\documentclass[12pt,reqno]{amsart}
\usepackage{enumerate}
\usepackage{amsrefs}
\usepackage{amsfonts, amsmath, amssymb, amscd, amsthm, bm, cancel}
\usepackage{mathrsfs}
\usepackage{url}
\usepackage{graphicx}
\usepackage[
linktocpage=true,colorlinks,citecolor=magenta,linkcolor=blue,urlcolor=magenta]{hyperref}
\usepackage{appendix}
\usepackage{multicol}
\usepackage{comment}
\usepackage{tikz}
\usepackage{pgfplots}
\usetikzlibrary{intersections}
\usepackage[margin=1in]{geometry}
\parskip = 0.13cm
\theoremstyle{plain}
\newtheorem{thm}{Theorem}[section]
\newtheorem*{thmA}{Theorem A}
\newtheorem*{thmB}{Theorem B}
\newtheorem*{thmC}{Theorem C}
 \newtheorem*{thmD}{Theorem D}   
  \newtheorem*{thmE}{Theorem E}   
\newtheorem*{thmF}{Theorem F}  
\newtheorem*{thmG}{Theorem G}  
\newtheorem*{thmH}{Theorem H}  
\newtheorem*{thmI}{Theorem I}  
\newtheorem*{thmJ}{Theorem J} 

 \newtheorem{cor}{Corollary}[section]
  \newtheorem{con}[thm]{Conjecture}
  \newtheorem{lem}{Lemma}[section]
 \newtheorem{assump}[thm]{Assumption}
 \newtheorem{prop}{Proposition}[section]
 \theoremstyle{definition}
 \newtheorem{defn}[thm]{Definition}
  \newtheorem*{ack}{Acknowledgments}
  \newtheorem{prob}{Problem}[section]
  
 \theoremstyle{remark}
\newtheorem{rem}{Remark}[section]

 \numberwithin{equation}{section}
 \numberwithin{figure}{section}
 
\setcounter{tocdepth}{1}
\allowdisplaybreaks


\renewcommand{\(}{\left(}
\renewcommand{\)}{\right)}



\renewcommand{\a}{\alpha}
\renewcommand{\b}{\beta}
\newcommand{\g}{\varphi}

\newcommand{\s}{\sigma}



\newcommand{\mrm}{\mathrm}

\newcommand{\vol}{\mrm{vol}}
\newcommand{\Vol}{\mrm{Vol}}
\newcommand{\Ric}{\mrm{Ric}}

\newcommand{\divv}{\mrm{div}}
\newcommand{\metric}[2]{\ensuremath{\langle #1, #2\rangle}}  

 \textwidth = 6.2 in
\textheight = 8.25 in
\oddsidemargin = 0.25 in
\evensidemargin = 0.25 in
\voffset=-20pt
\pagestyle{plain}

\begin{document}
\title{ Hyperbolic $p$-sum and Horospherical $p$-Brunn-Minkowski theory in hyperbolic space}

\author{Haizhong Li}
\address{Department of Mathematical Sciences, Tsinghua University, Beijing 100084, P.R. China}
\email{\href{mailto:lihz@tsinghua.edu.cn}{lihz@tsinghua.edu.cn}}

\author{Botong Xu}
\address{Department of Mathematical Sciences, Tsinghua University, Beijing 100084, P.R. China}
\email{\href{mailto:xbt17@mails.tsinghua.edu.cn}{xbt17@mails.tsinghua.edu.cn}}

\keywords{Hyperbolic $p$-sum; Horospherically convex; Minkowski problem; Christoffel-Minkowski problem; Brunn-Minkowski inequality; Minkowski inequality; Curvature flow}
\subjclass[2010]{52A55; 35K55}

\begin{abstract}
	The classical Brunn-Minkowski theory studies the geometry of convex bodies in Euclidean space by use of the Minkowski sum. It originated from H. Brunn's thesis in 1887 and H. Minkowski's paper in 1903. Because there is no universally acknowledged definition of the sum of two sets in hyperbolic space, there has been no Brunn-Minkowski theory in hyperbolic space since 1903. In this paper, for any $p>0$ we introduce a sum of two sets in hyperbolic space, and we call it the \emph{hyperbolic $p$-sum}. Then we develop a Brunn-Minkowski theory in the hyperbolic space by use of our hyperbolic $p$-sum, and we call it the horospherical $p$-Brunn-Minkowski theory.
		
	Let $K$ be any smooth horospherically convex bounded domain in the hyperbolic space $\mathbb{H}^{n+1}$. Through calculating the variation of the $k$-th modified quermassintegral of $K$ by use of our hyperbolic $p$-sum, we introduce the \emph{horospherical $k$-th $p$-surface area measure} associated with $K$ on the unit sphere $\mathbb{S}^n$. For $k=0$, we introduce the \emph{horospherical $p$-Minkowski problem}, which is the prescribed horospherical $p$-surface area measure problem. Through designing and studying a new volume preserving flow,
	 we solve the existence of solutions to the horospherical $p$-Minkowski problem for all $p \in (-\infty,+\infty)$ when the given measure is even. For $1 \leq k \leq n-1$, we introduce the \emph{horospherical $p$-Christoffel-Minkowski problem}, which is the prescribed horospherical $k$-th $p$-surface area measure problem. We solve the existence of solutions to the horospherical $p$-Christoffel-Minkowski problem for $p\in(-n, +\infty)$ under appropriate assumption on the given measure. We also study the Brunn-Minkowski inequalities and the Minkowski inequalities for domains in the hyperbolic space.
\end{abstract}

\maketitle
\tableofcontents

\section{Introduction}

\subsection{An overview of the Brunn-Minkowski theory in Euclidean space}\label{subsec-overview} $\ $

Convex geometry studies convex bodies in Euclidean space $\mathbb{R}^{n+1}$.
The classical Brunn-Minkowski theory is the heart of convex geometry, and it originated from Brunn's thesis \cite{Bru1887} and Minkowski's paper \cite{Min1903}.
By introducing the concept of support functions, Minkowski defined a linear combination of convex bodies, which is called the \emph{Minkowski addition} (i.e. Minkowski sum).
By studying the volume of linear combinations of convex bodies, he defined the mixed volume of convex bodies. Many geometric inequalities can be stated by use of the above concepts, such as the Brunn-Minkowski inequality, the Minkowski inequality, and the Alexandrov-Fenchel inequality. Here we refer readers to \cite{Gar02} for a survey on the Brunn-Minkowski inequality. Besides, Osserman \cite{Oss78} emphasized that the Brunn-Minkowski inequality directly implies the isoperimetric inequality. On the other hand, Aleksandrov \cite{Alek37}, Fenchel, and Jessen \cite{FJ38} showed that the integral representation of the mixed volume could induce a Borel measure on the unit sphere $\mathbb{S}^n$, which is known as the surface area measure. The classical Minkowski problem asks the necessary and sufficient conditions for a Borel measure on $\mathbb{S}^n$ to be the surface area measure of a convex body. The polytopal case was solved by Minkowski \cites{Min1897, Min1903}. In general cases, the existence of solutions to this problem was solved independently by Aleksandrov \cite{Alek38}, Fenchel and Jessen \cite{FJ38}. The regularity of solutions to this problem was studied by Lewy \cite{Lew38}, Nirenberg \cite{Nir53}, Cheng, Yau \cite{CY76}, and Pogorelov \cite{Pog78}. Guan and Ma \cite{GM03} studied the Christoffel-Minkowski problem and gave a sufficient condition for the existence of solutions.

In 1962, Firey \cite{F62} found a class of combinations of convex bodies containing the origin for each real number $p \geq 1$, which was called the \emph{Minkowski-Firey $L_p$-addition} (Firey's $p$-sum for short).
In 1993, Lutwak \cite{Lut93} discovered that Firey's $p$-sums could induce a Brunn-Minkowski theory for $p\geq 1$, called the $L_p$ Brunn-Minkowski theory (where $p=1$ is the classical case), see also \cite{Lut96}. He proved the $L_p$ Minkowski inequality and the $L_p$ Brunn-Minkowski inequality for $p>1$, where the latter was obtained earlier by Firey \cite{F62}. For $0 \leq p <1$, the $p$-sums and the Brunn-Minkowski inequalities have also been studied in the recent decade, see e.g. \cites{BLYZ12, KM20, CHLL20}.  Now we focus on the prescribed measure problems in the $L_p$ Brunn-Minkowski theory. Lutwak \cite{Lut93} defined the $L_p$ surface area measure for convex bodies containing the origin in their interiors. In the case $p > 1$, by use of the $L_p$ Minkowski inequality, he showed that the $L_p$ area measures appeared naturally in the integral representation of the mixed $p$-quermassintegrals. The prescribed $L_p$ surface area measure problem is called the $L_p$ Minkowski problem, which reduces to the classical Minkowski problem when $p=1$. In the case $p=0$, it is the prescribed cone-volume measure problem and is also called the logarithmic Minkowski problem \cite{BLYZ13}. In the case $p=-n-1$, it is called the centro-affine Minkowski problem \cite{CW06}. For  $n+1 \neq p> 1$, Lutwak \cite{Lut93} studied the existence and uniqueness of origin symmetric solutions to the $L_p$ Minkowski problem. For $p \geq -n-1$, Chou and Wang \cite{CW06} studied the problem under various assumptions on the given measure. Besides, the polytopal case of the $L_p$ Minkowski problem has also been studied, see e.g. \cites{HLYZ05, BHZ16, Zhu15, Zhu17}. The $L_p$ Brunn-Minkowski theory has developed rapidly for decades, see a recent survey by B\"{o}r\"{o}czky \cite{Bor22}. 

In 2016, Huang, Lutwak, Yang, and Zhang \cite{HLYZ16} introduced and studied the dual Minkowski problem. For other works contributed to this problem, we refer to, e.g. \cite{LSW20,LYZ18} and the references therein. For readers interested in the Brunn-Minkowski theory, we refer to the books by Schneider \cite{Sch14} and Gardner \cite{Gar06}.

\subsection{Motivation and construction of hyperbolic $p$-sum} $\ $

Researchers have tried to figure out the counterpart of results in the Brunn-Minkowski theory for domains in the hyperbolic space $\mathbb{H}^{n+1}$ over decades. 
	
The Minkowski type problems for smooth convex bodies in Euclidean space can be unified as the prescribed measure problems via the Gauss map, e.g. the classical Minkowski problem, the Christoffel-Minkowski problem, and the $L_p$ Minkowski problem. For domains in hyperbolic space, once a suitable Gauss map has been defined, one can also study the prescribed measure problems and the prescribed curvature problems, see e.g. \cite{EGM09} and \cite[Chapter 10]{Ger06}. 
	
In the classical Brunn-Minkowski theory, a special type of the Alexandrov-Fenchel inequalities concerns the comparison of different quermassintegrals of a single convex body, in particular, the classical isoperimetric inequality. For a smooth convex body in Euclidean space, the quermassintegrals  are either volume or can be expressed as curvature integrals on the boundary. For a bounded domain (i.e. compact sets with non-empty interior) in hyperbolic space, the quermassintegrals have also been defined, see \cite{San04}. Especially, when the domain is smooth, its quermassintegrals can be expressed as linear combinations of the volume and the curvature integrals on the boundary. 
Then people turned to study the comparison of the quermassintegrals and curvature integrals for bounded domains in $\mathbb{H}^{n+1}$ and obtained several Alexandrov-Fenchel type inequalities with suitable convex assumptions on the domains, see e.g. \cites{AHL20, LWX14, WX15, HL19, HL21, HLW20}. However, most of the above results in $\mathbb{H}^{n+1}$ are scattered and only for a single domain.

 \emph{The question arises whether there is a Brunn-Minkowski theory for bounded domains in the hyperbolic space with suitable convex assumptions}. 

Our main purpose is to find such a theory so that it can summarize some of the earlier \textit{irrelevant} results for domains in the hyperbolic space. The main difficulty in establishing a Brunn-Minkowski theory for bounded domains in hyperbolic space is that there is no underlying linear structure on the hyperbolic space. Then people do not know how to introduce the concept ``sum" of two domains, which is the first and the most important step. Once it is settled, by following the procedure in the classical Brunn-Minkowski theory (see Subsection \ref{subsec-overview}), one can study the corresponding prescribed measure problems, Brunn-Minkowski inequalities, and Minkowski inequalities in hyperbolic space.
To overcome the above difficulty, we introduce the concept of \emph{hyperbolic $p$-sum} by using two different methods. 

The first method is to construct the hyperbolic $p$-sum of two domains in $\mathbb{H}^{n+1}$ by use of their ``support functions". In Euclidean space $\mathbb{R}^{n+1}$, convex bodies can be viewed as intersections of their supporting half spaces (see \cite[Corollary 1.3.5]{Sch14}), and the support function of a given convex body is defined by the distance from the origin to the hyperplanes which are tangential to it. In hyperbolic space $\mathbb{H}^{n+1}$, we can use horo-balls to replace ``half spaces" and use horospheres to replace ``hyperplanes", where the \emph{horospheres} are the hypersurfaces  in $\mathbb{H}^{n+1}$ with principal curvatures equal to $1$ everywhere, and the \emph{horo-balls} are domains delimited by horospheres. Therefore, it is natural to study the \emph{horospherically convex bounded domains}, i.e. the intersections of closed horo-balls, and we call them \emph{h-convex bounded domains} for short. The \emph{horospherical support function} of a h-convex bounded domain is defined by the distance from a fixed origin in $\mathbb{H}^{n+1}$ to the horospheres which are tangential to it. These concepts can be found in \cite{ACW18}.

To introduce the follow-up hyperbolic $p$-sum in Definition \ref{def-p sum}, let us formulate the definition of the horospherical support function more precisely.
Throughout the paper, we use the hyperboloid model of $\mathbb{H}^{n+1}$ in Minkowski space $\mathbb{R}^{n+1,1}$. For a smooth h-convex bounded domain $\Omega \subset \mathbb{H}^{n+1} \subset \mathbb{R}^{n+1,1}$, let $X \in \partial \Omega$ be the position vector and $\nu$ be the outward unit normal of $\partial \Omega \subset \mathbb{H}^{n+1} $ at $X$. Since $\metric{X}{X}=-1$ and $\metric{X}{\nu}=0$ on $\partial \Omega$, there exist $u \in \mathbb{R}$ and  $z\in \mathbb{S}^{n}$ such that
\begin{equation*}
	X -\nu = e^{-u}(z,1).
\end{equation*} 
Define the \emph{horospherical Gauss map} $G :\partial \Omega \to \mathbb{S}^n$ by $G\(X\)=z$ and call $u$ the \emph{horospherical support function} of $\Omega$. 
For a smooth bounded domain, if the principal curvatures are greater than $1 +\delta$ for some $\delta>0$ on its boundary, then we call it \emph{uniformly horospherically convex} (uniformly h-convex for short). In this case, the map $G$ is a diffeomorphism, see \cite{ACW18}. Then we can view $u$ as a function on $\mathbb{S}^n$ and denote it by $u(z)$ when $\Omega$ is uniformly h-convex. We emphasize that \emph{a single point in $\mathbb{H}^{n+1}$ can be regarded as a degenerated h-convex bounded domain}, since it can be approximated by a sequence of closed geodesic balls.
  
Since the horospherical support function has been defined, it is a natural idea to use them to construct the sum of two uniformly h-convex bounded domains. To this end, we introduce the following definition of \emph{hyperbolic $p$-sum}.
\begin{defn}\label{def-p sum}
	Let $p$, $a$, and $b$ be real numbers that satisfy  $\frac{1}{2} \leq p \leq 2$, $a \geq 0$, $b \geq 0$ and $a +b \geq 1$, and let $K$ and $L$ be two smooth uniformly h-convex bounded domains in $\mathbb{H}^{n+1}$. Denote by $u_K (z)$ and $u_L (z)$ the horospherical support functions of $K$ and $L$ respectively. Allow that both $K$ and $L$ can degenerate to a single point. We define the hyperbolic $p$-sum $\Omega := a \cdot K +_p b \cdot L$ of $K$ and $L$ by the h-convex bounded domain with horospherical support function 
	\begin{equation*}
		u_{\Omega}(z):= \frac{1}{p} \log \(a e^{p u_{K}(z)}+b e^{p u_{L}(z)}\).
	\end{equation*}
\end{defn}
We will show that the above Definition \ref{def-p sum} is well-defined in Theorem \ref{thm-def p sum-well defined}.  Remark that the case $p=1$, $a=1$, and $b=1$ of Definition \ref{def-p sum} was appeared in the paper of Gallego, Solanes, and Teufel \cite{GST13}, where they called it the harmonic sum. Since there exists no good definition of ``dilation", their harmonic sum can't extend to linear combinations of h-convex bounded domains.

The second method to construct the hyperbolic $p$-sum is to give a pointwise definition.
Back to the classical Brunn-Minkowski theory, the pointwise definition of Minkowski addition of convex bodies  is valid for general sets in Euclidean space, i.e.
\begin{equation*}
	\widehat{\Omega} =a \widehat{K}+b \widehat{L} := \{ ax+by:  x \in \widehat{K}, \, y\in \widehat{L} \},
\end{equation*}
where $a,b \geq 0$, and $\widehat{K}, \widehat{L}$ are sets in $\mathbb{R}^{n+1}$. In the $L_p$ Brunn-Minkowski theory, Lutwak, Yang, and Zhang \cite{LYZ12} gave a pointwise definition of Firey's $p$-sum \cite{F62} for each $p \geq 1$. Furthermore, their pointwise addition is also valid for general sets in $\mathbb{R}^{n+1}$. 

Motivated by the above pointwise sums in Euclidean space, for all $p \in (0, +\infty)$, we will give a pointwise definition (see Definition \ref{def-p sum-p>0-new} below) of the hyperbolic $p$-sum for sets in $\mathbb{H}^{n+1}$. Before we state the specific construction, we would like to give a geometric intuition of the hyperbolic $p$-sum for sets. As in Definition \ref{def-p sum}, we need two sets $K, L$ in $\mathbb{H}^{n+1}$ and need three parameters $p$, $a$ and $b$. We first consider the case $p=1$, which is closely related to the classical Minkowski addition from a geometric point of view. For any $X \in K$ and $Y \in L$, it seems natural to define the hyperbolic $1$-sum ``$a \cdot X+_1 b \cdot Y$" of $X$ and $Y$  by $aX+bY$, where we used the vector addition in Minkowski space $\mathbb{R}^{n+1,1}$. However, in general cases, the vector $aX+bY$ is no longer on the hyperboloid $\mathbb{H}^{n+1}$. 

To overcome that obstacle, we \emph{identify the vector $aX+bY$ in $\mathbb{R}^{n+1,1}$ with a geodesic ball in $\mathbb{H}^{n+1}$}, which is enclosed by the intersection of the past light cone of $aX+bY$ and the hyperboloid $\mathbb{H}^{n+1}$. Then we define $a\cdot K+_1b \cdot L$ by the union of those geodesic balls. Now we turn to consider the case $p \in (0,1) \cup (1,+\infty)$. For any $X \in K$ and $Y \in L$, there exists a unique geodesic segment connecting $X$ and $Y$. Then any point on the segment can be parameterized by a new parameter $t \in [0,1]$, see \eqref{P-p-t-a-X-b-Y} below. For each point on the segment, we will define a geodesic ball centered at that point whose radius only depends on $\{a,b,p, X,Y, t\}$, see \eqref{R-p-t-a-X-b-Y} below. Then we can define the hyperbolic $p$-sum of $K$ and $L$ by the combination of those geodesic balls.

To introduce the follow-up pointwise hyperbolic $p$-sum in Definition \ref{def-p sum-p>0-new}, we need to set up some notations. Let $X$ and $Y$ be two points in $\mathbb{H}^{n+1}$, and let $p>0$, $t \in [0,1]$, $a \geq 0$, and $b \geq 0$ be real numbers. Denote by $d_{\mathbb{H}^{n+1}} \( \cdot, \cdot\)$ the distance function in $\mathbb{H}^{n+1}$ and by $\overline{XY}$ the geodesic segment that connects $X$ and $Y$ in $\mathbb{H}^{n+1} $.
\begin{itemize}
	\item If $p \in (0,1) \cup (1, +\infty)$, then we define
		\begin{equation}\label{R-p-t-a-X-b-Y}
			R(p, t; a, X, b, Y) :=  \( (1-t)^{\frac{2}{q}}a^{\frac{2}{p}} + t^{\frac{2}{q}} b^{\frac{2}{p}}+ 2\(1-t\)^{\frac{1}{q}} t^{\frac{1}{q}} a^{\frac{1}{p}} b^{\frac{1}{p}}\cosh d_{\mathbb{H}^{n+1}}(X,Y) \)^{\frac{1}{2}}, 
		\end{equation}
		where $q$ is the H\"older conjugate of $p$, i.e. $\frac{1}{p} + \frac{1}{q} =1$.
		Let $P(p, t; a, X, b, Y)$ be a point lying on $\overline{XY}$ such that the distance between it and $X$ satisfies
		\begin{equation}\label{P-p-t-a-X-b-Y}
			\cosh d_{\mathbb{H}^{n+1}} \( P(p,t; a, X, b, Y), X \) = \frac{(1-t)^{\frac{1}{q}} a^\frac{1}{p} + t^{\frac{1}{q}}b^{\frac{1}{p}} \cosh d_{\mathbb{H}^{n+1}}(X,Y)}{R(p, t; a, X, b, Y)}.
		\end{equation}
	\item Define
		\begin{equation}\label{R-a-X-b-Y}
			R( a, X, b, Y) :=  \( a^{2} +  b^{2}+ 2 a b\cosh d_{\mathbb{H}^{n+1}}(X,Y) \)^{\frac{1}{2}}.
		\end{equation} 
		Let $P(a, X, b, Y)$ be a point lying on $\overline{XY}$ such that the distance between it and $X$ satisfies
		\begin{equation}\label{P-a-X-b-Y}
			\cosh d_{\mathbb{H}^{n+1}} \( P(a, X, b, Y), X \) =  \frac{a + b \cosh d_{\mathbb{H}^{n+1}}(X,Y)}{R(a, X, b, Y)}.
		\end{equation}
\end{itemize}
	
\begin{defn}\label{def-B(p,t,a,X,b,Y)} $ \ $
	\begin{enumerate}
		\item If $p \in (0,1) \cup (1, +\infty)$ and $R(p, t; a, X, b, Y) \geq 1$, then we define $B (p, t; a, X, b, Y)$ as the closed geodesic ball of radius $\log R(p, t; a, X, b,Y)$ centered at $P(p, t; a, X, b, Y)$ in $\mathbb{H}^{n+1}$. 
		\item If $p \in (0,1) \cup (1, +\infty)$ and $R(p, t; a, X, b, Y) < 1$, then we define  $B (p, t; a, X, b, Y)$ as the empty set.
		\item If $R( a, X, b, Y) \geq 1$, then we define $B(a,X,b,Y)$ as the closed geodesic ball of radius	$\log R(a,X,b,Y)$ centered at $P(a,X,b,Y)$ in $\mathbb{H}^{n+1}$.
		\item If $R(a, X, b, Y) < 1$, then we define $B (a, X, b, Y)$ as the empty set.
	\end{enumerate}
\end{defn}	
After the above preparing works, we are in the position to state the pointwise hyperbolic $p$-sum.
\begin{defn}\label{def-p sum-p>0-new}
	Let $p>0$, $a \geq 0$ and $b \geq 0$ be real numbers, and let $K$ and $L$ be sets in $\mathbb{H}^{n+1}$.  Then  the hyperbolic $p$-sum $\Omega$ of $K$ and $L$, i.e.
	\begin{equation*}
		\Omega := a \cdot K+_p b \cdot  L
	\end{equation*}
	is defined by
	\begin{equation*}
		\Omega =
		\left\{ 
		\begin{aligned}
			&\bigcup_{X \in K, \, Y \in L} \bigcup_{t \in [0,1]} B(p,t; a, X, b, Y), \quad &p>1,\\
			&\bigcup_{X \in K, \, Y \in L}  B( a, X, b, Y), \quad &p=1,\\
			&\bigcup_{X \in K, \, Y \in L} \bigcap_{t \in [0,1]} B(p,t; a, X, b, Y), \quad &0 <p<1.
		\end{aligned}
		\right.
	\end{equation*}
\end{defn}
When $p$, $a$, $b$, $K$, and $L$ satisfy the assumptions as in Definition \ref{def-p sum}, we will show in Theorem \ref{thm-new-old-sum-compatible} that the pointwise hyperbolic $p$-sum defined in Definition \ref{def-p sum-p>0-new}  is compatible with the hyperbolic $p$-sum defined in Definition \ref{def-p sum}. 
Then Definition \ref{def-p sum-p>0-new} implies that Definition \ref{def-p sum} does not depend on the hyperboloid model. Specifically, the hyperbolic $p$-sum $a \cdot K+_p b \cdot L$ can be constructed without using the Minkowski space $\mathbb{R}^{n+1,1}$ and the horospherical support functions of $K$ and $L$, it only depends on the relative position of $K$ and $L$ in $\mathbb{H}^{n+1}$. Here we remark that our Definition \ref{def-p sum-p>0-new} is different from the $\lambda$-horocycle Minkowski sum  in $\mathbb{H}^2$ defined recently by Assouline and  Klartag \cite{AK22}. Fillastre \cite{Fill13} studied Fuchsian convex bodies in Minkowski space $\mathbb{R}^{n+1,1}$.

In the next two subsections, we will follow the procedure in the classical Brunn-Minkowski theory and study the prescribed measure problems and geometric inequalities that are both generated by our hyperbolic $p$-sum. At the same time, we will continuously discuss the relationship between these new problems and the earlier results in hyperbolic space. 
 
As we concern about the horospherically convex domains, we sum up the related problems in a theory called the \emph{horospherical $p$-Brunn-Minkowski theory} in hyperbolic space.

\subsection{Prescribed measure problems in hyperbolic space} $ \ $

For each integer $k=0,1,\ldots,n$ and any h-convex bounded domain $K \subset \mathbb{H}^{n+1}$, the $k$-th modified quermassintegral $\widetilde{W}_k \(K \) $ of $K$ was defined by Andrews, Chen, and Wei \cite{ACW18}, see \eqref{def-modified quermassintegral}. They proved that these $\{\widetilde{W}_k \( \cdot\)\}_{k=0}^n$ are monotone with respect to the inclusion of h-convex bounded domains.
 
Let $K$ be a smooth uniformly h-convex bounded domain with horospherical support function $u_K(z)$. Define $\g_K(z) = e^{u_K(z)}$. We will show in Lemma \ref{lem-formula-Wpk-K-L} that the variation of $\widetilde{W}_k \(K+_p t \cdot L\) $  along any h-convex bounded domain $L$ (might be a point) at $t=0$ induces a measure
\begin{equation*}
 	d S_{p,k}\(K,z \) := \g_K^{-p-k}p_{n-k} \(A [\g_K] \) d \sigma
\end{equation*}
on $\mathbb{S}^n$, where  $d \sigma$ is the area measure of the unit sphere $\mathbb{S}^n$; the matrix $A[\g_K]$ is defined by
\begin{equation*}
	A_{ij} [\g_K]:= D_j D_i \g_K - \frac{1}{2} \frac{|D \g_K|^2}{\g_K} \sigma_{ij} + \frac{1}{2} \(\g_K- \frac{1}{\g_K}\)\sigma_{ij},
\end{equation*}
where $\sigma_{ij}$ denotes the standard metric on $\mathbb{S}^n$, $D$ denotes the Levi-Civita connection with respect to $\sigma_{ij}$, and $p_{n-k} \(A[\g_K] \) $ is the modified $(n-k)$-th elementary symmetric function of the eigenvalues $\lbrace x_1, \ldots,x_n  \rbrace$ of $A[\g_K]$, i.e. 
\begin{equation*}
	p_{n-k} \(A[\g_K] \) = \frac{1}{C_n^{n-k}} \sigma_{n-k} \(A[\g_K] \)=\frac{1}{C_n^{n-k}}\sum_{1 \leq i_1<\cdots<i_{n-k} \leq n} x_{i_1} x_{i_2} \cdots x_{i_{n-k}}.
\end{equation*}
Then we call $d S_{p,k}\(K,z\)$ the \emph{$k$-th horospherical $p$-surface area measure of $K$}. Now we introduce the following prescribed $k$-th horospherical $p$-surface area measure problem. 

\textit{For a given smooth positive function $f(z)$, we ask the sufficient and necessary conditions for $f(z)$, such that the equation
\begin{equation}\label{eq-phi-p-CM problem}
	\g^{-p-k}p_{n-k} \(A [\g] \) = f(z)
\end{equation}
admits a smooth positive solution $\g(z)$ with $A[\g(z)]>0$ for all $z \in \mathbb{S}^n$.}

We call it the horospherical $p$-Minkowski problem when $k=0$ and  call it the horospherical $p$-Christoffel-Minkowski problem when $1 \leq k \leq n-1$, see the formal statements of the above problems in Problem \ref{prob-Horospherical p-Minkowski problem} and Problem \ref{prob-Horospherical p-Christoffel-Minkowski problem}.

\subsubsection{Kazdan-Warner type obstructions} If $p=-n$ in \eqref{eq-phi-p-CM problem}, then equation \eqref{eq-phi-p-CM problem} can be formulated as the following prescribed curvature equation for smooth uniformly h-convex hypersurfaces, i.e.
\begin{equation}\label{eq-p=-n-p-CM problem}
	p_{n-k} \(\frac{1}{\kappa-1}\) =f(z),
\end{equation}
where $\kappa = (\kappa_1, \ldots, \kappa_n)$ are the principal curvatures, and $z$ is the image of the horospherical Gauss map defined on the hypersurface.

When $k=n-1$, equation \eqref{eq-p=-n-p-CM problem} is closely related to the Christoffel problem in $\mathbb{H}^{n+1}$ studied by Espinar, G\'alvez, and Mira \cite{EGM09}. They observed that the Christoffel problem in $\mathbb{H}^{n+1}$ is closely related to the Nirenberg problem, and then they gave a Kazdan-Warner type obstruction to it. Motivated by their result, for general $k =0,1,\ldots,n-1$ and constant function $f(z)$, we notice that equation \eqref{eq-p=-n-p-CM problem} relates to the $\sigma_{m}$-Yamabe problems \cite{Via00} ($m=1, \ldots, n-k$) in conformal geometry.  In Theorem \ref{thm-necess-cond-p=-n}, we will give Kazdan-Warner type obstructions to equation \eqref{eq-p=-n-p-CM problem} for all $0 \leq k \leq n-1$. Particularly, for $n \geq 1$ and $p=-n$, we will show that the solutions $\g(z)$ to equation \eqref{eq-phi-p-CM problem} must satisfy
\begin{equation}\label{K-W obstruction}
	\int_{\mathbb{S}^n} \g^{-n} \metric{D f(z)}{D x_i} d\sigma = 0, \quad i=0,1,\ldots, n,
\end{equation}
where $x_0, \ldots, x_n$ are the coordinate functions of $\mathbb{S}^n$ in $\mathbb{R}^{n+1}$.
 
We present two different proofs of the obstruction \eqref{K-W obstruction}.
\begin{itemize}
 	\item Our first method is to consider equation \eqref{eq-phi-p-CM problem} as a linear combination of the equations for $\sigma_m$-Nirenberg problem ($m=1, \ldots, n-k$), and then we will prove the obstruction \eqref{K-W obstruction} by
 	using several Kazdan-Warner type identities proved in \cite{Han06, LLL21, Via00}.
 	\item  The second method is more direct. After observing a key identity for closed hypersurfaces in $\mathbb{H}^{n+1}$ (see Lemma \ref{lem-KZ-non-shifted}), we can prove the obstruction \eqref{K-W obstruction} by a direct calculation.
\end{itemize}
The obstruction \eqref{K-W obstruction} shows that, for general function $f(z)$ defined on $\mathbb{S}^n$, equation \eqref{eq-phi-p-CM problem} may not have a solution when $p=-n$.
\subsubsection{Uniqueness result}
When $f(z)$ is a positive constant and $p \geq -n$, we completely classify the solutions to equation \eqref{eq-phi-p-CM problem} satisfying $\g(z)>1$ and $A_{ij} [\g]>0$, see Theorem \ref{thm-f=c-CM-M-sphere}. Geometrically, the solutions are geodesic balls centered at the origin if $p>-n$, and are geodesic balls if $p=-n$. A general version of Theorem \ref{thm-f=c-CM-M-sphere} will be proved in Proposition \ref{prop-uniq-sphere-higher dim}, where we prove that geodesic balls are the only solutions to a class of curvature equations, see \eqref{eq-uniq-sphere}. The new techniques we use are integral formulas (see Lemma \ref{lem-integral formulas}) for closed hypersurfaces in $\mathbb{H}^{n+1}$ and a Heintze-Karcher type inequality (see Proposition \ref{prop-HK-n=1}) for curves in $\mathbb{H}^2$.  
\subsubsection{Existence result}
Let us consider the equation \eqref{eq-phi-p-CM problem} when $f(z)$ is an even function on $\mathbb{S}^n$, i.e. $f(z) =f(-z)$ for all $z \in \mathbb{S}^n$. We will prove in Theorem \ref{thm-exist-all p-k=0} that, for $k=0$ and $p \in (-\infty, +\infty)$, there exists a constant $\gamma$ such that equation
\begin{equation}\label{eq-CM-gamma}
	\g^{-p-k} p_{n-k}(A[\g]) =\gamma f(z)
\end{equation} 
admits a positive even solution $\g(z)$ with $A_{ij} [\g(z)]>0$, which is an existence result of the horospherical $p$-Minkowski problem. For $1 \leq k \leq n-1$ and $p \geq -n$, by making appropriate assumptions on $f(z)$ (see Assumption \ref{assump-h}), we will prove the existence of positive even solutions to equation \eqref{eq-CM-gamma} in Theorem \ref{thm-exist-all p}, which is an existence result of the horospherical $p$-Christoffel-Minkowski problem.

To prove the above results, we introduce a new volume preserving flow in \eqref{flow-HCMF}. For each real number $p$, we also define a functional $J_p(\Omega)$ for h-convex bounded domain $\Omega$ in \eqref{def-J_p}. We will show that $J_p(\Omega_t)$ is strictly decreasing along the flow \eqref{flow-HCMF} unless the horospherical support function of $\Omega_t$ solves equation  \eqref{eq-CM-gamma}. Then the desired existence results of equation \eqref{eq-CM-gamma} (see Theorem \ref{thm-exist-all p-k=0} and Theorem \ref{thm-exist-all p}) can be proved by use of the long time existence and the subsequential convergence of the flow \eqref{flow-HCMF} (see Theorem \ref{thm-long time existence}). 

\subsection{Geometric inequalities in hyperbolic space} $ \ $

\subsubsection{Steiner formulas and their applications}
Steiner formula for convex bodies in Euclidean space gives an explicit formula for the volume bounded by two convex parallel hypersurfaces. 
Steiner formulas in the hyperbolic space and the sphere can be found in Santal\'{o}'s book \cite[Chapter 18]{San04}.

We will derive the Steiner formulas for modified quermassintegrals of h-convex bounded domains in $\mathbb{H}^{n+1}$ in Theorem \ref{thm-Stein-modi-quemass}. Moreover, we will derive the Steiner formula for volumes with weight $\cosh r$ in Theorem \ref{thm-weighted Steiner formula}, where $r$ denotes the distance function in $\mathbb{H}^{n+1}$ from the origin. As an application, we will prove a new weighted Alexandrov-Fenchel type inequality for h-convex bounded domains in $\mathbb{H}^{n+1}$, see Theorem \ref{thm-new-weighted-inequality}.

\subsubsection{Horospherical $p$-Brunn-Minkowski inequality}
Denote by $B(r)$ the geodesic ball of radius $r$ centered at the origin in $\mathbb{H}^{n+1}$. For any bounded domain $\Omega$ in $\mathbb{H}^{n+1}$, we let $r^k_{\Omega}$ be the positive number satisfying $\widetilde{W}_k(B(r^k_{\Omega})) = \widetilde{W}_k(\Omega)$  for each $0\leq k \leq n$, see also Definition \ref{def-mod k-mean radius}. 

Let $a$, $b$, $p$, $K$ and $L$ satisfy the assumptions in Definition \ref{def-p sum}, we conjecture that the following geometric inequality holds,
\begin{equation}\label{horo BM informal}
	\exp (p r^k_{a\cdot K +_p b \cdot L}) \geq a \exp(p r^k_K) + b \exp \( p r^k_L\), \quad k=0,\ldots,n.
\end{equation}
We call the above inequality as the horospherical $p$-Brunn-Minkowski inequality, and the formal statement will be given in Conjecture \ref{conj-BM}. Especially, when  $K$ and $L$ are geodesic balls, inequality \eqref{horo BM informal} will be proved in Theorem \ref{thm-horo-BM balls}.

Curvature flows are powerful tools in proving geometric inequalities, see e.g. \cites{ACW18, GWW14, GL15, HL21,HLW20, SX19}. Andrews et al. \cite[Corollary 1.9]{ACW18}  proved a class of  Alexandrov-Fenchel type inequalities by using a volume preserving flow, see Theorem C in Subsection \ref{subsec-10.1}.  Later, Hu, Wei, and the first author \cite{HLW20}  proved a class of weighted Alexandrov-Fenchel type inequalities \cite[Theorem 1.8]{HLW20} for h-convex hypersurfaces by using a locally constrained flow, see Theorem E in Subsection \ref{subsec-10.2}. 

In view of the desired inequality \eqref{horo BM informal}, we will show in Proposition \ref{prop-BM-K=L} that the inequality in Theorem C (\cite[Corollary 1.9]{ACW18}) can be interpreted as
\begin{equation*}
	\left. \frac{d}{dt} \right|_{t=0} \exp (p r^k_{K +_p t \cdot K}) \geq \left. \frac{d}{dt} \right|_{t=0}
	\( \exp(p r^k_K) + t \exp \( p r^k_K\)\), \quad  k=0, \ldots, n.
\end{equation*}
On the other hand, we will show that the inequality in Theorem E (\cite[Theorem 1.8]{HLW20}) can be interpreted as 
\begin{equation*}
	\left. \frac{d}{dt} \right|_{t=0} \exp ( r^k_{K +_1 t \cdot N}) \geq \left. \frac{d}{dt} \right|_{t=0}
	\( \exp( r^k_K) + t \exp \( r^k_N\)\), \quad  k=1, \ldots, n,
\end{equation*}
where $N$ is regarded as the origin of $\mathbb{H}^{n+1}$. In the $L_p$ Brunn-Minkowski theory, the $L_p$ Minkowski inequalities are generated from the variation of the $L_p$ Brunn-Minkowski inequalities. Therefore by using the hyperbolic $p$-sum in Definition \ref{def-p sum}, we can regard the above Theorem C and Theorem E as two special types of Minkowski inequalities in the horospherical $p$-Brunn-Minkowski theory, see the statements of Conjecture \ref{conj-Min ineq} and Conjecture \ref{conj-Min ineq-weak} for general horospherical $p$-Minkowski inequalities in $\mathbb{H}^{n+1}$.
 
Motivated by the above observation, by using the curvature flows and geometric inequalities in \cite{HL21, HLW20, SX19} , we will prove some special cases of Conjectures \ref{conj-BM}, \ref{conj-Min ineq}, and \ref{conj-Min ineq-weak}. The results are summarized in Theorems \ref{thm-sum-BM}, \ref{thm-sum-Min ineq-strong}, and \ref{thm-sum-Min ineq-weak}.

\subsection{Weighted horospherical $p$-Brunn-Minkowski theory} $\ $

The weight $\cosh r$ often appears in geometric inequalities in $\mathbb{H}^{n+1}$; for example, it occurred in the Minkowski inequalities proved by Brendle, Hung, and Wang \cite{BHW16}, and Chao Xia \cite{Xia16}, see also \cite{SX19}. Furthermore, Hu and the first author \cite{HL21}, Hu, Wei, and the first author \cite{HLW20} proved a class of Alexandrov-Fenchel type inequalities for curvature integrals with weight $\cosh r$. Now we call the integral of $\cosh r$ on domain $\Omega \subset \mathbb{H}^{n+1}$ the weighted volume of $\Omega$.

So what is the behavior of the weighted volume of domains along with the hyperbolic $p$-sum? To answer that question, we propose Conjecture \ref{conj-weighted-h-BM}, which is called the weighted horospherical $p$-Brunn-Minkowski inequality. Based on Conjecture \ref{conj-weighted-h-BM}, we can propose several Minkowski type inequalities and isoperimetric type inequalities with weight $\cosh r$ in Conjectures \ref{conj-weighted-horo-Mink-ineq}--\ref{conj-last of weighted isoperi type}.
Then we find that the weighted isoperimetric inequality proved by Scheuer and Xia \cite{SX19} can be viewed as a corollary of Conjecture \ref{conj-weighted-h-BM}. Beyond that observation, we prove a class of weighted isoperimetric type inequalities in Theorem \ref{thm-p=1 case in conj last of weighted iso type}, which can be considered as corollaries of the aforementioned Conjecture \ref{conj-weigh-Min-ineq}. 

In the last part of this paper, we study the relationship between the Minkowski-Firey $L_p$-addition for convex bodies in $\mathbb{R}^{n+1}$ and the hyperbolic $p$-sum in Definition \ref{def-p sum}. We will construct a map $\pi$ from h-convex bounded domains in $\mathbb{H}^{n+1}$ to convex bodies in $\mathbb{R}^{n+1}$ that contains the origin in their interiors. Geometrically, $-\pi(\Omega) \subset \mathbb{R}^{n+1}$ is the domain enclosed by the intersection of the null hypersurface generated by $\partial \Omega$ with $\mathbb{R}^{n+1} := \mathbb{R}^{n+1} \times \{0\}$, where $\Omega$ is a h-convex bounded domain in $\mathbb{H}^{n+1}$. We will prove in Proposition \ref{prop-lc proj map-supp func} that the Euclidean support function of $\pi (\Omega) \subset \mathbb{R}^{n+1}$ is precisely $e^{u_{\Omega}(z)}$, where $u_{\Omega}(z)$ is the horospherical support function of $\Omega \subset \mathbb{H}^{n+1}$. Combining that fact with Definition \ref{def-p sum}, we will obtain the relationship between the hyperbolic $p$-sum and the Firey's $p$-sum in Theorem \ref{thm-rel p-sums in Eucl and Hyperbolic}. Consequently, we can use the map $\pi$ to study the Brunn-Minkowski type inequality and Minkowski type inequalities associated with the hyperbolic $p$-sum, see Theorem \ref{thm-weighted p-BM ineq-lc proj} and Theorem \ref{thm-weighted p-Min ineq-lc proj}. Besides, we will prove geometric inequalities for domains in $\mathbb{H}^{n+1}$, where equality holds if and only if the domains are geodesic balls in $\mathbb{H}^{n+1}$, see Theorem \ref{thm-weighted p-iso ineq-lc proj}.
 
\subsection{Organization of the paper} $ \ $

In Sections \ref{sec-h domain} and Section \ref{sec-useful lemmas}, we collect basic concepts and study properties of h-convex domains and h-convex hypersurfaces in $\mathbb{H}^{n+1}$. In Section \ref{sec-p-sum}, we prove that  the hyperbolic $p$-sum in Definition \ref{def-p sum} is well-defined and prove the compatibility of the hyperbolic $p$-sums in Definition \ref{def-p sum} and Definition \ref{def-p sum-p>0-new}. In Section \ref{sec-p,k-surface area measure}, through calculating the variation of the modified quermassintegrals of h-convex domains by use of our hyperbolic $p$-sum, we introduce the horospherical $p$-surface area measures of h-convex bounded domains in the hyperbolic space. Then we introduce the horospherical $p$-Minkowski problem and the horospherical $p$-Christoffel-Minkowski problem, see Problem \ref{prob-Horospherical p-Minkowski problem} and Problem \ref{prob-Horospherical p-Christoffel-Minkowski problem}. In Section \ref{sec-Steiner formula}, we define the hyperbolic $p$-dilation and study the Steiner formulas for h-convex bounded domains. 

In Section \ref{sec-horosp-p-Christoffel Minkowski problem}, we study the existence results of the horospherical $p$-Minkowski problem and the horospherical $p$-Christoffel-Minkowski problem by designing and studying a new volume preserving flow \eqref{flow-HCMF}. For the horospherical $p$-Minkowski problem, we prove the existence of origin symmetric solutions for all $p \in (-\infty, \infty)$ when the given measure is smooth and even, see Theorem \ref{thm-exist-all p-k=0}. For the horospherical $p$-Christoffel-Minkowski problem, we prove the existence of origin symmetric solutions for $p \in (-n, +\infty)$ under appropriate assumption on the given measure, see Theorem \ref{thm-exist-all p}. In Section \ref{sec-uniqueness of horo-p-Chri-Min-constant function}, we prove for $p \geq -n$ that geodesic balls are the unique solutions to the horospherical $p$-Minkowski problem and the horospherical $p$-Christoffel-Minkowski problem when the given measure is a constant multiple of the standard measure on $\mathbb{S}^n$, see Theorem \ref{thm-f=c-CM-M-sphere}. In Section \ref{sec-Kazdan-Warner}, we use two different methods to prove the Kazdan-Warner type obstructions for the $p=-n$ case of the horospherical $p$-Minkowski problem and the horospherical $p$-Christoffel-Minkowski problem, see Theorem \ref{thm-necess-cond-p=-n}.

In Section \ref{sec: BM inequalities}, we propose and study several conjectures about horospherical $p$-Brunn-Minkowski inequalities and horospherical $p$-Minkowski inequalities. The results are summarized in Theorem \ref{thm-sum-BM}, Theorem \ref{thm-sum-Min ineq-strong}, and Theorem \ref{thm-sum-Min ineq-weak}. In Section \ref{sec: weighted h-Brunn-Minkowski}, we propose several conjectures of weighted geometric inequalities for h-convex bounded domains and prove a class of weighted isoperimetric type inequalities for hypersurfaces in $\mathbb{H}^{n+1}$. In Section \ref{sec-hyperbolic sum and Firey's sum}, we prove the relationship between the hyperbolic $p$-sum and the Minkowski-Firey $L_p$-addition in Theorem \ref{thm-rel p-sums in Eucl and Hyperbolic}. Then we prove a Brunn-Minkowski type inequality for h-convex domains along the hyperbolic $p$-sum ($1 \leq p \leq 2$), see Theorem \ref{thm-weighted p-BM ineq-lc proj}. We also prove geometric inequalities of Minkowski type for h-convex domains, see Theorem \ref{thm-weighted p-Min ineq-lc proj}.

In Section \ref{sec-appendix}, we give an intrinsic proof of Lemma \ref{lem-conf-Schouten} and another proof of Theorem \ref{thm-sum of balls 0.5--2} for $1\leq p \leq 2$. Besides, we show that the origin symmetric assumption in Conjecture \ref{conj-weighted-h-BM} can not be dropped.

\begin{ack}
	The research of the authors was supported by NSFC grant No. 11831005, NSFC grant No. 12126405 and NSFC-FWO Grant No. 11961131001. 
\end{ack}
\section{Horospherically convex domains}\label{sec-h domain}
\subsection{Minkowski space $\mathbb{R}^{n+1,1}$ and Hyperbolic space $\mathbb{H}^{n+1}$}$\ $

The Minkowski space $\mathbb{R}^{n+1,1}$ is an $(n+2)$-dimensional vector space with the Lorentzian metric
\begin{equation*}
	\metric{X}{Y} =\metric{(x_0,x_1, \ldots,x_{n+1})}{(y_0,y_1,\ldots, y_{n+1})}
	= \sum_{i=0}^n x_iy_i- x_{n+1}y_{n+1}.
\end{equation*}
A vector $X \in \mathbb{R}^{n+1,1}$ is called \emph{time-like}, \emph{light-like} or \emph{space-like} if $\metric{X}{X}<0$, $\metric{X}{X}=0$ or $\metric{X}{X}>0$ respectively. In the following text, we will write a vector $X \in \mathbb{R}^{n+1,1}$ as $(x,x_{n+1})$, where $x = (x_0, x_1, \ldots, x_n) \in \mathbb{R}^{n+1}$. A vector $X$ in $\mathbb{R}^{n+1,1}$ is called future directed or past directed if $x_{n+1}>0$ or $x_{n+1}<0$ respectively, and the \emph{future (or past) light cone} of $X$ is defined by the set of future (or past) directed light-like vectors issuing from $X$.
 
The hyperboloid model of the hyperbolic space is given by
\begin{equation*}
	\mathbb{H}^{n+1} = \lbrace X=(x,x_{n+1}) \in \mathbb{R}^{n+1,1}:  \metric{X}{X}=|x|^2-x_{n+1}^2= -1, \, x_{n+1}>0 \rbrace,
\end{equation*}
and the position vector $X \in \mathbb{H}^{n+1}$ is the normal of $\mathbb{H}^{n+1}$ in $\mathbb{R}^{n+1,1}$. Without additional hypotheses, we always regard the `north pole' $N=(0,1)$ as the origin of $\mathbb{H}^{n+1}$. Given two points $X$ and $Y$ on $\mathbb{H}^{n+1}$, the geodesic distance between $X$ and $Y$ is denoted by ${d}_{\mathbb{H}^{n+1}} \(X, Y\)$. Denote by $\mathbb{S}^n$ the unit sphere. For any $X \in \mathbb{H}^{n+1}$ with $r= {d}_{\mathbb{H}^{n+1}} \((0,1), X \)$, there exists $\theta \in \mathbb{S}^n$ such that $X = (\sinh r \theta, \cosh r)$, which is exactly the warped product structure of $\mathbb{H}^{n+1}$, see Figure \ref{Figure polar coordinate}. 
\begin{figure}[htbp]
	\centering
	\begin{tikzpicture} 
	\coordinate (X) at (1, {sqrt(2)});
	\shade[top color=white, bottom color= yellow, domain=-1:2, samples=100]
	(-1,3) -- plot(\x, {sqrt(\x*\x+1)}) -- (2,3);
	\draw[->] (-1.5,0) -- (2.5,0) node[right]{$x$};
	\draw[->] (0,-0.2)-- (0,2.5) node[above]{$x_{n+1}$};
	\filldraw[->, thick, red] (0,0) -- node[below]{$\sinh r \theta$} (1,0);
	\draw[domain= -1:2, samples=200, thick]
	plot(\x, {sqrt(\x*\x+1)});
	\draw[domain= 0:1, samples=200, very thick, blue]
	plot(\x, {sqrt(\x*\x+1)});
	\draw[->, dashed] (1,0) --  node[right]{$\cosh r$} (X);
	\node at (-1.4,1.5){$\mathbb{H}^{n+1}$};
	\node at (1.2, 1.214){$X$};
	\filldraw (X) circle(.05);
	\filldraw (0,0) circle(.05);
	\end{tikzpicture}
	\caption{Warped product structure of $\mathbb{H}^{n+1}$}
	\label{Figure polar coordinate}
\end{figure}
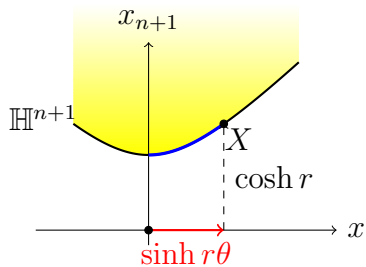

In this paper, we always assume that $M = \partial \Omega$ denotes the boundary of some bounded domain $\Omega \subset \mathbb{H}^{n+1}$. To distinguish the metrics and the Levi-Civita connections of spaces $\lbrace \mathbb{S}^n, M, \mathbb{H}^{n+1}, \mathbb{R}^{n+1,1} \rbrace$, we set the notations of their connections and metrics as $\(\mathbb{S}^n, D, \sigma\)$, $\(M, \nabla, g\)$, $\(\mathbb{H}^{n+1}, \overline{\nabla}, \bar{g}\)$ and $\(\mathbb{R}^{n+1,1}, \widetilde{\nabla}, \metric{\cdot}{\cdot}\)$. The volume elements of $\mathbb{S}^n$, $M$ and $\mathbb{H}^{n+1}$ are denoted by $d\sigma$, $d\mu$ and $dv$ respectively.
For any smooth function $f$ on $\mathbb{S}^n$ and any local orthonormal frame $\lbrace e_1, e_2, \ldots,e_n \rbrace$ on $\mathbb{S}^n$, we use the notations $f_i := D_i f$, $f_{ij} := D_j D_i f$ to denote the covariant derivatives of $f$ with respect to the standard metric of $\mathbb{S}^n$ in the context. For any hypersurface $M \subset \mathbb{H}^{n+1}$, we use $\nu$ to denote the outward unit normal of $M$ and use $h_{ij}$ to denote the second fundamental form of $M \subset \mathbb{H}^{n+1}$. The Weingarten matrix $\mathcal{W} =(h_i{}^j)$ is given by $h_i{}^j = g^{jk}h_{kj}$. At last, we remark that the notations with superscript $\widehat{\cdot}$ are stand for geometric objects and geometric quantities in Euclidean space $\mathbb{R}^{n+1}$, e.g. $\(\widehat{M}, \widehat{\nabla}, \widehat{g}\)$ denotes a hypersurface $\widehat{M}$ with connection $\widehat{\nabla}$ and metric $\widehat{g}$ in $\mathbb{R}^{n+1}$.

\subsection{Horospheres and Horo-balls}\label{subsec-horosphere,ball} $ \ $

Now we study the \emph{horospheres} in $\mathbb{H}^{n+1}$, which are the spheres centered at infinity $\mathbb{H}^{n+1}_{\infty}$ and with the constant principal curvatures equal to $1$ everywhere. Then $\nabla_i \(X-\nu \) = \(\delta_{i}{}^j-h_i{}^j \) \partial_j X=0$  on any horosphere $H$, which implies that $X-\nu$ is a constant vector on $H$. Since $\metric{X}{X} =-1$, $\metric{\nu}{\nu} =1$ and $\metric{X}{\nu} = 0$, we have $\metric{X-\nu}{X-\nu}=0$. Then there exist a constant $\lambda$ and a unit vector $z \in \mathbb{S}^n$ such that  $X -\nu = \lambda (z,1)$ on $H$ and hence
\begin{equation}\label{-1=lbd X-z,1}
	-1 = \metric{X-\nu}{X} = \lambda \metric{X}{(z,1)}. 
\end{equation}
Since $\metric{X}{(z,1)} = -x_{n+1} + \metric{x}{z} \leq -\sqrt{|x|^2+1}+|x|<0$, we have that $\lambda$ is positive. Let $\lambda = e^{-r}$. 
From now on, we denote this horosphere $H$ as $H_z(r)$. Then
\begin{equation}\label{horosphere-X-nu}
	X-\nu =e^{-r}(z,1)
\end{equation}
on $H_z(r)$. Now we explain the geometric interpretation of $r$. The above formula \eqref{-1=lbd X-z,1} implies that  $ H_z(r)$ is the intersection of the hyperplane $\lbrace X \in \mathbb{R}^{n+1,1}: \metric{X}{(z,1)} = -e^r \rbrace$ and the hyperboloid $\mathbb{H}^{n+1}$ in $\mathbb{R}^{n+1,1}$. Suppose that $X \in H_z(r)$ with
\begin{equation}\label{polar coord X}
	X = (\sinh s \theta, \cosh s),
\end{equation}
where $\theta \in \mathbb{S}^n$ and $s= d_{\mathbb{H}^{n+1}} \( (0,1),X \)$. If $r \geq 0$, then 
\begin{equation*}
	-e^r = \metric{X}{(z,1)} =\sinh s \metric{\theta}{z} - \cosh s \geq -e^s.
\end{equation*}
Therefore, $d_{\mathbb{H}^{n+1}} ((0,1), X) = s \geq r$ for any $X \in H_z(r)$, and equality holds if and only if  $X= (-\sinh rz, \cosh r) \in H_z(r)$. If $r \leq 0$, then
\begin{equation*}
	-e^r = \metric{X}{(z,1)} =\sinh s \metric{\theta}{z} - \cosh s \leq -e^{-s}.
\end{equation*}
Therefore $d_{\mathbb{H}^{n+1}} ((0,1), X) = s \geq -r$ for any $X \in H_z(r)$, and equality holds if and only if  $X= (-\sinh rz, \cosh r) \in H_z(r)$. Hence $r$ represents the signed geodesic distance from the `north pole' $N=(0,1)$ of $\mathbb{H}^{n+1}$ to $H_z(r)$. We conclude that the horosphere $H_z(r)$ is given by 
\begin{equation}\label{horosphere-support function}
	H_z(r) = \lbrace X \in \mathbb{H}^{n+1}: \log \(-\metric{X}{(z,1)} \) =r \rbrace,
\end{equation}
where $r$ is the signed geodesic distance from $N=(0,1)$ to $H_z(r)$. We define the \emph{horo-ball} $B_z(r)$ bounded by $H_z(r)$ by $\lbrace X \in \mathbb{H}^{n+1}:  \log (-\metric{X}{(z,1)}) <r \rbrace$, i.e.
\begin{equation}\label{Bz(r)}
	B_z(r) = \lbrace X \in \mathbb{H}^{n+1}: 0 > \metric{X}{(z,1)} > -e^r \rbrace.
\end{equation}
If we use the Poincar\'e ball model $\mathbb{B}^{n+1}$ of $\mathbb{H}^{n+1}$, then $B_z(r)$ corresponds to an $(n+1)$-dimensional ball which tangents to $\partial \mathbb{B}^{n+1}$ at $z$, see Figure \ref{Fig-horosphere in Poncare disc}. Furthermore, if $r>0$, then $B_z(r)$ contains the origin in its interior.
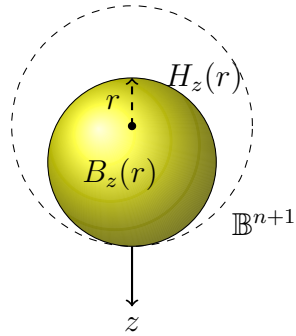
\begin{figure}[htbp]
	\centering
	\begin{tikzpicture}[scale=1.6]
	\coordinate (O) at (0,0);
	\shade[ball color=yellow, fill opacity=0.5] (0,-0.3) circle[radius=0.7];
	\draw (0,-0.3) circle[radius=0.7];
	\draw[dashed] (O) circle[radius=1];
 	\node at (1.1,-0.8){$\mathbb{B}^{n+1}$};
	\draw[->, thick] (0,-1) -- (0,-1.5) node[below]{$z$};
	\draw[->, dashed, thick] (O) -- node[left]{$r$}(0,0.4);
	\node at (-0.1, -0.4){$B_z(r)$};
	\filldraw (0,0) circle(0.03);
	\node at (0.6,0.4){$H_z(r)$};
	\end{tikzpicture}
	\caption{Horosphere $H_z(r)$ and Horo-ball $B_z(r)$}
	\label{Fig-horosphere in Poncare disc}
\end{figure} 
 
 \subsection{Horospherical Gauss map and Horospherical support function} $ \ $
 
Comparing the above arguments in Subsection \ref{subsec-horosphere,ball} with the objects in the classical convex geometry in Euclidean space, we have defined the ``hyperplane" $H_z(r)$ and the ``half space" $B_z(r)$ in the hyperbolic setting. In the next step, we will give the definitions of ``convex bodies" and ``Gauss map" in the hyperbolic setting.

A bounded domain $\Omega \subset \mathbb{H}^{n+1}$ (or its boundary $\partial \Omega$) is called \emph{horospherically convex} (or \emph{h-convex} for short) if for every $X \in \partial \Omega$ there exists a horo-ball $B$ such that $\Omega \subset B$ and $X \in \Omega \cap B$. When $\Omega$ is smooth, the horospherical convexity (h-convexity for short) of $\Omega$ implies that the principal curvatures are greater than or equal to $1$ on $\partial \Omega$. For a smooth bounded domain $\Omega$, we say $\Omega$ (or $\partial \Omega$) is \emph{uniformly h-convex} if the principal curvatures are greater than $1 +\delta$ for some $\delta>0$ everywhere on $\partial \Omega$. 

Now we define the \emph{horospherical Gauss map} $G: M \to \mathbb{S}^n$ for a smooth hypersurface $M = \partial \Omega$ in $\mathbb{H}^{n+1}$, see \cite{Bry87, Eps86}. For each $p \in M$, we know from \eqref{-1=lbd X-z,1} that $X(p) -\nu(p) = \lambda_p (z_p,1)$ for some $\lambda_p>0$ and $z_p\in \mathbb{S}^n$. Then we define the horospherical Gauss map $G$ at $p$ of $M$ by $G(p) = z_p$. From \eqref{horosphere-X-nu}, it is simple to see $G\(H_z(r)\) =z$. 

Note that the horospherical Gauss map defined in this paper is the same as that in \cite{ACW18}. In some other papers (see e.g., \cite{EGM09}), one may define the horospherical Gauss map ``$G(p)=z'_p$" by using ``$X(p) + \nu (p) = \lambda'_p (z'_p,1)$", which means that the horospherical Gauss map is defined locally on $M$, and $\nu$ need not be the ``outward" unit normal. However, since we deal with the domain which are intersections of horo-balls (not intersections of complements of horo-balls), the boundary of the domain is oriented, and hence we can choose $\nu$ to be the outward unit normal.

Let $\Omega$ be a bounded domain in $\mathbb{H}^{n+1}$. The \emph{horospherical support function} $u_\Omega : \mathbb{S}^n \to \mathbb{R}$ of $\Omega$ is defined by (see Figure \ref{Fig-h-convex bdd domain})
\begin{equation*}
	u_\Omega(z) := \inf  \lbrace s \in \mathbb{R}:  \Omega \subset \overline{B}_z(s)  \rbrace, \quad z\in \mathbb{S}^n.
\end{equation*}
By \eqref{Bz(r)}, that is equivalent to
\begin{equation}\label{horo supp funct-def}
	u_\Omega (z) = \sup \lbrace \log \(- \metric{X}{(z,1)} \):   X \in \Omega \rbrace.
\end{equation}
We will write $u(z)$ for $u_\Omega (z)$ if there is no confusion about  the choice of the domain. 

\begin{figure}[htbp]
	\centering
	\begin{tikzpicture}[scale =1.5]
	\draw (0,0) circle(1);
	\filldraw[rounded corners, color=gray, fill opacity =0.5](-0.182,0.359) -- (-0.231,0.087) --(-0.112, -0.13) -- (0.11, -0.234) --(0.374,-0.173) -- (0.537, 0.045) -- (0.553, 0.169) -- (0.524, 0.307) -- (0.487, 0.377) -- (0.414, 0.459) -- (0.353, 0.502) -- (0.233, 0.546) -- (0.127, 0.552) -- (-0.082, 0.472) -- cycle ;
	\draw (0.289, 0.297) circle(0.586);
	\draw (0.087, -0.212) circle(0.771);
	\draw[->, thick, red] (0,0) -- (0.378,-0.926) node[below]{$z$};
	\node at (0.3,0.3){$\Omega$};
	\draw[->, blue, dashed] (0,0) -- (-0.204, 0.502);
	\node[blue] at (-0.18, 0.18){$u(z)$};
	\filldraw (0,0) circle(.01);
	\end{tikzpicture}
	\caption{Horo-convex bounded domain $\Omega$}
	\label{Fig-h-convex bdd domain}
\end{figure}
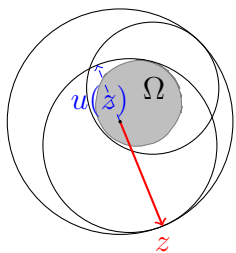

Assume that $\Omega$ is a h-convex bounded domain, and let $M =\partial \Omega$. If $X \in H_z(u(z)) \cap M$ for some $z \in \mathbb{S}^n$, then 
\begin{equation} \label{X-z,1}
	\metric{X}{(z,1)} = -e^{u(z)} = -\g(z), 
\end{equation}
where we defined $\g(z) := e^{u(z)}$. We call $ H_z(u(z))$ the \emph{supporting horosphere} of $\Omega$ at $X$. Furthermore, if $M$ is smooth, then \eqref{horosphere-X-nu} implies
\begin{equation}\label{X-nu}
	X -\nu =e^{-u(z)} (z,1) = \frac{1}{\g(z)} (z,1).  
\end{equation}
When $M = \partial \Omega$ is smooth and uniformly h-convex,  we can choose an orthonormal frame $\lbrace \partial_1, \ldots, \partial_n \rbrace$ at $p \in M$ such that the Weingarten matrix $\mathcal{W}=\( h_i{}^j \)$ is diagonal at $p$, i.e. $h_i{}^j = \kappa_i \delta_{i}{}^j$ and $\kappa_i>1$. Then we have $\partial_i \(X-\nu \)(p) = (1-\kappa_i) \partial_i \neq 0$. Hence $\ker T_p G \subset T_pM \cap \mathbb{R}(z_p,1) = 0$, which means that the horospherical Gauss map $G$ is a locally diffeomorphism. Therefore, we can parameterize $M$ by $\mathbb{S}^n$ locally via the inverse of horospherical Gauss map $G^{-1}$ when $M$ is smooth and uniformly h-convex, see \cite{ACW18}.

In the following lemma, we recover $\partial \Omega$ by using the horospherical support function $u(z)$ of the smooth uniformly h-convex bounded domain $\Omega$ (see \cite{ACW18, EGM09}).

\begin{lem}\label{lem-X,nu,di X}
	Assume that $M = \partial \Omega$ is a smooth uniformly h-convex hypersurface in $\mathbb{H}^{n+1}$ with horospherical support function $u(z)$. Then for each $z \in \mathbb{S}^n$, $M \cap H_z(u(z))$ only contains one point, which we denote it as $X(z)$. We let $\nu(z)$ denote the outward unit normal of $M \subset \mathbb{H}^{n+1}$ at $X(z)$ and choose normal coordinates for $\mathbb{S}^n$ around $z$ such that the coordinate frame $\lbrace e_1,\ldots,e_n \rbrace$ is an orthonormal basis at $z$. Then we have
	\begin{align}
		X(z) =& \frac{1}{2} \g(-z,1) + \frac{1}{2} \( \frac{|D\g|^2}{\g}+ \frac{1}{\g} \)(z,1) -(D \g,0), \label{X(z)}\\
		\nu(z)=& \frac{1}{2} \g(-z,1) + \frac{1}{2} \( \frac{|D\g|^2}{\g}- \frac{1}{\g} \)(z,1) -(D \g,0), \label{nu}\\
		\partial_i X(z) =& - \sum_{j=1}^n A_{ij}[\g] (e_j,0) +  \frac{\sum_{j=1}^n A_{ij} [\g] D_j \g}{\g}(z,1), \label{di X}
	\end{align}
	where we used $z$ to parameterize $M$ locally, $\g(z):= e^{u(z)}$ and  
	\begin{equation}\label{def A-phi}
		A_{ij}[\g] := \g_{ij} - \frac{1}{2} \frac{|D \g|^2}{\g} \sigma_{ij}+\frac{1}{2}\(\g-\frac{1}{\g}\)\sigma_{ij}.
	\end{equation}
\end{lem}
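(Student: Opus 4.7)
The plan is to expand $X(z)$ in a convenient basis of $\mathbb{R}^{n+1,1}$, deduce $\nu(z)$ from \eqref{X-nu}, and compute $\partial_i X$ by direct differentiation. First, for the uniqueness of $X(z)$: since $M$ is smooth and uniformly h-convex, the horospherical Gauss map $G:M \to \mathbb{S}^n$ is a diffeomorphism (recalled immediately before the lemma), so $M \cap H_z(u(z)) = \{G^{-1}(z)\}$ is a single point, which we take to be $X(z)$.

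For \eqref{X(z)}, fix $z\in\mathbb{S}^n$ and note that the vectors $E_+ := (-z,1)$, $E_- := (z,1)$, $(e_1,0),\ldots,(e_n,0)$ form a basis of $\mathbb{R}^{n+1,1}$ whose only nontrivial Lorentzian pairings are $\metric{E_+}{E_-}=-2$ and $\metric{(e_i,0)}{(e_j,0)}=\delta_{ij}$. Writing $X = \alpha E_+ + \beta E_- + \sum_j \gamma_j(e_j,0)$, I read off the three groups of coefficients as follows. (i) Pairing with $E_-$ and using \eqref{X-z,1} gives $-2\alpha = -\g$, hence $\alpha = \g/2$. (ii) Differentiating \eqref{X-z,1} along $e_i$ yields $\metric{\partial_i X}{E_-} + \metric{X}{(e_i,0)} = -\g_i$; but $E_- = \g(X-\nu)$ by \eqref{X-nu}, and $\partial_i X \perp X$ (from $\metric{X}{X}=-1$) and $\partial_i X \perp \nu$ (tangency), so $\metric{\partial_i X}{E_-}=0$ and hence $\gamma_i = -\g_i$. (iii) Imposing $\metric{X}{X}=-1$ gives $-4\alpha\beta + \sum_j \gamma_j^2 = -1$, from which $\beta = (1+|D\g|^2)/(2\g)$. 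Assembling the three pieces produces \eqref{X(z)}, and \eqref{nu} then follows at once from $\nu = X - \g^{-1} E_-$.

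For \eqref{di X}, I differentiate \eqref{X(z)} at the base point $z$ in normal coordinates on $\mathbb{S}^n$, so that $\partial_i z = e_i$, $\partial_i e_j = -\delta_{ij}z$, and $D_jD_i\g = \partial_j\partial_i\g$. Differentiating each of the three summands in \eqref{X(z)} and re-expanding every resulting vector in the basis $\{E_+, E_-, (e_j,0)\}$ (using $(z,0) = \tfrac12(E_- - E_+)$), the $E_+$ contributions cancel against each other. The coefficient of $(e_j,0)$ collects to $-\g_{ij} - \delta_{ij}(\g^2 - 1 - |D\g|^2)/(2\g)$, which is exactly $-A_{ij}[\g]$ by the definition \eqref{def A-phi} (noting $\sigma_{ij}=\delta_{ij}$ at $z$). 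The $E_-$ coefficient collects to $\g^{-1}\sum_k \g_k\bigl[\g_{ki} + \delta_{ki}(\g^2 - 1 - |D\g|^2)/(2\g)\bigr] = \g^{-1}\sum_k A_{ki}[\g]\, D_k\g$, which equals the right-hand factor in \eqref{di X} by symmetry of $A[\g]$.

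The main obstacle is not conceptual but purely bookkeeping: keeping track of the derivative $\partial_i e_j = -\delta_{ij}z$ when it appears inside the $(D\g,0)$ term, and systematically re-expressing every vector in the chosen null basis before reading off coefficients, so that the two correction terms $-\tfrac{|D\g|^2}{2\g}$ and $\tfrac12(\g - 1/\g)$ in the definition of $A_{ij}[\g]$ emerge with the right signs. Once this is done, the three formulas drop out simultaneously, and the proof is complete.
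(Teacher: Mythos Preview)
Your proof is correct and follows essentially the same approach as the paper: both expand $X(z)$ in the null basis $\{(-z,1),(z,1),(e_1,0),\ldots,(e_n,0)\}$, determine the coefficients from \eqref{X-z,1}, its derivative, and $\metric{X}{X}=-1$, then obtain \eqref{nu} from \eqref{X-nu} and \eqref{di X} by direct differentiation using $\partial_i e_j = -\delta_{ij}z$. The only cosmetic difference is that you cite the diffeomorphism property of $G$ for uniqueness, whereas the paper deduces uniqueness a posteriori from the explicit formula.
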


\begin{proof}
	For any $z \in \mathbb{S}^n$, vectors $\lbrace (-z,1), (z,1), (e_1, 0), \ldots, (e_n,0) \rbrace$ form a basis of $\mathbb{R}^{n+1,1}$. Then for any $X \in H_z(u(z)) \cap M$, we can assume that
	\begin{equation*}
		X =\alpha (-z,1) + \beta(z,1) + \sum_{j=1}^n \gamma_j (e_j,0),
	\end{equation*}
	where $\alpha$, $\beta$ and $\gamma_j$ are real numbers. By \eqref{X-z,1}, we have $-2\alpha =- e^u$, which induces $\alpha = \frac{1}{2}e^u$. Differentiating \eqref{X-z,1} with respect to $e_i$, we obtain
	\begin{equation*}
		\metric{\partial_i X}{(z,1)} + \metric{X}{(e_i,0)} = -D_i \g.
	\end{equation*}
	This together with the fact $\metric{\partial_i X}{X-\nu} =0$ and \eqref{X-nu} gives
	\begin{equation*}
		\metric{X}{(e_i,0)} =-D_i\g- \g\metric{\partial_i X}{X-\nu}= -D_i \g.
	\end{equation*}
	Therefore $\gamma_j = -D_j \g$. Now we arrive at
	\begin{equation*}
		X = \frac{1}{2} \g (-z,1) + \beta (z,1) - (D\g,0).
	\end{equation*}
	Because of $X \in \mathbb{H}^{n+1} \subset \mathbb{R}^{n+1,1}$, we have
	\begin{equation*}
		-1 = \metric{X}{X} = |D \g|^2 +\beta \g \metric{(z,1)}{(-z,1)}
		=|D \g|^2 - 2\beta \g,
	\end{equation*}
	which induces $\beta = \frac{1}{2} \frac{|D \g|^2}{\g} + \frac{1}{2\g}$. Then 
	\begin{equation*}
		X= \frac{1}{2} \g(-z,1) +\frac{1}{2} \( \frac{|D \g|^2}{\g}+ \frac{1}{\g} \)(z,1)- (D \g,0).
	\end{equation*}
	It follows that $X(z)$ is uniquely determined by $\g(z)$, and \eqref{X(z)} is proved. Formula \eqref{nu} follows by substituting \eqref{X(z)} into \eqref{X-nu}.
	
	Differentiating \eqref{X(z)} with respect to $e_i$, we have
	\begin{align}
		\partial_i X =& 
		\frac{1}{2}\partial_i \( \g(-z,1) \) + \frac{1}{2} \partial_i \(  \( \frac{|D \g|^2}{\g}+ \frac{1}{\g} \)(z,1)   \) -  \sum_{j=1}^n \partial_i \(\g_j e_j,0 \) \nonumber \\
		=&\frac{1}{2} \g_i (-z,1) - \frac{1}{2} \g(e_i,0)
		+ \frac{1}{2} \partial_i \(\frac{|D \g|^2}{\g} + \frac{1}{\g} \)(z,1) 
		+ \frac{1}{2} \( \frac{|D \g|^2}{\g}+ \frac{1}{\g} \)(e_i,0) \nonumber\\
		&- \sum_{j=1}^n \g_{ij}\(e_j, 0 \)+ \g_i (z,0) \nonumber\\
		=&- \sum_{j=1}^n \( \g_{ij} - \frac{1}{2} \frac{|D \g|^2}{\g} \delta_{ij}+ \frac{1}{2} \(\g -\frac{1}{\g}\)\delta_{ij} \)\( e_j,0\)+ \partial_i \left(\frac{1}{2}\frac{|D \g|^2}{\g} + \frac{1}{2}\(\g + \frac{1}{\g}\) \right)(z,1)  \nonumber\\
		=&- \sum_{j=1}^n A_{ij}[\g] (e_j,0) +  \partial_i \left(\frac{1}{2}\frac{|D \g|^2}{\g} + \frac{1}{2}\(\g + \frac{1}{\g}\) \right)(z,1),\label{di X-1}
	\end{align}
	where we used $\partial_i (e_j,0) = -\delta_{ij} (z,0)$ in the second equality. A direct calculation shows
	\begin{align}
		\partial_i \(\frac{1}{2}\frac{|D \g|^2}{\g} + \frac{1}{2}\(\g + \frac{1}{\g}\)  \)
		=&
		\frac{\sum_{j=1}^n \g_j \g_{ji}}{\g} - \frac{1}{2} \frac{|D \g|^2}{\g^2} \g_i + \frac{1}{2}\g_i
		-\frac{1}{2} \frac{\g_i}{\g^2} \nonumber\\
		=&\sum_{j=1}^n\frac{\g_j}{\g} \( \g_{ij} - \frac{1}{2} \frac{|D \g|^2}{\g} \delta_{ij} + \frac{1}{2} \(\g- \frac{1}{\g}\)\delta_{ij} \)  \nonumber\\
		=&\sum_{j=1}^n \frac{A_{ij} [\g ] \g_j }{\g}.\label{di X-2}
	\end{align}
	Then \eqref{di X} follows by substituting \eqref{di X-2} into \eqref{di X-1}. We complete the proof of Lemma \ref{lem-X,nu,di X}. 
\end{proof}

As we have proved that $G$ is bijective and locally diffeomorphic for the smooth closed and uniformly h-convex hypersurface $M$, we have that  $G : M \to \mathbb{S}^n$ is a diffeomorphism. In the context, we always assume that $M$ is parameterized by the unit sphere $\mathbb{S}^n$, i.e., the image of the horospherical Gauss map. 

In the following lemma, we show the geometric property of $A_{ij}[\g(z)]$ defined in \eqref{def A-phi}.

\begin{lem}\label{lem-shifted curvature-support function}
	Using the same notations and assumptions as in Lemma \ref{lem-X,nu,di X},
	it holds that
	\begin{equation}\label{shifted curvature-support function}
		(h_i{}^j - \delta_i{}^j ) A_{jk} [\g(z)] = \frac{1}{\g(z)} \sigma_{ik},
	\end{equation}
	where $\( h_i{}^j \)$ was evaluated at $X(z) \in M$. The above formula \eqref{shifted curvature-support function} shows that, if a smooth closed hypersurface $M$ is uniformly h-convex, then $A_{ij}[\g(z)] >0$ on $\mathbb{S}^n$.
\end{lem}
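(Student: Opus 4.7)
The plan is to derive the identity by differentiating the basic relation $X - \nu = \g^{-1}(z,1)$ from equation \eqref{X-nu} and comparing coefficients in a convenient basis of $\mathbb{R}^{n+1,1}$.

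First I would differentiate \eqref{X-nu} with respect to the coordinate frame $e_i$ on $\mathbb{S}^n$, using the Weingarten relation $\partial_i \nu = h_i{}^j \partial_j X$ on the left-hand side, to obtain
\begin{equation*}
    (\delta_i{}^j - h_i{}^j)\,\partial_j X \;=\; \partial_i\!\left( \frac{1}{\g}(z,1)\right) \;=\; -\frac{\g_i}{\g^2}(z,1) + \frac{1}{\g}(e_i,0),
\end{equation*}
where I used $\partial_i(z,1) = (e_i,0)$ in normal coordinates around $z$.

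Next I would substitute the formula \eqref{di X} for $\partial_j X$ into the left-hand side. The vectors $\{(z,1), (-z,1), (e_1,0), \ldots, (e_n,0)\}$ form a basis of $\mathbb{R}^{n+1,1}$, and \eqref{di X} has no $(-z,1)$ component, so matching the coefficients of $(e_k, 0)$ in the resulting identity gives
\begin{equation*}
    -(\delta_i{}^j - h_i{}^j)\, A_{jk}[\g] \;=\; \frac{1}{\g}\,\delta_{ik},
\end{equation*}
which is exactly \eqref{shifted curvature-support function} since $\sigma_{ik}=\delta_{ik}$ in the chosen orthonormal frame. The matching of the $(z,1)$ coefficients should be automatically consistent (it reduces to the identity obtained by contracting the $(e_k,0)$ relation with $\g_k/\g$), so no extra information is needed.

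Finally, the geometric consequence $A_{ij}[\g(z)]>0$ on $\mathbb{S}^n$ for uniformly h-convex $M$ is immediate: \eqref{shifted curvature-support function} expresses $A[\g]$ (up to the positive factor $\g^{-1}$) as the inverse of the positive definite shifted Weingarten matrix $h_i{}^j - \delta_i{}^j$, whose eigenvalues $\kappa_i - 1$ are positive by the definition of uniform h-convexity. I don't foresee a substantive obstacle here; the only thing to be careful about is making the frame and covariant-derivative conventions line up so that $\partial_i(e_j,0) = -\delta_{ij}(z,0)$ at the base point, which is the same convention already used in the derivation of \eqref{di X}.
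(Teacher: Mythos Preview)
Your proposal is correct and follows essentially the same route as the paper: differentiate \eqref{X-nu}, use the Weingarten relation on the left, and then extract the $(e_k,0)$-component using \eqref{di X}. The only cosmetic difference is that the paper phrases the last step as taking the inner product with $(e_k,0)$ rather than matching coefficients in the basis, which is the same operation since $(z,1)$ is orthogonal to each $(e_k,0)$.
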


\begin{proof}
	Differentiating \eqref{X-nu} with respect to $e_i$, we have
	\begin{equation*}
		\partial_i X - \partial_i \nu = - \frac{\g_i}{\g^2}(z,1) + \frac{1}{\g} (e_i,0). 
	\end{equation*}
	Taking the inner product of the both sides with $(e_k,0)$ gives
	\begin{equation*}
		(h_i{}^j -\delta_i{}^j) \metric{-\partial_j X}{(e_k,0)} = \frac{1}{\g} \sigma_{ik}.
	\end{equation*} 
	Then \eqref{shifted curvature-support function} follows by substituting \eqref{di X} into the above formula. We complete the proof of Lemma \ref{lem-shifted curvature-support function}.
\end{proof}

\begin{cor}
	Let $M$ be a smooth uniformly h-convex hypersurface in $\mathbb{H}^{n+1}$. Then, via the horospherical Gauss map $G: M \to \mathbb{S}^n$, we have
	\begin{equation} \label{rel-area element}
		d\mu = \det A [\g(z)] d \sigma,
	\end{equation}
	where $d\mu$ denoted the area element of $M$ at $X(z)= G^{-1}(z)$, $d\sigma$ denoted the area element of $\mathbb{S}^n$ at $z$, and $A [\g(z)]$ was defined in \eqref{def A-phi}.
\end{cor}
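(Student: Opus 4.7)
The plan is to pull back the induced metric $g$ on $M$ via the inverse horospherical Gauss map $G^{-1}\colon \mathbb{S}^n \to M$ and show that the determinant of the resulting Gram matrix is $(\det A[\varphi])^2$. The starting point is formula \eqref{di X} of Lemma \ref{lem-X,nu,di X}, which expresses $\partial_i X(z)$ in the basis $\{(e_j,0),(z,1)\}$ of $\mathbb{R}^{n+1,1}$, using normal coordinates on $\mathbb{S}^n$ around $z$ so that $\{e_1,\dots,e_n\}$ is an orthonormal frame at $z$ (hence $\sigma_{ij}=\delta_{ij}$ and $d\sigma$ at $z$ is standard).

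First, I would record the three inner products that control the computation:
\begin{equation*}
\langle (e_k,0),(e_l,0)\rangle = e_k\cdot e_l = \delta_{kl},\qquad \langle (z,1),(z,1)\rangle = |z|^2-1 = 0,\qquad \langle (e_k,0),(z,1)\rangle = e_k\cdot z = 0,
\end{equation*}
the last two using $|z|^2=1$ on $\mathbb{S}^n$ and $e_k\in T_z\mathbb{S}^n\perp z$. Then, substituting \eqref{di X} into $g_{ij}=\langle \partial_i X,\partial_j X\rangle$, the cross terms and the $(z,1)$-$(z,1)$ term all vanish, leaving only the contribution from the $(e_k,0)$ directions:
\begin{equation*}
g_{ij} \;=\; \sum_{k,l} A_{ik}[\varphi]\,A_{jl}[\varphi]\,\delta_{kl} \;=\; \sum_{k} A_{ik}[\varphi]\,A_{jk}[\varphi] \;=\; (A[\varphi]^{2})_{ij},
\end{equation*}
where symmetry of $A[\varphi]$ is used in the last step.

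Taking determinants gives $\det g_{ij} = (\det A[\varphi])^{2}$, so the area element of $M$ pulled back to $\mathbb{S}^n$ equals $\sqrt{\det g_{ij}}\,dz = |\det A[\varphi(z)]|\,d\sigma$. By Lemma \ref{lem-shifted curvature-support function}, uniform h-convexity of $M$ forces $A[\varphi(z)]>0$, so $\det A[\varphi(z)]>0$ and the absolute value can be removed, yielding $d\mu = \det A[\varphi(z)]\,d\sigma$ as claimed.

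There is no substantive obstacle; the only thing to be careful about is bookkeeping with the Lorentzian inner product, where the $(z,1)$ components — despite appearing prominently in \eqref{di X} — are self-null and orthogonal to the $(e_k,0)$ frame, so they drop out cleanly. Alternatively, one could derive the identity by combining $\det A[\varphi] = \varphi^{-n}\det(\mathcal{W}-I)^{-1}$ from Lemma \ref{lem-shifted curvature-support function} with the known shape of the horospherical Gauss map Jacobian, but the direct computation above is shorter and avoids inverting matrices.
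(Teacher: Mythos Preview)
Your proof is correct and follows essentially the same approach as the paper: both compute $g_{ij}=\sum_k A_{ik}[\g]A_{kj}[\g]$ from \eqref{di X} using the nullity of $(z,1)$ and its orthogonality to $(e_k,0)$, then take $\sqrt{\det g_{ij}}=\det A[\g]$ via positivity of $A[\g]$ from Lemma~\ref{lem-shifted curvature-support function}. Your write-up is slightly more explicit about the bookkeeping (the absolute value step, the three inner products), but the argument is identical.
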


\begin{proof}
	Since $\metric{(z,1)}{(z,1)} =0$ and $\metric{(z,1)}{(e_i,0)} =0$ for any $e_i \in T_z \mathbb{S}^n$, formula \eqref{di X} implies
	\begin{equation}\label{gij-A-phi}
		g_{ij} :=\metric{\partial_i X}{\partial_j X}
		= \sum_{k=1}^n A_{ik}[\g]A_{kj} [\g].
	\end{equation}
	Since we have proved that $A_{ij}[\g]$ is positive definite in Lemma \ref{lem-shifted curvature-support function}, the area element of $M$ can be given by
	\begin{equation*}
		d\mu = \sqrt{\det g_{ij}} d\sigma= \sqrt{\det A^2 [\g]} d\sigma = \det A[\g] d\sigma.
	\end{equation*}
	Thus we proved formula \eqref{rel-area element}.
\end{proof}

For the following Corollary \ref{cor-support-construct-domain}, readers may refer to \cite[Section 5.4]{ACW18}.

\begin{cor}\label{cor-support-construct-domain}
	Given a smooth function $u(z)$ defined on $\mathbb{S}^n$, we let $\g(z) = e^{u(z)}$. Then $u(z)$ is the horospherical support function of some smooth uniformly h-convex bounded domain $\Omega \subset \mathbb{H}^{n+1}$ if and only if
	$A_{ij}[\g(z)] >0$ on $\mathbb{S}^n$. Furthermore, domain $\Omega$ is uniquely determined by $u(z)$.
\end{cor}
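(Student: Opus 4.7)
The plan is to treat the two implications separately and to exploit the explicit parameterization \eqref{X(z)} to reconstruct the hypersurface from $u$.

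For the necessity direction, suppose $u$ is the horospherical support function of a smooth uniformly h-convex bounded domain $\Omega$. Then on $M=\partial\Omega$ the Weingarten matrix satisfies $h_i{}^j - \delta_i{}^j \geq \delta \cdot \delta_i{}^j$ for some $\delta>0$ by definition of uniform h-convexity, so $h-I$ is positive definite. Reading the identity \eqref{shifted curvature-support function} as a matrix equation $(h-I)\cdot A[\varphi] = \varphi^{-1}\cdot I$, we conclude $A[\varphi] = \varphi\,(h-I)^{-1}$ (after lowering indices with $\sigma$), hence $A[\varphi]>0$ on $\mathbb{S}^n$.

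For the sufficiency direction, given $\varphi = e^u$ with $A[\varphi]>0$, define $X:\mathbb{S}^n \to \mathbb{R}^{n+1,1}$ by the right-hand side of \eqref{X(z)} and $\nu:\mathbb{S}^n\to\mathbb{R}^{n+1,1}$ by the right-hand side of \eqref{nu}. I would verify in order: (a) a direct computation using the expansion of $X$ in the basis $\{(-z,1),(z,1),(e_j,0)\}$ shows $\langle X,X\rangle = -1$ with positive last coordinate, so $X$ maps into $\mathbb{H}^{n+1}$; (b) formula \eqref{di X} expresses $\partial_i X$ as $-\sum_j A_{ij}[\varphi](e_j,0)$ plus a multiple of $(z,1)$, so the positive-definiteness of $A[\varphi]$ implies the tangent vectors are linearly independent and $X$ is an immersion; (c) one checks $\langle \nu,\nu\rangle=1$, $\langle \nu,X\rangle=0$, and $\langle \nu,\partial_i X\rangle=0$ using \eqref{di X} and \eqref{nu}, so $\nu$ is a unit normal; (d) by construction $X-\nu = \varphi^{-1}(z,1)$, so $z$ is precisely the horospherical Gauss image of $X(z)$; (e) Lemma \ref{lem-shifted curvature-support function} then yields $(h_i{}^j-\delta_i{}^j)A_{jk}[\varphi] = \varphi^{-1}\sigma_{ik}$, and since $A[\varphi]$ is continuous and positive definite on the compact $\mathbb{S}^n$ its eigenvalues are uniformly bounded above, whence $h-I$ has a uniform positive lower bound, i.e.\ all principal curvatures are uniformly greater than $1$.

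Once $X(\mathbb{S}^n)$ is shown to be an immersed closed hypersurface with principal curvatures uniformly above $1$, I would invoke a Hadamard-Stoker type result in $\mathbb{H}^{n+1}$ to conclude that $X$ is in fact an embedding whose image bounds a smooth uniformly h-convex domain $\Omega$. Given the embedding, the identity $\log(-\langle X(z),(z,1)\rangle) = u(z)$ coming from the $(-z,1)$-coefficient of $X(z)$ shows that $H_z(u(z))$ is the (unique) supporting horosphere of $\Omega$ at $X(z)$, so by definition \eqref{horo supp funct-def} the horospherical support function of $\Omega$ equals $u$. Uniqueness is automatic: formula \eqref{X(z)} determines $X(z)$ from $u$ with no freedom, hence $M=X(\mathbb{S}^n)$ and the bounded component $\Omega$ it encloses are uniquely determined.

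The main obstacle I anticipate is step (e), namely upgrading the immersed closed h-convex hypersurface to an embedded one bounding a domain. The uniformity of $h-I>0$ together with the fact that the horospherical Gauss map $G$ is a local diffeomorphism (shown earlier in this section) makes $G:X(\mathbb{S}^n)\to\mathbb{S}^n$ a covering map, and covering $\mathbb{S}^n$ by a connected compact manifold of the same dimension with $n\geq 1$ forces injectivity of $G$, hence of $X$; combined with convex-geometric arguments comparing $X(\mathbb{S}^n)$ to its supporting horospheres, this should produce the desired embedded boundary. The remaining computations in (a)--(d) are routine expansions in the basis $\{(-z,1),(z,1),(e_j,0)\}$ of $\mathbb{R}^{n+1,1}$.
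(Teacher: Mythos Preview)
Your overall strategy matches the paper's: establish necessity from \eqref{shifted curvature-support function}, then for sufficiency define $X$ by \eqref{X(z)}, verify it is an immersion with $\kappa_i>1$ via \eqref{gij-A-phi} and \eqref{shifted curvature-support function}, and finally upgrade to an embedding. Uniqueness via \eqref{X(z)} is exactly what the paper does (it cites Lemma \ref{lem-X,nu,di X}).

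The gap is your embedding argument. Your covering idea does not work as written. By construction $X-\nu=\varphi^{-1}(z,1)$, so the horospherical Gauss map \emph{pulled back to the source} is $G\circ X=\mathrm{id}_{\mathbb{S}^n}$; this is trivially bijective and tells you nothing about injectivity of $X$. If instead you try to run the covering argument on the \emph{image} $X(\mathbb{S}^n)$, you are assuming that image is already a manifold, which is what you are trying to prove. Even if one could make sense of $G$ on the image, for $n=1$ the sphere $\mathbb{S}^1$ admits self-coverings of every degree, so the simple-connectedness step fails outright.

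The paper handles the embedding in two separate cases. For $n\geq 2$ it observes that $\kappa_i>1$ forces positive sectional curvature of the immersed hypersurface (via the Gauss equation), and then invokes the theorem of do Carmo--Warner \cite{dW70} --- this is precisely the ``Hadamard--Stoker type'' result you allude to, so that part of your plan is on target once you name the correct reference. For $n=1$ the paper projects the curve vertically to $\mathbb{R}^2$, computes its Euclidean curvature explicitly in terms of $\varphi$, shows the total absolute curvature equals $2\pi$, and concludes embeddedness from Fenchel's theorem. Your proposal does not address the $n=1$ case at all, and the covering argument cannot substitute for it.
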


\begin{proof}
	The uniqueness of $\Omega$ follows from Lemma \ref{lem-X,nu,di X}. Hence we only need to prove the existence of such a domain $\Omega$ with horospherical support function $u(z)$. We let $X(z)$ be defined as in \eqref{X(z)}. From \eqref{gij-A-phi}, the tensor $g_{ij} = \metric{\partial_i X}{\partial_j X}$ is positive definite provided that $A_{ij}[\g]$ is non-singular. Hence the map $X(z)$ is an immersion from $\mathbb{S}^n$ to $\mathbb{H}^{n+1}$ with normal vector field $\nu(z)$. Note that \eqref{shifted curvature-support function} implies that $\mathcal{W} - I= \(h_i{}^j - \delta_{i}{}^j\)$ is positive definite at $X(z)$ if and only if $A_{ij}[\g]$ is positive definite at $z$. Thus, it suffices to show that $X(z)$ is an embedding provided that $A_{ij}[\g(z)]>0$.
	
	When $n \geq 2$, the Gauss equation implies that the sectional curvatures of the range of $X(z)$ are positive provided that $\mathcal{W} - I>0$. Then we know that $X(z)$ is an embedding by a theorem of do Carmo and Warner \cite[Section 5]{dW70}.
	
	When $n=1$, we identify $z \in \mathbb{S}^1$ with $t \in [0,2\pi)$ by $z = (\cos t, \sin t)$. Let $\gamma$ be the closed curve generated by $X(t)$, and let $\widehat{\gamma}$ be the planar curve generated by $x(t)$, where $x(t) = (x_0(t), x_1(t))$ is the spacial component of $X(t)$. Define  $\widehat{\nu}(t)$ by 
	\begin{equation*}
		\widehat{\nu}(t) = \frac{-\cos t\(1, \frac{\g'}{\g}\) + \sin t \( \frac{\g'}{\g},-1 \)}{\(1+ \(\frac{\g'}{\g}\)^2\)^\frac{1}{2} }.
	\end{equation*}   
	Formula \eqref{di X} shows that
	\begin{align*}
		\frac{d}{dt} x(t) =& -A[\g] \frac{d}{dt} z+ A[\g]\frac{\g'}{\g}z\\
		=&-A[\g] (-\sin t, \cos t) +A[\g] \frac{\g'}{\g} (\cos t, \sin t)\\
		=&A[\g] \cos t \( \frac{\g'}{\g},-1\) + A[\g] \sin t \(1, \frac{\g'}{\g}\).
	\end{align*}
	Thus $\left| \frac{d}{dt} x(t)\right| = A[\g] \( 1+ (\frac{\g'}{\g})^2\)^{\frac{1}{2}}$, and $\widehat{\nu}(t)$ is the outward unit normal of $\widehat{\gamma}$ at $x(t)$. Then the curvature of $\widehat{\gamma}$ at $x(t)$ is given by
	\begin{equation}\label{kappa-hat-t}
		\widehat{\kappa}(t) = -\metric{\frac{d^2}{dt^2}x(t)}{\widehat{\nu}(t)}\left|\frac{d}{dt} x(t) \right|^{-2} 
		= \frac{A[\g]\(\(\frac{\g'}{\g}\)'+ \(\frac{\g'}{\g}\)^2+1 \)}{A[\g]^2 \(  1+ (\frac{\g'}{\g})^2 \)^{\frac{3}{2}}}
		= \frac{\g''+\g}{\g A[\g]\(1 + \(\frac{\g'}{\g}\)^2\)^\frac{3}{2}}. 
	\end{equation}
	By \eqref{def A-phi}, the assumption $A[\g]>0$ implies
	\begin{equation*}
		\g'' +\g > \frac{1}{2} \frac{\( \g'\)^2}{\g}+ \frac{1}{2} \(\g+\g^{-1}\) >0.
	\end{equation*} 
	Hence $\widehat{\kappa}(t)>0$ for all $t \in [0,2\pi)$. Then by using \eqref{kappa-hat-t}, we have 
	\begin{align*}
		\int_{\gamma} |\widehat{\kappa}| d\widehat{\mu}
		=& \int_0^{2\pi} \widehat{\kappa} \left| \frac{d}{dt} x(t) \right| dt\\
		=&  \int_0^{2\pi}\frac{\g''+\g}{ \g \(1+  \(\frac{\g'}{\g} \)^2\) } dt\\
		=& \int_0^{2\pi} \frac{1+ \( \frac{\g'}{\g} \)^2+ \( \frac{\g'}{\g} \)'}{ 1+  \(\frac{\g'}{\g} \)^2  } dt\\
		=&  2\pi + \left. \arctan \( \frac{\g'(t)}{\g(t)} \)\right\vert_{t=0}^{t=2\pi} = 2\pi. 
	\end{align*} 
	Then Fenchel's theorem \cite{Fen29} implies that $\widehat{\gamma}$ must be an embedded, closed convex curve in $\mathbb{R}^2$, in particular, $\widehat{\gamma}$ is a simple closed curve. Since the hyperboloid model of $\mathbb{H}^2$ can be viewed as a graph defined on $\mathbb{R}^2 \times \{0\}$ in $\mathbb{R}^{2,1}$, it is easy to see that the vertical projection gives a diffeomorphism from $\mathbb{H}^2$ to $\mathbb{R}^2$ and maps $\gamma$ to $\widehat{\gamma}$. Consequently, the curve $\gamma$ is an embedding from $\mathbb{S}^1$ to $\mathbb{H}^2$.
	 
	Therefore, we conclude that $X(z)$ is an embedding from $\mathbb{S}^n$ to the boundary of a uniformly h-convex bounded domain $\Omega$ if and only if $A_{ij}[\g] >0$ on $\mathbb{S}^n$. This completes the proof of Corollary \ref{cor-support-construct-domain}.
\end{proof}

\subsection{Horospherical Wulff shape} $\ $

In Euclidean space $\mathbb{R}^{n+1}$, the Wulff shape is defined by the intersection of a family of half spaces, which are determined by a given positive continuous function on $\mathbb{S}^n$. In hyperbolic space $\mathbb{H}^{n+1}$, by replacing the half spaces by horo-balls in the above argument, we can define the \emph{horospherical Wulff shape} of a continuous function $u(z)$ defined on $\mathbb{S}^n$. Remark that we do not require $u(z)$ to be positive in the following definition. Recall the definition of $B_z(r)$ in \eqref{Bz(r)}.
\begin{defn}\label{def-horo-Wulff shape}
	Given any continuous function $u(z)$ defined on $\mathbb{S}^n$, the horospherical Aleksandrov body (or horospherical Wulff shape) of $u(z)$ is defined by
	\begin{equation*}
		\Omega:= \bigcap_{z \in \mathbb{S}^n} \overline{B}_z(u(z)) = \bigcap_{z \in \mathbb{S}^n} \{ X \in \mathbb{H}^{n+1}:  \log \( -\metric{X}{z,1}\) \leq u(z) \}.
	\end{equation*}
\end{defn}

Now we use Definition \ref{def-horo-Wulff shape} to characterize points in $\mathbb{H}^{n+1}$.
\begin{lem}\label{lem-horospp of point}
	Given a smooth function $u(z)$ defined on $\mathbb{S}^n$, we let $\g(z) = e^{u(z)}$. Then $u(z)$ is the horospherical support function of a point $X$ in  $\mathbb{H}^{n+1}$ if and only if $A_{ij}[\g(z)] =0$. Furthermore,  $X$ is the horospherical Wulff shape of $u(z)$.
\end{lem}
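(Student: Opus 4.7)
The plan is to handle the two implications of the ``iff'' separately, and then identify the horospherical Wulff shape afterwards.

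For the forward direction, I would suppose $u$ is the horospherical support function of a point $X_0 = (x_0, t_0) \in \mathbb{H}^{n+1}$. By \eqref{horo supp funct-def} this forces $\g(z) = -\langle X_0,(z,1)\rangle = t_0 - \langle x_0, z\rangle$. Using the classical spherical identity $D_j D_i \langle x_0, z\rangle = -\langle x_0, z\rangle \sigma_{ij}$ together with $|x_0|^2 = t_0^2 - 1$, one obtains $D_j D_i \g = (t_0 - \g)\sigma_{ij}$ and $|D\g|^2 = 2 t_0 \g - \g^2 - 1$. Substituting these into the definition \eqref{def A-phi} the three terms cancel algebraically to give $A_{ij}[\g] \equiv 0$.

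For the converse, the key observation is that formula \eqref{di X} in Lemma \ref{lem-X,nu,di X} was obtained purely by differentiating the right-hand side of \eqref{X(z)}; this derivation uses no h-convexity hypothesis and therefore remains valid for any smooth positive function $\g$ on $\mathbb{S}^n$. So, given $\g > 0$ with $A_{ij}[\g]=0$, I would \emph{define} $X(z)$ via the right-hand side of \eqref{X(z)} and read off from \eqref{di X} that $\partial_i X(z) \equiv 0$; hence $X(z)$ reduces to a constant vector $X_0 \in \mathbb{R}^{n+1,1}$. The membership $X_0 \in \mathbb{H}^{n+1}$ is automatic, since the coefficient $\beta = (|D\g|^2+1)/(2\g)$ in \eqref{X(z)} was chosen precisely so that $\langle X(z),X(z)\rangle \equiv -1$, while $x_{n+1} > 0$ follows from $\g > 0$. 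Finally, pairing \eqref{X(z)} against $(z,1)$ and using that $D\g$ is tangent to $\mathbb{S}^n$ at $z$ (so $\langle (D\g,0),(z,1)\rangle = 0$) yields $-\langle X_0,(z,1)\rangle = \g(z)$, i.e.\ $u$ is the horospherical support function of the singleton $\{X_0\}$.

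To finish, I still need to show $\{X_0\}$ coincides with the Wulff shape $\Omega = \bigcap_{z \in \mathbb{S}^n} \overline{B}_z(u(z))$. One inclusion is immediate: the identity $\g(z) = -\langle X_0,(z,1)\rangle$ shows $X_0$ lies on every horosphere $H_z(u(z))$, hence $X_0 \in \Omega$. For the reverse inclusion I would take any $Y \in \Omega$, set $V := Y - X_0$, and note that Definition \ref{def-horo-Wulff shape} gives $\langle V,(z,1)\rangle \geq 0$ for every $z \in \mathbb{S}^n$. Writing $V = (v,v_{n+1})$ and minimizing over $z \in \mathbb{S}^n$ forces $v_{n+1} \leq -|v|$, so $V$ is past-directed causal or zero. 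Subtracting $\langle X_0,X_0\rangle = -1$ from $\langle Y,Y\rangle = -1$ yields the identity $2\langle V,X_0\rangle + \langle V,V\rangle = 0$; combining this with the standard Lorentzian positivity fact that $\langle V,X_0\rangle > 0$ whenever $V$ is a nonzero past-directed causal vector and $X_0$ is future-directed timelike, one concludes $V=0$, hence $Y = X_0$. The main technical point I expect to be this last step: the Lorentzian case analysis separating past-timelike and past-null $V$ is elementary but needs careful sign tracking, whereas the forward direction is mechanical and the converse drops out cleanly once one recognizes that \eqref{di X} is a pointwise identity in $\g$ rather than a geometric consequence of h-convexity.
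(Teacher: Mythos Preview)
Your two implications are handled exactly as in the paper: the same direct substitution for the forward direction, and the same use of \eqref{X(z)} together with \eqref{di X} for the converse. The only divergence is in the uniqueness step for the Wulff shape. The paper takes the concrete route: given $X' \neq X$ it sets $z' = (x-x')/|x-x'|$ and verifies by an elementary chain of inequalities that $\langle -X',(z',1)\rangle > \g(z')$, so $X' \notin \Omega$. You instead argue abstractly via Lorentzian signs.

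Your Lorentzian route is cleaner in spirit but, as written, has a small gap in the timelike sub-case: from $2\langle V,X_0\rangle + \langle V,V\rangle = 0$ together with $\langle V,X_0\rangle > 0$ you only obtain $\langle V,V\rangle < 0$, which is no contradiction. The fix is immediate once you notice that the identity is $\langle V,\, X_0 + Y\rangle = 0$, and $X_0 + Y$ is again future-directed timelike (sum of two such vectors); applying your positivity fact with $X_0+Y$ in place of $X_0$ forces $V = 0$. Equivalently, use $\langle X_0,Y\rangle \le -1$ (reverse Cauchy--Schwarz on $\mathbb{H}^{n+1}$) to get $\langle X_0,V\rangle \le 0$, contradicting strict positivity. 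Either patch completes your argument; the paper's explicit choice of $z'$ simply sidesteps this case split.
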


\begin{proof}
	Assume that $u(z)$ is a smooth function that satisfies $A_{ij} [\g(z)] =0$ for all $z \in \mathbb{S}^n$. Let us prove that $\cap_{z \in \mathbb{S}^n} \overline{B}_z(u(z))$ is a point in $\mathbb{H}^{n+1}$.
	
	Let $X(z)$ be a vector-valued function on $\mathbb{S}^n$ defined by expression \eqref{X(z)}. Since $A_{ij}[\g] = 0$, formula \eqref{di X} shows that $X(z)$ is constant, and we denote it by $X= (x,x_{n+1})$. It is easy to check that 
	\begin{equation*}
		\metric{-X}{(z,1)} = \metric{-X(z)}{(z,1)} = \g(z).
	\end{equation*}
	Thus $u(z)$ is the horospherical support function of $X$, and then $X \in \cap_{z \in \mathbb{S}^n} \overline{B}_z(u(z))$.
	
	If there exists a point $X'= (x',x_{n+1}')$ in $\cap_{z \in \mathbb{S}^n} \overline{B}_z(u(z))$ with $X' \neq X$, then we let $z' = \frac{x-x'}{|x-x'|}$. Hence
	\begin{align*}
		\metric{-X'}{(z',1)}- \metric{-X}{(z',1)}
		=& \metric{X-X'}{(z',1)}\\
		=& \metric{x-x'}{z'}- (x_{n+1}-x_{n+1}')\\
		\geq&  \metric{x-x'}{z'} -|x_{n+1} - x_{n+1}'|\\
		=& |x-x'| - \left| \( |x|^2+1 \)^{\frac{1}{2}} - \(|x'|^2+1\)^{\frac{1}{2}} \right|\\
		\geq& \left||x|- |x'|\right|\(1- \frac{|x|+|x'|}{ \( |x|^2+1 \)^{\frac{1}{2}} + \(|x'|^2+1\)^{\frac{1}{2}}   }\)>0.
	\end{align*}
	Consequently, 
	\begin{equation*}
		\metric{-X'}{(z',1)} > \metric{-X}{(z',1)} = \g(z'),
	\end{equation*}
	which contradicts the assumption $X' \in \cap_{z \in \mathbb{S}^n} \overline{B}_z(u(z))$. Thus $X =  \cap_{z \in \mathbb{S}^n} \overline{B}_z(u(z))$.
	
	On the other hand, for any point $X= (x, x_{n+1}) \in \mathbb{H}^{n+1}$, let us prove that  $A_{ij}[\g_X] = 0$. 
	
	Using \eqref{X-z,1}, we have
	\begin{equation*}
		\g_X(z) = \metric{-X}{(z,1)} = -\metric{x}{z} + x_{n+1}.
	\end{equation*}
	Then
	\begin{equation}\label{phi-i,phi-ij-point}
		D_i \g_X = -\metric{x}{e_i}, \quad D_j D_i \g_X = -D_j \metric{x}{e_i} =\metric{x}{z} \delta_{ij}.
	\end{equation}
	Therefore
	\begin{align*}
		A_{ij} [\g_X(z)] =& D_j D_i \g_X- \frac{1}{2} \frac{|D \g_X|^2}{\g_X}\delta_{ij} + \frac{1}{2} \( \g_X - \frac{1}{\g_X} \) \delta_{ij}\\
		=&\metric{x}{z} \delta_{ij} - \frac{1}{2} \frac{\sum_{i=1}^n \metric{x}{e_i}^2}{-\metric{x}{z} +x_{n+1}} \delta_{ij}+ \frac{1}{2} \( \(-\metric{x}{z} +x_{n+1}\) - \frac{1}{-\metric{x}{z} +x_{n+1}} \) \delta_{ij}\\
		=& \metric{x}{z} \delta_{ij} - \frac{1}{2} \frac{|x|^2 -\metric{x}{z}^2 +1}{-\metric{x}{z} +x_{n+1}} \delta_{ij} + \frac{1}{2} \( -\metric{x}{z} +x_{n+1} \)\delta_{ij}\\
		=& \metric{x}{z} \delta_{ij} -\frac{1}{2} \( \metric{x}{z} + x_{n+1}\) \delta_{ij} +  \frac{1}{2} \( -\metric{x}{z} +x_{n+1} \)\delta_{ij}
		=0,
	\end{align*}
	where we used $\sum_{i=1}^n \metric{x}{e_i}^2 = |x|^2 - \metric{x}{z}^2$ in the third equality and used $|x|^2 +1 = x_{n+1}^2$ in the last equality. Hence we obtain the desired equality $A_{ij}[\g_X(z)] = 0$ for any point $X \in \mathbb{H}^{n+1}$.
	
	We complete the proof of Lemma \ref{lem-horospp of point}.
\end{proof}

Without the smooth assumption of the domain $\Omega$ in Lemma \ref{lem-X,nu,di X}, we will show in the following Lemma \ref{lem-bdy of h-convex domain-by X(z)} that the expression \eqref{X(z)} is also valid to describe $\partial \Omega$.

\begin{lem}\label{lem-bdy of h-convex domain-by X(z)}
	Let $\Omega$ be a h-convex bounded domain with smooth horospherical support function $u(z)$. 
	Then the expression \eqref{X(z)} gives a surjective map from $\mathbb{S}^n $ to $\partial \Omega$. 
\end{lem}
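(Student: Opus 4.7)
The plan is to prove the lemma via an envelope-theorem argument that bypasses the uniform h-convexity hypothesis used in Lemma \ref{lem-X,nu,di X}, exploiting only the smoothness of $u(z)$ together with the h-convexity (hence compactness) of $\Omega$. I will first show that for each fixed $z_0 \in \mathbb{S}^n$ the formula \eqref{X(z)} picks out the unique maximizer on $\Omega$ of the function $F_{z_0}(X) := \log(-\metric{X}{(z_0,1)})$, and then deduce surjectivity from the definition of h-convexity.

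First I would fix $z_0 \in \mathbb{S}^n$. By \eqref{horo supp funct-def} and compactness of $\Omega$, the continuous function $F_{z_0}$ attains its maximum value $u(z_0)$ at some point $X^* \in \Omega$. For any $z \in \mathbb{S}^n$ one has $u(z) \geq \log(-\metric{X^*}{(z,1)})$ with equality at $z = z_0$, so the function
\begin{equation*}
g(z) := u(z) - \log(-\metric{X^*}{(z,1)})
\end{equation*}
is non-negative, smooth near $z_0$, and vanishes at $z_0$. Computing $D_i g(z_0) = 0$ in the normal coordinates of Lemma \ref{lem-X,nu,di X} yields $D_i u(z_0) = -\metric{X^*}{(e_i,0)}/\g(z_0)$, hence $\metric{X^*}{(e_i,0)} = -D_i \g(z_0)$. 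Combining this with $\metric{X^*}{(z_0,1)} = -\g(z_0)$ and $\metric{X^*}{X^*} = -1$, the very same basis expansion in $\{(-z_0,1),\, (z_0,1),\, (e_1,0),\ldots,(e_n,0)\}$ that was carried out in the proof of Lemma \ref{lem-X,nu,di X} then forces $X^* = X(z_0)$, with $X(z_0)$ given by \eqref{X(z)}. In particular the maximizer is unique; this shows the map $z \mapsto X(z)$ from $\mathbb{S}^n$ is well-defined and lands in $\partial \Omega$, because $X(z) \in H_z(u(z))$ lies on the supporting horosphere of $\Omega$.

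For surjectivity I would take an arbitrary $X_0 \in \partial \Omega$ and invoke the definition of h-convexity to produce $z \in \mathbb{S}^n$ and $r \in \mathbb{R}$ with $\Omega \subset \overline{B}_z(r)$ and $X_0 \in H_z(r)$. The inclusion forces $r \geq u(z)$ by the definition of $u$, while $F_z(X_0) = r$ together with $F_z \leq u(z)$ on $\Omega$ forces $r \leq u(z)$; hence $r = u(z)$ and $X_0$ is a maximizer of $F_z$ on $\Omega$. The uniqueness established above then gives $X_0 = X(z)$. The main obstacle is justifying the envelope identity without the smoothness of $\partial \Omega$ itself: this works cleanly because one differentiates only $u(z)$ and the smooth map $z \mapsto \log(-\metric{X^*}{(z,1)})$ for a \emph{fixed} maximizer $X^*$, so no regularity of the (a priori set-valued) selection $z \mapsto X^*(z)$ is required. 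Once uniqueness of the maximizer is in hand, surjectivity is an essentially immediate consequence of h-convexity.
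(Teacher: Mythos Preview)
Your proposal is correct and follows essentially the same approach as the paper's proof: both arguments fix a point on the supporting horosphere, use the first-order optimality condition coming from the smoothness of $u$ (your $D g(z_0)=0$, the paper's observation that the projection of $-Y$ onto $T_{z_Y}\mathbb{S}^n$ equals $D\g(z_Y)$) to pin down the tangential components, and then combine this with $\metric{X}{(z,1)}=-\g(z)$ and $\metric{X}{X}=-1$ in the basis $\{(-z,1),(z,1),(e_i,0)\}$ to recover formula \eqref{X(z)}. Your framing via the maximizer of $F_{z_0}$ and the explicit uniqueness statement is a clean repackaging, but the computational content and the use of h-convexity for surjectivity are the same.
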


\begin{proof}
	By the h-convexity of $\Omega$, for any $Y \in \partial \Omega$, there exists $z_Y \in \mathbb{S}^n$ such that $Y \in \partial \Omega \cap H_{z_Y} \( u(z_Y)\)$. By use of the expression \eqref{X(z)}, let us prove $Y=X(z_Y)$.
		 
	By using \eqref{horosphere-support function} and \eqref{Bz(r)}, we have $\metric{-Y}{(z_Y,1 )} = \g(z_Y)$, and $\metric{-Y}{(z,1)} \leq \g(z)$ for all $z \in \mathbb{S}^n$. Hence 
	\begin{equation*}
		\g (z) \geq \g(z_Y) + \metric{-Y}{(z-z_Y, 0)}, \quad \forall \ z\in \mathbb{S}^n.
	\end{equation*}
	This together with the smoothness of $\g(z)$ deduces that the projection of $-Y$ on $T_{z_Y} \mathbb{S}^n$ is exactly $D \g (z_Y)$. Hence we can assume 
	\begin{equation}\label{express of Y-new}
		Y =(y, y_{n+1}) =  \(-D \g(z_Y) +\alpha z_Y, \beta\)
	\end{equation}
	for some constants $\alpha \in \mathbb{R}$ and $\beta \geq 1$. Since $Y \in  H_{z_Y} \( u(z_Y)\)$, we have from \eqref{X-z,1} that
	\begin{equation*}
		\g(z_Y) = -\metric{Y}{(z_Y,1)} = \beta- \alpha.
	\end{equation*}
	Besides, the fact $Y \in \mathbb{H}^{n+1}$ implies
	\begin{equation*}
		-1 = \metric{Y}{Y}=|D \g(z_Y)|^2+\alpha^2-\beta^2.
	\end{equation*}
	Solving the above two equations gives
	\begin{equation*}
		\alpha= \frac{1}{2} \frac{|D \g(z_Y)|^2}{\g(z_Y)}- \frac{1}{2} \(\g(z_Y)- \frac{1}{\g(z_Y)}\), \quad
		\beta = \frac{1}{2} \frac{|D \g(z_Y)|^2}{\g(z_Y)}+ \frac{1}{2} \(\g(z_Y)+ \frac{1}{\g(z_Y)}\).
	\end{equation*}
	Substituting the above $\alpha, \beta$ into \eqref{express of Y-new} and using expression \eqref{X(z)}, we have $Y =X(z_Y)$. Consequently, we obtain $\partial \Omega \subset X(\mathbb{S}^n)$.
	
	Now, it suffices to show $X(\mathbb{S}^n) \subset \partial \Omega$. For any $z_0 \in \mathbb{S}^n$, there exists $Y_0 \in \partial \Omega \cap H_{z_0}(u(z_0))$. The above argument also shows $Y_0 = X(z_0)$. Thus, we have that the image of expression \eqref{X(z)} lies on $\partial \Omega$.
	
	Putting the above facts together, we complete the proof of Lemma \ref{lem-bdy of h-convex domain-by X(z)}.
\end{proof}

 We are now in a position to show that the domain $\Omega$ in Corollary \ref{cor-support-construct-domain} is exactly the horospherical Wulff shape of $u(z)$. 

\begin{cor}\label{cor-Aij>0 Omega=Wulff shape}
	Let $\Omega$ be a h-convex bounded domain with smooth horospherical support function $u(z)$.
	Assume that $A_{ij} [\g(z)]>0$ for all $z \in \mathbb{S}^n$, where $\g(z) =e^{u(z)}$. Then $\Omega = \cap_{z \in \mathbb{S}^n} \overline{B}_z(u(z))$, and  expression \eqref{X(z)} gives a diffeomorphism from $\mathbb{S}^n$ to $\partial \Omega$.
\end{cor}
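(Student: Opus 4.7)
The proof assembles two previously established ingredients. The plan is first to deduce the diffeomorphism claim, then to prove the identity $\Omega = W$, where $W := \bigcap_{z \in \mathbb{S}^n} \overline{B}_z(u(z))$ is the horospherical Wulff shape of $u$, by a connectedness argument.

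For the diffeomorphism $X\colon \mathbb{S}^n \to \partial \Omega$ given by \eqref{X(z)}, I would combine two facts already in hand. Lemma \ref{lem-bdy of h-convex domain-by X(z)} asserts that $X$ is surjective onto $\partial\Omega$, while the argument in the proof of Corollary \ref{cor-support-construct-domain}, applicable here verbatim since we are assuming precisely its hypothesis $A_{ij}[\g(z)]>0$, shows that $X$ is an injective immersion from $\mathbb{S}^n$ into $\mathbb{H}^{n+1}$ (the immersion property comes from \eqref{gij-A-phi}, while global injectivity uses the theorem of do~Carmo and Warner when $n\geq 2$ and Fenchel's theorem when $n=1$). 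A continuous bijection from compact $\mathbb{S}^n$ onto Hausdorff $\partial\Omega$ is automatically a homeomorphism, and, being an immersion between equidimensional manifolds, it is then a diffeomorphism.

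For $\Omega = W$, the inclusion $\Omega \subset W$ is immediate from the definition \eqref{horo supp funct-def}. For the reverse, note that $W$ is closed and geodesically convex, being an intersection of closed geodesically convex horo-balls; thus its interior $\mrm{int}(W)$ is connected and contains the non-empty open set $\mrm{int}(\Omega)$. By the first part, $\partial\Omega = X(\mathbb{S}^n)$, and the key point is that each $X(z) \in H_z(u(z)) = \partial \overline{B}_z(u(z))$ actually lies on $\partial W$: points arbitrarily close to $X(z)$ on the far side of $H_z(u(z))$ fall outside $\overline{B}_z(u(z))$, and hence outside $W$. Consequently $\partial\Omega \subset \partial W$, so $\mrm{int}(W) \cap \partial\Omega = \varnothing$, and the connected open set $\mrm{int}(W)$ decomposes as
\begin{equation*}
\mrm{int}(W) \;=\; \mrm{int}(\Omega) \;\sqcup\; \bigl(\mrm{int}(W) \setminus \Omega\bigr),
\end{equation*}
a disjoint union of open subsets. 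Since $\mrm{int}(\Omega) \neq \varnothing$, connectedness forces the second piece to be empty, yielding $\mrm{int}(W) \subset \Omega$. Taking closures and using that any closed geodesically convex set with non-empty interior equals the closure of its interior, we conclude $W \subset \overline{\Omega} = \Omega$.

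The one delicate technical point I anticipate is the claim $X(z) \in \partial W$, rather than merely $X(z) \in W$, which underpins the entire connectedness decomposition; its proof is a short local argument using that $X(z)$ lies on the horosphere $H_z(u(z))$ cutting out $W$, but without this observation one could not separate $\mrm{int}(\Omega)$ from $\mrm{int}(W) \setminus \Omega$ inside $\mrm{int}(W)$, and the whole argument would collapse.
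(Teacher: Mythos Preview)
Your proof is correct but organized differently from the paper's. The paper introduces the auxiliary domain $\widetilde{\Omega}$ furnished by Corollary~\ref{cor-support-construct-domain} (the unique smooth uniformly h-convex domain with support function $u$), observes that $\widetilde{\Omega} \subset W$ forces $u_W \ge u$ while $W \subset \overline{B}_z(u(z))$ forces $u_W \le u$, so that $\Omega$, $W$, and $\widetilde{\Omega}$ all share the \emph{same} horospherical support function $u$; then a single appeal to Lemma~\ref{lem-bdy of h-convex domain-by X(z)} gives them all the same boundary $X(\mathbb{S}^n)$, whence they coincide. You instead bypass the auxiliary $\widetilde{\Omega}$ and the step $u_W = u$, arguing directly via $\partial\Omega \subset \partial W$ and a connectedness decomposition of $\mrm{int}(W)$. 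Your route is a bit more hands-on topologically (and isolates nicely the one nontrivial point, $X(z) \in \partial W$), while the paper's route is slicker in that it reduces everything to the single principle ``same support function $\Rightarrow$ same boundary'' from Lemma~\ref{lem-bdy of h-convex domain-by X(z)}; both ultimately rest on the same Jordan--Brouwer-type fact that the embedded sphere $X(\mathbb{S}^n)$ bounds a unique compact region.
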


\begin{proof}
	By Corollary \ref{cor-support-construct-domain}, we can let $\widetilde{ \Omega}$ denote the unique smooth uniformly h-convex bounded domain with horospherical support function $u(z)$. By \eqref{horo supp funct-def}, we have  $\widetilde{ \Omega} \subset \cap_{z \in \mathbb{S}^n} \overline{B}_z(u(z))$. Consequently, the horospherical support function of $\cap_{z \in \mathbb{S}^n} \overline{B}_z(u(z))$ is exactly $u(z)$. So far, the domains $\Omega$, $\cap_{z \in \mathbb{S}^n} \overline{B}_z(u(z))$ and $\widetilde{ \Omega}$ are equipped with the same smooth horospherical support function $u(z)$. Then we can apply Lemma \ref{lem-bdy of h-convex domain-by X(z)} to get 
	\begin{equation*}
		\Omega = \cap_{z \in \mathbb{S}^n} \overline{B}_z(u(z))=\widetilde{ \Omega}.
	\end{equation*}
	Therefore, Corollary \ref{cor-Aij>0 Omega=Wulff shape} follows from Corollary \ref{cor-support-construct-domain}.
\end{proof}

In the following Proposition \ref{prop-Aij >=0}, we will extend the assumption  $A_{ij} [\g(z)]>0$ in Corollary \ref{cor-Aij>0 Omega=Wulff shape} to  $A_{ij} [\g(z)] \geq 0$, and we will prove the $1$-$1$ correspondence between smooth function $u(z)$ satisfying  $A_{ij} [\g(z)] \geq 0$ and the h-convex domain with smooth horospherical support function $u(z)$.

\begin{prop}\label{prop-Aij >=0}
	Let $u(z)$ be a smooth function on $\mathbb{S}^n$ and $\g(z) = e^{u(z)}$. Assume that $A_{ij} [\g(z)] \geq 0$ for all $z \in \mathbb{S}^n$. Then the horospherical Wulff shape $ \cap_{z \in \mathbb{S}^n} \overline{B}_z(u(z))$ is the unique h-convex domain with horospherical support function $u(z)$. Furthermore, expression \eqref{X(z)} gives a surjective map from $\mathbb{S}^n $ to $\partial \(\cap_{z \in \mathbb{S}^n} \overline{B}_z(u(z))\)$. 
\end{prop}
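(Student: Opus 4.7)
The plan is to reduce to the strictly positive case already settled by Corollary \ref{cor-Aij>0 Omega=Wulff shape} via a $C^\infty$ perturbation. For $\varepsilon>0$ set $u_\varepsilon(z) := u(z) + \varepsilon$ and $\varphi_\varepsilon := e^\varepsilon \varphi$. A direct computation from \eqref{def A-phi} yields
\begin{equation*}
A_{ij}[\varphi_\varepsilon] \;=\; e^\varepsilon A_{ij}[\varphi] + \tfrac{1}{2}\bigl(e^\varepsilon - e^{-\varepsilon}\bigr)\varphi^{-1}\sigma_{ij},
\end{equation*}
which is strictly positive definite whenever $A_{ij}[\varphi] \geq 0$. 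Corollary \ref{cor-Aij>0 Omega=Wulff shape} then produces a smooth uniformly h-convex bounded domain
\begin{equation*}
\Omega_\varepsilon \;:=\; \bigcap_{z \in \mathbb{S}^n} \overline{B}_z(u_\varepsilon(z))
\end{equation*}
with horospherical support function $u_\varepsilon$, whose boundary is parameterised diffeomorphically by $X_\varepsilon : \mathbb{S}^n \to \partial \Omega_\varepsilon$ obtained by substituting $\varphi_\varepsilon$ into \eqref{X(z)}.

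\textbf{Monotone limit and identification of the support function.} Let $\Omega^* := \cap_{z \in \mathbb{S}^n} \overline{B}_z(u(z))$. The family $\{\Omega_\varepsilon\}_{\varepsilon>0}$ is decreasing in $\varepsilon$, and a standard continuity--compactness argument shows $\Omega^* = \cap_{\varepsilon > 0} \Omega_\varepsilon$: if $X \notin \Omega^*$ then $\metric{-X}{(z_0,1)} > e^{u(z_0)}$ for some $z_0 \in \mathbb{S}^n$, hence $>e^{u(z_0)+\varepsilon}$ for all sufficiently small $\varepsilon>0$, so $X \notin \Omega_\varepsilon$. In particular $\Omega^*$ is compact. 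The bound $u_{\Omega^*}(z) \leq u(z)$ is immediate from $\Omega^* \subset \overline{B}_z(u(z))$. The reverse inequality is the only genuinely non-trivial point, and is where $A_{ij}[\varphi] \geq 0$ plays its role: since $\varphi_\varepsilon \to \varphi$ in $C^\infty(\mathbb{S}^n)$ as $\varepsilon \to 0^+$, the points $X_\varepsilon(z)$ produced by \eqref{X(z)} converge to the point $X(z)$ defined by \eqref{X(z)}, and $X_\varepsilon(z) \in \Omega_\varepsilon$ passes to the limit to give $X(z) \in \Omega^*$. Together with $\metric{-X(z)}{(z,1)} = \varphi(z)$ from \eqref{X-z,1}, this forces $u_{\Omega^*}(z) \geq u(z)$, completing the identification $u_{\Omega^*} = u$.

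\textbf{h-convexity, surjectivity and uniqueness.} For any $Y \in \partial \Omega^*$, if one had $\metric{-Y}{(z,1)} < e^{u(z)}$ for every $z \in \mathbb{S}^n$, then by continuity of the function $z \mapsto \metric{-Y}{(z,1)} - e^{u(z)}$ and compactness of $\mathbb{S}^n$ a whole neighborhood of $Y$ would lie inside $\Omega^*$, contradicting $Y \in \partial \Omega^*$. Hence some $z_Y \in \mathbb{S}^n$ yields $Y \in H_{z_Y}(u(z_Y))$, and $\overline{B}_{z_Y}(u(z_Y)) \supset \Omega^*$ is the required supporting horo-ball at $Y$; thus $\Omega^*$ is h-convex. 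Lemma \ref{lem-bdy of h-convex domain-by X(z)}, applied to the h-convex domain $\Omega^*$ whose smooth horospherical support function is $u$, now gives the asserted surjectivity of \eqref{X(z)} from $\mathbb{S}^n$ onto $\partial \Omega^*$. For uniqueness, any h-convex bounded domain $\Omega$ with horospherical support function $u$ lies in every $\overline{B}_z(u(z))$ and therefore in $\Omega^*$; applying Lemma \ref{lem-bdy of h-convex domain-by X(z)} to $\Omega$ itself yields $\partial \Omega = X(\mathbb{S}^n) = \partial \Omega^*$, so $\Omega = \Omega^*$.

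\textbf{Main obstacle.} The only delicate step is the ``$\geq$'' half of $u_{\Omega^*} = u$, i.e.\ showing that the candidate boundary points $X(z)$ from \eqref{X(z)} actually lie in $\Omega^*$; the perturbation $\varphi \mapsto e^\varepsilon \varphi$ is engineered precisely so that this inclusion is inherited in the limit $\varepsilon \to 0^+$ from the already-proved strictly convex case. Once this is established, h-convexity of $\Omega^*$, surjectivity of \eqref{X(z)}, and uniqueness are immediate applications of the earlier Lemma \ref{lem-bdy of h-convex domain-by X(z)}.
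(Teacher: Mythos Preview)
Your proof is correct and follows essentially the same approach as the paper: perturb $\varphi$ so that $A_{ij}$ becomes strictly positive, invoke the strict case (Corollaries \ref{cor-support-construct-domain} and \ref{cor-Aij>0 Omega=Wulff shape}) for each perturbed function, and pass to the limit to show $X(z)\in\Omega^*$ and hence $u_{\Omega^*}=u$, with surjectivity and uniqueness coming from Lemma \ref{lem-bdy of h-convex domain-by X(z)}. The only cosmetic difference is the choice of perturbation---you use the multiplicative shift $\varphi_\varepsilon=e^\varepsilon\varphi$ (i.e.\ $u_\varepsilon=u+\varepsilon$, a hyperbolic dilation), whereas the paper uses the additive shift $\varphi_k=\varphi+1/k$; both make $A_{ij}$ strictly positive by adding a positive multiple of $\varphi^{-1}\sigma_{ij}$, and the remainder of the argument is identical.
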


\begin{proof}
	For each integer $k \geq 1$, we define $u_k (z) := \log \(\g(z) + \frac{1}{k} \)$. Then $\g_k(z) := e^{u_k(z)}= \g(z) +\frac{1}{k}$ converges smoothly and uniformly to $\g(z)$ as $k \to +\infty$. By a direct calculation, we have
    \begin{align}
    	A_{ij} [\g_k(z)] =& \(\g_k\)_{ij} - \frac{1}{2} \frac{|D \g_k|^2}{\g_k} \delta_{ij} + \frac{1}{2}\(\g_k- \frac{1}{\g_k}\) \delta_{ij} \nonumber\\
    	=& \g_{ij}- \frac{1}{2}\frac{|D \g|^2}{\g +\frac{1}{k}}\delta_{ij} + \frac{1}{2} \( \g+ \frac{1}{k}- \frac{1}{\g+ \frac{1}{k}}\delta_{ij} \) \nonumber\\
    	>& \g_{ij} - \frac{1}{2} \frac{|D \g|^2}{\g} \delta_{ij} + \frac{1}{2} \( \g- \frac{1}{\g}\) \delta_{ij} \nonumber\\
    	=& A_{ij} [\g(z)] =0.\label{Aij-phik}
    \end{align}
    For any integer $k \geq 1$, we know from Corollary \ref{cor-support-construct-domain} that $u_k(z)$ uniquely determines a smooth uniformly h-convex bounded domain $\Omega_k$. At the same time, Corollary \ref{cor-Aij>0 Omega=Wulff shape} implies that $\Omega_k = \cap_{z\in \mathbb{S}^n} \overline{B}_z (u_k(z))$.
    	 
    Let $\Omega := \cap_{z\in \mathbb{S}^n} \overline{B}_z(u(z))$. We will show that the horospherical support function of $\Omega$ is $u(z)$.
    	 
    By the above argument, we have
    \begin{equation*}
    	\Omega = \bigcap_{z \in \mathbb{S}^n} \overline{B}_z (u(z)) 
    	= \bigcap_{z \in \mathbb{S}^n} \bigcap_{k=1}^{+ \infty}\overline{B}_z (u_k(z))
    	=\bigcap_{k=1}^{+ \infty} \bigcap_{z \in \mathbb{S}^n} \overline{B}_z (u_k(z))
    	= \bigcap_{k=1}^{+ \infty} \Omega_k.
    \end{equation*}
    Since $\Omega_i \subset \Omega_j$ for all $i>j \geq 1$,  $\Omega$ is nonempty. Replacing $\g(z)$ and $X(z)$ by $\g_k(z)$ and $X_k(z)$ respectively in expression \eqref{X(z)}, we know that $\partial \Omega_k$ is parameterized by $X_k (z)$. Taking $k \to +\infty$ and by the definition of $\g_k(z)$, we have that $X_k(z)$ converges smoothly and uniformly to $X(z)$ for all $z \in \mathbb{S}^n$, and hence $X(z) \in \Omega$. Besides, it is easy to see that $\metric{-X(z)}{(z,1)} = \g(z)$, which implies that the horospherical support function of $\Omega$ is $u(z)$. Note that Lemma \ref{lem-bdy of h-convex domain-by X(z)} implies that the above $\Omega$ is the unique h-convex domain with horospherical support function $u(z)$. This completes the proof of Proposition \ref{prop-Aij >=0}.
 \end{proof}

\section{Horospherically convex hypersurfaces}\label{sec-useful lemmas}

\subsection{Symmetric functions} $\ $

 For a symmetric function $s$ defined on $\mathbb{R}^n$, there is a $GL(n)$-invariant function $S$ defined on the space of $n \times n$ symmetric matrices ${\rm Sym}(n)$, such that $s \(\lambda\(A\)\) = S\(A\)$, where $\lambda\(A\) = \( \lambda_1,\ldots, \lambda_n\)$ are the eigenvalues of $A$.  Then we can define 
\begin{equation*}
\dot{S}^{pq} = \frac{\partial S}{\partial A_{pq}}, \quad \ddot{S}^{pq, rs} = \frac{\partial^2 S}{\partial A_{rs} \partial A_{pq}}. 
\end{equation*} 
At $A = {\rm diag} ( \lambda_1, \ldots, \lambda_n )$, we have 
\begin{equation}\label{S-pq s-p}
\dot{S}^{pq} = \frac{\partial s}{\partial \lambda_p} \delta_p{}^q.
\end{equation}
In the context, we will write $S$ for $s$ if there is no confusion.

Let $n \geq 1$ and $1 \leq m \leq n$ be integers, and let $\lambda = \(\lambda_1, \ldots, \lambda_n\)$ be a point in $\mathbb{R}^n$. The normalized $m$-th elementary symmetric function $p_m$ is defined by
\begin{equation*}
p_m(\lambda) := \frac{1}{C_n^m} \sigma_m (\lambda) = \frac{1}{C_n^m} \sum_{1 \leq i_1< \cdots< i_m \leq n} \lambda_{i_1} \cdots \lambda_{i_m}.
\end{equation*}
It is convenience to set $p_{0}(\lambda) = 1$. By the above argument, $p_m$ induces a function $p_m(A):=p_m (\lambda(A))$ defined on ${\rm Sym}(n)$. The following Lemma \ref{lem-pm-ij-sym matrix} can be found in \cite{HLW20}.

\begin{lem}\label{lem-pm-ij-sym matrix}
	If $1 \leq m \leq n$, then
	\begin{align}
		\dot{p}_m^{ij}\delta_{i}{}^j = m p_{m-1}, \label{pm-ij delt-ij}\\
		\dot{p}_m^{ij} A_{ij} = m p_m.\label{pm-ij A-ij}
	\end{align}
	If $0 \leq k<l \leq n$ and $G(A) := \(\frac{p_l \(\lambda(A)\)}{p_k \(\lambda(A)\)} \)^{\frac{1}{l-k}}$, then
	\begin{equation}\label{G=Gij Aij}
		G= \dot{G}^{ij} A_{ij}.
	\end{equation}
\end{lem}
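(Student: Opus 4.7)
The plan is to use the $GL(n)$-invariance of $p_m$ to reduce each identity to a computation at a diagonal matrix, where formula \eqref{S-pq s-p} makes $\dot{p}_m^{ij}$ completely explicit. First I would fix $A = \operatorname{diag}(\lambda_1, \ldots, \lambda_n)$, so that $\dot{p}_m^{ij} = (\partial p_m / \partial \lambda_i)\,\delta_i{}^j$. Then \eqref{pm-ij delt-ij} and \eqref{pm-ij A-ij} become the scalar identities $\sum_i \partial p_m / \partial \lambda_i = m\,p_{m-1}$ and $\sum_i \lambda_i\,\partial p_m / \partial \lambda_i = m\,p_m$. Since both sides of each identity are orthogonally invariant functions of $A$, the general symmetric case follows by diagonalization.

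For \eqref{pm-ij A-ij} I would just invoke Euler's identity applied to $\sigma_m$, which is homogeneous of degree $m$: $\sum_i \lambda_i\,\partial \sigma_m / \partial \lambda_i = m\,\sigma_m$, and dividing by $C_n^m$ produces $m\,p_m$. For \eqref{pm-ij delt-ij} I would use the classical formula $\partial \sigma_m / \partial \lambda_i = \sigma_{m-1}(\lambda\,|\,i)$, where $\sigma_{m-1}(\lambda\,|\,i)$ is the $(m-1)$-th elementary symmetric function in the $\lambda_j$ with $j \neq i$, together with the combinatorial identity $\sum_{i=1}^{n} \sigma_{m-1}(\lambda\,|\,i) = (n - m + 1)\,\sigma_{m-1}(\lambda)$, which holds because each monomial of $\sigma_{m-1}$ fails to involve exactly $n - m + 1$ of the indices. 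The arithmetic $(n-m+1)\,C_n^{m-1} = m\,C_n^m$ then converts $(n-m+1)\,\sigma_{m-1}$ into $m\,p_{m-1}$ after the $C_n^m$ normalization.

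For \eqref{G=Gij Aij} the key point is a scaling argument: $p_l$ is homogeneous of degree $l$ in the eigenvalues and $p_k$ of degree $k$, so $G = (p_l/p_k)^{1/(l-k)}$ is positively homogeneous of degree one on the relevant open cone $\{p_k > 0\}$. Euler's identity applied to $G$ at a diagonal matrix then yields $\sum_i \lambda_i\,\partial G/\partial \lambda_i = G$, which by \eqref{S-pq s-p} is precisely $\dot{G}^{ij} A_{ij} = G$, and invariance propagates this to all admissible symmetric $A$.

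I do not anticipate any substantive obstacle, as these identities are classical; the only care required is tracking the normalization constant $C_n^m$ linking $p_m$ with $\sigma_m$, and verifying that the diagonal-matrix computation legitimately extends to arbitrary symmetric matrices via the $GL(n)$-invariance built into the definition of $\dot{p}_m^{ij}$.
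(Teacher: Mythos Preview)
Your proposal is correct and complete. The paper does not actually prove this lemma; it merely cites \cite{HLW20} and moves on, so your direct argument via diagonalization, Euler's homogeneity identity, and the combinatorial count $(n-m+1)C_n^{m-1}=mC_n^m$ is more than the paper itself supplies.
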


Define the positive cone $\Gamma^+ \subset \mathbb{R}^n$ by
\begin{equation*}
	\Gamma^{+} : =\{ \lambda=\(\lambda_1, \dots, \lambda_n\):  \lambda_i>0, \, 1 \leq i \leq n   \}.
\end{equation*}
The following Newton-MacLaurin inequalities are well known, see e.g. \cite{WX14}.

\begin{lem}\label{lem-Newton-MacLaurin ineq}
	For $1 \leq k <l \leq n$ and $\lambda = (\lambda_1, \ldots, \lambda_n)\in \Gamma^+$, the following inequalities hold:
	\begin{align}
		p_k(\lambda) p_{l-1}(\lambda) \geq& p_{k-1}(\lambda) p_l(\lambda). \label{Newton ineq}\\
		p_k(\lambda) \geq& p_l^{\frac{k}{l}}(\lambda). \label{McLau ineq}
	\end{align}
	Equality  holds in the both \eqref{Newton ineq} and \eqref{McLau ineq} if and only if $\lambda_i = \lambda_j$ for all $1 \leq i,j \leq n$.
\end{lem}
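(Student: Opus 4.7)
The plan is to reduce both inequalities to \emph{Newton's inequality} for consecutive indices, namely
\begin{equation*}
    p_k(\lambda)^2 \geq p_{k-1}(\lambda)\, p_{k+1}(\lambda), \quad 1 \leq k \leq n-1, \ \lambda \in \Gamma^+,
\end{equation*}
with equality iff $\lambda_1 = \cdots = \lambda_n$. Once this is in hand, both \eqref{Newton ineq} and \eqref{McLau ineq} follow by manipulating a single telescoping chain.

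First I would establish Newton's inequality via the classical real-rootedness argument. Consider the polynomial $P(t) = \prod_{i=1}^{n}(t+\lambda_i) = \sum_{j=0}^{n} C_n^j\, p_j(\lambda)\, t^{n-j}$, which has only real (non-positive, in fact) roots. By Rolle's theorem, all derivatives $P^{(m)}(t)$ have only real roots as well, and differentiating $n-k-1$ times reduces the problem to showing that the quadratic
\begin{equation*}
    C_n^{k-1} p_{k-1}\, t^2 + 2 C_n^k p_k\, t + C_n^{k+1} p_{k+1}
\end{equation*}
(up to a positive scalar from the chain of derivatives) has real roots; this is exactly $p_k^2 \geq p_{k-1} p_{k+1}$ after simplifying binomial coefficients via $(C_n^k)^2 \geq C_n^{k-1} C_n^{k+1}$ factors. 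Because $\lambda \in \Gamma^+$ forces $p_{k-1}, p_k, p_{k+1} > 0$, equality in the discriminant forces $P^{(n-k-1)}$ to be a perfect square, which lifts (again by Rolle) to all $\lambda_i$ being equal.

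Next, I would deduce \eqref{Newton ineq} by iterating Newton's inequality. Rewriting $p_k^2 \geq p_{k-1} p_{k+1}$ as $p_{k+1}/p_k \leq p_k/p_{k-1}$ shows that the sequence $j \mapsto p_j/p_{j-1}$ is nonincreasing on $\Gamma^+$. For $1 \leq k < l \leq n$, chaining
\begin{equation*}
    \frac{p_l}{p_{l-1}} \leq \frac{p_{l-1}}{p_{l-2}} \leq \cdots \leq \frac{p_{k+1}}{p_k} \leq \frac{p_k}{p_{k-1}}
\end{equation*}
gives $p_k\, p_{l-1} \geq p_{k-1}\, p_l$ directly, and equality anywhere forces equality in every link of the chain, hence in Newton's inequality, hence all $\lambda_i$ equal.

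For \eqref{McLau ineq}, I would show that $j \mapsto p_j^{1/j}$ is nonincreasing, which yields $p_k^{1/k} \geq p_l^{1/l}$ and hence $p_k \geq p_l^{k/l}$. This is proved by induction on $k$: the base case $k=1$ uses $p_1 \geq p_2^{1/2}$ from Newton directly. For the inductive step, from $p_k^2 \geq p_{k-1} p_{k+1}$ together with $p_{k-1}^{1/(k-1)} \geq p_k^{1/k}$ (the induction hypothesis rewritten as $p_{k-1} \geq p_k^{(k-1)/k}$), we obtain
\begin{equation*}
    p_k^2 \geq p_{k-1}\, p_{k+1} \geq p_k^{(k-1)/k}\, p_{k+1},
\end{equation*}
so $p_k^{(k+1)/k} \geq p_{k+1}$, i.e.\ $p_k^{1/k} \geq p_{k+1}^{1/(k+1)}$. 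Iterating from $k$ up to $l$ and tracking equality cases (each step forces equality in Newton, hence all $\lambda_i$ equal) completes the proof.

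The main obstacle is genuinely only the first step: Newton's inequality itself. Everything after it is bookkeeping with inequalities and a careful tracking of the equality case. Since the lemma is stated with a reference to \cite{WX14}, I would simply cite that source for Newton's inequality and present the telescoping deductions of \eqref{Newton ineq} and \eqref{McLau ineq} cleanly, rather than redoing the Rolle-theorem derivation.
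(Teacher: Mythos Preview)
Your proposal is correct and in fact goes well beyond what the paper does: the paper does not prove this lemma at all, but simply states it as well known with a reference to \cite{WX14}. Your telescoping deductions of \eqref{Newton ineq} and \eqref{McLau ineq} from the consecutive Newton inequality $p_k^2 \geq p_{k-1}p_{k+1}$ are the standard ones and are carried out cleanly. One small caveat on your sketch of Newton's inequality itself: differentiating $P$ exactly $n-k-1$ times yields a polynomial of degree $k+1$, not a quadratic; the classical argument also requires a reversal step (replace $t$ by $1/t$ and clear denominators) followed by further differentiation to reach degree two. Since you already plan to cite \cite{WX14} for this step, that is not a genuine gap in your write-up.
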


\begin{lem}\label{lem-Gij delt-ij geq 1}
	Let  $0 \leq k <l \leq n $, $G(A) :=\(\frac{p_l(\lambda(A))}{p_k(\lambda(A))}\)^{\frac{1}{l-k}}$, and $\lambda(A) \in \Gamma^+$. Then
	\begin{equation}\label{Gij delt-ij in lemma}
		\dot{G}^{ij} \delta_i{}^j \geq 1,
	\end{equation}
	with equality if and only if $\lambda_1 (A) = \cdots =\lambda_n(A)>0$.
\end{lem}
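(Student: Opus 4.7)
The plan is to compute $\dot G^{ij}\delta_i{}^j$ in closed form and then deduce \eqref{Gij delt-ij in lemma} by repeated use of the Newton--MacLaurin inequalities of Lemma \ref{lem-Newton-MacLaurin ineq}. Since $G$ depends only on the eigenvalues of $A$, the quantity $\dot G^{ij}\delta_i{}^j$ is invariant under orthogonal change of basis, so I would assume without loss of generality that $A=\mathrm{diag}(\lambda_1,\ldots,\lambda_n)$ with $\lambda\in\Gamma^{+}$. Formula \eqref{S-pq s-p} then reduces the left-hand side of \eqref{Gij delt-ij in lemma} to $\sum_{i=1}^{n}\partial G/\partial\lambda_i$. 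Logarithmic differentiation of $G=(p_l/p_k)^{1/(l-k)}$, combined with equation \eqref{pm-ij delt-ij} (i.e.\ $\sum_i\partial_i p_m=m\,p_{m-1}$), gives
\begin{equation*}
\dot G^{ij}\delta_i{}^j \;=\; \frac{G}{l-k}\!\left(\frac{l\,p_{l-1}}{p_l}-\frac{k\,p_{k-1}}{p_k}\right),
\end{equation*}
with the convention that the $k=0$ term vanishes (since $p_{-1}$ never appears and one reads $k/\alpha_k$ as $0$).

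Next I would introduce $\alpha_m:=p_m/p_{m-1}$ for $1\leq m\leq n$. By Newton's inequality \eqref{Newton ineq} specialised to consecutive indices, $p_m^{2}\geq p_{m-1}p_{m+1}$, so $\alpha_1\geq\alpha_2\geq\cdots\geq\alpha_n>0$. A telescoping product yields $G^{l-k}=\prod_{j=k+1}^{l}\alpha_j$, and $p_{l-1}/p_l=1/\alpha_l$, $p_{k-1}/p_k=1/\alpha_k$. Hence \eqref{Gij delt-ij in lemma} reduces to
\begin{equation*}
G\!\left(\frac{l}{\alpha_l}-\frac{k}{\alpha_k}\right)\;\geq\; l-k.
\end{equation*}

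I would prove this by splitting it into two independent inequalities. The monotonicity $\alpha_l\leq\alpha_j$ for all $j\leq l$ yields the rearrangement
\begin{equation*}
\frac{l}{\alpha_l}-\frac{k}{\alpha_k}-\sum_{j=k+1}^{l}\frac{1}{\alpha_j}
\;=\; k\!\left(\frac{1}{\alpha_l}-\frac{1}{\alpha_k}\right)+\sum_{j=k+1}^{l}\!\left(\frac{1}{\alpha_l}-\frac{1}{\alpha_j}\right)\;\geq\;0,
\end{equation*}
while the GM--HM inequality applied to the positive numbers $\alpha_{k+1},\ldots,\alpha_l$ gives
\begin{equation*}
G\cdot\sum_{j=k+1}^{l}\frac{1}{\alpha_j}\;=\;\Bigl(\prod_{j=k+1}^{l}\alpha_j\Bigr)^{\!1/(l-k)}\sum_{j=k+1}^{l}\frac{1}{\alpha_j}\;\geq\; l-k.
\end{equation*}
Multiplying the first display by $G>0$ and combining with the second yields the required inequality.

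For the equality case, the GM--HM equality forces $\alpha_{k+1}=\cdots=\alpha_l$, and (when $k\geq 1$) the rearrangement also forces $\alpha_k=\alpha_l$; either way one gets $\alpha_m=\alpha_{m+1}$ for some $m$, and the equality clause of Newton's inequality \eqref{Newton ineq} then propagates this to $\lambda_1=\cdots=\lambda_n$. The converse is a direct substitution into the closed-form expression for $\dot G^{ij}\delta_i{}^j$. I do not foresee a serious obstacle; the only point requiring care is tracking the equality conditions across the two component inequalities and handling the boundary case $k=0$ uniformly, for which the first auxiliary inequality collapses to the trivial bound $\sum_{j=1}^{l}\alpha_j^{-1}\leq l/\alpha_l$.
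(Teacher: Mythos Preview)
Your proof is correct and is close in spirit to the paper's: both compute $\dot G^{ij}\delta_i{}^j=\frac{G}{l-k}\bigl(\frac{l\,p_{l-1}}{p_l}-\frac{k\,p_{k-1}}{p_k}\bigr)$ and reduce the inequality to the monotonicity of the ratios $\alpha_m=p_m/p_{m-1}$ furnished by Newton's inequality \eqref{Newton ineq}. The paper finishes with the two cruder bounds $l/\alpha_l-k/\alpha_k\geq(l-k)/\alpha_l$ (which is just $p_kp_{l-1}\geq p_{k-1}p_l$) and $G\geq\alpha_l$ (geometric mean $\geq$ minimum of $\alpha_{k+1},\ldots,\alpha_l$, i.e.\ inequality \eqref{p_l-1 geq p_k p_l}), whereas you sharpen the first step to the rearrangement $l/\alpha_l-k/\alpha_k\geq\sum_{j=k+1}^{l}1/\alpha_j$ and pair it with GM--HM. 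This is a harmless variant; your decomposition arguably makes the equality analysis a bit more transparent, since both auxiliary inequalities collapse precisely when the relevant $\alpha_j$ coincide, feeding directly into the equality clause of \eqref{Newton ineq}.
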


\begin{proof}
	By \eqref{Newton ineq}, we have	$\frac{p_{k+1}}{p_k} \geq \cdots \geq \frac{p_l}{p_{l-1}}$. Then
	\begin{equation*}
		\frac{p_l}{p_k} = \prod_{i=k}^{l-1} \frac{p_{i+1}}{p_i} \geq \( \frac{p_l}{p_{l-1}}\)^{l-k},
	\end{equation*}
	which is equivalent to
	\begin{equation}\label{p_l-1 geq p_k p_l }
		p_kp_{l-1} \geq p_k^{1+\frac{1}{l-k}} p_{l}^{1- \frac{1}{l-k}}.
	\end{equation}
	Using \eqref{pm-ij delt-ij}, \eqref{Newton ineq} and  \eqref{p_l-1 geq p_k p_l }, we have
	\begin{align}
		\dot{G}^{ij} \delta_i{}^j =& \frac{1}{l-k}\( \frac{p_l}{p_k} \)^{\frac{1}{l-k} -1}\frac{p_k \dot{p}_l^{ij} \delta_{i}{}^j - p_l \dot{p}_k^{ij} \delta_i{}^j}{p_k^2} \nonumber\\
		=& \frac{1}{l-k}\( \frac{p_l}{p_k} \)^{\frac{1}{l-k} -1} \frac{l p_k p_{l-1} - k p_l p_{k-1}}{p_k^2} \nonumber\\
		\geq& \frac{1}{l-k}\( \frac{p_l}{p_k} \)^{\frac{1}{l-k} -1} \frac{\(l-k\)p_k p_{l-1} }{p_k^2} \nonumber\\
		\geq& \( \frac{p_l}{p_k} \)^{\frac{1}{l-k} -1} \frac{ p_k^{1+\frac{1}{l-k}} p_{l}^{1- \frac{1}{l-k}}}{p_k^2}=1. \label{Gij-delt-ij}
	\end{align}
	By Lemma \ref{lem-Newton-MacLaurin ineq}, equality holds in the first inequality of \eqref{Gij-delt-ij} if and only if $\lambda_1(A) = \cdots = \lambda_n(A)$. Conversely, equality holds in \eqref{Gij-delt-ij} when $\lambda_1(A) = \cdots = \lambda_n(A)$. This completes the proof of Lemma \ref{lem-Gij delt-ij geq 1}.
\end{proof}

Let $s=s(\lambda)$ be a symmetric function defined on the positive cone $\Gamma^+$. Then the dual function $s_*$ of $s$ is defined by $s_* (\lambda) := s^{-1} (\lambda_1^{-1}, \ldots, \lambda_n^{-1}   )$. For example, if $s(\lambda) = \(\frac{p_{l}(\lambda)}{p_k(\lambda)}\)^{\frac{1}{l-k}}$ for $0 \leq k <l \leq n$ and $\lambda \in \Gamma^+$, then
\begin{equation}\label{inverse function of pl/pk}
	s_{*}(\lambda) = \( \frac{p_l (\lambda_1^{-1}, \ldots , \lambda_n^{-1})}{p_k( \lambda_1^{-1}, \ldots, \lambda_n^{-1} )} \)^{-\frac{1}{l-k}}
	=\( \frac{p_{n-l}(\lambda)}{p_n(\lambda)} \cdot \frac{p_n(\lambda)}{p_{n-k}(\lambda)} \)^{-\frac{1}{l-k}}
	=\(\frac{p_{n-k}(\lambda)}{p_{n-l}(\lambda)}\)^{\frac{1}{l-k}}.
\end{equation}
We call $s$ inverse-concave if its dual function $s_*$ is concave on $\Gamma^+$.

Andrews \cite{And07} proved the following lemma.

\begin{lem}\label{lem-mono-increasing concavity}
	Let  $0 \leq k <l \leq n $, $s(\lambda) = \(\frac{p_l(\lambda)}{p_k(\lambda)}\)^{\frac{1}{l-k}}$ and $\lambda \in \Gamma^+$. Then $s(\lambda)$ is monotone increasing and concave on $\Gamma^+$. Let $G(A) :=\(\frac{p_l(\lambda(A))}{p_k(\lambda(A))}\)^{\frac{1}{l-k}}$ and $\lambda(A) \in \Gamma^+$. Then
	\begin{equation}\label{G-ij >0}
		\dot{G }^{ij}>0.
	\end{equation}
	For any $B \in {\rm Sym}(n)$, there holds
	\begin{equation}\label{G-ij,kl <0}
		\ddot{G}^{ij, kl} B_{ij} B_{kl} \leq 0.
	\end{equation}
\end{lem}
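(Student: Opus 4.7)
The plan is to establish the three assertions in the order they are stated—monotonicity, concavity of $s$, positivity of $\dot G^{ij}$, and concavity of $G$—each step feeding into the next.

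First I would prove monotonicity by differentiation: writing $s^{l-k}=p_l/p_k$, the sign of $\partial s/\partial \lambda_i$ agrees with the sign of $p_k \,\partial_i p_l - p_l\,\partial_i p_k$. Using the splitting $\sigma_m(\lambda)=\lambda_i\sigma_{m-1}(\lambda|i)+\sigma_m(\lambda|i)$, where $\lambda|i$ denotes the $(n-1)$-tuple obtained by removing $\lambda_i$, the cross terms cancel and one is left with $\sigma_k(\lambda|i)\sigma_{l-1}(\lambda|i)-\sigma_{k-1}(\lambda|i)\sigma_l(\lambda|i)$. A telescoping application of Newton's inequality $\sigma_{j-1}(\lambda|i)\sigma_{j+1}(\lambda|i)\le \sigma_j(\lambda|i)^2$ on the $n-1$ variables $\lambda|i\in\Gamma^+$ shows that this quantity is nonnegative, which gives $\partial s/\partial\lambda_i\ge0$, and strict positivity on $\Gamma^+$.

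Next, for concavity of $s$ on $\Gamma^+$—the main obstacle of the lemma—the cleanest route is through the Maclaurin fact that $p_l^{1/l}$ is concave on $\Gamma^+$ combined with the Marcus–Lopes type argument that the ratio $(p_l/p_k)^{1/(l-k)}$ is concave whenever $0\le k<l\le n$. Concretely, one computes the Hessian of $\log s= (l-k)^{-1}(\log p_l-\log p_k)$ and uses the Newton–Maclaurin inequalities in Lemma \ref{lem-Newton-MacLaurin ineq} together with the identity $\dot p_m^{ij}\lambda_j=m\,p_m$ at diagonal arguments to show that $D^2\log s+(Ds)(Ds)^T/s^2$ is negative semidefinite, i.e.\ $s$ is concave. (Since the identity is classical and is explicitly stated in \cite{And07}, one could also cite it directly.)

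Finally I would deduce the two matrix statements by an orthogonal invariance argument. Since $G(A)$ depends only on the eigenvalues of $A$, it is $O(n)$-invariant, so at any symmetric $A$ with $\lambda(A)\in\Gamma^+$ we may diagonalize $A=O^T\mathrm{diag}(\lambda)O$ and transport the computation to the diagonal case. By \eqref{S-pq s-p}, at a diagonal matrix one has $\dot G^{pq}=(\partial s/\partial\lambda_p)\delta_p{}^q$, which is positive diagonal by the monotonicity proved above; conjugating by $O$ gives \eqref{G-ij >0} on all of $\{A:\lambda(A)\in\Gamma^+\}$. For \eqref{G-ij,kl <0}, the classical Davis theorem says that for any symmetric concave function $f$ on $\mathbb R^n$ the induced matrix function $F(A)=f(\lambda(A))$ is concave on the space of symmetric matrices. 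Applying this to $f=s$ on $\Gamma^+$ yields $\ddot G^{ij,kl}B_{ij}B_{kl}\le 0$ for every $B\in\mathrm{Sym}(n)$. The hard step is genuinely the concavity of $s$; once that is in hand the matrix versions are essentially formal consequences of orthogonal invariance plus Davis' theorem.
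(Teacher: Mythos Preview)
The paper does not actually prove this lemma; it simply attributes the result to Andrews \cite{And07} and states it without proof. So there is no ``paper's own proof'' to compare against---your outline already goes further than the paper does.

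Your argument is essentially correct. The monotonicity step is clean: the reduction to $\sigma_k(\lambda|i)\sigma_{l-1}(\lambda|i)-\sigma_{k-1}(\lambda|i)\sigma_l(\lambda|i)\ge 0$ via the splitting identity is right, and the log-concavity of $j\mapsto \sigma_j(\lambda|i)$ on $\Gamma^+$ (a consequence of Newton's inequality, and in fact strict since the normalizing binomial factors are strictly log-concave) gives the telescoping bound with strict inequality. The matrix statements then follow exactly as you say: \eqref{G-ij >0} from diagonalization and \eqref{S-pq s-p}, and \eqref{G-ij,kl <0} from Davis' theorem applied to the symmetric concave function $s$ on the symmetric convex cone $\Gamma^+$.

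The only soft spot is the concavity of $s(\lambda)=(p_l/p_k)^{1/(l-k)}$ itself. Your sketch (``compute the Hessian of $\log s$ and use Newton--Maclaurin'') is the right shape but hides real work; the standard proofs go through an inductive Marcus--Lopes-type argument or through Garding's theory of hyperbolic polynomials, neither of which is a one-line computation. Since you already note that this is the classical step and can be cited from \cite{And07}, that is consistent with how the paper treats it. If you want a self-contained write-up, be prepared to either spell out the induction on $l-k$ or invoke the hyperbolic-polynomial machinery; otherwise a citation suffices here just as it does in the paper.
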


Let $\(M^n,g\)$ be a Riemannian manifold. A $GL(n)$-invariant function $S$ defined on ${\rm Sym}(n)$ can induce a function $\widehat{S}$ defined on symmetric $(0,2)$-tensors on $M$. Precisely, for a symmetric $(0,2)$-tensor $A$ on $M$, we define
\begin{equation*}
	\widehat{S} \(A, g\) :=S \(\lambda\(g^{-\frac{1}{2}} A g^{-\frac{1}{2}} \) \)=S(\lambda(g^{-1}A)).
\end{equation*}
Due to the $GL(n)$-invariance of $S$, we have $\widehat{S} \(A, g\) = \widehat{S}\(g^{-\frac{1}{2}} A g^{-\frac{1}{2}}, I\)$. For $\widehat{S} = \widehat{S}(A, g)$, we define
\begin{equation*}
	\dot{ \widehat{S} }^{pq} = \frac{\partial \widehat{S}}{\partial A_{ij}}, \quad
	\ddot{\widehat{S}}^{pq, rs} = \frac{\partial^2 \widehat{S}}{\partial A_{rs} \partial A_{pq}},
\end{equation*} 
which are tensors on $(M,g)$. If $g=I$, then
\begin{equation*}
	\widehat{S}(A,I) = S(A), \quad
	\dot{\widehat{S}}^{pq} = \dot{S}^{pq}, \quad
	\ddot{\widehat{S}}^{pq,rs} = \ddot{S}^{pq,rs}.
\end{equation*}
In the following text, we will write $S$ for $\widehat{S}$ if there is no confusion. For more detailed  conventions and properties about curvature functions,  we refer readers to \cite[Chapter 2]{Ger06} and \cite[Section 2]{AMZ13}.

\subsection{Hypersurfaces in hyperbolic space} $ \ $

Let $M$ be a smooth hypersurface in $\mathbb{H}^{n+1}$ with second fundamental form $\(h_{ij}\)$ and principal curvatures $\kappa = \( \kappa_1, \ldots, \kappa_n\)$. We call $\tilde{h}_{ij}:= h_{ij}- g_{ij}$ the \emph{shifted second fundamental form} of $M \subset \mathbb{H}^{n+1}$. The eigenvalues $\tilde{\kappa} = \( \tilde{\kappa}_1, \ldots, \tilde{\kappa}_n \)$ of $g^{-1}\tilde{h}$ are called the \emph{shifted principal curvatures} of $M \subset \mathbb{H}^{n+1}$. It is easy to see that $\tilde{\kappa}_i = \kappa_i-1$, $i=1, \ldots, n$. For a $GL(n)$-invariant function $G$, we can view $G= G(\tilde{\kappa}) = G(\tilde{h}_{ij}, g_{ij})$. By the above argument, the following Lemma \ref{lem-pm-shifted curvature} follows directly from Lemma \ref{lem-pm-ij-sym matrix} and Lemma \ref{lem-Gij delt-ij geq 1}. 

\begin{lem}\label{lem-pm-shifted curvature}
	Let $M$ be a smooth hypersurface in $\mathbb{H}^{n+1}$ and $\tilde{\kappa}_i = \kappa_i-1$ be the shifted principal curvatures of $M$, and let $1 \leq m \leq n$ be an integer. Then
	\begin{align}
		\dot{p}_{m}^{ij} \(\tilde{\kappa} \) g_{ij} =& m p_{m-1}(\tilde{\kappa}),\label{pm-ij g-ij}\\
		\dot{p}_{m}^{ij} \( \tilde{\kappa} \) \tilde{h}_{ij} =& m p_m \(\tilde{\kappa} \).\label{pm-ij h-ij}
	\end{align}
	Let $0 \leq k < l \leq n$ and $F (\tilde{\kappa}) = \( \frac{p_l(\tilde{\kappa})}{p_k (\tilde{\kappa})} \)^{\frac{1}{l-k}}$. Then
	\begin{equation}\label{F-ij h-ij}
		F = \dot{F}^{ij} \tilde{h}_{ij}.
	\end{equation}
	In addition, if $\tilde{\kappa} \in \Gamma^+$, then
	\begin{equation}\label{F-ij delt-ij geq 1}
		\dot{F}^{ij} g_{ij} \geq 1,
	\end{equation}
	with equality if and only if $\tilde{\kappa}_i = \tilde{\kappa}_j$ for all $1 \leq i, j \leq n$.
\end{lem}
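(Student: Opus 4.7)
The plan is to reduce Lemma \ref{lem-pm-shifted curvature} to the previously established matrix identities by exploiting $GL(n)$-invariance. The content of the lemma is entirely pointwise and tensorial, so it suffices to verify each identity at an arbitrary point $p \in M$, where we can select a coordinate frame adapted to the geometry.

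First I would fix $p \in M$ and choose a local orthonormal frame $\{e_1, \ldots, e_n\}$ that simultaneously diagonalizes $\tilde{h}$, so that $g_{ij} = \delta_{ij}$ and $\tilde{h}_{ij} = \tilde{\kappa}_i \delta_{ij}$ at $p$. Under the convention $\widehat{S}(\tilde{h}, g) = S(\lambda(g^{-1}\tilde{h}))$ recalled just before the lemma, in such a frame the induced tensor $\widehat{p}_m(\tilde{h}, g)$ coincides with the matrix function $p_m$ evaluated on $A = \mathrm{diag}(\tilde{\kappa}_1, \ldots, \tilde{\kappa}_n)$, and the derivative tensors $\dot{\widehat{p}}_m^{ij}$, $\ddot{\widehat{p}}_m^{ij,kl}$ at $p$ agree with $\dot{p}_m^{ij}$, $\ddot{p}_m^{ij,kl}$. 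Thus the verification is reduced to a computation on symmetric matrices.

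With this translation, formula \eqref{pm-ij g-ij} is just \eqref{pm-ij delt-ij} applied to $A = \mathrm{diag}(\tilde{\kappa})$ (using $g_{ij} = \delta_{ij}$ in the chosen frame), and \eqref{pm-ij h-ij} is \eqref{pm-ij A-ij} applied to the same $A$. For the quotient function $F(\tilde{\kappa}) = (p_l(\tilde{\kappa})/p_k(\tilde{\kappa}))^{1/(l-k)}$, identity \eqref{F-ij h-ij} is exactly the Euler-type identity \eqref{G=Gij Aij} for $G$, since $F$ is homogeneous of degree one in $\tilde{h}$ (with $g$ fixed); no hyperbolic geometry is used beyond the definition of $\tilde{h}$. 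Finally, \eqref{F-ij delt-ij geq 1} together with its rigidity statement is an immediate consequence of Lemma \ref{lem-Gij delt-ij geq 1}, again via the identification $\dot{F}^{ij} g_{ij} = \dot{G}^{ij}\delta_i{}^j$ at $p$; the equality case $\tilde{\kappa}_1 = \cdots = \tilde{\kappa}_n$ transfers verbatim, and since $p$ was arbitrary we conclude $\tilde{\kappa}_i = \tilde{\kappa}_j$ on all of $M$ in the equality case.

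There is no real obstacle here: the lemma is essentially a dictionary between the Euclidean symmetric-matrix statements (Lemma \ref{lem-pm-ij-sym matrix} and Lemma \ref{lem-Gij delt-ij geq 1}) and their counterparts for the shifted Weingarten tensor in $\mathbb{H}^{n+1}$. The only mild care needed is to check that choosing an orthonormal frame that diagonalizes $\tilde{h}$ is harmless, which is guaranteed by the $GL(n)$-invariance of $p_m$ and of the quotient function $F$. I would therefore present the proof as a short remark that the identities at an arbitrary point follow, via this frame choice, from the matrix identities already proved.
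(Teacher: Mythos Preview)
Your proposal is correct and matches the paper's approach: the paper simply states that Lemma \ref{lem-pm-shifted curvature} follows directly from Lemma \ref{lem-pm-ij-sym matrix} and Lemma \ref{lem-Gij delt-ij geq 1} via the $GL(n)$-invariance discussion immediately preceding it, which is precisely the frame-reduction argument you spell out. One small over-reach: the equality characterization in \eqref{F-ij delt-ij geq 1} is pointwise, so you should not conclude umbilicity on all of $M$ from equality at a single $p$; just drop that last clause.
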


Note that the shifted second fundamental form $\(\tilde{h}_{ij} \)$ of a hypersurface $M$ in $\mathbb{H}^{n+1}$ is a Codazzi tensor, i.e. $\nabla_k \tilde{h}_{ij} = \nabla_j \tilde{h}_{ik}$. This implies that $\dot{p}_m^{ij}(\tilde{\kappa})$ is divergence-free for $1 \leq m \leq n$, i.e. $\nabla_j \dot{p}_m^{ij} (\tilde{\kappa})=0$ on $M$, see \cite{Rei73}.

The following Lemmas \ref{lem-conf-vf-direvative}--\ref{lem-shifted Minkowski formula} are well-known and can be found in \cite{HLW20}.
\begin{lem}\label{lem-conf-vf-direvative}
	Let $V = \sinh r \partial_r$ be a conformal Killing vector field in $\mathbb{H}^{n+1}$. Then
	\begin{align}
		\metric{\overline{\nabla}_X V}{Y} =& \cosh r  \metric{X}{Y}, \label{conf-vf}\\
    	\divv_{\mathbb{H}^{n+1}} V =& (n+1) \cosh r,\label{div-conf-vf}
	\end{align}
	where $\divv_{\mathbb{H}^{n+1}} V$ denoted the divergence of vector field $V$ in $\mathbb{H}^{n+1}$.
\end{lem}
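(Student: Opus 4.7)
The plan is to derive both identities simultaneously by recognizing the vector field $V = \sinh r \,\partial_r$ as the gradient of a natural function whose Hessian is a multiple of the metric. Specifically, since $\overline{\nabla} r = \partial_r$, we have $\overline{\nabla}(\cosh r) = \sinh r \,\partial_r = V$, so the first identity is equivalent to the Hessian identity
\begin{equation*}
    \overline{\nabla}^2 (\cosh r)(X,Y) = \cosh r \cdot \bar{g}(X,Y),
\end{equation*}
which is the classical Obata-type equation characterizing $\mathbb{H}^{n+1}$ (together with its sphere and Euclidean counterparts). Once this is established, the first formula follows by the defining relation $\overline{\nabla}^2(\cosh r)(X,Y)=\metric{\overline{\nabla}_X V}{Y}$, and the second formula follows by tracing: $\divv_{\mathbb{H}^{n+1}} V = \tr \overline{\nabla} V = (n+1) \cosh r$, since the endomorphism $\overline{\nabla} V$ equals $\cosh r \cdot \mrm{Id}$.

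To verify the Hessian identity, I would exploit the warped product structure $\bar{g} = dr^2 + \sinh^2 r \, g_{\mathbb{S}^n}$ on $\mathbb{H}^{n+1} \setminus \{N\}$. The standard warped-product Koszul computation gives $\overline{\nabla}_{\partial_r}\partial_r = 0$ and $\overline{\nabla}_Y \partial_r = \coth r \, Y$ for $Y$ tangent to the spherical factor. Splitting an arbitrary $X = X^r \partial_r + X^T$, a short calculation yields
\begin{equation*}
    \overline{\nabla}_X V = \overline{\nabla}_X(\sinh r\, \partial_r) = \cosh r \cdot X^r \partial_r + \sinh r \cdot \coth r \cdot X^T = \cosh r \cdot X,
\end{equation*}
which immediately gives $\metric{\overline{\nabla}_X V}{Y} = \cosh r \metric{X}{Y}$. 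Checking the three cases $(X,Y) = (\partial_r,\partial_r)$, $(\partial_r,Y^T)$, and $(X^T,Y^T)$ separately confirms the Obata identity holds globally, including at the origin $N$ where the polar coordinates degenerate (by continuity, since $\sinh r\,\partial_r$ extends smoothly to $0$ at $N$).

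The divergence formula then follows instantly from $\overline{\nabla} V = \cosh r \cdot \mrm{Id}_{T\mathbb{H}^{n+1}}$: choosing any local orthonormal frame $\{e_0,\dots,e_n\}$,
\begin{equation*}
    \divv_{\mathbb{H}^{n+1}} V = \sum_{i=0}^{n} \metric{\overline{\nabla}_{e_i} V}{e_i} = \cosh r \sum_{i=0}^{n} \metric{e_i}{e_i} = (n+1)\cosh r.
\end{equation*}

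I do not anticipate any substantive obstacle here; the only mildly delicate point is ensuring the extension across the origin $N$, where polar coordinates are singular but $V$ is smooth. An alternative route, if one prefers coordinate-free reasoning, is to work in the hyperboloid model and use $\cosh r = -\metric{X}{N}$ with $N = (0,1)$, compute the induced gradient on $\mathbb{H}^{n+1}$ as the tangential projection $-N - \cosh r \cdot X$ (with the Minkowski sign convention $\metric{X}{X}=-1$), and then differentiate once more using the Gauss formula $\widetilde{\nabla}_X Y = \overline{\nabla}_X Y - \metric{X}{Y} X$ to arrive at the same identity; this ambient computation avoids the coordinate singularity at $N$ altogether.
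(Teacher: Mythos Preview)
Your proof is correct. The paper does not actually prove this lemma; it merely cites it as well-known (referring to \cite{HLW20}), so there is no approach to compare against. Your argument via the warped product structure $\bar g = dr^2 + \sinh^2 r\, g_{\mathbb{S}^n}$, yielding $\overline{\nabla}_X V = \cosh r\, X$ and then tracing, is exactly the standard route and is carried out cleanly.
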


\begin{lem}\label{lem-Minkowski formula}
	Let $M$ be a smooth closed hypersurface in $\mathbb{H}^{n+1}$ and $\tilde{u} = \metric{V}{\nu}$ be the classical support function of $M$. Then
	\begin{equation}\label{eq-Minkowski formula}
		\int_M \cosh r p_m(\kappa) d \mu = \int_M \tilde{u} p_{m+1}(\kappa) d\mu, \quad m=0,1,\ldots,n-1.
	\end{equation}
\end{lem}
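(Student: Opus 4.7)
The plan is to derive the identity by exhibiting an exact divergence on $M$ and integrating by parts. First I would decompose the conformal Killing field along $M$ as $V = V^T + \tilde{u}\,\nu$, where $V^T$ is the tangential projection of $V$ and $\tilde{u} = \langle V,\nu\rangle$. Combining the conformal Killing identity \eqref{conf-vf} with the Weingarten relation $\langle \overline{\nabla}_X\nu,Y\rangle = h(X,Y)$ for tangent $X,Y$ (in the sign convention implicit in the excerpt, which one can verify via the horosphere computation $\nabla_i(X-\nu) = 0 \Leftrightarrow h_i{}^j = \delta_i{}^j$), I would derive the key tensor identity
$$\nabla_i V^T_j = \cosh r \, g_{ij} - \tilde{u}\, h_{ij}.$$
The only subtlety here is keeping track of the sign produced by the Weingarten term $\tilde{u}\langle\overline{\nabla}_{e_i}\nu,e_j\rangle$, which determines whether the final integral identity will have the correct form.

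Next I would contract this identity with the Newton-type tensor $\dot{p}_{m+1}^{ij}(\kappa)$. Because the second fundamental form $h_{ij}$ is Codazzi on a hypersurface in $\mathbb{H}^{n+1}$, the tensor $\dot{p}_{m+1}^{ij}(\kappa)$ is divergence-free on $M$ (the same reasoning cited in the excerpt for $\dot{p}_m^{ij}(\tilde{\kappa})$ applies verbatim, since the shifted and unshifted second fundamental forms differ by a parallel multiple of $g_{ij}$). Consequently,
$$\mathrm{div}_M\bigl(\dot{p}_{m+1}^{ij}(\kappa)\, V^T_j\bigr) = \dot{p}_{m+1}^{ij}(\kappa)\,\nabla_i V^T_j = \dot{p}_{m+1}^{ij}(\kappa)\bigl(\cosh r\, g_{ij} - \tilde{u}\, h_{ij}\bigr).$$

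Finally, I would invoke the trace identities $\dot{p}_{m+1}^{ij}(\kappa)\,g_{ij} = (m+1)\,p_m(\kappa)$ and $\dot{p}_{m+1}^{ij}(\kappa)\,h_{ij} = (m+1)\,p_{m+1}(\kappa)$ (the unshifted analogs of \eqref{pm-ij g-ij}--\eqref{pm-ij h-ij}, obtained by the same elementary symmetric-function computation), so that the right-hand side becomes $(m+1)\bigl(\cosh r\, p_m(\kappa) - \tilde{u}\, p_{m+1}(\kappa)\bigr)$. Integrating over the closed hypersurface $M$ kills the left-hand side by the divergence theorem, producing the asserted identity \eqref{eq-Minkowski formula}. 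The proof is mostly mechanical; the only step requiring genuine care is the sign bookkeeping in the formula for $\nabla_i V^T_j$, since every other ingredient is either a direct quotation of earlier material (Codazzi, conformal Killing identity, trace identities) or an application of Stokes' theorem.
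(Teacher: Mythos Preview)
Your proof is correct. The paper does not actually prove this lemma (it is cited as well-known from \cite{HLW20}), but it does give a direct proof of the shifted version (Lemma~\ref{lem-shifted Minkowski formula}) using exactly your method: since $V^T=\nabla\cosh r$, your identity $\nabla_i V^T_j=\cosh r\,g_{ij}-\tilde u\,h_{ij}$ is literally the paper's formula $\nabla_j\nabla_i\cosh r=\cosh r\,g_{ij}-\tilde u\,h_{ij}$, and the rest (contract with the divergence-free Newton tensor, apply the trace identities, integrate) is identical.
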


The following Lemma \ref{lem-shifted Minkowski formula} is a direct consequence of Lemma \ref{lem-Minkowski formula}, see \cite{HLW20}. However, we will give a direct proof of it.

\begin{lem}\label{lem-shifted Minkowski formula}
	Let $M$ be a smooth closed hypersurface in $\mathbb{H}^{n+1}$ and $\tilde{\kappa}_i = \kappa_i -1$ be the shifted principal curvatures.  Then
	\begin{equation}\label{eq-shifted Minkowski formula}
		\int_{M} (\cosh r -\tilde{u}) p_m( \tilde{\kappa}) d \mu =
		\int_{M} \tilde{u} p_{m+1}(\tilde{\kappa}) d \mu, \quad
		m =0,1,\ldots, n-1.
	\end{equation}
\end{lem}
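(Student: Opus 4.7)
The plan is to imitate the proof of the classical Minkowski formula (Lemma \ref{lem-Minkowski formula}) but apply integration by parts with respect to the Newton transformation of the \emph{shifted} second fundamental form, and then recombine $h_{ij} = \tilde{h}_{ij} + g_{ij}$ at the end to produce the extra $\cosh r \cdot p_m(\tilde{\kappa})$ contribution.

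First, I would decompose the ambient conformal Killing field $V = \sinh r\,\partial_r$ along $M$ as $V = V^T + \tilde{u}\nu$, where $V^T$ is the tangential projection and $\tilde{u} = \metric{V}{\nu}$. Using $\overline{\nabla}_X V = \cosh r \cdot X$ from \eqref{conf-vf} together with the Weingarten relation $\overline{\nabla}_i \nu = h_i{}^j \partial_j$ (which is the sign convention implicit in the paper, e.g.\ from the computation $\partial_i(X-\nu) = (\delta_i{}^j - h_i{}^j)\partial_j$ in Subsection 2.3), a short calculation gives
\begin{equation*}
	\nabla_i (V^T)_j = \cosh r \, g_{ij} - \tilde{u}\, h_{ij}.
\end{equation*}

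Next, since $\mathbb{H}^{n+1}$ has constant sectional curvature $-1$, the Codazzi-Mainardi equation says $h_{ij}$ is a Codazzi tensor, hence so is the shifted second fundamental form $\tilde{h}_{ij} = h_{ij} - g_{ij}$. By the divergence-free property of Newton transformations recalled just after Lemma \ref{lem-pm-shifted curvature}, we have $\nabla_j \dot{p}_{m+1}^{ij}(\tilde{\kappa}) = 0$ for each $m=0,1,\ldots,n-1$. Since $M$ is closed, integration by parts yields
\begin{equation*}
	0 = \int_M \dot{p}_{m+1}^{ij}(\tilde{\kappa})\,\nabla_i (V^T)_j\, d\mu
	= \int_M \dot{p}_{m+1}^{ij}(\tilde{\kappa})\bigl[\cosh r \, g_{ij} - \tilde{u}\,h_{ij}\bigr] d\mu.
\end{equation*}

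Finally, I would apply the trace formulas \eqref{pm-ij g-ij} and \eqref{pm-ij h-ij} from Lemma \ref{lem-pm-shifted curvature}, namely $\dot{p}_{m+1}^{ij}(\tilde{\kappa}) g_{ij} = (m+1) p_m(\tilde{\kappa})$ and $\dot{p}_{m+1}^{ij}(\tilde{\kappa}) \tilde{h}_{ij} = (m+1) p_{m+1}(\tilde{\kappa})$, together with $h_{ij} = \tilde{h}_{ij} + g_{ij}$, to get $\dot{p}_{m+1}^{ij}(\tilde{\kappa}) h_{ij} = (m+1)\bigl[p_{m+1}(\tilde{\kappa}) + p_m(\tilde{\kappa})\bigr]$. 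Substituting into the displayed equation above and dividing by $(m+1)$ yields
\begin{equation*}
	\int_M \cosh r \, p_m(\tilde{\kappa})\, d\mu = \int_M \tilde{u}\bigl[p_{m+1}(\tilde{\kappa}) + p_m(\tilde{\kappa})\bigr] d\mu,
\end{equation*}
which, after moving the $\tilde{u}\,p_m(\tilde{\kappa})$ term to the left, is exactly \eqref{eq-shifted Minkowski formula}. There is essentially no obstacle here; the only ``decision'' is to integrate by parts with the shifted Newton tensor $\dot{p}_{m+1}^{ij}(\tilde{\kappa})$ rather than with $\dot{p}_{m+1}^{ij}(\kappa)$, which automatically produces both terms on the left-hand side through the relation $h = \tilde{h} + g$.
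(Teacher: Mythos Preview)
Your proof is correct and is essentially the same as the paper's: since $V^T = \nabla\cosh r$ on $M$, your identity $\nabla_i(V^T)_j = \cosh r\,g_{ij} - \tilde u\,h_{ij}$ is exactly the paper's computation of $\nabla_j\nabla_i\cosh r$, and both then contract against the divergence-free tensor $\dot p_{m+1}^{ij}(\tilde\kappa)$ and use \eqref{pm-ij g-ij}--\eqref{pm-ij h-ij}. The only cosmetic difference is that the paper splits $h_{ij} = \tilde h_{ij} + g_{ij}$ before contracting while you do it after.
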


\begin{proof}
	By $\nabla_i \cosh r = \metric{V}{\partial_i X}$ and \eqref{conf-vf}, we have
	\begin{equation*}
		\nabla_j \nabla_i \cosh r = \nabla_j \metric{V}{\partial_i X}
		= \cosh r g_{ij} - \tilde{u}h_{ij}
		= \(\cosh r-\tilde{u}\) g_{ij} - \tilde{u}\tilde{h}_{ij}.
	\end{equation*}
	This together with \eqref{pm-ij g-ij} and \eqref{pm-ij h-ij} deduces
	\begin{align}
		\dot{p}_{m+1}^{ij}(\tilde{\kappa}) 	\nabla_j \nabla_i \cosh r
		=& (\cosh r -\tilde{u})\dot{p}_{m+1}^{ij}(\tilde{\kappa}) g_{ij}
		- \tilde{u} \dot{p}_{m+1}^{ij}(\tilde{\kappa}) \tilde{h}_{ij} \nonumber\\
		=& (m+1) \( (\cosh r-\tilde{u})p_m(\tilde{\kappa})- \tilde{u} p_{m+1}(\tilde{\kappa})\). \label{shifted-Min-fml-diverg-formula}
	\end{align}
	Since $\dot{p}_{m+1}^{ij}(\tilde{\kappa})$ is divergence-free, integration by parts and \eqref{shifted-Min-fml-diverg-formula} imply 
	\begin{align*}
		0=& -\frac{1}{m+1}\int_M  \nabla_j \dot{p}_{m+1}^{ij}(\tilde{\kappa})  \nabla_i \cosh r d\mu\\
		=&
		\frac{1}{m+1}\int_M \dot{p}_{m+1}^{ij}(\tilde{\kappa}) 	\nabla_j \nabla_i \cosh r d\mu\\
		=& \int_{M} (\cosh r -\tilde{u}) p_m( \tilde{\kappa}) d \mu-
		\int_{M} \tilde{u} p_{m+1}(\tilde{\kappa}) d \mu.
	\end{align*}
	This completes the proof of Lemma \ref{lem-shifted Minkowski formula}.
\end{proof}

In the classical convex geometric theory, researchers would like to transform the geometric quantities of convex bodies to functions of the support function on $\mathbb{S}^n$ by using the Gauss map. In the next part of Section \ref{sec-useful lemmas}, we will do the corresponding work in the hyperbolic setting.

For the radial function $\widehat{r}$ of a smooth convex hypersurface in $\mathbb{R}^{n+1}$, one has ``$\widehat{r} = \sqrt{\widehat{u}^2+ |D \widehat{u}|^2}$" (see \cite[Section 1.7]{Sch14}), here $\widehat{u}$ denotes the Euclidean support function. Conversely, $\widehat{u}$ can be viewed as a function defined on a smooth convex hypersurface in $\mathbb{R}^{n+1}$ by ``$\widehat{u} = \metric{\widehat{X}}{\widehat{\nu}} = \widehat{r}\metric{\partial \widehat{r}}{\widehat{\nu}}$". Motivated by the above formulas, for the smooth uniformly h-convex hypersurfaces in $\mathbb{H}^{n+1}$, we have the following Lemma \ref{lem-cosh r, tilde u, 1/phi}.

\begin{lem}\label{lem-cosh r, tilde u, 1/phi}
	Let $M$ be a smooth closed and uniformly h-convex hypersurface $M$ in $\mathbb{H}^{n+1}$,  let $r$ denote the radial function of $M$, and let $\tilde{u} = \metric{V}{\nu}$ denote the classical support function of $M$. Then
	\begin{align}
		\cosh r  =& \frac{1}{2} \frac{|D \g|^2}{\g}(z) + \frac{1}{2}\(\g(z) + \frac{1}{\g(z)}\), \label{coshr}\\
		\tilde{u}  =& \frac{1}{2} \frac{|D \g|^2}{\g}(z) + \frac{1}{2}\(\g(z) - \frac{1}{\g(z)}\), \label{tilde u}
	\end{align}
	where $\cosh r$ and $\tilde{u}$ were evaluated at $X(z) \in M$, and $\g$ was evaluated at $z \in \mathbb{S}^n$. 
	Conversely, for the horospherical support function $u(z)$ of $M$ defined on $\mathbb{S}^n$, we have
	\begin{equation}\label{1/phi, coshr-u}
		e^{-u(z)} = \frac{1}{\g(z)} = \cosh r - \tilde{u}.
	\end{equation} 
\end{lem}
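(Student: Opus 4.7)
The plan is to read off both $\cosh r$ and $\tilde u$ as specific Lorentzian components of the position vector $X(z)$ and the outward unit normal $\nu(z)$ that are already given by Lemma \ref{lem-X,nu,di X}.

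First I would prove \eqref{coshr}. Since $X \in \mathbb{H}^{n+1}$ admits the warped-product form $X = (\sinh r\,\theta, \cosh r)$, its last Minkowski coordinate equals $\cosh r$, equivalently $\cosh r = -\metric{X}{(0,1)}$. Applied to the explicit expression \eqref{X(z)}, only the summands $\tfrac{1}{2}\g(-z,1)$ and $\tfrac{1}{2}\bigl(\tfrac{|D\g|^2}{\g}+\tfrac{1}{\g}\bigr)(z,1)$ contribute to this last slot (the term $(D\g,0)$ has vanishing last coordinate), giving \eqref{coshr} directly.

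Next I would prove \eqref{tilde u} by first rewriting the conformal Killing field $V=\sinh r\,\partial_r$ intrinsically. Using $X=\cosh r\,N+\sinh r\,(\theta,0)$ with $N=(0,1)$, I differentiate to get $\partial_r X=\sinh r\,N+\cosh r\,(\theta,0)$, and then
\begin{equation*}
V = \sinh r\,\partial_r = \sinh^2 r\,N + \sinh r\cosh r\,(\theta,0) = \cosh r\,X - N.
\end{equation*}
Since $\metric{X}{\nu}=0$ on $M$, this gives
\begin{equation*}
\tilde u = \metric{V}{\nu} = \cosh r\metric{X}{\nu} - \metric{N}{\nu} = -\metric{N}{\nu},
\end{equation*}
i.e., $\tilde u$ equals the last Minkowski coordinate of $\nu(z)$. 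Reading off that coordinate from expression \eqref{nu} yields \eqref{tilde u}.

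Finally, subtracting \eqref{tilde u} from \eqref{coshr} cancels the $\tfrac{1}{2}\tfrac{|D\g|^2}{\g}$ and $\tfrac{1}{2}\g$ terms and leaves $\cosh r-\tilde u = \tfrac{1}{\g(z)}=e^{-u(z)}$, establishing \eqref{1/phi, coshr-u}. There is essentially no obstacle here beyond the identification $V=\cosh r\,X-N$, and the computation is a clean bookkeeping exercise once the formulas of Lemma \ref{lem-X,nu,di X} are in hand; uniform h-convexity of $M$ is used only to ensure that $X(z)$ and $\nu(z)$ genuinely parametrize $M$ via the horospherical Gauss map.
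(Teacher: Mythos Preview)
Your proof is correct. The derivation of \eqref{coshr} is essentially the same as the paper's: both read off the last coordinate of $X(z)$ from \eqref{X(z)}. For \eqref{tilde u}, however, you take a cleaner route. The paper computes $\tilde u$ via $\tilde u=\sinh r\metric{\partial_r}{\nu}=-\sinh r\metric{\partial_r}{X-\nu}$, then invokes \eqref{X-nu} and the auxiliary identity $\sinh r\metric{\theta}{z}=\tfrac12\tfrac{|D\g|^2}{\g}-\tfrac12(\g-\tfrac1\g)$ to reduce to \eqref{tilde u}. Your argument instead establishes the algebraic identity $V=\cosh r\,X-N$ once and for all, which immediately gives $\tilde u=-\metric{N}{\nu}$, i.e.\ the last Minkowski coordinate of $\nu$; then \eqref{tilde u} is read off directly from \eqref{nu}. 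This avoids the intermediate $\sinh r\metric{\theta}{z}$ computation and makes the parallelism between \eqref{coshr} and \eqref{tilde u} transparent (both are last-slot readings, of $X$ and $\nu$ respectively). The paper's route, on the other hand, has the minor advantage of not needing the explicit formula \eqref{nu} for $\nu$, relying only on \eqref{X(z)} and \eqref{X-nu}. Both yield \eqref{1/phi, coshr-u} by the same subtraction.
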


\begin{proof}
	For the position vector $X(z) = G^{-1}(z) \in M$,  there exists $\theta \in \mathbb{S}^n$ such that
	\begin{align*}
		X(z) =& (\sinh r \theta, \cosh r),\\
		X(z) =& \left( \left( \frac{1}{2} \frac{|D \g|^2}{\g} - \frac{1}{2}\(\g- \frac{1}{\g} \)\right)z
		-D \g, \frac{1}{2} \frac{|D \g|^2}{\g} + \frac{1}{2} \(\g + \frac{1}{\g} \) \right),
	\end{align*}
	where the second formula is \eqref{X(z)}. Comparing these two formulas, we have
	\begin{align}
		\cosh r =& \frac{1}{2} \frac{|D \g|^2}{\g} + \frac{1}{2} \(\g + \frac{1}{\g} \), \nonumber\\
		\sinh r \metric{\theta}{z} =& \frac{1}{2} \frac{|D \g|^2}{\g} - \frac{1}{2}\(\g - \frac{1}{\g}\). \label{coshr, sinhr<theta,z>}
	\end{align}
	Thus we obtain \eqref{coshr}.
	
	Since $\partial_r = \partial_r X = (\cosh r \theta, \sinh r)$, we know $\metric{\partial_r}{X} =0$. Then by \eqref{X-nu}, we have
	\begin{align}
		\tilde{u} =& \metric{V}{\nu} = \sinh r \metric{\partial_r}{\nu} 
		=-\sinh r \metric{\partial_r}{X-\nu}  \nonumber\\
		=& -\frac{\sinh r }{\g} \metric{\partial_r}{(z,1)}
		= \frac{1}{\g}(\sinh^2 r - \cosh r \sinh r\metric{ \theta}{z}). \label{u-r-theta}
	\end{align}
	Using \eqref{u-r-theta}, \eqref{coshr, sinhr<theta,z>} and \eqref{coshr}, we have
	\begin{equation*}
		\tilde{u} = \frac{1}{\g} \( \cosh^2 r-1- \cosh r \( \cosh r-\g \)  \)
		=\cosh r- \frac{1}{\g}
		=\frac{1}{2} \frac{|D \g|^2}{\g} + \frac{1}{2} \(\g - \frac{1}{\g} \).
	\end{equation*}
	Thus we obtain \eqref{tilde u}. Formula \eqref{1/phi, coshr-u} follows directly from \eqref{coshr} and \eqref{tilde u}. Then we complete the proof of Lemma \ref{lem-cosh r, tilde u, 1/phi}.
\end{proof}

\begin{cor}\label{cor-coshrnu-uX}
	Let $M$ be a smooth uniformly h-convex hypersurface in $\mathbb{H}^{n+1}$ with horospherical support function $u(z)$. Then
	\begin{align}
		\cosh r \nu - \widetilde{u} X =& - \( \frac{D \g}{\g}+z,0\), \label{coshr nu-uX}\\
		\frac{\widetilde{u} X - \cosh r \nu}{\cosh r -\widetilde{u}} =& \( D \g+ \g z,0 \) . \label{phi(coshr nu-uX)}                                                           
	\end{align}
\end{cor}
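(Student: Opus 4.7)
The proof plan is a direct calculation that hinges on two ingredients already established: the identity $X-\nu=\frac{1}{\g}(z,1)$ from \eqref{X-nu}, and the key relation $\cosh r-\widetilde{u}=\frac{1}{\g}$ from \eqref{1/phi, coshr-u} of Lemma \ref{lem-cosh r, tilde u, 1/phi}.

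First, I would rewrite the left-hand side of \eqref{coshr nu-uX} by eliminating $\nu$ in favor of $X$: using $\nu=X-\frac{1}{\g}(z,1)$, I obtain
\begin{equation*}
\cosh r\,\nu-\widetilde{u}\,X=(\cosh r-\widetilde{u})X-\frac{\cosh r}{\g}(z,1)=\frac{1}{\g}X-\frac{\cosh r}{\g}(z,1),
\end{equation*}
where the second step uses \eqref{1/phi, coshr-u}. This reduces the identity to a statement purely in terms of $\g$, $D\g$, and the basis vectors $(z,1),(-z,1),(D\g,0)$ of $\mathbb{R}^{n+1,1}$ adapted at the point $z\in\mathbb{S}^n$.

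Next, I would substitute the explicit expression \eqref{X(z)} for $X(z)$ and \eqref{coshr} for $\cosh r$ into this formula, collect the coefficients of $(-z,1)$, $(z,1)$, and $(D\g,0)$ separately, and show that all $(z,1)$-contributions cancel except for a clean residue. Concretely, the $(-z,1)$-component contributes $\frac{1}{2}$, the $(D\g,0)$-component contributes $-\frac{1}{\g}$, and the $(z,1)$-coefficient simplifies to $-\frac{1}{2}$ after cancellation of the $\frac{|D\g|^2}{\g^2}$ and $\frac{1}{\g^2}$ terms. Combining $\frac{1}{2}(-z,1)-\frac{1}{2}(z,1)=(-z,0)$ yields
\begin{equation*}
\cosh r\,\nu-\widetilde{u}\,X=-(z,0)-\left(\tfrac{D\g}{\g},0\right)=-\left(\tfrac{D\g}{\g}+z,\,0\right),
\end{equation*}
which is \eqref{coshr nu-uX}.

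For the second identity \eqref{phi(coshr nu-uX)}, I would simply multiply both sides of \eqref{coshr nu-uX} by $-\g$ and invoke \eqref{1/phi, coshr-u} once more to rewrite the factor $\g$ as $\frac{1}{\cosh r-\widetilde{u}}$. This gives $\frac{\widetilde{u}X-\cosh r\,\nu}{\cosh r-\widetilde{u}}=(D\g+\g z,0)$, as claimed. The whole argument is a one-step substitution; there is no genuine obstacle, only bookkeeping of the three basis directions in $\mathbb{R}^{n+1,1}$, and the cancellation that produces $-\frac{1}{2}$ as the coefficient of $(z,1)$ is the only spot that requires careful algebra.
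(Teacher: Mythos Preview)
Your proof is correct and follows essentially the same route as the paper: both reduce $\cosh r\,\nu-\widetilde{u}\,X$ to $\frac{1}{\g}\bigl(X-\cosh r\,(z,1)\bigr)$ via \eqref{X-nu} and \eqref{1/phi, coshr-u}, then substitute \eqref{X(z)} and \eqref{coshr} to evaluate $X-\cosh r\,(z,1)=-(D\g+\g z,0)$, and derive \eqref{phi(coshr nu-uX)} by multiplying through by $-\g$. The only cosmetic difference is that the paper groups the first step as $\cosh r(\nu-X)+(\cosh r-\widetilde{u})X$ rather than substituting $\nu=X-\frac{1}{\g}(z,1)$ directly, but this is the same identity.
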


\begin{proof}
	Using \eqref{1/phi, coshr-u} and \eqref{X-nu}, we have
	\begin{equation}\label{coshrnu-uX-1}
		\cosh r \nu- \widetilde{u} X
		= \cosh r \( \nu- X\) + \( \cosh r- \widetilde{u} \)X
		= \frac{X - \cosh r (z,1)}{\g}. 
	\end{equation}
	Formulas \eqref{X(z)} and \eqref{coshr} yield that
	\begin{align}
		X - \cosh r (z,1) =&
		\frac{1}{2} \g \(-z,1\)+ \frac{1}{2} \( \frac{|D \g|^2}{\g} +\frac{1}{\g}  \)\(z,1\)- \( D \g,0 \) \nonumber\\
		&-\frac{1}{2}\( \frac{|D \g|^2}{\g}+ \g +\frac{1}{\g}   \)\(z,1\) \nonumber\\
		=&- \(D \g +\g z,0\). \label{X-coshr (z,1)}
	\end{align}
	Then formula \eqref{coshr nu-uX} follows by substituting \eqref{X-coshr (z,1)} into \eqref{coshrnu-uX-1}, and thus formula \eqref{phi(coshr nu-uX)} follows from \eqref{coshr nu-uX} and \eqref{1/phi, coshr-u}. We complete the proof of Corollary \ref{cor-coshrnu-uX}.
\end{proof}

We call a hypersurface  $M \subset \mathbb{H}^{n+1}$ \emph{star-shaped} if it can be represented as a graph of a positive radial function $r(\theta)$ defined on $\mathbb{S}^{n}$. In this case, for any $\theta \in \mathbb{S}^n$, there exist a unique point $X(\theta)$ on $M$ and a positive number $r$ such that $X(\theta) = \(  \sinh r \theta, \cosh r \)$.  It is well-known that a closed hypersurface $M \subset \mathbb{H}^{n+1}$ is star-shaped if and only if $\tilde{u} = \metric{V}{\nu}$ is positive on $M$. 
\begin{cor}\label{cor-star-shape}
	Let $M = \partial \Omega$ be a smooth closed and uniformly h-convex hypersurface in $\mathbb{H}^{n+1}$. Assume that $\Omega$ contains the origin in its interior. Then $M$ is star-shaped.
\end{cor}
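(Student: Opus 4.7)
The plan is to reduce the star-shapedness to the positivity of $\widetilde{u} = \metric{V}{\nu}$ on $M$ (the standard criterion stated in the paragraph preceding the corollary), and then to verify that positivity using formula \eqref{tilde u} from Lemma \ref{lem-cosh r, tilde u, 1/phi}. The whole argument hinges on showing that the horospherical support function satisfies $\g(z) = e^{u(z)} > 1$ on all of $\mathbb{S}^n$, which is where the assumption on the origin enters.

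First I would establish the pointwise bound $\g(z) > 1$. Since $\Omega$ is h-convex, for every $z \in \mathbb{S}^n$ we have $\Omega \subset \overline{B}_z(u(z))$ by the definition of the horospherical support function. Because $N = (0,1)$ lies in the interior of $\Omega$, and the interior of a subset is contained in the interior of the ambient set, $N$ must lie in the open horo-ball $B_z(u(z))$. The characterization \eqref{Bz(r)} then reads $-1 = \metric{N}{(z,1)} > -e^{u(z)}$, which is exactly $\g(z) > 1$. This step only uses h-convexity together with the origin being an interior point; smoothness and uniform h-convexity are not needed here.

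Next I would plug this into the expression \eqref{tilde u}, namely
\begin{equation*}
    \widetilde{u} \;=\; \frac{1}{2}\,\frac{|D\g|^{2}}{\g} + \frac{1}{2}\!\left(\g - \frac{1}{\g}\right).
\end{equation*}
Both summands on the right are nonnegative once $\g > 1$: the first because $\g > 0$, and the second because $t \mapsto t - t^{-1}$ is strictly positive for $t > 1$. Moreover the second term is strictly positive, so $\widetilde{u} > 0$ at every point of $M$. (Smoothness and uniform h-convexity of $\Omega$ are what make formula \eqref{tilde u} applicable, via Lemma \ref{lem-cosh r, tilde u, 1/phi}.)

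Finally, invoking the equivalence recalled just before the statement of Corollary \ref{cor-star-shape}, the positivity of $\widetilde{u}$ on $M$ is equivalent to $M$ being star-shaped with respect to the origin, which finishes the argument. I do not expect any real obstacle here: the content is essentially packaged in Lemma \ref{lem-cosh r, tilde u, 1/phi} and definition \eqref{Bz(r)}; the only mildly subtle point is the passage from ``$N$ in the interior of $\Omega$'' to the strict inequality $\g > 1$, which is handled by the elementary interior-monotonicity observation above.
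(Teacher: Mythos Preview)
Your proof is correct and follows essentially the same approach as the paper: use the assumption that the origin lies in the interior to obtain $\g(z)>1$, then invoke formula \eqref{tilde u} to conclude $\widetilde{u}>0$, which gives star-shapedness. The only difference is that you spell out the step $\g(z)>1$ via \eqref{Bz(r)} a bit more explicitly than the paper does.
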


\begin{proof}
	Since $\Omega$ contains the origin in its interior, we have $u(z) >0$ for all $z \in \mathbb{S}^n$, where $u(z)$ is the horospherical support function of $\Omega$. Hence $\g(z) = e^{u(z)} >1$, and then \eqref{tilde u} implies 
	\begin{equation*}
		\tilde{u}\(X(z) \) = \frac{1}{2} \frac{|D \g|^2}{\g}(z) + \frac{1}{2}\(\g(z) - \frac{1}{\g(z)}\)>0
	\end{equation*}
	for all $z \in \mathbb{S}^n$,
	which is equivalent to the fact that $M$ is star-shaped. 
\end{proof}

There are formulas $\widehat{\nabla}_i \widehat{\nu} = \widehat{h}_{i}{}^j \partial_j \widehat{X}$ and $\widehat{\nabla}_j \widehat{\nabla}_i \widehat{\nu} = \widehat{\nabla}^l \widehat{h}_{ij} \partial_l \widehat{X} - \widehat{h}_{i}{}^l \widehat{h}_{lj} \widehat{\nu}$ on any smooth hypersurfaces $\widehat{M}$ in $\mathbb{R}^{n+1}$ (see e.g., \cite[p. 9]{BIS20}), where $\widehat{X}$, $\widehat{\nu}$, $\widehat{\nabla}$ and $\widehat{h}_{ij}$ denote the position vector, outward unit normal, connection and second fundamental form of $\widehat{M} \subset \mathbb{R}^{n+1}$ respectively. In the following Lemma \ref{lem-zi-zij}, we derive the corresponding formulas in the hyperbolic setting.
\begin{lem}\label{lem-zi-zij}
	Let $M$ be a smooth hypersurface in $\mathbb{H}^{n+1} \subset \mathbb{R}^{n+1}$. In this lemma, we use the connection of the pullback bundle induced by the inclusion map $M \subset \mathbb{R}^{n+1,1}$, i.e.
	\begin{equation*}
		\nabla_i U := \widetilde{\nabla}_i U, \quad \nabla_j \nabla_i U : = \widetilde{\nabla}_j (\widetilde{\nabla}_i U)- \widetilde{\nabla}_{\nabla_j \partial_i} U,
	\end{equation*}
	where $\partial_i$, $\partial_j$ are tangential vector fields on $M$, and $U$ is a vector field in $\mathbb{R}^{n+1,1}$ defined on $M$. Then we have
	\begin{align}
		\nabla_i (X-\nu) =&-\tilde{h}_{i}{}^j \partial_j X, \label{di-X-nu} \\
		{\nabla}_i (z,1) =& - \g \tilde{h}_{i}{}^j \partial_j X + \g^2 \tilde{h}_{i}{}^j \metric{V}{\partial_j X}(X-\nu), \label{zi}\\
		{\nabla}_j {\nabla}_i (z,1) =& - \g \nabla^k \tilde{h}_{ij} \partial_k X +\g \tilde{h}_{i}{}^{k} \tilde{h}_{kj} \nu -\g^2 \tilde{h}_{i}{}^k \tilde{h}_{j}{}^{l} \metric{V}{\partial_l X} \partial_k X \nonumber\\
	 	&-\g^2 \tilde{h}_{i}{}^{k} \tilde{h}_{j}{}^l \metric{V}{\partial_k X} \partial_l X + T_{ij} (X-\nu), \label{zij}
	\end{align}
	where we viewed $\g$ as a function defined on $M$ by \eqref{1/phi, coshr-u} and set
	\begin{equation*}
		T_{ij} = -\g \tilde{h}_{ij}+ \g^2 \nabla_j (\tilde{h}_{i}{}^k \metric{V}{\partial_k X}) + 2 \g^3 \tilde{h}_{i}{}^k \tilde{h}_{j}{}^l \metric{V}{\partial_k X} \metric{V}{\partial_l X}.
	\end{equation*}
\end{lem}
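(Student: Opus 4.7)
The plan is to derive all three formulas by successively differentiating the single identity $(z,1) = \g(X - \nu)$, which is just \eqref{X-nu} read in reverse, while viewing $\g = (\cosh r - \widetilde{u})^{-1}$ as a smooth function on $M$ via \eqref{1/phi, coshr-u}. The connection $\nabla$ is the pullback of the flat ambient $\widetilde{\nabla}$ on $\mathbb{R}^{n+1,1}$; on tangent fields it relates to the connection of $\mathbb{H}^{n+1}$ by the Gauss formula $\widetilde{\nabla}_Y Z = \overline{\nabla}_Y Z + \metric{Y}{Z}X$ (since $X$ is the outward normal of $\mathbb{H}^{n+1}$ with $\metric{X}{X}=-1$), and then to the Levi-Civita connection of $M$ by the hypersurface Gauss equation, giving the bookkeeping identities
\begin{equation*}
\widetilde{\nabla}_j \partial_k X = \nabla_j \partial_k - h_{jk}\nu + g_{jk}X, \qquad \widetilde{\nabla}_i \nu = h_i{}^j \partial_j X.
\end{equation*}

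Formula \eqref{di-X-nu} is then immediate: $\widetilde{\nabla}_i(X-\nu) = \partial_i X - h_i{}^j \partial_j X = -\tilde{h}_i{}^j \partial_j X$. For \eqref{zi}, I apply the product rule to $(z,1) = \g(X-\nu)$: the second factor contributes $-\g \tilde{h}_i{}^j \partial_j X$ by \eqref{di-X-nu}, while the scalar derivative $\partial_i \g$ is computed by differentiating $\g^{-1} = \cosh r - \widetilde{u}$. Here $\partial_i \cosh r = \metric{V}{\partial_i X}$ follows from $\overline{\nabla}\cosh r = V$, and $\partial_i \widetilde{u} = h_i{}^k \metric{V}{\partial_k X}$ follows from differentiating $\widetilde{u} = \metric{V}{\nu}$ using \eqref{conf-vf} for the variation of $V$ and Weingarten for the variation of $\nu$. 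The difference gives $\partial_i \g = \g^2 \tilde{h}_i{}^k \metric{V}{\partial_k X}$, which yields \eqref{zi}.

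For \eqref{zij}, I apply the definition $\nabla_j \nabla_i (z,1) = \widetilde{\nabla}_j(\widetilde{\nabla}_i(z,1)) - \widetilde{\nabla}_{\nabla_j \partial_i}(z,1)$ and expand \eqref{zi} by the product rule. Contributions arise from differentiating: (a) the scalar $\g$, reusing the identity from the previous step; (b) the tensor $\tilde{h}_i{}^k$, where the partial derivative together with the $\Gamma^{\ell}_{ij}$ correction from the $\widetilde{\nabla}_{\nabla_j \partial_i}$ piece assembles into the covariant derivative $\nabla_j \tilde{h}_i{}^k$; (c) the vector $\partial_k X$, producing the normal and $X$-components above plus tangential Christoffel terms; (d) the vector $X-\nu$, producing $-\tilde{h}_j{}^{\ell}\partial_{\ell} X$ by \eqref{di-X-nu}; and (e) the scalar $V_k := \metric{V}{\partial_k X}$, for which \eqref{conf-vf} and the Gauss formula yield $\nabla_j V_k = \cosh r\, g_{jk} - \widetilde{u} h_{jk}$. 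Splitting $X = (X-\nu)+\nu$ and sorting by components, the Codazzi symmetry $\nabla_j \tilde{h}_i{}^k = \nabla^k \tilde{h}_{ij}$ produces the stated tangential coefficient, the $\nu$-coefficient is $\g \tilde{h}_i{}^k \tilde{h}_{kj}$, and the $(X-\nu)$-coefficients—one from $g_{jk}X$, one from $2\g(\partial_j \g) \tilde{h}_i{}^k V_k$, and one from $\g^2 \nabla_j(\tilde{h}_i{}^k V_k)$ with $\nabla_j V_k$ substituted—combine to give exactly $T_{ij}$.

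The main obstacle is the Christoffel-symbol bookkeeping in step (b): one must verify that the non-tensorial pieces of $\partial_j \tilde{h}_i{}^k$ and of $\widetilde{\nabla}_j \partial_k X$ cancel precisely against $\widetilde{\nabla}_{\nabla_j \partial_i}(z,1)$ to deliver the covariant derivatives $\nabla_j \tilde{h}_i{}^k$ and $\nabla_j V_k$ in both the tangential and $(X-\nu)$ components, with no spurious remainder. Once that cancellation is organized and the decomposition $X = (X-\nu)+\nu$ is made, the verification of \eqref{zij} reduces to a term-by-term comparison with the asserted formula.
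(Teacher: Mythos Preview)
Your proposal is correct and follows essentially the same route as the paper: differentiate $(z,1)=\g(X-\nu)$ once for \eqref{zi} and then differentiate \eqref{zi} term by term, using the Gauss and Weingarten formulas together with the identity $\nabla_i\g^{-1}=-\tilde{h}_i{}^k\metric{V}{\partial_k X}$ and the Codazzi equation. The paper sidesteps your ``main obstacle'' by working in normal coordinates for $M$ at the point, so that $\nabla_j\partial_i=0$ and the Christoffel bookkeeping you describe disappears; since the final formulas are tensorial, this is equivalent to your explicit tracking.
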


\begin{proof}
	In the proof of Lemma \ref{lem-zi-zij}, we will use \eqref{1/phi, coshr-u} frequently without mention. Let us work in normal coordinates for $M$ about $X \in M$. By a direct calculation, we have
	\begin{equation*}
		\nabla_i (X-\nu) = \partial_i X -h_{i}{}^j \partial_j X = -\tilde{h}_{i}{}^j \partial_j X.
	\end{equation*}
	Thus we obtain \eqref{di-X-nu}. 
	
	By using \eqref{conf-vf}, we have
	\begin{equation}\label{1/phi i}
		\nabla_i (\cosh r -\tilde{u}) = \metric{V}{\partial_i X} - \nabla_i \metric{V}{\nu}
		=\metric{V}{\partial_i X} - h_{i}{}^j\metric{V}{ \partial_j X}
		= -\tilde{h}_{i}{}^j \metric{V}{\partial_j X}.
	\end{equation}
	By the definition of the horospherical Gauss map \eqref{X-nu} and \eqref{1/phi, coshr-u}, we have
	\begin{equation*}
		(z,1) = \frac{X-\nu}{\cosh r - \tilde{u}}.
	\end{equation*}
	This together with \eqref{di-X-nu} and \eqref{1/phi i} yields
	\begin{align*}
		\nabla_i (z,1) =& \nabla_i \(\frac{X-\nu}{\cosh r - \tilde{u}} \)
		= \g \nabla_i (X -\nu) - \g^2 \nabla_i (\cosh r - \tilde{u})(X -\nu)\\
		=&- \g \tilde{h}_{i}{}^j \partial_j X + \g^2 \tilde{h}_{i}{}^j \metric{V}{\partial_j X} (X- \nu),
	\end{align*}
	thus we obtain \eqref{zi}.
	
	Differentiating the above formula with respect to  $\partial_j$ gives
	\begin{equation}\label{(z,1)ij}
		\nabla_j \nabla_i (z,1) =- \nabla_j \left( \frac{\tilde{h}_{i}{}^k\partial_k X}{\cosh r -\tilde{u}}\right)
		+\nabla_j \left(\frac{\tilde{h}_{i}{}^k \metric{V}{\partial_k X}(X-\nu)}{(\cosh r - \tilde{u})^2} \right).
	\end{equation}
	 Since
	\begin{align*}
		\metric{\widetilde{\nabla}_j \partial_i X}{X}=& -\metric{\partial_i X}{\partial_j X}=-g_{ij},\\
		\metric{\widetilde{\nabla}_j \partial_i X}{\nu} =& -h_{j}{}^l \metric{\partial_i X}{\partial_l X} = -h_{ij},
	\end{align*}
	we have
	\begin{equation}\label{Xij}
		\widetilde{\nabla}_{j} \partial_i X = -h_{ij} \nu +g_{ij} X = -\tilde{h}_{ij} \nu +g_{ij} (X-\nu).
	\end{equation}
	For the first term in the right-hand side of \eqref{(z,1)ij}, by using the Codazzi equation, \eqref{1/phi i} and \eqref{Xij}, we have
	\begin{align}
		- \nabla_j \left( \frac{\tilde{h}_{i}{}^k\partial_k X}{\cosh r -\tilde{u}}\right)
		=&- \frac{\nabla^k \tilde{h}_{ij} \partial_k X}{\cosh r- \tilde{u}}
		- \frac{\tilde{h}_{i}{}^k (- \tilde{h}_{kj} \nu + g_{kj}(X-\nu))}{\cosh r - \tilde{u}}
		+ \frac{\tilde{h}_{i}{}^k \partial_k X (- \tilde{h}_{j}{}^l \metric{V}{\partial_l X})}{(\cosh r - \tilde{u})^2} \nonumber\\
		=& -\g \nabla^k \tilde{h}_{ij} \partial_k X + \g \tilde{h}_{i}{}^k \tilde{h}_{kj} \nu
		- \g \tilde{h}_{ij}(X- \nu) - \g^2 \tilde{h}_{i}{}^k \tilde{h}_{j}{}^l \metric{V}{\partial_l X} \partial_k X.\label{zij-p1}
	\end{align}
	For the second term in the right-hand side of \eqref{(z,1)ij}, by using \eqref{di-X-nu} and \eqref{1/phi i}, we have
	\begin{align}
		\nabla_j \left(\frac{\tilde{h}_{i}{}^k \metric{V}{\partial_k X}(X-\nu)}{(\cosh r - \tilde{u})^2} \right)
		=& \g^2 \nabla_j (\tilde{h}_{i}{}^k \metric{V}{\partial_k X})(X-\nu)
		+ \g^2 \tilde{h}_{i}{}^k \metric{V}{\partial_k X}(- \tilde{h}_{j}{}^l \partial_l X) \nonumber\\
		&+ 2 \g^3 \tilde{h}_{i}{}^k \metric{V}{\partial_k X} \tilde{h}_{j}{}^l \metric{V}{\partial_l X} (X-\nu) \nonumber\\
		=&\left( \g^2 \nabla_j (\tilde{h}_{i}{}^k \metric{V}{\partial_k X}) + 2 \g^3 \tilde{h}_{i}{}^k \tilde{h}_{j}{}^l \metric{V}{\partial_k X} \metric{V}{\partial_l X}\right) (X- \nu) \nonumber\\
	  	&-\g^2 \tilde{h}_{i}{}^k \tilde{h}_{j}{}^l \metric{V}{\partial_k X} \partial_l X.\label{zij-p2}
	\end{align}
	Therefore formula \eqref{zij} follows by inserting \eqref{zij-p1} and \eqref{zij-p2} into \eqref{(z,1)ij}.
	
	Then we complete the proof of Lemma \ref{lem-zi-zij}.
\end{proof}

For a smooth hypersurface $\widehat{M}$ in Euclidean space, one has (see e.g. \cite[Equation (3.7)]{SX19})
\begin{align*}
	\widehat{\nabla}_i \widehat{u} =& \widehat{h}_{i}{}^j \metric{\widehat{X}}{\partial_j \widehat{X}},\\
	\widehat{\nabla}_j \widehat{\nabla}_i \widehat{u} =& \widehat{\nabla}^k \widehat{h}_{ij} \metric{\widehat{X}}{\partial_k \widehat{X}} +\widehat{h}_{ij} -\widehat{h}_{i}{}^k \widehat{h}_{kj} \widehat{u},
\end{align*}
where $\widehat{u}:= \metric{\widehat{X}}{\widehat{\nu}}$ is the support function of $\widehat{M}$. The following Lemma \ref{lem-1/phi i, 1/phi ij} gives the corresponding formulas for smooth hypersurfaces in $\mathbb{H}^{n+1}$.

\begin{lem}\label{lem-1/phi i, 1/phi ij}
	Let $M$ be a smooth hypersurface in $\mathbb{H}^{n+1}$. Then
	\begin{align}
		\nabla_i \cosh r =& \metric{V}{\partial_i X}, \label{di cosh r}\\
		\nabla_i (\cosh r -\tilde{u})=& -\tilde{h}_{i}{}^j \metric{V}{\partial_j X}, \label{1/phi i new}\\
		\nabla_j \nabla_i (\cosh r - \tilde{u})=& -\nabla^k \tilde{h}_{ij} \metric{V}{\partial_k X} 
		-(\cosh r -\tilde{u})\tilde{h}_{ij} +\tilde{h}_{i}{}^k \tilde{h}_{kj} \tilde{u} . \label{1/phi ij}
	\end{align}
\end{lem}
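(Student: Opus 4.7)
\textbf{Proof plan for Lemma \ref{lem-1/phi i, 1/phi ij}.}

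The plan is to deduce all three formulas from the two basic facts about $V=\sinh r\,\partial_r$: namely, $V$ is the ambient gradient of $\cosh r$ on $\mathbb{H}^{n+1}$, and it is a conformal Killing field satisfying $\metric{\overline{\nabla}_X V}{Y}=\cosh r\metric{X}{Y}$ from Lemma~\ref{lem-conf-vf-direvative}. For \eqref{di cosh r}, observe that in polar coordinates at the origin, $\overline{\nabla}(\cosh r)=\partial_r(\cosh r)\partial_r=\sinh r\,\partial_r=V$, so $\nabla_i\cosh r=\metric{\overline{\nabla}\cosh r}{\partial_i X}=\metric{V}{\partial_i X}$.

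For \eqref{1/phi i new}, I would first compute $\nabla_i\tilde{u}$: using the Weingarten relation $\overline{\nabla}_{\partial_i X}\nu=h_i{}^j\partial_j X$ together with the conformal Killing property, one gets
\begin{equation*}
\nabla_i\tilde{u}=\metric{\overline{\nabla}_{\partial_i X}V}{\nu}+\metric{V}{\overline{\nabla}_{\partial_i X}\nu}=\cosh r\metric{\partial_i X}{\nu}+h_i{}^j\metric{V}{\partial_j X}=h_i{}^j\metric{V}{\partial_j X}.
\end{equation*}
Subtracting this from $\nabla_i\cosh r=\metric{V}{\partial_i X}$ and using $h_i{}^j=\tilde{h}_i{}^j+\delta_i{}^j$ yields \eqref{1/phi i new}. (This is the same identity as \eqref{1/phi i} used earlier in the proof of Lemma~\ref{lem-zi-zij}, so one can simply cite it.)

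For \eqref{1/phi ij}, the plan is to differentiate \eqref{1/phi i new} and reduce everything to known pieces. Working in normal coordinates at the reference point, write
\begin{equation*}
\nabla_j\nabla_i(\cosh r-\tilde{u})=-\nabla_j\tilde{h}_i{}^k\metric{V}{\partial_k X}-\tilde{h}_i{}^k\nabla_j\metric{V}{\partial_k X}.
\end{equation*}
The first term becomes $-\nabla^k\tilde{h}_{ij}\metric{V}{\partial_k X}$ by the Codazzi equation applied to the Codazzi tensor $\tilde{h}$. For the second term, combine the conformal Killing identity with the Gauss formula $\overline{\nabla}_{\partial_j X}\partial_k X=-h_{jk}\nu$ (valid in $\mathbb{H}^{n+1}$ at the normal-coordinate point, since we use the pullback connection from the ambient space) to obtain
\begin{equation*}
\nabla_j\metric{V}{\partial_k X}=\metric{\overline{\nabla}_{\partial_j X}V}{\partial_k X}+\metric{V}{\overline{\nabla}_{\partial_j X}\partial_k X}=\cosh r\,g_{jk}-h_{jk}\tilde{u}.
\end{equation*}
Substituting and using $h_{jk}=\tilde{h}_{jk}+g_{jk}$ gives
\begin{equation*}
-\tilde{h}_i{}^k(\cosh r\,g_{jk}-h_{jk}\tilde{u})=-\cosh r\,\tilde{h}_{ij}+\tilde{h}_i{}^k\tilde{h}_{kj}\tilde{u}+\tilde{h}_{ij}\tilde{u},
\end{equation*}
which combines into $-(\cosh r-\tilde{u})\tilde{h}_{ij}+\tilde{h}_i{}^k\tilde{h}_{kj}\tilde{u}$, producing \eqref{1/phi ij}.

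There is no real obstacle here; the lemma is essentially a bookkeeping exercise in which the only ingredients are the conformal Killing identity, the Codazzi equation for $\tilde{h}_{ij}$, and the Weingarten and Gauss formulas. The mild care required is to use the pullback connection consistently (as set up in Lemma~\ref{lem-zi-zij}) and to correctly convert between $h$ and $\tilde{h}$ at the end so that the $\tilde{u}$-coefficient in the quadratic term involves $\tilde{h}_i{}^k\tilde{h}_{kj}$ rather than $\tilde{h}_i{}^k h_{kj}$.
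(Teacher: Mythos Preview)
Your proposal is correct and follows essentially the same approach as the paper's proof: both derive \eqref{di cosh r} from $V=\overline{\nabla}\cosh r$, cite or rederive \eqref{1/phi i} for \eqref{1/phi i new}, and obtain \eqref{1/phi ij} by differentiating \eqref{1/phi i new}, applying Codazzi to the first term and the conformal Killing identity plus the Gauss formula to compute $\nabla_j\metric{V}{\partial_k X}=\cosh r\,g_{jk}-h_{jk}\tilde u$, then converting $h$ to $\tilde h$. The paper's write-up is slightly terser but the ingredients and order of steps are the same.
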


\begin{proof}
	In the proof of Lemma \ref{lem-1/phi i, 1/phi ij}, we use local normal coordinates for $M$. Formula \eqref{di cosh r} follows from the definition of $V$, i.e. $V := \sinh r \partial_r =\overline{\nabla} \cosh r$. Formula \eqref{1/phi i new} was proved in \eqref{1/phi i}. Using \eqref{conf-vf} and the Codazzi equation, we have
	\begin{align*}
		\nabla_j \nabla_i (\cosh r - \tilde{u}) =& 
		-\nabla^k \tilde{h}_{ij} \metric{V}{\partial_k X}- \tilde{h}_{i}{}^k \nabla_j \metric{V}{\partial_k X}\\
		=& 	-\nabla^k \tilde{h}_{ij} \metric{V}{\partial_k X}- \cosh r \tilde{h}_{ij} + \tilde{h}_{i}{}^k (\tilde{h}_{kj} +g_{kj}) \tilde{u}\\
		=& -\nabla^k \tilde{h}_{ij} \metric{V}{\partial_k X} 
		-(\cosh r-\tilde{u})\tilde{h}_{ij} +\tilde{h}_{i}{}^k \tilde{h}_{kj} \tilde{u},
	\end{align*}
	thus we obtain \eqref{1/phi ij}. This completes the proof of Lemma \ref{lem-1/phi i, 1/phi ij}.
\end{proof}

\begin{lem}\label{lem-gij-hildlX-hilVdlX}
	By use of the horospherical Gauss map, we parameterize a smooth uniformly h-convex hypersurface $M$ about $X(z)$ by local coordinates for $\mathbb{S}^n$ about $z \in \mathbb{S}^n$. Let $\partial_i z= e_i$ at $z$. Then 
	\begin{align}
		\tilde{h}_i{}^j \partial_j X =& \frac{\g_i}{\g^2}(z,1) - \frac{1}{\g}(e_i,0),  \label{hil Xl}\\
    	\tilde{h}_i{}^j \metric{V}{\partial_j X} =& \frac{\g_i}{\g^2}. \label{hilVdlX}
	\end{align}
\end{lem}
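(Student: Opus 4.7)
The plan is to derive both identities essentially by one differentiation of the key formula \eqref{X-nu}, namely $X - \nu = \frac{1}{\g}(z,1)$, interpreted as an equality in $\mathbb{R}^{n+1,1}$ after parameterizing $M$ by $\mathbb{S}^n$ via the horospherical Gauss map. Here $\g = e^u$ is pulled back to a function on $\mathbb{S}^n$, and $(z,1)$ is a smooth $\mathbb{R}^{n+1,1}$-valued function of $z$.

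For \eqref{hil Xl}, I would take the covariant derivative $\nabla_i$ of $X - \nu = \frac{1}{\g}(z,1)$ at the base point, using the normal-coordinate convention $\partial_i z = e_i$. On the left side I would invoke the earlier identity \eqref{di-X-nu}, which gives $\nabla_i(X - \nu) = -\tilde{h}_i{}^j \partial_j X$. On the right side, a direct computation in the ambient Minkowski space yields $\partial_i\!\left(\tfrac{1}{\g}(z,1)\right) = -\tfrac{\g_i}{\g^2}(z,1) + \tfrac{1}{\g}(e_i,0)$. Equating the two and flipping signs gives \eqref{hil Xl} immediately.

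For \eqref{hilVdlX}, the cleanest route is to combine the already-established formula \eqref{1/phi i new}, which states $\nabla_i(\cosh r - \tilde{u}) = -\tilde{h}_i{}^j\metric{V}{\partial_j X}$, with the identity \eqref{1/phi, coshr-u}, which reads $\cosh r - \tilde{u} = 1/\g$. Differentiating $1/\g$ gives $-\g_i/\g^2$, and rearrangement yields the claim. As a sanity check one can also derive \eqref{hilVdlX} directly from \eqref{hil Xl} by pairing with $V$: since $V = -(0,1) + (\cosh r) X$ in the ambient space, one computes $\metric{V}{(z,1)} = 1 - \g \cosh r$ and $\metric{V}{(e_i,0)} = -\g_i \cosh r$ using \eqref{X-z,1} and $\metric{X}{(e_i,0)} = -\g_i$ (the latter read off from expression \eqref{X(z)}); substituting into the pairing of \eqref{hil Xl} with $V$ causes the $\cosh r$ terms to cancel and leaves exactly $\g_i/\g^2$.

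There is no substantive obstacle here; the lemma is essentially an unpacking of \eqref{X-nu} in Gauss-map coordinates. The only care needed is in the ambient interpretation of derivatives: $(z,1)$ and $(e_i,0)$ are not tangent to $\mathbb{H}^{n+1}$, so one must work with the pullback connection described in Lemma \ref{lem-zi-zij} and verify that both sides of \eqref{hil Xl} are tangent to $\mathbb{H}^{n+1}$, which follows from $\metric{(z,1)}{X} = -\g$ and $\metric{(e_i,0)}{X} = -\g_i$.
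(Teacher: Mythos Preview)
Your proof is correct. For \eqref{hil Xl} you do exactly what the paper does: differentiate $X-\nu=\tfrac{1}{\g}(z,1)$ and invoke \eqref{di-X-nu}. For \eqref{hilVdlX} your primary route via \eqref{1/phi, coshr-u} and \eqref{1/phi i new} is a slightly different, and arguably cleaner, path than the paper's: the paper instead expands $(e_i,0)=\nabla_i(z,1)$ using formula \eqref{zi} from Lemma~\ref{lem-zi-zij} and substitutes back into \eqref{hil Xl}. Your approach avoids the heavier identity \eqref{zi} entirely, at the cost of appealing to Lemma~\ref{lem-cosh r, tilde u, 1/phi} and Lemma~\ref{lem-1/phi i, 1/phi ij}; both routes are short and the difference is purely cosmetic. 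Your sanity-check computation pairing \eqref{hil Xl} with $V=-(0,1)+\cosh r\,X$ is also correct and provides a nice independent verification.
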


\begin{proof}
	Differentiating \eqref{X-nu} with respect to $\partial_i$, we have
	\begin{equation}\label{di X-nu}
		\nabla_i (X-\nu) = -\frac{\g_i}{\g^2}(z,1) + \frac{1}{\g}(e_i,0).
	\end{equation}
	This together with $\nabla_i(X-\nu) = -\tilde{h}_i{}^j \partial_j X$ implies \eqref{hil Xl}.
	
	By \eqref{X-nu} and \eqref{zi}, we know
	\begin{align}
		(e_i,0) =& \nabla_i (z,1) =  - \g \tilde{h}_i{}^j \partial_j X + \g^2 \tilde{h}_i{}^j \metric{V}{\partial_j X}(X-\nu) \nonumber\\
		=& - \g \tilde{h}_i{}^j \partial_j X+ \g \tilde{h}_{i}{}^j \metric{V}{\partial_j X}(z,1).\label{ei,0}
	\end{align}
	Then \eqref{hilVdlX} follows by substituting \eqref{ei,0} into the right-hand side of \eqref{hil Xl}. We complete the proof of Lemma \ref{lem-gij-hildlX-hilVdlX}.
\end{proof}

\subsection{$C^0$ estimates of h-convex hypersurfaces} $\ $

Let $M = \partial \Omega$ be a smooth uniformly h-convex hypersurface such that $\Omega$ contains the origin in its interior. We have proved that $M$ is star-shaped in Corollary \ref{cor-star-shape}, i.e.
\begin{equation*}
	M = \lbrace \(\sinh r(\theta) \theta, \cosh r(\theta) \)\in \mathbb{H}^{n+1}:  \theta \in \mathbb{S}^n\rbrace
\end{equation*}
for some positive function $r(\theta)$. Here $r(\theta)$ is called the radial function of $M$. We define 
\begin{equation*}
	\begin{aligned}
	r_{\max} :=& \max_{\theta \in \mathbb{S}^n} r(\theta), \quad r_{\min} :=& \min_{\theta \in \mathbb{S}^n} r(\theta),\\
	u_{\max} :=& \max_{z \in \mathbb{S}^n} u(z), \quad u_{\min} :=& \min_{z \in \mathbb{S}^n} u(z),
	\end{aligned}
\end{equation*}
where $r(\theta)$ is the radial function, and $u(z)$ is the horospherical support function of $M$. Thanks to Corollary \ref{cor-support-construct-domain} and Corollary \ref{cor-star-shape}, we can regard $r$ and $u$  as functions defined on $M$.

When researchers study the $C^0$ estimate of a star-shaped hypersurface $\widehat{M}$ in Euclidean space, they often use the formulas (see e.g. \cite[Lemma 3.5]{LSW20})
\begin{equation*}
	\widehat{u}_{\max} = \widehat{r}_{\max}, \quad
	\widehat{u}_{\min} = \widehat{r}_{\min}.
\end{equation*}
In the following Lemma \ref{lem-r-min-u-min-r-max-u-max}, we derive the corresponding formulas for smooth uniformly h-convex hypersurfaces in $\mathbb{H}^{n+1}$.  

\begin{lem}\label{lem-r-min-u-min-r-max-u-max}
	Let $M = \partial \Omega$ be a smooth uniformly h-convex hypersurface in $\mathbb{H}^{n+1}$. If $\Omega$ contains the origin in its interior, then
	\begin{equation} \label{r-min-u-min-r-max-u-max}
		u_{\max}  =r_{\max} , \quad u_{\min}  = r_{\min}. 
	\end{equation}
\end{lem}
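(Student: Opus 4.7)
The plan is to extract both inequalities $u_{\max}\leq r_{\max}$, $u_{\min}\geq r_{\min}$ from Lemma \ref{lem-cosh r, tilde u, 1/phi} applied at critical points of $u$ on $\mathbb{S}^n$, and the reverse inequalities $u_{\max}\geq r_{\max}$, $u_{\min}\leq r_{\min}$ from Lemma \ref{lem-cosh r, tilde u, 1/phi} applied at critical points of $r$ on $M$, where one can identify the outward unit normal $\nu$ with the radial direction $\partial_r$.

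\textbf{Step 1 (upper/lower bounds via critical points of $u$).} Since $\partial\Omega$ is smooth and uniformly h-convex containing the origin, by Corollary \ref{cor-star-shape} the radial function $r$ is well defined on $M$ and, via $X(z)=G^{-1}(z)$, we may regard $r$ as a function on $\mathbb{S}^n$. Let $z_0,z_1\in\mathbb{S}^n$ be points where $u$ attains its maximum and minimum, so that $D\g(z_0)=D\g(z_1)=0$. Formula \eqref{coshr} then gives
\begin{equation*}
\cosh r(X(z_0))=\tfrac{1}{2}\bigl(\g(z_0)+\g(z_0)^{-1}\bigr)=\cosh u_{\max},\qquad \cosh r(X(z_1))=\cosh u_{\min}.
\end{equation*}
Because $\Omega$ contains the origin we have $u\geq 0$ and $r\geq 0$, so these identities yield $r(X(z_0))=u_{\max}$ and $r(X(z_1))=u_{\min}$. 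Consequently $u_{\max}\leq r_{\max}$ and $u_{\min}\geq r_{\min}$.

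\textbf{Step 2 (reverse inequalities via critical points of $r$).} Let $X_0,X_1\in M$ be points realizing $r_{\max}$ and $r_{\min}$, respectively. The tangential gradient $\nabla r|_M$ vanishes at these points, which means $\partial_r$ is parallel to $\nu$ there. At $X_0$ the farthest point from the origin, $r$ decreases along $-\nu$ (which points into $\Omega$), while at $X_1$ the closest point to the origin, $r$ again decreases along $-\nu$ (moving into $\Omega$ takes us toward the origin). In both cases $\langle \partial_r,\nu\rangle>0$, and hence $\nu=\partial_r$ at $X_j$ for $j=0,1$. Therefore
\begin{equation*}
\tilde u(X_j)=\metric{\sinh r\,\partial_r}{\nu}=\sinh r(X_j),\qquad j=0,1,
\end{equation*}
and \eqref{1/phi, coshr-u} gives
\begin{equation*}
\g(G(X_j))^{-1}=\cosh r(X_j)-\sinh r(X_j)=e^{-r(X_j)},
\end{equation*}
so that $u(G(X_0))=r_{\max}$ and $u(G(X_1))=r_{\min}$. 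These imply $u_{\max}\geq r_{\max}$ and $u_{\min}\leq r_{\min}$.

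Combining Steps 1 and 2 yields \eqref{r-min-u-min-r-max-u-max}. The only delicate point is Step 2, namely justifying $\nu=\partial_r$ (and not $-\partial_r$) at both extremal radial points; this uses that $\Omega$ contains the origin, so both the farthest and the nearest point to the origin on $M$ have outward normal aligned with $\partial_r$. The rest is a direct application of the identities \eqref{coshr}--\eqref{1/phi, coshr-u}.
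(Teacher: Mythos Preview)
Your proof is correct and follows essentially the same approach as the paper, relying on the identities \eqref{coshr} and \eqref{1/phi, coshr-u} at critical points of $u$ and of $r$. The only organizational difference is that the paper first establishes the global pointwise inequality $u\leq r$ on $M$ (from \eqref{coshr}), and then needs only one critical point for each equality (the minimum of $u$ for $u_{\min}=r_{\min}$, the maximum of $r$ for $u_{\max}=r_{\max}$); you instead derive four separate inequalities from four critical points, which is slightly more symmetric but equivalent.
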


\begin{proof}
	By \eqref{coshr}, we have
	\begin{equation*}
		\cosh r(X(z)) = \frac{1}{2} \frac{|D \g|^2}{\g}(z) + \frac{1}{2} \(\g(z) + \frac{1}{\g(z)} \) \geq \cosh u(z),
	\end{equation*}
	and hence $u(z) \leq r(X(z))$ for all $z\in \mathbb{S}^n$. Then we have $u \leq r$ on $M$.
	
	Assume that $u(z)$ attains its minimum at $z_0 \in \mathbb{S}^n$. By \eqref{coshr}, we have $\cosh r(X(z_0)) =\cosh u(z_0)$, which means $u_{\min}=u =r$ at $X(z_0)$ by $u(z_0) > 0$. Hence
	\begin{equation*}
		u_{\min} = r_{\min}.
	\end{equation*}
	
	By \eqref{1/phi, coshr-u}, we have
	\begin{equation*}
		e^{-u}= \cosh r - \tilde{u} = \cosh r- \sinh r \metric{\partial_r}{\nu}.
	\end{equation*}
	Choose $\theta_0 \in \mathbb{S}^n$ such that $r(\theta_0) = r_{\max}$, and let $X_0 = (\sin r(\theta_0) \theta_0, \cosh r (\theta_0)) \in M$.
	By the above formula, we have $e^{-u} = e^{-r}= e^{-r_{\max}}$ at $X_0$. This together with the fact that $u \leq r$ on $M$ implies
	\begin{equation*}
		u_{\max} = r_{\max}.
	\end{equation*}
	Then we complete the proof of Lemma \ref{lem-r-min-u-min-r-max-u-max}.
\end{proof}

In \cite[Theorem 1]{BM99}, Borisenko and Miquel estimated the maximal distance from the center of an inball to the points on the boundary for h-convex bounded domains, which implies the following lemma. 
\begin{lem}\label{lem-u-min-r-max}
	Let $\Omega := \overline{B}_z(u) \cap \overline{B}_{-z}(u)$ be the intersection of two horo-balls. Then the radial function $r(\theta)$ of $\partial \Omega$ satisfies  
	\begin{equation}\label{u-min-r-max}
		\cosh r (\theta) \leq e^u, \quad \forall \ \theta \in \mathbb{S}^n,
	\end{equation}
	with equality if and only if $\theta$ satisfies $\metric{\theta}{z} =0$.
\end{lem}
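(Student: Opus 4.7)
\textbf{Proof proposal for Lemma \ref{lem-u-min-r-max}.}

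The strategy is a direct unpacking of the defining inequalities of the two horo-balls in polar coordinates, so no curvature or support-function machinery is needed; everything reduces to a $1$-variable inequality in the variable $s = r(\theta)$.

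First, I would use the definition \eqref{Bz(r)} of horo-balls to rewrite
\begin{equation*}
    \Omega \;=\; \overline{B}_z(u)\cap \overline{B}_{-z}(u)
    \;=\; \bigl\{X\in \mathbb{H}^{n+1}:\; -\langle X,(z,1)\rangle \leq e^{u} \text{ and } -\langle X,(-z,1)\rangle \leq e^{u}\bigr\}.
\end{equation*}
Writing a general point of $\mathbb{H}^{n+1}$ along the ray in direction $\theta\in \mathbb{S}^n$ as $X(s) = (\sinh s\,\theta,\cosh s)$ for $s\geq 0$, the two inequalities become
\begin{equation*}
    \cosh s \;-\; \sinh s\,\langle \theta,z\rangle \;\leq\; e^{u}, \qquad \cosh s \;+\; \sinh s\,\langle \theta,z\rangle \;\leq\; e^{u},
\end{equation*}
which combine (using $\sinh s \geq 0$) into the single sharp condition
\begin{equation*}
    \cosh s \;+\; \sinh s\,|\langle \theta,z\rangle| \;\leq\; e^{u}.
\end{equation*}
The left-hand side is strictly increasing in $s$, so the radial function $r(\theta)$ is characterized by equality:
\begin{equation*}
    \cosh r(\theta) \;+\; \sinh r(\theta)\,|\langle \theta,z\rangle| \;=\; e^{u}.
\end{equation*}

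From this identity and $\sinh r(\theta)\geq 0$, the desired inequality $\cosh r(\theta)\leq e^{u}$ is immediate, with equality iff $\sinh r(\theta)\,|\langle \theta,z\rangle|=0$. To finish the equality analysis I would separate two cases: if $u>0$, then plugging $s=0$ shows the origin $(0,1)$ is in the interior of $\Omega$, forcing $r(\theta)>0$ for every $\theta$, hence $\sinh r(\theta)>0$ and equality occurs exactly when $\langle\theta,z\rangle =0$. The degenerate case $u=0$ collapses $\Omega$ to the origin and is either excluded by interpretation or handled trivially.

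The argument has no serious obstacle; the only mild care needed is in the last step, to justify that $r(\theta)>0$ for all $\theta$ (so $\sinh r(\theta)\neq 0$) when $u>0$, which is what makes the equality case equivalent to $\langle\theta,z\rangle=0$ rather than being vacuous.
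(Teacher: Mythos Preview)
Your proof is correct and takes a genuinely different route from the paper's. The paper argues geometrically in the Poincar\'e ball model: it fixes a point $A$ on $H_z(u)\cap H_{-z}(u)$, and for any $B\in H_z(u)\cap \overline{B}_{-z}(u)$ it extends the ray $\overrightarrow{BO}$ to hit $H_{-z}(u)$ at $C$, then applies the Intersecting Chords Theorem in the Euclidean disc $\overline{B}_{-z}(u)$ to get $|OA|^2 = |OB|\cdot|OC|\ge |OB|^2$; finally it computes $\cosh r_A = e^u$ back in the hyperboloid model. Your argument stays entirely in the hyperboloid model and is more direct: writing the two horo-ball constraints for $X(s)=(\sinh s\,\theta,\cosh s)$ yields the explicit identity $\cosh r(\theta) + \sinh r(\theta)\,|\langle\theta,z\rangle| = e^u$ for the boundary, from which both the inequality and the equality case fall out immediately. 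The paper's proof offers a nice picture and a connection to classical plane geometry; yours is shorter, avoids switching models, and produces a closed-form description of $\partial\Omega$ along the way.
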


\begin{proof}
	Here we give an elementary geometric proof of Lemma \ref{lem-u-min-r-max} by using the Euclidean norm of the Poincar\'e ball model $\mathbb{B}^{n+1}$, see Figure \ref{fig-intersec-2 balls}.
	Let $O$ be the center of $\mathbb{B}^{n+1}$ and $A$ be a point on $H_z(u) \cap H_{-z}(u)$. Then the direction $\theta_A$ of $A$ satisfies $\metric{\theta_A}{z} = 0$.  For any point $B \in H_z(u) \cap \overline{B}_{-z}(u)$, we can extend the ray $\overrightarrow{BO}$ such that it intersects with $H_{-z}(u)$ at point $C$. It is easy to see $|OC| \geq |OB|$. Note that the points $A$, $B$, $C$, $O$ lie on the boundary of a two-dimensional flat disc ${\rm span}\{OA,OB\} \cap \overline{B}_{-z}(u)$ from the Euclidean point of view. By the Intersecting Chords Theorem, we have 
	\begin{equation}\label{inter-chords}
		|OA|^2 = |OB|\cdot |OC| \geq |OB|^2,
	\end{equation}
	with equality if and only if $B \in H_z(u) \cap H_{-z}(u)$. Denote by $r_A$ the hyperbolic norm of segment $OA$. Then we have $r_A = \max_{\theta \in \mathbb{S}^n} r(\theta)$. 
	
	In the remainder of this proof, we use the hyperboloid model of $\mathbb{H}^{n+1}$ in $\mathbb{R}^{n+1,1}$. Let $X_A$ denote the position vector of $A$. Since $X_A \in  H_z(u) \cap H_{-z}(u)$, formula \eqref{horosphere-support function} implies
	\begin{equation}\label{position-A}
		\metric{-X_A}{(z,1)} = e^u, \quad \metric{-X_A}{(-z,1)} = e^u.
	\end{equation}
	By using \eqref{polar coord X}, the formulas in \eqref{position-A} imply $\cosh r_A=\metric{-X_A}{(0,1)}=e^u$. Then inequality \eqref{inter-chords} deduces that inequality $\cosh r (\theta) \leq e^u$ holds for all $\theta \in \mathbb{S}^n$, and equality holds if and only if $\metric{\theta}{z} =0$. This completes the proof of Lemma \ref{lem-u-min-r-max}.
\end{proof}

\begin{figure}[htbp]
	\centering
	\begin{tikzpicture}[scale=0.8]
	\coordinate (O) at (0,0);
	\coordinate (B) at  (1.2+2*cos{160}, 2*sin{160});
	\coordinate (Bp) at (-4.8-8*cos{160}, -8*sin{160});
	\shade[ball color =yellow, fill opacity=0.7] (0,1.6) arc(2.215 r: 4.068 r: 2) -- (0,-1.6) arc(-0.927r : 0.927 r:2);
	\draw  (1.2,0)  circle[radius=2];
	\node at (1.9,-0.3){$B_{-z}(u)$};
	\draw  (-1.2,0)  circle[radius=2];
	\node at (-1.9,-0.3){$B_z(u)$};
	\path [name path = rtcirc] (1.2,0) circle[radius=2];
	\path [name path = BO] (B) -- (Bp);
	\path [name intersections = {of = BO and rtcirc, by = C}];
	\draw[->, very thick] (-3.2,0) -- (-4.2, 0) node[left]{$z$};
	\draw[->, very thick] (3.2,0) -- (4.2,0) node[right]{$-z$};
	\draw[dashed] (O) circle[radius=3.2];
	\node at (3,-3){$\mathbb{B}^{n+1}$};
	\filldraw (0,0) circle(0.05) node[right=0.2]{$O$};
	\filldraw (0,1.6) circle (0.05) node[above]{$A$};
	\filldraw (B) circle (0.05) node[left]{$B$};
	\draw[blue, very thick] (B)--(C); 
	\filldraw (C) circle(0.05) node[above=0.6, right=0.1]{$C$};
	\draw[blue, very thick] (0, 1.6) -- (0,-1.6);
	\end{tikzpicture}
	\caption{Intersection of horo-balls}
	\label{fig-intersec-2 balls}
\end{figure}
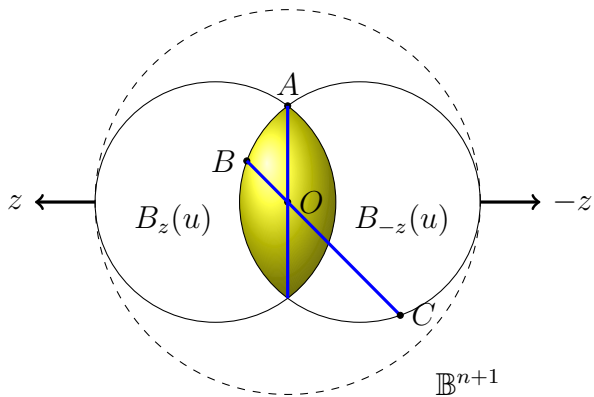

\section{Hyperbolic $p$-sum}\label{sec-p-sum}
\subsection{Hyperbolic $p$-sum for h-convex bounded domains} \label{subsec-p sum h convex domain} $ \ $

The main result in Subsection \ref{subsec-p sum h convex domain} is the following Theorem \ref{thm-def p sum-well defined}, and the proof of it will be given in the last part of Subsection \ref{subsec-p sum h convex domain}.
\begin{thm}\label{thm-def p sum-well defined}
	Let $p$, $a$, $b$, $K$ and $L$ satisfy the assumptions in Definition \ref{def-p sum}. Then there exists a unique h-convex bounded domain with horospherical support function
	\begin{equation*}
		u(z) = \frac{1}{p} \log  \(a e^{pu_K(z)} +b e^{pu_L(z)}\).
	\end{equation*}
	Consequently, Definition \ref{def-p sum} is well-defined. 
	
	Let $\Omega =a\cdot K +_p b \cdot L$ be the hyperbolic $p$-sum of $K$ and $L$ defined by Definition \ref{def-p sum}. Then for any point $X \in \partial \Omega$, there exists $z_X \in \mathbb{S}^n$ such that 
	\begin{equation*}
		X = X(z_X) = \frac{1}{2} \g(-z_X,1) + \frac{1}{2} \( \frac{|D\g|^2}{\g}+ \frac{1}{\g} \)(z_X,1) -(D \g,0),
	\end{equation*}
	where $\g(z) = \( a\g_K^p(z) +b\g_L^p(z) \)^{\frac{1}{p}}$, and $X(z_X)$ was defined by expression \eqref{X(z)}. Moreover, except the following special cases in Definition \ref{def-p sum}:
	\begin{enumerate}
		\item $\frac{1}{2} \leq p \leq 2$, $a=1$, $b=0$ and $K$ is a point;
		\item $\frac{1}{2} \leq p \leq 2$, $a=0$, $b=1$ and $L$ is a point;
		\item $p \in \{\frac{1}{2}, 2\}$, $a>0$, $b>0$, $a+b=1$, and both of $K$ and $L$ are points,
	\end{enumerate}
	we have that $\Omega$ is a smooth, non-degenerate and uniformly h-convex bounded domain.
\end{thm}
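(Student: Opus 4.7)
The plan is to reduce the entire theorem to verifying a single pointwise inequality on the sphere, namely $A_{ij}[\g] \geq 0$ throughout $\mathbb{S}^n$, where $\g := (a\g_K^p + b\g_L^p)^{1/p}$ and $A_{ij}$ is the operator from \eqref{def A-phi}. Once this is established, Proposition \ref{prop-Aij >=0} immediately produces the unique h-convex bounded domain $\Omega$ with horospherical support function $u = p^{-1}\log(a e^{pu_K} + b e^{pu_L})$, and Lemma \ref{lem-bdy of h-convex domain-by X(z)} supplies the surjection $\mathbb{S}^n \to \partial\Omega$ via formula \eqref{X(z)}. Strict positivity $A_{ij}[\g] > 0$ outside the three listed configurations, together with Corollary \ref{cor-support-construct-domain}, will then upgrade $\Omega$ to a smooth, non-degenerate, uniformly h-convex domain.

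The computational heart of the argument is an exact decomposition of $A_{ij}[\g]$. I would introduce the auxiliary weights
\begin{equation*}
	\alpha_K := a(\g_K/\g)^{p-1},\ \alpha_L := b(\g_L/\g)^{p-1},\ \beta_K := a(\g_K/\g)^p,\ \beta_L := b(\g_L/\g)^p,
\end{equation*}
which by $\g^p = a\g_K^p + b\g_L^p$ satisfy $\beta_K + \beta_L = 1$ and $\alpha_K\g_K + \alpha_L\g_L = \g$. Differentiating once gives $D_i\g = \alpha_K D_i\g_K + \alpha_L D_i\g_L$; differentiating again, and using the identity $D_j\alpha_K = (p-1)\alpha_K\beta_L\xi_j$ with $\xi_j := D_j\log(\g_K/\g_L)$ (and the analogous identity for $\alpha_L$), leads after routine bookkeeping to
\begin{equation*}
	A_{ij}[\g] = \alpha_K A_{ij}[\g_K] + \alpha_L A_{ij}[\g_L] + (p-1)\g\beta_K\beta_L\xi_i\xi_j + \frac{\g\beta_K\beta_L|\xi|^2}{2}\sigma_{ij} + \frac{a\g_K^{p-2} + b\g_L^{p-2} - \g^{p-2}}{2\g^{p-1}}\sigma_{ij}.
\end{equation*}

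The next task is to show that each of the three error tensors in this decomposition is non-negative under $\frac{1}{2}\leq p \leq 2$, $a,b\geq 0$, $a+b\geq 1$. The first two terms combine into a symmetric tensor whose eigenvalues are $(p-\frac{1}{2})\g\beta_K\beta_L|\xi|^2$ in the direction of $\xi$ and $\frac{1}{2}\g\beta_K\beta_L|\xi|^2$ on the orthogonal complement, both non-negative exactly because $p\geq \frac{1}{2}$. Setting $s=\g_K/\g$ and $t=\g_L/\g$, the third term reduces to the scalar inequality $as^{p-2}+bt^{p-2}\geq 1$ subject to $as^p+bt^p=1$; a Lagrange-multiplier computation places the unique interior critical point at $s=t=(a+b)^{-1/p}$ with value $(a+b)^{2/p}\geq 1$, and for $p<2$ the boundary behavior $s\to 0^+$ forces $s^{p-2}\to+\infty$, so the interior critical point is the global minimum (the $p=2$ case being trivial since the quantity is then the constant $a+b$). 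Combining this with $\alpha_K,\alpha_L\geq 0$ and the standing inequalities $A_{ij}[\g_K], A_{ij}[\g_L]\geq 0$ (given by Proposition \ref{prop-Aij >=0} applied to $K$ and $L$) yields $A_{ij}[\g]\geq 0$ on $\mathbb{S}^n$, settling the existence, uniqueness, and boundary-parameterization assertions.

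Finally, to obtain smoothness and uniform h-convexity outside the three exceptional configurations I would analyze where strictness $A_{ij}[\g]>0$ must occur. If at least one of $K,L$ is a genuine smooth uniformly h-convex domain with a positive coefficient, the corresponding summand $\alpha_\bullet A_{ij}[\g_\bullet]$ is already strictly positive by Corollary \ref{cor-support-construct-domain}. When both $K$ and $L$ are points, so $A_{ij}[\g_K]=A_{ij}[\g_L]=0$ by Lemma \ref{lem-horospp of point}, strictness must come from the error tensors: for $p\in(\frac{1}{2},2)$ the strictness of the orthogonal $\xi$-eigenvalue gives $A_{ij}[\g]>0$ wherever $\xi\neq 0$, and at points where $\xi=0$ one gets strictness from the third term unless $a+b=1$; the $a+b=1$ case is in turn saved by the first two error tensors unless additionally $p\in\{\frac{1}{2},2\}$, which is exactly case (3). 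The main obstacle in the entire argument is not conceptual but bookkeeping: deriving the decomposition cleanly and tracking every potential degeneracy to verify that the three exceptions listed are indeed the only ones.
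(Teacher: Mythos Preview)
Your decomposition is correct and is in fact equivalent to the paper's: the paper works with $f=\g^p$ and the rescaled tensor $B_{ij}=p\g^{p-1}A_{ij}$, obtaining
\[
B_{ij}[\g]-aB_{ij}[\g_K]-bB_{ij}[\g_L]=(T_1)_{ij}+(T_2)_{ij}+(T_3)_{ij}
\]
with $T_1,T_2$ built from $\mathcal{V}:=f_L Df_K-f_K Df_L=pf_Kf_L\,\xi$ and $T_3$ the convexity defect of $t\mapsto t^{1-2/p}$. Dividing by $p\g^{p-1}$ and using $\alpha_\bullet=a_\bullet(\g_\bullet/\g)^{p-1}$ recovers exactly your identity, so the non-negativity argument and the reduction to Proposition~\ref{prop-Aij >=0} go through just as in the paper.

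There is, however, a genuine gap in your strictness analysis for the case $a+b=1$, $p\in(\tfrac12,2)$, both $K=X$ and $L=Y$ points. You write that this case ``is saved by the first two error tensors'', but at any point $z_0$ with $\xi(z_0)=0$ those two tensors \emph{vanish}, so they cannot help there. What you actually need is that the zero-locus of the third term and the zero-locus of $\xi$ are disjoint. For $p<2$ the third term vanishes at $z_0$ precisely when $\g_X(z_0)=\g_Y(z_0)$ (this is the equality case in your Lagrange computation, not merely $a+b=1$), so the missing step is: for distinct points $X\neq Y$, $\g_X(z_0)=\g_Y(z_0)$ forces $\xi(z_0)\neq 0$. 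The paper proves this directly: writing $X=(x,x_{n+1})$, $Y=(y,y_{n+1})$, the equality $\g_X(z_0)=\g_Y(z_0)$ means $y_{n+1}-x_{n+1}=\langle y-x,z_0\rangle$, and a short Cauchy--Schwarz estimate shows $|y_{n+1}-x_{n+1}|<|y-x|$ when $X\neq Y$, so $y-x$ is not parallel to $z_0$; then $\xi_i(z_0)=\g_X^{-1}\langle y-x,e_i\rangle$ cannot vanish for all $i$. Without this observation your case analysis does not close, and it is exactly what singles out $p\in\{\tfrac12,2\}$ as exceptional (at $p=\tfrac12$ the $\xi$-direction eigenvalue is zero even when $\xi\neq 0$; at $p=2$ the third term is identically zero).
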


By Corollary \ref{cor-support-construct-domain}, in order to show that the smooth function $u(z)$ in Theorem \ref{thm-def p sum-well defined} determines a unique smooth uniformly h-convex bounded domain in $\mathbb{H}^{n+1}$, we only need to provide that $A_{ij} [e^{u(z)}] >0$ for all $z \in \mathbb{S}^n$.

\begin{prop}\label{prop-p-sum}
	Let $p>0$, $a>0$ and $b \geq 0$ be real numbers, and let $K$ and $L$ be two different smooth uniformly h-convex bounded domains in $\mathbb{H}^{n+1}$. Here $K$ and $L$ can degenerate to points. Define
	\begin{equation*}
		u(z) := \frac{1}{p} \log \(a e^{p u_{K}(z)}+b e^{p u_{L}(z)} \).
	\end{equation*}
	In addition, we require that $p$, $a$, $b$ ,$K$ and $L$ satisfy one of the following conditions,
	\begin{enumerate}
		\item $p>0$, $a> 1$, and $b =0$;
		\item $\frac{1}{2} \leq p \leq 2$, $a>0$, $b>0$, and $a+b >1$;
		\item $\frac{1}{2} \leq p \leq 2$,  $a>0$, $b>0$, $a+b =1$, and either $K$ or $L$ is non-degenerate;
		\item  $\frac{1}{2} < p < 2$,  $a>0$, $b>0$, $a+b =1$, and both $K$ and $L$ are degenerate.
	\end{enumerate}
	Then $A[e^{u(z)}]>0$, and $u(z)$ is the horospherical support function of a smooth, non-degenerate, uniformly h-convex bounded domain $\Omega$ in $\mathbb{H}^{n+1}$. 
\end{prop}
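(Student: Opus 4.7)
The plan is to prove $A_{ij}[\varphi] > 0$ by deriving an additive decomposition of $A_{ij}[\varphi]$ into a weighted combination of $A_{ij}[\varphi_K]$ and $A_{ij}[\varphi_L]$ plus two explicit correction terms whose signs can be controlled under the hypotheses. Writing $\varphi = e^u$ and
\begin{equation*}
Q_{ij}[u] := D_iD_j u + D_iu\, D_ju - \tfrac{1}{2}|Du|^2\sigma_{ij} + \tfrac{1}{2}(1 - e^{-2u})\sigma_{ij},
\end{equation*}
a direct expansion of $A_{ij}[\varphi]$ via $\varphi_i = \varphi u_i$ and $\varphi_{ij} = \varphi(u_{ij} + u_iu_j)$ yields $A_{ij}[\varphi] = \varphi\, Q_{ij}[u]$, so it will suffice to establish $Q[u] > 0$ on $\mathbb{S}^n$; the conclusion that $u$ is the horospherical support function of a smooth, non-degenerate, uniformly h-convex domain then follows from Corollary \ref{cor-support-construct-domain}.

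The core step is to differentiate $\varphi^p = a\varphi_K^p + b\varphi_L^p$ twice and arrange the output as
\begin{equation*}
Q_{ij}[u] = \alpha_K\, Q_{ij}[u_K] + \alpha_L\, Q_{ij}[u_L] + M_{ij} + \tfrac{1}{2} E\,\sigma_{ij},
\end{equation*}
where $\alpha_K := a\varphi_K^p/\varphi^p$ and $\alpha_L := b\varphi_L^p/\varphi^p$ are convex coefficients summing to $1$, $\Delta u := Du_K - Du_L$, $M_{ij} := (p-1)\alpha_K\alpha_L\,\Delta u_i\,\Delta u_j + \tfrac{1}{2}\alpha_K\alpha_L|\Delta u|^2\sigma_{ij}$, and $E := \alpha_K e^{-2u_K} + \alpha_L e^{-2u_L} - e^{-2u}$. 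Testing $M$ on $\xi = \xi_\parallel + \xi_\perp$ (components parallel and orthogonal to $\Delta u$) gives $M(\xi,\xi) = \alpha_K\alpha_L|\Delta u|^2[(p-\tfrac{1}{2})|\xi_\parallel|^2 + \tfrac{1}{2}|\xi_\perp|^2]$, which is positive semi-definite for $p \geq \tfrac{1}{2}$ and strictly positive definite for $p > \tfrac{1}{2}$ and $\Delta u \neq 0$. For $E$, setting $x = \varphi_K^p$, $y = \varphi_L^p$, $r = (p-2)/p \leq 0$ reduces $E \geq 0$ to $(ax+by)^r \leq ax^r + by^r$; convexity of $F(t) = t^r$ on $(0,\infty)$ for $r \leq 0$ and Jensen's inequality yield $F(ax+by) \leq (a+b)^{r-1}(aF(x) + bF(y))$, and $a+b \geq 1$ together with $r-1 < 0$ forces $(a+b)^{r-1} \leq 1$. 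Tracing the two equality cases shows $E > 0$ strictly whenever $a + b > 1$, while under $a + b = 1$ one has $E(z) > 0$ iff $\varphi_K(z) \neq \varphi_L(z)$ (and $E \equiv 0$ when $p = 2$).

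The four cases will then be dispatched using this decomposition. Case (1) is a direct calculation: $\varphi = a^{1/p}\varphi_K$ gives $Q_{ij}[u] = Q_{ij}[u_K] + \tfrac{1}{2}e^{-2u_K}(1 - a^{-2/p})\sigma_{ij} > 0$ since $a > 1$. Case (2) uses $a + b > 1$ to obtain strict $E > 0$ everywhere, which combined with the other non-negative pieces yields $Q[u] > 0$. Case (3) uses that at least one of $K, L$ (say $K$) is non-degenerate and uniformly h-convex, so $Q[u_K] > 0$ strictly; combined with $\alpha_K > 0$ and the other non-negative pieces this gives $Q[u] \geq \alpha_K Q[u_K] > 0$.

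The main obstacle, and most delicate part, is case (4): both $K$ and $L$ are distinct points (so $Q[u_K] \equiv Q[u_L] \equiv 0$) and $a+b = 1$, where one must prove $M + \tfrac{1}{2}E\sigma > 0$ pointwise on $\mathbb{S}^n$. At $z$ with $\varphi_K(z) \neq \varphi_L(z)$ the term $\tfrac{1}{2}E\sigma$ is strictly positive. At $z$ with $\varphi_K(z) = \varphi_L(z)$, the additional coincidence $D\varphi_K(z) = D\varphi_L(z)$ would, via the explicit formula \eqref{phi-i,phi-ij-point} for horospherical support functions of points together with the constraint $\langle X, X\rangle = -1$, uniquely determine the underlying point and force $K = L$, contradicting the hypothesis; hence $\Delta u(z) \neq 0$, and the strict inequality $p > \tfrac{1}{2}$ produces $M(z) > 0$. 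This pointwise recovery of a point in $\mathbb{H}^{n+1}$ from the pair $(\varphi_X, D\varphi_X)$ at a single $z_0$ is the geometric heart of the argument, and it also explains why the endpoints $p \in \{\tfrac{1}{2}, 2\}$ must be excluded in (4): at $p = \tfrac{1}{2}$ the null direction of $M$ along $\Delta u$ cannot be compensated by $E$, while at $p = 2$ one has $E \equiv 0$ and $M$ vanishes at any critical point of $u_K - u_L$, which necessarily exists on the compact sphere.
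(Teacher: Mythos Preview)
Your proof is correct and follows essentially the same route as the paper. The paper works with $f=\g^p$ and the rescaled tensor $B_{ij}=p\g^{p-1}A_{ij}[\g]$, deriving $B_{ij}[\g]-aB_{ij}[\g_K]-bB_{ij}[\g_L]=(T_1+T_2+T_3)_{ij}$; your $Q_{ij}=A_{ij}/\g$ and your decomposition $Q[u]=\alpha_KQ[u_K]+\alpha_LQ[u_L]+M+\tfrac12 E\sigma$ are precisely this identity divided by $p\g^p$, with $M=(T_1+T_2)/(p\g^p)$ and $\tfrac12 E\sigma=T_3/(p\g^p)$ (your $\Delta u$ corresponds to the paper's $\mathcal V/(p f_K f_L)$). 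The sign analyses and the four-case split match, and your reconstruction-of-a-point argument in case~(4) is equivalent to the paper's observation that $|\mathcal V|\neq 0$ at a point where $\g_K=\g_L$.
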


\begin{proof}
	Let $f(z):= \g^p(z)= e^{p u(z)}$, $f_K(z):=\g_K^p (z)= e^{p u_K(z)}$ and  $f_L (z):=\g_L^p (z)= e^{p u_L(z)}$. Then 
	\begin{equation*}
		f(z) =af_K(z)+bf_L(z).
	\end{equation*}
	We use normal coordinates for $\mathbb{S}^n$ around $z \in \mathbb{S}^n$ with $\sigma_{ij} = \delta_{ij}$ at $z$. Then
	\begin{align*}
		A_{ij}[f^{\frac{1}{p}}] =& 
		\( f^{\frac{1}{p}} \)_{ij} - \frac{1}{2}f^{-\frac{1}{p}}|D f^{\frac{1}{p}}|^2 \delta_{ij}
		+\frac{1}{2} \(f^{\frac{1}{p}}-f^{-\frac{1}{p}} \)\delta_{ij}\\
		=&\frac{1}{p}f^{\frac{1}{p}-1}f_{ij}+ \frac{1}{p} \( \frac{1}{p}-1 \) f^{\frac{1}{p}-2}f_if_j
		-\frac{1}{2p^2} f^{\frac{1}{p}-2}|Df|^2 \delta_{ij} \\
		&+ \frac{1}{2} \(f^{\frac{1}{p}}-f^{-\frac{1}{p}} \) \delta_{ij}.
	\end{align*}
	Since we assumed $p>0$, the inequality $A_{ij}[f^{\frac{1}{p}}] \geq0$ is equivalent to
	\begin{equation}\label{Bij}
		B_{ij}[f^{\frac{1}{p}}]:=f_{ij} + \frac{1-p}{p} \frac{f_i f_j}{f}-\frac{1}{2p}\frac{|D f|^2}{f} \delta_{ij}
		+\frac{p}{2} \( f -f^{1-\frac{2}{p}} \) \delta_{ij} \geq 0.
	\end{equation} 
	At this time, we have $B_{ij}[f_K^{\frac{1}{p}}] \geq 0$ and  $B_{ij}[f_L^{\frac{1}{p}}] \geq 0$ by the uniform h-convexity of $K$ and $L$, respectively. By Corollary \ref{cor-support-construct-domain} (see also \cite{ACW18}), it is enough to show 
	\begin{equation}\label{p-sum h-convex condition}
		B_{ij} [f^{\frac{1}{p}}(z)]> 0, \quad \forall z \in \mathbb{S}^n.
	\end{equation}
	A direct calculation yields
	\begin{equation}\label{Bij-Tij}
		B_{ij} [f^{\frac{1}{p}}]- aB_{ij}[f_K^{\frac{1}{p}}]-bB_{ij}[f_L^{\frac{1}{p}}]
		=(T_1)_{ij} + (T_2)_{ij} +(T_3)_{ij},
	\end{equation}
	where
	\begin{align*}
		(T_1)_{ij}:=& \frac{p-1}{p} \left( \frac{a (f_K)_i (f_K)_j}{f_K}
		+\frac{b (f_L)_i (f_L)_j}{f_L}
		- \frac{(af_K+bf_L)_i(af_K+bf_L)_j}{af_K+bf_L}
		 \right),\\
		(T_2)_{ij} :=& \frac{1}{2p} \left( a\frac{|D f_K|^2}{f_K} +b\frac{|D f_L|^2}{f_L} 
		-\frac{|D(af_K+bf_L)|^2}{af_K+bf_L}\right) \delta_{ij},\\
		(T_3)_{ij}:=&\frac{p}{2} \left( af_K^{1- \frac{2}{p}} +bf_L^{1- \frac{2}{p}} -(af_K+bf_L)^{1- \frac{2}{p}}  \right) \delta_{ij}
	\end{align*}
	are tensors on $\mathbb{S}^n$. Simplifying tensor $T_1$ gives
	\begin{align}
		\frac{p}{p-1} (T_1)_{ij}=& \frac{1}{f_K f_L (af_K+b f_L)} \left(  \left( af_L(f_K)_i(f_K)_j +bf_K(f_L)_i(f_L)_j \right)(af_K+bf_L)  \right. \nonumber\\
		&\left. -f_Kf_L \left( a^2(f_K)_i(f_K)_j +b^2 (f_L)_i(f_L)_j
		+ab(f_K)_i(f_L)_j +ab(f_K)_j(f_L)_i \right) \right) \nonumber\\
		=&\frac{1}{f_K f_L (af_K+b f_L)} \left( abf_K^2(f_L)_i(f_L)_j + ab f_L^2(f_K)_i(f_K)_j   
		 -abf_Kf_L(f_K)_i(f_L)_j  \right. \nonumber \\
		 & \left. -abf_Kf_L(f_K)_j(f_L)_i \right)  \nonumber\\
		=&\frac{ab}{f_K f_L (af_K+b f_L)} \mathcal{V}_i \mathcal{V}_j,\label{T_1}
	\end{align}
	where $\mathcal{V}:= f_LDf_K-f_KDf_L$. Analogously,
	\begin{equation}\label{T_2}
		(T_2)_{ij} =\frac{1}{2p}\frac{ab}{f_K f_L (af_K+b f_L)}|\mathcal{V}|^2 \delta_{ij}.
	\end{equation}
	The following argument is divided into four cases.
	
	\textbf{Case 1.} $p>0$, $a>1$ and $b=0$.
	Using \eqref{T_1} and \eqref{T_2}, we have $T_1 =0$ and $T_2 = 0$ in this case. Assumptions $a > 1$ and $b=0$ imply  
	\begin{equation}\label{p>0, T_3}
		(T_3)_{ij} = \frac{p}{2} \( a f_K^{1-\frac{2}{p}}- a^{1-\frac{2}{p}} f_K^{1-\frac{2}{p}}\) \delta_{ij}
		=\frac{p}{2} \( 1- a^{-\frac{2}{p}}\)a f_K^{1-\frac{2}{p}} \delta_{ij} > 0.
	\end{equation}
	Then we obtain the desired inequality \eqref{p-sum h-convex condition} by use of \eqref{Bij-Tij} and \eqref{p>0, T_3}. 
	
	For the follow-up three cases, let us begin to assume that 
	$\frac{1}{2} \leq p \leq 2$, $a>0$, $b>0$ and $a+b \geq 1$. The assumption $\frac{1}{2} \leq p \leq 2$ provides that $\zeta(t) = t^{1- \frac{2}{p}}$ is a convex function on $\mathbb{R}^+$. This together with the assumption $a+b \geq 1$ gives
	\begin{equation}\label{T3>0}
	af_K^{1- \frac{2}{p}}+	bf_L^{1- \frac{2}{p}} \geq
	(a+b)\( \frac{af_K+bf_L}{a+b} \)^{1- \frac{2}{p}} \geq (af_K+bf_L)^{1-\frac{2}{p}},
	\end{equation}
	which induces $T_3 \geq 0$. On the other hand, the assumption $ \frac{1}{2} \leq p \leq 2$ also implies $\frac{1}{2p}+1-\frac{1}{p}\geq 0$, and hence 
	\begin{equation}\label{T_1+T_2 >0}
		(T_1)_{ij}+(T_2)_{ij} = \frac{ab}{f_K f_L (af_K+b f_L)}
		\left(  \frac{1}{2p}|\mathcal{V}|^2 \delta_{ij} + \( 1-\frac{1}{p} \)\mathcal{V}_i\mathcal{V}_j \right) \geq 0.
	\end{equation}	
	
	\textbf{Case 2.}  $\frac{1}{2} \leq p \leq 2$, $a>0$, $b>0$ and $a+b > 1$.
	As $a+b >1$, the second inequality in \eqref{T3>0} is strict, which induces $T_3 >0$. The desired inequality \eqref{p-sum h-convex condition} therefore follows from \eqref{T_1+T_2 >0} and \eqref{Bij-Tij}. 
	
	\textbf{Case 3.} $\frac{1}{2} \leq p \leq 2$, $a>0$, $b>0$, $a+b =1$, and either $K$ or $L$ is non-degenerate. Since either $K$ or $L$ is non-degenerate, we have $aB_{ij} [f_K^{\frac{1}{p}}] + b B_{ij} [f_L^{\frac{1}{p}}]>0$. Then \eqref{T3>0}, \eqref{T_1+T_2 >0} and \eqref{Bij-Tij} imply  the desired inequality \eqref{p-sum h-convex condition}. 
	
	\textbf{Case 4.} $\frac{1}{2} < p < 2$, $a>0$, $b>0$, $a+b=1$, and $K, L$ are both degenerate. Let $K =X = (x,x_{n+1})$ and $L = Y = (y, y_{n+1})$ be two different points in $\mathbb{H}^{n+1}$. Using \eqref{X-z,1}, we have 
	\begin{align*}
		\g_K(z) =&\g_X(z) = \metric{-X}{(z,1)} = x_{n+1} - \metric{x}{z},\\
		\g_L (z)=& \g_Y(z) = \metric{-Y}{(z,1)} = y_{n+1} - \metric{y}{z}.
	\end{align*}
	If the inequality in \eqref{T3>0} is strict for all $z \in \mathbb{S}^n$, then the desired inequality \eqref{p-sum h-convex condition} follows from the argument in Case 2. Hence we can assume that equality holds in \eqref{T3>0} for some $z_0 \in \mathbb{S}^n$ without loss of generality.
	
	Since $p<2$, we have $f_K(z_0) =f_L(z_0)$ in this case, which is equivalent to $\g_X(z_0) = \g_Y(z_0)$. Then we have $x_{n+1} - \metric{x}{z_0} = y_{n+1} - \metric{y}{z_0}$, which deduces $y_{n+1} - x_{n+1} = \metric{y-x}{z_0}$. On the other hand, the Cauchy-Schwarz inequality yields
	\begin{align}
		\(y_{n+1} -x_{n+1} \)^2=& \( \sqrt{|y|^2 +1} - \sqrt{|x|^2 +1} \)^2
		= |y|^2 + |x|^2+2 - 2 \sqrt{ \(|y|^2 +1 \) \(|x|^2+1\)  }  \nonumber\\
		\leq& |y|^2 +|x|^2 +2 - 2\(|y||x| +1\)
		= \(|y| -|x|  \)^2
		\leq |y-x|^2,\label{Cau-Sch-x-y}
	\end{align}
	with equality if and only if $x=y$. Then the assumption $x \neq y$ implies that the vector $z_0$ is not parallel to $y-x$. Besides, at $z_0$ we have
	\begin{align*}
		\metric{\mathcal{V}}{e_i} =& f_L D_i f_K -f_K D_i f_L\\
		=& \g_Y^p D_i \(x_{n+1} - \metric{x}{z}  \)^p - 
		\g_X^p D_i \(y_{n+1} - \metric{y}{z}  \)^p\\
		=&
		p \g_Y^p \g_X^p\( \frac{-\metric{x}{e_i}}{\g_X} - 	\frac{-\metric{y}{e_i}}{\g_Y}  \)\\
		=& p \g_X^{2p-1}(z_0) \metric{y-x}{e_i}.
	\end{align*}
	Then $|\mathcal{V} | \neq 0$ at $z_0$, and hence inequality \eqref{T_1+T_2 >0} is strict at $z_0$ as $p> \frac{1}{2}$. Therefore we have $T_1 + T_2 + T_3 >0$ for all $z \in \mathbb{S}^n$, which implies the desired inequality \eqref{p-sum h-convex condition}.  
	 
	We complete the proof of Proposition \ref{prop-p-sum}.
\end{proof}

\begin{rem} $ \ $
	\begin{enumerate}
		\item  If $K = L$ in Definition \ref{def-p sum}, then we can view $a \cdot K +_p b \cdot L$ as $(a+b) \cdot_p K$. Hence we assumed that $K \neq L$ in Proposition \ref{prop-p-sum}. 
		\item Since $a \cdot K +_p b \cdot L =b \cdot L +_p a \cdot K$ and $ab \neq 0$ in Definition \ref{def-p sum}, we assumed that $a >0$ and $b \geq 0$ in Proposition \ref{prop-p-sum}.
		\item The case  $p>0$, $a=1$, and $b=0$ is trivial in Proposition \ref{prop-p-sum}, since $\Omega = 1 \cdot K = K$. Hence we assumed that $a>1$ in the first case of Proposition \ref{prop-p-sum}. 
	\end{enumerate}
\end{rem}

The above Proposition \ref{prop-p-sum} has already covered the generic cases in Definition \ref{def-p sum}. However, the reason that we can not assume that either $p = \frac{1}{2}$ or $p=2$ in Case (4) of Proposition \ref{prop-p-sum} will be given in the following Corollary \ref{cor-2pts-A>=0}, where we use the same notations as in the proof of Proposition \ref{prop-p-sum}.

\begin{cor}\label{cor-2pts-A>=0}
	Let $X=(x, x_{n+1})$ and $Y= (y, y_{n+1})$ be two different points in $\mathbb{H}^{n+1}$. 
	\begin{itemize}
		\item 	If $p=\frac{1}{2}$, $a>0$, $b>0$ and $a+b =1$, then $A_{ij}[\g(z)] \geq 0$ for all $z \in \mathbb{S}^{n}$.  Furthermore, $A_{ij} [\g(z_0)]$ has a $1$-dimensional null space for some $z_0 \in \mathbb{S}^n$ if and only if $z_0$ satisfies $ y_{n+1} - x_{n+1} = \metric{y-x}{z_0}$. 
		\item 	If $p=2$, $a>0$, $b>0$ and $a+b =1$, then $A_{ij} [\g(z)] \geq 0$ for all $z \in \mathbb{S}^n$. Furthermore, $A_{ij} [\g(z_0)]$ has a null space for some $z_0 \in \mathbb{S}^n$ if and only if $z_0$ is the critical point of the function $\frac{\metric{-X}{(z,1)}}{\metric{-Y}{(z,1)}}$ defined on $\mathbb{S}^n$. Indeed, $A_{ij} [\g(z_0)] = 0$ in this case, and the function $\frac{\metric{-X}{(z,1)}}{\metric{-Y}{(z,1)}}$ only has finite critical points. 
	\end{itemize}
\end{cor}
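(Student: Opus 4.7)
The strategy is to revisit the decomposition
\begin{equation*}
B_{ij}[f^{1/p}] = (T_1)_{ij} + (T_2)_{ij} + (T_3)_{ij}
\end{equation*}
established in Case 4 of the proof of Proposition \ref{prop-p-sum} and to track its equality cases. Because $X$ and $Y$ are points of $\mathbb{H}^{n+1}$, Lemma \ref{lem-horospp of point} gives $A_{ij}[\g_X] = A_{ij}[\g_Y] = 0$, hence also $B_{ij}[\g_X] = B_{ij}[\g_Y] = 0$ by the positive proportionality $A_{ij}[\g] = \tfrac{1}{p}\g^{1-p}B_{ij}[\g]$ inherent in \eqref{Bij}. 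Since $A$ and $B$ share signatures and kernels, it is enough to analyze $T_1 + T_2 + T_3$ for the two specific exponents, with $\mathcal{V} := f_L Df_K - f_K Df_L$ as before.

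For $p = \tfrac{1}{2}$ the exponent $1 - 2/p = -3$ makes $t \mapsto t^{-3}$ strictly convex, so \eqref{T3>0} yields $(T_3)_{ij} \geq 0$, with equality at $z_0$ if and only if $f_K(z_0) = f_L(z_0)$, which rewrites as $y_{n+1} - x_{n+1} = \langle y - x, z_0\rangle$. Simultaneously $T_1 + T_2$ reduces to the Cauchy--Schwarz tensor $\frac{ab}{f_K f_L(af_K+bf_L)}\bigl(|\mathcal{V}|^2\delta_{ij} - \mathcal{V}_i\mathcal{V}_j\bigr)$, which is positive semidefinite with one-dimensional kernel $\mathbb{R}\mathcal{V}$ whenever $\mathcal{V} \neq 0$. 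Where $f_K(z_0) \neq f_L(z_0)$ the strict positivity of $T_3$ gives $B_{ij}[\g(z_0)] > 0$; where $f_K(z_0) = f_L(z_0)$, the Cauchy--Schwarz argument from Case 4 of Proposition \ref{prop-p-sum} shows $\mathcal{V}(z_0) \neq 0$ (it forces $z_0$ non-parallel to $y - x$ since $X \neq Y$), so the null space is exactly one-dimensional and spanned by $\mathcal{V}(z_0)$. This establishes the first bullet.

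For $p = 2$ we have $1 - 2/p = 0$, so $(T_3)_{ij} = \tfrac{p}{2}(a + b - 1)\delta_{ij} \equiv 0$, and
\begin{equation*}
(T_1 + T_2)_{ij} = \frac{ab}{f_K f_L(af_K + bf_L)}\Bigl(\tfrac{1}{4}|\mathcal{V}|^2\delta_{ij} + \tfrac{1}{2}\mathcal{V}_i\mathcal{V}_j\Bigr)
\end{equation*}
is strictly positive definite when $\mathcal{V} \neq 0$ and vanishes identically when $\mathcal{V} = 0$. A direct differentiation of $f_K = \g_X^p$, $f_L = \g_Y^p$ gives $\mathcal{V} = p\g_X^{p-1}\g_Y^{p-1}(\g_Y D\g_X - \g_X D\g_Y)$, so $\mathcal{V}(z_0) = 0$ is equivalent to $D(\g_X/\g_Y)(z_0) = 0$, i.e.\ to $z_0$ being a critical point of $\g_X/\g_Y$, and in that case $A_{ij}[\g(z_0)] = 0$. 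For the finite-critical-point assertion, a critical value $c = \g_X(z_0)/\g_Y(z_0)$ corresponds to tangency of the level hyperplane $H_c = \{z \in \mathbb{R}^{n+1}: \langle x - cy, z\rangle = x_{n+1} - cy_{n+1}\}$ with $\mathbb{S}^n$; using $|x|^2 = x_{n+1}^2 - 1$, $|y|^2 = y_{n+1}^2 - 1$ and $x_{n+1}y_{n+1} - \langle x, y\rangle = \cosh d_{\mathbb{H}^{n+1}}(X, Y)$, this tangency condition collapses to the quadratic $c^2 - 2c\cosh d_{\mathbb{H}^{n+1}}(X, Y) + 1 = 0$, whose discriminant $4\sinh^2 d_{\mathbb{H}^{n+1}}(X, Y)$ is strictly positive since $X \neq Y$. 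Thus there are exactly two critical values $c_\pm = e^{\pm d_{\mathbb{H}^{n+1}}(X, Y)}$, each producing a unique tangent point, which yields the second bullet. The main technical hurdle is the bilinear-form analysis of $T_1 + T_2$ at these two distinguished exponents, handled cleanly by recognizing $|\mathcal{V}|^2\delta - \mathcal{V}\mathcal{V}^T$ as a standard Cauchy--Schwarz tensor whose kernel is precisely $\mathbb{R}\mathcal{V}$.
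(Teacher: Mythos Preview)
Your argument is correct and closely parallels the paper's for the $p=\tfrac{1}{2}$ case and for the $p=2$ characterization of where $A_{ij}[\g]$ degenerates. The one genuine divergence is in showing that $\g_X/\g_Y$ has only finitely many critical points on $\mathbb{S}^n$. The paper computes the spherical Hessian of $\chi=\log(\g_X/\g_Y)$ at an arbitrary critical point $z_0$, finds $D_jD_i\chi(z_0)=\bigl(\tfrac{\langle x,z_0\rangle}{\g_X(z_0)}-\tfrac{\langle y,z_0\rangle}{\g_Y(z_0)}\bigr)\delta_{ij}$, and derives a contradiction (ultimately $X=Y$) from the assumption that this scalar vanishes; hence every critical point is non-degenerate and the critical set is discrete, thus finite. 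Your route is more geometric and in fact sharper: the level set $\{\g_X/\g_Y=c\}$ is the intersection of an affine hyperplane $H_c$ with $\mathbb{S}^n$, a critical point is precisely a tangency, and the tangency condition reduces to the quadratic $c^2-2c\cosh d_{\mathbb{H}^{n+1}}(X,Y)+1=0$ with roots $e^{\pm d_{\mathbb{H}^{n+1}}(X,Y)}$, yielding \emph{exactly two} critical points rather than merely finitely many, and bypassing the second-derivative computation. One detail you left implicit is that $x-c_\pm y\neq 0$ for these roots (so that $H_{c_\pm}$ is a genuine hyperplane with a unique tangent point); this holds because $x=cy$ together with $X,Y\in\mathbb{H}^{n+1}$, $X\neq Y$, and $-\langle X,Y\rangle=\cosh d_{\mathbb{H}^{n+1}}(X,Y)$ forces $c=1$ after a short computation, a contradiction.
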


\begin{proof}
	Inserting \eqref{T3>0}, \eqref{T_1+T_2 >0}, $B_{ij} [\g_X(z)]=0$ and $B_{ij}[\g_Y(z)]=0$ into \eqref{Bij-Tij}, we have $B_{ij}[\g(z)] \geq 0$, which is equivalent to $A_{ij} [\g(z)] \geq 0$. Furthermore, $A_{ii}[\g(z_0)] = 0$ for some $z_0 \in \mathbb{S}^n$ and $e_i \in T_{z_0} \mathbb{S}^n$ if and only if both equality holds in \eqref{T3>0} at $z_0$, and equality holds in \eqref{T_1+T_2 >0} along direction $e_i$ at $z_0$. We divide the rest proof into two cases.
	
	\textbf{Case 1.} $p=\frac{1}{2}$, $a >0$, $b>0$ and $a+b =1$. Let $z_0 \in \mathbb{S}^n$ satisfy that $A_{ij} [\g(z_0)]$ has a null space. Since $\zeta(t) = t^{1-\frac{2}{p}} = t^{-3}$ is strictly convex on $\mathbb{R}^{+}$, equality holds in \eqref{T3>0} at $z_0$ if and only if $f_K(z_0) = f_L(z_0)$, which means $\metric{-X}{(z_0,1)}^p = \metric{-Y}{(z_0,1)}^p$. That is equivalent to $y_{n+1} - x_{n+1} = \metric{y-x}{z_0}$.
	
	The assumption $X \neq Y$ and inequality \eqref{Cau-Sch-x-y} show that $y-x$ is not parallel to $z_0$. Nevertheless, as $p=\frac{1}{2}$, we have from \eqref{T_1+T_2 >0} that
	\begin{equation*}
		(T_1)_{ii} + (T_2)_{ii} = \frac{ab}{f_K f_L \(af_K+b f_L\)} \( |\mathcal{V}| - (\mathcal{V}_i)^2 \) \geq 0,
	\end{equation*}
	and equality holds if and only if 
	\begin{equation*}
		e_i:= \pm  \frac{y-x- \metric{y-x}{z_0}}{|{y-x- \metric{y-x}{z_0}|}}.
	\end{equation*}
	Using these special $e_i$,  at $z_0$ we have $(T_1 + T_2+T_3)_{ii} =0$, and hence  $A_{ii} [\g(z_0)] =0$. This proves Corollary \ref{cor-2pts-A>=0} in Case 1.
	
	\textbf{Case 2.} $p=2$, $a >0$, $b>0$ and $a+b =1$.
	In this case, inequality \eqref{T3>0} is trivial. Thus, $A_{ij} [\g(z)]$ has a null space if and only if equality holds in \eqref{T_1+T_2 >0} along some unit vector in $T_z \mathbb{S}^n$.  Furthermore, $A_{ij} [\g(z)] = 0$ in this case. Since $\frac{1}{2p} +1 - \frac{1}{p} = \frac{3}{4}>0$, we have that $A_{ij} [\g(z)] = 0 $ is equivalent to $\mathcal{V} =0$ at $z$. Since $\mathcal{V}:=f_L D f_K - f_K D f_L$, we have 
	\begin{equation*}
		\frac{\mathcal{V}}{f_L^2} = D \(\frac{f_K}{f_L}\) = D \( \frac{\metric{-X}{(z,1)}^2}{\metric{-Y}{(z,1)}^2}  \).
	\end{equation*}
	Hence $A_{ij} [\g(z)] = 0$ if and only if $D \( \frac{\metric{-X}{(z,1)}}{\metric{-Y}{(z,1)}} \) = 0$ at $z$, equivalently, $z$ is a critical point of the function 
	\begin{equation*}
		\chi(z):=\log \metric{-X}{(z,1)}-\log \metric{-Y}{(z,1)}=\log \g_X(z) -\log \g_Y(z).
	\end{equation*}
	Let $z_0$ be a critical point of $\chi(z)$.
	By using \eqref{phi-i,phi-ij-point}, we have 
	\begin{equation}\label{critical eq-1-chi}
		D_i \chi(z_0) = \frac{D_i \g_X(z_0)}{\g_X(z_0)}-  \frac{D_i \g_X(z_0)}{\g_X(z_0)}
		= \frac{-\metric{x}{e_i}}{\g_X(z_0)} -  \frac{-\metric{y}{e_i}}{\g_Y(z_0)}=0,
	\end{equation}
	and thus
	\begin{align}
		D_j D_i \chi(z_0) =& \frac{D_j D_i \g_X}{\g_X}- \frac{D_j \g_X D_i \g_X}{\g_X^2}-\frac{D_j D_i \g_Y}{\g_Y} + \frac{D_j \g_Y D_i \g_Y}{\g_Y^2} \nonumber\\
		=& \frac{D_j D_i \g_X}{\g_X}-\frac{D_j D_i \g_Y}{\g_Y}
		=\(\frac{\metric{x}{z_0}}{\g_X(z_0)} - \frac{\metric{y}{z_0}}{\g_Y(z_0)}\) \delta_{ij}. \label{critical eq-2-chi}
	\end{align}
	Suppose for a contradiction $D^2 \chi(z_0)$ is neither positive definite nor negative definite. Then
	equations \eqref{critical eq-1-chi} and \eqref{critical eq-2-chi} imply that 
	\begin{equation}\label{phiY x- phiX y}
		\g_Y(z_0) x-\g_X(z_0) y = 0.
	\end{equation}
	Hence
	\begin{equation*}
		0 =\metric{\g_Y x-\g_X y}{z_0}
		=\g_Y (x_{n+1} - \g_X) - \g_X(y_{n+1} -\g_Y)
		=\g_Y(z_0) x_{n+1} - \g_X(z_0) y_{n+1}.
	\end{equation*}
	This together with \eqref{phiY x- phiX y} gives
	\begin{equation*}
		0= \g_Y^2 x_{n+1}^2-\g_X^2 y_{n+1}^2 = |\g_Y x|^2+ \g_Y^2-|\g_X y|^2- \g_X^2= \g_Y^2(z_0)- \g_X^2(z_0).
	\end{equation*}
	Then we get $\g_Y(z_0) = \g_X(z_0)$, and thus $x=y$. This contradicts the assumption $X \neq Y$. Therefore $D^2 \chi (z_0)$ is either positive definite or negative definite, which implies that the critical points of $\chi(z)$ are discrete and form a finite subset of $\mathbb{S}^n$. We complete the proof of Corollary \ref{cor-2pts-A>=0}.
\end{proof}

Now, we proceed to prove Theorem \ref{thm-def p sum-well defined}.

\begin{proof}[Proof of Theorem \ref{thm-def p sum-well defined}]
	Proposition \ref{prop-p-sum} and Corollary \ref{cor-2pts-A>=0} show that $A_{ij} [\g] \geq 0$ for all cases in Definition \ref{def-p sum}. Then Proposition \ref{prop-Aij >=0} asserts that
	$u(z)$ uniquely determines a h-convex bounded domain $\Omega$ with horospherical support function $u(z)$ and $\Omega = \cap_{z \in \mathbb{S}^n} \overline{B}_z\( u(z)\)$. Hence, Definition \ref{def-p sum} is well-defined. Furthermore, the boundary of $\Omega$ can be described by using expression \eqref{X(z)}.  Except the three cases we mentioned in Theorem \ref{thm-def p sum-well defined}, the regularity of $\Omega$ follows from Proposition \ref{prop-p-sum}. Then we complete the proof of Theorem \ref{thm-def p sum-well defined}. 
\end{proof}

\subsection{Hyperbolic $p$-sum for sets}$ \ $

In \cite{LYZ12}, Lutwak et al. found a pointwise addition for nonconvex sets in $\mathbb{R}^{n+1}$, which extended the Minkowski-Firey $L_p$ addition from convex bodies containing the origin to arbitrary subsets of $\mathbb{R}^{n+1}$. For real $p \geq 1$, $a \geq 0$, $b\geq 0$, and $\widetilde{K}, \widehat{L} \subset \mathbb{R}^{n+1}$, they defined
\begin{equation*}
	a \cdot \widehat{K} +_p b \cdot \widehat{L} =
	\{ a^{\frac{1}{p}}(1-t)^{\frac{1}{q}} x+ b^{\frac{1}{p}}t^{\frac{1}{q}} y: x \in \widehat{K}, \ y \in \widehat{L}, \ {\rm and} \ t \in [0,1]  \},
\end{equation*}
where $q$ is the H\"older conjugate of $p$. If $p =1$, then $q = +\infty$, and hence the above definition degenerates to the pointwise Minkowski sum $a\widehat{K}+b \widehat{L}$.
 
This subsection aims to show that the pointwise addition for general sets in Definition \ref{def-p sum-p>0-new} is compatible with Definition \ref{def-p sum} for h-convex bounded domains. 

The following Assumption \ref{assump-p-sum} will be needed.

\begin{assump}\label{assump-p-sum}
	Let $p>0$, $t \in [0,1]$, $a \geq 0$ and $b \geq 0$ be real numbers, and let $X = (x,x_{n+1})$, $Y =(y,y_{n+1})$ and $W =(w,w_{n+1})$ be points on $\mathbb{H}^{n+1} \subset \mathbb{R}^{n+1,1}$.  We suppose that 
	\begin{align*}
		(1-t)^{\frac{1}{q}} a^{\frac{1}{p}}x_{n+1} +t^{\frac{1}{q}}b^{\frac{1}{p}} y_{n+1}- w_{n+1} \geq& \left\vert (1-t)^{\frac{1}{q}} a^{\frac{1}{p}}x +t^{\frac{1}{q}}b^{\frac{1}{p}} y-w \right\vert, \quad &p \in (0,1) \cup (1, +\infty),\\
		a x_{n+1}+ by_{n+1}-w_{n+1} \geq& \left\vert ax+by-w \right\vert, \quad &p=1,
	\end{align*}
	where $q$ is the H\"older conjugate of $p$, i.e. $\frac{1}{p} + \frac{1}{q}=1$. Equivalently, it means that the vector $ (1-t)^{\frac{1}{q}} a^{\frac{1}{p}}X +t^{\frac{1}{q}}b^{\frac{1}{p}} Y- W$  lies inside the future light cone of $(0,0)$ when $p \in (0,1) \cup (1, +\infty)$, and the vector $X+Y-W$ lies inside the future light cone of $(0,0)$ when $p=1$.
\end{assump}

Now we proceed to give a geometric explanation of Assumption \ref{assump-p-sum}. The following Lemma \ref{lem-geo-dis on hyperbolic space} can be found in \cite[p. 49]{MB16}.

\begin{lem}\label{lem-geo-dis on hyperbolic space}
	Let $X$, $Y$ be two points on $\mathbb{H}^{n+1} \subset \mathbb{R}^{n+1,1}$ and $d_{\mathbb{H}^{n+1}}(X,Y)$ be the geodesic distance between $X$ and $Y$ on $\mathbb{H}^{n+1}$. Then
	\begin{equation}\label{geo-dis on hyperbolic space}
		\cosh \( d_{\mathbb{H}^{n+1}}(X,Y)\) = -\metric{X}{Y}.
	\end{equation}
\end{lem}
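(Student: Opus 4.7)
The plan is to exploit the homogeneity and isotropy of $\mathbb{H}^{n+1}$ under the isometry action of the restricted Lorentz group $O^+(n+1,1)$, since both the Minkowski inner product $\metric{\cdot}{\cdot}$ and the intrinsic geodesic distance $d_{\mathbb{H}^{n+1}}(\cdot,\cdot)$ are manifestly invariant under this group. The strategy is to reduce to a normal form in which the identity \eqref{geo-dis on hyperbolic space} follows from a one-line computation, using the warped product structure already described after Figure \ref{Figure polar coordinate} in the paper.

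First, I would recall that for any $X \in \mathbb{H}^{n+1}$, there exists a Lorentz transformation $\Lambda \in O^+(n+1,1)$ sending $X$ to the north pole $N=(0,1)$; this is standard and can be built by composing a boost along the direction of the spatial component of $X$ with a Euclidean rotation. Since $\Lambda$ preserves both sides of \eqref{geo-dis on hyperbolic space}, it suffices to prove the identity in the case $X=N$. Next, applying a Euclidean rotation of $\mathbb{R}^{n+1}$ (which is a Lorentz isometry fixing $N$), I may further arrange that the spatial component of $Y$ is a nonnegative multiple of a fixed unit vector $e \in \mathbb{S}^n$. By the warped product description \eqref{polar coord X}, one may then write
\begin{equation*}
	Y = (\sinh r \, e, \cosh r),
\end{equation*}
where $r = d_{\mathbb{H}^{n+1}}(N, Y)$ is precisely the radial geodesic parameter; this is the content of the polar coordinate discussion preceding Subsection \ref{subsec-horosphere,ball}.

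With $X=N=(0,1)$ and $Y=(\sinh r\, e,\cosh r)$ in hand, the Minkowski inner product evaluates to
\begin{equation*}
	\metric{X}{Y} = \metric{(0,1)}{(\sinh r\, e, \cosh r)} = 0 - \cosh r = -\cosh r = -\cosh d_{\mathbb{H}^{n+1}}(X,Y),
\end{equation*}
which is exactly \eqref{geo-dis on hyperbolic space}.

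I do not anticipate any serious obstacle: the only nontrivial ingredients are the transitivity of $O^+(n+1,1)$ on $\mathbb{H}^{n+1}$ and the identification of the warped product radial coordinate with arclength along radial geodesics, both of which are standard and implicit in the discussion surrounding \eqref{polar coord X}. If one prefers an intrinsic proof avoiding invocation of the isometry group, an alternative is to parametrize the geodesic joining $X$ and $Y$ as $\gamma(s) = \cosh s\, X + \sinh s\, W$, where $W$ is the unit tangent vector to $\gamma$ at $X$ (so $\metric{W}{W}=1$ and $\metric{X}{W}=0$); then $-\metric{X}{\gamma(s)} = \cosh s$ and setting $s = d_{\mathbb{H}^{n+1}}(X,Y)$ gives $\gamma(s)=Y$ and the desired identity immediately.
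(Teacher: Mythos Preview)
Your proposal is correct. Your alternative intrinsic argument at the end---parametrizing the geodesic as $\gamma(s)=\cosh s\,X+\sinh s\,W$ with $W$ a unit tangent vector at $X$, and computing $-\metric{X}{\gamma(s)}=\cosh s$---is exactly the proof the paper gives (with $W$ called $v$ there), so you have in fact recovered the paper's own argument.

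Your main approach, by contrast, is genuinely different: you use the transitive action of $O^+(n+1,1)$ to reduce to the normal form $X=N$, $Y=(\sinh r\,e,\cosh r)$, and then read off the identity from the warped product coordinates. This buys you a proof that avoids invoking the explicit geodesic formula \eqref{formula of geodesic segment}, at the cost of appealing to the transitivity of the isometry group and the identification of the radial coordinate with arclength. The paper's route is shorter and more self-contained, but both are entirely standard and equally valid; either would be acceptable here.
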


\begin{proof}
	It is well-known that the geodesic segment from $X$ to $Y$ is given by (see e.g., \cite[p. 48]{MB16})
	\begin{equation}\label{formula of geodesic segment}
		\cosh t X + \sinh t v, \quad t\in[0, d_{\mathbb{H}^{n+1}}(X,Y)],
	\end{equation}
	where $v$ is a unit vector in $T_X \mathbb{H}^{n+1}$. Since $\metric{X}{v} =0$, we have
	\begin{equation*}
		\metric{X}{Y} = \metric{X}{\cosh \(d (X,Y) \)X + \sinh \( d_{\mathbb{H}^{n+1}}(X,Y)\)v } = -\cosh \( d_{\mathbb{H}^{n+1}}(X,Y) \).  
	\end{equation*}
	We complete the proof of Lemma \ref{lem-geo-dis on hyperbolic space}.
\end{proof}

\begin{prop}\label{prop-Assump-geodesic ball}
	Let $p>0$, $t \in[0,1]$, $a \geq 0$ and $b \geq 0$ be real numbers, 
	let $X= (x, x_{n+1})$ and $Y= (y, y_{n+1})$ be two points in $\mathbb{H}^{n+1}$, and let $T = (1-t)^{\frac{1}{q}} a^{\frac{1}{p}}X + t^{\frac{1}{q}}b^{\frac{1}{p}} Y$.
	\begin{enumerate}
		\item If $\metric{T}{T} \leq -1$, then
		\begin{equation}\label{set-assump 4.1}
			\left\{ W= (w,w_{n+1}) \in \mathbb{H}^{n+1}:  p, \, t, \, a, \, X, \, b, \, Y, \, {\rm and} \, W  {\rm satisfy \ Assumption \  \ref{assump-p-sum}}  \right\}.
		\end{equation}
		is a geodesic ball of radius $ \log \sqrt{- \metric{T}{T}}$ centered at $T/ \sqrt{-\metric{T}{T}}$ . Moreover, $T/ \sqrt{-\metric{T}{T}}$ lies on the geodesic segment connecting $X$ and $Y$.
		\item If $\metric{T}{T}>-1$,  then the set \eqref{set-assump 4.1} is empty.
	\end{enumerate}
\end{prop}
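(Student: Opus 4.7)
The plan is to translate Assumption \ref{assump-p-sum} into the intrinsic statement that $T - W$ is a future-directed causal vector in $\mathbb{R}^{n+1,1}$. Writing out the components, the assumption is equivalent to $\langle T-W, T-W\rangle \leq 0$ together with $T_{n+1} - W_{n+1} \geq 0$. Using $\langle W, W\rangle = -1$, the inner product inequality simplifies to
\begin{equation*}
-\langle T, W\rangle \leq \frac{1 - \langle T, T\rangle}{2}.
\end{equation*}

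For part (1), I would suppose $\langle T, T\rangle \leq -1$, so $T$ is future-directed timelike and $T_0 := T/\sqrt{-\langle T, T\rangle} \in \mathbb{H}^{n+1}$. By Lemma \ref{lem-geo-dis on hyperbolic space}, $-\langle T, W\rangle = \sqrt{-\langle T, T\rangle}\,\cosh d_{\mathbb{H}^{n+1}}(T_0, W)$. Setting $r := \log\sqrt{-\langle T, T\rangle} \geq 0$, the right-hand side of the displayed inequality becomes $\sqrt{-\langle T, T\rangle}\cdot \tfrac{1}{2}(e^{-r} + e^{r}) = \sqrt{-\langle T, T\rangle}\,\cosh r$, so the inequality reduces exactly to $d_{\mathbb{H}^{n+1}}(T_0, W) \leq r$. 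To conclude that such $W$ automatically also satisfies $T_{n+1} \geq W_{n+1}$, I would use a connectedness argument: on the connected closed ball $T - W$ is causal, and at $W = T_0$ one has $T - W = (\sqrt{-\langle T, T\rangle} - 1) T_0$, which is future-directed; $T - W$ cannot change time-orientation without either becoming spacelike or passing through $0$, and $T - W = 0$ forces $\langle T, T\rangle = -1$ (the degenerate case where the ball is the single point $\{T_0\}$). Finally, since $T$ is a non-negative linear combination of $X$ and $Y$, it lies inside the forward cone of the 2-plane $\mathrm{span}\{X, Y\}$; intersecting this 2-plane with $\mathbb{H}^{n+1}$ is a geodesic, and positive scaling sends $T$ to a point $T_0$ on the geodesic segment $\overline{XY}$.

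For part (2), assume $\langle T, T\rangle > -1$ and suppose for contradiction that some $W \in \mathbb{H}^{n+1}$ satisfies the assumption. Then $W$ is future-directed timelike unit and $T - W$ is future-directed causal (possibly zero). By the reverse Cauchy--Schwarz inequality for future-directed causal vectors in Minkowski space, $\langle W, T-W\rangle \leq 0$; combined with $\langle T - W, T - W\rangle \leq 0$, this gives
\begin{equation*}
\langle T, T\rangle = \langle W, W\rangle + 2\langle W, T-W\rangle + \langle T-W, T-W\rangle \leq -1,
\end{equation*}
contradicting $\langle T, T\rangle > -1$.

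The main obstacle is the time-orientation in Case 1, which I handle by connectedness rather than direct estimation; a secondary subtlety is that in the $p = 1$ branch of Assumption \ref{assump-p-sum} the vector $T$ should be read as $aX + bY$ (no $t$-factor), but the entire argument goes through verbatim once the common reformulation in terms of future-causality of $T - W$ is in place.
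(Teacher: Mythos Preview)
Your proposal is correct and follows the same core computation as the paper: expand $\langle T-W,T-W\rangle$ using $\langle W,W\rangle=-1$, divide by $\sqrt{-\langle T,T\rangle}$, and recognize $\cosh d_{\mathbb{H}^{n+1}}(T_0,W)$ via Lemma~\ref{lem-geo-dis on hyperbolic space}. Two points are worth noting. First, you explicitly treat the time-orientation of $T-W$ in part~(1) via a connectedness argument, whereas the paper simply asserts that the converse ``can easily be checked by the same argument''; your treatment is more careful here, since the inequality $\langle T-W,T-W\rangle\le 0$ alone does not distinguish the future and past cones. Second, for part~(2) the paper normalizes $T$ to $T'\in\mathbb{H}^{n+1}$ and derives the contradiction $\cosh d_{\mathbb{H}^{n+1}}(T',W)<1$, which tacitly uses that $T$ is timelike (true here since $T$ is a nonnegative combination of $X,Y\in\mathbb{H}^{n+1}$, hence timelike or zero); your reverse Cauchy--Schwarz argument avoids this normalization and handles the borderline $T=0$ case uniformly. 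Both routes are short and equivalent in strength.
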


\begin{proof}
	By Assumption \ref{assump-p-sum} and $\metric{W}{W} =-1$, we have
	\begin{equation}\label{T-w}
		0 \geq \metric{T-W}{T-W} = \metric{T}{T}-1- 2\metric{T}{W}.
	\end{equation}
	Using \eqref{geo-dis on hyperbolic space} and \eqref{T-w}, we have
	\begin{equation}\label{T'-w}
		\cosh d_{\mathbb{H}^{n+1} } (T', W) = -\metric{T'}{W} \leq  \frac{1- \metric{T}{T}}{2 \sqrt{-\metric{T}{T}}},
	\end{equation}
	where we set $T' = T/ \sqrt{-\metric{T}{T}} \in \mathbb{H}^{n+1}$. If $\metric{T}{T} >-1$, then the right-hand side of inequality \eqref{T'-w} is less than $1$, which implies that \eqref{set-assump 4.1} is empty.
	
	It is easy to see that
	\begin{equation}\label{radius ball T'}
		\cosh \(\log \sqrt{- \metric{T}{T}} \) =
		\frac{1- \metric{T}{T}}{  2 \sqrt{-\metric{T}{T}}} .
	\end{equation}
	If $\metric{T}{T} \leq -1$, then combining \eqref{T'-w}, \eqref{radius ball T'} with \eqref{geo-dis on hyperbolic space}, we have that $W$ lies in a geodesic ball of radius $\log \sqrt{- \metric{T}{T}}$ centered at $T'$.  Conversely, for any $W$ in this ball, one can easily check that $W$ satisfies Assumption \ref{assump-p-sum} by the same argument as above.
	
	From $(1-t)^{\frac{1}{q}} a^{\frac{1}{p}}\geq 0$ and $ t^{\frac{1}{q}}b^{\frac{1}{p}} \geq 0$, we have that $T$ and $T'$ both locate in the cone generated by the linear combinations of the vectors $X$ and $Y$ with non-negative coefficients. Besides, it is well-known that the geodesic lines in $\mathbb{H}^{n+1}$ are given by the intersection of $2$-dimensional vector subspaces of $\mathbb{R}^{n+1,1}$ with the hyperboloid model of $\mathbb{H}^{n+1}$, see e.g. \cite[Section 2.1]{MB16}. Consequently, $T'$ is on the geodesic segment connecting $X$ and $Y$. We complete the proof of Proposition \ref{prop-Assump-geodesic ball}.
\end{proof}

Here we provide Figure \ref{fig-new sum} ($p=2$ case) and Figure \ref{fig-new sum p=1} ($p=1$ case) to show the geometric explanation of  Assumption \ref{assump-p-sum}, as we proved in Proposition \ref{prop-Assump-geodesic ball}.
\begin{figure}[htbp!]
	\centering
	\begin{tikzpicture}[scale=1.5]
	\coordinate (X) at (0,1);
	\coordinate (Y) at (1, {sqrt(2)});
	\coordinate (T) at (0.8, 1.731);
	\shade[domain=-1:2, bottom color=yellow, top color= white, samples =100]
	(-1, {sqrt(5)}) -- plot(\x, {sqrt(\x*\x+1)});
	\shade[top color=green, bottom color= white] (-0.3, 0.631) --(T) --(1.9,0.631) --cycle;
	\filldraw (T) circle(.02);
	\node at (0.8,1.931){$T$};
	\draw[->] (-1.5,0) -- (2.5,0) node[right]{$x= (x_0, \ldots, x_n)$};
	\draw[->] (0,-0.2)-- (0,2.5) node[above]{$x_{n+1}$};
	\draw[domain= -1:2, samples=200, thick]
	plot(\x, {sqrt(\x*\x+1)});
	\node at (-1.4,2){$\mathbb{H}^{n+1}$};
	\node at (-0.1,1.1){$X$};
	\node at (1.1, 1.214){$Y$};
	\draw[domain=0:1, samples=200, very thick, blue]
	plot(\x, {sqrt(1-\x*\x)+sqrt(2)*\x});
	\coordinate (P) at (0.924,1.6893);
	\coordinate (Q) at (0.3828,1.465);
	\filldraw (P) circle(.01);
	\filldraw (Q) circle(.01);
	\filldraw[domain=0.072: 1.1, samples=100, color = red, fill opacity=1, rounded corners]
	plot(\x, {sqrt(\x*\x+1)})--(1, 1.45) arc(1.55 r:1.62 r:1) -- (0.5, 1.25) -- cycle;
	\draw[->, dashed] (0,0) --(T);
	\filldraw (X) circle(.02);
	\filldraw (Y) circle(.02);
	\node[red] at (0.78,1.1){$W$};
	\filldraw (0.521,1.128) circle(.02);
	\node at (0.39,0.97){$T'$};
	\end{tikzpicture}
	\caption{In this figure, we let $p=2$ (thus $q=2$), $t_0=\frac{16}{25}$, $a=1$, $b=1$, $X=(0,0,\ldots, 0, 1)$ and $Y= (1,0,\ldots, 0, \sqrt{2})$. The blue curve in $\mathbb{R}^{n+1,1}$ represents $\lbrace (1-t)^{\frac{1}{2}}X +t^{\frac{1}{2}} Y:   t \in [0,1] \rbrace$, and $T = (1-t_0)^{\frac{1}{q}} X + t_0^{\frac{1}{q}} Y= (\frac{4}{5}, 0, \ldots, 0, \frac{3+4 \sqrt{2}}{5})$ lies on the curve. The green shade represents the domain enclosed by the past light cone of $T$; the red domain represents the geodesic ball of radius $\log \sqrt{-\metric{T}{T}} $ centered at $T'=T/ \sqrt{-\metric{T}{T}}$. Proposition \ref{prop-Assump-geodesic ball} implies that, $W$ together with the chosen $p$, $t_0$, $a$ and $b$ satisfies Assumption \ref{assump-p-sum} if and only if $W$ lies in the red domain. }
	\label{fig-new sum}
\end{figure}
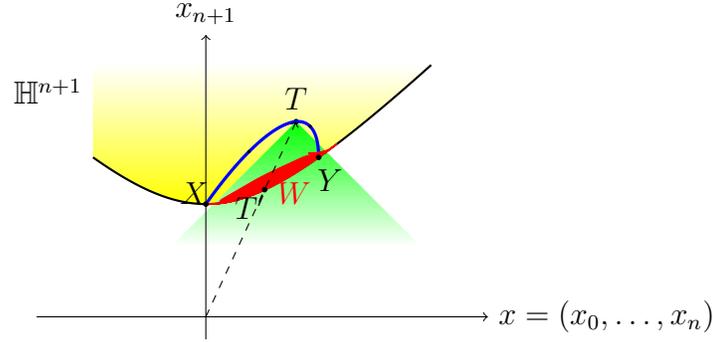
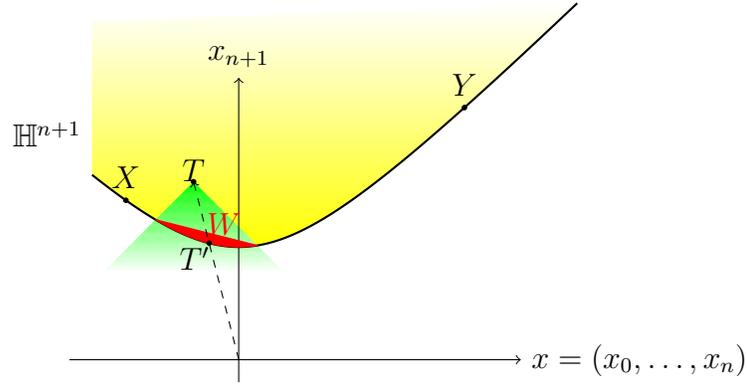
\begin{figure}[htbp!]
	\centering
	\begin{tikzpicture}[scale=1.5]
	\coordinate (X) at (-1,{sqrt(2)});
	\coordinate (Y) at (2, {sqrt(5)});
	\coordinate (T) at (-0.4, 1.5784);
	\coordinate (S)  at  (-0.262,1.034); 
	\node at (-0.262,1.034){$T'$};
	\shade[domain=-1.3:3, bottom color=yellow, top color= white, samples =100]
	(-1.3, 3) -- plot(\x, {sqrt(\x*\x+1)});
	\shade[top color=green, bottom color= white] (-1.2, 0.7784) --(T) --(0.4, 0.7784) --cycle;
	\filldraw (T) circle(.02);
	\node at (-0.4,1.68){$T$};
	\draw[->] (-1.5,0) -- (2.5,0) node[right]{$x= (x_0, \ldots, x_n)$};
	\draw[->] (0,-0.2)-- (0,2.5) node[above]{$x_{n+1}$};
	\draw[domain= -1.3:3, samples=200, thick]
	plot(\x, {sqrt(\x*\x+1)});
	\node at (-1.7,2){$\mathbb{H}^{n+1}$};
	\node at (-1, {sqrt(2)+0.2}){$X$};
	\node at (2, {sqrt(5)+0.2}){$Y$};
	\filldraw[domain= -0.736:0.165, samples=100, color = red, fill opacity=1]
	(-0.736, 1.242) -- plot(\x, {sqrt(\x*\x+1)}) -- (0.165, 1.014) -- cycle;	
	\draw[->, dashed] (0,0) --(T);
	\filldraw (X) circle(.02);
	\filldraw (Y) circle(.02);
	\node[red] at (-0.13,1.2){$W$};
	\filldraw (S) circle(.02);
	\node at (-0.39,0.9){$T'$};
	\end{tikzpicture}
	\caption{In this figure, we let $p=1$, $a=\frac{4}{5}$, $b=\frac{1}{5}$, $X=(-1, 0, \ldots,0,\sqrt{2})$ and $Y= (2,0,\ldots, 0, \sqrt{5})$. Then  $T = a X + b Y=(-0.4,0,\ldots, 0, \frac{4\sqrt{2}+\sqrt{5}}{5})$. The green shade represents the domain enclosed by the past light cone of $T$; the red domain represents the geodesic ball of radius $\log \sqrt{-\metric{T}{T}} $ centered at $T'=T/ \sqrt{-\metric{T}{T}}$.  Proposition \ref{prop-Assump-geodesic ball} implies that, $W$ together with the chosen $a$ and $b$ satisfies Assumption \ref{assump-p-sum} if and only if $W$ lies in the red domain. }
	\label{fig-new sum p=1}
\end{figure}
	
Now we can define a pointwise $p$-sum for general sets in $\mathbb{H}^{n+1}$.
 
\begin{defn}\label{def-p sum-p>0}
	Let $p>0$, $a \geq 0$ and $b \geq 0$ be real numbers, and let $K$ and $L$ be sets in $\mathbb{H}^{n+1}$. We define
	\begin{small}
		\begin{equation*}
			a \cdot K \widetilde{+}_p b \cdot L = \left\{
			\begin{aligned}
			&\underset{ X\in K, \ Y \in L}{\bigcup} \bigcup_{t \in [0,1]} \left\{ W \in \mathbb{H}^{n+1}:  p, \, t, \, a, \, X, \, b, \, Y \, {\rm and} \, W {\rm satisfy \ Assumption \ \ref{assump-p-sum}} \right\}, \quad p \geq 1,\\
			&\underset{ X\in K, \ Y \in L}{\bigcup} \bigcap_{t \in [0,1]} \left\{ W \in \mathbb{H}^{n+1}:  p,\, t, \, a, \, X, \, b, \, Y \, {\rm and} \, W {\rm satisfy \ Assumption \ \ref{assump-p-sum}}  \right\}, \quad  0<p<1.
			\end{aligned} \right.
		\end{equation*}
	\end{small}
\end{defn}

\begin{prop}\label{prop-point wise def-equivalent form}
	Let $p>0$, $a \geq 0$ and $b \geq 0$ be a real numbers, and let $K$ and $L$ be sets in $\mathbb{H}^{n+1}$. Then
	\begin{equation*}
		a \cdot K \widetilde{+_p} b \cdot L =
		\left\{ 
			\begin{aligned}
			&\bigcup_{X \in K, \, Y \in L} \bigcup_{t \in [0,1]} B(p,t; a,X,b, Y), \quad &p>1,\\
			&\bigcup_{X \in K, \, Y \in L}  B( a, X,b,Y), \quad &p=1,\\
			&\bigcup_{X \in K, \, Y \in L} \bigcap_{t \in [0,1]} B(p,t; a,X,b,Y), \quad &0 <p<1,
			\end{aligned}
		\right.
	\end{equation*}
	Consequently, Definition \ref{def-p sum-p>0} is equivalent to Definition \ref{def-p sum-p>0-new}.
\end{prop}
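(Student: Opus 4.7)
The proposition reduces to identifying the sets
\[
S(p,t;a,X,b,Y) := \{W \in \mathbb{H}^{n+1}:  p, t, a, X, b, Y, W \text{ satisfy Assumption \ref{assump-p-sum}}\}
\]
(and their $p=1$ analogues) with the geodesic balls $B(p,t;a,X,b,Y)$ (resp.\ $B(a,X,b,Y)$) of Definition \ref{def-B(p,t,a,X,b,Y)}. Proposition \ref{prop-Assump-geodesic ball} already does most of the work: setting $T := (1-t)^{1/q} a^{1/p} X + t^{1/q} b^{1/p} Y$, it tells us that $S(p,t;a,X,b,Y)$ is empty when $\metric{T}{T} > -1$ and equals the closed geodesic ball of radius $\log \sqrt{-\metric{T}{T}}$ centered at $T/\sqrt{-\metric{T}{T}}$ when $\metric{T}{T} \leq -1$. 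Hence the plan is to compute $-\metric{T}{T}$ and $-\metric{T}{X}$ explicitly and match them with $R(p,t;a,X,b,Y)^2$ and with the defining formula for $P(p,t;a,X,b,Y)$.

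First I will treat the case $p\in(0,1)\cup(1,+\infty)$. Since $X,Y\in\mathbb{H}^{n+1}$ satisfy $\metric{X}{X}=\metric{Y}{Y}=-1$, Lemma \ref{lem-geo-dis on hyperbolic space} gives $\metric{X}{Y} = -\cosh d_{\mathbb{H}^{n+1}}(X,Y)$. Expanding bilinearly,
\[
-\metric{T}{T} = (1-t)^{2/q}a^{2/p} + t^{2/q}b^{2/p} + 2(1-t)^{1/q}t^{1/q}a^{1/p}b^{1/p}\cosh d_{\mathbb{H}^{n+1}}(X,Y),
\]
which is exactly $R(p,t;a,X,b,Y)^2$ from \eqref{R-p-t-a-X-b-Y}. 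Therefore the dichotomy $\metric{T}{T} \lessgtr -1$ in Proposition \ref{prop-Assump-geodesic ball} corresponds precisely to the dichotomy $R(p,t;a,X,b,Y) \gtrless 1$ in Definition \ref{def-B(p,t,a,X,b,Y)}, so $S$ is empty exactly when $B(p,t;a,X,b,Y)$ is, and the two radii agree.

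For the center, set $T' := T/\sqrt{-\metric{T}{T}} = T/R(p,t;a,X,b,Y)$, which already lies on the geodesic segment $\overline{XY}$ by the second part of Proposition \ref{prop-Assump-geodesic ball}. Using Lemma \ref{lem-geo-dis on hyperbolic space} again,
\[
\cosh d_{\mathbb{H}^{n+1}}(T',X) = -\metric{T'}{X} = \frac{-(1-t)^{1/q}a^{1/p}\metric{X}{X} - t^{1/q}b^{1/p}\metric{Y}{X}}{R(p,t;a,X,b,Y)},
\]
and substituting $\metric{X}{X}=-1$, $\metric{Y}{X}=-\cosh d_{\mathbb{H}^{n+1}}(X,Y)$ reproduces \eqref{P-p-t-a-X-b-Y}. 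Hence $T' = P(p,t;a,X,b,Y)$, and $S(p,t;a,X,b,Y) = B(p,t;a,X,b,Y)$.

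The case $p=1$ is handled analogously with $T := aX + bY$: the same two-line computations give $-\metric{T}{T} = R(a,X,b,Y)^2$ from \eqref{R-a-X-b-Y} and $-\metric{T/\sqrt{-\metric{T}{T}}}{X}$ equal to the right-hand side of \eqref{P-a-X-b-Y}, so $S(a,X,b,Y) = B(a,X,b,Y)$. Substituting these identifications into the three-case formula defining $a\cdot K \widetilde{+}_p b\cdot L$ in Definition \ref{def-p sum-p>0} produces exactly the three-case formula of Definition \ref{def-p sum-p>0-new}, proving the proposition. No real obstacle arises; the only mild care is to recognize that the two normalizations for the radius ($\log\sqrt{-\metric{T}{T}}$ vs.\ $\log R$) coincide because $R^2 = -\metric{T}{T}$.
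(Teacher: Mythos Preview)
Your proof is correct and follows essentially the same route as the paper: both invoke Proposition \ref{prop-Assump-geodesic ball} to identify the set of $W$'s satisfying Assumption \ref{assump-p-sum} with a geodesic ball (or the empty set), then expand $-\metric{T}{T}$ and $-\metric{T'}{X}$ via Lemma \ref{lem-geo-dis on hyperbolic space} to match the formulas \eqref{R-p-t-a-X-b-Y}--\eqref{P-a-X-b-Y} defining $R$, $P$, and hence $B$. The only cosmetic difference is that the paper handles $p=1$ by formally setting $(1-t)^{1/q}=t^{1/q}=1$ in the generic computation, whereas you redo the two-line calculation with $T=aX+bY$; both are fine.
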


\begin{proof}
	Let $t \in [0,1]$ be a real number, and let $X \in K$ and $Y \in L$ be two points. For abbreviation, let
	\begin{equation*}
		E = \left\{ W \in \mathbb{H}^{n+1}:  p, \, t, \, a, \, X, \, b, \, Y \, {\rm and} \, W {\rm satisfy \ Assumption \ \ref{assump-p-sum}}  \right\}.
	\end{equation*}
	
	We first consider the case $p \in (0,1) \cup (1, +\infty)$.
	Let $T:= (1-t)^\frac{1}{q} a^{\frac{1}{p}} X+ t^{\frac{1}{q}} b^\frac{1}{p} Y$ be defined as in Proposition \ref{prop-Assump-geodesic ball}. 
	Then Proposition \ref{prop-Assump-geodesic ball} asserts that $E$ is a geodesic ball whose center $T'$ lies on the geodesic segment connecting $X$ and $Y$, and the radius of $E$ is given by 
	\begin{align*}
		\log \sqrt{-\metric{T}{T}}
		=&\log \(- \metric{(1-t)^\frac{1}{q} a^{\frac{1}{p}} X+ t^{\frac{1}{q}} b^\frac{1}{p} Y}{
		(1-t)^\frac{1}{q} a^{\frac{1}{p}} X+ t^{\frac{1}{q}} b^\frac{1}{p} Y}  \)^{\frac{1}{2}}\\
		=& \log  \(- (1-t)^\frac{2}{q} a^\frac{2}{p} \metric{X}{X} - t^\frac{2}{q} b^\frac{2}{p} \metric{Y}{Y}
		- 2 (1-t)^\frac{1}{q}t^\frac{1}{q} a^\frac{1}{p} b^\frac{1}{p} \metric{X}{Y}\)^\frac{1}{2}\\
		=&\log  \((1-t)^\frac{2}{q} a^\frac{2}{p}+ t^\frac{2}{q} b^\frac{2}{p} 
		+2 (1-t)^\frac{1}{q}t^\frac{1}{q} a^\frac{1}{p} b^\frac{1}{p} \cosh  d_{\mathbb{H}^{n+1}} (X,Y) \)^\frac{1}{2}\\
		=&\log R(p,t,a,X,b,Y),
	\end{align*}
	where we used \eqref{geo-dis on hyperbolic space} and \eqref{R-p-t-a-X-b-Y}. If $\metric{T}{T} >-1$, then Proposition \ref{prop-Assump-geodesic ball} implies that $E$ is an empty set. Hence we can assume $\metric{T}{T} \leq -1$. Using \eqref{geo-dis on hyperbolic space},  we have  that the distance between $T'$ and $X$ is given by
	\begin{align*}
		\cosh d_{\mathbb{H}^{n+1}} (T', X)
		=&-\metric{T'}{X} 
		=\frac{-\metric{T}{X}}{\sqrt{- \metric{T}{T}}}\\
		=&\frac{-  (1-t)^\frac{1}{q} a^{\frac{1}{p}} \metric{X}{X} - t^{\frac{1}{q}}b^\frac{1}{p} \metric{X}{Y}  }{ R(p,t,a,X,b,Y) 	 }\\
	=& \frac{(1-t)^\frac{1}{q}a^\frac{1}{p} + t^\frac{1}{q} b^\frac{1}{p}\cosh d_{\mathbb{H}^{n+1}} (X, Y) }{R(p,t,a,X,b,Y)}.
	\end{align*} 
	Hence $T' = P(p,t,a,X,b,Y)$ and $E = B(p,t,a,X,b,Y)$, where we used Definition \ref{def-B(p,t,a,X,b,Y)}.
	
	In the case $p=1$, we have $E = B(a,X,b,Y)$ by viewing $(1-t)^\frac{1}{q} = t^\frac{1}{q} =1$ in the above proof, where $B(a,X,b,Y)$ was defined in Definition \ref{def-B(p,t,a,X,b,Y)}. Then Proposition \ref{prop-point wise def-equivalent form} follows from Definition \ref{def-p sum-p>0}.
\end{proof}

\subsection{Compatibility of different definitions of hyperbolic $p$-sum} $ \ $

The following Theorem \ref{thm-new-old-sum-compatible} shows the relationship between Definition \ref{def-p sum-p>0} and Definition \ref{def-p sum}.
\begin{thm}\label{thm-new-old-sum-compatible}
	Let $p$, $a$, and $b$ be real numbers such that $\frac{1}{2} \leq p \leq 2$,  $a \geq 0$, $b \geq 0$ and  $a+b \geq 1$. Let $K$ and $L$ be two smooth uniformly h-convex bounded domains in $\mathbb{H}^{n+1}$. Then Definition \ref{def-p sum-p>0} is compatible with Definition \ref{def-p sum}, i.e.
	\begin{equation*}
		a \cdot K \widetilde{+}_p b \cdot L =a \cdot K +_p b \cdot  L.
	\end{equation*}
	Consequently, Definition \ref{def-p sum-p>0-new} is compatible with Definition \ref{def-p sum}.
\end{thm}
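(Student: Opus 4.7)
My plan is to establish $a\cdot K\,\widetilde{+}_p\,b\cdot L = a\cdot K +_p b\cdot L$ (denoted $\Omega_2$ and $\Omega_1$ respectively) by proving both inclusions. Throughout, I identify Definition \ref{def-p sum-p>0} with Definition \ref{def-p sum-p>0-new} via Proposition \ref{prop-point wise def-equivalent form}, and I convert membership in a ball $B(p,t;a,X,b,Y)$ into the light-cone form of Assumption \ref{assump-p-sum} via Proposition \ref{prop-Assump-geodesic ball}. I write $\g_\Omega(z) = e^{u_\Omega(z)} = (a\g_K^p+b\g_L^p)^{1/p}$ as in Definition \ref{def-p sum}.

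First I would prove $\Omega_2 \subseteq \Omega_1$ by a Hölder-type argument. Taking $W \in \Omega_2$ and pairing the defining light-cone inclusion with $(z,1)$ for arbitrary $z\in\mathbb{S}^n$, the bounds $-\metric{X}{(z,1)}\le\g_K(z)$ and $-\metric{Y}{(z,1)}\le\g_L(z)$ for $X\in K$, $Y\in L$ give
\begin{equation*}
	-\metric{W}{(z,1)} \le (1-t)^{1/q}a^{1/p}\g_K(z) + t^{1/q}b^{1/p}\g_L(z).
\end{equation*}
For $p>1$, Hölder's inequality bounds the right-hand side by $(a\g_K^p+b\g_L^p)^{1/p}=\g_\Omega(z)$. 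For $p<1$, the intersection $\bigcap_t$ in Definition \ref{def-p sum-p>0-new} allows taking the infimum over $t$, and the reverse Hölder inequality (with negative conjugate exponent $q$) yields the same bound; the case $p=1$ is immediate. Therefore $W \in \bigcap_z\overline{B}_z(u_\Omega(z))=\Omega_1$ by Theorem \ref{thm-def p sum-well defined}.

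The reverse inclusion $\Omega_1\subseteq\Omega_2$ is the main content. I first handle $\partial\Omega_1$. For each $z_0\in\mathbb{S}^n$, set $X:=X_K(z_0)$, $Y:=X_L(z_0)$ via the boundary parameterization in \eqref{X(z)}, and select the Hölder-equality parameter
\begin{equation*}
	t_0 := \frac{b\g_L^p(z_0)}{a\g_K^p(z_0)+b\g_L^p(z_0)},
\end{equation*}
so that $(1-t_0)^{1/q}a^{1/p}=a\g_K^{p-1}/\g_\Omega^{p-1}$ and $t_0^{1/q}b^{1/p}=b\g_L^{p-1}/\g_\Omega^{p-1}$ at $z_0$. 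Expanding $X_K(z_0)$, $X_L(z_0)$, $X_\Omega(z_0)$ via \eqref{X(z)} in the basis $\{(-z_0,1),(z_0,1),(e_j,0)\}$ of $\mathbb{R}^{n+1,1}$, and using the derivative identity $D\g_\Omega=(a\g_K^{p-1}D\g_K+b\g_L^{p-1}D\g_L)/\g_\Omega^{p-1}$ obtained from $\g_\Omega^p=a\g_K^p+b\g_L^p$, a direct calculation shows
\begin{equation*}
	(1-t_0)^{1/q}a^{1/p}X_K(z_0)+t_0^{1/q}b^{1/p}X_L(z_0)-X_\Omega(z_0)=\alpha(z_0)\,(z_0,1)
\end{equation*}
for some $\alpha(z_0)\ge 0$: the $(-z_0,1)$-component cancels by the Hölder equality, the $(D\g,0)$-component cancels by the derivative identity, and the sign of $\alpha(z_0)$ is controlled by the Cauchy--Schwarz bound $a|D\g_K|^2/\g_K + b|D\g_L|^2/\g_L \ge |aD\g_K+bD\g_L|^2/(a\g_K+b\g_L)$ together with $(a/\g_K + b/\g_L)(a\g_K+b\g_L)\ge (a+b)^2\ge 1$ (where the last step uses the hypothesis $a+b\ge 1$). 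Since $(z_0,1)$ lies in the closed future light cone, Proposition \ref{prop-Assump-geodesic ball} gives $X_\Omega(z_0)\in B(p,t_0;a,X_K(z_0),b,X_L(z_0))\subseteq\Omega_2$, and Lemma \ref{lem-bdy of h-convex domain-by X(z)} then yields $\partial\Omega_1\subseteq\Omega_2$.

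The main obstacle I expect is upgrading from $\partial\Omega_1\subseteq\Omega_2$ to $\Omega_1\subseteq\Omega_2$. To close this, I plan to compute the horospherical support function of $\Omega_2$: each ball $B(p,t;a,X,b,Y)$ has support function $z\mapsto (1-t)^{1/q}a^{1/p}(-\metric{X}{(z,1)})+t^{1/q}b^{1/p}(-\metric{Y}{(z,1)})$, and the supremum (or, for $p<1$, the appropriate combination) over $X\in K$, $Y\in L$, $t\in[0,1]$ recovers $\g_\Omega(z)$ by the equality case of Hölder. Thus $\Omega_2$ has support function $\g_\Omega$, coincides with $\Omega_1$ on $\partial\Omega_1$, and is inscribed at every supporting horosphere by a positive-radius h-convex tangent ball $\mathcal{B}(z_0)$ constructed above. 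Combining this tangential inscription with the horospherical Wulff-shape characterization of $\Omega_1$ in Corollary \ref{cor-Aij>0 Omega=Wulff shape} and the continuity of $(X,Y,t)\mapsto B(p,t;a,X,b,Y)$ over the compact parameter set $K\times L\times[0,1]$, I can conclude that every interior point of $\Omega_1$ also lies in some $B(p,t;a,X,b,Y)$, completing $\Omega_1=\Omega_2$.
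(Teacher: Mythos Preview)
Your inclusion $\Omega_2\subseteq\Omega_1$ and your boundary computation for $p\ge 1$ are correct and match the paper's Proposition~\ref{prop-K+p L in h-convex hull} and Lemma~\ref{lem-old-bdy in new sum}. Two genuine gaps remain.

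\textbf{The interior upgrade.} Knowing $\partial\Omega_1\subseteq\Omega_2\subseteq\Omega_1$ and that $\Omega_2$ has horospherical support function $\g_\Omega$ does \emph{not} force $\Omega_2=\Omega_1$: the support-function identity only says that the h-convex hull of $\Omega_2$ is $\Omega_1$, but $\Omega_2$ itself could be, say, an ``annular'' subset of $\Omega_1$ containing $\partial\Omega_1$ (closed, connected, with the same support function, yet missing the center). Continuity of $(X,Y,t)\mapsto B(p,t;a,X,b,Y)$ over the compact parameter set does not rule this out; you need an argument that actually produces, for each interior $W\in\Omega_1$, a specific triple $(X,Y,t)$ with $W$ in the corresponding ball. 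The paper closes this with an interpolation: fix $X_0\in K$, $Y_0\in L$, set $K_\alpha=\alpha\cdot K+_p(1-\alpha)\cdot X_0$, $L_\alpha=\alpha\cdot L+_p(1-\alpha)\cdot Y_0$, so that $\alpha\mapsto a\cdot K_\alpha+_p b\cdot L_\alpha$ is a nested family shrinking from $\Omega_1$ down to $a\cdot X_0+_p b\cdot Y_0$; any $W\notin a\cdot X_0+_p b\cdot Y_0$ then lies on $\partial(a\cdot K_{\alpha_0}+_p b\cdot L_{\alpha_0})$ for some $\alpha_0$, and the boundary inclusion applied to the smaller pair (with $K_{\alpha_0}\subseteq K$, $L_{\alpha_0}\subseteq L$) gives $W\in\Omega_2$. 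The remaining case $W\in a\cdot X_0+_p b\cdot Y_0$ is handled by first proving Proposition~\ref{prop-sum of 2 pts new} (the two-point case) separately, using a contractibility argument in the upper half-space model.

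\textbf{The $p<1$ boundary step.} For $\tfrac12\le p<1$, Definition~\ref{def-p sum-p>0-new} requires $X_\Omega(z_0)\in\bigcap_{t\in[0,1]}B(p,t;a,X_K(z_0),b,X_L(z_0))$, not just membership for the single H\"older-equality $t_0$; your light-cone identity $\alpha(z_0)(z_0,1)$ establishes only the latter. The paper sidesteps this by observing (third equality in \eqref{old sum in new sum-p<1}) that for two \emph{points} the intersection $\bigcap_t B(p,t;a,X,b,Y)$ already equals $a\cdot X+_p b\cdot Y$, so it suffices to show $X_\Omega(z_0)\in a\cdot X_K(z_0)+_p b\cdot X_L(z_0)$; this follows from the first-order matching $\g_K(z_0)=\g_{X_K(z_0)}(z_0)$, $D\g_K(z_0)=D\g_{X_K(z_0)}(z_0)$ (and likewise for $L$), which makes the boundary parametrization \eqref{X(z)} for $\widetilde\g:=(a\g_{X_K(z_0)}^p+b\g_{X_L(z_0)}^p)^{1/p}$ coincide with $X_\Omega(z_0)$ at $z=z_0$. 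A minor point: your inequality $(a/\g_K+b/\g_L)(a\g_K+b\g_L)\ge(a+b)^2$ is the $p=1$ instance; for general $p\in[\tfrac12,2]$ the needed bound is $a\g_K^{p-2}+b\g_L^{p-2}\ge\g_\Omega^{p-2}$, which comes from the convexity of $t\mapsto t^{1-2/p}$ together with $a+b\ge 1$ (cf.~\eqref{T3>0}).
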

For the convenience of readers, we give an outline of the proof of Theorem \ref{thm-new-old-sum-compatible}.
\begin{itemize}
	\item \emph{Step 1}.  We will prove that $a \cdot K \widetilde{+}_p b \cdot L \subset a \cdot K+_p b \cdot L$ in Proposition \ref{prop-K+p L in h-convex hull}. Then, it suffices to show the opposite direction.
	\item \emph{Step 2}. Since any point $W$ on $\partial(a \cdot K+_p b \cdot L)$ can be given explicitly by expression \eqref{X(z)}, we intend to find suitable $X \in K$, $Y \in L$ and $t \in [0,1]$ that satisfies Assumption \ref{assump-p-sum}. Then we will prove that $\partial(a \cdot K +_p b \cdot L) \subset a \cdot K \widetilde{+}_p b \cdot L$ in Lemma \ref{lem-old-bdy in new sum}.
	\item \emph{Step 3}. In Proposition \ref{prop-sum of 2 pts new}, we will prove that $a \cdot X \widetilde{+}_p b \cdot Y = a \cdot X+_p b \cdot Y$ for any two points $X$, $Y$. 
	\item \emph{Step 4}. Let $X \in K$, $Y\in L$ and $W \in a \cdot K+_p b \cdot L$ be points. If $W \in a \cdot X+_p b \cdot Y$, then the result in Step 3 implies $W \in a \cdot X \widetilde{+}_p b \cdot Y$. If $W \in a \cdot K+_p b \cdot L \backslash \(a \cdot X+_p b\cdot Y\)$, then we will find two h-convex bounded domains $K_0 \subset K$ and $L_0 \subset L$ such that $W \in \partial( a \cdot K_0 +_p b \cdot L_0)$. Then the fact in Step 2 implies  $W \in a \cdot K \widetilde{+}_p b \cdot L$. Hence we will get $a \cdot K+_p b \cdot L \subset a \cdot K \widetilde{+}_p b \cdot L$.
	\item Theorem \ref{thm-new-old-sum-compatible} will follow from the results in Step 1 and Step 4.
\end{itemize}

\begin{lem}\label{lem-jesen-ineq}
	Let $p$, $A$, $B$ be positive numbers and $t \in [0,1]$. Then 
	\begin{align}
		(A^p + B^p)^{\frac{1}{p}} \geq&  (1-t)^{\frac{1}{q}} A +t^{\frac{1}{q}}B, &\quad p>1, \label{jesen-ineq-p>1}\\
		(A^p + B^p)^{\frac{1}{p}} \leq&  (1-t)^{\frac{1}{q}} A +t^{\frac{1}{q}}B, &\quad 0<p<1,
		\label{jesen-ineq-p<1}
	\end{align}
	where $q$ satisfies $\frac{1}{p} + \frac{1}{q}=1$. In \eqref{jesen-ineq-p>1} and \eqref{jesen-ineq-p<1}, equality holds if and only if $t = \frac{B^p}{A^p +B^p}$.
\end{lem}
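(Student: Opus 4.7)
The statement is the two-term case of (reverse) Hölder's inequality. The plan is to pair the sequences $(A, B)$ and $((1-t)^{1/q}, t^{1/q})$ using the conjugate exponents $p$ and $q$. For $p > 1$, both $p, q > 1$, and the classical Hölder inequality gives
\begin{equation*}
A(1-t)^{1/q} + B\, t^{1/q} \leq (A^p + B^p)^{1/p}\bigl((1-t) + t\bigr)^{1/q} = (A^p + B^p)^{1/p},
\end{equation*}
which is \eqref{jesen-ineq-p>1}. For $0 < p < 1$, the conjugate exponent $q = p/(p-1)$ is negative, and the reverse Hölder inequality for positive sequences flips the direction to yield \eqref{jesen-ineq-p<1}. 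In both regimes the standard equality condition of Hölder forces $(A^p, B^p)$ to be proportional to $(1-t, t)$, which uniquely gives $t = B^p/(A^p + B^p)$.

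As a self-contained alternative (avoiding any appeal to reverse Hölder), I would instead differentiate the one-variable function $f(t) := (1-t)^{1/q} A + t^{1/q} B$ directly on $(0,1)$. A short computation gives
\begin{equation*}
f'(t) = \frac{1}{q}\bigl[t^{-1/p} B - (1-t)^{-1/p} A\bigr],
\end{equation*}
whose unique interior zero is $t^* = B^p/(A^p + B^p)$. Substituting back, using $(1-t^*)^{1/q} = A^{p-1}/(A^p + B^p)^{1/q}$ and $(t^*)^{1/q} = B^{p-1}/(A^p + B^p)^{1/q}$, yields $f(t^*) = (A^p + B^p)^{1/p}$. For $p > 1$ one has $1/q > 0$, so $f$ extends continuously to the endpoints with $f(0) = A$ and $f(1) = B$, both strictly smaller than $f(t^*)$; hence $t^*$ is a global maximum and \eqref{jesen-ineq-p>1} holds. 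For $0 < p < 1$ one has $1/q < 0$, so $f$ blows up at both endpoints, making $t^*$ a global minimum and establishing \eqref{jesen-ineq-p<1}. Equality in each case occurs exactly at $t = t^*$.

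The only point that requires care is the sign of $1/q$: it is positive for $p > 1$ and negative for $0 < p < 1$, which is precisely why Hölder's inequality flips direction between the two regimes and why the critical point of $f$ switches from a maximum to a minimum. Once this sign bookkeeping is tracked, the entire argument is elementary and there is no serious obstacle.
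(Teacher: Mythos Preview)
Your proof is correct. The paper takes a slightly different route: it sets $m = B^p/(A^p+B^p)$ and applies Jensen's inequality to the function $\zeta(s)=s^{1/p}$ (strictly concave for $p>1$, strictly convex for $0<p<1$) with weights $(1-t,t)$ and points $\bigl(\tfrac{1-m}{1-t},\tfrac{m}{t}\bigr)$, obtaining
\[
(1-t)\Bigl(\tfrac{1-m}{1-t}\Bigr)^{1/p}+t\Bigl(\tfrac{m}{t}\Bigr)^{1/p}\le 1
\]
for $p>1$ (reversed for $0<p<1$), with equality iff $t=m$; this unwinds directly to the stated inequality. Your H\"older argument is essentially the dual formulation of this Jensen step, and your second approach (optimizing $f(t)=(1-t)^{1/q}A+t^{1/q}B$ directly) is a self-contained calculus proof. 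All three are elementary and of comparable length; the Jensen version has the minor advantage that the sign flip between $p>1$ and $0<p<1$ comes automatically from concavity vs.\ convexity of $s^{1/p}$, so no separate appeal to reverse H\"older is needed.
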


\begin{proof}
	Let $m= \frac{B^p}{A^p+B^p}\in (0,1)$. Since $\zeta(s) = s^{\frac{1}{p}}$ is  strictly concave on $\mathbb{R}^{+}$ for $p>1$,  Jensen's inequality implies
	\begin{equation*}
		(1-t) \( \frac{1-m}{1-t} \)^{\frac{1}{p}} + t \( \frac{m}{t}\)^{\frac{1}{p}} \leq 1, 
	\end{equation*}
	with equality if and only if $t=m$. This fact shows the desired inequality \eqref{jesen-ineq-p>1}. The proof of inequality \eqref{jesen-ineq-p<1} follows similarly. This completes the proof of Lemma \ref{lem-jesen-ineq}.
\end{proof}

\begin{prop}\label{prop-K+p L in h-convex hull}
	Let $p>0$ be a real number, let $K$ and $L$ be sets in $\mathbb{H}^{n+1}$, and let $u(z) = \frac{1}{p} \log \(a \g_K^p(z)+ b\g_L^p(z)\)$ be a function on $\mathbb{S}^n$. Then
	\begin{equation}\label{K+p L in h-convex hull}
		a \cdot K\widetilde{+}_p b \cdot L \subset \bigcap_{z \in \mathbb{S}^n} \overline{B}_z(u(z)).
	\end{equation}
\end{prop}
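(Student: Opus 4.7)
The plan is to take any $W \in a \cdot K \widetilde{+}_p b \cdot L$ and verify, for every $z \in \mathbb{S}^n$, the inequality $-\metric{W}{(z,1)} \leq e^{u(z)} = (a\g_K^p(z) + b\g_L^p(z))^{1/p}$, which is exactly membership in $\overline{B}_z(u(z))$ by \eqref{Bz(r)}. By Definition \ref{def-p sum-p>0}, there exist $X \in K$ and $Y \in L$ such that Assumption \ref{assump-p-sum} is satisfied for \emph{some} $t \in [0,1]$ when $p \geq 1$, and for \emph{every} $t \in [0,1]$ when $0 < p < 1$.

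The key observation is that Assumption \ref{assump-p-sum} is precisely the statement that the vector $T - W$ lies in the closed future light cone of the origin, where $T := (1-t)^{1/q} a^{1/p} X + t^{1/q} b^{1/p} Y$ for $p \neq 1$ (respectively $T := aX + bY$ for $p = 1$). Since $(z,1)$ is future-directed null, a direct Cauchy--Schwarz computation shows $\metric{U}{(z,1)} \leq 0$ for every future-directed causal $U$. Applying this with $U = T - W$, rearranging, and using the defining property $-\metric{X}{(z,1)} \leq \g_K(z)$ of the horospherical support function together with its analog for $Y$, I obtain, for $p \neq 1$,
\begin{equation*}
-\metric{W}{(z,1)} \leq -\metric{T}{(z,1)} \leq (1-t)^{1/q} a^{1/p} \g_K(z) + t^{1/q} b^{1/p} \g_L(z),
\end{equation*}
and the parallel inequality $-\metric{W}{(z,1)} \leq a \g_K(z) + b \g_L(z)$ for $p = 1$.

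The case $p = 1$ is now immediate. For $p > 1$, inequality \eqref{jesen-ineq-p>1} of Lemma \ref{lem-jesen-ineq}, applied with $A = a^{1/p} \g_K(z)$ and $B = b^{1/p} \g_L(z)$, bounds the right-hand side above by $(a \g_K^p(z) + b \g_L^p(z))^{1/p} = e^{u(z)}$ for every admissible $t$, closing the argument. The main obstacle is the regime $0 < p < 1$: Lemma \ref{lem-jesen-ineq} there gives only the reverse inequality \eqref{jesen-ineq-p<1}, so a generic $t$ would yield the wrong bound. The point is that Definition \ref{def-p sum-p>0} employs an \emph{intersection} over $t \in [0,1]$ precisely in this range, so the bound above is available for all $t$ simultaneously. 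One then selects the equality-case value $t^{\ast} = b\g_L^p(z)/(a\g_K^p(z) + b\g_L^p(z))$ from Lemma \ref{lem-jesen-ineq}, which lies in $(0,1)$ since $\g_K, \g_L > 0$, so the factors $(1-t^{\ast})^{1/q}$ and $(t^{\ast})^{1/q}$ are well-defined despite $1/q < 0$. With this choice the right-hand side collapses exactly to $(a\g_K^p(z) + b\g_L^p(z))^{1/p} = e^{u(z)}$, completing the verification and hence the inclusion \eqref{K+p L in h-convex hull}.
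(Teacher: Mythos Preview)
Your proof is correct and follows essentially the same line as the paper's argument: both reduce Assumption \ref{assump-p-sum} to the inequality $-\metric{W}{(z,1)} \le -\metric{T}{(z,1)}$ for all $z$, then combine the horospherical-support bounds $-\metric{X}{(z,1)}\le\g_K(z)$, $-\metric{Y}{(z,1)}\le\g_L(z)$ with Lemma \ref{lem-jesen-ineq}. The paper packages this as a chain of set inclusions (swapping $\bigcup_t\bigcap_z\subset\bigcap_z\bigcup_t$, etc.) rather than your pointwise verification with the Lorentzian causal-vector interpretation, but the content is the same.
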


\begin{proof}
	It is easy to see that the Assumption \ref{assump-p-sum} is equivalent to
	\begin{align}
		&\metric{-(1-t)^{\frac{1}{q}} X - t^{\frac{1}{q}} Y}{(z,1)}-\metric{-W}{(z,1)} \nonumber\\
		=& (1-t)^{\frac{1}{q}} x_{n+1} -t^{\frac{1}{q}}y_{n+1} -w_{n+1} -\metric{ (1-t)^{\frac{1}{q}} x -t^{\frac{1}{q}}y -w}{z} \geq 0, \quad \forall \ z \in \mathbb{S}^n, \label{equiv-assump}
	\end{align}
	where $q$ satisfies $\frac{1}{p} + \frac{1}{q}=1$. 
	
	We consider the case $p>1$ at first. Apparently, 
	\begin{equation*}
		\metric{-X}{z} = x_{n+1}- \metric{x}{z} \geq \(|x|^2+1\)^{\frac{1}{2}}-|x|>0.
	\end{equation*}
	 Similarly, $\metric{-Y}{z}>0$.  Taking $A=a^{\frac{1}{p}}\metric{-X}{(z,1)}$ and $B=b^{\frac{1}{q}}\metric{-Y}{(z,1)}$ in Lemma \ref{lem-jesen-ineq}, we have
	\begin{equation}\label{ineq-lemma}
		\( a\metric{-X}{(z,1)}^p + b\metric{-Y}{(z,1)}^p \)^\frac{1}{p}
		= \max_{t \in (0,1)} \metric{-(1-t)^{\frac{1}{q}} a^{\frac{1}{p}}X-t^{\frac{1}{q}} b^{\frac{1}{p}}Y }{(z,1)}.
	\end{equation}
	By Definition \ref{def-p sum-p>0}, we have
	\begin{align*}
		a\cdot K\widetilde{+}_p b\cdot  L =&
		\underset{ X\in K, \ Y \in L}{\bigcup} \bigcup_{t \in[0,1]}\bigcap_{z \in \mathbb{S}^n} \left\{W \in \mathbb{H}^{n+1}: \metric{-W}{(z,1)} \leq \metric{-(1-t)^{\frac{1}{q}} a^\frac{1}{p}X - t^{\frac{1}{q}}b^{\frac{1}{p}} Y}{(z,1)}  \right\}\\
		\subset& \underset{ X\in K, \ Y \in L}{\bigcup} \bigcap_{z \in \mathbb{S}^n}  \bigcup_{t \in[0,1]} \left\{W \in \mathbb{H}^{n+1}:  \metric{-W}{(z,1)} \leq \metric{-(1-t)^{\frac{1}{q}}  a^\frac{1}{p}X - t^{\frac{1}{q}}  b^\frac{1}{p}Y}{(z,1)} \right\}\\
		=&\underset{ X\in K, \ Y \in L}{\bigcup} \bigcap_{z \in \mathbb{S}^n}  \left\{W \in \mathbb{H}^{n+1}:  \metric{-W}{(z,1)} \leq \max_{t \in [0,1]}\metric{-(1-t)^{\frac{1}{q}} a^\frac{1}{p} X - t^{\frac{1}{q}}  b^\frac{1}{p}Y}{(z,1)} \right\}\\
		=&  \underset{ X\in K, \ Y \in L}{\bigcup} \bigcap_{z \in \mathbb{S}^n}  \left\{W \in \mathbb{H}^{n+1}: \metric{-W}{(z,1)} \leq \( a\metric{-X}{(z,1)}^p +b \metric{-Y}{(z,1)}^p \)^\frac{1}{p}  \right\}\\
		\subset& \bigcap_{z \in \mathbb{S}^n}\underset{ X\in K, \ Y \in L}{\bigcup}  \left\{W \in \mathbb{H}^{n+1}:  \metric{-W}{(z,1)} \leq \( a\metric{-X}{(z,1)}^p +b \metric{-Y}{(z,1)}^p \)^\frac{1}{p}  \right\}\\
		=& \bigcap_{z \in \mathbb{S}^n} \left\{W \in \mathbb{H}^{n+1}:  \metric{-W}{(z,1)} \leq \( \max_{X \in K} a\metric{-X}{(z,1)}^p + \max_{Y \in L} b\metric{-Y}{(z,1)}^p \)^\frac{1}{p} \right\}\\
		=& \bigcap_{z \in \mathbb{S}^n} \left\{W \in \mathbb{H}^{n+1}:  \metric{-W}{(z,1)} \leq \( a\g_K^p(z) + b\g_L^p(z) \)^\frac{1}{p}  \right\} \nonumber\\
		=& \bigcap_{z \in \mathbb{S}^n} \overline{B}_z \(u(z) \),
	\end{align*}
	where we used \eqref{equiv-assump} and \eqref{ineq-lemma} in the first and the third equality respectively, and the last two equalities came from \eqref{horo supp funct-def} and the definition of $u(z)$. We note that the above argument is also valid for the case $p=1$ by viewing $\(1-t\)^{\frac{1}{q}} = t^{\frac{1}{q}}=1$.
	
	Since the proof of the case $0 <p <1$ of Proposition \ref{prop-K+p L in h-convex hull} follows similarly, we write it down briefly.
	\begin{align}
		K\widetilde{+}_p L =&
		\underset{ X\in K, \ Y \in L}{\bigcup} \bigcap_{t \in[0,1]}\bigcap_{z \in \mathbb{S}^n} \left\{W \in \mathbb{H}^{n+1}:  \metric{-W}{(z,1)} \leq \metric{-(1-t)^{\frac{1}{q}} a^\frac{1}{p} X - t^{\frac{1}{q}} b^\frac{1}{p} Y}{(z,1)}  \right\} \nonumber\\
		=&\underset{ X\in K, \ Y \in L}{\bigcup} \bigcap_{z \in \mathbb{S}^n}  \left\{W \in \mathbb{H}^{n+1}: \metric{-W}{(z,1)} \leq \min_{t \in [0,1]}\metric{-(1-t)^{\frac{1}{q}} a^\frac{1}{p} X - t^{\frac{1}{q}} b^\frac{1}{p} Y}{(z,1)}  \right\} \nonumber\\
		=&  \underset{ X\in K, \ Y \in L}{\bigcup} \bigcap_{z \in \mathbb{S}^n}  \left\{W \in \mathbb{H}^{n+1}:  \metric{-W}{(z,1)} \leq \( a\metric{-X}{(z,1)}^p + b\metric{-Y}{(z,1)}^p \)^\frac{1}{p}  \right\} \nonumber\\
		\subset& \bigcap_{z \in \mathbb{S}^n} \overline{B}_z \(u(z) \). \label{old sum in new sum-p<1}
	\end{align}
	We complete the proof of Proposition \ref{prop-K+p L in h-convex hull}.
\end{proof}

\begin{lem}\label{lem-old-bdy in new sum}
	Let $\frac{1}{2} \leq p \leq 2$ be a real number, and let $a \geq 0$ and $b \geq 0$ be two real numbers with $a+b \geq 1$. Assume that $K$ and $L$ are two smooth uniformly h-convex bounded domains in $\mathbb{H}^{n+1}$, here the both $K$ and $L$ can degenerate to a single point. Let $\Omega =a\cdot K+_p b \cdot L$. Then
	\begin{equation}\label{old-bdy in new sum}
		\partial \Omega \subset a \cdot K \widetilde{+}_p b \cdot L. 
	\end{equation}
\end{lem}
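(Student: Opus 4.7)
Fix $W \in \partial\Omega$. Since the horospherical support function $u = \tfrac{1}{p}\log\(a\g_K^p + b\g_L^p\)$ of $\Omega$ is smooth on $\mathbb{S}^n$, Lemma \ref{lem-bdy of h-convex domain-by X(z)} provides $z \in \mathbb{S}^n$ with $W = X(z)$ given by \eqref{X(z)} and $\g = (a\g_K^p + b\g_L^p)^{1/p}$. The natural candidates to plug into Assumption \ref{assump-p-sum} are $X := X_K(z)$ and $Y := X_L(z)$, obtained from the same formula \eqref{X(z)} applied to $\g_K$ and $\g_L$ (if $K$ or $L$ is degenerate, Lemma \ref{lem-horospp of point} says $X_K(z)$ or $X_L(z)$ \emph{is} that single point), together with
\begin{equation*}
t^* := \frac{b\g_L^p(z)}{a\g_K^p(z) + b\g_L^p(z)} \in [0,1].
\end{equation*}
The plan is to verify Assumption \ref{assump-p-sum} for the quadruple $(X, Y, t^*, W)$; by Proposition \ref{prop-Assump-geodesic ball} combined with Proposition \ref{prop-point wise def-equivalent form}, this will place $W$ in $a\cdot K \widetilde{+}_p b\cdot L$, at least in the case $p \geq 1$.

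With this $t^*$ the weights simplify to $(1-t^*)^{1/q}a^{1/p} = a\g_K^{p-1}/\g^{p-1}$ and $(t^*)^{1/q}b^{1/p} = b\g_L^{p-1}/\g^{p-1}$. Expanding $X$, $Y$, and $W$ in the null basis $\{(-z,1), (z,1), (e_i, 0)\}$ of $\mathbb{R}^{n+1,1}$ via \eqref{X(z)} and using the chain-rule identity $D\g = \g^{1-p}\(a\g_K^{p-1}D\g_K + b\g_L^{p-1}D\g_L\)$, a direct comparison shows that the $(-z,1)$-coefficient of $T := (1-t^*)^{1/q}a^{1/p}X + (t^*)^{1/q}b^{1/p}Y$ reduces to $\g/2$ and its $(e_i,0)$-coefficient to $-D\g$, matching those of $W$ exactly. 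Hence $T - W = c\cdot(z,1)$ for a scalar $c$; since $(z,1)$ is future-directed null, the Assumption holds iff $c \geq 0$, which unwinds to
\begin{equation*}
a\g_K^{p-2}\(|D\g_K|^2 + 1\) + b\g_L^{p-2}\(|D\g_L|^2 + 1\) \geq \g^{p-2}\(|D\g|^2 + 1\).
\end{equation*}
This splits into a \emph{gradient} part $a\g_K^{p-2}|D\g_K|^2 + b\g_L^{p-2}|D\g_L|^2 \geq \g^{p-2}|D\g|^2$, which follows from Cauchy-Schwarz after substituting the chain-rule identity (valid for any $p > 0$), and a \emph{constant} part $a\g_K^{p-2} + b\g_L^{p-2} \geq \g^{p-2}$, which follows from Jensen's inequality applied to the convex function $s \mapsto s^{(p-2)/p}$ on $(0,\infty)$ (convex because $(p-2)/p \leq 0$ when $p \leq 2$), with the resulting factor $(a+b)^{2/p} \geq 1$ absorbed by the hypothesis $a+b \geq 1$.

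For $p \geq 1$ the above verifies Assumption \ref{assump-p-sum} at $(X, Y, t^*, W)$ and hence $W \in B(p, t^*; a, X, b, Y) \subset a\cdot K \widetilde{+}_p b\cdot L$ (when $p=1$ the same computation with the degenerate weights $1 \cdot a, 1 \cdot b$ gives $W \in B(a, X, b, Y)$). For $\tfrac{1}{2} \leq p < 1$, Definition \ref{def-p sum-p>0-new} uses an intersection over $t$, so one must additionally show $W \in B(p, t; a, X, b, Y)$ for \emph{every} $t \in [0,1]$; by Lemma \ref{lem-jesen-ineq}, minimizing the RHS of Assumption \ref{assump-p-sum} over $t$ reduces this to the single pointwise inequality $\g_W(z') \leq (a\g_{X_K(z)}^p(z') + b\g_{X_L(z)}^p(z'))^{1/p}$ on $\mathbb{S}^n$, which holds with equality and vanishing gradient at $z' = z$ (by the same chain-rule identity at $z$) and is controlled globally by the same two scalar inequalities above. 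The main obstacle is purely algebraic: identifying the specific $t^*$ that simultaneously cancels the $(-z,1)$ and $(e_i,0)$ components of $T - W$; once that is in hand, the remaining analysis is an elementary combination of Cauchy-Schwarz and Jensen that uses the hypotheses $\tfrac{1}{2} \leq p \leq 2$ and $a + b \geq 1$ at exactly one point each.
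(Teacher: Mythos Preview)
For $1 \leq p \leq 2$ your plan is correct and essentially identical to the paper's argument: the choice $t^* = b\g_L^p(z)/\g^p(z)$, the cancellation of the $(-z,1)$- and $(e_i,0)$-components of $T-W$, and the splitting of the remaining $(z,1)$-coefficient into a gradient piece (Cauchy--Schwarz) and a constant piece (convexity of $s\mapsto s^{(p-2)/p}$ together with $a+b\ge 1$) are exactly what the paper does, only packaged in the null basis rather than componentwise.

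For $\tfrac12\le p<1$ there is a genuine gap. You correctly reduce to the global inequality $\g_W(z')\le\tilde\g(z')$ on $\mathbb{S}^n$, where $\tilde\g=(a\g_{X}^p+b\g_{Y}^p)^{1/p}$ with $X=X_K(z)$, $Y=X_L(z)$, and you correctly note the first-order contact $\tilde\g(z)=\g_W(z)$, $D\tilde\g(z)=D\g_W(z)$. But the assertion that this global inequality is ``controlled by the same two scalar inequalities above'' is not justified: those inequalities live at the single direction $z$; if one re-derives them for $\g_X,\g_Y,\tilde\g$ at a varying $z'$, they compare $T(z')$ with the \emph{moving} point $X_{\tilde\g}(z')$, not with the fixed point $W$. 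First-order contact alone does not force a global sign. What actually closes the argument is structural: by Theorem~\ref{thm-def p sum-well defined} one has $A_{ij}[\tilde\g]\ge 0$, so Proposition~\ref{prop-Aij >=0} applies and $z'\mapsto X_{\tilde\g}(z')$ parametrizes $\partial(a\cdot X+_p b\cdot Y)$; since $W=X_{\tilde\g}(z)$ by the first-order contact, $W$ lies on that boundary and hence $\g_W(z')\le\tilde\g(z')$ for all $z'$. Combined with the identity $a\cdot X\,\widetilde+_p\,b\cdot Y=a\cdot X+_p b\cdot Y$ from \eqref{old sum in new sum-p<1}, this finishes the $p<1$ case---and is precisely the paper's route.
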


\begin{proof}
	Let $u(z)= \frac{1}{p} \log \(a\g_K^p(z)+ b\g_L^p(z)\)$ be the horospherical support function of $\Omega$, and let $W$ be any point on $\partial \Omega$. Then there exists $z_0 \in \mathbb{S}^n$ such that 
	\begin{equation*}
		-\metric{W}{(z_0,1)} = \g(z_0),
	\end{equation*}
	where $\g(z) =e^{u(z)}$, and we used \eqref{X-z,1}. By Lemma \ref{lem-X,nu,di X}, there also exist unique $X \in \partial K$ and $Y \in \partial L$ such that
	\begin{equation} \label{take 2 pts}
		-\metric{X}{(z_0,1)} = \g_K(z_0), \quad  	-\metric{Y}{(z_0,1)} = \g_L(z_0).
	\end{equation}
	Note that
	\begin{equation*}
		\g^p(z_0) = a \g_K^p(z_0) +b \g_L^p(z_0).
	\end{equation*}
	Furthermore, Proposition \ref{prop-Aij >=0} asserts that the points $X$, $Y$ and $W$ can be given by use of expression \eqref{X(z)}.
	
	At first, we consider the case $1 \leq p \leq 2$. Let
	\begin{equation*}
		t = \frac{b \g_L^p(z_0)}{a \g_K^p(z_0) +b\g_L^p(z_0)} = \frac{b \g_L^p(z_0)}{\g^p (z_0)}.
	\end{equation*}
    Then
	\begin{align}
		(1-t)^{\frac{1}{q}} a^{\frac{1}{p}}\g_K(z_0) + t^{\frac{1}{q}} b^\frac{1}{p} \g_L(z_0) 
		=& \(\frac{a \g_K^p(z_0)}{ \g^p(z_0) } \)^{\frac{1}{q}} a^{\frac{1}{p}} \g_K(z_0)
		+ \(\frac{b \g_L^p(z_0)}{ \g^p(z_0) } \)^{\frac{1}{q}} b^{\frac{1}{p}} \g_L(z_0)  \nonumber\\
		=&  \frac{a \g_K^p (z_0) +b \g_L^p(z_0)}{\g^{p-1}(z_0)} = \g(z_0),\label{phiK-phiL-t}
	\end{align}
	where $q$ satisfies $\frac{1}{p} +\frac{1}{q} =1$. If $p=1$, then we view $ (1-t)^{\frac{1}{q}} = t^{\frac{1}{q}} =1$. Using \eqref{take 2 pts} and \eqref{phiK-phiL-t}, we obtain
	\begin{equation*}
		\metric{-W}{(z_0,1)}= \(1-t\)^{\frac{1}{q}} a^{\frac{1}{p}}\metric{-X}{(z_0,1)}+ t^{\frac{1}{q}}b^{\frac{1}{p}} \metric{-Y}{(z_0,1)},
	\end{equation*}
	 which is equivalent to
	\begin{equation}\label{eq-W on the bdry}
		(1-t)^{\frac{1}{q}}a^{\frac{1}{p}} x_{n+1} +t^{\frac{1}{q}}b^{\frac{1}{p}} y_{n+1}- w_{n+1} = \metric{(1-t)^{\frac{1}{q}} a^{\frac{1}{p}}x +t^{\frac{1}{q}}b^{\frac{1}{p}} y-w }{z_0}.
	\end{equation}
	The key idea of the remaining proof is to show 
	\begin{equation}\label{goal-abs=x+y-w,z}
		\metric{(1-t)^{\frac{1}{q}} a^{\frac{1}{p}}x +t^{\frac{1}{q}}b^{\frac{1}{p}} y-w }{z_0} =\left \vert(1-t)^{\frac{1}{q}}a^{\frac{1}{p}} x +t^{\frac{1}{q}}b^{\frac{1}{p}} y-w \right\vert.
	\end{equation}
	If \eqref{goal-abs=x+y-w,z} is true, then \eqref{eq-W on the bdry} and Definition \ref{def-p sum-p>0} imply that $W \in a \cdot K \widetilde{+}_p b \cdot L$.
	
	From the definitions of $\g(z)$ and $t$ in this proof, we have
	\begin{equation}
		(1-t)^\frac{1}{q} a^{\frac{1}{p}}D \g_K(z_0) + t^{\frac{1}{q}}  b^{\frac{1}{p}}D \g_L (z_0)  
		=\frac{ a\g_K^{p-1} D \g_K(z_0)}{  \g^{p-1} (z_0)}
		+\frac{ b\g_L^{p-1} D \g_L(z_0)}{  \g^{p-1} (z_0)}  
		 = D \g(z_0). \label{DphiK-DphiL-t}
	\end{equation}
	Combining \eqref{DphiK-DphiL-t} with \eqref{X(z)}, we deduce that the vector $(1-t)^{\frac{1}{q}}a^{\frac{1}{p}} x +t^{\frac{1}{q}} b^{\frac{1}{p}} y-w$ is parallel to $z_0$. From \eqref{eq-W on the bdry}, it remains to show 
	\begin{equation}\label{check ineq geq 0}
		(1-t)^{\frac{1}{q}}a^{\frac{1}{p}} x_{n+1} +t^{\frac{1}{q}}b^{\frac{1}{p}} y_{n+1}- w_{n+1} \geq 0.
	\end{equation}
	Since $1 \leq p \leq 2$, we know that $\zeta(s) =s^{\frac{p-2}{p}}$ is a convex function on $(0,+\infty)$, then (see also \eqref{T3>0})
	\begin{equation*}
		a\g_K^{p-2}(z_0) + b\g_L(z_0)^{p-2} \geq (a+b) \( \frac{a\g_K^p(z_0) +b\g_L^p(z_0)}{a+b} \)^{\frac{p-2}{p}} \geq  \g^{p-2}(z_0),
	\end{equation*}
	where we used the assumption $a+b \geq 1$ in the second inequality.
	Hence 
	\begin{equation}\label{inv-phiK-inv-phiL-t}
		(1-t)^{\frac{1}{q}}a^{\frac{1}{p}} \g^{-1}_K(z_0) + t^{\frac{1}{q}}b^{\frac{1}{p}}\g^{-1}_L(z_0)
		=\frac{a\g^{p-2}_K(z_0) +b\g^{p-2}_L(z_0)}{\g^{p-1}(z_0)} \geq \g^{-1}(z_0).
	\end{equation}
	Using \eqref{phiK-phiL-t}, the Cauchy-Schwarz inequality and \eqref{DphiK-DphiL-t}, we get
	\begin{align}\label{grad-phiK-phiL-t}
		&\g(z_0) \( (1-t)^\frac{1}{q} a^\frac{1}{p}\frac{|D \g_K(z_0)|^2}{\g_K(z_0)} + t^{\frac{1}{q}} b^{\frac{1}{p}} \frac{|D \g_L(z_0)|^2}{\g_L(z_0)}\) \nonumber \\
		=&\((1-t)^\frac{1}{q} a^{\frac{1}{p}}\g_K(z_0) + t^{\frac{1}{q}}b^{\frac{1}{p}} \g_L(z_0) \)  \( (1-t)^\frac{1}{q} a^{\frac{1}{p}}\frac{|D \g_K(z_0)|^2}{\g_K(z_0)} + t^{\frac{1}{q}} b^{\frac{1}{p}} \frac{|D \g_L(z_0)|^2}{\g_L(z_0)}\) \nonumber \\
		\geq& \left\vert(1-t)^\frac{1}{q}  a^{\frac{1}{p}}D \g_K(z_0) + t^{\frac{1}{q}} b^{\frac{1}{p}}D \g_L(z_0) \right\vert^2 = |D \g(z_0)|^2.
	\end{align}
	Using  \eqref{X(z)}, \eqref{DphiK-DphiL-t}, \eqref{phiK-phiL-t}, \eqref{inv-phiK-inv-phiL-t} and \eqref{grad-phiK-phiL-t}, we have
	\begin{align*}
		&(1-t)^{\frac{1}{q}}a^{\frac{1}{p}} x_{n+1} +t^{\frac{1}{q}}b^{\frac{1}{p}} y_{n+1}- w_{n+1}\\
		=& (1-t)^{\frac{1}{q}}a^{\frac{1}{p}} \(\frac{1}{2} \frac{|D \g_K(z_0)|^2}{\g_K(z_0)} + \frac{1}{2} \( \g_K(z_0) + \frac{1}{\g_K(z_0)}\)  \)\\
		&+ t^{\frac{1}{q}} b^{\frac{1}{p}}\(\frac{1}{2} \frac{|D \g_L(z_0)|^2}{\g_L(z_0)} + \frac{1}{2} \( \g_L(z_0) + \frac{1}{\g_L(z_0)}\)  \)\\
		&-  \(\frac{1}{2} \frac{|D \g(z_0)|^2}{\g(z_0)} + \frac{1}{2} \( \g(z_0) + \frac{1}{\g(z_0)}\)  \)\\
		=& \frac{1}{2} \(	(1-t)^{\frac{1}{q}}a^{\frac{1}{p}} \frac{|D \g_K(z_0)|^2}{\g_K(z_0)} + t^{\frac{1}{q}} b^{\frac{1}{p}} \frac{|D \g_L(z_0)|^2}{\g_L(z_0)} - \frac{|D \g(z_0)|^2}{\g(z_0)} \)\\
		&+ \frac{1}{2} \( (1-t)^\frac{1}{q}a^{\frac{1}{p}} \g_K(z_0) + t^{\frac{1}{q}}b^{\frac{1}{p}} \g_L(z_0) - \g(z_0)   \)\\
		&+ \frac{1}{2} \( (1-t)^{\frac{1}{q}}a^{\frac{1}{p}} \g^{-1}_K(z_0) + t^{\frac{1}{q}} b^{\frac{1}{p}} \g^{-1}_L(z_0)
		-\g^{-1}(z_0) \) \geq 0.
	\end{align*}
	Thus we obtain the desired inequality \eqref{check ineq geq 0}.
	Therefore, in the case $1 \leq p \leq 2$, we conclude that $W \in a \cdot K \widetilde{+}_p b \cdot L$ for any $W \in \partial \Omega$, and hence $\partial \Omega \subset  a \cdot K \widetilde{+}_p b \cdot L$. 
	
	Now we consider the case $\frac{1}{2} \leq p<1$. The third equality in \eqref{old sum in new sum-p<1} implies that
	\begin{equation*}
		a \cdot X \widetilde{+}_p b \cdot Y =a \cdot X +_p b \cdot Y,
	\end{equation*}
	where $X$ and $Y$ are the same as those in \eqref{take 2 pts}. Hence we only need to prove $W \in a \cdot X+_p b \cdot Y$. Since $X \in \partial K$, we have that $\g_K(z)-\metric{-X}{(z,1)}$ is nonnegative for all $z \in \mathbb{S}^n$ and attains its minimum $0$ at $z_0$.  Then $\g_K(z_0) = \g_X(z_0)$ and $D\g_K(z_0) = D\g_X(z_0)$. Similarly, $\g_L(z_0) = \g_Y(z_0)$ and $D\g_L(z_0) = D \g_Y(z_0)$. Define 
	\begin{equation*}
		\tilde{\g}(z) := \(a \g_X^p(z) +b \g_Y^p(z)\)^{\frac{1}{p}}.
	\end{equation*}
	It is easy to show that $\tilde{\g}(z_0) = \g(z_0)$ and $D \tilde{\g}(z_0) = D\g(z_0)$. On the other hand, we have proved in Theorem \ref{thm-def p sum-well defined} that $\partial \(a \cdot X+_p b \cdot Y\)$ can be characterized by use of  $\tilde{\g}(z)$ and expression \eqref{X(z)}.  Besides, as $W \in \partial \Omega$, we can write $W$ explicitly by use of $\g(z_0)$, $D \g(z_0)$, $z_0$ and expression \eqref{X(z)}. Putting the above facts together, we have $W \in a \cdot X+_p b \cdot Y$. This completes the proof of Lemma \ref{lem-old-bdy in new sum}.
\end{proof}

\begin{prop}\label{prop-sum of 2 pts new}
	Let $n \geq 1$, $\frac{1}{2} \leq p \leq 2$, $a \geq 0$, $b \geq 0$ with $a+b \geq 1$. Let $X=(x, x_{n+1})$ and $Y=(y, y_{n+1})$ be two points in $\mathbb{H}^{n+1}$. Then
	\begin{equation*}
		a \cdot X \widetilde{+}_p b \cdot Y = a \cdot X+_p b \cdot Y.
	\end{equation*}
\end{prop}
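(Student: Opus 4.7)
The easy inclusion $a \cdot X \widetilde{+}_p b \cdot Y \subset a \cdot X +_p b \cdot Y$ is free: Proposition \ref{prop-K+p L in h-convex hull} applied to the singletons $K=\{X\}$, $L=\{Y\}$ gives $a \cdot X \widetilde{+}_p b \cdot Y \subset \bigcap_{z \in \mathbb{S}^n} \overline{B}_z(u(z))$ with $u(z)=\frac{1}{p}\log(a\g_X^p(z)+b\g_Y^p(z))$, and Theorem \ref{thm-def p sum-well defined} (together with Proposition \ref{prop-Aij >=0}) identifies the right-hand side with $a \cdot X +_p b \cdot Y$. For the reverse inclusion, the critical preparatory calculation is the horospherical support function of each building block $B(p,t;a,X,b,Y)$. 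Starting from the elementary identity $u_{B(P,\rho)}(z)=\rho+u_P(z)$ for a geodesic ball (obtained by maximizing $-\langle \cosh\rho' P+\sinh\rho' v,(z,1)\rangle$ over $\rho'\in[0,\rho]$ and unit $v\in T_P\mathbb{H}^{n+1}$, using that $|\pi_{T_P}(z,1)|_{T_P}=\g_P(z)$), and substituting $P=T_t/\sqrt{-\langle T_t,T_t\rangle}$ with $T_t=(1-t)^{1/q}a^{1/p}X+t^{1/q}b^{1/p}Y$ and $\rho=\log R(p,t;a,X,b,Y)$, I would derive
\begin{equation*}
\g_{B(p,t;a,X,b,Y)}(z)=-\langle T_t,(z,1)\rangle=(1-t)^{1/q}a^{1/p}\g_X(z)+t^{1/q}b^{1/p}\g_Y(z)=:\Phi_t(z),
\end{equation*}
with the analogous $\g_{B(a,X,b,Y)}(z)=a\g_X(z)+b\g_Y(z)$ in the case $p=1$. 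Lemma \ref{lem-jesen-ineq} then identifies $\sup_t\Phi_t=\g:=(a\g_X^p+b\g_Y^p)^{1/p}$ for $p>1$ and $\inf_{t\in(0,1)}\Phi_t=\g$ for $0<p<1$.

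The case $p=1$ follows instantly: $a\cdot X\widetilde{+}_1 b\cdot Y=B(a,X,b,Y)$ is a single geodesic ball whose horospherical support function $a\g_X+b\g_Y$ coincides with that of $a\cdot X+_1 b\cdot Y$, so the two h-convex sets are equal. For $0<p<1$, since each $B(p,t;a,X,b,Y)=\bigcap_z\overline{B}_z(\log\Phi_t(z))$ is h-convex, I would commute the two intersections and use that horo-balls in a fixed direction are nested:
\begin{equation*}
a\cdot X\widetilde{+}_p b\cdot Y=\bigcap_t\bigcap_z\overline{B}_z(\log\Phi_t(z))=\bigcap_z\overline{B}_z\bigl(\log\inf_t\Phi_t(z)\bigr)=\bigcap_z\overline{B}_z(\log\g(z))=a\cdot X+_p b\cdot Y.
\end{equation*}

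The remaining case $p>1$, where $a\cdot X\widetilde{+}_p b\cdot Y=\bigcup_t B(p,t;a,X,b,Y)$, is the main obstacle and requires showing that every $W$ in $\Omega:=a\cdot X+_p b\cdot Y$ lies in at least one ball $B_t$. For boundary points, Lemma \ref{lem-old-bdy in new sum} (whose hypotheses explicitly permit $K$ and $L$ to degenerate to points) produces the explicit witness $t^*(W)=b\g_Y^p(z_W)/\g^p(z_W)$ at each boundary point $W$ with supporting direction $z_W$. For an interior point $W$ I would reformulate $W\in B(p,t;a,X,b,Y)$ via Proposition \ref{prop-Assump-geodesic ball} as the geometric condition that $T_t-W$ lies in the closed future light cone of the origin in $\mathbb{R}^{n+1,1}$; equivalently, setting $\mathcal{D}(t):=-\langle T_t-W,T_t-W\rangle$, I seek $t^*\in[0,1]$ with $\mathcal{D}(t^*)\geq 0$ and $(T_t-W)_{n+1}\geq 0$. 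The hard part will be to upgrade the pointwise validity $\g_W(z)\leq\max_t\Phi_t(z)=\g(z)$ (one $t$ per direction) to a single uniform $t^*$; a minimax argument via Sion's theorem on $[0,1]\times B^{n+1}$ does not immediately close because the natural extension of $\g$ to $\mathbb{R}^{n+1}$ is convex in $z$, and so the equality $\inf_t\sup_z=\sup_z\inf_t$ produces a right-hand side $\sup_{B^{n+1}}(\g_W-\g)$ that need not be nonpositive from $\g_W\leq\g$ on $\mathbb{S}^n$ alone. I would therefore pursue a dedicated geometric argument exploiting the Minkowski light-cone structure of the curve $t\mapsto T_t$: the boundary values $t^*(W_\partial)$ from Lemma \ref{lem-old-bdy in new sum} sweep continuously through $[0,1]$ as $W_\partial$ traverses $\partial\Omega$, and the corresponding balls $\{B_t\}$ have the property that $\partial B_t$ is tangent to $\partial\Omega$ at the single boundary point $X_\Omega(z)$ with $t=t^*(z)$, so that $\partial\Omega$ is precisely the envelope of the one-parameter family and the union $\bigcup_t B_t$ fills out the entire interior of $\Omega$. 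The degenerate boundary cases $p\in\{\tfrac{1}{2},2\}$ with $a+b=1$ are recovered from the non-degenerate regime by continuous dependence as $a+b\to 1^+$.
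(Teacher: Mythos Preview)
Your treatment of the cases $p\leq 1$ is correct and coincides with the paper's: for $\tfrac12\leq p<1$ the paper simply cites the third equality in \eqref{old sum in new sum-p<1}, which is exactly your intersection–commutation argument combined with Lemma~\ref{lem-jesen-ineq}, and for $p=1$ both sides are the single ball $B(a,X,b,Y)$.

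For $1<p\leq 2$ there is a genuine gap. Your envelope heuristic is not a proof, and one of its premises is already false: $\partial B_t$ does not meet $\partial\Omega$ at a single point, but along the $(n-1)$–sphere $\{z\in\mathbb S^n:\ b\g_Y^p(z)/\g^p(z)=t\}$. More importantly, ``$\partial\Omega$ is the envelope of $\{B_t\}$, hence $\bigcup_t B_t=\Omega$'' is not a valid inference in general; from $\partial\Omega\subset\bigcup_t B_t\subset\Omega$ alone one cannot exclude that $\bigcup_t B_t$ misses an interior hole. You correctly diagnose why a Sion–type minimax does not close the argument, but the proposed ``dedicated geometric argument'' is not carried out. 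The limiting passage for $p\in\{\tfrac12,2\}$ with $a+b=1$ is also unnecessary: the paper treats all $(p,a,b)$ uniformly.

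The paper circumvents the search for a witness $t^*(W)$ entirely by a topological argument. Since $\partial\Omega\subset S:=a\cdot X\,\widetilde{+}_p\,b\cdot Y\subset\Omega$ and $\Omega$ is a closed topological $(n{+}1)$–disk, it suffices to show that $S$ is contractible: if $S\neq\Omega$ one can radially retract $\Omega\setminus\{p\}$ onto $\partial\Omega$ for any $p\in\Omega\setminus S$, exhibiting $S^n\cong\partial\Omega$ as a retract of the contractible $S$, a contradiction. To prove contractibility, the paper passes to the upper half–space model and places $X,Y$ on a vertical geodesic $\Gamma$. Each $B(p,t;a,X,b,Y)$ is then a Euclidean ball centered on $\Gamma$, so every $W\in S$ slides along its horizontal segment into $\Gamma$; this retracts $S$ onto $S\cap\Gamma$. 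Finally $S\cap\Gamma$ is a single segment: the set $I=\{t\in[0,1]:\chi(t)\leq-1\}$ with $\chi(t)=\langle T_t,T_t\rangle$ is an interval because $-(1-t)^{2/q}$, $-t^{2/q}$ and $-2(1-t)^{1/q}t^{1/q}$ are all convex when $q\geq 2$ (i.e.\ $1<p\leq 2$), and a continuous one–parameter family of intervals on $\Gamma$ indexed by the connected $I$ has connected union. This is the missing idea your proposal lacks.
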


\begin{proof}
	Since Proposition \ref{prop-sum of 2 pts new} follows from the third equality in \eqref{old sum in new sum-p<1} when $\frac{1}{2} \leq p<1$, we can assume $1 \leq p \leq 2$ in the remaining proof.
		
	Let $u(z) = \frac{1}{p} \log \(a \g_X^p(z) +b \g_Y^p(z)\)$.  Theorem \ref{thm-def p sum-well defined} asserts that $a\cdot  X+_p b \cdot Y$ is a  h-convex bounded domain with horospherical support function $u(z)$.
	Then \eqref{K+p L in h-convex hull} implies that 
	\begin{equation*}
		a \cdot X \widetilde{+}_p b \cdot Y \subset  \cap_{z \in \mathbb{S}^n} \overline{B}_z \(u(z) \)=a \cdot X+_p b \cdot Y.
	\end{equation*}
	However, we have from \eqref{old-bdy in new sum} that $\partial \(a \cdot X+_p b \cdot Y\) \subset a \cdot X \widetilde{+}_p b \cdot Y$. Therefore, it is enough to show that  $ a \cdot X \widetilde{+}_p b \cdot Y$ is contractible.
		
	Now we use the upper half-space model of $\mathbb{H}^{n+1}$, i.e. 
	\begin{equation*}
		\mathbb{H}^{n+1} = \mathbb{R}^{n+1}_{\xi_n >0}= \{\Xi =(\xi, \xi_{n}) = (\xi_0, \ldots, \xi_{n-1}, \xi_{n} ):  \xi_{n}>0  \}.
	\end{equation*}
	By Proposition \ref{prop-point wise def-equivalent form}, we can assume that $X$ and $Y$ both locate on the geodesic line $\Gamma :=\{\Xi \in \mathbb{H}^{n+1}:  \xi = 0  \}$ without loss of generality. Denote by $\overline{XY}$ the geodesic segment connecting $X$ and $Y$. Proposition \ref{prop-point wise def-equivalent form} also implies that  $a \cdot X \widetilde{+}_p b \cdot Y$ is the union of a continuous family of geodesic balls. Furthermore, each of the balls is symmetric with respect to $\overline{XY}$. Therefore, for any point $W \in a \cdot X \widetilde{+}_p b \cdot Y$, there exists a geodesic ball $\mathscr{B} \subset a \cdot X \widetilde{+}_p b \cdot Y$ such that $W \in \mathscr{B}$, and the center of $\mathscr{B}$ lies on  $\overline{XY}$. Denote $W = (\xi^W, \xi^W_{n}) \in \mathbb{R}^{n+1}_{\xi_n >0}$. Since $\mathscr{B}$ is also a Euclidean ball in the upper half-space model, we know that the ``segment" connecting $(0, \xi^W_n)$ and $W$ lies in $\mathscr{B}$; hence it lies in  $a \cdot X \widetilde{+}_p b \cdot Y$.  This shows that  $a \cdot X \widetilde{+}_p b \cdot Y$ can shrink continuously to $\(a \cdot X \widetilde{+}_p b \cdot Y \) \cap \Gamma$.
		 
	We claim that $\(a \cdot X \widetilde{+}_p b \cdot Y \) \cap \Gamma$ is connected for $1 \leq p \leq 2$. If $p=1$, then the claim follows directly from Definition \ref{def-p sum-p>0-new}. If $1< p \leq 2$, then Proposition \ref{prop-Assump-geodesic ball} implies that $B(p,t;a,X,b,Y)$ is non-empty if and only if 
	\begin{align*}
		-1 \geq& \metric{(1-t)^\frac{1}{q} a^{\frac{1}{p}} X+ t^{\frac{1}{q}} b^{\frac{1}{p}} Y}{(1-t)^\frac{1}{q} a^{\frac{1}{p}} X+ t^{\frac{1}{q}} b^{\frac{1}{p}} Y}\\
		=&-(1-t)^{\frac{2}{q}}a^{\frac{2}{p}} - t^{\frac{2}{q}} b^{\frac{2}{p}}- 2\(1-t\)^{\frac{1}{q}} t^{\frac{1}{q}} a^{\frac{1}{p}} b^{\frac{1}{p}}\cosh d_{\mathbb{H}^{n+1}}(X,Y):= \chi(t),
	\end{align*}
	where $q$ satisfies $\frac{1}{p} + \frac{1}{q} =1$, and we used \eqref{geo-dis on hyperbolic space}. Since $1<p \leq 2$, we have $q \geq 2$. Then the functions $-(1-t)^\frac{2}{q}$, $-t^{\frac{2}{q}}$ and $-2(1-t)^\frac{1}{q} t^{\frac{1}{q}}$ are convex on $[0,1]$. Hence $\chi(t)$ is convex on $[0,1]$, and thus $I:=\{t \in [0,1]:  \chi(t) \leq -1 \}$ is a connected interval. Then by \eqref{P-p-t-a-X-b-Y}, Definition \ref{def-B(p,t,a,X,b,Y)} and Definition \ref{def-p sum-p>0-new}, we have that $\(a \cdot X \widetilde{+}_p b \cdot Y \) \cap \Gamma$ is a segment. Therefore $a \cdot X \widetilde{+}_p b \cdot Y$ is contractible.
		
	We complete the proof of Proposition \ref{prop-sum of 2 pts new}.
\end{proof}

We now turn to prove Theorem \ref{thm-new-old-sum-compatible}.

\begin{proof}[Proof of Theorem \ref{thm-new-old-sum-compatible}]
	Let $X \in K$ and $Y \in L$ be two points. Let $K_\alpha = \alpha \cdot K+_p(1-\alpha) \cdot X$ and $L_\alpha = \alpha \cdot L +_p (1-\alpha) \cdot Y$ be h-convex bounded domains, where $0 \leq \alpha \leq 1$. Since $\g_K(z) \geq \g_X(z)$ and $\g_L(z) \geq \g_Y(z)$, we know $K_\alpha \subset K$ and $L_\alpha \subset L$. 
	According to Theorem \ref{thm-def p sum-well defined}, we have the following inclusions of h-convex bounded domains, i.e.
	\begin{equation}\label{inclusion h-convex body}
		a \cdot X+_p b \cdot Y \subset a \cdot K_{\alpha_1} +_p b \cdot L_{\alpha_1}\subset a \cdot K_{\alpha_2} +_p b \cdot  L_{\alpha_2 } \subset a \cdot K+_p b \cdot L
	\end{equation}
	for all $ 0 \leq \alpha_1 \leq \alpha_2 \leq 1$. Note that the horospherical support functions of the above h-convex domains are smooth. Let $W$ be an arbitrary point in  $a \cdot K+_p b \cdot L$.
	By Proposition \ref{prop-K+p L in h-convex hull} and Proposition \ref{prop-sum of 2 pts new}, we only need to show that if $W \in a \cdot K+_p b \cdot L \backslash \(a \cdot  X+_p b \cdot Y \)$, then $W \in a \cdot K \widetilde{+}_p b \cdot L$. 	
	
	The above inclusions in \eqref{inclusion h-convex body} imply that $W \in \partial \(a \cdot K_{\alpha_0}+_p b \cdot L_{\alpha_0} \)$ for some $0 < \alpha_0 \leq 1$. Then by Lemma \ref{lem-old-bdy in new sum}, we have $W \in a \cdot K_{\alpha_0} \widetilde{+}_p b \cdot L_{\alpha_0} \subset a \cdot K \widetilde{+}_p b \cdot L$. This completes the proof of Theorem \ref{thm-new-old-sum-compatible}.
\end{proof}	

\begin{prop}\label{prop-sums not rely on origin} $ \ $
	Fix real numbers $p$, $a$, $b$ that satisfy the assumptions in Definition \ref{def-p sum} (Definition \ref{def-p sum-p>0} resp.). Then the hyperbolic $p$-sum in  Definition \ref{def-p sum} (Definition \ref{def-p sum-p>0} resp.) only depends on the relative position of the smooth uniformly h-convex bounded domains $K$ and $L$. 
\end{prop}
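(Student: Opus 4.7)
The plan is to reduce the statement to Theorem \ref{thm-new-old-sum-compatible} together with the manifestly intrinsic nature of the pointwise construction in Definition \ref{def-p sum-p>0-new}. First I would observe that all data appearing in Definition \ref{def-p sum-p>0-new}, namely the hyperbolic distance $d_{\mathbb{H}^{n+1}}(X,Y)$, the geodesic segment $\overline{XY}$, the specified centers $P(p,t;a,X,b,Y)$ and $P(a,X,b,Y)$ lying on that segment, and the closed geodesic balls $B(p,t;a,X,b,Y)$, $B(a,X,b,Y)$ with the radii prescribed by \eqref{R-p-t-a-X-b-Y} and \eqref{R-a-X-b-Y}, are defined purely from the intrinsic Riemannian geometry of $\mathbb{H}^{n+1}$: they involve only the mutual distance between $X$ and $Y$ and the unique geodesic connecting them, with no reference to the hyperboloid embedding or to the origin $N$. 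Consequently, for any isometry $\Phi$ of $\mathbb{H}^{n+1}$ one has $\Phi\bigl(B(p,t;a,X,b,Y)\bigr)=B(p,t;a,\Phi(X),b,\Phi(Y))$, and likewise for $B(a,X,b,Y)$.

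Taking unions over $X\in K$, $Y\in L$ and the appropriate unions/intersections over $t\in[0,1]$ as in Definition \ref{def-p sum-p>0-new}, the preceding isometry-equivariance transfers to the sum itself, yielding
$$a\cdot\Phi(K)\,\widetilde{+}_p\,b\cdot\Phi(L)=\Phi\bigl(a\cdot K\,\widetilde{+}_p\,b\cdot L\bigr).$$
Thus the pointwise sum of Definition \ref{def-p sum-p>0} (equivalently \ref{def-p sum-p>0-new}) depends only on the relative position of $K$ and $L$ in $\mathbb{H}^{n+1}$.

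For the sum of Definition \ref{def-p sum} I would then invoke Theorem \ref{thm-new-old-sum-compatible}: under the standing hypotheses on $p$, $a$, $b$, $K$, $L$ the two constructions coincide, $a\cdot K+_p b\cdot L=a\cdot K\,\widetilde{+}_p\,b\cdot L$. Combining with the isometry-equivariance just established yields
$$a\cdot\Phi(K)+_p b\cdot\Phi(L)=\Phi\bigl(a\cdot K+_p b\cdot L\bigr)$$
for every isometry $\Phi$ of $\mathbb{H}^{n+1}$. Equivalently, if one redefines the horospherical support functions of $K$ and $L$ with respect to a different origin $N'=\Phi^{-1}(N)$ and runs Definition \ref{def-p sum} with those functions, the resulting h-convex domain is merely the $\Phi^{-1}$-image of the original; hence the hyperbolic $p$-sum is an intrinsic object attached to the pair $(K,L)\subset\mathbb{H}^{n+1}$.

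There is no genuine obstacle here, since the essence of the proposition is already encoded in Theorem \ref{thm-new-old-sum-compatible}; the only verification required is the transparent observation that the ingredients of Definition \ref{def-p sum-p>0-new} are invariants of the hyperbolic isometry group. This gives a conceptual explanation of why, although Definition \ref{def-p sum} was phrased in terms of the hyperboloid model and of support functions based at the north pole, the resulting h-convex sum is coordinate-free.
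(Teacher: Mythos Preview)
Your proof is correct and follows essentially the same approach as the paper: observe that the geodesic balls $B(p,t;a,X,b,Y)$ and $B(a,X,b,Y)$ in Definition~\ref{def-B(p,t,a,X,b,Y)} are intrinsic (isometry-equivariant), deduce the same for the pointwise sum of Definition~\ref{def-p sum-p>0-new}, and then transfer this to Definition~\ref{def-p sum} via Theorem~\ref{thm-new-old-sum-compatible}. The paper additionally cites Proposition~\ref{prop-point wise def-equivalent form} to pass between Definitions~\ref{def-p sum-p>0} and~\ref{def-p sum-p>0-new}, which you use implicitly.
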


\begin{proof}
	From  Definition \ref{def-B(p,t,a,X,b,Y)}, we have that the geodesic balls $B (p,t;a,X,b,Y)$ and $B(a,X,b,Y)$ only depend on the relative position of $X$ and $Y$ for fixed $p$, $t$, $a$ and $b$. Then Proposition \ref{prop-sums not rely on origin} follows from Proposition \ref{prop-point wise def-equivalent form} and Theorem \ref{thm-new-old-sum-compatible}
\end{proof}

\subsection{Hyperbolic $p$-sum of geodesic balls} $ \ $

This subsection studies the geometric properties of the hyperbolic $p$-sum of geodesic balls.

Let $X$ be a point in  $\mathbb{H}^{n+1}$. Denote by $B(X, r)$ the geodesic ball of radius $r$ centered at $X$ in $\mathbb{H}^{n+1}$. If $X$ is treated as the origin of $\mathbb{H}^{n+1}$, e.g., $X= (0,1)$ in the hyperboloid model, then we will write $B(r)$ for $B(X, r)$.

We first derive the horospherical support functions of geodesic balls in $\mathbb{H}^{n+1}$.

\begin{lem}\label{lem-horo supp of geodesic ball}
	Let $X = (x, x_{n+1})$ and $\Omega= B(X, r) \subset \mathbb{H}^{n+1}$. Then the horospherical support function $u_{\Omega} (z)$ of $\Omega$ is given by
	\begin{equation}\label{horo supp of geodesic ball}
		\g_{\Omega} (z) =e^{u_{\Omega} (z)} = e^r \(x_{n+1}- \metric{x}{z}\).
	\end{equation}
	If we set $T =e^r X$, then $\g_{\Omega}(z)=- \metric{T}{(z,1)}$.
\end{lem}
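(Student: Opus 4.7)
The plan is to apply the definition \eqref{horo supp funct-def} of the horospherical support function directly. By definition, $\g_\Omega(z) = \sup\{ -\metric{X'}{(z,1)} : X' \in \overline{B(X,r)}\}$, so I must maximize the linear functional $X' \mapsto -\metric{X'}{(z,1)}$ over the closed geodesic ball of radius $r$ centered at $X$. Using the geodesic formula \eqref{formula of geodesic segment} from Lemma \ref{lem-geo-dis on hyperbolic space}, every point of $\overline{B(X,r)}$ can be written as $X' = \cosh s\, X + \sinh s\, v$ for some $s \in [0,r]$ and some unit vector $v \in T_X\mathbb{H}^{n+1}$.

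The key step is to decompose $(z,1)$ along the $\mathbb{R}^{n+1,1}$-orthogonal splitting $\mathbb{R}^{n+1,1} = \mathbb{R} X \oplus T_X \mathbb{H}^{n+1}$. Setting $A := -\metric{X}{(z,1)} = x_{n+1} - \metric{x}{z}$, I write
\begin{equation*}
	(z,1) = A X + w, \qquad w \in T_X\mathbb{H}^{n+1}.
\end{equation*}
Since $(z,1)$ is light-like and $\metric{X}{X}=-1$, the relation $0 = \metric{(z,1)}{(z,1)} = -A^2 + \metric{w}{w}$ forces $\metric{w}{w} = A^2$, so $w$ is a space-like vector with Lorentzian norm $A$. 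Note that $A > 0$ since $x_{n+1} = \sqrt{|x|^2+1} > |x| \geq \metric{x}{z}$, as already used in Subsection \ref{subsec-horosphere,ball}.

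Substituting this decomposition and using $\metric{v}{X}=0$ gives
\begin{equation*}
	-\metric{X'}{(z,1)} = A\cosh s - \sinh s\, \metric{v}{w}.
\end{equation*}
The restriction of $\metric{\cdot}{\cdot}$ to $T_X\mathbb{H}^{n+1}$ is Riemannian, so the Cauchy–Schwarz inequality yields $\metric{v}{w} \geq -|v|\,|w| = -A$, with equality when $v = -w/A$. Maximizing first in $v$ and then in $s\in[0,r]$ produces $A(\cosh s + \sinh s) = Ae^s \leq Ae^r$, and both suprema are attained on the ball. Hence $\g_\Omega(z) = e^r A = e^r(x_{n+1}-\metric{x}{z})$, and the identification $\g_\Omega(z) = -\metric{T}{(z,1)}$ for $T=e^r X$ is immediate from the definition of $A$.

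The main obstacle here is purely bookkeeping: correctly handling signs when decomposing the null vector $(z,1)$ against the time-like position vector $X$, and noticing that the tangent space carries a Riemannian (not Lorentzian) inner product so that the standard Cauchy–Schwarz applies. There is no genuine analytic difficulty, and no appeal to h-convexity is needed since the ball is already a specific domain whose boundary can be parametrized explicitly.
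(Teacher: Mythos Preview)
Your proof is correct and follows essentially the same approach as the paper: both parametrize the ball via geodesics $X' = \cosh s\, X + \sinh s\, v$, decompose the null vector $(z,1)$ along $X$ and $T_X\mathbb{H}^{n+1}$, then optimize first over the unit tangent vector $v$ using the Riemannian Cauchy--Schwarz inequality and finally over $s \in [0,r]$. The only cosmetic difference is that the paper normalizes the tangential component to a unit vector $v_0$ with $(z,1) = A X - A v_0$, whereas you keep the unnormalized $w$ and record $\metric{w}{w}=A^2$; the computations are otherwise identical.
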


\begin{proof}
	By \eqref{formula of geodesic segment}, we know that the geodesic ball $\Omega \subset \mathbb{H}^{n+1}$ is given by
	\begin{equation*}
		\Omega= \left\{ Y = \cosh t X + \sinh t v: t \in [0,r], \, v \in T_X \mathbb{H}^{n+1}, \, \metric{v}{v} =1  \right\}.
	\end{equation*}
	For any light-like vector $(z_0,1)$, there exists a unit space-like vector $v_0 \in T_X \mathbb{H}^{n+1}$ such that
	\begin{equation*}
		\(z_0,1\)= -\metric{X}{ \(z_0,1\) } X+ \metric{X}{\(z_0,1\)}v_0.
	\end{equation*}
	Note that $-\metric{X}{(z_0,1)} = x_{n+1}-\metric{x}{z_0}\geq (|x|^2+1)^{\frac{1}{2}}-|x|>0$. Using \eqref{horo supp funct-def}, we have
	\begin{align*}
		\g_{\Omega}(z_0)=& \max_{Y \in \Omega} -\metric{Y}{(z_0,1)}\\
		=& \max_{t \in [0,r]} \, \max_{v \in T_X \mathbb{H}^{n+1}, \, |v|=1} \(-\cosh t \metric{X}{(z_0,1)} -\sinh t \metric{v}{(z_0,1)}\)\\
		=&\max_{t \in [0,r]}\(-\cosh t \metric{X}{(z_0,1)} -\sinh t \min_{v \in T_X \mathbb{H}^{n+1}, \, |v|=1} \(\metric{X}{\(z_0,1\)}\metric{v}{v_0}\)\)\\
		=&\max_{t \in [0,r]}\( -e^t \metric{X}{(z_0,1)}\)\\
		=& e^r \(x_{n+1} - \metric{x}{z_0}\)\\
		=&- \metric{T}{(z_0,1)}.
	\end{align*}
	Thus we obtain the desired formula \eqref{horo supp of geodesic ball}. This completes the proof of Lemma \ref{lem-horo supp of geodesic ball}.
\end{proof}

\begin{rem} $ \ $
	\begin{enumerate}
		\item Let $T =e^r X$, $r \geq 0$ and $X \in \mathbb{H}^{n+1}$.
		Geometrically, the domain enclosed by the intersection of $\mathbb{H}^{n+1}$ and the past light cone of $T$  is exactly $B(X, r)$. This fact was proved in Proposition \ref{prop-Assump-geodesic ball}.
		\item By treating the geodesic ball $\Omega=B(X,r)$ as the outer parallel set of the point $X \in \mathbb{H}^{n+1}$ with distance $r$, a more general form of Lemma \ref{lem-horo supp of geodesic ball} will be proved in Proposition \ref{prop-geo-hyper-p-dilation} below. 
	\end{enumerate}
\end{rem}

Recall that the \emph{Minkowski norm} of a future time-like vector $X\in \mathbb{R}^{n+1,1}$ is defined by
\begin{equation*}
	N(X) = \sqrt{-\metric{X}{X}}.
\end{equation*}
Combining formula \eqref{horo supp of geodesic ball} with Definition \ref{def-p sum}, it is straightforward to see that the hyperbolic $1$-sum of geodesic balls can be identified with the sum of future time-like vectors lying above the hyperboloid model of $\mathbb{H}^{n+1}$ in $\mathbb{R}^{n+1,1}$, and thus we obtain the following Corollary \ref{cor-1 sum of balls}. We note that the case $a=b =1$ in Corollary \ref{cor-1 sum of balls} was also studied in \cite[Proposition 5]{GST13}.

\begin{cor}\label{cor-1 sum of balls}
	Let $K=B(X, r_1)$ and $L=B(Y, r_2)$ be geodesic balls in $\mathbb{H}^{n+1}$, let $T_K = e^{r_1} X$ and $T_L =e^{r_2} Y$ be vectors in $\mathbb{R}^{n+1,1}$, and let $a \geq 0$ and $b \geq 0$ be real numbers with $a+b \geq 1$. Define $T =a T_K+b T_L$ and $\Omega =a \cdot K+_1 b \cdot L$. Then
	\begin{equation*}
		\Omega = B\(\frac{T}{N(T)}, \, \log N(T) \).
	\end{equation*}
	Consequently, the hyperbolic $1$-sum of geodesic balls are geodesic balls.
\end{cor}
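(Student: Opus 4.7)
The plan is to identify both sides via their horospherical support functions and then invoke the uniqueness portion of Theorem \ref{thm-def p sum-well defined}. By Lemma \ref{lem-horo supp of geodesic ball}, the horospherical support functions of $K$ and $L$ are
\begin{equation*}
	\g_K(z) = -\metric{T_K}{(z,1)}, \qquad \g_L(z) = -\metric{T_L}{(z,1)}.
\end{equation*}
Applying Definition \ref{def-p sum} with $p=1$, the hyperbolic $1$-sum $\Omega = a\cdot K +_1 b\cdot L$ has horospherical support function
\begin{equation*}
	\g_\Omega(z) = a\g_K(z) + b\g_L(z) = -\metric{aT_K + bT_L}{(z,1)} = -\metric{T}{(z,1)}.
\end{equation*}

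The main technical step is to verify that $T/N(T) \in \mathbb{H}^{n+1}$ and that $\log N(T) \geq 0$, so that the right-hand side $B(T/N(T), \log N(T))$ is a legitimate geodesic ball. Since $X,Y \in \mathbb{H}^{n+1}$ are future-directed and $r_1, r_2 \geq 0$, $a,b \geq 0$, the vector $T = aT_K + bT_L$ is future-directed. Using $\metric{X}{X}=\metric{Y}{Y}=-1$ and $\metric{X}{Y} = -\cosh d_{\mathbb{H}^{n+1}}(X,Y) \leq -1$ together with \eqref{geo-dis on hyperbolic space}, I would compute
\begin{equation*}
	-\metric{T}{T} = a^2 e^{2r_1} + b^2 e^{2r_2} + 2ab\, e^{r_1+r_2}\cosh d_{\mathbb{H}^{n+1}}(X,Y) \geq (ae^{r_1} + be^{r_2})^2 \geq (a+b)^2 \geq 1.
\end{equation*}
Hence $T$ is time-like, $N(T) = \sqrt{-\metric{T}{T}} \geq 1$, and $T/N(T) \in \mathbb{H}^{n+1}$.

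Now applying Lemma \ref{lem-horo supp of geodesic ball} to the ball on the right-hand side (with center $T/N(T)$ and radius $\log N(T)$), its horospherical support function equals
\begin{equation*}
	e^{\log N(T)} \cdot \left(-\left\langle \tfrac{T}{N(T)}, (z,1)\right\rangle\right) = -\metric{T}{(z,1)} = \g_\Omega(z).
\end{equation*}
Thus $\Omega$ and $B(T/N(T), \log N(T))$ are both h-convex bounded domains with the same smooth horospherical support function, so by the uniqueness assertion of Theorem \ref{thm-def p sum-well defined} (equivalently, Proposition \ref{prop-Aij >=0}), they coincide. The ``consequently'' clause is then immediate. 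I expect no serious obstacle beyond the norm computation above, which is really just expanding $\metric{T}{T}$ and applying the reverse triangle inequality on the hyperboloid through $\cosh d_{\mathbb{H}^{n+1}}(X,Y) \geq 1$.
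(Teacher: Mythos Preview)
Your proof is correct and follows exactly the approach the paper indicates: the paper simply remarks that combining formula \eqref{horo supp of geodesic ball} with Definition \ref{def-p sum} makes the corollary ``straightforward,'' and you have carried out precisely that combination, additionally supplying the details (the computation showing $N(T)\geq 1$ and the uniqueness appeal) that the paper leaves implicit.
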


We next estimate the hyperbolic $p$-sum of geodesic balls for $\frac{1}{2} \leq p \leq 2$.  Let $K$ and $L$ be two geodesic balls centered at the same point in $\mathbb{H}^{n+1}$. We will show that the hyperbolic $p$-sum of $K$ and $L$ will become larger if we separate their centers.
 
\begin{thm}\label{thm-sum of balls 0.5--2}
	Let $\frac{1}{2} \leq p \leq 2$, let $a>0$ and $b>0$ such that $a+b \geq 1$, and let $K= B(X, r_1)$ and $L = B(Y, r_2)$ be two geodesic balls in $\mathbb{H}^{n+1}$. Let  $\Omega = a \cdot K+_p b \cdot L$. Then there is a geodesic ball of radius $\frac{1}{p}\log \(a e^{p r_1+ b e^{p r_2}}\)$ lying inside $\Omega$. Moreover, the above inclusion is proper if and only if $X \neq Y$.
\end{thm}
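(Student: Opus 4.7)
The plan is to reduce the theorem to finding a single point inside an auxiliary hyperbolic $p$-sum of the two centers $X$ and $Y$. First I will use Lemma~\ref{lem-horo supp of geodesic ball} to write $\g_K=e^{r_1}\g_X$ and $\g_L=e^{r_2}\g_Y$, where $\g_X$ and $\g_Y$ are the horospherical support functions of the points $X$ and $Y$. Definition~\ref{def-p sum} then gives
\begin{equation*}
\g_{\Omega}^{p}(z)\;=\;a\g_K^p(z)+b\g_L^p(z)\;=\;\bigl(ae^{pr_1}+be^{pr_2}\bigr)\bigl(\lambda\g_X^p(z)+(1-\lambda)\g_Y^p(z)\bigr),
\end{equation*}
where $\lambda:=ae^{pr_1}/(ae^{pr_1}+be^{pr_2})\in(0,1)$. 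Setting $R:=\frac{1}{p}\log(ae^{pr_1}+be^{pr_2})$ and $M(z):=(\lambda\g_X^p(z)+(1-\lambda)\g_Y^p(z))^{1/p}$ factorises the horospherical support as $\g_{\Omega}=e^{R}M$.

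I would next recognise $M$ as the horospherical support function of the auxiliary h-convex bounded domain $\Omega':=\lambda\cdot X+_p(1-\lambda)\cdot Y$, obtained by applying Definition~\ref{def-p sum} to $X$ and $Y$ viewed as degenerate point summands. Since $\lambda+(1-\lambda)=1\geq 1$ and $\frac{1}{2}\leq p\leq 2$, Theorem~\ref{thm-def p sum-well defined} ensures $\Omega'$ is well-defined and non-empty. For any chosen $Z\in\Omega'$, the definition of horospherical support gives $\g_{Z}(z)\leq M(z)$ pointwise on $\mathbb{S}^n$, so by Lemma~\ref{lem-horo supp of geodesic ball} the ball $B(Z,R)$ satisfies $\g_{B(Z,R)}(z)=e^{R}\g_{Z}(z)\leq e^{R}M(z)=\g_{\Omega}(z)$. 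Passing to horospherical Wulff shapes via Proposition~\ref{prop-Aij >=0} then delivers the inclusion $B(Z,R)\subset\Omega$.

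For the sharpness dichotomy, the case $X=Y$ is immediate: $\g_X=\g_Y$ makes $M=\g_X$, so $\Omega'=\{X\}$ and $B(X,R)=\Omega$. When $X\neq Y$, I would pick $Z$ in the \emph{interior} of $\Omega'$ and argue $\g_{Z}(z)<M(z)$ strictly for every $z\in\mathbb{S}^n$. The justification is that the Minkowski projection of the null vector $(-z,1)$ onto $T_{Z}\mathbb{H}^{n+1}$ is always nonzero (a null vector cannot be parallel to a time-like one), so the unit geodesic through $Z$ in that projected direction keeps us inside $\Omega'$ while strictly increasing $-\langle\,\cdot\,,(z,1)\rangle$ to first order. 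This upgrades the inequality $e^{R}\g_{Z}\leq\g_{\Omega}$ to a strict one at every $z$, forcing $B(Z,R)\subsetneq\Omega$.

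The main obstacle is verifying that $\Omega'$ has non-empty interior whenever $X\neq Y$. For $p\in(\frac{1}{2},1)\cup(1,2)$ this is Case~(4) of Proposition~\ref{prop-p-sum}; for $p=1$ it follows from Corollary~\ref{cor-1 sum of balls} since $N(\lambda X+(1-\lambda)Y)>1$ once $X\neq Y$. The delicate borderline exponents $p\in\{\frac{1}{2},2\}$ fall into the exceptional Case~(3) of Theorem~\ref{thm-def p sum-well defined}, and here I would invoke Corollary~\ref{cor-2pts-A>=0}: the inequality $A_{ij}[e^{u_{\Omega'}}]\geq 0$ degenerates only on a meagre subset of $\mathbb{S}^n$, which precludes $\Omega'$ from collapsing to a point (Lemma~\ref{lem-horospp of point} would otherwise force $A_{ij}[e^{u_{\Omega'}}]\equiv 0$) and hence secures the required interior.
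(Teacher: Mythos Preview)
Your approach is essentially the same as the paper's: factor $\g_\Omega = e^R M$ with $M$ the horospherical support of the auxiliary domain $\Omega' = \lambda\cdot X +_p (1-\lambda)\cdot Y$, pick a point inside $\Omega'$, and use Lemma~\ref{lem-horo supp of geodesic ball} to get the ball $B(Z,R)\subset\Omega$. The only substantive difference is how you handle properness when $X\neq Y$.

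The paper's argument is simpler and avoids your interior issue entirely: once $\Omega'$ is not a single point, there exist two distinct $T_1,T_2\in\Omega'$, so $B(T_1,R)$ and $B(T_2,R)$ both sit inside $\Omega$; if $B(T_1,R)=\Omega$ then $B(T_2,R)\subset B(T_1,R)$, impossible for balls of equal radius with distinct centers. This needs only ``$\Omega'$ is not a point,'' which is exactly what Lemma~\ref{lem-horospp of point} together with Proposition~\ref{prop-p-sum}(4) and Corollary~\ref{cor-2pts-A>=0} provide.

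Your route instead requires $\Omega'$ to have non-empty \emph{interior}. For $p\in(\tfrac12,2)$ this is fine (Proposition~\ref{prop-p-sum} and Corollary~\ref{cor-1 sum of balls} give a smooth uniformly h-convex or positive-radius ball). But for $p\in\{\tfrac12,2\}$ you only argue that $A_{ij}[\g_{\Omega'}]$ is not identically zero, i.e.\ $\Omega'$ is not a point, and then assert this ``secures the required interior.'' That leap is not automatic: you would still need to say why a h-convex Wulff shape with smooth support function and $A_{ij}>0$ on a dense open set of $\mathbb{S}^n$ is full-dimensional (e.g.\ via \eqref{di X}, which shows $X(z)$ is an immersion wherever $A_{ij}>0$, so $\partial\Omega'$ contains an $n$-dimensional patch). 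It is a fixable gap, but the paper's two-center argument sidesteps it cleanly.
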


\begin{proof}
	Define $t \in (0,1)$ by
	\begin{equation}\label{const t}
		t= \frac{b e^{pr_2}}{ae^{pr_1} +b e^{pr_2}}.
	\end{equation}
	Theorem \ref{thm-def p sum-well defined} asserts that
	\begin{equation*}
		\widetilde{\Omega}:= (1-t) \cdot X +_p t \cdot Y
	\end{equation*}
	is well-defined and non-empty. Then we can choose a point $T \in \widetilde{\Omega}$, and hence
	\begin{equation*}
		\g_T(z) \leq  \g_{\widetilde{\Omega}}(z) = \((1-t) \g_X(z)^p + t \g_Y(z)^p\)^{\frac{1}{p}}, \quad \forall \ z \in \mathbb{S}^n.
	\end{equation*}
	Substituting \eqref{const t} into the above inequality, we arrive at
	\begin{equation}\label{ineq-p-sum balls 0.5--2}
		\(a e^{pr_1} + b e^{pr_2} \)^{\frac{1}{p}} \g_T 
		\leq  \( a \(e^{r_1} \g_X\)^p +b \(e^{r_2} \g_Y \)^p \)^{\frac{1}{p}}.
	\end{equation}
	Let $\widetilde{B} = B\(T, \frac{1}{p}\log \( ae^{pr_1}+b e^{pr_2}\)\)$. Then Lemma \ref{lem-horo supp of geodesic ball} implies 
	\begin{equation*}
		\g_K = e^{r_1} \g_X, \quad \g_L =e^{r_2} \g_Y, \quad 
		\g_{\widetilde{B}} = \( ae^{pr_1}+b e^{pr_2}\)^{\frac{1}{p}} \g_T.
	\end{equation*}
	Putting these formulas into \eqref{ineq-p-sum balls 0.5--2}, we have
	\begin{equation*}
		\g_{\widetilde{B}} \leq \(a \g_K^p+ b\g_L^p\)^{\frac{1}{p}} = \g_{\Omega},
	\end{equation*}
	which means that the geodesic ball $\widetilde{B}$ lies inside $\Omega$.
	
	Now we explore the case $\widetilde{B} =\Omega$. We first assume that $X \neq Y$, i.e. $K$ and $L$ are not centered at the same point. If $\widetilde{\Omega}$ is not a single point, then the choice of $T$ is not unique, and hence $\widetilde{B}$ is a proper subset of $ \Omega$. Otherwise, if $\widetilde{ \Omega}$ is a single point, then we infer from Lemma \ref{lem-horospp of point} that $A [\g_{\widetilde{\Omega}}(z)] = 0$ for all $z \in \mathbb{S}^n$. However, by the fourth case of Proposition \ref{prop-p-sum} and Corollary \ref{cor-2pts-A>=0}, this can not happen whenever $\frac{1}{2} \leq  p \leq 2$ and $X \neq Y$. Therefore we conclude that $\widetilde{B} \subset \neq \Omega$ when $X \neq Y$.
	
	If $X =Y$, then we will prove that $\Omega$ is a geodesic ball of radius $\frac{1}{p} \log  \(a e^{pr_1} + b e^{p r_2}\)$. Proposition \ref{prop-sums not rely on origin} implies that we can assume $X=Y=(0,1)$ without loss of generality. Consequently, $\g_K(z) =e^{r_1}$, $\g_L(z) =e^{r_2}$, and hence $\g_\Omega = \(a e^{p r_1} +b e^{p r_2} \)^{\frac{1}{p}}$. Then Lemma \ref{lem-horo supp of geodesic ball} implies that $\Omega$ is a geodesic ball of radius $\frac{1}{p} \log  \(a e^{pr_1} + b e^{p r_2}\)$ centered at the origin $(0,1)$. This completes the proof of Theorem \ref{thm-sum of balls 0.5--2}.
\end{proof}

\section{Horospherical $p$-surface area measures, Horospherical $p$-Minkowski problem and Horospherical $p$-Christoffel-Minkowski problem}\label{sec-p,k-surface area measure}
The \emph{quermassintegral} $W_k(\Omega)$ for a (geodesically) convex domain $\Omega \subset \mathbb{H}^{n+1}$ is defined by (see e.g., \cite{San04})
\begin{equation*}
	W_k(\Omega) = \frac{\omega_{k-1} \cdots \omega_0}{\omega_{n-1} \cdots \omega_{n-k}}
	\int_{\mathcal{L}_k} \chi(L_k \cap \Omega) d L_k, \quad k=1,\ldots,n,
\end{equation*}
where $\mathcal{L}_k$ is the space of $k$-dimensional totally geodesic subspaces $L_k$ in $\mathbb{H}^{n+1}$, and $\omega_i$ denotes the area of the unit sphere $\mathbb{S}^i$ in Euclidean space. If $L_k \cap \Omega \neq \emptyset$, then $\chi(L_k) =1$, otherwise $\chi(L_k) =0$.

This definition of $W_k(\Omega)$ seems difficult to be used in the calculations, hence we will give an equivalent definition of $W_k(\Omega)$ for smooth convex $\Omega$ by induction. We set 
\begin{equation*}
	W_0(\Omega) = \Vol(\Omega), \quad W_{n+1}(\Omega) = \frac{\omega_n}{n+1}.
\end{equation*} 
Furthermore, we let the curvature integrals be defined by 
\begin{equation*}
	V_{n-k} = \int_{\partial \Omega} p_k (\kappa) d\mu, \quad k=0,1,\ldots,n,
\end{equation*}
where $\kappa = (\kappa_1, \ldots, \kappa_n)$ are the principal curvatures of $\partial \Omega$. Then we can define the quermassintegral $W_k(\Omega)$ by the following relations (see \cite{HL21, HLW20})
\begin{equation*}
	V_{n-k}(\Omega) = (n-k)W_{k+1}(\Omega) + kW_{k-1}(\Omega), \quad k=1,2, \ldots,n.
\end{equation*}
Since we focus on the geometry of h-convex domains, we would like to consider the following \emph{modified quermassintegrals} of a h-convex domain $\Omega \subset \mathbb{H}^{n+1}$, which were introduced by Andrews et al. \cite{ACW18},
\begin{equation}\label{def-modified quermassintegral}
	\widetilde{W}_k(\Omega):= \sum_{i=0}^k (-1)^{k-i}C_k^i W_i(\Omega), \quad k=0,1,\ldots,n.
\end{equation} 
Equivalently, that means
\begin{equation*}
	{W}_k (\Omega) = \sum_{i=0}^k C_k^i \widetilde{W}_i (\Omega).
\end{equation*}
If the domain $\Omega$ is smooth, then the modified quermassintegrals can also be defined by induction, i.e.
\begin{equation}\label{induction def of modi-quer-intg}
	\begin{aligned}
	\widetilde{W}_0(\Omega) =& \Vol(\Omega), \quad \widetilde{W}_1 (\Omega) = \frac{1}{n} |\partial \Omega| -\Vol (\Omega),\\
	\widetilde{W}_{k+1}(\Omega)=& \frac{1}{n-k} \int_{\partial \Omega} p_k (\tilde{\kappa}) d\mu 
	- \frac{n-2k}{n-k} \widetilde{W}_k (\Omega), \quad k=1,\ldots,n-1,
	\end{aligned}
\end{equation}
where $\tilde{\kappa} = \kappa-1 = (\kappa_1-1, \ldots, \kappa_n-1)$ are the shifted principal curvatures of $\partial \Omega$.

Andrews et al. \cite{ACW18} showed that the modified quermassintegral $\widetilde{W}_k(\Omega)$ is monotone with respect to the inclusion of h-convex domains.

\begin{prop}[\cite{ACW18}]\label{prop-domain monotone modified quermass}
	If $\Omega_0$ and $\Omega_1$ are h-convex domains with $\Omega_0 \subset \Omega_1$, then $\widetilde{W}_k (\Omega_0) \leq \widetilde{W}_k(\Omega_1)$.
\end{prop}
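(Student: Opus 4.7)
The plan is to reduce to smooth uniformly h-convex domains and then establish monotonicity infinitesimally along a connecting family. First, I would use an approximation argument: every h-convex bounded domain can be approximated from inside and outside by smooth uniformly h-convex domains (taking outer parallel bodies at distance $\varepsilon$ and then smoothing the support function by convolution on $\mathbb{S}^n$, using Proposition~\ref{prop-Aij >=0} to check that strict positivity of $A[\g]$ is achieved in the limit). Since $\widetilde{W}_k$ is continuous under such approximations (via the inductive definition \eqref{induction def of modi-quer-intg}), it suffices to treat the case when $\Omega_0 \subsetneq \Omega_1$ are both smooth and uniformly h-convex.

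Next I would construct a smooth family $\{\Omega_s\}_{s\in[0,1]}$ of smooth uniformly h-convex bounded domains interpolating between $\Omega_0$ and $\Omega_1$, such that $\Omega_s \subset \Omega_t$ whenever $s \leq t$, and such that $\partial\Omega_s$ evolves in the outward normal direction with nonnegative speed $f_s \geq 0$. One way to do this is on the level of horospherical support functions: since $\Omega_0 \subset \Omega_1$ is equivalent to $u_{\Omega_0} \leq u_{\Omega_1}$ on $\mathbb{S}^n$, one can choose a monotone path $s \mapsto u_s$ interpolating between $u_{\Omega_0}$ and $u_{\Omega_1}$ within the space of smooth functions satisfying $A[e^{u_s}] > 0$ (for instance, by first dilating $\Omega_0$ outward to sit strictly inside $\Omega_1$ and then performing a suitable linear interpolation, possibly combined with an auxiliary convolution to preserve h-convexity).

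The key calculation is then the first variation. Using the classical hyperbolic first variation formula for quermassintegrals,
\[
\frac{d}{ds} W_i(\Omega_s) = \frac{n+1-i}{n+1}\int_{\partial\Omega_s} f_s\, p_{i-1}(\kappa)\, d\mu \qquad (i \geq 0, \text{ with } p_{-1}\equiv 1),
\]
together with the definition $\widetilde{W}_k = \sum_{i=0}^{k}(-1)^{k-i}\binom{k}{i}W_i$, I would substitute $p_i(\kappa) = p_i(\tilde\kappa + 1)$ and apply the binomial identity
\[
\binom{n}{i} p_i(\tilde\kappa + 1) = \sum_{j=0}^{i} \binom{n-j}{i-j}\binom{n}{j}\, p_j(\tilde\kappa)
\]
to rewrite the derivative entirely in terms of $p_j(\tilde\kappa)$. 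The resulting double sum telescopes by a combinatorial identity (the alternating sums $\sum_{i=j}^{k}(-1)^{k-i}\binom{k}{i}\binom{n-j}{i-j}$ vanish except in the leading case $j = k-1$), yielding
\[
\frac{d}{ds} \widetilde{W}_k(\Omega_s) = \frac{n+1-k}{n+1}\int_{\partial\Omega_s} f_s\, p_{k-1}(\tilde\kappa)\, d\mu.
\]

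Since $\Omega_s$ is h-convex, $\tilde\kappa_i \geq 0$ everywhere on $\partial\Omega_s$, hence $p_{k-1}(\tilde\kappa) \geq 0$; combined with $f_s \geq 0$, this gives nonnegativity of the derivative and the desired conclusion after integrating in $s$. The main obstacle is the middle step: constructing a monotone smooth family that stays within the class of smooth uniformly h-convex domains throughout. Linear interpolation of support functions need not preserve the positivity $A[\g_s] > 0$, so some care is required — either by first enlarging $\Omega_0$ slightly (using hyperbolic dilation, Section~\ref{sec-Steiner formula}) so that it is compactly contained in $\Omega_1$, or by running a suitable outward parabolic flow whose long-time behavior reaches $\Omega_1$. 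The combinatorial verification of the variation formula is routine but must be carried out carefully to pin down the positive constant.
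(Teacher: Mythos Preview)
The paper does not itself prove this proposition; it is quoted from \cite{ACW18}. However, the paper does record the key ingredient as Lemma~\ref{lem-variation of modified quermass}: along any normal variation $\partial_t X=\mathscr F\nu$,
\[
\frac{d}{dt}\widetilde W_k(\Omega_t)=\int_{M_t} p_k(\tilde\kappa)\,\mathscr F\,d\mu_t .
\]
Given this, the monotonicity is immediate once one has a monotone interpolating family of uniformly h-convex domains, and that family is supplied directly by the paper's own machinery: for smooth uniformly h-convex $\Omega_0\subset\Omega_1$ one takes $\g_s=(1-s)\g_{\Omega_0}+s\g_{\Omega_1}$, which has $A[\g_s]>0$ by Proposition~\ref{prop-p-sum} (case~(3) with $p=1$, $a+b=1$). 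Since $\g_{\Omega_0}\le\g_{\Omega_1}$, the normal speed $\mathscr F=\partial_s u_s\ge 0$ by \eqref{scalr-flow DT's trick}, and h-convexity gives $p_k(\tilde\kappa)\ge 0$. So the ``obstacle'' you flag about preserving h-convexity under interpolation is already handled in the paper.

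Your outline follows this same strategy, but the computation has a genuine error. The formula you use,
\[
\frac{d}{ds} W_i(\Omega_s)=\frac{n+1-i}{n+1}\int_{\partial\Omega_s} f_s\,p_{i-1}(\kappa)\,d\mu,
\]
is a Euclidean-type single-term variation; in $\mathbb H^{n+1}$ the variation of $W_i$ picks up a second curvature integral coming from the ambient sectional curvature $-1$ (equivalently, $\frac{d}{dt}\int p_k(\kappa)\,d\mu$ involves both $p_{k+1}$ and $p_{k-1}$ in a space form). Starting from an incorrect input, the subsequent telescoping cannot produce the right answer, and indeed your final formula $\frac{d}{ds}\widetilde W_k=\frac{n+1-k}{n+1}\int f_s\,p_{k-1}(\tilde\kappa)\,d\mu$ disagrees with Lemma~\ref{lem-variation of modified quermass}: already for $k=1$ on a geodesic sphere $\partial B(r)$ with $f_s\equiv 1$, your expression gives $\tfrac{n}{n+1}\omega_n\sinh^n r$, whereas $I_1'(r)=\omega_n\sinh^{n-1}r\,e^{-r}$ by \eqref{I_k(r)}. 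The integrand you obtain is still nonnegative, so your conclusion happens to survive, but the derivation does not; you should either invoke Lemma~\ref{lem-variation of modified quermass} directly, or redo the combinatorics starting from the correct hyperbolic variation of $W_i$.
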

Furthermore, they gave the following evolution equations (see \cite[Lemma 2.4]{ACW18}) of $\widetilde{W}_k(\Omega)$ along general flows.

\begin{lem}[\cite{ACW18}]\label{lem-variation of modified quermass}
	Along the flow $\partial_t X =\mathscr{F} \nu$, the modified quermassintegral $\widetilde{W}_k$ of the evolving domain $\Omega_t$ satisfies
	\begin{equation*}
	\frac{d}{dt} \widetilde{W}_k(\Omega_t) = \int_{M_t} p_k(\tilde{\kappa}) \mathscr{F} d \mu_t, \quad
	k=0, \ldots, n.
	\end{equation*}
\end{lem}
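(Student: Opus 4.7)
The plan is to argue by induction on $k$, using the inductive characterization of the modified quermassintegrals given in \eqref{induction def of modi-quer-intg}. For the base case $k=0$, one has $\widetilde{W}_0(\Omega_t) = \Vol(\Omega_t)$ and $p_0(\tilde{\kappa})=1$, so the claim reduces to the first variation of volume $\frac{d}{dt}\Vol(\Omega_t) = \int_{M_t}\mathscr{F}\, d\mu_t$, which is immediate from the divergence theorem. Assuming the formula for $k$, the recursion in \eqref{induction def of modi-quer-intg} gives
\begin{equation*}
  \frac{d}{dt}\widetilde{W}_{k+1}(\Omega_t) \;=\; \frac{1}{n-k}\frac{d}{dt}\!\int_{M_t} p_k(\tilde{\kappa})\, d\mu_t \;-\; \frac{n-2k}{n-k}\int_{M_t} p_k(\tilde{\kappa})\,\mathscr{F}\, d\mu_t,
\end{equation*}
after applying the inductive hypothesis to the second term. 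Thus everything reduces to establishing the identity
\begin{equation*}
  \frac{d}{dt}\!\int_{M_t} p_k(\tilde{\kappa})\, d\mu_t \;=\; \int_{M_t}\mathscr{F}\bigl[(n-k)\,p_{k+1}(\tilde{\kappa}) \;+\; (n-2k)\,p_k(\tilde{\kappa})\bigr] d\mu_t,
\end{equation*}
since inserting it cancels the $(n-2k)\,p_k$ terms and leaves exactly $\int p_{k+1}(\tilde{\kappa})\,\mathscr{F}\,d\mu_t$.

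To prove the key identity I would invoke the standard evolution equations along $\partial_t X=\mathscr{F}\nu$. The measure evolves by $\partial_t d\mu_t = H\mathscr{F}\,d\mu_t$ with $H=n\,p_1(\kappa)=n+n\,p_1(\tilde{\kappa})$. For the Weingarten operator, using that the ambient sectional curvature of $\mathbb{H}^{n+1}$ is $-1$, one derives
\begin{equation*}
  \partial_t \tilde{h}_i{}^j \;=\; -\nabla^j\nabla_i\mathscr{F} \;-\; \mathscr{F}\bigl(\tilde{h}_i{}^k\tilde{h}_k{}^j + 2\,\tilde{h}_i{}^j\bigr),
\end{equation*}
which can be checked against the test case of a geodesic sphere of radius $r$ expanding at unit speed (where $\partial_t\tilde{\kappa}=1-\coth^2 r = -\tilde{\kappa}^2-2\tilde{\kappa}$). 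The time derivative of $\int p_k(\tilde{\kappa})\,d\mu_t$ then splits into the Hessian contribution $\int \dot{p}_k^{ij}(\tilde{\kappa})(-\nabla^j\nabla_i\mathscr{F})\,d\mu_t$, which vanishes after two integrations by parts because $\tilde{h}$ is a Codazzi tensor and hence its Newton tensor $\dot{p}_k^{ij}(\tilde{\kappa})$ is divergence-free (as noted after Lemma \ref{lem-pm-shifted curvature}), and the algebraic contribution. The algebraic contribution is evaluated using the homogeneity identities $\dot{p}_k^{ij}g_{ij}=kp_{k-1}(\tilde{\kappa})$ and $\dot{p}_k^{ij}\tilde{h}_{ij}=kp_k(\tilde{\kappa})$ from Lemma \ref{lem-pm-shifted curvature}, together with the Newton-type identity
\begin{equation*}
  \dot{p}_k^{ij}(\tilde{\kappa})\,\tilde{h}_i{}^l\tilde{h}_{lj} \;=\; n\,p_1(\tilde{\kappa})\,p_k(\tilde{\kappa}) \;-\; (n-k)\,p_{k+1}(\tilde{\kappa}),
\end{equation*}
which I would derive from the elementary symmetric-function identity $\sum_i \lambda_i^2\sigma_{k-1}(\lambda\vert i)=\sigma_1\sigma_k-(k+1)\sigma_{k+1}$ after normalization.

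\textbf{Main obstacle.} The hard part is not any single step but the bookkeeping of the cancellations. After summation, the contribution from $\dot{p}_k^{ij}\tilde{h}_i{}^l\tilde{h}_{lj}$ yields a term $+np_1(\tilde{\kappa})p_k(\tilde{\kappa})$ that must be balanced exactly by the $+np_1(\tilde{\kappa})p_k(\tilde{\kappa})$ produced by the measure evolution $H\mathscr{F}\,d\mu_t$ once $H$ is rewritten as $n+np_1(\tilde{\kappa})$; it is precisely this cancellation---an artefact of the ambient curvature $-1$ forcing the appearance of the shifted tensor $\tilde{h}=h-g$---that causes the identity to produce the clean constants $(n-k)$ and $(n-2k)$ matching the recursion \eqref{induction def of modi-quer-intg}. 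Verifying these signs and coefficients without error is the technical heart of the proof.
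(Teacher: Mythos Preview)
Your argument is correct. The paper itself does not supply a proof of this lemma; it simply quotes the result from \cite{ACW18} (see the citation attached to the statement). Your induction on $k$ via the recursion \eqref{induction def of modi-quer-intg}, reducing everything to the evolution identity
\[
  \frac{d}{dt}\!\int_{M_t} p_k(\tilde{\kappa})\, d\mu_t \;=\; \int_{M_t}\mathscr{F}\bigl[(n-k)\,p_{k+1}(\tilde{\kappa}) + (n-2k)\,p_k(\tilde{\kappa})\bigr] d\mu_t,
\]
is exactly the standard route and matches the approach in \cite{ACW18}. The evolution equation you use for $\tilde{h}_i{}^j$ is precisely \eqref{evl-thij} in the present paper, the divergence-free property of $\dot{p}_k^{ij}(\tilde{\kappa})$ is recorded just after Lemma \ref{lem-pm-shifted curvature}, and your Newton-type identity $\dot{p}_k^{ij}\tilde{h}_i{}^l\tilde{h}_{lj} = n\,p_1(\tilde{\kappa})p_k(\tilde{\kappa}) - (n-k)p_{k+1}(\tilde{\kappa})$ is correct (it follows from $(k+1)\binom{n}{k+1}=(n-k)\binom{n}{k}$ after normalizing $\sum_i\lambda_i^2\sigma_{k-1}(\lambda|i)=\sigma_1\sigma_k-(k+1)\sigma_{k+1}$). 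The cancellation you flag as the ``main obstacle'' indeed goes through: the $np_1(\tilde{\kappa})p_k$ from the quadratic term cancels the $np_1(\tilde{\kappa})p_k$ from $H\mathscr{F}\,d\mu_t$, and the surviving $np_k-2kp_k=(n-2k)p_k$ and $(n-k)p_{k+1}$ are exactly what the recursion requires.
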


Now we introduce the mixed modified quermassintegrals which are defined by the variation of modified quermassintegrals along the hyperbolic $p$-sum in Definition \ref{def-p sum}.

\begin{defn}\label{def-p mixed k-th modified qmintegral}
	Let $0 \leq k \leq n$ be an integer and $\frac{1}{2} \leq p \leq 2$ be a real number. We define the $p$-mixed $k$-th modified quermassintegral of two smooth uniformly h-convex bounded domains $K$, $L \subset \mathbb{H}^{n+1}$ by
	\begin{equation*}
		\widetilde{W}_{p,k} (K,L) = \lim_{t \to 0^+} \frac{\widetilde{W}_{k} (K +_p t \cdot L) - \widetilde{W}_{k}(K)}{t}.
	\end{equation*} 
\end{defn}

To calculate the integral representation of $\widetilde{W}_{p,k} (K,L)$, we should study the scalar evolution equation of the horospherical support function $u(z)$ along a general flow $\partial_t X = \mathscr{F} \nu$. Now we assume that, on a small time interval, the solution to the above flow is given by a vector-valued function $X(z,t)$ defined on $\mathbb{S}^n \times [0, \varepsilon)$,  where $X(z, t)$ is determined by function $\g(z,t)$ and expression \eqref{X(z)}. By \eqref{X-nu}, \eqref{nu} and DeTurck's trick, we have
\begin{align}
	\mathscr{F} =&\metric{\partial_t X(z,t) }{\nu}
	=\metric{\partial_t \(X(z,t)-\nu(z,t)\)}{\nu}
	= \metric{\partial_t  \(\frac{ (z,1)}{\g(z,t)} \)}{ \nu} \nonumber\\
	=& -\frac{1}{\g} \partial_t u(z,t) \metric{(z,1)}{\nu}
	= \partial_t u(z,t).\label{scalr-flow DT's trick}
\end{align}
Hence, the scalar evolution equation of $u(z,t)$ is given by $\partial_t u(z,t) = \mathscr{F}$. Equivalently,
\begin{equation*}
	\partial_t \g(z,t) = \g \mathscr{F}.
\end{equation*}
We also refer readers to \cite[Proposition 5.3]{ACW18} for details. 

Denote by $\tilde{\lambda} = \tilde{\kappa}^{-1} = (\frac{1}{\kappa_1-1}, \ldots, \frac{1}{\kappa_n-1})$ the \emph{shifted principal radii of curvature} of a smooth uniformly h-convex hypersurface $M \subset \mathbb{H}^{n+1}$. 

\begin{lem}\label{lem-formula-Wpk-K-L}
	Let $k$,  $p$, $K$ and $L$ satisfy the assumptions in Definition \ref{def-p mixed k-th modified qmintegral}. Then the integral representation of the $p$-mixed $k$-th modified quermassintegral $\widetilde{W}_{p,k} (K,L)$ is given by
	\begin{equation*}
		\widetilde{W}_{p,k} (K,L)=
		\frac{1}{p} \int_{\mathbb{S}^n} \g_L^p(z) \g_K^{-p-k}(z) p_{n-k} (A[\g_K(z)]) d \sigma.
	\end{equation*}
\end{lem}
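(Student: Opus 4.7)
The plan is to differentiate $\widetilde{W}_k(\Omega_t)$ at $t=0$ along the family $\Omega_t = K +_p t\cdot L$, using Lemma \ref{lem-variation of modified quermass} to convert this into a boundary integral, and then to pull everything back to $\mathbb{S}^n$ via the horospherical Gauss map.

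First, I would use Theorem \ref{thm-def p sum-well defined} to ensure that, for all sufficiently small $t>0$, the domain $\Omega_t$ is smooth and uniformly h-convex with horospherical support function
\[
u_{\Omega_t}(z) = \frac{1}{p}\log\bigl(\g_K^p(z) + t\,\g_L^p(z)\bigr),
\]
so that $\g_{\Omega_t}^p = \g_K^p + t\,\g_L^p$. Differentiating at $t=0$ yields $\partial_t u_{\Omega_t}(z)\big|_{t=0} = \g_L^p(z)/(p\,\g_K^p(z))$. Parameterizing $\partial\Omega_t$ through $G^{-1}$ and applying the DeTurck-type identity \eqref{scalr-flow DT's trick}, the boundary of $\Omega_t$ evolves as $\partial_t X = \mathscr{F}\,\nu$ with normal speed
\[
\mathscr{F}\big|_{t=0} = \frac{\g_L^p(z)}{p\,\g_K^p(z)}.
\]

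Next, I would apply Lemma \ref{lem-variation of modified quermass} to obtain
\[
\widetilde{W}_{p,k}(K,L) = \frac{d}{dt}\widetilde{W}_k(\Omega_t)\Big|_{t=0} = \int_{\partial K} p_k(\tilde\kappa)\,\mathscr{F}\big|_{t=0}\,d\mu,
\]
and then change variables to $\mathbb{S}^n$ via the horospherical Gauss map. For this I would use \eqref{rel-area element}, which gives $d\mu = \det A[\g_K]\,d\sigma$, together with the shifted-Weingarten identity from Lemma \ref{lem-shifted curvature-support function}, namely $\tilde h_i{}^j A_{jk}[\g_K] = \g_K^{-1}\sigma_{ik}$. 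Viewing both sides as endomorphisms on the common tangent frame furnished by $G$, this says that the shifted Weingarten operator and $A[\g_K]$ are inverse up to the scalar $\g_K^{-1}$; hence the eigenvalues satisfy $\tilde\kappa_i = 1/(\g_K\,\mu_i)$, where $\mu_1,\ldots,\mu_n$ are the eigenvalues of $A[\g_K]$.

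With this in hand, the duality formula \eqref{inverse function of pl/pk} (applied with $k$ replaced by $0$ and $l$ by $k$, giving $p_k(1/\mu) = p_{n-k}(\mu)/p_n(\mu)$) yields
\[
p_k(\tilde\kappa) = \frac{1}{\g_K^k}\,\frac{p_{n-k}(A[\g_K])}{\det A[\g_K]},
\]
so that $p_k(\tilde\kappa)\,\det A[\g_K] = \g_K^{-k}\,p_{n-k}(A[\g_K])$. Substituting everything back gives
\[
\widetilde{W}_{p,k}(K,L) = \int_{\mathbb{S}^n} \frac{\g_L^p(z)}{p\,\g_K^p(z)}\cdot\frac{p_{n-k}(A[\g_K(z)])}{\g_K^k(z)}\,d\sigma = \frac{1}{p}\int_{\mathbb{S}^n}\g_L^p(z)\,\g_K^{-p-k}(z)\,p_{n-k}(A[\g_K(z)])\,d\sigma,
\]
as desired. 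The only step that requires some care is the algebraic translation in the last paragraph, namely recognizing that $\tilde h$ and $A[\g_K]$ are inverse endomorphisms up to the factor $\g_K^{-1}$ and then correctly invoking the symmetric-function duality to replace $p_k(\tilde\kappa)$ by a quantity in $A[\g_K]$ that cancels the Jacobian $\det A[\g_K]$; everything else is a direct application of previously established formulas.
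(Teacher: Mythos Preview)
Your proof is correct and follows essentially the same route as the paper: differentiate $u(z,t)$ at $t=0$, invoke Lemma \ref{lem-variation of modified quermass} with $\mathscr{F}=\partial_t u$ via \eqref{scalr-flow DT's trick}, and then convert $p_k(\tilde\kappa)\,d\mu$ to $\g_K^{-k}p_{n-k}(A[\g_K])\,d\sigma$ using \eqref{rel-area element} and \eqref{shifted curvature-support function}. The only cosmetic difference is that the paper phrases the last step through the shifted radii $\tilde\lambda$ (writing $p_k(\tilde\kappa)=p_{n-k}(\tilde\lambda)/p_n(\tilde\lambda)$) rather than through the eigenvalue inversion $\tilde\kappa_i=1/(\g_K\mu_i)$ that you use, but the algebra is identical.
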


\begin{proof}
	Let $\Omega_t = K+_p t \cdot L$. Then the horospherical support function of $\Omega_t$ is given by
	\begin{equation*}
		u(z,t) = \frac{1}{p} \log (\g_K^p(z) +t \g_L^p(z) ).
	\end{equation*}
	By Definition \ref{def-p mixed k-th modified qmintegral}, we know 
	\begin{equation*}
		\widetilde{W}_{p,k} (K,L) = \left. \frac{d}{dt} \right|_{t=0} \widetilde{W}_{p,k} (\Omega_t).
	\end{equation*}
	Using \eqref{scalr-flow DT's trick}, we have that the initial speed of the above variation is given by 
	\begin{equation*}
		\frac{\partial }{\partial t} u (z,0)= \frac{1}{p} \frac{\g_L^p(z)}{\g_K^p(z)}.
	\end{equation*} 
	Then Lemma \ref{lem-variation of modified quermass} implies
	\begin{equation}\label{dt W-pk-1}
		\left. \frac{d}{dt} \right|_{t=0} \widetilde{W}_{k} (\Omega_t)
		= \frac{1}{p} \int_{\partial K} \g_L^p \g_K^{-p} p_k (\tilde{\kappa}) d\mu.
	\end{equation}
	Using \eqref{rel-area element} and \eqref{shifted curvature-support function}, we have
	\begin{equation}\label{dt W-pk-2}
		\g_K^{-p} p_k (\tilde{\kappa}) d\mu
		= \g_K^{-p} \frac{p_{n-k} (\tilde{\lambda})}{p_n (\tilde{\lambda})}  \det A[\g_K] d \sigma
		=\g_K^{-p-k} p_{n-k} (A[\g_K]) d \sigma.
	\end{equation} 
	Then Lemma \ref{lem-formula-Wpk-K-L} follows by substituting \eqref{dt W-pk-2} into  \eqref{dt W-pk-1}.
\end{proof}

\begin{defn}\label{def-(p,k)surface area measure}
	Let $0 \leq k \leq n$ be an integer and $p$ be a real number. The \emph{$k$-th horospherical $p$-surface area measure} of a smooth uniformly h-convex bounded domain $K \subset \mathbb{H}^{n+1}$ is defined by
	\begin{equation*}
		d S_{p,k}(K,z) = \g_K^{-p-k}p_{n-k} (A [\g_K]) d \sigma.
	\end{equation*}
	Particularly, we call $dS(K,z) := dS_{0,0} (K,z)$ the \emph{horospherical surface area measure} of $K$.
\end{defn}

By Lemma \ref{lem-formula-Wpk-K-L} and Definition \ref{def-(p,k)surface area measure}, we have
\begin{equation*}
	\widetilde{W}_{p,k}(K,L) = \frac{1}{p}\int_{\mathbb{S}^n} \g_L^p d S_{p,k}(K,z).
\end{equation*}
This motivates us to propose the following two prescribed measure problems by using Definition \ref{def-(p,k)surface area measure}, which are nonlinear elliptic PDEs on $\mathbb{S}^n$. 

\begin{prob}[Horospherical $p$-Minkowski problem]\label{prob-Horospherical p-Minkowski problem}
	Let $n \geq 1$ be an integer and $p$ be a real number. Given a smooth positive function $f(z)$ defined on $\mathbb{S}^n$, what are necessary and sufficient conditions for $f(z)$, such that there exists a smooth uniformly h-convex bounded domain $K \subset \mathbb{H}^{n+1}$ satisfying 
	\begin{equation*}
		d S_{p,0}\(K,z \) = f(z)d\sigma. 
	\end{equation*} 
	That is, finding a smooth positive solution $\g(z)$ to
	\begin{equation}\label{Horo-p-Min-phi}
		\g^{-p}(z) p_n \(A[\g(z)] \) =f(z),
	\end{equation}
	such that $A[\g(z)]>0$ for all $z\in \mathbb{S}^n$.
\end{prob}

\begin{rem}
	Even though we assumed the smoothness of $f(z)$ in Problem \ref{prob-Horospherical p-Minkowski problem}, the polytopal version of Problem \ref{prob-Horospherical p-Minkowski problem} is still interesting. Recall that the classical Minkowski problem for polytopes was proposed and completely solved by Minkowski \cite{Min1897}, and the $L_p$ Minkowski problem for polytopes was also well studied, see e.g. \cites{HLYZ05,BHZ16, Zhu15,Zhu17}. The counterpart of Problem \ref{prob-Horospherical p-Minkowski problem} for h-convex polytopes (intersections of finite horo-balls) has been called the discrete horospherical $p$-Minkowski problem and has been studied in a recent joint work of the authors with Yao Wan \cite{LWX}. 
\end{rem}

\begin{prob}[Horospherical $p$-Christoffel-Minkowski problem]\label{prob-Horospherical p-Christoffel-Minkowski problem}
	Let $n \geq 2$ and $1 \leq k \leq n-1$ be integers, and let $p$ be a real number. For a given smooth positive function $f(z)$ defined on $\mathbb{S}^n$, what are necessary and sufficient conditions for $f(z)$, such that there exists a smooth uniformly h-convex bounded domain $K \subset \mathbb{H}^{n+1}$ satisfying
	\begin{equation*}
		d S_{p,k}\(K,z\) = f(z)d\sigma.
	\end{equation*} 
	That is, finding a smooth positive solution $\g(z)$ to
	\begin{equation}\label{Horo-p-Ch-Min-phi}
		\g^{-p-k}(z)p_{n-k} \(A[\g(z)] \) = f(z),
	\end{equation}
	such that $A[\g(z)]>0$ for all $z\in \mathbb{S}^n$.
\end{prob}

\section{Hyperbolic $p$-dilation and Steiner formulas}\label{sec-Steiner formula}
In Section \ref{sec-Steiner formula}, we study the (weighted) Steiner formulas for modified quermassintegrals (weighted volume) of smooth h-convex bounded domains.  

\subsection{Hyperbolic $p$-dilation}$ \ $

\begin{defn}\label{def-p dilation}
	Let $p$ and $a$ be real numbers such that $p>0$ and $a \geq 1$, and let $K$ be a smooth uniformly h-convex bounded domain in $\mathbb{H}^{n+1}$ $(n \geq 1)$ with horospherical support function $u_K(z)$. We allow $K$ to degenerate to a single point. We define the hyperbolic $p$-dilation $\Omega:= a \cdot_p K$ of $K$ by the smooth uniformly h-convex bounded domain with horospherical support function 
	\begin{equation*}
		u_\Omega(z) : = u_K(z) + \frac{1}{p} \log a.
	\end{equation*} 
	Equivalently, 
	\begin{equation*}
		\Omega = \bigcap_{z \in \mathbb{S}^n} \overline{B}_z (u_\Omega(z)) : = 
		\bigcap_{z \in \mathbb{S}^n}  \{ X \in \mathbb{H}^{n+1}:  \log ( - \metric{X}{(z,1)}) \leq u_\Omega(z)  \}.
	\end{equation*}
\end{defn}

By the first case in Proposition \ref{prop-p-sum}  and Proposition \ref{prop-Aij >=0}, the above Definition \ref{def-p dilation} is well-defined.

\begin{defn}\label{def-hyperbolic dilates}
	Let $K$ and $L$ be smooth uniformly h-convex bounded domains in $\mathbb{H}^{n+1}$. Then $K$ and $L$ are called hyperbolic dilates if there is a constant $c \geq 1$ such that either $K = c \cdot_1 L$ or $L = c \cdot_1 K$.
\end{defn}

From Definition \ref{def-p dilation}, it is easy to see that $K$ and $L$ are hyperbolic dilates if and only if $\g_K^{-1}(z) \g_L(z) = e^{u_L(z) - u_K(z)}$ is constant on $\mathbb{S}^n$.

In the context, we write $a \cdot K$ for $a \cdot_p K$ if there is no confusion about the choice of $p$. The following Proposition \ref{prop-regu-p-dilation} follows directly from Case (1) of Proposition \ref{prop-p-sum}. 

\begin{prop}\label{prop-regu-p-dilation}
	Let $n$, $p$, $a$, $K$ satisfy the same assumptions as in Definition \ref{def-p dilation}. Then Definition \ref{def-p dilation} is well-defined. Furthermore, except the case that $a=1$ and $K$ is a single point, $\Omega = a \cdot_p K$ is a smooth, non-degenerate and uniformly h-convex bounded domain. 
\end{prop}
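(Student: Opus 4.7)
The plan is to realize the hyperbolic $p$-dilation as a degenerate case of the hyperbolic $p$-sum and then invoke Proposition \ref{prop-p-sum} and Corollary \ref{cor-support-construct-domain}. Specifically, for $a\geq 1$ and $p>0$, if we set $b=0$ in Definition \ref{def-p sum}, the horospherical support function reads
\begin{equation*}
    u(z) = \tfrac{1}{p}\log\bigl(a\,e^{p u_K(z)}\bigr) = u_K(z) + \tfrac{1}{p}\log a,
\end{equation*}
which is exactly $u_\Omega(z)$ from Definition \ref{def-p dilation}. Hence $\Omega = a\cdot_p K$ coincides with the hyperbolic $p$-sum $a\cdot K +_p 0\cdot L$ (for any allowed $L$, which plays no role since $b=0$).

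First, I would treat the generic case $a>1$. Setting $\g(z)=e^{u_\Omega(z)} = a^{1/p}\g_K(z)$, I would apply Case (1) of Proposition \ref{prop-p-sum} (with $a>1$, $b=0$, and $K$ possibly degenerate to a single point) to conclude $A_{ij}[\g(z)] > 0$ for all $z\in\mathbb{S}^n$. Then Corollary \ref{cor-support-construct-domain} supplies a unique smooth, non-degenerate, uniformly h-convex bounded domain $\Omega \subset \mathbb{H}^{n+1}$ with horospherical support function $u_\Omega(z)$, and Corollary \ref{cor-Aij>0 Omega=Wulff shape} identifies it with the Wulff shape $\bigcap_{z\in\mathbb{S}^n}\overline{B}_z(u_\Omega(z))$ in Definition \ref{def-p dilation}. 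Geometrically, when $K$ is a single point $X$, this recovers the geodesic ball $B(X,\tfrac{1}{p}\log a)$ via Lemma \ref{lem-horo supp of geodesic ball}, giving a useful sanity check.

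Second, I would handle the boundary case $a=1$ directly: here $u_\Omega = u_K$, so $\Omega = K$ by the uniqueness clause in Proposition \ref{prop-Aij >=0} (or Corollary \ref{cor-support-construct-domain}). If $K$ is a non-degenerate smooth uniformly h-convex bounded domain, so is $\Omega$; if $K$ degenerates to a point, then $\Omega$ degenerates to the same point, which is precisely the excluded exceptional case in the statement. Together with the previous paragraph, this covers all parameter choices allowed in Definition \ref{def-p dilation}.

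Since the entire argument is a direct reduction, there is no genuine obstacle; the only point requiring a moment of care is verifying that Case (1) of Proposition \ref{prop-p-sum} still applies when $K$ itself is degenerate, but the proof of that case uses only $a>1$ and $b=0$ (via inequality \eqref{p>0, T_3}) and does not invoke any non-degeneracy of $K$, so the reduction goes through verbatim.
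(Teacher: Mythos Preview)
Your proposal is correct and follows essentially the same approach as the paper, which simply states that the proposition follows directly from Case (1) of Proposition \ref{prop-p-sum} (together with Proposition \ref{prop-Aij >=0} for well-definedness). Your treatment is more detailed---you explicitly separate the trivial case $a=1$ and spell out the role of Corollaries \ref{cor-support-construct-domain} and \ref{cor-Aij>0 Omega=Wulff shape}---but the underlying argument is identical.
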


Now we give a geometric description of the hyperbolic $p$-dilation. 

\begin{prop}\label{prop-geo-hyper-p-dilation}
	Definition \ref{def-p dilation} is compatible with Definition \ref{def-p sum-p>0-new}. Let $n \geq 1$ be an integer and $p>0$ be a real number. Let $K$ be a set in $\mathbb{H}^{n+1}$ and $\Omega := a \cdot_p K$. Then $\Omega$ is the outer parallel set of $K$ with distance $\frac{1}{p} \log a$, i.e.
	\begin{equation*}
	\Omega = \{ X \in \mathbb{H}^{n+1}:  d_{\mathbb{H}^{n+1}} \( X, K\) \leq \frac{1}{p} \log a   \}.
	\end{equation*}
\end{prop}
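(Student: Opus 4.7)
The plan is to prove that, with $r := \frac{1}{p}\log a \geq 0$, the hyperbolic $p$-dilation $a \cdot_p K$ coincides with the outer parallel set $K_r := \{X \in \mathbb{H}^{n+1} : d_{\mathbb{H}^{n+1}}(X,K) \leq r\}$ under whichever of Definition \ref{def-p dilation} or Definition \ref{def-p sum-p>0-new} applies; the compatibility claim between the two definitions is then automatic.

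First, I would handle the pointwise side by specializing Definition \ref{def-p sum-p>0-new} to $b = 0$ (with any auxiliary set $L$). Formulas \eqref{R-p-t-a-X-b-Y}--\eqref{P-a-X-b-Y} collapse to $R(p,t;a,X,0,Y) = (1-t)^{1/q} a^{1/p}$ (respectively $R(a,X,0,Y) = a$ when $p = 1$), together with $\cosh d_{\mathbb{H}^{n+1}}(P,X) = 1$, so $P = X$. Hence $B(p,t;a,X,0,Y)$ is a geodesic ball centered at $X$ whose radius is maximized at $t = 0$. For $p \geq 1$ the pointwise $p$-sum takes the union over $t$ and yields $a \cdot K +_p 0 \cdot L = \bigcup_{X \in K}\overline{B}(X,r) = K_r$. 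For $0 < p < 1$ I would use the identity $a \cdot_p K = a^{1/p} \cdot_1 K$, which is immediate from Definition \ref{def-p dilation} on h-convex inputs and serves as the natural extension of dilation to general sets.

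Next I would show that for h-convex $K$ the Wulff shape $\bigcap_{z\in\mathbb{S}^n}\overline{B}_z(u_K(z)+r)$ prescribed by Definition \ref{def-p dilation} equals $K_r$. The forward inclusion rests on the elementary computation that the outer parallel set of the horoball $\overline{B}_z(s)$ at distance $r$ is $\overline{B}_z(s+r)$: for $Y \in H_z(s)$ with outward unit normal $\nu = Y - e^{-s}(z,1)$, the perpendicular geodesic $\cosh\rho \cdot Y + \sinh\rho \cdot \nu = e^\rho Y - \sinh\rho \cdot e^{-s}(z,1)$ satisfies $-\metric{\cdot}{(z,1)} = e^{\rho+s}$, placing it on $H_z(s+\rho)$. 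Since $K \subset \overline{B}_z(u_K(z))$ for every $z$, any $X$ with $d_{\mathbb{H}^{n+1}}(X,K) \leq r$ lies within distance $r$ of each $\overline{B}_z(u_K(z))$ and hence belongs to $\overline{B}_z(u_K(z)+r)$.

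The reverse inclusion is the main obstacle. I would treat smooth uniformly h-convex $K$ first: given $X$ with $\rho := d_{\mathbb{H}^{n+1}}(X,K) > r$, let $Y \in \partial K$ realize the distance and let $z_0 \in \mathbb{S}^n$ be the image of $Y$ under the horospherical Gauss map, so that $H_{z_0}(u_K(z_0))$ is the supporting horosphere at $Y$. Uniform h-convexity forces the outward unit normal to $\partial K$ at $Y$ to coincide with the outward horospherical normal $\nu = Y - e^{-u_K(z_0)}(z_0,1)$, and since the minimizing geodesic from $Y$ to $X$ is perpendicular to $\partial K$ one has $X = \cosh\rho \cdot Y + \sinh\rho \cdot \nu$. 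The pairing computation of the previous paragraph then yields $-\metric{X}{(z_0,1)} = e^{\rho + u_K(z_0)} > e^{r + u_K(z_0)}$, so $X \notin \overline{B}_{z_0}(u_K(z_0)+r)$, as required. For a general (possibly non-smooth) h-convex $K$ I would approximate by the smooth uniformly h-convex domains with horospherical support function $\frac{1}{p}\log(\g_K + \frac{1}{k})$ as in the proof of Proposition \ref{prop-Aij >=0} and pass to the limit, using that the outer parallel set operation is monotone and continuous under Hausdorff limits of nested h-convex sets.
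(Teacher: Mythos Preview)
Your proposal is essentially correct and the pointwise computation for $b=0$ matches the paper's, but the overall strategy diverges from what the paper does. After computing $P(p,t;a,X,0,\cdot)=X$ and $R(p,t;a,X,0,\cdot)=(1-t)^{1/q}a^{1/p}$, the paper observes directly that both the union (for $p>1$) and the intersection (for $0<p<1$) over $t\in[0,1]$ of the balls $B(p,t;a,X,0,\cdot)$ equal the single geodesic ball $B(X,\tfrac{1}{p}\log a)$; this gives the outer parallel description \emph{for the pointwise definition on arbitrary sets} in one stroke. For the compatibility with Definition~\ref{def-p dilation} on smooth uniformly h-convex $K$, the paper then simply invokes Theorem~\ref{thm-new-old-sum-compatible}: since $a\cdot_p K=a^{1/p}\cdot_1 K$ at the level of horospherical support functions, and Theorem~\ref{thm-new-old-sum-compatible} already identifies this with the pointwise $a^{1/p}\,\widetilde{\cdot}_1\,K$, the Wulff shape equals the outer parallel set without any further geometric argument.

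Your route is different and more self-contained: you prove directly that $\bigcap_z\overline{B}_z(u_K(z)+r)=K_r$ via the fact that horoballs have horoballs as outer parallel sets and a nearest-point argument on $\partial K$. This works and has the merit of not relying on the rather involved Theorem~\ref{thm-new-old-sum-compatible}, but it is longer and the approximation step for non-smooth h-convex $K$ is unnecessary given the scope of Definition~\ref{def-p dilation}. One small gap: for $0<p<1$ you do not actually verify the pointwise description for general sets; you fall back on the support-function identity from Definition~\ref{def-p dilation}, which only applies to h-convex inputs. The fix is immediate (since $1/q<0$, the radius $(1-t)^{1/q}a^{1/p}$ is increasing in $t$, so the intersection over $t$ is the ball at $t=0$), and the paper states this explicitly.
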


\begin{proof}
	In this proof, we use $a \ \widetilde{\cdot}_p \  K$ to denote the dilation we defined in Definition \ref{def-p sum-p>0-new}. At first, we prove that $a  \ \widetilde{\cdot}_p \  K = a^{\frac{1}{p}}  \ \widetilde{\cdot}_1 \  K$.  Using \eqref{R-p-t-a-X-b-Y} and \eqref{P-p-t-a-X-b-Y}, we have $P(p,t; a,X,0,\cdot) = X$ and $R(p,t; a,X,0,\cdot) = (1-t)^\frac{1}{q} a^{\frac{1}{p}}$, where $q$ satisfies $\frac{1}{p} + \frac{1}{q} =1$. According to Definition \ref{def-B(p,t,a,X,b,Y)}, it is easy to show that
	\begin{align*}
		\bigcup_{t \in [0,1]} B(p,t; a, X, 0, \cdot)=& B( a^{\frac{1}{p}},X,0,\cdot), \quad p>1,\\
		\bigcap_{t \in [0,1]} B(p,t; a, X, 0, \cdot)=&  B( a^{\frac{1}{p}},X,0,\cdot), \quad 0<p<1,
	\end{align*}
	and $B( a^{\frac{1}{p}},X,0,\cdot)= B(X, \frac{1}{p} \log a)$ is the geodesic ball of radius $\frac{1}{p} \log a$ centered at $X$. Using Definition \ref{def-p sum-p>0-new}, this implies 
	\begin{align}
		a  \ \widetilde{\cdot}_p \  K =& a^{\frac{1}{p}}  \ \widetilde{\cdot}_1 \  K, \label{poinw-dila-rel}\\
		a  \ \widetilde{\cdot}_p \  K =& \{ X \in \mathbb{H}^{n+1}: d_{\mathbb{H}^{n+1}} \( X, K\) \leq \frac{1}{p} \log a   \}. \label{pw-dila-geom}
	\end{align}
	
	Now we assume that $K$ is a smooth uniformly h-convex bounded domain.
	By Definition \ref{def-p dilation}, it is easy to see that $\Omega=a \cdot_p K = a^{\frac{1}{p}} \cdot_1 K$. By Theorem \ref{thm-new-old-sum-compatible} and \eqref{poinw-dila-rel}, we know $\Omega =a \cdot_p K= a^\frac{1}{p} \ \widetilde{\cdot}_1 \ K = a \ \widetilde{\cdot}_p \ K$. Hence Definition \ref{def-p dilation} is compatible with Definition  \ref{def-p sum-p>0-new}, and $\Omega$ can be described by using \eqref{pw-dila-geom}. We complete the proof of Proposition \ref{prop-geo-hyper-p-dilation}.
\end{proof}

\subsection{Steiner formulas for modified quermassintegrals in $\mathbb{H}^{n+1}$}$ \ $

For a fixed smooth h-convex bounded domain $\Omega=\Omega_0$ with horospherical support function $u(z)$, we let $\Omega_t = e^t \cdot_1 \Omega$. Then  $\Omega_t$ is the h-convex domain with horospherical support function $u_t(z) = u(z)+t$. Define $\g_t(z) =e^{u_t(z)}$, then $\g_t(z) =e^t \g(z)$. The h-convexity of $\Omega_t$ follows from Proposition \ref{prop-regu-p-dilation}. According to Proposition \ref{prop-geo-hyper-p-dilation}, $\Omega_\rho = \left\{ X \in \mathbb{H}^{n+1}:  d_{\mathbb{H}^{n+1}} (X, \Omega) \leq \rho \right\}$.

\begin{thm}\label{thm-Stein-modi-quemass}
	The Steiner formulas for modified quermassintegrals of a smooth uniformly h-convex bounded domain $\Omega \subset \mathbb{H}^{n+1}$ $(n \geq 1) $ are
	\begin{equation*}
		\widetilde{W}_k(\Omega_\rho) - \widetilde{W}_k(\Omega) = \sum_{i=k}^n C_{n-k}^{i-k} \left( \int_{\partial \Omega} p_i (\tilde{\kappa}) d \mu \right)  \int_0^\rho e^{(n-k-i)t}  \sinh^{-k+i} t dt, \quad k=0,1,\ldots, n.
	\end{equation*}
\end{thm}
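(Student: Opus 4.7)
The plan is to differentiate $\widetilde{W}_k(\Omega_t)$ in $t$ and then integrate from $0$ to $\rho$. Since $u_t(z) = u(z) + t$, the hypersurface $\partial\Omega_t$ moves in the outer normal direction at unit speed: indeed $\partial_t u_t = 1$, and by \eqref{scalr-flow DT's trick} this scalar evolution corresponds to $\mathscr{F} = 1$ (geometrically, Proposition \ref{prop-geo-hyper-p-dilation} identifies $\Omega_t$ with the outer parallel set of $\Omega$ at distance $t$). Therefore, by Lemma \ref{lem-variation of modified quermass},
\begin{equation*}
\frac{d}{dt}\widetilde{W}_k(\Omega_t) = \int_{\partial\Omega_t} p_k(\tilde{\kappa}_t)\, d\mu_t.
\end{equation*}

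Next I would parameterize $\partial\Omega_t$ by $\partial\Omega$ via the normal exponential map $F(X,t) = \exp_X(t\nu(X))$. A standard Jacobi-field computation in $\mathbb{H}^{n+1}$ (where transverse Jacobi fields satisfy $J'' - J = 0$) shows that if $e_i$ is a principal direction with principal curvature $\kappa_i$, then $dF(e_i) = (\cosh t + \kappa_i \sinh t) P_i(t)$ along its parallel transport, while the Weingarten operator satisfies the Riccati equation $\dot W = I - W^2$, giving
\begin{equation*}
\kappa_i(t) = \frac{\sinh t + \kappa_i \cosh t}{\cosh t + \kappa_i \sinh t}, \qquad \tilde{\kappa}_i(t) = \frac{e^{-t}\tilde{\kappa}_i}{e^t + \tilde{\kappa}_i \sinh t},
\end{equation*}
and $d\mu_t = \prod_{i=1}^n (e^t + \tilde{\kappa}_i \sinh t)\, d\mu$. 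Substituting these into $p_k(\tilde{\kappa}_t)\, d\mu_t$ produces a symmetric-function expression where the denominators cancel, leaving
\begin{equation*}
p_k(\tilde{\kappa}_t)\, d\mu_t = \frac{e^{-kt}}{C_n^k} \sum_{|I|=k}\prod_{i\in I} \tilde{\kappa}_i \cdot \prod_{j\notin I}(e^t + \tilde{\kappa}_j \sinh t)\, d\mu.
\end{equation*}

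Expanding the product over $j\notin I$ and regrouping by the set $J = I \sqcup S$ (with $|J| = m$) absorbs the subset count into a single binomial factor $C_m^k C_n^m / C_n^k = C_{n-k}^{m-k}$, yielding the identity
\begin{equation*}
\int_{\partial\Omega_t} p_k(\tilde{\kappa}_t)\, d\mu_t = \sum_{m=k}^n C_{n-k}^{m-k}\, e^{(n-m-k)t} \sinh^{m-k} t \int_{\partial\Omega} p_m(\tilde{\kappa})\, d\mu.
\end{equation*}
Integrating in $t$ from $0$ to $\rho$ and relabeling $m = i$ gives the claimed Steiner formula. The main technical step is the combinatorial regrouping in the expansion above; the rest is a direct application of the variational formula and the classical parallel-surface ODEs for $\mathbb{H}^{n+1}$.
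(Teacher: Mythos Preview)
Your proposal is correct. The approach differs from the paper's proof of this theorem, though it coincides with the method the paper uses later in Proposition \ref{prop-new-pf-weighted Steiner formula} for the weighted Steiner formula.

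The paper proves Theorem \ref{thm-Stein-modi-quemass} by pulling everything back to $\mathbb{S}^n$ via the horospherical Gauss map: it rewrites $\int_{\partial\Omega_t} p_k(\tilde\kappa)\,d\mu_t$ as $\int_{\mathbb{S}^n}\g_t^{-k}p_{n-k}(A[\g_t])\,d\sigma$, uses the identity $A_{ij}[\g_t]=e^t A_{ij}[\g]+\g^{-1}\sinh t\,\delta_{ij}$ to expand $\sigma_{n-k}(A[\g_t])$ via the mixed-symmetric-function formula, and then converts back to $\partial\Omega$. Your route bypasses the horospherical support function entirely and works directly with the normal exponential map and the Riccati/Jacobi equations in $\mathbb{H}^{n+1}$, obtaining $\tilde\kappa_i(t)=e^{-t}\tilde\kappa_i/(e^t+\tilde\kappa_i\sinh t)$ and $d\mu_t=\prod_i(e^t+\tilde\kappa_i\sinh t)\,d\mu$, after which the combinatorial regrouping $C_m^kC_n^m/C_n^k=C_{n-k}^{m-k}$ is exactly what is needed. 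Your approach has the practical advantage that it does not use the horospherical Gauss map as a diffeomorphism, so the argument goes through for merely convex $\Omega$ (the paper makes the analogous observation for the weighted formula in Remark \ref{rem-WSF-convex}). The paper's approach, on the other hand, makes the role of the $A[\g]$-tensor transparent and fits naturally with the rest of the horospherical machinery developed there.
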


\begin{proof}
	By using \eqref{rel-area element}, \eqref{shifted curvature-support function} and Lemma \ref{lem-variation of modified quermass}, we have
	\begin{align*}
		\frac{d}{dt} \widetilde{W}_k(\Omega_t) =& \int_{\partial \Omega_t} p_k(\tilde{\kappa}) d \mu_t
		=\int_{\mathbb{S}^n} p_k (\tilde{\kappa}) \det(A[\g_t]) d \sigma\\
		=&\int_{\mathbb{S}^n} \g_t^{-n} p_{n-k}(\tilde{\lambda}) d\sigma
		=\int_{\mathbb{S}^n} \g_{t}^{-k} p_{n-k}(A[\g_t]) d\sigma.
	\end{align*}
	It follows that
	\begin{equation}\label{dt Wk Omega}
		C_n^k\frac{d}{dt} \widetilde{W}_k(\Omega_t) = \int_{\mathbb{S}^n} \g_t^{-k}\sigma_{n-k} (A[\g_t]) d\sigma
		=\int_{\mathbb{S}^n} e^{-kt} \g^{-k} \sigma_{n-k} (A[\g_t]) d\sigma.
	\end{equation}
	By \eqref{def A-phi}, we have
	\begin{align}
		\s_{n-k}(A [\g_t(z)]) =& \s_{n-k}\left((\g_{t})_{ij}  -\frac{1}{2}\frac{|D \g_t|^2}{\g_t} \delta_{ij} +\frac{1}{2} \( \g_t -\frac{1}{\g_t} \) \delta_{ij} \right) \nonumber\\
		=&\s_{n-k} \( e^t A_{ij} [\g] +\sinh t \frac{1}{\g} \delta_{ij} \) \nonumber\\
		=&\sum_{l=0}^{n-k} C_{n-l}^{n-k-l} \g^{k+l-n} \s_l(A[\g]) e^{lt}\sinh^{n-k-l} t.\label{sigma-k-phi-t}
	\end{align}
	Inserting \eqref{sigma-k-phi-t} into \eqref{dt Wk Omega} and using \eqref{rel-area element}, \eqref{shifted curvature-support function}, we have
	\begin{align*}
		C_n^k\frac{d}{dt} \widetilde{W}_k(\Omega_t)
		=& \sum_{l=0}^{n-k} C_{n-l}^{n-k-l} \int_{\mathbb{S}^n} \g^{l-n}\s_l (A[\g]) e^{(l-k) t} 	\sinh^{n-k-l} t d \sigma\\
		=& \sum_{l=0}^{n-k} \left( \int_{\partial \Omega} \s_{n-l} (\tilde{\kappa}) d \mu \right) C_{n-l}^{n-k-l} e^{(l-k)t} \sinh^{n-k-l} t\\
		=&\sum_{i=k}^{n}  \left( \int_{\partial \Omega} \s_i (\tilde{\kappa}) d \mu \right)  C_{i}^{i-k} e^{(n-i-k) t} \sinh^{i-k} t.
	\end{align*}
	Integrating both sides of the above formula from $0$ to $\rho$, we have
	\begin{equation*}
		\widetilde{W}_k(\Omega_\rho) - \widetilde{W}_k(\Omega) = \sum_{i=k}^n C_{n-k}^{i-k} \left( \int_{\partial \Omega} p_i (\tilde{\kappa}) d \mu \right)  \int_0^\rho e^{(n-k-i)t} \sinh^{i-k} t dt.
	\end{equation*}
	This completes the proof of Theorem \ref{thm-Stein-modi-quemass}.
\end{proof}

For the sake of completeness, let us check that the  $k=0$ case of Theorem \ref{thm-Stein-modi-quemass} corresponds to the following well-known Steiner formula, see \cite[\uppercase\expandafter{\romannumeral4} 18.4]{San04} and \cite{WX14}.

\begin{cor}\label{cor-Steiner-k=0}
	Let $\Omega \subset \mathbb{H}^{n+1}$ $(n \geq 1)$ be a smooth uniformly h-convex bounded domain. Then 
	\begin{equation*}
		\Vol(\Omega_\rho) - \Vol(\Omega) = \sum_{i=0}^n  \left( \int_{\partial \Omega} \s_i ({\kappa}) d \mu \right)
		\int_0^\rho \cosh^{n-i} t \sinh^{i} t dt.
	\end{equation*}
\end{cor}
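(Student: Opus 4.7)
The plan is to derive the identity as an elementary algebraic consequence of Theorem \ref{thm-Stein-modi-quemass} with $k=0$. Setting $k=0$ in that theorem and using $\widetilde{W}_0(\Omega)=\Vol(\Omega)$ together with $C_n^i p_i(\tilde{\kappa})=\s_i(\tilde{\kappa})$, I get immediately
\begin{equation*}
\Vol(\Omega_\rho)-\Vol(\Omega)=\sum_{i=0}^{n}\Bigl(\int_{\partial\Omega}\s_i(\tilde{\kappa})\,d\mu\Bigr)\int_0^\rho e^{(n-i)t}\sinh^{i}t\,dt.
\end{equation*}
So the only remaining task is to rewrite this sum in terms of $\s_i(\kappa)$ and $\cosh^{n-i}t\sinh^i t$.

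The key observation I would use is the pointwise polynomial identity
\begin{equation*}
\prod_{i=1}^{n}(\cosh t+\kappa_i\sinh t)=\prod_{i=1}^{n}(e^t+\tilde{\kappa}_i\sinh t),
\end{equation*}
which follows from $\tilde{\kappa}_i=\kappa_i-1$ and $\cosh t+\sinh t=e^t$. Expanding both sides using the definition of the elementary symmetric functions yields
\begin{equation*}
\sum_{i=0}^{n}\s_i(\kappa)\cosh^{n-i}t\sinh^{i}t=\sum_{i=0}^{n}\s_i(\tilde{\kappa})e^{(n-i)t}\sinh^{i}t.
\end{equation*}
Integrating this identity against $d\mu$ over $\partial\Omega$ and then against $dt$ over $[0,\rho]$, and substituting back, the right-hand side of the Steiner formula becomes exactly $\sum_{i=0}^n\bigl(\int_{\partial\Omega}\s_i(\kappa)\,d\mu\bigr)\int_0^\rho\cosh^{n-i}t\sinh^{i}t\,dt$, which is the desired expression.

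There is no real obstacle here; the proof is a two-line manipulation once Theorem \ref{thm-Stein-modi-quemass} is in hand. The only point worth checking carefully is the algebraic identity between the two generating polynomials, which is a direct consequence of the shift $\kappa\mapsto\tilde{\kappa}=\kappa-1$ combined with $\cosh t+\sinh t=e^t$.
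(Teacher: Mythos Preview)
Your proof is correct, and in fact cleaner than the paper's. Both arguments begin identically by specializing Theorem \ref{thm-Stein-modi-quemass} to $k=0$, but they diverge in how they pass from $\s_i(\tilde{\kappa})$ with the weight $e^{(n-i)t}\sinh^i t$ to $\s_i(\kappa)$ with the weight $\cosh^{n-i}t\sinh^i t$. The paper works coefficient-by-coefficient: it writes out the linear relation $\s_i(\tilde{\kappa})=\sum_{s=0}^i(-1)^{i-s}C_{n-s}^{i-s}\s_s(\kappa)$, substitutes, swaps the order of summation, and then recognizes the inner sum as the binomial expansion of $\cosh^{n-s}t=(e^t-\sinh t)^{n-s}$. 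You instead observe directly that the two expressions are the expansions of the \emph{same} polynomial $\prod_{i=1}^n(\cosh t+\kappa_i\sinh t)=\prod_{i=1}^n(e^t+\tilde\kappa_i\sinh t)$, which collapses the whole combinatorial computation into a one-line generating-function identity. Your route is shorter and more transparent; the paper's route has the minor advantage of making the explicit change-of-basis coefficients visible, but for this corollary that is not needed.
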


\begin{proof}
	A direct calculation yields
	\begin{equation}\label{sigma-shifted-no shifted}
		\sigma_i(\tilde{\kappa}) = \sum_{s=0}^i (-1)^{i-s} C_n^i C_i^s p_s(\kappa)
		=\sum_{s=0}^i (-1)^{i-s}C_{n-s}^{i-s} \sigma_s (\kappa), \quad i=0,\ldots,n.
	\end{equation}
	Taking $k=0$ in Theorem \ref{thm-Stein-modi-quemass} and using \eqref{sigma-shifted-no shifted}, we have
	\begin{align}
		\Vol(\Omega_\rho) - \Vol(\Omega)
		=&\sum_{i=0}^n \( \int_{\partial \Omega} \sigma_i (\tilde{\kappa}) d\mu \)
		\int_{0}^\rho e^{(n-i) t} \sinh^i t dt  \nonumber\\
		=&\sum_{i=0}^n \( \sum_{s=0}^i (-1)^{i-s} C_{n-s}^{i-s} \int_{\partial \Omega} \sigma_s(\kappa)  d\mu \)
		\int_0^\rho e^{(n-i) t} \sinh^i t dt \nonumber\\
		=& \sum_{s=0}^n \sum_{i=s}^n \( (-1)^{i-s}C_{n-s}^{i-s} \int_0^\rho e^{(n-i) t} \sinh^i t dt \)\int_{\partial \Omega} \sigma_s(\kappa) d\mu \nonumber\\
		=& \sum_{s=0}^n \( \sum_{j=0}^{n-s} (-1)^{n-j-s} C_{n-s}^j \int_0^\rho e^{jt} \sinh^{n-j} t dt \) \int_{\partial \Omega} \sigma_s(\kappa) d\mu. \label{Steiner-k=0-1}
	\end{align}
	A direct calculation shows 
	\begin{equation}\label{cosh n-s t}
		\cosh^{n-s} t = (e^t - \sinh t)^{n-s} = \sum_{j=0}^{n-s} C_{n-s}^j e^{jt} (-1)^{n-s-j} \sinh^{n-s-j} t. 
	\end{equation}
	Inserting \eqref{cosh n-s t} into the right-hand side of \eqref{Steiner-k=0-1}, we obtain
	\begin{equation*}
		\Vol(\Omega_\rho) - \Vol(\Omega)
		=\sum_{s=0}^n \( \int_0^\rho \cosh^{n-s} t \sinh^s t dt \)\int_{\partial \Omega} \sigma_s(\kappa) d\mu.
	\end{equation*}
	We complete the proof of Corollary \ref{cor-Steiner-k=0}.
\end{proof}

\subsection{Steiner formula for weighted volume in $\mathbb{H}^{n+1}$}$ \ $

We define the \emph{weighted volume} of a domain $\Omega \subset \mathbb{H}^{n+1}$ by $\Vol_w(\Omega) = \int_\Omega \cosh r dv$. If the domain $\Omega$ is smooth, then by the divergence theorem and \eqref{div-conf-vf}, we have 
\begin{equation}\label{weighted volume}
	\int_{\partial \Omega} \tilde{u} d\mu := \int_{\partial \Omega} \metric{V}{\nu} d\mu = \int_{\Omega} \divv V dv = (n+1) \int_\Omega \cosh r dv= (n+1) \Vol_w (\Omega).
\end{equation}

\begin{thm}\label{thm-weighted Steiner formula}
	The weighted Steiner formula of a smooth uniformly h-convex bounded domain $\Omega \subset \mathbb{H}^{n+1}$ $(n\geq 1)$ is
	\begin{equation}\label{weighted Steiner formula}
		\begin{aligned}
		\Vol_w(\Omega_\rho) - \Vol_w(\Omega)
		=& \sum_{k=0}^{n} \left( \int_{\partial \Omega} \cosh r \sigma_k (\tilde{\kappa}) d \mu \right) \int_0^\rho  e^{(n-k+1)t} \sinh^{k}t dt\\
		&-\sum_{k=0}^{n} \left( \int_{\partial \Omega} (\cosh r - \tilde{u}) \sigma_k (\tilde{\kappa}) d \mu \right) \int_0^\rho  e^{(n-k)t} \sinh^{k+1}t dt.
		\end{aligned}
	\end{equation}
	Equivalently, it holds that
	\begin{equation*}
		\Vol_w(\Omega_\rho) = \Vol_w(\Omega) e^{(n+1)\rho} 
		+\sum_{k=0}^n \frac{1}{k+1} \left(\int_{\partial \Omega} (\cosh r -\tilde{u}) \sigma_k (\tilde{\kappa}) d\mu \right) e^{(n-k)\rho} \sinh^{k+1} \rho.
	\end{equation*}
\end{thm}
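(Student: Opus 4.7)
The plan is to differentiate $\Vol_w(\Omega_t)$ along the outer parallel flow $\Omega_t := e^t \cdot_1 \Omega$ and then integrate in $t$. By Definition \ref{def-p dilation} and Proposition \ref{prop-geo-hyper-p-dilation}, $\Omega_t$ is precisely the outer parallel set of $\Omega$ at distance $t$, and its horospherical support function evolves as $u_t(z) = u(z) + t$, so $\g_t(z) = e^t \g(z)$. By \eqref{scalr-flow DT's trick}, the normal speed of this flow is constantly $\mathscr{F} = \partial_t u_t \equiv 1$. Consequently
\begin{equation*}
	\frac{d}{dt} \Vol_w(\Omega_t) = \int_{\partial \Omega_t} \cosh r_t \, d\mu_t,
\end{equation*}
and it remains to rewrite the integrand as a function on $\partial \Omega$ via the horospherical Gauss map.

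The key computation is to expand $\cosh r_t$ and $d\mu_t$ in powers of $e^t$ and $\sinh t$. Applying \eqref{coshr} to $\g_t = e^t \g$ and invoking \eqref{1/phi, coshr-u} gives, after a direct calculation,
\begin{equation*}
	\cosh r_t = e^t \cosh r - \sinh t \,(\cosh r - \tilde u),
\end{equation*}
with $\cosh r$ and $\tilde u$ evaluated at $X(z) \in \partial \Omega$. For the area element, the same matrix identity $A[\g_t] = e^t A[\g] + (\sinh t/\g)\, I$ that underlies \eqref{sigma-k-phi-t} together with \eqref{rel-area element} and \eqref{shifted curvature-support function} (which turn $\g^{l-n}\s_l(A[\g])\,d\sigma$ into $\s_{n-l}(\tilde\kappa)\,d\mu$, just as in the proof of Theorem \ref{thm-Stein-modi-quemass}) produces
\begin{equation*}
	d\mu_t = \sum_{k=0}^{n} \s_k(\tilde\kappa)\, e^{(n-k)t} \sinh^k t \, d\mu.
\end{equation*}
Multiplying the two expressions, integrating over $\partial \Omega$, and then integrating in $t$ from $0$ to $\rho$ produces the first form \eqref{weighted Steiner formula} immediately.

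For the equivalent form I would switch to an ODE viewpoint. Set $g(t) = \Vol_w(\Omega_t)$ and $c_k = \int_{\partial\Omega}(\cosh r - \tilde u)\s_k(\tilde\kappa)\,d\mu$. The divergence identity \eqref{weighted volume} applied to $\Omega_t$ gives $\int_{\partial \Omega_t} \tilde u_t\, d\mu_t = (n+1)\,g(t)$, hence
\begin{equation*}
	g'(t) - (n+1)\, g(t) = \int_{\partial \Omega_t}(\cosh r_t - \tilde u_t)\, d\mu_t = \int_{\partial \Omega_t} e^{-u_t}\, d\mu_t = e^{-t}\int_{\partial \Omega_t} e^{-u}\, d\mu_t,
\end{equation*}
and substituting the formula for $d\mu_t$ and using $e^{-u} = \cosh r - \tilde u$ turns this into
\begin{equation*}
	g'(t) - (n+1)\, g(t) = \sum_{k=0}^{n} c_k\, e^{(n-k-1)t} \sinh^k t.
\end{equation*}
Multiplying by the integrating factor $e^{-(n+1)t}$ and integrating from $0$ to $\rho$ reduces the problem to the elementary identity
\begin{equation*}
	\int_0^\rho e^{-(k+2)t} \sinh^k t\, dt = \frac{1}{k+1}\, e^{-(k+1)\rho} \sinh^{k+1}\rho,
\end{equation*}
which is verified in one line by differentiating the right-hand side (using $\cosh\rho - \sinh\rho = e^{-\rho}$). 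This yields the equivalent expression.

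The argument is essentially computational rather than conceptual, so there is no substantial obstacle; the only step that requires care is the clean identification of $\cosh r_t\, d\mu_t$ as a polynomial in $e^t$ and $\sinh t$ with coefficients expressible as integrals of $\cosh r\,\s_k(\tilde\kappa)$ and $(\cosh r - \tilde u)\s_k(\tilde\kappa)$ over $\partial \Omega$. Once \eqref{coshr}, \eqref{1/phi, coshr-u}, \eqref{rel-area element}, \eqref{shifted curvature-support function}, and the expansion used in \eqref{sigma-k-phi-t} are combined, both asserted forms drop out.
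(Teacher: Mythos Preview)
Your derivation of the first form \eqref{weighted Steiner formula} is essentially the paper's own proof: the paper computes $\frac{d}{dt}\Vol_w(\Omega_t)=\int_{\mathbb{S}^n}\cosh r_t\det(A[\g_t])\,d\sigma$, expands $\cosh r_t=e^t\cosh r-\sinh t\cdot\g^{-1}$ and $\det(A[\g_t])$ via $A[\g_t]=e^tA[\g]+(\sinh t/\g)I$, and then converts $\g^{l-n}\sigma_l(A[\g])\,d\sigma$ into $\sigma_{n-l}(\tilde\kappa)\,d\mu$ exactly as you indicate. Your phrasing in terms of $d\mu_t$ on $\partial\Omega$ is just the same computation packaged differently.

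Your route to the equivalent closed form, however, is genuinely different from the paper's. The paper splits $\cosh r=\g^{-1}+\tilde u$, applies the integration-by-parts identity
\[
\int_0^\rho e^{(n-k)t}\sinh^k t\cosh t\,dt=\frac{1}{k+1}e^{(n-k)\rho}\sinh^{k+1}\rho-\frac{n-k}{k+1}\int_0^\rho e^{(n-k)t}\sinh^{k+1}t\,dt,
\]
and then invokes the shifted Minkowski formula \eqref{eq-shifted Minkowski formula} to match the resulting $\tilde u\,\sigma_k(\tilde\kappa)$ and $\g^{-1}\sigma_{k-1}(\tilde\kappa)$ integrals, so that all the indefinite $t$-integrals cancel telescopically. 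Your approach bypasses this entirely: observing from \eqref{weighted volume} that $g'(t)-(n+1)g(t)=\int_{\partial\Omega_t}(\cosh r_t-\tilde u_t)\,d\mu_t$, and from \eqref{1/phi, coshr-u} that $\cosh r_t-\tilde u_t=e^{-t}\g^{-1}$, you get a first-order linear ODE whose inhomogeneity is already a finite sum of $c_k\,e^{(n-k-1)t}\sinh^k t$. The integrating factor reduces everything to the single primitive $\int_0^\rho e^{-(k+2)t}\sinh^k t\,dt=\frac{1}{k+1}e^{-(k+1)\rho}\sinh^{k+1}\rho$. This is cleaner and avoids the shifted Minkowski formula altogether; the paper's route, on the other hand, makes explicit where the extra structure (Lemma \ref{lem-shifted Minkowski formula}) is hiding. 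Both are correct.
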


\begin{proof}
	Denote by $r_t$ the radial function of $\Omega_t$ defined on $\partial \Omega_t$. By the co-area formula and \eqref{rel-area element}, we have
	\begin{equation}\label{weighted Steiner-co-area}
		\frac{d}{dt} \Vol_w(\Omega_t) = \int_{\partial \Omega_t} \cosh r_t d\mu_t
		=\int_{\mathbb{S}^n} \cosh r_t \det (A[\g_t]) d\sigma.
	\end{equation}
	By $\g_t(z) = e^t \g(z)$ and \eqref{coshr}, we obtain 
	\begin{align}
		\cosh r_t =& \frac{1}{2} \frac{|D\g_t|^2}{\g_t} +\frac{1}{2} \( \g_t +\frac{1}{\g_t} \) \nonumber\\
		=& e^t \left( \frac{1}{2} \frac{|D \g|^2}{\g}+\frac{1}{2} \(\g +\frac{1}{\g} \) \right)
		-e^t \frac{1}{2\g} +e^{-t}\frac{1}{2\g} \nonumber\\
		=&e^t \cosh r -\sinh t \frac{1}{\g}.\label{cosh-r-t}
	\end{align}
	Inserting \eqref{cosh-r-t} and \eqref{sigma-k-phi-t} (take $k=0$ in \eqref{sigma-k-phi-t}) into \eqref{weighted Steiner-co-area}, we have
	\begin{align}
		\frac{d}{dt} \Vol_w(\Omega_t) =& \int_{\mathbb{S}^n} \(e^t \cosh r -\sinh t \frac{1}{\g} \) \(\sum_{l=0}^{n} \frac{1}{\g^{n-l}} \sigma_l (A[\g] ) e^{lt} \sinh ^{n-l} t \) d \sigma \nonumber\\
		=& \sum_{l=0}^{n}  \(\int_{\mathbb{S}^n} \cosh r\frac{1}{\g^{n-l}} \sigma_l (A[\g]) d \sigma \) e^{(l+1)t} \sinh^{n-l}t \nonumber\\
		&- \sum_{l=0}^{n} \(\int_{\mathbb{S}^n} \frac{1}{\g^{n-l+1}} \sigma_l (A[\g]) d \sigma\) e^{lt} \sinh^{n-l+1}t.\label{derivative of wei vol}
	\end{align}
	Integrating the both sides of \eqref{derivative of wei vol} from $0$ to $\rho$ yields
	\begin{align}
		\Vol_w(\Omega_\rho) - \Vol_w(\Omega_0)
		=& \sum_{l=0}^{n} \left( \int_{\mathbb{S}^n} \cosh r\frac{1}{\g^{n-l}} \sigma_l (A[\g]) d \sigma \right) \int_0^\rho  e^{(l+1)t} \sinh^{n-l}t dt \nonumber\\
		&-\sum_{l=0}^{n} \left( \int_{\mathbb{S}^n} \frac{1}{\g^{n-l+1}} \sigma_l (A[\g]) d \sigma \right) \int_0^\rho  e^{lt} \sinh^{n-l+1}t dt. \label{weighted Steiner-mid}
	\end{align}
	Taking $l =n-k$ in \eqref{weighted Steiner-mid}, and using  \eqref{rel-area element} and \eqref{shifted curvature-support function}, we obtain
	\begin{align*}
		\Vol_w(\Omega_\rho) - \Vol_w(\Omega)
		=& \sum_{k=0}^{n} \left( \int_{\partial \Omega} \cosh r \sigma_k (\tilde{\kappa}) d \mu \right) \int_0^\rho  e^{(n-k+1)t} \sinh^{k}t dt\\
		&-\sum_{k=0}^{n} \left( \int_{\partial \Omega} \frac{1}{\g} \sigma_k (\tilde{\kappa}) d \mu \right) \int_0^\rho  e^{(n-k)t} \sinh^{k+1}t dt,
	\end{align*}
	thus we obtain \eqref{weighted Steiner formula}. Furthermore, by using \eqref{1/phi, coshr-u}, we have
	\begin{align}
		&\Vol_w(\Omega_\rho) - \Vol_w(\Omega) \nonumber\\
		=& \sum_{k=0}^n \left( \int_{\partial \Omega} \frac{1}{\g} \sigma_k (\tilde{\kappa}) d \mu \right)
 		\int_0^\rho  e^{(n-k+1)t} \sinh^{k}t dt
 		+\sum_{k=0}^n \left( \int_{\partial \Omega} \tilde{u} \sigma_k (\tilde{\kappa}) d \mu \right)
		\int_0^\rho  e^{(n-k+1)t} \sinh^{k}t dt \nonumber\\
		& -\sum_{k=0}^{n} \left( \int_{\partial \Omega} \frac{1}{\g} \sigma_k (\tilde{\kappa}) d \mu \right) \int_0^\rho  e^{(n-k)t} \sinh^{k+1}t dt \nonumber\\
		=& \sum_{k=0}^n \left( \int_{\partial \Omega} \frac{1}{\g} \sigma_k (\tilde{\kappa}) d \mu \right)
		\int_0^\rho  e^{(n-k)t} \sinh^{k}t \cosh t dt
		+\sum_{k=0}^n \left( \int_{\partial \Omega} \tilde{u} \sigma_k (\tilde{\kappa}) d \mu \right)
		\int_0^\rho  e^{(n-k+1)t} \sinh^{k}t dt.\label{new form WSF}
	\end{align}
	For $0 \leq k \leq n$, by integration by parts, we have
	\begin{equation}\label{integrat by parts}
		\int_0^\rho  e^{(n-k)t} \sinh^{k}t \cosh t dt
		= \frac{1}{k+1} e^{(n-k)\rho} \sinh^{k+1}\rho - \frac{n-k}{k+1}
		\int_0^\rho  e^{(n-k)t} \sinh^{k+1}t dt.
	\end{equation}
	By \eqref{eq-shifted Minkowski formula} and \eqref{1/phi, coshr-u}, we know
	\begin{equation}\label{use shif Min for}
		\int_{\partial \Omega} \tilde{u} \sigma_k (\tilde{\kappa}) d \mu
		= \frac{n-k+1}{k} \int_{\partial \Omega} \frac{1}{\g} \sigma_{k-1} (\tilde{\kappa}) d \mu, \quad k=1,\ldots,n.
	\end{equation}
	Inserting \eqref{integrat by parts} and \eqref{use shif Min for} into \eqref{new form WSF},  we have by \eqref{1/phi, coshr-u}
	\begin{align*}
		\Vol_w(\Omega_\rho) =&  \Vol_w(\Omega) +\sum_{k=0}^{n} \frac{1}{k+1} \left(\int_{\partial \Omega} \frac{1}{\g} \sigma_k (\tilde{\kappa}) d\mu \right) e^{(n-k)\rho} \sinh^{k+1} \rho\\
		&-\sum_{k=0}^{n} \frac{n-k}{k+1} \left(\int_{\partial \Omega} \frac{1}{\g} \sigma_k (\tilde{\kappa}) d\mu \right) \int_0^\rho e^{(n-k)t} \sinh^{k+1} t dt\\
		&+\sum_{k=0}^{n-1} \frac{n-k}{k+1} \left(\int_{\partial \Omega} \frac{1}{\g} \sigma_k (\tilde{\kappa}) d\mu \right) \int_0^\rho e^{(n-k)t} \sinh^{k+1} t dt +(n+1)\Vol_w(\Omega) \int_0^\rho e^{(n+1)t} dt\\
		=&\Vol_w(\Omega) e^{(n+1)\rho} + \sum_{k=0}^n \frac{1}{k+1} \left(\int_{\partial \Omega} (\cosh r -\tilde{u}) \sigma_k (\tilde{\kappa}) d\mu \right) e^{(n-k)\rho} \sinh^{k+1} \rho.
	\end{align*}
\end{proof}

\begin{rem}\label{rem-WSF-convex}
	In Theorem \ref{thm-weighted Steiner formula}, the uniformly h-convex assumption of $\Omega$ is not necessary, see Proposition \ref{prop-new-pf-weighted Steiner formula}.
\end{rem}

\subsection{Application}$ \ $

In \cite[Equation (25)]{Xia16}, Chao Xia obtained the following geometric inequality for h-convex bounded domains,
\begin{equation}\label{ineq-Xia16}
	\lim_{\rho \to +\infty} \frac{\Vol_w(\Omega_\rho)^{\frac{1}{n+1}}}{\sinh \rho} \leq
	\Vol_w(\Omega)^{\frac{1}{n+1}}+ \frac{1}{n+1}\Vol_w(\Omega)^{-\frac{n}{n+1}} \int_{\partial \Omega} \cosh r d\mu.
\end{equation}
Combing \eqref{ineq-Xia16} with the weighted Steiner formula \eqref{weighted Steiner formula}, we prove the following new weighted Alexandrov-Fenchel type inequality for h-convex domains. 

\begin{thm}\label{thm-new-weighted-inequality}
	Let $\Omega$ be a smooth uniformly h-convex bounded domain in $\mathbb{H}^{n+1}$ $(n \geq 1)$. Then 
	\begin{equation*}
		\Vol_w(\Omega) + \sum_{k=0}^n \frac{1}{k+1} \int_{\partial \Omega} \cosh r \sigma_k(\kappa) d\mu
		\leq
		\left( 	\Vol_w(\Omega)^{\frac{1}{n+1}} +\frac{1}{n+1} 	\Vol_w(\Omega)^{-\frac{n}{n+1}}\int_{\partial \Omega} \cosh r d\mu \right)^{n+1}.
	\end{equation*}
\end{thm}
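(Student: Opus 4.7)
The plan is to deduce the inequality from Chao Xia's inequality \eqref{ineq-Xia16} applied to the original domain $\Omega$, by computing the $\rho \to +\infty$ limit of $\Vol_w(\Omega_\rho)/\sinh^{n+1}\rho$ in an explicit closed form and matching it against the stated left-hand side. The key is to extract a weighted Steiner-type expansion for the parallel tubes $\Omega_\rho=\{X\in\mathbb{H}^{n+1}:d_{\mathbb{H}^{n+1}}(X,\Omega)\leq\rho\}$ that is naturally expressed in terms of $\sigma_k(\kappa)$ rather than $\sigma_k(\tilde\kappa)$, so that the asymptotic on the left matches the stated expression after one invocation of the Minkowski formula.

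First, I would raise \eqref{ineq-Xia16} to the $(n+1)$-th power to obtain
\begin{equation*}
\lim_{\rho \to +\infty}\frac{\Vol_w(\Omega_\rho)}{\sinh^{n+1}\rho} \leq \left(\Vol_w(\Omega)^{\frac{1}{n+1}} + \frac{1}{n+1}\Vol_w(\Omega)^{-\frac{n}{n+1}}\int_{\partial\Omega}\cosh r\,d\mu\right)^{n+1}.
\end{equation*}
To evaluate the left-hand side, I would parametrize the shell $\Omega_\rho \setminus \Omega$ by the outward normal exponential map $\Phi(p,t)=\exp_{\nu(p)}(t\nu(p))$, for which the hyperbolic Jacobian factors as
\begin{equation*}
\frac{d\mu_t}{d\mu} = \prod_{i=1}^n\bigl(\cosh t + \kappa_i(p)\sinh t\bigr) = \sum_{k=0}^n \cosh^{n-k}t\,\sinh^k t\,\sigma_k(\kappa).
\end{equation*}
Since $V=\sinh r\,\partial_r$ is conformal Killing by Lemma \ref{lem-conf-vf-direvative}, the function $\cosh r$ satisfies $\overline{\nabla}^2 \cosh r = \cosh r\,\bar g$, so along the unit-speed outward normal geodesic starting at $p$ one has the closed-form identity $\cosh r(\Phi(p,t)) = \cosh t\cosh r(p) + \sinh t\,\tilde u(p)$. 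Integrating $\cosh r(\Phi(p,t))\,d\mu_t\,dt$ over $[0,\rho]\times\partial\Omega$, and using that both $\int_0^\rho \cosh^{n-k+1}t\sinh^k t\,dt$ and $\int_0^\rho \cosh^{n-k}t\sinh^{k+1}t\,dt$ are asymptotically equivalent to $\sinh^{n+1}\rho/(n+1)$ as $\rho\to+\infty$ (which follows at once from $\cosh t,\sinh t \sim e^t/2$), I would arrive at the clean identity
\begin{equation*}
\lim_{\rho\to+\infty}\frac{\Vol_w(\Omega_\rho)}{\sinh^{n+1}\rho} = \frac{1}{n+1}\sum_{k=0}^n\int_{\partial\Omega}(\cosh r + \tilde u)\sigma_k(\kappa)\,d\mu.
\end{equation*}

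The remaining task is to match this limit with the left-hand side of the theorem, which is purely algebraic. The Minkowski formula \eqref{eq-Minkowski formula} rewritten for unnormalized elementary symmetric functions gives $\int_{\partial\Omega}\tilde u\,\sigma_{k+1}(\kappa)\,d\mu = \frac{n-k}{k+1}\int_{\partial\Omega}\cosh r\,\sigma_k(\kappa)\,d\mu$ for $0\leq k\leq n-1$, and from \eqref{weighted volume} I have $\int_{\partial\Omega}\tilde u\,d\mu = (n+1)\Vol_w(\Omega)$. Substituting and regrouping, the coefficient of $\int_{\partial\Omega}\cosh r\,\sigma_k(\kappa)\,d\mu$ becomes $\frac{1}{n+1}\bigl(1 + \frac{n-k}{k+1}\bigr) = \frac{1}{k+1}$ for every $0\leq k\leq n$ (with the convention that the Minkowski contribution is absent at $k=n$), while the $\sigma_0$ part of the $\tilde u$ sum produces the constant $\Vol_w(\Omega)$. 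This identifies the limit as precisely $\Vol_w(\Omega) + \sum_{k=0}^n \frac{1}{k+1}\int_{\partial\Omega}\cosh r\,\sigma_k(\kappa)\,d\mu$, and combining with Xia's bound completes the proof. The main technical obstacle is the uniform-in-$k$ asymptotic equivalence of the two families of integrals with $\sinh^{n+1}\rho/(n+1)$ and the careful bookkeeping of the Minkowski coefficients at the endpoint $k=n$; neither is deep, but both must be handled carefully to land on the stated coefficients.
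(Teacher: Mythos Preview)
Your proof is correct and follows the same overall architecture as the paper's: feed the limit $\lim_{\rho\to\infty}\Vol_w(\Omega_\rho)/\sinh^{n+1}\rho$ into Xia's inequality \eqref{ineq-Xia16}, evaluate that limit explicitly via a weighted Steiner expansion, and then massage the result with the Minkowski formula and \eqref{weighted volume} to recover the stated left-hand side.

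The difference lies in how the Steiner expansion is set up. The paper invokes Theorem~\ref{thm-weighted Steiner formula}, which is written in terms of the \emph{shifted} curvatures $\sigma_k(\tilde\kappa)$ (with integrands $e^{(n-k+1)t}\sinh^k t$), then uses L'Hospital to extract the asymptotic coefficients $2^{n-k+1}/(n+1)$ and $2^{n-k}/(n+1)$, and finally applies the binomial identity \eqref{sigma-shifted-no shifted} to convert the resulting sums $I_1,I_2$ back to unshifted curvatures $\sigma_k(\kappa)$. You instead derive the tube expansion \emph{directly} in unshifted curvatures via the Jacobian $\prod_i(\cosh t+\kappa_i\sinh t)$ and the closed form $\cosh r_t=\cosh t\cosh r+\sinh t\,\tilde u$ (the latter is exactly what the paper establishes in \eqref{coshr-t} in the Appendix). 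This buys you a cleaner limit, $\frac{1}{n+1}\sum_k\int(\cosh r+\tilde u)\sigma_k(\kappa)\,d\mu$, without the shifted-to-unshifted conversion step; the remaining Minkowski-formula bookkeeping is then essentially identical to the paper's computations \eqref{I1}--\eqref{I2}. Your route is marginally more elementary and, as the paper itself notes in Remark~\ref{rem-WSF-convex} and Proposition~\ref{prop-new-pf-weighted Steiner formula}, has the side benefit of not relying on the horospherical-support-function machinery, hence not on uniform h-convexity, for the Steiner part of the argument.
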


\begin{proof}
	We will prove this theorem by calculating the left-hand side of \eqref{ineq-Xia16} directly. Using the L' Hospital's rule, we have
	\begin{equation*}
		\lim_{\rho \to +\infty} \frac{\int_0^\rho e^{(n-k+1)t} \sinh^k t dt}{\sinh^{n+1} \rho}
		= \lim_{\rho \to +\infty} \frac{e^{(n-k+1)\rho} \sinh^k \rho}{(n+1) \sinh^n \rho \cosh \rho}
		=\frac{2^{n-k+1}}{n+1}.
	\end{equation*} 
	Dividing the both sides of \eqref{weighted Steiner formula} by $\sinh^{n+1} \rho$ and taking $\rho \to +\infty$, we have
	\begin{align}
		\lim_{\rho \to +\infty} \frac{\Vol_w(\Omega_\rho)}{\sinh^{n+1} \rho}
		=& \sum_{k=0}^n \left(\int_{\partial \Omega} \cosh r \sigma_k(\tilde{\kappa})d\mu \right) \frac{2^{n-k+1}}{n+1}
		-\sum_{k=0}^n  \left(\int_{\partial \Omega} (\cosh r -\tilde{u}) \sigma_k(\tilde{\kappa})d\mu \right)\frac{2^{n-k}}{n+1} \nonumber\\
		=&\sum_{k=0}^n \left(\int_{\partial \Omega} \cosh r \sigma_k(\tilde{\kappa})d\mu \right) \frac{2^{n-k}}{n+1}
		+\sum_{k=0}^n  \left(\int_{\partial \Omega} \tilde{u}\sigma_k(\tilde{\kappa})d\mu \right)\frac{2^{n-k}}{n+1} \nonumber\\
		:=&I_1 +I_2,\label{lim-Xia=I1,I2}
	\end{align}
	where $I_1$ is the first term and $I_2$ is the second term in the second line of \eqref{lim-Xia=I1,I2}. Using \eqref{sigma-shifted-no shifted}, we have
	\begin{align}
		I_1 =&
		\sum_{k=0}^n \sum_{i=0}^k \left( \int_{\partial \Omega} \cosh r \sigma_i(\kappa) d\mu \right) 
		\frac{(-1)^{k-i}C_{n-i}^{k-i} 2^{n-k}}{n+1} \nonumber\\
		=&
		\sum_{i=0}^n \left( \int_{\partial \Omega} \cosh r \sigma_i(\kappa) d\mu \right) \left( \sum_{k=i}^n \frac{(-1)^{k-i}C_{n-i}^{k-i} 2^{n-k}}{n+1}\right) \nonumber\\
		=& \frac{1}{n+1} \sum_{i=0}^n  \int_{\partial \Omega} \cosh r \sigma_i(\kappa) d\mu.\label{I1}
	\end{align}
	By using \eqref{eq-Minkowski formula}, \eqref{weighted volume} and a similar calculation as above, we have
	\begin{align}
		I_2 =&\sum_{k=0}^n  \left(\int_{\partial \Omega} \tilde{u}\sigma_k(\tilde{\kappa})d\mu \right)\frac{2^{n-k}}{n+1}
		=\frac{1}{n+1} \sum_{k=0}^n \int_{\partial \Omega} \tilde{u} \sigma_k(\kappa) d\mu \nonumber\\
		=&\Vol_w(\Omega)+\frac{1}{n+1} \frac{C_n^k}{C_n^{k-1}}\sum_{k=1}^n \int_{\partial \Omega} \cosh r \sigma_{k-1}(\kappa) d\mu \nonumber\\
		=&\Vol_w(\Omega) +\sum_{k=0}^{n-1} \frac{n-k}{(n+1)(k+1)} \int_{\partial \Omega} \cosh r \sigma_k(\kappa) d\mu.\label{I2}
	\end{align}
	Substituting \eqref{I1} and \eqref{I2} into the right-hand side of \eqref{lim-Xia=I1,I2}, we obtain
	\begin{equation*}
		\lim_{\rho \to +\infty} \frac{\Vol_w(\Omega_\rho)}{\sinh^{n+1} \rho}
		=\Vol_w(\Omega) + \sum_{k=0}^n \frac{1}{k+1}\int_{\partial \Omega} \cosh r \sigma_k(\kappa) d\mu.
	\end{equation*}
	Then Theorem \ref{thm-new-weighted-inequality} follows by substituting the above formula into \eqref{ineq-Xia16}.
\end{proof}

Here we give an explicit example of the result in Theorem \ref{thm-new-weighted-inequality}. Letting $n=2$ and $\Omega \subset \mathbb{H}^3$ be a smooth uniformly h-convex bounded domain, we have
\begin{equation*}
	\lim_{\rho \to +\infty} \frac{\Vol_w(\Omega_\rho)}{\sinh^3 \rho}
	= \frac{1}{3} \int_{\partial \Omega} \cosh r \sigma_2(\kappa) d\mu
	+\frac{1}{2} \int_{\partial \Omega} \cosh r H d\mu+\int_{\partial \Omega} \cosh r d\mu + \Vol_w(\Omega).
\end{equation*}
Then Theorem \ref{thm-new-weighted-inequality} reduces to
\begin{align*}
	&\frac{1}{3} \int_{\partial \Omega} \cosh r \sigma_2(\kappa) d\mu
	+\frac{1}{2} \int_{\partial \Omega} \cosh r H d\mu \\
	\leq& \frac{1}{3}\Vol_w(\Omega)^{-1} \(\int_{\partial \Omega} \cosh r  d\mu \)^2
	+\frac{1}{27}\Vol_w(\Omega)^{-2} \(\int_{\partial \Omega} \cosh r  d\mu \)^3.
\end{align*}
For other Alexandrov-Fenchel type inequalities proved by using Steiner formulas in space forms, readers can refer to e.g. \cites{Nat15, WX15}.

\section{Existence of solutions to Horospherical $p$-Minkowski problem and Horospherical $p$-Christoffel-Minkowski problem}\label{sec-horosp-p-Christoffel Minkowski problem}

In Section \ref{sec-horosp-p-Christoffel Minkowski problem}, we study Problem \ref{prob-Horospherical p-Minkowski problem} and Problem \ref{prob-Horospherical p-Christoffel-Minkowski problem}, i.e.
\begin{equation}\label{eq-p-CM problem}
	\g^{-p-k}(z) p_{n-k} \(A[\g(z)] \) =f(z), \quad k=0,1,\ldots,n-1,
\end{equation}
where $u(z)$ is the horospherical support function of the solution and $\g(z) := e^{u(z)}$. By \eqref{shifted curvature-support function}, equation \eqref{eq-p-CM problem} is equivalent to
\begin{equation}\label{eq-p-CM problem-with-lambda}
	\g^{-p-n}(z)  p_{n-k} \(\tilde{\lambda}\(G^{-1}(z) \)\) = f(z), \quad  k=0,1,\ldots,n-1,
\end{equation}
where $G$ is the horospherical Gauss map of the solution, and $\tilde{\lambda}$ are the shifted principal radii of curvature at $G^{-1}(z)$. When $n \geq 1$, $k=0$ and $p =0$ in equation \eqref{eq-p-CM problem},
\begin{equation}\label{pre-horo-surf-area-problem}
	p_{n}\(A[\g(z)]\) =f(z)
\end{equation}
is the equation for the prescribed horospherical surface area measure problem by  \eqref{rel-area element}. When $n \geq 1$, $k=0$ and $p=-n$ in equation \eqref{eq-p-CM problem},
\begin{equation}\label{pre-shif-Gauss-problem}
	\g^{n}(z) p_{n} \(A[\g(z)] \)=f(z)
\end{equation}
is the equation for prescribed shifted Gauss curvature problem. 

To obtain the existence of solutions to the Christoffel-Minkowski problem and the $L_p$ Christoffel-Minkowski problem in Euclidean space $\mathbb{R}^{n+1}$, researchers always need to make convex assumptions on the prescribed measures, see e.g. \cites{GM03, GX18, BIS20}. To obtain the existence of solutions to equation \eqref{eq-p-CM problem}, we need to make the following assumptions on $f(z)$ when $1 \leq k \leq n-1$.

\begin{assump}\label{assump-h}
	Let $n \geq 2$ and $1 \leq k \leq n-1$ be integers. Let $p \geq -n$ be a real number and $f(z)$ be a smooth positive function on $\mathbb{S}^n$. For abbreviation, let $h(z) :=  f^{-\frac{1}{n-k}}(z)$.
	\begin{enumerate}
		\item If $p=-n$, then we assume that $h(z)$ is a positive constant function on $\mathbb{S}^n$.
		\item If $-n <p \leq -\frac{n+k}{2}$, then we assume that $h(z)$ satisfies
		\begin{equation*}
			D^2 h- \(\frac{n-3k-2p}{n-k}\) |Dh|I + \( \frac{n+p}{n-k}  \)^2 h I \geq 0.
		\end{equation*}
		\item If $-\frac{n+k}{2}<p <-k$, then we assume that $h(z)$ satisfies
		\begin{equation*}
			D^2 h- \frac{(n-3k-2p)^2}{2(n+p)(n+k+2p)} \frac{|D h|^2}{h} I +  \frac{1}{2} \frac{n+p}{n-k}h I\geq 0.
		\end{equation*}
		\item If $-k \leq p \leq n-2k$, then we assume that $h(z)$ satisfies
		\begin{equation*}
			D^2 \( h^{\frac{n-k}{n+p}} \)
			-\frac{1}{2} \frac{\left|D \(h^{\frac{n-k}{n+p}} \)\right|^2}{h^{\frac{n-k}{n+p}}} I
			+\frac{1}{2} h^{\frac{n-k}{n+p}} I \geq 0.
		\end{equation*}
		\item If $p >n-2k$, then we assume that $h(z)$ satisfies
		\begin{equation*}
			D^2 \( h^{\frac{n-k}{n+p}} \)
			-\frac{1}{2} \frac{\left|D \(h^{\frac{n-k}{n+p}} \)\right|^2}{h^{\frac{n-k}{n+p}}} I
			+\frac{n-k}{n+p} h^{\frac{n-k}{n+p}} I \geq 0.
		\end{equation*}
	\end{enumerate}
\end{assump}

We state the main results in Section \ref{sec-horosp-p-Christoffel Minkowski problem} and their proofs will be given in Subsection \ref{Subsec-Long time existence and convergence}.
 
\begin{thm}[Horospherical $p$-Minkowski problem]\label{thm-exist-all p-k=0}
	Let $n \geq 1$ be an integer and $- \infty<p<+ \infty$ be a real number. Given a smooth positive even function $f(z)$ defined on $\mathbb{S}^n$, the following equation has a smooth even and uniformly h-convex solution, i.e.
 	\begin{equation}\label{eq-lim-hyp-k=0}
 		\g^{-p}(z) p_n \(A[\g(z)] \) = \gamma f(z)
 	\end{equation}
 	for some $\gamma >0$.
 \end{thm}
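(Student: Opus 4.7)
The plan is to construct the solution as the stationary limit of a volume preserving curvature flow on the class of smooth origin symmetric uniformly h-convex bounded domains in $\mathbb{H}^{n+1}$. Concretely, I would design flow \eqref{flow-HCMF} whose normal speed has the form $\mathscr{F}=\eta(t)-f(z)^{-1}\g^{-p}p_n(A[\g])$ (or a convenient reparametrization), with the global Lagrange multiplier $\eta(t)$ chosen so that $\widetilde{W}_0(\Omega_t)=\Vol(\Omega_t)$ is preserved. By \eqref{scalr-flow DT's trick} this reduces to a scalar parabolic equation $\partial_t u=\mathscr{F}$ on $\mathbb{S}^n$; uniform h-convexity $A[\g]>0$ from Corollary \ref{cor-support-construct-domain} makes its leading symbol uniformly elliptic, so short-time existence follows from standard parabolic theory for any smooth even uniformly h-convex initial datum.

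For the functional $J_p(\Omega_t)$ in \eqref{def-J_p} I expect the natural choice $J_p(\Omega)=\frac{1}{p}\int_{\mathbb{S}^n}f\g^p\,d\sigma$ for $p\neq 0$, with $J_0(\Omega)=\int_{\mathbb{S}^n}f\log\g\,d\sigma$. A direct computation combined with Lemma \ref{lem-formula-Wpk-K-L} yields $\tfrac{d}{dt}J_p(\Omega_t)=\int_{\mathbb{S}^n}f\g^p\mathscr{F}\,d\sigma$. Using the volume constraint together with a Cauchy--Schwarz argument applied to the pairing between $\mathscr{F}$ and $\g^{-p}p_n(A[\g])$, one should conclude $\tfrac{d}{dt}J_p\leq 0$, with equality precisely when $\mathscr{F}\equiv 0$, i.e.\ when $\g$ solves \eqref{eq-CM-gamma} with $\gamma=\eta(t)$.

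The third and hardest step is to establish uniform \emph{a priori} bounds and long-time existence. The origin symmetry forced by $f(z)=f(-z)$, combined with Lemma \ref{lem-r-min-u-min-r-max-u-max} and Lemma \ref{lem-u-min-r-max}, yields two-sided $C^0$ control: the containment in $\overline{B}_z(u)\cap\overline{B}_{-z}(u)$ bounds $r_{\max}$ in terms of $u_{\min}$, while the preserved volume together with the monotonicity of $J_p$ obstructs $u$ from collapsing to $0$ or escaping to $+\infty$. For $C^1$ and $C^2$ bounds and the preservation of $A[\g]>0$, I would run the maximum principle machinery standard for inverse type curvature flows, exploiting the inverse concavity of $p_n^{1/n}$ recorded in Lemma \ref{lem-mono-increasing concavity} together with the Codazzi structure of $\tilde{h}_{ij}$. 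Krylov--Safonov and Schauder theory then bootstrap these into uniform $C^\infty$ estimates and hence global existence for all $t\geq 0$.

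Monotonicity and long-time existence force $\tfrac{d}{dt}J_p(\Omega_{t_i})\to 0$ along some sequence $t_i\to+\infty$; any smooth subsequential limit $\g_\infty$ of $\g(\cdot,t_i)$ then realises the equality case of the Cauchy--Schwarz estimate and hence solves \eqref{eq-lim-hyp-k=0} with $\gamma=\lim_i\eta(t_i)>0$. The main obstacle I anticipate is the $C^0$ estimate for very negative $p$, in particular $p\leq -n$, where the singular homogeneity of $\g^{-p}p_n(A[\g])$ defeats direct comparison arguments; this is exactly where the origin symmetry assumption is indispensable, playing the role of the even condition in the Euclidean $L_p$ Minkowski problem studied in \cite{BLYZ12} and \cite{CHLL20}.
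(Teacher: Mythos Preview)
Your overall architecture matches the paper: a volume-preserving flow on origin-symmetric uniformly h-convex domains, $J_p$ monotone along it, uniform a priori estimates, and subsequential convergence to a solution. Two points, however, diverge from what the paper actually does, and one of them is a genuine gap.

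First, the flow $\mathscr{F}=\eta(t)-f^{-1}\g^{-p}p_n(A[\g])$ as written has $\partial\mathscr{F}/\partial\g_{ij}<0$, so the scalar equation $\partial_t u=\mathscr{F}$ is \emph{backward} parabolic and admits no short-time theory. Reversing the sign fixes this, and then the Cauchy--Schwarz argument you sketch does give $\frac{d}{dt}J_p\le 0$ under volume preservation.

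Second, and more substantively: the paper's flow \eqref{flow-HCMF} does not use $f^{-1}\g^{-p}p_n(A[\g])$ as the curvature term but rather its $n$-th root, so that the contracting part is $F(\tilde\kappa)=p_n^{1/n}(\tilde\kappa)$---degree-one homogeneous, concave, and inverse-concave. These structural properties are load-bearing. The pinching estimate for $k=0$ (Proposition \ref{prop-Pinching-est}, Case 3) applies Andrews' tensor maximum principle \cite{And07}, which needs inverse-concavity of $F$ and exploits the blow-up $\dot F^{lm}g_{lm}\to\infty$ as the pinching ratio degenerates, a feature specific to $p_n^{1/n}$. The subsequent $C^{2,\alpha}$ step via \cite{TW13} likewise needs concavity of the speed in the Hessian variable. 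Your speed is degree-$n$ in curvature and enjoys neither property, so the ``standard inverse curvature flow machinery'' you invoke does not apply; taking the $n$-th root is not a reparametrisation but a different flow, and the monotonicity of $J_p$ for that flow is no longer plain Cauchy--Schwarz (see the proof of Lemma \ref{lem-mono-quantitirs}). Finally, you misidentify the hard step: the paper's $C^0$ estimate (Lemma \ref{lem-C0-est}) is \emph{independent of $p$}, using only the preserved volume and origin symmetry; the $p$-sensitive difficulty lies in the curvature bounds (Lemmas \ref{lem-F<C}--\ref{lem-F>C}), precisely where the $n$-th-root normalisation earns its keep.
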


 \begin{thm}[Horospherical $p$-Christoffel-Minkowski problem]\label{thm-exist-all p}
	Let $n \geq 2$ and $1 \leq k \leq n-1$ be integers, and let $p \geq -n$ be a real number. Given a smooth positive even function $f(z)$ defined on $\mathbb{S}^n$ that satisfies the Assumption \ref{assump-h}, the following equation has a smooth even and uniformly h-convex solution, i.e.
	\begin{equation}\label{eq-limit-hypersurface}
		\g^{-p-k} p_{n-k}(A[\g(z)]) =\gamma f(z)
	\end{equation}
	for some $\gamma>0$.
\end{thm}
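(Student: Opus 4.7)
The plan is to construct the solution as the asymptotic limit of a volume-preserving curvature flow, in the spirit of the functional-based flow approach advertised in the introduction. Starting from a small geodesic ball centered at the origin as the initial datum, I would consider the flow \eqref{flow-HCMF} whose scalar form on horospherical support functions reads
$$\partial_t u(z,t) = c(t) - f(z)\,\g^{p+k}(z,t)\,p_{n-k}^{-1}\bigl(A[\g(z,t)]\bigr),$$
with the global factor $c(t)$ chosen so as to keep $\widetilde{W}_k(\Omega_t)$ constant. By Lemma \ref{lem-variation of modified quermass} together with \eqref{dt Wk Omega}, this gives a positive explicit formula for $c(t)$. The functional $J_p$ alluded to in the introduction should then be defined so that its variation along an arbitrary deformation picks up exactly the quantity $f(z)\,\g^{p+k}\,p_{n-k}^{-1}(A[\g])$ weighted by the $k$-th horospherical surface area measure $dS_{p,k}$; one checks by a direct computation with Cauchy--Schwarz that $\tfrac{d}{dt}J_p(\Omega_t)\le 0$ along the flow, with equality exactly when $u$ satisfies \eqref{eq-limit-hypersurface} for $\gamma=c(t)$.

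The second step is to establish long-time existence. Evenness of $f$ guarantees that origin-symmetry of $\Omega_t$ is propagated, so by Lemma \ref{lem-r-min-u-min-r-max-u-max} combined with Lemma \ref{lem-u-min-r-max} the quantities $u_{\min}$ and $u_{\max}$ are controlled against each other, giving uniform two-sided $C^0$ bounds once the bound on $\widetilde{W}_k$ is in hand (which is automatic from the volume-preserving design, via the monotonicity in Proposition \ref{prop-domain monotone modified quermass}). The $C^1$ bound follows from the $C^0$ bound by an auxiliary-function argument on $|D\g|^2/\g$, exploiting the structure of $A[\g]$ in \eqref{def A-phi}.

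The main obstacle, as in the classical Christoffel--Minkowski problem of Guan--Ma \cite{GM03}, is to preserve the admissibility condition $A_{ij}[\g(\cdot,t)]>0$ along the flow and to obtain a uniform upper bound on its eigenvalues. This is exactly where Assumption \ref{assump-h} enters: the various convexity conditions on $h=f^{-1/(n-k)}$ are engineered so that the tensor maximum principle applied to the minimum eigenvalue of $A[\g]$ (respectively to a suitably weighted auxiliary tensor, the weight depending on the range of $p$) keeps the evolving domain within the cone of admissible solutions. Concretely, for each regime of $p$ one writes out the evolution equation of $A_{ij}[\g]$ using the formulas of Section \ref{sec-h domain}, applies Andrews' tensor maximum principle \cite{And07} together with the concavity of $(p_{n-k})^{1/(n-k)}$ provided by Lemma \ref{lem-mono-increasing concavity}, and reads off the precise convexity inequality $h$ must satisfy to close the argument; these inequalities match the five cases of Assumption \ref{assump-h} exactly. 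The upper $C^2$ estimate is then reduced to a Pogorelov-type bound using the lower bound just obtained.

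With uniform $C^{2,\alpha}$ estimates in hand, Krylov--Safonov theory promotes them to $C^m$ estimates for every $m$, so the flow exists for all time. Monotonicity of $J_p$ together with its coercivity on the set of origin-symmetric h-convex bodies of fixed $\widetilde{W}_k$ allows extraction of a smooth subsequential limit $u_\infty$ along a time sequence $t_j\to\infty$; the equality case in $\tfrac{d}{dt}J_p\le 0$ then forces $u_\infty$ to satisfy \eqref{eq-limit-hypersurface} with $\gamma=\lim_j c(t_j)>0$. Corollary \ref{cor-support-construct-domain} ensures that $u_\infty$ is the horospherical support function of a smooth even uniformly h-convex bounded domain, completing the proof.
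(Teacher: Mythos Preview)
Your overall architecture matches the paper's: run the volume-preserving flow \eqref{flow-HCMF}, show $J_p$ is monotone, obtain uniform a~priori estimates (with Assumption~\ref{assump-h} entering the pinching step via Andrews' tensor maximum principle), and extract a subsequential limit solving \eqref{eq-limit-hypersurface}. However, several of the concrete ingredients are misidentified.

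\textbf{The flow is not the one you wrote.} The scalar form of \eqref{flow-HCMF} is \eqref{scalr eq u-HCMF}:
\[
\partial_t u \;=\; \Phi(t)\,\g^{-\frac{n+p}{n-k}}\,f^{-\frac{1}{n-k}} \;-\; \g^{-1}\,p_{n-k}^{-\frac{1}{n-k}}\bigl(A[\g]\bigr),
\]
so the curvature function is $F(\tilde\kappa)=(p_n(\tilde\kappa)/p_k(\tilde\kappa))^{1/(n-k)}$, which is homogeneous of degree~$1$, concave, and inverse-concave on $\Gamma^+$ (Lemma~\ref{lem-mono-increasing concavity} and \eqref{inverse function of pl/pk}). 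Your version uses $p_{n-k}^{-1}$ rather than $p_{n-k}^{-1/(n-k)}$, yielding a function of degree $n-k$ in the curvatures; this breaks the appeal to \cite[Theorem~4.1]{And07} in the pinching estimate, which relies on degree-$1$ homogeneity together with concavity and inverse-concavity simultaneously. With the correct exponent the paper's Proposition~\ref{prop-Pinching-est} goes through.

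\textbf{Several mechanisms differ from your description.} (i) The monotonicity of $J_p$ (Lemma~\ref{lem-mono-quantitirs}) is Cauchy--Schwarz only when $k=n-1$; for $0\le k\le n-2$ one needs the two-parameter integral inequality \eqref{goal ineq-new}. (ii) The pinching estimate is run on the shifted Weingarten tensor $\tilde h_i{}^j$ on the hypersurface, not directly on $A_{ij}[\g]$ on the sphere; the $\nabla^2(\g^{-(n+p)/(n-k)}h)$ terms are translated back to spherical derivatives via Lemmas~\ref{lem-zi-zij}--\ref{lem-gij-hildlX-hilVdlX}, and then Lemma~\ref{lem-from assmp of f to desired est} converts Assumption~\ref{assump-h} into the sign needed in \eqref{Nijvivj-final-version}. (iii) The $C^1$ bound is simpler than an auxiliary-function argument: origin-symmetry alone gives $|D\g|<\g$ (Lemma~\ref{lem-Dphi<phi}). (iv) The $C^2$ bounds are obtained not by a Pogorelov-type estimate but by combining the pinching ratio with two-sided bounds on $\mathcal F=p_{n-k}^{1/(n-k)}(A[\g])$ (Lemmas~\ref{lem-F<C}--\ref{lem-F>C}), after which the higher regularity follows from \cite{TW13} and Schauder theory.
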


Our following curvature flow \eqref{flow-HCMF} will be used to study the existence of solutions to equation \eqref{eq-p-CM problem}. In the case that $n\geq 2$, $p=-n$ and $f(z)$ is a positive constant, the flow \eqref{flow-HCMF} was used to obtain a class of Alexandrov-Fenchel inequalities in hyperbolic space by Andrews et al. \cite[Corollary 1.9]{ACW18}.

Let $n \geq 1$ and $0 \leq k \leq n-1$ be integers, let $p$ be a real number, and let $f(z)$ be a smooth positive function on $\mathbb{S}^n$. Suppose that  $ X: M \times [0,T) \to \mathbb{H}^{n+1}$ is a family of smooth uniformly h-convex hypersurfaces, satisfying
\begin{equation}\label{flow-HCMF}
	\left\{\begin{aligned}
		\frac{\partial}{\partial t}X(x,t)=&\left( \Phi(t)  \(\cosh r- \widetilde{u}\)^{\frac{n+p}{n-k}}(x,t)f^{- \frac{1}{n-k}}(z(x,t)) - p_{n-k}^{-\frac{1}{n-k}}( \tilde{\lambda}(x,t)) \right) \nu , \\
		X(\cdot,0)=& M_0= \partial \Omega_0.
	\end{aligned}\right.
\end{equation}
Here $z(x,t)$ is the horospherical Gauss map at $X(x,t)$ of $M_t = X(M,t) =\partial \Omega_t$, $\tilde{\lambda}$ are the shifted principal  radii of curvature of $M_t$, and $\Phi(t)$ is a scalar function to be specified immediately.  Let $G_t: M_t \to \mathbb{S}^n$ be the horospherical Gauss map of $M_t$ and  $u(z,t) := u_{\Omega_t} (z)$ be the horospherical support function of $\Omega_t$, and $\g(z,t):=e^{u(z,t)}$. In this paper, the global term $\Phi(t)$ is defined by
\begin{equation}\label{def of Phi}
	\Phi(t) := \frac{\int_{\mathbb{S}^n} \g^{-n}(z,t) p_{n-k}^{1- \frac{1}{n-k}}\(\tilde{\lambda}(x,t)\) d \sigma}{\int_{\mathbb{S}^n} f^{-\frac{1}{n-k}}(z) \g^{-n-\frac{n+p}{n-k}}(z,t) p_{n-k} \(\tilde{\lambda}(x,t)\) d \sigma},
\end{equation}
where $\tilde{\lambda}(x,t)$ are the shifted principal  radii of curvature at $X(x,t):=G^{-1}_t(z)$ of $M_t$. This choice of $\Phi(t)$ ensures that $\widetilde{W}_k(\Omega_t)$ remains constant along the flow \eqref{flow-HCMF}. We denote the speed function of flow \eqref{flow-HCMF} as
\begin{equation}\label{speed HCMF}
	\mathscr{F} = \Phi(t) \g^{- \frac{n+p}{n-k}}(z,t)h(z)-F(\tilde{\kappa}(x,t)),
\end{equation}
where $h(z) := f^{-\frac{1}{n-k}}(z)$, $F(\tilde{\kappa}) := \(\frac{p_n(\tilde{\kappa})}{p_k(\tilde{\kappa})} \)^{\frac{1}{n-k}}$, and we used \eqref{1/phi, coshr-u}.
 
We will prove the following long time existence of the flow \eqref{flow-HCMF}, and the proof will be given in Subsection \ref{Subsec-Long time existence and convergence}.

\begin{thm}\label{thm-long time existence}
	Let $f(z)$ be a smooth positive even function on $\mathbb{S}^n$ and $M_0 = \partial \Omega_0$ be a smooth, origin symmetric and uniformly h-convex hypersurface in $\mathbb{H}^{n+1}$. 
 	\begin{enumerate}
		\item If $k=0$, $-\infty<p<+\infty$ and $n \geq 1$, then the flow \eqref{flow-HCMF} has a smooth uniformly h-convex solution $M_t$ for all time $t > 0$.
 	 	\item If $1 \leq k \leq n-1$, $p \geq -n$, $n \geq 2$, and $f(z)$ satisfies the Assumption \ref{assump-h}, then the flow \eqref{flow-HCMF} has a smooth uniformly h-convex solution $M_t$ for all time $t > 0$.
 	 \end{enumerate}
\end{thm}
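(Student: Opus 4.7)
The plan is to reduce \eqref{flow-HCMF} to a scalar parabolic equation on $\mathbb{S}^n$ for the horospherical support function $u(z,t)$ and then establish uniform a priori $C^k$ estimates for all $k$. By the DeTurck-type identification in \eqref{scalr-flow DT's trick}, as long as $M_t$ is smooth and uniformly h-convex, the horospherical Gauss map $G_t:M_t\to\mathbb{S}^n$ is a diffeomorphism and \eqref{flow-HCMF} is equivalent to
\begin{equation*}
\partial_t u(z,t)=\Phi(t)\,\g^{-\frac{n+p}{n-k}}(z,t)\,h(z)-F\bigl(A[\g(z,t)],\g(z,t)\bigr),
\end{equation*}
where $h=f^{-1/(n-k)}$ and $F(\tilde\kappa)=\bigl(p_n(\tilde\kappa)/p_k(\tilde\kappa)\bigr)^{1/(n-k)}$ is rewritten via \eqref{shifted curvature-support function} as a function of $A[\g]$ and $\g$. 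Short-time existence follows from the standard theory for fully nonlinear parabolic equations, since $F$ is elliptic and concave on $\{A[\g]>0\}$ by Lemma \ref{lem-mono-increasing concavity}. The evenness hypotheses $u_0(-z)=u_0(z)$ and $f(-z)=f(z)$ are preserved by uniqueness, so each $\Omega_t$ is origin-symmetric, and the choice of $\Phi(t)$ in \eqref{def of Phi} together with Lemma \ref{lem-variation of modified quermass} ensures that $\widetilde W_k(\Omega_t)\equiv\widetilde W_k(\Omega_0)$.

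I would then establish the $C^0$ bound $0<c_1\le u(z,t)\le c_2<\infty$. Origin-symmetry is used crucially: for each $z$, $\Omega_t\subset\overline B_z(u_{\min}(t))\cap\overline B_{-z}(u_{\min}(t))$, so Lemma \ref{lem-u-min-r-max} yields $\cosh r_{\max}(t)\le e^{u_{\min}(t)}$, and Lemma \ref{lem-r-min-u-min-r-max-u-max} then gives $u_{\max}(t)\le u_{\min}(t)+\log 2$. Combining this with the conservation of $\widetilde W_k$ and the domain monotonicity in Proposition \ref{prop-domain monotone modified quermass} forces $u_{\min}(t)$ and $u_{\max}(t)$ to lie in a fixed compact subinterval of $(0,+\infty)$. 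Next, I would obtain the $C^1$ bound on $Du$ by applying the maximum principle to an auxiliary quantity of the form $\tfrac12|Du|^2+\Psi(u)$ for a suitable $\Psi$, using the evolution equation of $D_iu$ obtained by differentiating the scalar flow.

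The main obstacle is the $C^2$ estimate, i.e.\ preserving uniform h-convexity through two-sided bounds on the eigenvalues of $A[\g]$. For the upper bound on the shifted principal radii $\tilde\lambda$ one differentiates the flow twice and, at the spatial maximum of $\tilde\lambda_1$, combines the concavity and monotonicity of $F$ from Lemma \ref{lem-mono-increasing concavity} with the Codazzi property of $\tilde h_{ij}$ on $M$; this is exactly where Assumption \ref{assump-h} enters, in the spirit of Guan--Ma's treatment of the Christoffel--Minkowski problem, so that the bad terms produced by the inhomogeneous factor $\Phi(t)\g^{-(n+p)/(n-k)}h(z)$ can be absorbed in each of the parameter ranges $-n\le p\le -\frac{n+k}{2}$, $-\frac{n+k}{2}<p<-k$, $-k\le p\le n-2k$, $p>n-2k$, with the endpoint $p=-n$ handled by the fact that $h$ is then constant. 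A lower bound on $\tilde\lambda$ is obtained by the same maximum-principle machinery applied to $\log\mu_{\max}(A[\g]^{-1})-Nu$ for $N$ large. For the case $k=0$ in Theorem \ref{thm-long time existence}(1) there is no $\tilde\lambda$-upper bound to worry about and the analysis simplifies; only a lower bound on $A[\g]$ (equivalently an upper bound on $\tilde\kappa$) is needed, which follows from the same type of test-function argument without any further hypothesis on $f$.

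Once a uniform $C^2$ estimate and uniform ellipticity are in hand, the equation is uniformly parabolic with concave principal part, and Evans--Krylov plus Krylov--Safonov yield uniform $C^{2,\alpha}$ bounds. Linear Schauder theory applied to differentiated equations then bootstraps to uniform $C^{k,\alpha}$ bounds for every $k$, and standard continuation extends the solution to $t\in[0,+\infty)$, with uniform h-convexity preserved in the limit. I expect the closure of the $C^2$ estimate — specifically, verifying that Assumption \ref{assump-h} is precisely what makes the maximum-principle argument work uniformly across the full range $p\ge -n$ — to be the genuinely hard step.
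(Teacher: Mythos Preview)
Your overall plan matches the paper's: reduce to the scalar equation \eqref{scalar eq phi-HCMF}, establish uniform a priori estimates, and continue. Two points where you diverge are worth flagging. For the $C^1$ bound, the paper does not use a maximum-principle argument: origin-symmetry gives $|D\g|<\g$ directly (Lemma~\ref{lem-Dphi<phi}), so the $C^0$ bound immediately yields $C^1$ (Corollary~\ref{cor-C1-est}). For the $C^2$ estimate, the paper does \emph{not} bound extreme eigenvalues of $A[\g]$ directly; it first proves a pinching estimate $\tilde h_{ij}\ge\varepsilon_0\tilde H g_{ij}$ (Proposition~\ref{prop-Pinching-est}) via Andrews' tensor maximum principle---this is where Assumption~\ref{assump-h} enters, through Lemma~\ref{lem-from assmp of f to desired est}---and only afterward bounds the scalar $\mathcal F=p_{n-k}^{1/(n-k)}(A[\g])$ from above and below (Lemmas~\ref{lem-F<C} and~\ref{lem-F>C}). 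Combining the ratio bound with the two-sided $\mathcal F$ bound gives two-sided eigenvalue bounds. This ordering is not optional: the upper bound on $\mathcal F$ in Lemma~\ref{lem-F<C} requires a uniform positive lower bound on the global term $\Phi(t)$, and by Lemma~\ref{lem-c < Phi <C} that lower bound itself already requires the pinching estimate. You never discuss bounding $\Phi(t)$; without it your test-function arguments would not close.

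Your statement about $k=0$ contains a genuine gap: it is \emph{not} true that ``only a lower bound on $A[\g]$ is needed.'' Both a lower and an upper bound on the eigenvalues of $A[\g]$ are required for uniform parabolicity and the $C^2$ estimate. The actual simplification for $k=0$ is that the pinching step (Case~3 of Proposition~\ref{prop-Pinching-est}) succeeds without Assumption~\ref{assump-h}, because $\dot F^{lm}g_{lm}\to\infty$ as the pinching constant $\varepsilon\to 0$ when $F=p_n^{1/n}$; the two-sided scalar bounds on $\mathcal F$ (and hence on $A[\g]$) are still needed afterward.
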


\begin{rem}
	Let $n\geq 1$, $0 \leq k \leq n-1$, $p \geq -n$, and $f(z)$ be a positive constant. Let $M_0$ be a smooth, origin symmetric and uniformly h-convex hypersurface in $\mathbb{H}^{n+1}$. In Theorem \ref{thm-f=c-convergence}, we will prove that flow  \eqref{flow-HCMF} converges to a geodesic sphere centered at the origin.
\end{rem}

The flow \eqref{flow-HCMF} is equivalent to a parabolic equation of $u(z,t)$ by a similar argument as \eqref{scalr-flow DT's trick}, i.e.
\begin{align}
	\frac{\partial}{\partial t} u(z,t) =& \Phi \g^{-\frac{n+p}{n-k}}f^{-\frac{1}{n-k}} -  p_{n-k}^{- \frac{1}{n-k}}(\tilde{\lambda}) \nonumber\\
	=& \Phi \g^{-\frac{n+p}{n-k}}f^{-\frac{1}{n-k}} -  \g^{-1} p_{n-k}^{- \frac{1}{n-k}} (A [\g]).\label{scalr eq u-HCMF}
\end{align}
Then the scalar evolution equation of $\g(z,t)$ is
\begin{equation}\label{scalar eq phi-HCMF}
	\frac{\partial}{\partial t} \g (z,t)= \Phi \g^{-\frac{n+p}{n-k}+1}f^{-\frac{1}{n-k}}-  p_{n-k}^{- \frac{1}{n-k}} (A [\g]).
\end{equation}
If $\Omega$ is an origin symmetric domain in $\mathbb{H}^{n+1}$, then $\g_{\Omega}(z)$ is even on $\mathbb{S}^n$. Suppose that $f(z)$ is a positive even function on $\mathbb{S}^n$, $M_0$ is a smooth, origin symmetric hypersurface, and $\{M_t\}$ is a solution to the flow \eqref{flow-HCMF} for $t \in [0,T)$, then \eqref{scalar eq phi-HCMF} implies that $M_t$ is origin symmetric.  

To avoid confusion, when we do calculations on a uniformly h-convex hypersurface $M = \partial \Omega$, we view $z$ as a vector field defined on $M$ by $z = G_{\Omega}(X)$ for $X \in M$. Similarly, we view $X = G_{\Omega}^{-1}(z)$ when we do calculations on $\mathbb{S}^n$.

\subsection{Monotone quantities}$ \ $

Let $p$ be a real number and $f(z)$ be a smooth positive function on $\mathbb{S}^n$. Let $\Omega$ be a h-convex domain with horospherical support function $u(z)$, and $\g(z):= e^{u(z)}$. We define
\begin{equation}\label{def-J_p}
	J_p(\Omega) :=\left\{
	\begin{aligned}
		\frac{1}{p} \int_{\mathbb{S}^n} f(z) \g^p(z) d \sigma, \quad p\neq 0,\\
		\int_{\mathbb{S}^n} f(z) u(z) d\sigma, \quad p=0.
	\end{aligned}
\right.
\end{equation}

\begin{lem}\label{lem-mono-quantitirs}
	Let $M_t =\partial \Omega_t$ be a smooth uniformly h-convex solution to the flow \eqref{flow-HCMF}. Then
	\begin{align}
		\frac{d}{dt} \widetilde{W}_k(\Omega_t) =&0 , \quad \forall \ t\in [0,T), \label{mono quan-eq}\\
		 \frac{d}{dt} J_p(\Omega_t)\leq& 0, \quad \forall \ t\in [0,T).\label{mono quan-ineq}
	\end{align}
	Moreover, equality holds in \eqref{mono quan-ineq} if and only if $f^{-1}(z) \g^{-p-n}(z,t)p_{n-k}(\tilde{\lambda}(x,t))$ is constant.
\end{lem}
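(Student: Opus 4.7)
The plan is to differentiate both functionals directly, convert to integrals over $\mathbb{S}^n$ via the horospherical Gauss map, and then exploit the defining ratio for $\Phi(t)$ together with H\"older-type inequalities. Throughout I will abbreviate $m:=n-k\geq 1$, $a:=(n+p)/m$ and $h:=f^{-1/m}$, so the scalar speed reads $\mathscr{F}=\Phi(t)h\,\g^{-a}-p_{n-k}^{-1/m}(\tilde{\lambda})$.

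For the equality \eqref{mono quan-eq}, I would apply Lemma~\ref{lem-variation of modified quermass} to get $\frac{d}{dt}\widetilde{W}_k(\Omega_t)=\int_{M_t}p_k(\tilde\kappa)\,\mathscr{F}\,d\mu_t$. Combining \eqref{rel-area element} with \eqref{shifted curvature-support function} (the latter shows $A[\g]$ has eigenvalues $\tilde\lambda_i/\g$) gives $p_k(\tilde\kappa)\,d\mu=\g^{-n}p_{n-k}(\tilde\lambda)\,d\sigma$ on $\mathbb{S}^n$. Substituting $\mathscr{F}$ and separating the two terms produces
$\Phi(t)\int h\,\g^{-n-a}p_{n-k}\,d\sigma-\int\g^{-n}p_{n-k}^{(m-1)/m}\,d\sigma$,
which vanishes identically by the very definition \eqref{def of Phi} of $\Phi(t)$.

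For the inequality \eqref{mono quan-ineq}, since $\partial_t\g=\g\,\partial_t u$ a single formula handles every $p\in\mathbb{R}$ (including $p=0$):
\begin{equation*}
\tfrac{d}{dt}J_p(\Omega_t)=\int_{\mathbb{S}^n} f\g^p\,\mathscr{F}\,d\sigma=\Phi(t)\,I_1-I_2,
\end{equation*}
where, setting $A:=f\g^p p_{n-k}^{-1/m}$ and $B:=\g^{-n}p_{n-k}^{(m-1)/m}$,
\begin{equation*}
I_2:=\int_{\mathbb{S}^n} A\,d\sigma,\quad N:=\int_{\mathbb{S}^n} B\,d\sigma,\quad I_1:=\int_{\mathbb{S}^n}fh\,\g^{p-a}\,d\sigma,\quad D:=\int_{\mathbb{S}^n}h\,\g^{-n-a}p_{n-k}\,d\sigma,
\end{equation*}
and $\Phi=N/D$. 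The crucial algebraic identity (verified by a routine exponent check) is
\begin{equation*}
I_1=\int_{\mathbb{S}^n}A^{(m-1)/m}B^{1/m}\,d\sigma,\qquad D=\int_{\mathbb{S}^n}A^{-1/m}B^{(m+1)/m}\,d\sigma,
\end{equation*}
with each pair of exponents summing to $1$. Moreover $B/A=f^{-1}\g^{-p-n}p_{n-k}(\tilde\lambda)$, which is precisely the quantity appearing in the claimed equality case.

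H\"older's inequality with conjugate exponents $m/(m-1)$ and $m$ (trivial when $m=1$) applied to $I_1$ gives $I_1\leq I_2^{(m-1)/m}N^{1/m}$, so $NI_1\leq I_2^{(m-1)/m}N^{(m+1)/m}$. Since $-1/m<0$ and $(m+1)/m>1$, the reverse H\"older inequality (Cauchy--Schwarz in the borderline case $m=1$) applied to $D$ produces $D\geq I_2^{-1/m}N^{(m+1)/m}$, whence $DI_2\geq I_2^{(m-1)/m}N^{(m+1)/m}$. Chaining these yields $NI_1\leq DI_2$, i.e.\ $\Phi I_1-I_2\leq 0$, proving \eqref{mono quan-ineq}. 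Both the H\"older and the reverse H\"older steps are saturated if and only if $A$ and $B$ are proportional on $\mathbb{S}^n$, equivalently $f^{-1}\g^{-p-n}p_{n-k}(\tilde\lambda)$ is constant, giving exactly the stated equality condition. The only genuine obstacle is recognising the correct H\"older interpolation: once one spots that $I_1$ and $D$ are the natural interpolants between $I_2$ and $N$ with conjugate exponent pairs $((m-1)/m,1/m)$ and $(-1/m,(m+1)/m)$, the rest is exponent bookkeeping.
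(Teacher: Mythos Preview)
Your proof is correct. The verification of \eqref{mono quan-eq} matches the paper exactly, and for \eqref{mono quan-ineq} you reduce to the same core inequality $I_1N\le I_2D$ (in the paper's notation this is inequality \eqref{goal ineq}), but you reach it by a genuinely different route.

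The paper rewrites the four integrals with respect to the measure $dm:=f^{1-\frac1m}\g^{p-a}\,d\sigma$ (your $I_1$ integrand), so that $N=\int\eta\,dm$, $I_2=\int\eta^{-1/(m-1)}\,dm$ and $D=\int\eta^{m/(m-1)}\,dm$ for a single positive function $\eta$; the inequality then drops out of the Chebyshev--type symmetrization
\[
\int\!\!\int\bigl(\eta(z_1)^{-1/(m-1)}-\eta(z_2)^{-1/(m-1)}\bigr)\bigl(\eta(z_1)^{m/(m-1)}-\eta(z_2)^{m/(m-1)}\bigr)\,dm(z_1)\,dm(z_2)\le 0,
\]
which is pointwise nonpositive since one factor is decreasing and the other increasing in $\eta$. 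Your approach instead recognises $I_1$ and $D$ as the two natural H\"older interpolants between $I_2=\int A$ and $N=\int B$ with exponent pairs $\bigl(\tfrac{m-1}{m},\tfrac1m\bigr)$ and $\bigl(-\tfrac1m,\tfrac{m+1}{m}\bigr)$, then chains the forward H\"older bound on $I_1$ with the reverse H\"older bound on $D$. Both arguments collapse to Cauchy--Schwarz when $m=1$, and both give equality precisely when $B/A=f^{-1}\g^{-p-n}p_{n-k}(\tilde\lambda)$ is constant. The paper's symmetrization is more self-contained and makes the equality case immediate, while your H\"older interpolation viewpoint explains structurally why the particular choice of $\Phi(t)$ forces monotonicity.
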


\begin{proof}
	From Lemma \ref{lem-variation of modified quermass}, we have by \eqref{rel-area element} and \eqref{shifted curvature-support function} that
	\begin{align*}
		\frac{d}{dt} \widetilde{W}_k(\Omega_t) =& \int_{M_t} p_k (\tilde{\kappa}) \mathscr{F} d \mu_t
		=\int_{\mathbb{S}^n} \g^{-n}\frac{p_{n-k} (\tilde{\lambda})}{p_n (\tilde{\lambda})}  p_{n}(\tilde{\lambda}) \mathscr{F}d \sigma\\
		=&\int_{\mathbb{S}^n} \g^{-n} p_{n-k}(\tilde{\lambda}) \mathscr{F} d \sigma.
	\end{align*}
	Substituting \eqref{speed HCMF} into the right-hand side and using \eqref{def of Phi}, we obtain
	\begin{equation*}
		\frac{d}{dt} \widetilde{W}_k(\Omega_t) 
		= \Phi(t) \int_{\mathbb{S}^n} f^{-\frac{1}{n-k}} \g^{-n-\frac{n+p}{n-k}} p_{n-k}(\tilde{\lambda}) d \sigma
		-\int_{\mathbb{S}^n} \g^{-n}p_{n-k}^{1- \frac{1}{n-k}}(\tilde{\lambda}) d \sigma
		=0.
	\end{equation*}
    When $p \neq 0$, we have from \eqref{scalr eq u-HCMF} that
	\begin{align*}
		\frac{d}{dt} J_p(\Omega_t) =& \frac{d}{dt} \( \frac{1}{p} \int_{\mathbb{S}^n} f(z) \g^p(z,t) d \sigma \)
		= \int_{\mathbb{S}^n} f \g^p \mathscr{F} d \sigma\\
		=& \Phi(t) \int_{\mathbb{S}^n} f^{1-\frac{1}{n-k}} \g^{p-\frac{n+p}{n-k}} d \sigma 
		-\int_{\mathbb{S}^n} f \g^{p} p_{n-k}^{-\frac{1}{n-k}}(\tilde{\lambda}) d \sigma.
	\end{align*}
	When $p=0$, we have by using \eqref{scalr eq u-HCMF}
	\begin{align*}
		\frac{d}{dt} J_0(\Omega_t) =&\frac{d}{dt} \(  \int_{\mathbb{S}^n} f(z) u(z,t) d\sigma\)= \int_{\mathbb{S}^n} f \mathscr{F} d\sigma\\
		=& \Phi(t)  \int_{\mathbb{S}^n} f^{1-\frac{1}{n-k}} \g^{-\frac{n}{n-k}} d \sigma 
		-\int_{\mathbb{S}^n} f  p_{n-k}^{-\frac{1}{n-k}}(\tilde{\lambda}) d \sigma.
	\end{align*}
	By the definition of $\Phi(t)$ in \eqref{def of Phi}, it suffices to prove the following inequality for all $p \in (-\infty,+\infty)$,
	\begin{equation}\label{goal ineq}
		\int_{\mathbb{S}^n} f^{1-\frac{1}{n-k}} \g^{p-\frac{n+p}{n-k}} d \sigma 
		\int_{\mathbb{S}^n} \g^{-n} p_{n-k}^{1- \frac{1}{n-k}}(\tilde{\lambda}) d \sigma
		\leq 
		\int_{\mathbb{S}^n} f \g^{p} p_{n-k}^{-\frac{1}{n-k}}(\tilde{\lambda}) d \sigma
		\int_{\mathbb{S}^n} f^{-\frac{1}{n-k}} \g^{-n-\frac{n+p}{n-k}} p_{n-k} (\tilde{\lambda}) d \sigma.
	\end{equation}
	In the case $k = n-1$, the inequality \eqref{goal ineq} follows from the Cauchy-Schwarz inequality, and the equality holds if and only if 
	\begin{equation*}
		f^{-1}(z)\g^{-n-p}(z,t)p_1(\tilde{\lambda}(x,t))
	\end{equation*}
	is constant on $\mathbb{S}^n$. In the case $0 \leq k \leq n-2$, we let $dm(z,t) = f^{1- \frac{1}{n-k}}(z) \g^{p-\frac{n+p}{n-k}}(z,t) d \sigma(z)$ and $\eta(z,t) = f^{\frac{1}{n-k}-1}(z) \g^{\frac{n+p}{n-k}-n-p}(z,t) p_{n-k}^{1- \frac{1}{n-k}}(\tilde{\lambda}(x,t))$. Then \eqref{goal ineq} becomes
	\begin{equation}\label{goal ineq-new}
		\int_{\mathbb{S}^n} dm \int_{\mathbb{S}^n} \eta dm - \int_{\mathbb{S}^n} \eta^{-\frac{1}{n-k-1}} dm 
		\int_{\mathbb{S}^n} \eta^{\frac{n-k}{n-k-1}} dm \leq 0.
	\end{equation} 
	The above inequality \eqref{goal ineq-new} follows from
	\begin{equation*}
		\int_{\mathbb{S}^n} \int_{\mathbb{S}^n} 
		\(\eta^{-\frac{1}{n-k-1}}(z_1,t) -\eta^{-\frac{1}{n-k-1}}(z_2,t) \) \(\eta^{\frac{n-k}{n-k-1}}(z_1,t)-\eta^{\frac{n-k}{n-k-1}}(z_2,t)\) dm(z_1,t)dm(z_2,t) \leq 0, 
	\end{equation*}
	with equality if and only if $\eta(z,t)$ is constant on $\mathbb{S}^n$, i.e.
	\begin{equation*}
		f^{-1}(z)\g^{-n-p}(z,t)p_{n-k}(\tilde{\lambda}(x,t))
	\end{equation*}
	is constant. Hence we complete the proof of Lemma \ref{lem-mono-quantitirs}.
\end{proof}
\subsection{A priori estimates}\label{subsec-a priori est} $ \ $

In order to achieve the long time existence of flow \eqref{flow-HCMF}, we need to do the related a priori estimates.
\subsubsection{$C^0$ estimate}
\begin{lem}\label{lem-C0-est}
	Let $M_t = \partial \Omega$ be a smooth, origin symmetric and uniformly h-convex solution to the flow \eqref{flow-HCMF}. Let
	\begin{equation*}
		r_{\min}(t) := \min_{\theta \in \mathbb{S}^n} r(\theta , t), \quad 
		r_{\max}(t) := \max_{\theta \in \mathbb{S}^n} r(\theta , t),
	\end{equation*}
	and 
	\begin{equation*}
		u_{\min}(t) := \min_{z \in \mathbb{S}^n} u(z , t), \quad 
		u_{\max}(t) := \max_{z \in \mathbb{S}^n} u(z , t).
	\end{equation*}
	Then along the flow \eqref{flow-HCMF}, there exist two positive constants $c$, $C$ such that
	\begin{equation}\label{C0-est}
		c \leq r_{\min}(t) = u_{\min} (t) \leq u_{\max}(t)= r_{\max}(t) \leq C, \quad \forall \ t\in [0,T),
	\end{equation}
	where $c$ and $C$ only depend on $n$, $k$ and $M_0$.
\end{lem}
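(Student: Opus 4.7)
\emph{Proof plan.} The plan is to use the conservation law $\widetilde W_k(\Omega_t)=\widetilde W_k(\Omega_0)$ from Lemma~\ref{lem-mono-quantitirs} together with origin symmetry to sandwich $\Omega_t$ between two concentric geodesic balls of comparable radii. First I would observe that, since $\Omega_t$ is h-convex and hence geodesically convex, the geodesic joining any point of $\Omega_t$ to its antipode passes through $N=(0,1)$; from origin symmetry and nonempty interior this forces $N$ to be an interior point of $\Omega_t$. Lemma~\ref{lem-r-min-u-min-r-max-u-max} then gives $u_{\min}(t)=r_{\min}(t)$ and $u_{\max}(t)=r_{\max}(t)$, so \eqref{C0-est} reduces to producing positive constants $c\le C$, depending only on $n,k,M_0$, with $c\le r_{\min}(t)\le r_{\max}(t)\le C$.

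Next I would extract a one-sided geometric bound from origin symmetry. Since $\Omega_t$ is origin symmetric, the horospherical support function $u(\cdot,t)$ is even on $\mathbb{S}^n$. Picking $z_0$ with $u(z_0,t)=u_{\min}(t)$ gives $u(-z_0,t)=u_{\min}(t)$ as well, whence, by the very definition of the horospherical support function,
\[
\Omega_t\subset\overline{B}_{z_0}\bigl(u_{\min}(t)\bigr)\cap\overline{B}_{-z_0}\bigl(u_{\min}(t)\bigr).
\]
Lemma~\ref{lem-u-min-r-max} applied to this intersection of two opposite horoballs yields $\cosh r(\theta)\le e^{u_{\min}(t)}$ for every point of the intersection at distance $r(\theta)$ from $N$; combined with the trivial inclusion $B(r_{\min}(t))\subset\Omega_t$, this gives
\[
B\bigl(r_{\min}(t)\bigr)\subset\Omega_t\subset B\bigl(\cosh^{-1}(e^{u_{\min}(t)})\bigr).
\]

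To close the estimates I would invoke monotonicity of the modified quermassintegral under inclusion of h-convex domains (Proposition~\ref{prop-domain monotone modified quermass}) together with the conservation $\widetilde W_k(\Omega_t)=\widetilde W_k(\Omega_0)$. Writing $W(\rho):=\widetilde W_k(B(\rho))$, which is smooth and strictly increasing on $(0,\infty)$ (immediate from \eqref{induction def of modi-quer-intg} since the shifted mean curvatures $\coth\rho-1$ of concentric spheres are positive), the above sandwich gives
\[
W\bigl(r_{\min}(t)\bigr)\le \widetilde W_k(\Omega_0)\le W\bigl(\cosh^{-1}(e^{u_{\min}(t)})\bigr).
\]
The first inequality bounds $r_{\min}(t)$ from above, while the second bounds $\cosh^{-1}(e^{u_{\min}(t)})$, and hence $u_{\min}(t)=r_{\min}(t)$, from below by a positive constant $c=c(n,k,M_0)$. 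Inserting the resulting upper bound on $r_{\min}(t)$ into $r_{\max}(t)\le\cosh^{-1}(e^{r_{\min}(t)})$ produces the matching upper bound $r_{\max}(t)\le C$, and $r_{\max}(t)\ge r_{\min}(t)\ge c$ completes \eqref{C0-est}.

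The main obstacle, and the reason this lemma is not merely formal, is precisely the step from conservation of $\widetilde W_k$ to a diameter bound: without origin symmetry a long thin h-convex sliver can carry arbitrarily small $\widetilde W_k$ and yet arbitrarily large $r_{\max}$, so the conservation law alone would fail to sandwich the domain. Origin symmetry, via Lemma~\ref{lem-u-min-r-max}, converts the inward-size control coming from $\widetilde W_k$ into the two-sided geometric control on the radial function that the argument needs.
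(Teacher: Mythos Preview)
Your proposal is correct and follows essentially the same approach as the paper: both use the conservation $\widetilde W_k(\Omega_t)=\widetilde W_k(\Omega_0)$, origin symmetry to trap $\Omega_t$ in $\overline{B}_{z_0}(u_{\min})\cap\overline{B}_{-z_0}(u_{\min})$, and Lemma~\ref{lem-r-min-u-min-r-max-u-max}. The only cosmetic difference is that you obtain $\cosh r_{\max}\le e^{u_{\min}}$ by invoking Lemma~\ref{lem-u-min-r-max} directly, whereas the paper rederives the equivalent inequality $\cosh u_{\max}\le e^{u_{\min}}$ via the explicit expression \eqref{X(z)} (formula \eqref{phi-max-phi}); your packaging is arguably cleaner.
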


\begin{proof}
	In this proof, we fix a time $t \in [0,T)$. By \eqref{mono quan-eq}, we have
	\begin{equation*}
		\widetilde{W}_k(\Omega_t)=\widetilde{W}_k(\Omega_0).
	\end{equation*}
	Now we choose $z_0 \in \mathbb{S}^n$ such that $u(z_0,t)=u_{\min}(t)$. Since $\Omega_t$ is origin symmetric, we know $\Omega_t \subset \overline{B}_{z_0}(u(z_0,t)) \cap \overline{B}_{-z_0}(u(z_0,t))$. When $u(z_0,t)$ goes to $0$, we have that $\widetilde{W}_k \left( \overline{B}_{z_0}(u(z_0,t)) \cap \overline{B}_{-z_0}(u(z_0,t))\right) $ also goes to $0$ by Proposition \ref{prop-domain monotone modified quermass}. Hence 
	\begin{equation}\label{u min >c}
		u_{\min}(t) \geq c
	\end{equation}
	 for some constant $c$ along the flow \eqref{flow-HCMF}. Since
	\begin{equation*}
		\Omega_t = \bigcap_{z \in \mathbb{S}^n} \overline{B}_z(u(z,t)), 
	\end{equation*}
	we have $B(u_{\min}(t)) \subset \Omega_t$. Then
	\begin{equation*}
		\widetilde{W}_k \left( B(u_{\min}(t)) \right) \leq \widetilde{W}_k (\Omega_t)=\widetilde{W}_k(\Omega_0).
	\end{equation*}
	Consequently, $u_{\min}(t) \leq C$ for some constant $C$ along the flow \eqref{flow-HCMF}. Now we let $z_1 \in \mathbb{S}^n$ such that $u(z_1, t) =u_{\max}(t)$ and set $\g_{\max} (t) = e^{u_{\max} (t)}$. Let $\overline{X} = X(z_1, t)$ as in \eqref{X(z)}. Since the horo-ball $\overline{B}_z(r)$ is given by
	\begin{equation*}
		\log(- \metric{X}{(z,1)}) \leq r,
	\end{equation*}
	 we have
	\begin{equation*}
		-\metric{\overline{X}}{(z,1)}  \leq \g(z,t), \quad \forall \ z \in \mathbb{S}^n,
	\end{equation*}
	which means $\overline{X} \in \overline{B}_z(u(z))$. By \eqref{X(z)}, we have 
	\begin{equation}\label{phi-max-phi}
		\frac{1}{2} \g_{\max}(t) (1 + \metric{z_1}{z}) + \frac{1}{2} \(\frac{|D \g (z_1, t)|^2}{\g_{\max}(t)} + \frac{1}{\g_{\max}(t)} \) (1- \metric{z_1}{z}) + \metric{D \g(z_1, t)}{z} \leq \g(z,t)
	\end{equation}
	for any $z \in \mathbb{S}^n$. Now we set
	\begin{equation*}
		E = \lbrace z \in \mathbb{S}^n:  \metric{z_1}{z} \geq 0 \rbrace.
	\end{equation*}
	Then by $D \g(z_1,t) =0$ and $\g_{\max}(t) \geq e^{u_{\min}(t)} >1 $, we have $\g(z,t) \geq \frac{1}{2} \(\g_{\max}(t) +\frac{1}{\g_{\max}(t)} \)= \cosh u_{\max}(t)$ for $ z \in E$. Since $\Omega_t$ is origin symmetric, we can assume $z_0 \in E$ without loss of generality. Then we have
	\begin{equation*}
		\cosh u_{\max}(t) = \frac{1}{2} \(\g_{\max}(t) +\frac{1}{\g_{\max}(t)} \) \leq e^{u_{\min}(t)} \leq C,
	\end{equation*} 
	which means 
	\begin{equation}\label{u max <C}
		u_{\max}(t) \leq C
	\end{equation}
	for some constant $C$. Finally, Lemma \ref{lem-C0-est} follows from the estimates in \eqref{u min >c}, \eqref{u max <C} and formula \eqref{r-min-u-min-r-max-u-max}.
\end{proof}

\subsubsection{$C^1$ estimates}
\begin{lem}\label{lem-Dphi<phi}
	For a smooth, origin symmetric and uniformly h-convex hypersurface $M = \partial \Omega$ in $\mathbb{H}^{n+1}$, we have 
	\begin{equation}\label{Dphi<phi}
		|D \g(z)| < \g(z), \quad \forall \ z \in \mathbb{S}^{n}.
	\end{equation}
\end{lem}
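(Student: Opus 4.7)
The strategy is to express both $\g(z)$ and $|D\g(z)|$ in terms of the hyperboloid coordinates of the boundary point corresponding to $z$ under the horospherical Gauss map, and then to exploit origin symmetry through the pairing $z \leftrightarrow -z$ to control $\g(z)^2 - |D\g(z)|^2$.

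Since $M$ is smooth and uniformly h-convex, the horospherical Gauss map $G\colon M \to \mathbb{S}^n$ is a diffeomorphism, so I may parametrize $M$ by $X(z) = G^{-1}(z)$. Writing $X(z) = (x, x_{n+1}) \in \mathbb{R}^{n+1,1}$, the identity \eqref{X-z,1} gives $\g(z) = -\metric{X(z)}{(z,1)} = x_{n+1} - \metric{x}{z}$. Moreover, from formula \eqref{X(z)} the spatial part of $X(z)$ decomposes as a multiple of $z$ plus the vector $-D\g$ (viewed in $\mathbb{R}^{n+1}$ and perpendicular to $z$), so that $|D\g(z)|^2 = |x|^2 - \metric{x}{z}^2$. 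Combining these two expressions with the hyperboloid relation $x_{n+1}^2 - |x|^2 = 1$, a short computation yields the key algebraic identity
\begin{equation*}
\g(z)^2 - |D\g(z)|^2 = (x_{n+1} - \metric{x}{z})^2 - (|x|^2 - \metric{x}{z}^2) = 1 - 2\g(z)\metric{x}{z}.
\end{equation*}
Hence the lemma will follow once I verify $\metric{x}{z} \leq 0$ at $X(z) \in \partial \Omega$.

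To establish this, I invoke the origin symmetry of $\Omega$: since $(x,x_{n+1}) \in \Omega$ iff $(-x, x_{n+1}) \in \Omega$, the definition \eqref{horo supp funct-def} gives $\g(-z) = \g(z)$. Applying the defining inequality of the support function to the point $X = X(z) \in \Omega$ with test directions $z$ and $-z$ yields
\begin{equation*}
x_{n+1} - \metric{x}{z} \leq \g(z), \qquad x_{n+1} + \metric{x}{z} \leq \g(-z) = \g(z).
\end{equation*}
Summing these two inequalities gives $x_{n+1} \leq \g(z)$, equivalently $\metric{x}{z} = x_{n+1} - \g(z) \leq 0$. Substituting back into the identity above produces $\g(z)^2 - |D\g(z)|^2 \geq 1 > 0$, which is the desired strict bound $|D\g(z)| < \g(z)$. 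No serious obstacle arises; the essential point is recognizing that origin symmetry allows one to test the support-function inequality simultaneously against $z$ and $-z$, turning the one-sided estimate furnished by a single supporting horosphere into the two-sided bound $x_{n+1} \leq \g(z)$.
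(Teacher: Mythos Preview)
Your proof is correct and takes a genuinely different route from the paper's argument. The paper proceeds globally: it fixes the minimum point $z_0$ of $\g$, uses origin symmetry to trap $\Omega$ inside $\overline{B}_{z_0}(u(z_0)) \cap \overline{B}_{-z_0}(u(z_0))$, then invokes the geometric Lemma~\ref{lem-u-min-r-max} about the radial bound on such an intersection to get $\cosh r \leq \g(z_0)$, and finally combines this with formula~\eqref{coshr} to bound $|D\g|^2/\g^2$. Your argument is instead pointwise and purely algebraic: you read off from \eqref{X(z)} the identity $\g(z)^2 - |D\g(z)|^2 = 1 - 2\g(z)\metric{x}{z}$ and then use origin symmetry in the form $\g(-z)=\g(z)$ together with the support inequality at $-z$ (applied to the single point $X(z)$) to force $\metric{x}{z}\le 0$. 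This bypasses Lemma~\ref{lem-u-min-r-max} entirely and yields the sharper quantitative statement $\g^2 - |D\g|^2 \geq 1$ directly; the paper's route also gives this bound after a further step, but packages it less explicitly. The trade-off is that the paper's argument makes the geometric picture (containment in a lens of horo-balls) visible, while yours is shorter and self-contained.
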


\begin{proof}
	Fix a point $z_0 \in \mathbb{S}^n$ such that $\min_{z \in \mathbb{S}^n} \g(z) = \g(z_0)$. Since $\Omega \subset \overline{B}_{z_0}(u(z_0)) \cap \overline{B}_{-z_0}(u(z_0))$, inequality \eqref{u-min-r-max} implies 
	\begin{equation*}
		\cosh r(\theta) \leq \g(z_0), \quad \forall \ \theta \in \mathbb{S}^n.
	\end{equation*}
	 For any $z \in \mathbb{S}^n$, by using \eqref{coshr} and $\g(z) \geq \g(z_0)$, we have
	\begin{equation*}
		\frac{|D \g|^2}{\g^2}(z) = \frac{2(\cosh r- \cosh u(z))}{\g(z)} \leq \frac{2 \sinh u(z_0)}{\g(z_0)} 
		=1- \frac{1}{\g^2(z_0)}<1.
	\end{equation*}
	 Hence, inequality $|D \g(z)| < \g(z)$ holds for all  $z \in \mathbb{S}^n$.
\end{proof}

\begin{cor}\label{cor-C1-est}
	Let $M_t = \partial \Omega$ be a smooth, origin symmetric and uniformly h-convex solution to the flow \eqref{flow-HCMF}.	Then there is a positive constant $C$ depending only on $n$, $k$ and $M_0$ such that
	\begin{equation}\label{C1-est}
		|D \g(z,t)| \leq C, \quad  \forall \ (z,t) \in \mathbb{S}^n \times [0,T).
	\end{equation}
\end{cor}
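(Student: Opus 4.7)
The plan is to observe that Corollary \ref{cor-C1-est} follows almost immediately by combining the two preceding results: the $C^0$ estimate in Lemma \ref{lem-C0-est} and the pointwise gradient bound in Lemma \ref{lem-Dphi<phi}. Specifically, Lemma \ref{lem-C0-est} provides a uniform upper bound $u(z,t) \leq u_{\max}(t) \leq C_0$ along the flow, with $C_0$ depending only on $n$, $k$, and $M_0$. Exponentiating yields $\g(z,t) = e^{u(z,t)} \leq e^{C_0}$.

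Next, since the flow \eqref{flow-HCMF} is assumed to have a smooth, origin symmetric, uniformly h-convex solution at each time $t \in [0,T)$, each time slice $M_t = \partial \Omega_t$ satisfies the hypotheses of Lemma \ref{lem-Dphi<phi}. Applying that lemma pointwise in $z$ for each fixed $t$ gives
\begin{equation*}
    |D\g(z,t)| < \g(z,t) \leq e^{C_0}, \quad \forall\ (z,t) \in \mathbb{S}^n \times [0,T).
\end{equation*}
Setting $C := e^{C_0}$ yields the desired uniform bound.

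There is essentially no obstacle here: the proof is a two-line combination of earlier results. The only subtlety is verifying that the origin-symmetric property needed for Lemma \ref{lem-Dphi<phi} is preserved under the flow \eqref{flow-HCMF}. This is already ensured by the discussion after equation \eqref{scalar eq phi-HCMF}, since $f(z)$ is even and the initial hypersurface $M_0$ is origin symmetric, so $\g(z,t)$ remains even in $z$ for all $t$, equivalently $\Omega_t$ is origin symmetric. Hence Lemma \ref{lem-Dphi<phi} applies at every time, and Corollary \ref{cor-C1-est} follows.
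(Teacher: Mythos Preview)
Your proof is correct and follows essentially the same approach as the paper: combine the pointwise gradient bound $|D\g| < \g$ from Lemma \ref{lem-Dphi<phi} with the uniform $C^0$ bound on $\g$ from Lemma \ref{lem-C0-est}. The paper's proof is the one-line version of what you wrote; your additional remark on the preservation of origin symmetry is accurate but not strictly needed here, since origin symmetry of $M_t$ is already part of the hypothesis.
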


\begin{proof}
	Using \eqref{C0-est} and \eqref{Dphi<phi},  we have $|D\g(z,t)| < \g(z,t) \leq C$ along the flow, which induces Corollary \ref{cor-C1-est}.
\end{proof}

\begin{lem}\label{lem-c < Phi <C}
	Let $M_t = \partial \Omega$ be a smooth, origin symmetric and uniformly h-convex solution to the flow \eqref{flow-HCMF}. Along the flow \eqref{flow-HCMF}, it holds that
	\begin{equation}\label{Phi<C}
		\Phi(t) \leq C, \quad \forall \ t\in [0,T),
	\end{equation}
	where $C$ is a positive constant that only depends on $n$, $k$, $p$, $||f(z)||_{C^0(\mathbb{S}^n)}$ and $M_0$. Furthermore, if we assume $\max_{1 \leq i, j \leq n} \(\tilde{\kappa}_i / \tilde{\kappa}_j\) \leq C'$ on $M \times [0,T)$ along the flow for some positive constant $C'$, then
	\begin{equation}\label{Phi>c}
		\Phi(t) \geq c, \quad \forall \ t\in[0,T)
	\end{equation}
	for some positive constant $c$ that only depends on $C'$, $n$, $k$, $p$, $\min_{z \in \mathbb{S}^n} f(z)$ and $M_0$.  
\end{lem}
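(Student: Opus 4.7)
The strategy is to bound the numerator $N(t)$ and denominator $D(t)$ in the definition \eqref{def of Phi} separately, relying only on the $C^0$ bounds of Lemma \ref{lem-C0-est} for the upper estimate and additionally invoking the pinching hypothesis for the lower estimate. As a preliminary, the $C^0$ bounds yield $B(c) \subset \Omega_t \subset B(C)$, so by Proposition \ref{prop-domain monotone modified quermass} every $\widetilde{W}_j(\Omega_t)$ lies in a positive compact interval $[\widetilde{W}_j(B(c)),\widetilde{W}_j(B(C))]$. Iterating the shifted Minkowski formula \eqref{eq-shifted Minkowski formula} in the form $\int_{M_t} \g^{-1} p_m(\tilde\kappa) d\mu_t = \int_{M_t} \tilde u \, p_{m+1}(\tilde\kappa) d\mu_t$, and using the uniform two-sided bounds on $\g$ and $\tilde u$ (the latter satisfying $\tilde u \ge \sinh c > 0$ by \eqref{tilde u} and $u \ge c$), starting from $\int_{M_t} p_0(\tilde\kappa) d\mu_t = |M_t| \in [|\partial B(c)|,|\partial B(C)|]$, propagates two-sided positive bounds to each $I_j(t) := \int_{M_t} p_j(\tilde\kappa) d\mu_t$, $0 \le j \le n$. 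By \eqref{rel-area element} and \eqref{shifted curvature-support function}, $I_j(t) = \int_{\mathbb{S}^n} \g^{-n} p_{n-j}(\tilde\lambda) d\sigma$.

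For the upper bound \eqref{Phi<C}, when $k \le n-2$ I apply H\"older's inequality with exponents $(n-k)/(n-k-1)$ and $n-k$ to obtain
\begin{equation*}
N(t) = \int_{\mathbb{S}^n} \g^{-n} p_{n-k}^{1-1/(n-k)}(\tilde\lambda) d\sigma \le I_k(t)^{1-1/(n-k)} \(\int_{\mathbb{S}^n} \g^{-n} d\sigma\)^{1/(n-k)} \le C,
\end{equation*}
while the borderline case $k = n-1$ is immediate because the exponent $1 - 1/(n-k)$ vanishes. Factoring the denominator integrand as $\bigl(f^{-1/(n-k)} \g^{-(n+p)/(n-k)}\bigr)\cdot\bigl(\g^{-n} p_{n-k}\bigr)$, the first bracket has a positive lower bound from the $C^0$ control on $\g$ and $f$, while the second integrates to $I_k(t) \ge c > 0$ by the preliminary. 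Hence $D(t) \ge c$, and \eqref{Phi<C} follows.

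For the lower bound \eqref{Phi>c}, I exploit the pinching: under $\tilde\kappa_{\max}/\tilde\kappa_{\min} \le C'$, all $\tilde\lambda_i$ lie in an interval $[\lambda_0, C'\lambda_0]$ pointwise, from which a direct elementary computation yields
\begin{equation*}
p_{n-k}^{1-1/(n-k)}(\tilde\lambda) \ge (C')^{-(n-k)} \, p_{n-k}(\tilde\lambda) \, p_1(\tilde\kappa).
\end{equation*}
Integrating this pointwise bound, converting to boundary integrals via \eqref{rel-area element}--\eqref{shifted curvature-support function}, and applying Newton's inequality \eqref{Newton ineq} in the form $p_1(\tilde\kappa) p_k(\tilde\kappa) \ge p_{k+1}(\tilde\kappa)$ produce
\begin{equation*}
N(t) \ge (C')^{-(n-k)} \int_{M_t} p_1(\tilde\kappa) p_k(\tilde\kappa) d\mu_t \ge (C')^{-(n-k)} I_{k+1}(t) \ge c > 0,
\end{equation*}
whereas $D(t) \le C \cdot I_k(t) \le C''$ by the $C^0$ bounds, giving \eqref{Phi>c}. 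The principal technical hurdle is precisely this lower bound: deriving the pointwise pinching comparison in the displayed inequality above and then reducing the mixed curvature integral $\int p_1 p_k \, d\mu_t$ back to a single-order integral $I_{k+1}(t)$ that is controlled by the preliminary step.
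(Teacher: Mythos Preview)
Your proof is correct and follows the same architecture as the paper's: obtain two-sided bounds $I_j(t)\sim 1$ for all $0\le j\le n$ via the shifted Minkowski formula and the $C^0$/$C^1$ control on $\g$ and $\tilde u$, then bound the numerator and denominator of $\Phi(t)$ separately using these integrals together with the $C^0$ bounds on $\g$ and $f$. The only cosmetic differences are that the paper bounds $N(t)$ above via the Maclaurin inequality $p_{n-k}^{(n-k-1)/(n-k)}(\tilde\lambda)\le p_{n-k-1}(\tilde\lambda)$ rather than H\"older, and under pinching bounds $N(t)$ below by comparing $p_{n-k}^{(n-k-1)/(n-k)}(\tilde\lambda)$ directly with $(C')^{-(n-k-1)}p_{n-k-1}(\tilde\lambda)$ rather than routing through $p_{n-k}(\tilde\lambda)p_1(\tilde\kappa)$ and Newton's inequality --- both routes land on the same controlled integral $I_{k+1}(t)$.
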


\begin{proof}
	We say $a \sim b$ if and only if $a \leq Cb$, $b \leq C a$ for some constant $C>0$ depending only on $n$, $k$ and $M_0$. Combining the a priori estimates in \eqref{C0-est} and \eqref{C1-est} with formulas \eqref{1/phi, coshr-u} and \eqref{tilde u}, we have 
	\begin{align}
		&\cosh r(x,t) -\tilde{u}(x,t) = \frac{1}{\g(z,t)} \sim 1, \label{sim1-cosh r- u}\\
		 &\tilde{u}(x,t)  =\frac{1}{2} \frac{|D \g|^2}{\g}(z,t) + \frac{1}{2} \(\g(z,t)- \frac{1}{\g(z,t)} \) \sim 1 \label{sim1-u}
	\end{align}
	along the flow \eqref{flow-HCMF}. By \eqref{shifted curvature-support function} and \eqref{rel-area element}, we have
	$\int_{M_t} \g^n(z,t) p_n(\tilde{\kappa}) d\mu_t =\int_{\mathbb{S}^n} d\sigma= \omega_n$, which induces $\int_{M_t} p_n (\tilde{\kappa}) d \mu_t \sim 1$. Using \eqref{sim1-cosh r- u}, \eqref{sim1-u} and \eqref{eq-shifted Minkowski formula}, we know $\int_{M_t} p_m (\tilde{\kappa}) d \mu_t \sim \int_{M_t} p_{m+1}(\tilde{\kappa}) d \mu_t$ for $m =0,1, \ldots, n-1$. Hence we have
	\begin{equation}\label{sim1-pm kappa}
		\int_{M_t} p_m (\tilde{\kappa}) d \mu_t \sim 1, \quad m=0,1,\ldots,n
	\end{equation}
	along the flow. Using \eqref{rel-area element}, \eqref{shifted curvature-support function} and \eqref{sim1-cosh r- u} in \eqref{sim1-pm kappa}, we get
	\begin{align}
		1 \sim& 	\int_{M_t} p_m (\tilde{\kappa}) d \mu_t
		= \int_{\mathbb{S}^n}  p_m (\tilde{\kappa}) \det A[\g] d\sigma \nonumber \\
		=& \int_{\mathbb{S}^n} p_m(\tilde{\kappa}) \g^{-n} p_n^{-1} (\tilde{\kappa}) d\sigma
		=\int_{\mathbb{S}^n} p_{n-m}(\tilde{\lambda}) \g^{-n} d\sigma \nonumber \\
		\sim& \int_{\mathbb{S}^n} p_{n-m}(\tilde{\lambda}(x,t)) d\sigma , \quad m=0,1, \ldots,n, \label{sim1-p_n-m lambda}
	\end{align}
	where $X(x,t) = G_t^{-1}(z)$. Formulas \eqref{sim1-cosh r- u} and \eqref{sim1-p_n-m lambda} imply
	\begin{equation}\label{Phi-1}
		\frac{1}{C_1} \leq \int_{\mathbb{S}^n} f^{-\frac{1}{n-k}}(z) \g^{-n-\frac{n+p}{n-k}}(z,t) p_{n-k} (\tilde{\lambda}(x,t)) d \sigma \leq C_2
	\end{equation}
	along the flow \eqref{flow-HCMF} for some constants $C_1$ and $C_2$. Here $C_1$ only depends on $n$, $k$, $p$, $||f(z)||_{C^0(\mathbb{S}^n)}$ and $M_0$, and $C_2$ only depends on $n$, $k$, $p$, $\min_{z \in \mathbb{S}^n} f(z)$  and $M_0$. If $n \geq 2$ and $k \leq n-2$, then \eqref{McLau ineq} yields
	\begin{equation}\label{Mac-lambda}
		p_{n-k}^{ \frac{n-k-1}{n-k}}(\tilde{\lambda}(x,t)) \leq p_{n-k-1}(\tilde{\lambda}(x,t)).
	\end{equation}
	When $n \geq 1$ and $k=n-1$, we note that \eqref{Mac-lambda} is trivial. Using \eqref{Mac-lambda}, \eqref{sim1-cosh r- u} and \eqref{sim1-p_n-m lambda}, we have
	\begin{equation}\label{Phi-3}
		\int_{\mathbb{S}^n} \g^{-n}(z,t) p_{n-k}^{1- \frac{1}{n-k}}(\tilde{\lambda}(x,t)) d\sigma 
		\leq \int_{\mathbb{S}^n} \g^{-n}(z,t) p_{n-k-1}(\tilde{\lambda}(x,t) ) d\sigma
		\leq C
	\end{equation}
	along the flow \eqref{flow-HCMF} for some constant $C$. Combining \eqref{Phi-1}, \eqref{Phi-3} with \eqref{def of Phi}, we obtain \eqref{Phi<C}.  
	
	Now we estimate $\Phi(t)$ from below. For convenience, we assume $\tilde{\kappa}_1(x,t) \leq \cdots \leq \tilde{\kappa}_n(x,t)$ at $X(x,t) \in M_t$, which means $\tilde{\lambda}_1(x,t) \geq \cdots \geq \tilde{\lambda}_n(x,t)$. By the assumption in Lemma \ref{lem-c < Phi <C}, we have
	\begin{equation}\label{pinch-assm-pk-lambda}
		p_{n-k}^{ \frac{n-k-1}{n-k}}(\tilde{\lambda}(x,t)) \geq \(\tilde{\lambda}_n(x,t) \)^{n-k-1} \geq \(\frac{1}{C'} \tilde{\lambda}_1(x,t)\)^{n-k-1} \geq \(C'\)^{-(n-k-1)} p_{n-k-1} (\tilde{\lambda}(x,t)).
	\end{equation}
	Then by using \eqref{Phi-3}, \eqref{pinch-assm-pk-lambda}, \eqref{sim1-cosh r- u} and \eqref{sim1-p_n-m lambda}, there exists a large positive constant $C$ such that
	\begin{align}
		C \geq&
		\int_{\mathbb{S}^n} \g^{-n}(z,t) p_{n-k}^{1- \frac{1}{n-k}}(\tilde{\lambda}(x,t))d\sigma \nonumber\\
		\geq& 
		\(C'\)^{-(n-k-1)}\int_{\mathbb{S}^n} \g^{-n}(z,t)  p_{n-k-1} (\tilde{\lambda}(x,t)) d\sigma
		\geq \frac{1}{C}.    \label{Phi-2}
	\end{align}
	Then \eqref{Phi>c} follows from \eqref{Phi-1}, \eqref{Phi-2} and the definition of $\Phi(t)$ in \eqref{def of Phi}. This completes the proof of Lemma \ref{lem-c < Phi <C}.
\end{proof}

\subsubsection{Pinching estimates}
Along a general flow
\begin{equation*}
	\frac{\partial}{\partial t} X =\mathscr{F} \nu,
\end{equation*}
the following evolution equations for hypersurfaces in $\mathbb{H}^{n+1}$ are well known, see e.g. \cite[Equations (2.7) and (2,9)]{ACW18},
\begin{align}
	\frac{\partial}{\partial t} g_{ij} =& 2 \mathscr{F} h_{ij}, \label{evl-gij}\\
	\frac{\partial}{\partial t} h_i{}^j =& - \nabla^j \nabla_i \mathscr{F} - \mathscr{F} (h_i{}^l h_l{}^j - \delta_i{}^j). \nonumber
\end{align}
Hence 
\begin{equation}\label{evl-thij}
	\frac{\partial}{\partial t} \tilde{h}_i{}^j = -\nabla^j \nabla_i \mathscr{F} - \mathscr{F} \tilde{h}_i{}^l \tilde{h}_l{}^j -2 \mathscr{F} \tilde{h}_i{}^j,
\end{equation}
where $\tilde{h}_i{}^j = h_i{}^j-\delta_i{}^j$.

\begin{lem}\label{lem-evl-thij}
	Along the flow \eqref{flow-HCMF}, the shifted Weingarten matrix $\tilde{h}_i{}^j$ evolves as
	\begin{align}
		\frac{\partial}{\partial t} \tilde{h}_i{}^j =&
		\dot{F}^{lm} \nabla_m \nabla_l \tilde{h}_i{}^j + \ddot{F}^{lm,qs} \nabla_i \tilde{h}_{lm} \nabla^j \tilde{h}_{qs} +\dot{F}^{lm} \tilde{h}_l{}^q \tilde{h}_{qm} \delta_i{}^j \nonumber\\
		&+\dot{F}^{lm} \tilde{h}_l{}^q \tilde{h}_{qm} \tilde{h}_i{}^j
		-\dot{F}^{lm}g_{lm} \tilde{h}_i{}^q \tilde{h}_q{}^j
		-F \tilde{h}_i{}^l \tilde{h}_l{}^j \nonumber\\
		&-\mathscr{F} \tilde{h}_i{}^l \tilde{h}_l{}^j
		-2\mathscr{F} \tilde{h}_i{}^j -\Phi \nabla^j \nabla_i\( \g^{-\frac{n+p}{n-k}} h\).\label{evl-thij-new}
	\end{align}
\end{lem}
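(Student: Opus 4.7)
\textbf{Proof proposal for Lemma \ref{lem-evl-thij}.} The plan is to start from the general evolution equation \eqref{evl-thij} for the shifted Weingarten matrix along an arbitrary flow $\partial_t X = \mathscr{F}\nu$, substitute the specific speed
\[
\mathscr{F}=\Phi(t)\,\g^{-\frac{n+p}{n-k}}h(z)-F(\tilde{\kappa}),\qquad F:=\Bigl(\tfrac{p_n}{p_k}\Bigr)^{\frac{1}{n-k}},
\]
and then expand $-\nabla^{j}\nabla_{i}\mathscr{F}$. The purely global piece $-\Phi\,\nabla^{j}\nabla_{i}\bigl(\g^{-(n+p)/(n-k)}h\bigr)$ reads off directly, so the real work is to rewrite $\nabla^{j}\nabla_{i}F$ as a linear second-order operator in $\tilde{h}_i{}^j$ plus the desired algebraic terms. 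By the chain rule and the fact that $F$ is viewed as a function of $\tilde{h}_{lm}$ via $g$,
\[
\nabla^{j}\nabla_{i}F=\dot F^{lm}\,\nabla^{j}\nabla_{i}\tilde h_{lm}+\ddot F^{lm,qs}\,\nabla_{i}\tilde h_{lm}\,\nabla^{j}\tilde h_{qs},
\]
which already accounts for the $\ddot F$-term in the claim.

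The next step is to commute $\nabla^{j}\nabla_{i}\tilde h_{lm}$ into $\nabla_{m}\nabla_{l}\tilde h_i{}^j$ using the Codazzi identity $\nabla_{i}\tilde h_{lm}=\nabla_{l}\tilde h_{im}=\nabla_{m}\tilde h_{il}$ together with the Ricci identity on $(M_t,g)$. Applying Codazzi twice and a commutation of covariant derivatives yields
\[
\nabla^{j}\nabla_{i}\tilde h_{lm}=\nabla_{m}\nabla_{l}\tilde h_i{}^j+g^{js}\bigl(R_{sli}{}^{p}\tilde h_{pm}+R_{slm}{}^{p}\tilde h_{ip}\bigr)+\text{(a further curvature correction from commuting the $(l,m)$-pair)},
\]
and every curvature tensor is evaluated by the Gauss equation for a hypersurface of $\mathbb{H}^{n+1}$,
\[
R_{ijkl}=h_{ik}h_{jl}-h_{il}h_{jk}-(g_{ik}g_{jl}-g_{il}g_{jk})=\tilde h_{ik}\tilde h_{jl}-\tilde h_{il}\tilde h_{jk}+\tilde h_{ik}g_{jl}+g_{ik}\tilde h_{jl}-\tilde h_{il}g_{jk}-g_{il}\tilde h_{jk},
\]
so that each $R$-contraction produces expressions purely in $\tilde h$ and $g$.

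After contracting with $\dot F^{lm}$ and simplifying, the cross terms of the form $g\cdot\tilde h$ combine with the algebraic terms $-\mathscr{F}\tilde h_i{}^l\tilde h_l{}^j-2\mathscr{F}\tilde h_i{}^j$ already present in \eqref{evl-thij}, and the quadratic terms $\tilde h\cdot\tilde h$ rearrange into the four algebraic contributions
\[
\dot F^{lm}\tilde h_l{}^q\tilde h_{qm}\,\delta_i{}^j+\dot F^{lm}\tilde h_l{}^q\tilde h_{qm}\,\tilde h_i{}^j-\dot F^{lm}g_{lm}\,\tilde h_i{}^q\tilde h_q{}^j-F\,\tilde h_i{}^l\tilde h_l{}^j,
\]
where the $F$-term appears through the homogeneity identity $F=\dot F^{lm}\tilde h_{lm}$ from Lemma \ref{lem-pm-shifted curvature}, equation \eqref{F-ij h-ij}. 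Collecting everything produces exactly \eqref{evl-thij-new}.

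The main obstacle is the bookkeeping in the commutation step: one must apply Codazzi twice and the Ricci identity (at least) once, and then carefully expand $R_{ijkl}$ via the Gauss equation in its shifted form to see that the curvature contractions with $\dot F^{lm}$ assemble precisely into the four stated algebraic terms, with the $F\tilde h\tilde h$ contribution arising only after invoking Euler's identity $\dot F^{lm}\tilde h_{lm}=F$. Sign conventions and index balancing in the Ricci commutator are the delicate points; once these are handled consistently, the remainder of the computation is purely algebraic.
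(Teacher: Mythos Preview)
Your proposal is correct and follows essentially the same route as the paper, which simply cites the Simons'-type identity (from \cite{And94} and \cite{HLW20}) for $\nabla^j\nabla_iF$ rather than re-deriving it from Codazzi, the Ricci identity, and the Gauss equation as you outline; the content is identical.

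One small correction to your bookkeeping sketch: the $g\cdot\tilde h$ cross terms arising from the Gauss equation do \emph{not} combine with the $-\mathscr{F}\tilde h_i{}^l\tilde h_l{}^j-2\mathscr{F}\tilde h_i{}^j$ terms from \eqref{evl-thij}; those $\mathscr{F}$-terms pass through to \eqref{evl-thij-new} unchanged. The $g\cdot\tilde h$ pieces cancel \emph{internally} within the commutation computation for $\nabla^j\nabla_iF$ (after invoking $\dot F^{lm}\tilde h_{lm}=F$), leaving exactly the four algebraic terms you list.
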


\begin{proof}
	Recall that $F \(\tilde{\kappa}\):= \( \frac{p_n(\tilde{\kappa})}{p_k \(\tilde{\kappa}\)}  \)^{\frac{1}{n-k}}$. By the Simons' identity (see e.g., \cite[Equation (2-7)]{And94}, \cite[Equation (3.5)]{HLW20}), we have
	\begin{align*}
		\nabla^j\nabla_iF= & \nabla^j(\dot{F}^{lm}\nabla_i\tilde{h}_{lm}) \\
		=&\dot{F}^{lm}\nabla^j\nabla_i\tilde{h}_{lm}+\ddot{F}^{lm,qs}\nabla_i\tilde{h}_{lm}\nabla^j\tilde{h}_{qs}\nonumber\\
		=& \dot{F}^{lm}\nabla_m\nabla_l\tilde{h}_{i}{}^j + \dot{F}^{lm}((h^2)_{lm}+g_{lm})h_{i}{}^j-\dot{F}^{lm}h_{lm}((h^2)_{i}{}^j+\delta_i{}^j)\\
		&+\ddot{F}^{lm,qs}\nabla_i\tilde{h}_{lm}\nabla^j\tilde{h}_{qs}.
	\end{align*}
	This together with \eqref{F-ij h-ij} gives
	\begin{align}
		\nabla^j \nabla_i F =& 
		\dot{F}^{lm}\nabla_m\nabla_l\tilde{h}_i{}^j
		+\dot{F}^{lm} \( (\tilde{h}_l{}^q + \delta_l{}^q)(\tilde{h}_{qm} + g_{qm})+ g_{lm} \)\( \tilde{h}_i{}^j + \delta_i{}^j \) \nonumber\\
		&- \dot{F}^{lm}\( \tilde{h}_{lm}+g_{lm} \)\(  (\tilde{h}_{i}{}^q+ \delta_{i}{}^q  )(\tilde{h}_q{}^j + \delta_q{}^j)+\delta_i{}^j \)
		+\ddot{F}^{lm,qs}\nabla_i\tilde{h}_{lm}\nabla^j\tilde{h}_{qs}  \nonumber\\
		=&\dot{F}^{lm}\nabla_m\nabla_l\tilde{h}_i{}^j+\dot{F}^{lm} \tilde{h}_l{}^q \tilde{h}_{qm} \tilde{h}_i{}^j
		+\dot{F}^{lm} \tilde{h}_l{}^q \tilde{h}_{qm} \delta_i{}^j
	 	-\dot{F}^{lm}g_{lm} \tilde{h}_i{}^q \tilde{h}_q{}^j  \nonumber\\
	 	&-F \tilde{h}_i{}^l \tilde{h}_l{}^j
	 	+\ddot{F}^{lm,qs}\nabla_i\tilde{h}_{lm}\nabla^j\tilde{h}_{qs}.\label{sims-id}
	\end{align}
	By \eqref{evl-thij}, we have
	\begin{align}
		\frac{\partial}{\partial t} \tilde{h}_i{}^j =&
		-\nabla^j \nabla_i \( \Phi \g^{-\frac{n+p}{n-k}} h-F \)
		- \mathscr{F}\tilde{h}_i{}^l \tilde{h}_l{}^j - 2\mathscr{F} \tilde{h}_i{}^j \nonumber\\
		=&\nabla^j \nabla_i F -\Phi \nabla^j \nabla_i \( \g^{-\frac{n+p}{n-k}} h \)
		- \mathscr{F}\tilde{h}_i{}^l \tilde{h}_l{}^j - 2\mathscr{F} \tilde{h}_i{}^j. \label{dt-thij}
	\end{align}
	Then we obtain \eqref{evl-thij-new} by inserting \eqref{sims-id} into \eqref{dt-thij}. This completes the proof of Lemma \ref{lem-evl-thij}.
\end{proof}

The following Lemma \ref{lem-from assmp of f to desired est} shows how Assumption \ref{assump-h} works in the pinching estimates, i.e. Proposition \ref{prop-Pinching-est} below.

\begin{lem}\label{lem-from assmp of f to desired est}
	Let $n \geq 2$ and $1 \leq k \leq n-1$ be integers, and $p>-n$ be a real number. Let $f(z)$ be a smooth positive function on $\mathbb{S}^n$ satisfying Assumption \ref{assump-h}. Suppose that $M = \partial \Omega$ is a smooth uniformly h-convex hypersurface in $\mathbb{H}^{n+1}$ with horospherical support function $u(z)$. Let $\g(z) = e^{u(z)}$ and $h(z) = f^{-\frac{1}{n-k}}(z)$. Let $\{e_1, \ldots, e_n\}$ be a local orthonormal frame for $\mathbb{S}^n$. 
	\begin{enumerate}
		\item  If $-n<p\leq-\frac{n+k}{2}$, and in addition that $\Omega$ is origin symmetric, then
	 	\begin{align}
	 	&  \g^2 D^2 h( e_i,e_i)
	 	- \g \metric{D h}{D \g} 
	 	- \frac{2(k+p)}{n-k} \g \metric{Dh}{e_i} \metric{D \g}{e_i}  
	 	+\frac{n+p}{2(n-k)}h|D \g|^2 \nonumber\\
	 	&+ \frac{n+p}{2(n-k)}\g^2 h  
	 	+ \frac{(n+p)(k+p)}{(n-k)^2}h\metric{D \g}{e_i}^2
	 	+ \frac{n-2k-p}{2(n-k)}h \geq 0, \quad  \forall 1 \leq i \leq n, \label{1-suffi-cond-h-p>-n-new}
	 	\end{align}
	 	on $\mathbb{S}^n$.
 		\item If $-\frac{n+k}{2}<p<-k$, then \eqref{1-suffi-cond-h-p>-n-new} holds on $\mathbb{S}^n$.
 		\item If $-k \leq p \leq n-2k$, then \eqref{1-suffi-cond-h-p>-n-new} holds on $\mathbb{S}^n$.
		\item If $p>n-2k$,  and in addition that $\Omega$ contains the origin, then \eqref{1-suffi-cond-h-p>-n-new} holds on $\mathbb{S}^n$.
	\end{enumerate}
\end{lem}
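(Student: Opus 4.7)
The plan is to reduce, in each of the four cases (2)--(5), the desired inequality \eqref{1-suffi-cond-h-p>-n-new} to the pointwise lower bound on $D^2 h$ furnished by Assumption \ref{assump-h}. In every case the strategy is the same: substitute the assumption into \eqref{1-suffi-cond-h-p>-n-new}, reorganize the result as a quadratic form in $(Dh,D\g,h,\g)$, and then close it off via a Cauchy--Schwarz estimate on the mixed products $\metric{Dh}{D\g}$ and $\metric{Dh}{e_i}\metric{D\g}{e_i}$. The coefficients in Assumption \ref{assump-h} are precisely tuned so that this last step works; the role of the proof is to make this tuning visible.

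For cases (4) and (5) (i.e.\ $p\ge-k$), the natural change of variable is $\tilde h := h^{(n-k)/(n+p)}$, since Assumption \ref{assump-h} is phrased in terms of $\tilde h$. Writing $h=\tilde h^{(n+p)/(n-k)}$ and differentiating,
\[
D_i D_j h = \tfrac{n+p}{n-k}\tilde h^{\frac{n+p}{n-k}-1} D_i D_j \tilde h + \tfrac{n+p}{n-k}\bigl(\tfrac{n+p}{n-k}-1\bigr)\tilde h^{\frac{n+p}{n-k}-2} D_i\tilde h\, D_j\tilde h,
\]
so the hypothesis $D^2\tilde h-\tfrac{1}{2}\tilde h^{-1}|D\tilde h|^2 I + c\,\tilde h I\ge 0$ (with $c=\tfrac{1}{2}$ in case (4) and $c=\tfrac{n-k}{n+p}$ in case (5)) translates into an $h$-form lower bound on $D^2h$ of the shape $D^2 h \ge \alpha \,h^{-1}|Dh|^2 I - \beta\, h I$ for explicit constants $\alpha,\beta$. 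Inserting this into \eqref{1-suffi-cond-h-p>-n-new}, every term containing $D^2h$ is replaced by quantities involving $|Dh|^2/h$ and $h$; the remaining cross terms $\g\metric{Dh}{D\g}$ and $\g\metric{Dh}{e_i}\metric{D\g}{e_i}$ will be handled by the Cauchy--Schwarz estimate
\[
|\g\metric{Dh}{D\g}|\le \tfrac{1}{2}\bigl(\tfrac{|Dh|^2}{h}\bigr)\g^2 \cdot t + \tfrac{1}{2}\, h|D\g|^2 \cdot t^{-1}
\]
for a suitable $t>0$, and analogously for $\metric{Dh}{e_i}\metric{D\g}{e_i}$. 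The resulting quadratic form in $(|Dh|^2/h,\g^2 h, h|D\g|^2, h)$ has non-negative coefficients thanks to Assumption \ref{assump-h}; in case (5) the additional inequality $\g>1$ (which holds because $\Omega$ contains the origin) is used to absorb the term $\tfrac{n-2k-p}{2(n-k)}h$ (whose coefficient is negative for $p>n-2k$) into $\tfrac{n+p}{2(n-k)}\g^2 h$.

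Case (3), $-\tfrac{n+k}{2}<p<-k$, is handled by the same template, except that Assumption \ref{assump-h} already gives a lower bound on $D^2 h$ in the form $\alpha\, h^{-1}|Dh|^2 I - \beta\, h I$ directly (no change of variable is needed). The coefficient $\alpha = \tfrac{(n-3k-2p)^2}{2(n+p)(n+k+2p)}$ is exactly what is required for the Cauchy--Schwarz step to give a non-negative quadratic form; the assumption $p<-k$ ensures the denominator $(n+p)(n+k+2p)$ has the correct sign. Case (2), $-n<p\le-\tfrac{n+k}{2}$, is the most delicate. The hypothesis only controls $D^2 h$ linearly in $|Dh|$, so the direct Cauchy--Schwarz approach used above does not close. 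This is precisely where the hypothesis that $\Omega$ is origin-symmetric enters: Lemma \ref{lem-Dphi<phi} yields $|D\g|<\g$, so $\g\metric{Dh}{D\g}$ and $\g\metric{Dh}{e_i}\metric{D\g}{e_i}$ can be bounded by $\g^2|Dh|$, which is exactly the quantity subtracted in Assumption \ref{assump-h}. One then checks that the remaining terms combine to a non-negative multiple of $\g^2 h$ using $(n+p)/(n-k)>0$.

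The main obstacle is the coefficient bookkeeping, particularly matching the mixed-term coefficient $\tfrac{2(k+p)}{n-k}$ in \eqref{1-suffi-cond-h-p>-n-new} against the Cauchy--Schwarz splittings, and confirming that the critical cases at $p=-\tfrac{n+k}{2}$, $p=-k$, and $p=n-2k$ align correctly with the thresholds in Assumption \ref{assump-h}. I expect case (2) to be the most technical, because there we cannot dispense with the origin-symmetry of $\Omega$; the bound $|D\g|<\g$ must be used in a genuinely nonlinear way to overcome the weaker (linear-in-$|Dh|$) form of the hypothesis.
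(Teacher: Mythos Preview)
Your proposal is correct and follows essentially the same route as the paper: in each regime one substitutes the lower bound on $D^2 h$ (or on $D^2\tilde h$ after the change of variable $\tilde h=h^{(n-k)/(n+p)}$ when $p\ge -k$), absorbs the mixed terms $\g\metric{Dh}{D\g}$ and $\g\metric{Dh}{e_i}\metric{D\g}{e_i}$ by AM--GM/Cauchy--Schwarz splittings whose weights are dictated by the constants in Assumption~\ref{assump-h}, and in the two boundary regimes invokes $|D\g|<\g$ (origin symmetry, Lemma~\ref{lem-Dphi<phi}) and $\g\ge 1$ (origin contained) respectively. One small correction: the translated lower bound on $D^2h$ in cases $p\ge -k$ is not quite of the isotropic form $D^2h\ge \alpha h^{-1}|Dh|^2 I-\beta hI$ but carries an additional directional term $\tfrac{k+p}{n+p}h^{-1}\metric{Dh}{e_i}^2$ (see the paper's identity \eqref{identity -k<p<n-2k}); this extra term is exactly what absorbs the directional cross term $\metric{Dh}{e_i}\metric{D\g}{e_i}$ after the AM--GM split, so your plan still goes through once you track it.
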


\begin{proof}
	If $-n <p < -k$, then $n+p>0$, $k+p <0$, and $n-2k-p >n-k>0$. Hence 
	\begin{align}
		&\g^2 D^2 h( e_i,e_i)
		- \g \metric{D h}{D \g} 
		- \frac{2(k+p)}{n-k} \g \metric{Dh}{e_i} \metric{D \g}{e_i}  
		+\frac{n+p}{2(n-k)}h|D \g|^2 \nonumber\\
		&+ \frac{n+p}{2(n-k)}\g^2 h  
		+ \frac{(n+p)(k+p)}{(n-k)^2}h\metric{D \g}{e_i}^2
		+ \frac{n-2k-p}{2(n-k)}h \nonumber\\
		\geq& \g^2 D^2 h( e_i,e_i)- \g |Dh|\cdot|D \g|+\frac{2(k+p)}{n-k}\g |D h|\cdot|D \g|+ \frac{n+p}{2(n-k)}h|D \g|^2 \nonumber\\
		&+\frac{n+p}{2(n-k)}\g^2 h
		+ \frac{(n+p)(k+p)}{(n-k)^2}h|D \g|^2 \nonumber\\
		=&\g^2 D^2h(e_i,e_i) -\mathcal{A} \g|D \g|\cdot |Dh|+ \mathcal{B} |D \g|^2 h+\mathcal{C} \g^2 h \label{suff cond case 1}
	\end{align}
	holds for all $-n <p <-k$, where
	\begin{equation*}
		\mathcal{A} := 1- \frac{2(k+p)}{n-k}>0, \quad \mathcal{B} := \frac{n+p}{n-k} \( \frac{n+p}{n-k}- \frac{1}{2} \), \quad \mathcal{C}:= \frac{1}{2} \frac{n+p}{n-k}.
	\end{equation*}
	Now we divide the remaining proof of Lemma \ref{lem-from assmp of f to desired est} into four cases.
	
	\textbf{Case 1.} $-n <p \leq -\frac{n+k}{2}$, and $\Omega$ is in addition origin symmetric.
	In this case, we have $\mathcal{B} \leq 0$. As we assumed $\Omega$ is origin symmetric, the estimate \eqref{Dphi<phi} and Assumption \ref{assump-h} imply that
	\begin{align}
		&\g^2 D^2h -\mathcal{A} \g|D \g|\cdot |Dh|I+ \mathcal{B} |D \g|^2 h I+\mathcal{C} \g^2 hI  \nonumber\\
		\geq& \g^2 D^2 h- \mathcal{A} \g^2 |D h| I + \mathcal{B} \g^2 h I + \mathcal{C} \g^2 h I \nonumber\\
		=&\g^2 \(  D^2 h- \(\frac{n-3k-2p}{n-k}\) |Dh|I + \( \frac{n+p}{n-k}  \)^2 h I \) \geq 0.
		\label{est suff cond case 1.1}
	\end{align}
	Combining \eqref{suff cond case 1} with \eqref{est suff cond case 1.1}, \eqref{1-suffi-cond-h-p>-n-new} is proved for Case 1.
	
	\textbf{Case 2.} $-\frac{n+k}{2} <p <-k $. 
	It is direct to see that $\mathcal{B}>0$ in Case 2. By the arithmetic-geometric mean inequality, we have
	\begin{equation*}
		\mathcal{A} \g|D \g|\cdot|D h| \leq \mathcal{B}|D \g|^2h + \frac{\mathcal{A}^2}{4\mathcal{B}} \frac{|D h|^2}{h} \g^2. 
	\end{equation*}
	This together with Assumption \ref{assump-h} implies that
	\begin{align}
		&\g^2 D^2h -\mathcal{A} \g|D \g|\cdot |Dh|I+ \mathcal{B} |D \g|^2 h I+\mathcal{C} \g^2 hI \nonumber\\
		\geq& \g^2 \( D^2 h - \frac{\mathcal{A}^2}{4\mathcal{B}} \frac{|D h|^2}{h} I + \mathcal{C}h I \) \nonumber\\
		=&\g^2 \(D^2 h- \frac{(n-3k-2p)^2}{2(n+p)(n+k+2p)} \frac{|D h|^2}{h} I +  \frac{1}{2} \frac{n+p}{n-k}h I \)\geq 0. \label{est suff cond case 1.2}
	\end{align}
	Combining \eqref{suff cond case 1} with \eqref{est suff cond case 1.2}, we obtain \eqref{1-suffi-cond-h-p>-n-new} for Case 2.

	\textbf{Case 3.} $-k \leq p \leq n-2k$.
	Using $p \geq -k$ and the arithmetic-geometric mean inequality, we have
	\begin{align*}
		-\frac{2(k+p)}{n-k} \g \metric{D \g}{e_i} \metric{D h}{e_i}
		\geq& -\frac{(n+p)(k+p)}{(n-k)^2} h \metric{D\g}{e_i}^2 - \frac{k+p}{n+p} \g^2 \frac{\metric{Dh}{e_i}^2}{h},\\
		-\g \metric{D h}{D \g} \geq&  -\frac{n+p}{2(n-k)} h |D \g|^2 -\frac{n-k}{2(n+p)} \g^2\frac{|D h|^2}{h}. 
	\end{align*}
	Using the above inequalities, we obtain
	\begin{align}
		&\g^2 D^2 h( e_i,e_i)
		- \g \metric{D h}{D \g} 
		- \frac{2(k+p)}{n-k} \g \metric{Dh}{e_i} \metric{D \g}{e_i}  
		+\frac{n+p}{2(n-k)}h|D \g|^2 \nonumber\\
		&+ \frac{n+p}{2(n-k)}\g^2 h  
		+ \frac{(n+p)(k+p)}{(n-k)^2}h\metric{D \g}{e_i}^2
		+ \frac{n-2k-p}{2(n-k)}h  \nonumber\\
		\geq& \g^2 D^2 h( e_i,e_i) -\frac{n-k}{2(n+p)} \g^2\frac{|D h|^2}{h} - \frac{k+p}{n+p} \g^2 \frac{\metric{Dh}{e_i}^2}{h}+ \frac{n+p}{2(n-k)}\g^2 h  	+ \frac{n-2k-p}{2(n-k)}h \nonumber\\
		=& \g^2 \(D^2 h( e_i,e_i)  - \frac{k+p}{n+p}\frac{\metric{Dh}{e_i}^2}{h}- \frac{n-k}{2(n+p)}\frac{|D h|^2}{h}+\frac{n+p}{2(n-k)} h	+ \frac{n-2k-p}{2(n-k)} \frac{1}{\g^2}h  \). \label{1-est-k<p}
	\end{align}
	Applying the assumption $p \leq n-2k$ to \eqref{1-est-k<p}, we have
	\begin{align}
		&  \g^2 D^2 h( e_i,e_i)
		- \g \metric{D h}{D \g} 
		- \frac{2(k+p)}{n-k} \g \metric{Dh}{e_i} \metric{D \g}{e_i}  
		+\frac{n+p}{2(n-k)}h|D \g|^2 \nonumber\\
		&+ \frac{n+p}{2(n-k)}\g^2 h  
		+ \frac{(n+p)(k+p)}{(n-k)^2}h\metric{D \g}{e_i}^2
		+ \frac{n-2k-p}{2(n-k)}h  \nonumber\\
		\geq& \g^2 \(D^2 h( e_i,e_i)  - \frac{k+p}{n+p}\frac{\metric{Dh}{e_i}^2}{h}- \frac{n-k}{2(n+p)}\frac{|D h|^2}{h}+\frac{n+p}{2(n-k)} h\). \label{1-est -k<p<n-2k}
	\end{align}
	On the other hand, a direct calculation yields
	\begin{align*}
		D_i \(h^{\frac{n-k}{n+p}}\) = \frac{n-k}{n+p}h^{-\frac{k+p}{n+p}} D_i h, \quad
		D_i D_i \( h^{\frac{n-k}{n+p}} \) = \frac{n-k}{n+p} h^{-\frac{k+p}{n+p}}\( D_iD_i h -\frac{k+p}{n+p} \frac{\(D_i h\)^2}{h} \),\\
		\frac{1}{2} \frac{\left|D \(h^{\frac{n-k}{n+p}} \)\right|^2}{h^{\frac{n-k}{n+p}}}
		= \frac{1}{2} h^{-\frac{n-k}{n+p}} \(\frac{n-k}{n+p}\)^2 h^{-\frac{2(k+p)}{n+p}} |D h|^2
		= \frac{n-k}{n+p} h^{-\frac{k+p}{n+p}}\( \frac{n-k}{2(n+p)} \frac{|D h|^2}{h} \),
	\end{align*}
	and hence
	\begin{align}
		&D_i D_i \( h^{\frac{n-k}{n+p}} \)
		-\frac{1}{2} \frac{\left|D \(h^{\frac{n-k}{n+p}} \)\right|^2}{h^{\frac{n-k}{n+p}}}
		+\frac{1}{2} h^{\frac{n-k}{n+p}} \nonumber\\
		=&\frac{n-k}{n+p} h^{-\frac{k+p}{n+p}}\( D_iD_i h -\frac{k+p}{n+p} \frac{\(D_i h\)^2}{h} \)
		-\frac{n-k}{n+p} h^{-\frac{k+p}{n+p}}\( \frac{n-k}{2(n+p)} \frac{|D h|^2}{h} \)
		+\frac{1}{2} h^{\frac{n-k}{n+p}} \nonumber\\
		=&\frac{n-k}{n+p} h^{-\frac{k+p}{n+p}} \( D^2 h(e_i, e_i) -\frac{k+p}{n+p} \frac{\metric{D h}{e_i}^2}{h} - \frac{n-k}{2(n+p)} \frac{|D h|^2}{h}
		+ \frac{n+p}{2(n-k)} h\). \label{identity -k<p<n-2k}
	\end{align}
	Combining \eqref{1-est -k<p<n-2k} with \eqref{identity -k<p<n-2k} and using Assumption \ref{assump-h}, we have
	\begin{align*}
		&  \g^2 D^2 h( e_i,e_i)
		- \g \metric{D h}{D \g} 
		- \frac{2(k+p)}{n-k} \g \metric{Dh}{e_i} \metric{D \g}{e_i}  
		+\frac{n+p}{2(n-k)}h|D \g|^2 \nonumber\\
		&+ \frac{n+p}{2(n-k)}\g^2 h  
		+ \frac{(n+p)(k+p)}{(n-k)^2}h\metric{D \g}{e_i}^2
		+ \frac{n-2k-p}{2(n-k)}h  \nonumber\\
		\geq& \frac{n+p}{n-k} \g^2 h^{\frac{k+p}{n+p}} \(  D_i D_i \( h^{\frac{n-k}{n+p}} \)
		-\frac{1}{2} \frac{\left|D \(h^{\frac{n-k}{n+p}} \)\right|^2}{h^{\frac{n-k}{n+p}}}
		+\frac{1}{2} h^{\frac{n-k}{n+p}} \) \nonumber\\
		\geq& 0.
	\end{align*}
	Thus we obtain \eqref{1-suffi-cond-h-p>-n-new} for Case 3.
	
	\textbf{Case 4.} $p>n-2k$, and $\Omega$ contains the origin in addition. In this case, we know $\g(z) = e^{u(z)} \geq 1$ and hence
	\begin{equation*}
		\frac{1}{2} + \frac{n-2k-p}{2(n+p)} \frac{1}{\g^2} \geq \frac{1}{2} + \frac{n-2k-p}{2(n+p)} = \frac{n-k}{n+p}.
	\end{equation*}
	Using \eqref{1-est-k<p}, \eqref{identity -k<p<n-2k} and Assumption \ref{assump-h}, we have 
	\begin{align}
		 &\g^2 D^2 h( e_i,e_i)
		- \g \metric{D h}{D \g} 
		- \frac{2(k+p)}{n-k} \g \metric{Dh}{e_i} \metric{D \g}{e_i}  
		+\frac{n+p}{2(n-k)}h|D \g|^2 \nonumber\\
		&+ \frac{n+p}{2(n-k)}\g^2 h  
		+ \frac{(n+p)(k+p)}{(n-k)^2}h\metric{D \g}{e_i}^2
		+ \frac{n-2k-p}{2(n-k)}h  \nonumber\\
		\geq&  \g^2 \(D^2 h( e_i,e_i)  - \frac{k+p}{n+p}\frac{\metric{Dh}{e_i}^2}{h}- \frac{n-k}{2(n+p)}\frac{|D h|^2}{h}+\frac{n+p}{2(n-k)} h	+ \frac{n-2k-p}{2(n-k)} \frac{1}{\g^2}h  \) \nonumber\\
		=& \g^2 \(D^2 h( e_i,e_i)  - \frac{k+p}{n+p}\frac{\metric{Dh}{e_i}^2}{h}- \frac{n-k}{2(n+p)}\frac{|D h|^2}{h}+ \frac{n+p}{n-k} \(\frac{1}{2} + \frac{n-2k-p}{2(n+p)} \frac{1}{\g^2}  \)h  \) \nonumber\\
		\geq& \g^2 \( D^2 h( e_i,e_i)  - \frac{k+p}{n+p}\frac{\metric{Dh}{e_i}^2}{h}- \frac{n-k}{2(n+p)}\frac{|D h|^2}{h} \) +\g^2 h \nonumber\\
		=&\frac{n+p}{n-k} \g^2 h^{\frac{n-k}{n+p}} \(  D_i D_i \( h^{\frac{n-k}{n+p}} \)
		-\frac{1}{2} \frac{\left|D \(h^{\frac{n-k}{n+p}} \)\right|^2}{h^{\frac{n-k}{n+p}}} \)+ \g^2 h  \nonumber\\
		=&\frac{n+p}{n-k} \g^2 h^{\frac{k+p}{n+p}} \(D_i D_i \( h^{\frac{n-k}{n+p}} \)
		-\frac{1}{2}\frac{\left|D \(h^{\frac{n-k}{n+p}} \)\right|^2}{h^{\frac{n-k}{n+p}}} +  \frac{n-k}{n+p}h^{\frac{n-k}{n+p}}    \) \nonumber\\
		\geq& 0.
	\end{align}
	Thus we obtain \eqref{1-suffi-cond-h-p>-n-new} for Case 4.
	
	We complete the proof of Lemma \ref{lem-from assmp of f to desired est}.
\end{proof}

Now we recall the tensor maximum principle proved by Andrews \cite[Theorem 3.2]{And07}.

\begin{thm}[\cite{And07}]\label{And-tensor-max-principle}
	Let $S_{ij}$ be a smooth time-varying symmetric tensor field on a compact manifold $M$ satisfying
	\begin{equation*}
		\frac{\partial}{\partial t} S_{ij} =
		a^{kl} \nabla_k \nabla_l S_{ij} + b^k \nabla_k S_{ij} +N_{ij},
	\end{equation*}
	where $a^{kl}$ and $b^k$ are smooth, $\nabla$ is a (possibly time-dependent) smooth symmetric connection, and $a^{kl}$ is positive definite everywhere. Suppose that
	\begin{equation*}
		N_{ij} v^iv^j + \sup_{\Gamma} 2a^{kl} \( 2\Gamma_k{}^p \nabla_l S_{ip} v^i - \Gamma_k{}^p \Gamma_l{}^q S_{pq} \) \geq 0,
	\end{equation*}
	whenever $S_{ij} \geq 0$ and $S_{ij} v^j=0$ and $\Gamma$ is an $n \times n$-matrix. If $S_{ij}$ is positive definite everywhere on $M$ at $t=0$ and on $\partial M$ for $0 \leq t \leq T$, then it is positive on $M \times [0,T]$. 
\end{thm}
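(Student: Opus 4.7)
My plan is to proceed by a spacetime maximum principle argument applied to the scalar $W:=S_{ij}v^iv^j$ formed from a carefully chosen local extension of the null vector $v$, exploiting the fact that the first covariant derivatives of the extension are at our disposal. Arguing by contradiction, I would suppose that positive definiteness of $S_{ij}$ fails on $M\times[0,T]$, let $t_{0}\in(0,T]$ be the first such failure time; by continuity one gets a point $x_{0}\in M$ and a unit vector $v\in T_{x_{0}}M$ with $S_{ij}(x_{0},t_{0})v^{j}=0$ and $S_{ij}\ge 0$ throughout $M\times[0,t_{0}]$.

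Extending $v$ to a smooth local vector field near $x_{0}$ with prescribed values $\nabla_{k}v^{p}(x_{0})=\Gamma_{k}{}^{p}$ for an arbitrary matrix $\Gamma$, set $W(x,t):=S_{ij}(x,t)v^{i}(x)v^{j}(x)$. Since $S\ge 0$ on $M\times[0,t_{0}]$ and $W(x_{0},t_{0})=0$, the point $(x_{0},t_{0})$ is a spacetime minimum of $W$, so standard calculus gives $\partial_{t}W\le 0$, $\nabla_{l}W=0$, and $a^{kl}\nabla_{k}\nabla_{l}W\ge 0$ there. Using the symmetry of $S$ together with $S_{ij}v^{j}=0$, the vanishing first-order condition collapses to $\nabla_{l}S_{ij}v^{i}v^{j}=0$, eliminating the $b^{k}$-drift; substituting the evolution equation for $S_{ij}$ into $\partial_{t}W\le 0$ then produces
\begin{equation*}
N_{ij}v^{i}v^{j} + a^{kl}\nabla_{k}\nabla_{l}S_{ij}v^{i}v^{j} \;\le\; 0.
\end{equation*}
Expanding $a^{kl}\nabla_{k}\nabla_{l}W\ge 0$ by the product rule, using $Sv=0$ to kill the $\nabla^{2}v$ contributions, leaves
\begin{equation*}
a^{kl}\nabla_{k}\nabla_{l}S_{ij}v^{i}v^{j} + 4a^{kl}\nabla_{l}S_{ip}v^{i}\Gamma_{k}{}^{p} + 2a^{kl}S_{pq}\Gamma_{k}{}^{p}\Gamma_{l}{}^{q} \;\ge\; 0.
\end{equation*}
Subtracting these two inequalities eliminates the uncontrolled Hessian of $S$, and since the extension (hence $\Gamma$) is arbitrary I may replace $\Gamma$ by $-\Gamma$, obtaining
\begin{equation*}
N_{ij}v^{i}v^{j} + 2a^{kl}\bigl(2\Gamma_{k}{}^{p}\nabla_{l}S_{ip}v^{i}-\Gamma_{k}{}^{p}\Gamma_{l}{}^{q}S_{pq}\bigr) \;\le\; 0
\end{equation*}
for every $\Gamma$, whence the supremum over $\Gamma$ is also $\le 0$; this is exactly the negation of the strict version of the hypothesis, giving the desired contradiction.

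The above argument yields the theorem under a strict inequality in the hypothesis. To recover the weak statement, I would use a perturbation: replace $S_{ij}$ by $S^{\epsilon}_{ij}:=S_{ij}+\epsilon e^{Kt}g_{ij}$ for small $\epsilon>0$ and $K$ large enough (depending on $\partial_{t}g$ and uniform bounds of $a^{kl},b^{k}$); then $S^{\epsilon}$ satisfies a parabolic evolution of the same type with $\widetilde N_{ij}=N_{ij}+\epsilon e^{Kt}(Kg_{ij}+\partial_{t}g_{ij})+\epsilon e^{Kt}(\text{connection terms})$, and for $K$ sufficiently large the structural inequality becomes strict at any null vector of $S^{\epsilon}$. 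Applying the strict version to $S^{\epsilon}$ and letting $\epsilon\searrow 0$ yields $S_{ij}\ge 0$ on $M\times[0,T]$, and using the hypothesis once more at a boundary null vector together with the initial strict positivity upgrades this to $S_{ij}>0$. The main obstacle I anticipate is the sign-bookkeeping in the Hessian expansion of $W$ and verifying that the perturbation really does produce a strictly positive version of the hypothesis without spoiling the coefficients, especially when the metric (and hence the connection) is itself time-dependent as in the curvature-flow applications of this paper.
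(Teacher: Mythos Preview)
The paper does not prove this theorem: it is quoted verbatim from Andrews \cite{And07} (Theorem~3.2 there) and used as a black box in the pinching estimate of Proposition~\ref{prop-Pinching-est}. There is therefore no proof in the paper to compare your proposal against.

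That said, your sketch is essentially Andrews' original argument, and it is correct. The key device---extending the null vector $v$ to a local field with \emph{arbitrary} prescribed first derivatives $\nabla_k v^p(x_0)=\Gamma_k{}^p$, so that the freedom in $\Gamma$ can be used after expanding $a^{kl}\nabla_k\nabla_l W\ge 0$ by the product rule---is precisely what generates the $\sup_\Gamma$ term in the hypothesis. Your sign bookkeeping (including the $\Gamma\mapsto -\Gamma$ relabeling) is right, and the $\epsilon e^{Kt}g_{ij}$ perturbation to pass from the strict to the weak hypothesis is the standard closing step; the extra connection and $\partial_t g$ terms coming from a time-dependent metric are uniformly bounded on $M\times[0,T]$, so choosing $K$ large enough does make the structural inequality strict, as you anticipate.
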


\begin{prop}\label{prop-Pinching-est}
	Let $n \geq 2$ be an integer. Suppose that one of the following assumptions of $k$, $p$, $f(z)$, and $M_t$ holds:
	\begin{itemize}
		\item  $1 \leq k \leq n-1$, $p > -n$,  $f(z)$ is a smooth positive even function on $\mathbb{S}^n$ satisfying Assumption \ref{assump-h}, and $M_t= \partial \Omega_t$ is a smooth, origin symmetric uniformly h-convex solution to the flow \eqref{flow-HCMF};
		\item  $0 \leq k \leq n-1$, $p=-n$, $f(z)$ is a positive constant function on $\mathbb{S}^n$, and $M_t$ is a smooth uniformly h-convex solution to the flow \eqref{flow-HCMF};
		\item $k=0$, $-\infty<p<+\infty$, $f(z)$ is a smooth positive even function on $\mathbb{S}^n$, and $M_t$ is a smooth, origin symmetric uniformly h-convex solution to the flow \eqref{flow-HCMF}.
	\end{itemize}
	Then there exists a positive constant $\varepsilon_0$ depending only on $M_0$, $k$, $p$ and $||f(z)||_{C^2(\mathbb{S}^n)}$, such that along the flow \eqref{flow-HCMF},
	\begin{equation*}
		\tilde{h}_{ij} \geq \varepsilon_0 \tilde{H} g_{ij},
	\end{equation*}
	where $\tilde{H} = g^{lm} \tilde{h}_{lm}$ is the shifted mean curvature. Therefore, along the flow \eqref{flow-HCMF},
	\begin{equation}\label{Pinching-est-2}
		\max_{1 \leq i, j \leq n} \frac{\tilde{\kappa}_i}{ \tilde{\kappa}_j}(x,t) \leq C(\varepsilon_0), \quad \forall \ (x,t) \in M \times [0,T),
	\end{equation}
	where $\tilde{\kappa}_i =\kappa_i-1$, $1 \leq i \leq n$, are the shifted principal curvatures, and $C(\varepsilon_0) = \(1- (n-1) \varepsilon_0\)/ \varepsilon_0$ is a positive constant.
\end{prop}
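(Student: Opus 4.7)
The plan is to apply Andrews' tensor maximum principle (Theorem \ref{And-tensor-max-principle}) to the symmetric $(0,2)$-tensor
\[
S_{ij}:=\tilde h_{ij}-\varepsilon\tilde H\,g_{ij},
\]
where $\tilde H=g^{lm}\tilde h_{lm}$ and $\varepsilon>0$ is chosen below. Since $M_0$ is smooth and uniformly h-convex, we can pick $\varepsilon_0\in(0,1/(n-1))$ so that $S_{ij}\geq \varepsilon_0(\tilde H-(n-1)\varepsilon_0\tilde H)g_{ij}>0$ holds at $t=0$; this will be the initial positivity needed by Theorem \ref{And-tensor-max-principle}. The pointwise estimate $\tilde h_{ij}\geq \varepsilon_0\tilde H g_{ij}$ immediately yields $\tilde\kappa_j\geq \varepsilon_0\sum_i\tilde\kappa_i\geq \varepsilon_0\tilde\kappa_i$ for all $i,j$, giving \eqref{Pinching-est-2}.

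First I would derive the evolution equation of $S_i{}^j:=\tilde h_i{}^j-\varepsilon\tilde H\delta_i{}^j$ from Lemma \ref{lem-evl-thij} and the trace of \eqref{evl-thij-new}. Writing $F(\tilde\kappa)=(p_n/p_k)^{1/(n-k)}$ and $\Psi(z,t):=\g^{-\frac{n+p}{n-k}}(z,t)h(z)$, and using \eqref{evl-gij}, the evolution of $S_i{}^j$ takes the schematic form
\[
\tfrac{\partial}{\partial t}S_i{}^j=\dot F^{lm}\nabla_m\nabla_l S_i{}^j+\ddot F^{lm,qs}\(\nabla_i\tilde h_{lm}\nabla^j\tilde h_{qs}-\varepsilon\nabla_r\tilde h_{lm}\nabla^r\tilde h_{qs}\delta_i{}^j\)+\mathcal{Q}_i{}^j-\Phi\,\mathcal{D}_i{}^j\Psi,
\]
where $\mathcal{Q}$ collects the zero-order reaction terms from \eqref{evl-thij-new} and $\mathcal{D}_i{}^j\Psi:=\nabla^j\nabla_i\Psi-\varepsilon(\Delta\Psi)\delta_i{}^j$. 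The evolution has the parabolic form required by Theorem \ref{And-tensor-max-principle} with $a^{lm}=\dot F^{lm}$ (positive definite by \eqref{G-ij >0}).

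Next I would analyze the null-eigenvector condition. Suppose at some interior $(x_0,t_0)$ the tensor $S_{ij}$ first develops a null direction $v$, so $S_{ij}v^j=0$ and $\tilde h_{ij}v^j=\varepsilon\tilde H v_i$. I diagonalize at $x_0$ with $v=e_1$, so $\tilde\kappa_1=\varepsilon\tilde H$; the Andrews concavity inequality \eqref{G-ij,kl <0} combined with the gradient term from $\sup_\Gamma$ in Theorem \ref{And-tensor-max-principle} controls the bad second-derivative terms of $\tilde h$ (this is the standard ``good-$\Gamma$'' argument for inverse-concave speeds; here it is applied to $F$, which is concave and inverse-concave by Lemma \ref{lem-mono-increasing concavity}). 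The purely algebraic part $\mathcal{Q}_{ij}v^iv^j$ contributes a manifestly non-negative term of the form $\dot F^{lm}\tilde h_l{}^q\tilde h_{qm}(\tilde\kappa_1-\varepsilon\tilde H)+\dot F^{lm}\tilde h_l{}^q\tilde h_{qm}$ plus harmless lower-order pieces controlled by the $C^0$ and $C^1$ bounds of Subsection \ref{subsec-a priori est}. The remaining term to be handled is the non-local one, $-\Phi\mathcal{D}_{ij}\Psi v^iv^j$.

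The main obstacle is exactly the bound on this non-local term. To treat it I translate from intrinsic derivatives on $M$ to spherical derivatives via Lemma \ref{lem-zi-zij} and Lemma \ref{lem-gij-hildlX-hilVdlX}: since $\Psi$ depends on $x\in M$ only through $z=G(x)$, we have
\[
\nabla^j\nabla_i\Psi=\(D^2\Psi\)_{kl}\partial^j z^k\partial_i z^l+D_k\Psi\,\nabla^j\partial_i z^k,
\]
and \eqref{zi}--\eqref{hilVdlX} expand $\nabla z$ in terms of $\tilde h$ and $D\g$. After substituting $\Psi=\g^{-(n+p)/(n-k)}h$ and grouping the terms proportional to $\delta_i{}^j$, $\tilde h_{ij}$ and $\tilde h_i{}^l\tilde h_{lj}$, the condition that $\mathcal{Q}_{ij}v^iv^j-\Phi\mathcal{D}_{ij}\Psi v^iv^j\geq 0$ (after using $\tilde\kappa_1=\varepsilon\tilde H$ and the already-established $C^0,C^1,\Phi$ bounds) reduces precisely to a pointwise inequality on $\mathbb{S}^n$ of the shape \eqref{1-suffi-cond-h-p>-n-new}. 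This is where Lemma \ref{lem-from assmp of f to desired est} is invoked: each of its four cases matches one of the parameter ranges of Assumption \ref{assump-h}, and in each case the required inequality follows from the inequality \eqref{1-suffi-cond-h-p>-n-new} stated there. When $k=0$ the equation is Monge-Amp\`ere type and both the good $\Gamma$-term and the concavity of $\log\det$ are stronger, so a direct variant of the same argument works for all $p$ without Assumption \ref{assump-h}; when $p=-n$ and $f$ is constant, $\Psi$ reduces to $\g^0 h=$ constant on $\mathbb{S}^n$, so $\mathcal{D}\Psi\equiv 0$ and the argument is straightforward. In all three cases the tensor maximum principle then yields $S_{ij}>0$ on $M\times[0,T)$, completing the proof.
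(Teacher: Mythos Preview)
Your plan is essentially the paper's proof: Andrews' tensor maximum principle applied to $S_{ij}=\tilde h_{ij}-\varepsilon\tilde H g_{ij}$, with the gradient terms absorbed via the concavity and inverse-concavity of $F$ (the paper invokes \cite[Theorem~4.1]{And07} at exactly this point), the Hessian of $\Psi=\g^{-(n+p)/(n-k)}h$ rewritten through Lemmas \ref{lem-zi-zij} and \ref{lem-gij-hildlX-hilVdlX} in spherical quantities, and the resulting pointwise inequality on $\mathbb{S}^n$ fed into Lemma \ref{lem-from assmp of f to desired est} in the case $1\le k\le n-1$. The $p=-n$ case is handled just as you say.

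The one place your outline is imprecise is the mechanism for $k=0$. It is not ``extra concavity of $\log\det$'' or a stronger $\Gamma$-term that saves you. In the reaction term $N_{ij}v^iv^j$ the paper isolates the positive piece $\varepsilon\,\dot F^{lm}g_{lm}\,(|\tilde A|^2-\varepsilon\tilde H^2)$, and at a null point (so $\tilde\kappa_1=\varepsilon\tilde H$) with $F=p_n^{1/n}$ one has
\[
\dot F^{lm}g_{lm}\ \ge\ \frac{p_n^{1/n}(\tilde\kappa)}{n\,\tilde\kappa_1}\ \ge\ \frac{1}{n}\Bigl(\frac{\tilde\kappa_n}{\tilde\kappa_1}\Bigr)^{1/n}\ \ge\ \frac{1}{n(n-1)^{1/n}}\Bigl(\frac{1-\varepsilon}{\varepsilon}\Bigr)^{1/n}\longrightarrow\infty\quad(\varepsilon\to 0^+).
\]
This diverging coefficient dominates all the contributions from $-\Phi\,\mathcal D_{ij}\Psi\,v^iv^j$, which after the $C^0$, $C^1$ and $\Phi$ bounds are merely $O(\varepsilon|\tilde A|^2)$. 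That is the actual reason no structural hypothesis on $f$ is needed when $k=0$; for $1\le k\le n-1$ the sum $\dot F^{lm}g_{lm}$ stays bounded (cf.\ \eqref{F-ii leq C}), which is why Assumption \ref{assump-h} enters there.
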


\begin{proof}
	Let $S_{ij} = \tilde{h}_{ij} - \varepsilon \tilde{H} g_{ij}$. By \eqref{evl-gij}, we have
	\begin{equation*}
		\frac{\partial}{\partial t} S_{ij} =\partial_t \( \( \tilde{h}_i{}^l -\varepsilon \tilde{H} \delta_i{}^l \)g_{lj}\)
		= \(\partial_t \tilde{h}_i{}^l - \varepsilon \partial_t \tilde{H} \delta_i{}^l \)g_{lj} + 2 \mathscr{F} h_j{}^l  S_{il}.
	\end{equation*}
	We set a parabolic operator $\mathcal{L} = \frac{\partial}{\partial t} - \dot{F}^{lm} \nabla_m \nabla_l$. Using \eqref{evl-thij-new}, we have
	\begin{align*}
		\mathcal{L} S_{ij} = &\ddot{F}^{lm,qs} \nabla_i \tilde{h}_{lm} \nabla_j \tilde{h}_{qs} 
		-\varepsilon  \ddot{F}^{lm,qs} \nabla^\iota \tilde{h}_{lm} \nabla_\iota \tilde{h}_{qs} g_{ij}\\
		& +\dot{F}^{lm} \tilde{h}_l{}^q \tilde{h}_{qm}(1- \varepsilon n ) g_{ij}
		 +\dot{F}^{lm} \tilde{h}_l{}^q \tilde{h}_{qm} S_{ij}
 		-\dot{F}^{lm}g_{lm} (\tilde{h}_i{}^q \tilde{h}_{qj} - \varepsilon |\tilde{A}|^2 g_{ij})\\
		&-F (\tilde{h}_i{}^l \tilde{h}_{lj} - \varepsilon |\tilde{A}|^2 g_{ij})
		-\mathscr{F}(\tilde{h}_i{}^l \tilde{h}_{lj} - \varepsilon |\tilde{A}|^2 g_{ij})
		-2\mathscr{F}S_{ij}\\
		&-\Phi\left( \nabla_j \nabla_i \(\g^{- \frac{n+p}{n-k}}h \) - \varepsilon \Delta \(\g^{- \frac{n+p}{n-k}}h \) g_{ij} \right) +2 \mathscr{F} h_j{}^l S_{il}\\
		:=&N_{ij} + \tilde{N}_{ij},
	\end{align*}
	where $|\tilde{A}|^2 := \tilde{h}_i{}^j \tilde{h}_j {}^i$ and  
	\begin{equation*}
		\tilde{N}_{ij} = \ddot{F}^{lm,qs} \nabla_i \tilde{h}_{lm} \nabla_j \tilde{h}_{qs} 
		-\varepsilon  \ddot{F}^{lm,qs} \nabla^\iota \tilde{h}_{lm} \nabla_\iota \tilde{h}_{qs} g_{ij}.
	\end{equation*}
	According to Theorem \ref{And-tensor-max-principle}, we only need to show
	\begin{equation*}
		N_{ij}v^i v^j +\tilde{N}_{ij} v^iv^j 
		+2 \sup_{\Gamma} \dot{F}^{lm} \(2 \Gamma_l{}^q \nabla_m S_{iq} v^i -\Gamma_l{}^q \Gamma_m{}^s S_{qs} \)
		\geq 0,
	\end{equation*}
	whenever $S_{ij} \geq 0$ and $S_{ij}v^j =0$, $|v|=1$. Let $(x_0, t_0) $ be the point where $S_{ij}$ has a null vector $v = v^i \partial_i X$. 

	By Lemma \ref{lem-mono-increasing concavity} and \eqref{inverse function of pl/pk}, $F = \(\frac{p_n}{p_k}\)^{\frac{1}{n-k}}$ is concave and inverse-concave on the positive cone $\Gamma^+$. Recall that $\nabla_l \tilde{h}_{ij}$ is a Codazzi tensor. Hence $F(\tilde{\kappa})$ satisfies all the assumptions to apply \cite[Theorem 4.1]{And07}. Consequently, we only need to prove $N_{ij}v^i v^j \geq 0$ at $(x_0,t_0)$.  

	Remark that $S_{ij}v^j =0$ is equivalent to $\tilde{h}_i{}^l v_l =\varepsilon \tilde{H} v_i$, where $v_i = v^j g_{ij}$. Then by the definition of $N_{ij}$, we have
	\begin{align}
		N_{ij}v^i v^j =& \dot{F}^{lm} \tilde{h}_l{}^q \tilde{h}_{qm}(1- \varepsilon n )
		+\varepsilon(\dot{F}^{lm}g_{lm} + F +\mathscr{F})(|\tilde{A}|^2 - \varepsilon \tilde{H}^2) \nonumber\\
		&-\Phi\left( \nabla_j \nabla_i \(\g^{- \frac{n+p}{n-k}}h \) - \varepsilon \Delta \(\g^{- \frac{n+p}{n-k}}h \) g_{ij} \right)v^i v^j \nonumber\\
		=& \dot{F}^{lm} \tilde{h}_l{}^q \tilde{h}_{qm}(1- \varepsilon n ) 
		+\varepsilon \dot{F}^{lm}g_{lm}(|\tilde{A}|^2 - \varepsilon \tilde{H}^2)
		+\varepsilon \Phi \g^{- \frac{n+p}{n-k}} h (|\tilde{A}|^2 - \varepsilon \tilde{H}^2)  \nonumber\\
		&-\Phi\left( \nabla_j \nabla_i \(\g^{- \frac{n+p}{n-k}}h \) - \varepsilon \Delta \(\g^{- \frac{n+p}{n-k}}h \) g_{ij} \right)v^i v^j.\label{Nij-vi-vj}
	\end{align}
	For the last two terms in the right-hand side of \eqref{Nij-vi-vj}, we have
	\begin{align}
		-\Phi\left( \nabla_j \nabla_i \( \g^{- \frac{n+p}{n-k}}h \) - \varepsilon \Delta \(\g^{- \frac{n+p}{n-k}}h \) g_{ij} \right)v^i v^j 
		=\( \uppercase\expandafter{\romannumeral1} \) + \(\uppercase\expandafter{\romannumeral2}\)
		+\( \uppercase\expandafter{\romannumeral3} \), \label{last-term-Nijvivj}
	\end{align}
	where
	\begin{align}
		\( \uppercase\expandafter{\romannumeral1} \) :=&-\Phi \g^{- \frac{n+p}{n-k}} \(\nabla_j \nabla_i h - \varepsilon \Delta h g_{ij} \)v^iv^j, \label{last-term-Nijvivj-1}\\
		\(\uppercase\expandafter{\romannumeral2}\):=& -2\Phi \(\nabla_i \(\g^{- \frac{n+p}{n-k}}\) \nabla_j h - \varepsilon \metric{\nabla \(\g^{- \frac{n+p}{n-k}}\)}{\nabla h}g_{ij}\)v^iv^j, 
		\label{last-term-Nijvivj-2}\\
		\( \uppercase\expandafter{\romannumeral3} \):=& -\Phi h \(\nabla_j \nabla_i \(\g^{- \frac{n+p}{n-k}}\) - \varepsilon \Delta \(\g^{- \frac{n+p}{n-k}}\) g_{ij}\)v^iv^j.
		\label{last-term-Nijvivj-3}
	\end{align}
	Now we parameterize $M_{t_0}$ around $X(x_0, t_0)$ by normal coordinates for $\mathbb{S}^n$ about $z_0 = G_{t_0}(X(x_0, t_0))$, such that $A_{ij} [\g(z_0, t_0)]$ is diagonal, and the corresponding coordinate vector fields on $\mathbb{S}^n$ are $\lbrace e_1, \ldots, e_n \rbrace$. By the above convention, we have
	\begin{equation}\label{partial i X--e_i}
		\(d G_{t_0} \)_{X(x,t_0)} \( \partial_i X \) = e_i, \quad 1 \leq i \leq n.
	\end{equation}
	We write $\nabla_i$ for $\nabla_{\partial_i X}$ on $M_{t_0}$ and $D_i$ for $D_{e_i}$ on $\mathbb{S}^n$. The calculations of \eqref{last-term-Nijvivj} are divided into three parts.

	\textbf{Part 1}. In this part, we calculate \eqref{last-term-Nijvivj-1}. Recall that $\widetilde{ \nabla}$ denotes the connection of $\(\mathbb{R}^{n+1,1}, \metric{\cdot}{\cdot} \)$. By identifying $\mathbb{S}^n$ with $\mathbb{S}^n \times \{1\} \subset \mathbb{R}^{n+1,1}$, we extend the definition of $h(z)$ from $\mathbb{S}^n$ to $\mathbb{R}^{n+1,1} \backslash \(\{0\} \times \mathbb{R}\)$ by $h((y,y_{n+1})) := h \(\frac{y}{|y|} \)$, where $y \in \mathbb{R}^{n+1} \backslash \{0\}$ and $y_{n+1} \in \mathbb{R}$. Therefore
	\begin{equation*}
 		\metric{\widetilde{\nabla} h}{ (z,0) } = \metric{\widetilde{\nabla} h}{ (0,1) } = 0
	\end{equation*}
	on $\mathbb{S}^n \times \{1\}$, which implies $\widetilde{\nabla} h = (D h,0) $ on $\mathbb{S}^n \times \{1\}$. Hence,
	\begin{equation}\label{rel-Hess-h}
		\widetilde{\nabla}^2 h ((v,0),(w,0)) = D^2 h(v,w), \quad \forall v, w \in T_z \mathbb{S}^n,
	\end{equation}
	where the left-hand side was evaluated at $(z,1)$, and the right-hand side was evaluated at $z$. Using \eqref{rel-Hess-h} and \eqref{partial i X--e_i}, we have
	\begin{align}
		\nabla_j \nabla_i h =& 
		\nabla_j \metric{\widetilde{\nabla} h}{ \nabla_i (z,1)} \nonumber\\
		=&\widetilde{\nabla}^2 h( \nabla_i (z,1), \nabla_j (z,1)) + \metric{\widetilde{\nabla} h}{\nabla_j \nabla_i (z,1)} \nonumber\\
		=&D^2 h(\nabla_i z, \nabla_j z) + \metric{(D h,0)}{\nabla_j \nabla_i  (z,1)}  \nonumber\\
		=&D^2 h(e_i, e_j) + \metric{(D h,0)}{\nabla_j \nabla_i  (z,1)}. \label{nblj-nbli-h}
	\end{align}
	Formula \eqref{X-nu} and $\metric{(Dh, 0)}{(z,1)}=0$ imply $\metric{\(Dh, 0\) }{X-\nu} = 0$. Inserting \eqref{zij} into \eqref{nblj-nbli-h} therefore yields
	\begin{align}
		\nabla_j \nabla_i h 
		=&D^2 h \(e_i, e_j\)  + \big\langle (Dh,0), - \g \nabla^l \tilde{h}_{ij} \partial_l X +\g \tilde{h}_i{}^l \tilde{h}_{lj} \nu \big\rangle \nonumber\\
		&-\g^2 \tilde{h}_i{}^l \tilde{h}_j{}^m \metric{V}{\partial_m X} \metric{(Dh,0)}{\partial_l X}
		-\g^2 \tilde{h}_i{}^m \tilde{h}_j{}^l \metric{V}{\partial_m X} \metric{(Dh,0)}{\partial_l X}. \label{Nbl-i-Nbl-j-h}
	\end{align}
	By using \eqref{di X} and \eqref{nu}, we have
	\begin{align}
		\metric{(Dh,0)}{- \g \nabla^l \tilde{h}_{ij} \partial_l X}=& \g \nabla^l \tilde{h}_{ij} A_{lm}[\g] D_m h, \label{Dh,hijlXl}\\
		\metric{(D h,0)}{\g \tilde{h}_i{}^l \tilde{h}_{lj} \nu} =& -\g \tilde{h}_i{}^l \tilde{h}_{lj} \metric{D h}{D \g} \label{Dh,nu}.
	\end{align}
	By using \eqref{hil Xl} and \eqref{hilVdlX}, we have
	\begin{equation}\label{Dh,VXsXl}
		-\g^2 \tilde{h}_i{}^l \tilde{h}_j{}^m \metric{V}{\partial_m X}\metric{(D h,0)}{ \partial_l X}
		= \frac{1}{\g} D_i h D_j \g.
	\end{equation}
	Inserting \eqref{Dh,hijlXl}, \eqref{Dh,nu} and \eqref{Dh,VXsXl} into \eqref{Nbl-i-Nbl-j-h}, we have
	\begin{align}
		\nabla_j \nabla_i h =& D^2h \( e_i, e_j\) +\g \nabla^l \tilde{h}_{ij} A_{lm}[\g] D_m h
		-\g \tilde{h}_i{}^l \tilde{h}_{lj} \metric{D h}{D \g} \nonumber\\
		&+\frac{1}{\g}\metric{D h}{e_i} \metric{D \g}{e_j}
		+\frac{1}{\g}\metric{D h}{e_j} \metric{D \g}{e_i}.\label{nabla-ij-h}
	\end{align}
	Then
	\begin{align}
		\nabla_j \nabla_i h v^i v^j =& D^2h \(\sum_{i=1}^n v^i e_i, \sum_{j=1}^n v^je_j\) +\g \nabla^l \tilde{h}_{ij} A_{lm}[\g] \(D_m h\)v^iv^j \nonumber\\
		&-\g \tilde{h}_i{}^l \tilde{h}_{lj} \metric{D h}{D \g}v^iv^j+\frac{2}{\g}\metric{D h}{\sum_{i=1}^n v^ie_i} \metric{D \g}{\sum_{j=1}^n v^je_j}. \label{nbli nblj h vi vj}
	\end{align}
	By \eqref{hil Xl} and the assumption $ \tilde{h}_i{}^jv^i =\varepsilon \tilde{H}v^j$ at $X(x_0, t_0)$, we have
	\begin{equation}\label{viei}
		\sum_{i=1}^n v^i(e_i,0) = -\g v^i \tilde{h}_i{}^j \partial_j X + \frac{v^i D_i \g}{\g}(z,1)
		=-\varepsilon \g \tilde{H} v^j \partial_j X+ \frac{v^i D_i \g}{\g}(z,1). 
	\end{equation}
	Note that formula \eqref{di X} implies $\metric{\partial_j X}{(z,1)} =0$. Therefore, by using \eqref{viei}, $\metric{(z,1)}{(z,1)} = 0$ and the assumption $v^iv^j g_{ij} =1$, we have 
	\begin{equation}\label{norm-viei}
		\metric{\sum_{i=1}^n v^i(e_i,0)  }{  \sum_{j=1}^n v^j(e_j,0)}
		= \varepsilon^2 \g^2 \tilde{H}^2 v^i v^j \metric{\partial_i X}{\partial_j X}
		= \varepsilon^2 \g^2 \tilde{H}^2.
	\end{equation}
	Letting $\bar{v} = \sum_{i=1}^n v^ie_i/(\varepsilon \g \tilde{H})$ be a unit length vector on $T_{z_0} \mathbb{S}^n$, we have from \eqref{nbli nblj h vi vj} that
	\begin{align}
		\nabla_j \nabla_i h v^i v^j =& \varepsilon^2 \g^2 \tilde{H}^2 D^2h(\bar{v}, \bar{v})
		+ \g \nabla^l \tilde{h}_{ij} v^iv^j A_{lm}[\g] D_m h \nonumber\\
		&-\varepsilon^2 \g \tilde{H}^2 \metric{Dh}{D \g} + 2\varepsilon^2 \g \tilde{H}^2 \metric{D h}{\bar{v}} \metric{D \g}{\bar{v}}.\label{hijvivj}
	\end{align}
	Using \eqref{gij-A-phi} and  \eqref{shifted curvature-support function}, we have
	\begin{equation}\label{g^ij-[hi-kappa]}
		g^{ij} = \( \sum_{l=1}^n A_{il}[\g] A_{lj}[\g] \)^{-1} = \g^2 \tilde{\kappa}_i^2 \delta_{ij} .
	\end{equation}
	Then by \eqref{nabla-ij-h}, we know
	\begin{align}
		\varepsilon \Delta h=
		\varepsilon g^{ij} \nabla_j \nabla_i h =&
		\varepsilon \g^2 \sum_{i=1}^n \tilde{\kappa}_i^2 D^2 h( e_i,e_i) 
		+\varepsilon \g \nabla^l \tilde{H}  A_{lm}[\g] D_m h \nonumber\\
		&- \varepsilon \g \metric{D h}{D \g} |\tilde{A}|^2 
		+\varepsilon \g^2 \frac{2}{\g}  \sum_{i=1}^n \tilde{\kappa}_i^2\metric{Dh}{e_i} \metric{D \g}{e_i} \nonumber\\
		=& \varepsilon \g^2 |\tilde{A}|^2 \sum_{i=1}^n \frac{\tilde{\kappa}_i^2}{|\tilde{A}|^2} D^2 h( e_i,e_i) 
		+\varepsilon \g \nabla^l \tilde{H}  A_{lm}[\g] D_m h \nonumber\\
		&- \varepsilon \g \metric{D h}{D \g} |\tilde{A}|^2 
		+2\varepsilon \g   |\tilde{A}|^2\sum_{i=1}^n \frac{\tilde{\kappa}_i^2}{|\tilde{A}|^2}\metric{Dh}{e_i} \metric{D \g}{e_i}. \label{hijgij}
	\end{align}
	Since $S_{ij} \geq 0$ on $M_{t_0}$, and $S_{ij}v^j=0$ at $X(x_0,t_0)$, we have $\nabla_l S_{ij} v^iv^j =0$ at the point, and hence $\nabla_l \tilde{h}_{ij} v^iv^j = \varepsilon \nabla_l \tilde{H}$ at $X(x_0, t_0)$ of $M_{t_0}$.  Inserting \eqref{hijvivj} and \eqref{hijgij} into \eqref{last-term-Nijvivj-1}, we have
	\begin{align}
		\( \uppercase\expandafter{\romannumeral1} \):= &-\Phi \g^{- \frac{n+p}{n-k}}(\nabla_j \nabla_i h - \varepsilon \Delta h g_{ij})v^iv^j \nonumber\\
		=&-\Phi \g^{- \frac{n+p}{n-k}}\( \(\nabla_j \nabla_i h\) v^iv^j - \varepsilon \Delta h \) \nonumber\\
 		=& \varepsilon \Phi \g^{- \frac{n+p}{n-k}} \(\g^2 \(\sum_{i=1}^n \frac{\tilde{\kappa}_i^2}{|\tilde{A}|^2} D^2 h( e_i,e_i)\)|\tilde{A}|^2 -\varepsilon \g^2  D^2h(\bar{v}, \bar{v}) \tilde{H}^2\)  \nonumber\\
		 &- \varepsilon \Phi \g^{- \frac{n+p}{n-k}+1} \metric{Dh}{D \g}(|\tilde{A}|^2 - \varepsilon \tilde{H}^2)  \nonumber\\
		& +2 \varepsilon \Phi \g^{- \frac{n+p}{n-k}+1}  \(\(\sum_{i=1}^n \frac{\tilde{\kappa}_i^2}{|\tilde{A}|^2}\metric{Dh}{e_i} \metric{D \g}{e_i}\) |\tilde{A}|^2
		-\varepsilon \metric{D h}{\bar{v}} \metric{D \g}{\bar{v}}\tilde{H}^2  \). \label{last-term-Nijvivj-P1}
	\end{align}

	\textbf{Part 2}. In this part, we calculate \eqref{last-term-Nijvivj-2}. A straightforward calculation yields that
	\begin{equation}\label{phi-i-h-j}
		\nabla_i \(\g^{- \frac{n+p}{n-k}}\) \nabla_j h
		= -\frac{n+p}{n-k} \g^{- \frac{n+p}{n-k}-1} \nabla_i \g \nabla_j h
		=-\frac{n+p}{n-k} \g^{- \frac{n+p}{n-k}-1} D_i \g D_j h.
	\end{equation}
	The following calculations are similar to those in \eqref{hijvivj} and \eqref{hijgij}. Using \eqref{phi-i-h-j} and \eqref{norm-viei}, we have
	\begin{align}
		-2\Phi \nabla_i \( \g^{- \frac{n+p}{n-k}}\) \(\nabla_j h\) v^iv^j
		=&
		2 \frac{n+p}{n-k}\Phi   \g^{- \frac{n+p}{n-k}-1} \metric{D \g}{e_i} \metric{D h}{e_j}v^iv^j \nonumber\\
		=& 2  \frac{n+p}{n-k} \Phi \g^{- \frac{n+p}{n-k}-1}  \metric{D \g}{\sum_{i=1}^n v^ie_i} \metric{Dh}{\sum_{j=1}^n v^j e_j} \nonumber\\
		=&  2  \frac{n+p}{n-k} \Phi \g^{- \frac{n+p}{n-k}-1} \varepsilon^2 \g^2 \tilde{H}^2 \metric{D\g}{\bar{v}} \metric{D h}{\bar{v}} \nonumber\\
		=& 2 \varepsilon^2\frac{n+p}{n-k} \Phi \g^{- \frac{n+p}{n-k}+1}\metric{D\g}{\bar{v}} \metric{D h}{\bar{v}}
		\tilde{H}^2.\label{Part2-1}
	\end{align}
	Using the assumption $v^iv^jg_{ij}=1$ and \eqref{g^ij-[hi-kappa]}, we obtain
	\begin{align}
		2 \varepsilon \Phi \metric{\nabla \(\g^{- \frac{n+p}{n-k}}\)}{\nabla h}g_{ij} v^iv^j
		=&
		2 \varepsilon \Phi \(-\frac{n+p}{n-k}   \g^{- \frac{n+p}{n-k}-1} D_i \g D_j h g^{ij}  \) \nonumber\\
		=&
		-2\varepsilon \frac{n+p}{n-k} \Phi  \g^{- \frac{n+p}{n-k}-1} D_i \g D_j h \( \g^2 \tilde{\kappa}_i^2 \delta_{ij} \) \nonumber\\
		=& 
		-2\varepsilon \frac{n+p}{n-k} \Phi  \g^{- \frac{n+p}{n-k}+1} \(\sum_{i=1}^n \tilde{\kappa}_i^2 D_i \g D_i h  \) \nonumber\\
		=&-2\varepsilon \frac{n+p}{n-k} \Phi  \g^{- \frac{n+p}{n-k}+1} \( \sum_{i=1}^n \frac{\tilde{\kappa}_i^2}{|\tilde{A}|^2} \metric{D\g}{e_i} \metric{D h}{e_i} \) |\tilde{A}|^2. \label{Par2-2}
	\end{align}
	Substituting \eqref{Part2-1} and \eqref{Par2-2} into \eqref{last-term-Nijvivj-2}, we have
	\begin{align}
		\( \uppercase\expandafter{\romannumeral2} \):=&-2\Phi \(\nabla_i \( \g^{- \frac{n+p}{n-k}}\) \nabla_j h - \varepsilon \metric{\nabla \(\g^{- \frac{n+p}{n-k}}\)}{\nabla h}g_{ij}  \)v^iv^j  \nonumber\\
		=& - 2 \varepsilon \frac{n+p}{n-k} \Phi \g^{- \frac{n+p}{n-k}+1}  \(\(\sum_{i=1}^n \frac{\tilde{\kappa}_i^2}{|\tilde{A}|^2}\metric{Dh}{e_i} \metric{D \g}{e_i}\) |\tilde{A}|^2
		-\varepsilon \metric{D h}{\bar{v}} \metric{D \g}{\bar{v}}\tilde{H}^2  \).\label{last-term-Nijvivj-P2}
	\end{align}

	\textbf{Part 3}. In this part, we calculate \eqref{last-term-Nijvivj-3}. For the term $\nabla_j \nabla_i \(\g^{- \frac{n+p}{n-k}} \)$, we have
	\begin{align*}
		&\nabla_j \nabla_i \(\g^{- \frac{n+p}{n-k}} \) 
		=\nabla_j \nabla_i  \( \(\frac{1}{\g}\)^{ \frac{n+p}{n-k}}  \) \\
		=& \frac{n+p}{n-k} \g^{- \frac{n+p}{n-k}+1} \nabla_j \nabla_i \(\frac{1}{\g}\)
		+ \frac{n+p}{n-k} \( \frac{n+p}{n-k}-1 \) \g^{- \frac{n+p}{n-k}+2} \nabla_i \(\frac{1}{\g}\) \nabla_j \(\frac{1}{\g}\).
	\end{align*}
	This together with \eqref{1/phi, coshr-u}, \eqref{1/phi i new} and \eqref{1/phi ij} yields
	\begin{align*}
		\nabla_j \nabla_i \(\g^{- \frac{n+p}{n-k}} \) 
		=& \frac{n+p}{n-k} \g^{- \frac{n+p}{n-k}+1} (-\nabla^l \tilde{h}_{ij} \metric{V}{\partial_l X} 
		-\frac{1}{\g}\tilde{h}_{ij} +\tilde{h}_i{}^l \tilde{h}_{lj} \tilde{u})\\
		&+\frac{n+p}{n-k}  \(\frac{n+p}{n-k}-1 \) \g^{- \frac{n+p}{n-k}-2} D_i \g D_j \g.
	\end{align*}
	Hence, by using \eqref{g^ij-[hi-kappa]} and the assumption that $\tilde{h}_i{}^j v_j = \varepsilon \tilde{H}v_i$ at $X(x_0, t_0)$, we have
	\begin{align}
		\nabla_j \nabla_i \(\g^{- \frac{n+p}{n-k}}\) v^i v^j
		=& -  \frac{n+p}{n-k} \g^{- \frac{n+p}{n-k}+1}  \(\nabla^l \tilde{h}_{ij}\)  v^iv^j \metric{V}{\partial_l X} 
		-\varepsilon \frac{n+p}{n-k} \g^{- \frac{n+p}{n-k}} \tilde{H} \nonumber\\
		&+ \varepsilon^2  \frac{n+p}{n-k} \g^{- \frac{n+p}{n-k}+1} \tilde{u} \tilde{H}^2
		+ \varepsilon^2 \frac{n+p}{n-k} \( \frac{n+p}{n-k}-1 \) \g^{- \frac{n+p}{n-k}} \metric{D \g}{\bar{v}}^2 \tilde{H}^2 \label{nbli-nblj-phi}
	\end{align}
	and
	\begin{align}
		\varepsilon \Delta \(\g^{- \frac{n+p}{n-k}}\) 
		=&- \varepsilon \frac{n+p}{n-k} \g^{- \frac{n+p}{n-k}+1}  \nabla^l \tilde{H} \metric{V}{\partial_l X} \nonumber\\
		&-\varepsilon \frac{n+p}{n-k} \g^{- \frac{n+p}{n-k}} \tilde{H}
		+\varepsilon  \frac{n+p}{n-k} \g^{- \frac{n+p}{n-k}+1} \tilde{u} \tilde{|A|}^2 \nonumber\\
		&+\varepsilon \frac{n+p}{n-k} \( \frac{n+p}{n-k}-1 \) \g^{- \frac{n+p}{n-k}} \(\sum_{i=1}^n \frac{\tilde{\kappa}_i^2}{|\tilde{A}|^2}\metric{D \g}{e_i}^2\) \tilde{|A|}^2. \label{epsl-Lap-phi}
	\end{align}
	Substituting \eqref{nbli-nblj-phi} and \eqref{epsl-Lap-phi} into \eqref{last-term-Nijvivj-3} and using $\nabla^l \tilde{h}_{ij} v^i v^j = \varepsilon \nabla^l \tilde{H}$ at $X(x_0, t_0)$, we have
	\begin{align}
		\( \uppercase\expandafter{\romannumeral3} \):=&-\Phi h \(\nabla_j \nabla_i \(\g^{- \frac{n+p}{n-k}}\) - \varepsilon \Delta \(\g^{- \frac{n+p}{n-k}}\) g_{ij} \)v^iv^j \nonumber\\
		=&\varepsilon  \frac{n+p}{n-k} \Phi \g^{- \frac{n+p}{n-k}+1}  h \tilde{u} (\tilde{|A|}^2 -\varepsilon \tilde{H}^2) \nonumber\\
		&+\varepsilon \frac{n+p}{n-k} \( \frac{n+p}{n-k}-1 \) \Phi \g^{- \frac{n+p}{n-k}} h \( \(\sum_{i=1}^n \frac{\tilde{\kappa}_i^2}{|\tilde{A}|^2}\metric{D \g}{e_i}^2\) \tilde{|A|}^2 -  \varepsilon \metric{D \g}{\bar{v}}^2 \tilde{H}^2   \).\label{last-term-Nijvivj-P3}
	\end{align}

	Putting \eqref{last-term-Nijvivj-P1}, \eqref{last-term-Nijvivj-P2} and \eqref{last-term-Nijvivj-P3} into \eqref{last-term-Nijvivj} and using \eqref{Nij-vi-vj}, we have
	\begin{align}
		N_{ij}v^iv^j 
		=& \varepsilon \Phi \g^{- \frac{n+p}{n-k}} \(\g^2 \(\sum_{i=1}^n \frac{\tilde{\kappa}_i^2}{|\tilde{A}|^2} D^2 h( e_i,e_i)\)|\tilde{A}|^2 -\varepsilon \g^2  D^2h(\bar{v}, \bar{v}) \tilde{H}^2\)   \nonumber\\
		&- \varepsilon \Phi \g^{- \frac{n+p}{n-k}+1} \metric{Dh}{D \g}(|\tilde{A}|^2 - \varepsilon \tilde{H}^2)  \nonumber\\
		& -2 \varepsilon (\frac{n+p}{n-k} -1)\Phi \g^{- \frac{n+p}{n-k}+1}
		\( 
		 \(\sum_{i=1}^n \frac{\tilde{\kappa}_i^2}{|\tilde{A}|^2}\metric{Dh}{e_i} \metric{D \g}{e_i}\) |\tilde{A}|^2
		-\varepsilon \metric{D h}{\bar{v}} \metric{D \g}{\bar{v}}\tilde{H}^2  
		\)  \nonumber\\
		&+\varepsilon  \frac{n+p}{n-k} \Phi \g^{- \frac{n+p}{n-k}+1}  h \tilde{u} (\tilde{|A|}^2 -\varepsilon \tilde{H}^2)  \nonumber\\
		&+\varepsilon \frac{n+p}{n-k} \( \frac{n+p}{n-k}-1 \) \Phi \g^{- \frac{n+p}{n-k}} h 
		\(
		\(\sum_{i=1}^n \frac{\tilde{\kappa}_i^2}{|\tilde{A}|^2}\metric{D \g}{e_i}^2\) \tilde{|A|}^2 -  \varepsilon \metric{D \g}{\bar{v}}^2 \tilde{H}^2
		\)  \nonumber\\
		&+\dot{F}^{lm} \tilde{h}_l{}^q \tilde{h}_{qm}(1- \varepsilon n ) 
		+\varepsilon \dot{F}^{lm}g_{lm}(|\tilde{A}|^2 - \varepsilon \tilde{H}^2)
		+\varepsilon \Phi \g^{- \frac{n+p}{n-k}} h (|\tilde{A}|^2 - \varepsilon \tilde{H}^2).\label{Nijvivj-final-version}
	\end{align}

	If $M_t = \partial \Omega_t$ is origin symmetric, then we recall the previous estimates of  $||\g||_{C^1(\mathbb{S}^n)}$ and $\sup_{t \in [0,T)} \Phi(t)$ in \eqref{C0-est}, \eqref{C1-est} and \eqref{Phi<C}. Since $\tilde{H}^2 \leq n |\tilde{A}|^2$, the terms containing $\tilde{H}^2$ (except $\varepsilon^2 \dot{F}^{lm}g_{lm} \tilde{H}^2$) in the right-hand side of \eqref{Nijvivj-final-version} can be bounded from below by $-\varepsilon^2 C |\tilde{A}|^2$, where the constant $C$ only depends on $n$, $k$, $p$, $\sup_{t \in [0,T)} \Phi(t)$, $||\g||_{C^1(\mathbb{S}^n)}$ and $||f||_{C^2(\mathbb{S}^n)}$; however, since \eqref{F-ij delt-ij geq 1} showed  $\dot{F}^{lm}g_{lm} \geq 1$, the quantity $-\varepsilon^2 C |\tilde{A}|^2$ can be controlled by the positive term $\varepsilon\dot{F}^{lm}g_{lm}(|\tilde{A}|^2 - \varepsilon \tilde{H}^2)$ when  $\varepsilon$ is sufficiently small. Hence, when $M_t = \partial \Omega_t$ is origin symmetric, in order to prove $N_{ij} v^i v^j \geq 0$ for small $\varepsilon$, we only need to focus on the coefficient of $|\tilde{A}|^2$ in \eqref{Nijvivj-final-version}.

\textbf{Case 1.} $1 \leq k \leq n-1$ and $p >-n$. Recall the assumption that $M_t= \partial \Omega_t$ is origin symmetric in this case. By the above argument, we only need to ensure the following inequality to be true, i.e.
\begin{align}
& \g^2 \(\sum_{i=1}^n \frac{\tilde{\kappa}_i^2}{|\tilde{A}|^2} D^2 h( e_i,e_i)\)
 - \g \metric{D h}{D \g} 
-2 \(\frac{n+p}{n-k} -1 \) \g  \(\sum_{i=1}^n \frac{\tilde{\kappa}_i^2}{|\tilde{A}|^2}\metric{Dh}{e_i} \metric{D \g}{e_i}\)  \nonumber\\
&+\frac{n+p}{n-k} \g h \tilde{u}
+ \frac{n+p}{n-k} \( \frac{n+p}{n-k}-1 \)h\(\sum_{i=1}^n \frac{\tilde{\kappa}_i^2}{|\tilde{A}|^2}\metric{D \g}{e_i}^2\)
+h \geq 0.\label{0-suffi-cond-h-p>-n}
\end{align}
Formula \eqref{tilde u} yields 
\begin{equation*}
\g \tilde{u} = \frac{1}{2} |D \g|^2 + \frac{1}{2}(\g^2-1).
\end{equation*}
Hence the inequality \eqref{0-suffi-cond-h-p>-n} is equivalent to
\begin{align}
0 \leq &  \sum_{i=1}^n \frac{\tilde{\kappa}_i^2}{|\tilde{A}|^2} \g^2 D^2 h( e_i,e_i)
- \g \metric{D h}{D \g} 
- \sum_{i=1}^n \frac{\tilde{\kappa}_i^2}{|\tilde{A}|^2} \frac{2(k+p)}{n-k} \g \metric{Dh}{e_i} \metric{D \g}{e_i}  
+\frac{n+p}{2(n-k)}h|D \g|^2 \nonumber\\
&+ \frac{n+p}{2(n-k)}\g^2 h  
+ \sum_{i=1}^n \frac{\tilde{\kappa}_i^2}{|\tilde{A}|^2} \frac{(n+p)(k+p)}{(n-k)^2}h\metric{D \g}{e_i}^2
+ \frac{n-2k-p}{2(n-k)}h \nonumber\\
=& \sum_{i=1}^n  \frac{\tilde{\kappa}_i^2}{|\tilde{A}|^2} \(\g^2 D^2 h( e_i,e_i)
-\g \metric{D h}{D \g}
- \frac{2(k+p)}{n-k} \g \metric{Dh}{e_i} \metric{D \g}{e_i} 
+\frac{n+p}{2(n-k)}h|D \g|^2 \right. \nonumber \\
&\left. +\frac{n+p}{2(n-k)}\g^2 h 
+\frac{(n+p)(k+p)}{(n-k)^2}h\metric{D \g}{e_i}^2
+ \frac{n-2k-p}{2(n-k)}h \). \label{1-suffi-cond-h-p>-n}
\end{align}
Since $\Omega_t$ is origin symmetric, and $f(z)$ satisfies Assumption \ref{assump-h}, we know that \eqref{1-suffi-cond-h-p>-n} follows from Lemma \ref{lem-from assmp of f to desired est}.
 
\textbf{Case 2.} $0 \leq  k \leq n-1$ and $p=-n$. In this case, we assumed that $f(z)$ is a positive constant function on $\mathbb{S}^n$. Then \eqref{Nijvivj-final-version} implies 
\begin{equation*}
N_{ij} v^iv^j = 
\dot{F}^{lm} \tilde{h}_l{}^q \tilde{h}_{qm}(1- \varepsilon n ) 
+\varepsilon \dot{F}^{lm}g_{lm}(|\tilde{A}|^2 - \varepsilon \tilde{H}^2)
+\varepsilon \Phi  h (|\tilde{A}|^2 - \varepsilon \tilde{H}^2) \geq 0
\end{equation*}
for $\varepsilon$ satisfying $0 <\varepsilon \leq \frac{1}{n}$.

\textbf{Case 3.} $k=0$ and $-\infty<p<+\infty$. From $S_{ij} v^i = 0$, we can easily get $\tilde{\kappa}_1(x_0, t_0) = \varepsilon \tilde{H}(x_0, t_0)$, where we assumed $\tilde{\kappa}_1 \leq  \cdots \leq \tilde{\kappa}_n$. Then we know $\tilde{\kappa}_1 \leq  \frac{(n-1) \varepsilon}{1-\varepsilon} \tilde{\kappa}_n$, which implies
\begin{equation*}
\dot{F}^{lm}g_{lm} \geq \frac{\partial p_n^{\frac{1}{n}}(\tilde{\kappa})}{\partial \tilde{\kappa}_1}
=\frac{p_n^{\frac{1}{n}}(\tilde{\kappa})}{ n \tilde{\kappa}_1}
\geq \frac{\tilde{\kappa}_1^{\frac{n-1}{n}} \tilde{\kappa}_n^{\frac{1}{n}} }{n \tilde{\kappa}_1} 
= \frac{1}{n}\(\frac{\tilde{\kappa}_n}{\tilde{\kappa}_1} \)^{\frac{1}{n}} \geq \frac{1}{n (n-1)^{\frac{1}{n}}}\(\frac{1-\varepsilon}{\varepsilon} \)^{\frac{1}{n}}. 
\end{equation*}
Hence we have that $\dot{F}^{lm}g_{lm} \to +\infty$ as $\varepsilon \to 0^+$. Recall that $M_t =\partial \Omega_t$ is origin symmetric in Case 3.
By using \eqref{Nijvivj-final-version} and the a priori estimates in  Lemma \ref{lem-C0-est}, Corollary \ref{cor-C1-est} and Lemma \ref{lem-c < Phi <C}, we can choose a small $\varepsilon_0 = \varepsilon_0(k, p, M_0, ||f||_{C^2(\mathbb{S}^n)}) >0$ such that $N_{ij}v^iv^j \geq 0$. 

In summary, with the assumptions of $n$, $k$, $p$, $f(z)$ in Proposition \ref{prop-Pinching-est}, there exists a small positive constant $\varepsilon_0>0$ such that $N_{ij} v^i v^j \geq 0$ at $X(x_0,t_0)$. Hence we have $\tilde{h}_{ij} \geq \varepsilon_0 \tilde{H} g_{ij}$ along the flow \eqref{flow-HCMF}, which implies
\begin{equation*}
\tilde{\kappa}_1(x,t) \geq \varepsilon_0 \tilde{H}(x,t) \geq \varepsilon_0(n-1) \tilde{\kappa}_1(x,t) + \varepsilon_0 \tilde{\kappa}_n(x,t), \quad \forall \ (x,t) \in M \times [0,T),
\end{equation*} 
where we assumed $\tilde{\kappa}_1 \leq \cdots \leq \tilde{\kappa}_n$ at $X(x,t)$ of $M_t$. This implies the desired inequality \eqref{Pinching-est-2}. We complete the proof of Proposition \ref{prop-Pinching-est}.
\end{proof}
\subsubsection{Estimates of shifted principal curvatures}
For abbreviation, let $\mathcal{F}(z,t) = p_{n-k}^{\frac{1}{n-k}} (A [\g(z,t)])$ and $P(z,t) =\Phi(t) h(z) \g^{-\frac{n+p}{n-k}}(z,t)$. If $A[\g]>0$, then \eqref{G=Gij Aij}, \eqref{Gij delt-ij in lemma} and \eqref{G-ij >0} yield
\begin{align}
\mathcal{F}(A [\g]) =& \dot{\mathcal{F}}^{ij} A_{ij}[\g], \label{mathcal F-ij A-ij}\\
\sum_{i=1}^n \mathcal{F}^{ii} \geq& 1, \label{sum mathcal F-ii}\\
\mathcal{F}^{ij} >&0. \label{mathcal F-ij >0}
\end{align}
Recall that $\sigma_{ij}$ denotes the metric on the unit sphere $\mathbb{S}^n$.
In the following lemmas, for a fixed pair $(z_0,t_0) \in \mathbb{S}^n \times [0,T)$, we use normal coordinates for $\mathbb{S}^n$ around $z_0$ such that $\sigma_{ij}(z_0, t) =\delta_{ij}$ for $t \in [0,T)$ and  $A_{ij}[\g(z_0,t_0)]$ is diagonal.

\begin{lem}\label{lem-dt-Aij}
	Along the scalar evolution equation \eqref{scalar eq phi-HCMF} of $\g(z,t)$, $A_{ij}[\g(z,t)]$ evolves as
	\begin{align}
	\frac{\partial}{\partial t} A_{ij} [\g(z,t)] =&
	\frac{1}{\mathcal{F}^2} D_j D_i \mathcal{F} - 2\frac{\dot{\mathcal{F}}^{ij}}{\mathcal{F}^3} D_i\mathcal{F} D_j \mathcal{F} 
	+\Phi D_i h D_j \(\g^{-\frac{n+p}{n-k}+1}\)  \nonumber\\
	&+\Phi D_j h D_i \(\g^{-\frac{n+p}{n-k}+1}\)
	+\Phi \g^{-\frac{n+p}{n-k}+1} D_j D_i h
	-\Phi \metric{D \g}{D h}\g^{-\frac{n+p}{n-k}} \delta_{ij} \nonumber\\
	&-\frac{\metric{D \g}{D \mathcal{F}}}{\g \mathcal{F}^2} \delta_{ij}
	-\frac{\cosh r}{\g \mathcal{F}}\delta_{ij}
	- \( \frac{n+p}{n-k}-1 \) P A_{ij}[\g] \nonumber \\
	&+\frac{n+p}{n-k} \( \frac{n+p}{n-k}-1 \)\frac{P \g_i \g_j}{\g}
	+\frac{n+p}{n-k}P \tilde{u}\delta_{ij}
	+\frac{P}{\g}\delta_{ij}. \label{dt-Aij}
	\end{align}	
\end{lem}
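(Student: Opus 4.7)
The plan is to differentiate
\begin{equation*}
A_{ij}[\g]=\g_{ij}-\tfrac{1}{2}\frac{|D\g|^2}{\g}\sigma_{ij}+\tfrac{1}{2}\Bigl(\g-\frac{1}{\g}\Bigr)\sigma_{ij}
\end{equation*}
directly in $t$ and substitute the scalar flow \eqref{scalar eq phi-HCMF}, which I write compactly as
\begin{equation*}
\psi:=\partial_t\g=P\g-\mc F^{-1},\qquad P=\Phi h\,\g^{-\frac{n+p}{n-k}},\qquad \mc F=p_{n-k}^{\frac{1}{n-k}}(A[\g]),
\end{equation*}
and set $\alpha:=1-\tfrac{n+p}{n-k}$, so that $\Phi h\g^{\alpha}=P\g$ and $\Phi h\,\alpha\g^{\alpha-1}=\alpha P$. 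Working in the normal coordinates fixed around $z_0$ at $t_0$, term-by-term differentiation yields
\begin{equation*}
\partial_tA_{ij}[\g]=\psi_{ij}-\frac{\langle D\g,D\psi\rangle}{\g}\sigma_{ij}+\frac{|D\g|^2}{2\g^2}\,\psi\,\sigma_{ij}+\tfrac{1}{2}\psi\Bigl(1+\g^{-2}\Bigr)\sigma_{ij},
\end{equation*}
so the whole computation reduces to expanding $\psi_{ij}$ and then regrouping the $\sigma_{ij}$ terms.

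Differentiating $\psi=\Phi h\g^\alpha-\mc F^{-1}$ twice gives
\begin{equation*}
\psi_{ij}=\Phi\g^{\alpha}h_{ij}+\Phi h_iD_j(\g^\alpha)+\Phi h_jD_i(\g^\alpha)+\alpha(\alpha-1)\frac{P\g_i\g_j}{\g}+\alpha P\g_{ij}+\mc F^{-2}\mc F_{ij}-2\mc F^{-3}\mc F_i\mc F_j,
\end{equation*}
where $\alpha(\alpha-1)=\tfrac{(n+p)(k+p)}{(n-k)^2}=\tfrac{n+p}{n-k}\bigl(\tfrac{n+p}{n-k}-1\bigr)$, matching the $P\g_i\g_j/\g$-coefficient in \eqref{dt-Aij}; the $h_{ij}$, $h_iD_j(\g^\alpha)$, and $\mc F$-terms also match directly. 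To produce the $-(\tfrac{n+p}{n-k}-1)PA_{ij}[\g]$ contribution, I rewrite $\alpha P\g_{ij}$ by inverting the defining formula
\begin{equation*}
\g_{ij}=A_{ij}[\g]+\tfrac{1}{2}\frac{|D\g|^2}{\g}\sigma_{ij}-\tfrac{1}{2}\Bigl(\g-\frac{1}{\g}\Bigr)\sigma_{ij},
\end{equation*}
using $\alpha=-(\tfrac{n+p}{n-k}-1)$; this extracts the $PA_{ij}$ term and leaves a residual $\sigma_{ij}$ piece to be combined below.

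The last step is to collect all $\sigma_{ij}$-proportional contributions. Splitting $D_k\psi=\Phi\g^\alpha h_k+\alpha P\g_k+\mc F^{-2}\mc F_k$ gives
\begin{equation*}
-\frac{\langle D\g,D\psi\rangle}{\g}\sigma_{ij}=-\Phi\,\g^{-\frac{n+p}{n-k}}\langle D\g,Dh\rangle\sigma_{ij}-\frac{\alpha P|D\g|^2}{\g}\sigma_{ij}-\frac{\langle D\g,D\mc F\rangle}{\g\mc F^2}\sigma_{ij},
\end{equation*}
which accounts for two further terms of \eqref{dt-Aij}. The remaining $\sigma_{ij}$-residue equals
\begin{equation*}
-\alpha P\Bigl[\tfrac{1}{2}\tfrac{|D\g|^2}{\g}+\tfrac{1}{2}(\g-\g^{-1})\Bigr]\sigma_{ij}+\tfrac{\psi}{2}\Bigl[\tfrac{|D\g|^2}{\g^2}+1+\g^{-2}\Bigr]\sigma_{ij}.
\end{equation*}
Using $\tilde u=\tfrac{1}{2}\tfrac{|D\g|^2}{\g}+\tfrac{1}{2}(\g-\g^{-1})$ and $\cosh r=\tfrac{1}{2}\tfrac{|D\g|^2}{\g}+\tfrac{1}{2}(\g+\g^{-1})$ from \eqref{coshr}--\eqref{tilde u} (the second bracket equals $2\cosh r/\g$), together with $\cosh r-\tilde u=1/\g$ from \eqref{1/phi, coshr-u}, this residue collapses to
\begin{equation*}
-\alpha P\tilde u\,\sigma_{ij}+\frac{\psi\cosh r}{\g}\sigma_{ij}=\tfrac{n+p}{n-k}P\tilde u\,\sigma_{ij}+\frac{P}{\g}\sigma_{ij}-\frac{\cosh r}{\g\mc F}\sigma_{ij},
\end{equation*}
which are precisely the last three terms in \eqref{dt-Aij}. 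The computation is essentially bookkeeping; the main obstacle is the regrouping in this last step, where several $\sigma_{ij}$-proportional expressions of the form $\psi|D\g|^2/\g^2$, $\psi$, $\psi/\g^2$, and $\alpha P(\g-\g^{-1})$ must be funneled into the compact form of \eqref{dt-Aij} via the hyperbolic identities \eqref{coshr}, \eqref{tilde u}, and \eqref{1/phi, coshr-u}.
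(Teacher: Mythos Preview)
Your proof is correct and follows essentially the same route as the paper: differentiate $A_{ij}[\g]$ in $t$, expand the second derivatives of $\psi=\partial_t\g$, rewrite the $\alpha P\g_{ij}$ term via the definition of $A_{ij}[\g]$ to extract the $-(\tfrac{n+p}{n-k}-1)PA_{ij}$ contribution, and collapse the remaining $\sigma_{ij}$-terms using \eqref{coshr}, \eqref{tilde u}, \eqref{1/phi, coshr-u}. The paper organizes the bookkeeping slightly differently (isolating the $\Phi h$-block in \eqref{Phih-phi-ij}--\eqref{dt-Aij-main part} before recombining), but the ingredients and logic are identical; your term $-2\mathcal F^{-3}\mathcal F_i\mathcal F_j$ also agrees with what the paper's proof actually computes.
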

\begin{proof}
	From \eqref{scalar eq phi-HCMF} we have
	\begin{equation*}
	\frac{\partial}{\partial t} \g(z,t) = \Phi(t) \g^{-\frac{n+p}{n-k}+1}(z,t)h(z)  - \mathcal{F}^{-1}(z,t).
	\end{equation*}
	This together with \eqref{def A-phi} gives
	\begin{align}
	\frac{\partial}{\partial t} A_{ij} [\g(z,t)] =&
	(\partial_t\g(z,t))_{ij} - \frac{\metric{D \g}{D \partial_t\g(z,t)}}{\g} \delta_{ij} + \frac{1}{2} \frac{|D \g|^2}{\g^2} \partial_t\g(z,t) \delta_{ij} \nonumber\\
	&+ \frac{1}{2}\(1 + \frac{1}{\g^2} \) \partial_t\g(z,t) \delta_{ij} \nonumber\\
	=& \(\Phi \g^{-\frac{n+p}{n-k}+1} h - \frac{1}{\mathcal{F}} \)_{ij} - \frac{\metric{D \g}{D \(\Phi \g^{-\frac{n+p}{n-k}+1} h - \frac{1}{\mathcal{F}} \)} }{\g} \delta_{ij} \nonumber\\
	&+ \frac{1}{2} \frac{|D \g|^2}{\g^2} \(\Phi \g^{-\frac{n+p}{n-k}+1} h - \frac{1}{\mathcal{F}} \)\delta_{ij} 
	 + \frac{1}{2} \(1 + \frac{1}{\g^2} \) \(\Phi \g^{-\frac{n+p}{n-k}+1} h - \frac{1}{\mathcal{F}} \) \delta_{ij} \nonumber\\
	=& \Phi  h D_j D_i \(\g^{-\frac{n+p}{n-k}+1}\) + \Phi D_i  h D_j \(\g^{-\frac{n+p}{n-k}+1}\) + \Phi  D_j h D_i \(\g^{-\frac{n+p}{n-k}+1}\) \nonumber\\
	&+\Phi \g^{-\frac{n+p}{n-k}+1} D_j D_i h 
	+ \frac{D_j D_i \mathcal{F}}{\mathcal{F}^2}- 2 \frac{D_i \mathcal{F} D_j \mathcal{F}}{\mathcal{F}^3} 
	- \Phi  \g^{-\frac{n+p}{n-k}}\metric{D \g}{D  h} \delta_{ij} \nonumber\\
	&- \Phi h \frac{\metric{D \g}{D \( \g^{-\frac{n+p}{n-k}+1}\)}}{\g} \delta_{ij} - \frac{\metric{D \g}{D \mathcal{F}}}{\g \mathcal{F}^2} \delta_{ij}
	+ \frac{1}{2} \Phi h \g^{-\frac{n+p}{n-k}}\frac{|D \g|^2}{\g} \delta_{ij} \nonumber\\
	&- \frac{1}{2} \frac{1}{\g \mathcal{F}} \frac{|D \g|^2}{\g} \delta_{ij} 
	+ \frac{1}{2} \(\g + \frac{1}{\g} \) \Phi  h \g^{-\frac{n+p}{n-k}} \delta_{ij}
	- \frac{1}{2}\(\g + \frac{1}{\g} \) \frac{1}{\g \mathcal{F}} \delta_{ij}. \label{dt-Aij-1}
	\end{align}	
	 A direct calculation shows	
	\begin{align}
	\Phi  h D_j D_i \(\g^{-\frac{n+p}{n-k}+1}\)
	=& \Phi h D_j \left( \( 1-\frac{n+p}{n-k} \) \g^{-\frac{n+p}{n-k}} D_i \g \right) \nonumber\\
	=&- \( \frac{n+p}{n-k}-1 \) \Phi h \g^{-\frac{n+p}{n-k}} \g_{ij}
	+\frac{n+p}{n-k} \( \frac{n+p}{n-k}-1 \)\Phi h \g^{-\frac{n+p}{n-k}-1}\g_i \g_j \label{Phih-phi-ij}
	\end{align}
	and
	\begin{align}\label{Phih-Dphi-Dphi}
	- \Phi h \frac{\metric{D \g}{D \(\g^{-\frac{n+p}{n-k}+1}\) }  }{\g} \delta_{ij}
	=\( \frac{n+p}{n-k}-1 \)\Phi h \g^{-\frac{n+p}{n-k}} \frac{|D \g|^2}{\g} \delta_{ij}.
	\end{align}
	Since $P(z,t) =\Phi(t) h(z) \g^{-\frac{n+p}{n-k}}(z,t)$, by using \eqref{Phih-phi-ij} and \eqref{Phih-Dphi-Dphi}, we have 
	\begin{align}
	&\Phi  h D_j D_i \(\g^{-\frac{n+p}{n-k}+1}\) -\Phi h \frac{\metric{D \g}{D \(\g^{-\frac{n+p}{n-k}+1} \)}}{\g} \delta_{ij}+ \frac{1}{2} \Phi h \g^{-\frac{n+p}{n-k}} \frac{|D \g|^2}{\g}\delta_{ij} \nonumber\\
	&+\frac{1}{2} \(\g + \frac{1}{\g}\) \Phi  h \g^{-\frac{n+p}{n-k}} \delta_{ij} \nonumber\\
	=&- \(\frac{n+p}{n-k}-1 \) P \g_{ij}
	+\frac{n+p}{n-k} \( \frac{n+p}{n-k}-1 \) \frac{P \g_i \g_j}{\g}
	+\( \frac{n+p}{n-k}-\frac{1}{2} \) P \frac{|D \g|^2}{\g} \delta_{ij} \nonumber\\
	&+\frac{1}{2}P \(\g +\frac{1}{\g} \)\delta_{ij} \nonumber\\
	=& - \( \frac{n+p}{n-k}-1 \) P \left(\g_{ij}- \frac{1}{2} \frac{|D \g|^2}{\g} \delta_{ij} + \frac{1}{2} \(\g -\frac{1}{\g}\) \delta_{ij} \right)
	+\frac{n+p}{n-k} \( \frac{n+p}{n-k}-1 \) \frac{P\g_i \g_j}{\g} \nonumber\\
	&+\frac{1}{2} \frac{n+p}{n-k}P \frac{|D \g|^2}{\g} \delta_{ij} 
	+\frac{1}{2} \frac{n+p}{n-k}P \(\g-\frac{1}{\g} \) \delta_{ij}
	+ \frac{P}{\g}\delta_{ij}. \label{dt-Aij-main part}
	\end{align}
	Finally, plugging \eqref{dt-Aij-main part} into \eqref{dt-Aij-1}, we have
	\begin{align*}
	\frac{\partial}{\partial t} A_{ij} [\g] =&
	\frac{1}{\mathcal{F}^2} D_j D_i \mathcal{F} - \frac{2}{\mathcal{F}^3} D_i\mathcal{F} D_j \mathcal{F} 
	+\Phi D_i h D_j \(\g^{-\frac{n+p}{n-k}+1}\)\\
	&+\Phi D_j h D_i \(\g^{-\frac{n+p}{n-k}+1}\)
	+\Phi \g^{-\frac{n+p}{n-k}+1} D_j D_i h
	-\Phi \metric{D \g}{D h}\g^{-\frac{n+p}{n-k}} \delta_{ij}\\
	&-\frac{\metric{D \g}{D \mathcal{F}}}{\g \mathcal{F}^2} \delta_{ij}
	-\frac{\cosh r}{\g \mathcal{F}}\delta_{ij}
	- \( \frac{n+p}{n-k}-1 \) P A_{ij}[\g]\\
	&+\frac{n+p}{n-k} \( \frac{n+p}{n-k}-1 \) \frac{P\g_i \g_j}{\g}
	+\frac{n+p}{n-k} P\tilde{u}\delta_{ij}
	+\frac{P}{\g}\delta_{ij},
	\end{align*}	
	where we used \eqref{coshr} and \eqref{tilde u}. We complete the proof of Lemma \ref{lem-dt-Aij}.
\end{proof}

\begin{lem} \label{lem-dt log F-dt log phi-a}
	Along the scalar evolution equation \eqref{scalar eq phi-HCMF} of $\g(z,t)$, we have
	\begin{align}
	\frac{\partial }{\partial t} \log \mathcal{F}(z,t)
	=&\frac{\dot{\mathcal{F}}^{ij}}{\mathcal{F}^2} D_j D_i \log \mathcal{F} 
	- \frac{\dot{\mathcal{F}}^{ij}}{\mathcal{F}^2} D_i \log \mathcal{F} D_j \log \mathcal{F} 
	+2\frac{\dot{\mathcal{F}}^{ij}}{\mathcal{F}}\Phi D_i h D_j \(\g^{-\frac{n+p}{n-k}+1}\) \nonumber\\
	&+\Phi \g^{-\frac{n+p}{n-k}+1} \frac{\dot{\mathcal{F}}^{ij}}{\mathcal{F}}D_j D_i h
	-\Phi \metric{D \g}{D h}\g^{-\frac{n+p}{n-k}} \frac{ \sum_{i=1}^n\dot{\mathcal{F}}^{ii}}{\mathcal{F}}
	-\frac{\metric{D \g}{D \mathcal{F}}}{\g \mathcal{F}^3} \sum_{i=1}^n \dot{\mathcal{F}}^{ii} \nonumber\\
	&-\frac{\cosh r}{\g \mathcal{F}^2} \sum_{i=1}^n \dot{\mathcal{F}}^{ii}
	- \( \frac{n+p}{n-k}-1 \) P
	+\frac{n+p}{n-k} \( \frac{n+p}{n-k}-1 \) \frac{P}{\g\mathcal{F}}\mathcal{F}^{ij}\g_i \g_j  \nonumber\\
	&+\frac{n+p}{n-k} P\tilde{u}\frac{ \sum_{i=1}^n \dot{\mathcal{F}}^{ii}}{\mathcal{F}}
	+\frac{P}{\g}\frac{ \sum_{i=1}^n\dot{\mathcal{F}}^{ii}}{\mathcal{F}}, \label{dt-logF} 
	\end{align}
	and
	\begin{equation}\label{dt-log(phi-a)}
	\frac{\partial}{\partial t} \log (\g(z,t)-a)
	= P \frac{\g}{\g-a}- \frac{1}{(\g-a)\mathcal{F}},
	\end{equation}
	where $a$ is a constant satisfying $\g(z,t)- a>0$.
\end{lem}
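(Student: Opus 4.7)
The plan is a direct computation in two steps, one for each asserted identity. Both reduce to substitutions into results already established earlier in the paper, so the proof is essentially mechanical.

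For the second identity \eqref{dt-log(phi-a)}, I would start from the scalar evolution equation \eqref{scalar eq phi-HCMF}. Since $P(z,t)=\Phi(t)h(z)\g^{-\frac{n+p}{n-k}}(z,t)$, this equation simply reads $\partial_t \g = P\g - \mathcal{F}^{-1}$. Dividing by $\g-a$ gives
\[
\frac{\partial}{\partial t}\log(\g-a) \;=\; \frac{\partial_t \g}{\g-a} \;=\; \frac{P\g}{\g-a} - \frac{1}{(\g-a)\mathcal{F}},
\]
which is exactly \eqref{dt-log(phi-a)}.

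For the first identity \eqref{dt-logF}, I would invoke the chain rule: since $\mathcal{F}=p_{n-k}^{1/(n-k)}(A[\g])$ depends on $t$ only through $A_{ij}[\g(z,t)]$, one has
\[
\frac{\partial}{\partial t}\log \mathcal{F}(z,t) \;=\; \frac{\dot{\mathcal{F}}^{ij}}{\mathcal{F}}\,\frac{\partial}{\partial t} A_{ij}[\g(z,t)].
\]
The plan is then to substitute formula \eqref{dt-Aij} from Lemma \ref{lem-dt-Aij} term by term and simplify. Most terms pass through immediately: the terms in $\Phi D_ih\,D_j(\g^{-\frac{n+p}{n-k}+1})$ combine via the symmetry of $\dot{\mathcal{F}}^{ij}$ into the single factor $2\dot{\mathcal{F}}^{ij}\Phi D_ih\,D_j(\g^{-\frac{n+p}{n-k}+1})/\mathcal{F}$; the terms proportional to $\delta_{ij}$ collapse to $\sum_i \dot{\mathcal{F}}^{ii}/\mathcal{F}$ times the corresponding scalar; the term $-(\frac{n+p}{n-k}-1)PA_{ij}[\g]$ contracts with $\dot{\mathcal{F}}^{ij}/\mathcal{F}$ and, by the homogeneity identity \eqref{mathcal F-ij A-ij} (i.e.\ $\mathcal{F}=\dot{\mathcal{F}}^{ij}A_{ij}[\g]$), gives exactly $-(\frac{n+p}{n-k}-1)P$.

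The only slightly non-routine point will be the first two terms, those coming from $\frac{1}{\mathcal{F}^2}D_jD_i\mathcal{F}$ and $-\frac{2}{\mathcal{F}^3}D_i\mathcal{F}D_j\mathcal{F}$. After contracting with $\dot{\mathcal{F}}^{ij}/\mathcal{F}$ they become $\dot{\mathcal{F}}^{ij}D_jD_i\mathcal{F}/\mathcal{F}^3$ and $-2\dot{\mathcal{F}}^{ij}D_i\mathcal{F}D_j\mathcal{F}/\mathcal{F}^4$. To match the form in \eqref{dt-logF} I would then use the elementary identity
\[
\frac{D_jD_i\mathcal{F}}{\mathcal{F}} \;=\; D_jD_i\log\mathcal{F} + D_i\log\mathcal{F}\,D_j\log\mathcal{F},\qquad \frac{D_i\mathcal{F}\,D_j\mathcal{F}}{\mathcal{F}^2}=D_i\log\mathcal{F}\,D_j\log\mathcal{F},
\]
which, upon contraction against $\dot{\mathcal{F}}^{ij}/\mathcal{F}^2$, produces precisely $\dot{\mathcal{F}}^{ij}D_jD_i\log\mathcal{F}/\mathcal{F}^2 - \dot{\mathcal{F}}^{ij}D_i\log\mathcal{F}\,D_j\log\mathcal{F}/\mathcal{F}^2$, matching the first two terms of \eqref{dt-logF}. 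Collecting all contributions yields the claimed formula.

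Both computations are routine; since Lemma \ref{lem-dt-Aij} has already absorbed all of the geometric input, I do not anticipate any genuine obstacle. If anything, the main bookkeeping challenge is keeping signs, powers of $\mathcal{F}$, and the two $\log$-derivative identities straight while converting $D\mathcal{F}$, $D^2\mathcal{F}$ into $D\log\mathcal{F}$, $D^2\log\mathcal{F}$.
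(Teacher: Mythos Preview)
Your proposal is correct and matches the paper's proof essentially line for line: the paper contracts \eqref{dt-Aij} against $\dot{\mathcal{F}}^{ij}/\mathcal{F}$, invokes the homogeneity identity \eqref{mathcal F-ij A-ij} to collapse the $A_{ij}[\g]$ term, and then records exactly the $\log$-derivative identity you wrote as a separate display \eqref{Fij-logFij} before substituting. The second identity is handled identically via $\partial_t\g = P\g - \mathcal{F}^{-1}$.
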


\begin{proof}
		Since $\partial_t \mathcal{F}(z,t) = \dot{\mathcal{F}}^{ij} \partial_t A_{ij}[\g(z,t)]$, formulas \eqref{dt-Aij} and \eqref{mathcal F-ij A-ij} imply
	\begin{align}
	\frac{\partial }{\partial t} \log \mathcal{F}(z,t)
	=& \frac{\dot{\mathcal{F}}^{ij}}{\mathcal{F}} \partial_t A_{ij}[\g(z,t)] \nonumber\\
	=&\frac{\dot{\mathcal{F}}^{ij}}{\mathcal{F}^3} D_j D_i \mathcal{F} 
	- 2\frac{\dot{\mathcal{F}}^{ij}}{\mathcal{F}^4} D_i\mathcal{F} D_j \mathcal{F} 
	+2\frac{\dot{\mathcal{F}}^{ij}}{\mathcal{F}}\Phi D_i h D_j \(\g^{-\frac{n+p}{n-k}+1}\) \nonumber\\
	&+\Phi \g^{-\frac{n+p}{n-k}+1} \frac{\dot{\mathcal{F}}^{ij}}{\mathcal{F}}D_j D_i h
	-\Phi \metric{D \g}{D h}\g^{-\frac{n+p}{n-k}} \frac{ \sum_{i=1}^n\dot{\mathcal{F}}^{ii}}{\mathcal{F}}
	-\frac{\metric{D \g}{D \mathcal{F}}}{\g \mathcal{F}^3} \sum_{i=1}^n \dot{\mathcal{F}}^{ii} \nonumber\\
	&-\frac{\cosh r}{\g \mathcal{F}^2} \sum_{i=1}^n \dot{\mathcal{F}}^{ii}
	- \( \frac{n+p}{n-k}-1 \) P
	+\frac{n+p}{n-k} \( \frac{n+p}{n-k}-1 \) \frac{P}{\g\mathcal{F}}\mathcal{F}^{ij}\g_i \g_j  \nonumber\\
	&+\frac{n+p}{n-k} P\tilde{u}\frac{ \sum_{i=1}^n \dot{\mathcal{F}}^{ii}}{\mathcal{F}}
	+\frac{P}{\g}\frac{ \sum_{i=1}^n\dot{\mathcal{F}}^{ii}}{\mathcal{F}}. \label{dt-logF-old} 
	\end{align}
	A direct calculation yields
	\begin{align}\label{Fij-logFij}
	\frac{\dot{\mathcal{F}}^{ij}}{\mathcal{F}^3} D_j D_i \mathcal{F} 
	- 2\frac{\dot{\mathcal{F}}^{ij}}{\mathcal{F}^4} D_i\mathcal{F} D_j \mathcal{F} 
	=
	\frac{\dot{\mathcal{F}}^{ij}}{\mathcal{F}^2} D_j D_i \log \mathcal{F} 
	- \frac{\dot{\mathcal{F}}^{ij}}{\mathcal{F}^2} D_i \log \mathcal{F} D_j \log \mathcal{F}.
	\end{align}
	Inserting \eqref{Fij-logFij} into the right-hand side of \eqref{dt-logF-old}, we obtain \eqref{dt-logF}.
	
	Recall that $P(z,t) = \Phi(t) h(z) \g^{-\frac{n+p}{n-k}}(z,t)$.
	Then equation \eqref{scalar eq phi-HCMF} implies 
	\begin{equation*}
	\frac{\partial}{\partial t} \log (\g(z,t)-a)
	= \frac{\partial_t \g(z,t)}{\g(z,t)-a}
	= P \frac{\g}{\g-a}- \frac{1}{(\g-a)\mathcal{F}}.
	\end{equation*}
	This completes the proof of Lemma \ref{lem-dt log F-dt log phi-a}.
\end{proof}

\begin{lem}\label{lem-F<C}
	Suppose that one of the following assumptions of $n$, $k$, $p$, $f(z)$, and $M_t$ holds:
	\begin{itemize}
		\item $k=0$, $-\infty<p<+\infty$, $n \geq 1$, $f(z)$ is a smooth positive even function on $\mathbb{S}^n$, and $M_t$ is a smooth, origin symmetric uniformly h-convex solution to flow $\eqref{flow-HCMF}$;
		\item $1 \leq  k \leq n-1$, $p \geq -n$, $n \geq 2$, $f(z)$ is a smooth positive even function on $\mathbb{S}^n$ satisfying Assumption \ref{assump-h}, and $M_t$ is a smooth, origin symmetric uniformly h-convex solution to the flow \eqref{flow-HCMF}.
	\end{itemize}
	Then there is a positive constant $C$ depending only on $n$, $k$, $p$, $\min_{z \in \mathbb{S}^n} f(z)$, $||f(z)||_{C^2(\mathbb{S}^n)}$ and $M_0$ such that
	\begin{equation}\label{F<C}
	\mathcal{F}(z,t) \leq C, \quad \forall (z,t) \in \mathbb{S}^n \times [0,T).
	\end{equation}
\end{lem}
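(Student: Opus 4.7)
The plan is to apply the parabolic maximum principle to the auxiliary function
$W(z,t) := \log \mathcal{F}(z,t) + \beta\, u(z,t)$ with the shift $\beta := \frac{p+k}{n-k} - 1$, combined with the pinching estimate of Proposition \ref{prop-Pinching-est}. The choice of $\beta$ is motivated by the stationary equation $\g^{-p-k}p_{n-k}(A[\g]) = \gamma f$, for which $\mathcal{F} = (\gamma f)^{1/(n-k)}\g^{(p+k)/(n-k)}$, so that $\log\mathcal{F} - \frac{p+k}{n-k}u$ is bounded at the target limit. Before entering the maximum-principle argument, I would combine the $C^0$ and $C^1$ bounds (Lemma \ref{lem-C0-est}, Corollary \ref{cor-C1-est}), the bounds on $\Phi(t)$ from Lemma \ref{lem-c < Phi <C} (applicable via the pinching \eqref{Pinching-est-2}), and the uniform ellipticity of $(\dot{\mathcal{F}}^{ij})$ that follows from the pinching, to deduce that $P(z,t) := \Phi(t)h(z)\g(z,t)^{-(n+p)/(n-k)}$ satisfies $0 < c \leq P \leq C$, and that $\sum_i \dot{\mathcal{F}}^{ii}$ together with the eigenvalues of $(\dot{\mathcal{F}}^{ij})$ are comparable to positive constants independent of $t$.

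Suppose $W$ attains its maximum over $\mathbb{S}^n\times[0,T)$ at $(z_0, t_0)$; if $t_0 = 0$ the conclusion is immediate since $\mathcal{F}(\cdot, 0)$ is fixed, so assume $t_0 > 0$. At $(z_0, t_0)$ the first-order condition $DW = 0$ yields $D_i\log\mathcal{F} = -\beta D_i\g/\g$, and the Hessian condition $\dot{\mathcal{F}}^{ij}D_jD_iW \leq 0$ yields $\dot{\mathcal{F}}^{ij}D_jD_i\log\mathcal{F} \leq -\beta\dot{\mathcal{F}}^{ij}D_jD_iu$. Substituting these, together with the identity $\dot{\mathcal{F}}^{ij}\g_{ij} = \mathcal{F} + \alpha\sum_i\dot{\mathcal{F}}^{ii}$ obtained by tracing \eqref{def A-phi} against $\dot{\mathcal{F}}^{ij}$ (with $\alpha = \frac{1}{2}(|D\g|^2/\g - \g + 1/\g)$ bounded), into the evolution equation \eqref{dt-logF}, and combining with $\beta\,\partial_t u = \beta P - \beta(\g\mathcal{F})^{-1}$ from \eqref{scalr eq u-HCMF}, every term in $\partial_t W$ except $-\frac{p+k}{n-k}P$ (from \eqref{dt-logF}) and $\beta P$ (from $\beta\partial_t u$) acquires an explicit factor of $\mathcal{F}^{-1}$ with coefficient bounded in terms of the a priori data. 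The value $\beta = \frac{p+k}{n-k} - 1$ reduces the leading $O(1)$ coefficient to $\beta - \frac{p+k}{n-k} = -1$, producing
\begin{equation*}
\partial_t W(z_0, t_0) \;\leq\; -P(z_0, t_0) + \frac{C_0}{\mathcal{F}(z_0, t_0)} \;\leq\; -c + \frac{C_0}{\mathcal{F}(z_0, t_0)}
\end{equation*}
for some constant $C_0$ depending only on the data. If $\mathcal{F}(z_0, t_0) > C_0/c$, this contradicts $\partial_t W(z_0, t_0) \geq 0$. Hence $\mathcal{F}(z_0, t_0)$ is bounded, so $W$ is bounded, and the $C^0$ bound on $u$ then yields the desired estimate $\mathcal{F} \leq C$.

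The main obstacle is the sign of $-\bigl(\frac{n+p}{n-k} - 1\bigr)P$ in \eqref{dt-logF}: this is the only $O(1)$ contribution to $\partial_t\log\mathcal{F}$ as $\mathcal{F}\to\infty$, and it becomes positive exactly when $p + k < 0$, a range allowed by the hypotheses ($p \geq -n$ for $1\leq k \leq n-1$, or $k = 0$ with $p$ unrestricted). Thus the naive choice $W = \log\mathcal{F}$ cannot succeed, and the linear shift $\beta u$ is essential. Verifying that every remaining term in $\partial_t W$ at $(z_0, t_0)$ is genuinely $O(\mathcal{F}^{-1})$ with constant determined by the a priori data requires the simultaneous use of the pinching-based uniform ellipticity of $(\dot{\mathcal{F}}^{ij})$, the $C^0$ and $C^1$ estimates of Subsection \ref{subsec-a priori est}, and the two-sided bound on $\Phi$.
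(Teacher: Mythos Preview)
Your argument is correct and follows the same maximum-principle template as the paper, with a minor variation in the auxiliary function. The paper considers
\[
Q(z,t)=\log\mathcal{F}(z,t)-b\log\bigl(\g(z,t)-a\bigr),
\]
eventually taking $a=1$ and choosing $b>0$ large so that the leading $O(1)$ coefficient $\frac{n+p}{n-k}-1+b\,\frac{\g}{\g-1}$ is at least $1$; the shift $a=1$ is a device that makes the factor $\g/(\g-1)$ bounded below independently of $p$, so that a single large $b$ works uniformly. Your choice $W=\log\mathcal{F}+\beta u$ with the specific value $\beta=\frac{p+k}{n-k}-1$ is the $a=0$ version with $b=-\beta$ fixed rather than free; it makes the leading coefficient exactly $-1$ without any tuning at the end. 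Both routes rely identically on the pinching estimate (to bound $\sum_i\dot{\mathcal{F}}^{ii}$ and hence $\Phi$ from below via Lemma~\ref{lem-c < Phi <C}) and on the $C^0$, $C^1$ bounds, and both reduce all remaining terms to $O(\mathcal{F}^{-1})$. Your version is marginally more economical; the paper's free-parameter form is the standard presentation and is reused with the same template in the companion lower-bound Lemma~\ref{lem-F>C}.
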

\begin{proof}
	At first, we explain the assumptions of $n$, $k$, $p$, $f(z)$, and $M_t$ in Lemma \ref{lem-F<C}. By Lemma \ref{lem-C0-est}, Corollary \ref{cor-C1-est} and Lemma \ref{lem-c < Phi <C}, the origin-symmetry of $M_t$ implies the $C^0$, $C^1$ estimates of $\g(z,t)$ and the uniform upper bound of $\Phi(t)$ along the flow \eqref{flow-HCMF}. If $n=1$ and $k=0$, 
	then Lemma \ref{lem-c < Phi <C} implies that $\Phi(t)$ 
	stays uniformly away from zero
	 along the flow \eqref{flow-HCMF}. If $n \geq 2$ and $0 \leq k \leq n-1$, then by Proposition \ref{prop-Pinching-est} and the assumptions in Lemma \ref{lem-F<C}, we can get the pinching estimates along the flow \eqref{flow-HCMF}. Then we get the lower bound of $\Phi(t)$ by use of Lemma \ref{lem-c < Phi <C}. Hence $P(z,t):= \Phi(t) h(z) \g^{-\frac{n+p}{n-k}}(z,t)$ has a uniform positive lower bound along the flow \eqref{flow-HCMF}. By the above arguments, under the assumptions of $n$, $k$, $p$, $f(z)$, and $M_t$ in Lemma \ref{lem-F<C}, there exists a positive constant $C$ such that
	\begin{align}
	1+\frac{1}{C} \leq& \g(z,t) \leq C, \label{C0 in F<C}\\
	\quad |D\g(z,t)| \leq& C, \label{C1 in F<C}\\
	\quad \frac{1}{C} \leq& \Phi(t) \leq C, \label{Phi in F<C}\\
    \frac{1}{C} \leq&  P(z,t) \leq C, \label{P in F<C}\\
	\quad \frac{1}{C} \leq& \frac{\tilde{\kappa}_i}{\tilde{\kappa}_j} \leq C, \quad 1 \leq i, j \leq n \label{Pinching in F<C}
	\end{align} 
   along the flow \eqref{flow-HCMF}, and $C$ only depends on $n$, $k$, $p$, $\min_{z \in \mathbb{S}^n} f(z)$, $||f(z)||_{C^2(\mathbb{S}^n)}$ and $M_0$.
	
	Now we define an auxiliary function
	\begin{equation}\label{aux func Q1}
	Q(z,t)= \log \mathcal{F}(z,t) - b\log(\g(z,t)-a),
	\end{equation}
	where $a \in [0,1]$ and $b>0$ are constants that will be chosen later in this proof. By \eqref{C0 in F<C} and the assumption $a \in [0,1]$, we have $\g-a$ is bounded between two uniform positive constants along the flow \eqref{flow-HCMF}. Hence, in order for \eqref{F<C}, we only need to show $Q(z,t) \leq C$.
    Using \eqref{shifted curvature-support function} and \eqref{Pinching in F<C}, we have
	\begin{align}
	\sum_{i=1}^n \dot{\mathcal{F}}^{ii}(z,t) =& \frac{1}{n-k} p_{n-k}^{\frac{-n+k+1}{n-k}}(A[\g]) \sum_{i=1}^n \frac{\partial p_{n-k}(A[\g])}{\partial A_{ii}[\g]} \nonumber\\
	\leq&  \min_{1 \leq i \leq n} (A_{ii}[\g])^{-n+k+1} \max_{1 \leq j \leq n} (A_{jj}[\g])^{n-k-1} \nonumber\\
	=&  \max_{1 \leq i, j \leq n} \(\frac{\tilde{\kappa}_j}{\tilde{\kappa}_i} \)^{n-k-1}(x,t)
	\leq  C.\label{F-ii leq C}
	\end{align}
	 Applying \eqref{C0 in F<C}, \eqref{C1 in F<C}, \eqref{Phi in F<C} and \eqref{F-ii leq C} to the right-hand side of \eqref{dt-logF}, we have
	\begin{align}
	\frac{\partial }{\partial t} \log \mathcal{F}(z,t) \leq&
	\frac{\dot{\mathcal{F}}^{ij}}{\mathcal{F}^2} D_j D_i \log \mathcal{F} 
	- \frac{\dot{\mathcal{F}}^{ij}}{\mathcal{F}^2} D_i \log \mathcal{F} D_j \log \mathcal{F}
	-\frac{\metric{D \g}{D \log \mathcal{F}}}{\g \mathcal{F}^2}  \sum_{i=1}^n \dot{\mathcal{F}}^{ii} \nonumber\\
	&- \( \frac{n+p}{n-k}-1 \) P+ C\frac{1}{\mathcal{F}}+ C \frac{1}{\mathcal{F}^2} \label{dt log F-ineq}
	\end{align}
	for some constant $C$.
	Inserting \eqref{dt-log(phi-a)} into \eqref{aux func Q1}, we obtain
	\begin{equation}\label{dtQ-F<C}
	\frac{\partial}{\partial t} Q(z,t) = \partial_t \log \mathcal{F} -bP \frac{\g}{\g-a}+ \frac{b}{(\g-a)\mathcal{F}}.
	\end{equation}
	For a fixed time $t_0 \in[0,T)$, we have the following critical equations at the spacial maximum point $z_0$ of $Q(\cdot,t_0)$,
	\begin{equation}\label{critical Q_1}
	D_i \log \mathcal{F} = bD_i \log(\g-a), \quad D_j D_i \log \mathcal{F} \leq bD_j D_i \log(\g-a).
	\end{equation}
	Hence, by using \eqref{critical Q_1}, \eqref{C0 in F<C}, \eqref{C1 in F<C} and \eqref{F-ii leq C}, at $(z_0,t_0)$ we have 
	\begin{align}
	&\frac{\dot{\mathcal{F}}^{ij}}{\mathcal{F}^2} D_j D_i \log \mathcal{F} 
	- \frac{\dot{\mathcal{F}}^{ij}}{\mathcal{F}^2} D_i \log \mathcal{F} D_j \log \mathcal{F} \nonumber\\ 
	\leq& \frac{\dot{\mathcal{F}}^{ij}}{\mathcal{F}^2}\left( b D_j D_i \log(\g-a) -b^2 D_i \log (\g-a) D_j \log(\g-a) \right) \nonumber\\
	=&\frac{\dot{\mathcal{F}}^{ij}}{\mathcal{F}^2} \left( \frac{b\g_{ij}}{\g-a} -\frac{(b^2+b)\g_i \g_j}{(\g-a)^2}\right) \nonumber\\
	=&\frac{b\dot{\mathcal{F}}^{ij}}{\mathcal{F}^2(\g-a)} \left( \g_{ij} -\frac{1}{2}\frac{|D\g|^2}{\g} \delta_{ij}+\frac{1}{2}\(\g-\frac{1}{\g}\) \delta_{ij} \right) \nonumber\\
	&+\frac{b\sum_{i=1}^n \dot{\mathcal{F}}^{ii}}{\mathcal{F}^2(\g-a)}\left( \frac{1}{2}\frac{|D\g|^2}{\g} - \frac{1}{2}\(\g-\frac{1}{\g}\) \right)
	-\frac{(b^2+b)\dot{\mathcal{F}}^{ij} \g_i \g_j}{\mathcal{F}^2(\g-a)^2} \nonumber\\
	=&\frac{b}{\mathcal{F}(\g-a)}+\frac{b\sum_{i=1}^n \dot{\mathcal{F}}^{ii}}{\mathcal{F}^2(\g-a)}\left( \frac{1}{2}\frac{|D\g|^2}{\g} - \frac{1}{2} \(\g-\frac{1}{\g} \) \right)
	-\frac{(b^2+b)\dot{\mathcal{F}}^{ij} \g_i \g_j}{\mathcal{F}^2(\g-a)^2} \nonumber\\
	\leq& C\frac{b}{\mathcal{F}}+ C\frac{b^2+b}{\mathcal{F}^2},\label{dt log F-first 2 terms}
	\end{align}
	where $C$ is a large positive constant depending only on $n$, $k$, $p$, $||f(z)||_{C^2(\mathbb{S}^n)}$ and $M_0$.
	Using \eqref{critical Q_1} and \eqref{sum mathcal F-ii}, at $(z_0,t_0)$ we have
	\begin{align}\label{Dphi-DlogF leq 0}
	-\frac{\metric{D \g}{D \log \mathcal{F}}}{\g \mathcal{F}^2}  \sum_{i=1}^n \dot{\mathcal{F}}^{ii}
	=-\frac{b|D \g|^2}{\g(\g-a) \mathcal{F}^2} \sum_{i=1}^n \dot{\mathcal{F}}^{ii} \leq 0.
	\end{align}
	Inserting \eqref{dt log F-first 2 terms} and \eqref{Dphi-DlogF leq 0} into \eqref{dt log F-ineq}, we have
	\begin{align*}
	\frac{\partial }{\partial t} \log \mathcal{F}(z,t) \leq&
	C\frac{b}{\mathcal{F}}+ C\frac{b^2+b}{\mathcal{F}^2}
    - \( \frac{n+p}{n-k}-1 \) P+ C\frac{1}{\mathcal{F}}+ C \frac{1}{\mathcal{F}^2}\\
    \leq&- \( \frac{n+p}{n-k}-1 \) P +C \frac{b+1}{\mathcal{F}} +C\frac{b^2+b+1}{\mathcal{F}^2}
	\end{align*}
	at $(z_0, t_0)$,
	where $C$ is a uniform positive constant. 
	This together with \eqref{dtQ-F<C} gives
	\begin{align*}
	\frac{\partial}{\partial t} Q(z,t)
	\leq 
	- \(\frac{n+p}{n-k}-1+ b\frac{\g}{\g-a} \) P + C\frac{b}{\mathcal{F}} + C\frac{b^2+b+1}{\mathcal{F}^2}
	\end{align*}
	at $(z_0, t_0)$.
	By \eqref{C0 in F<C}, we can choose $a=1$, then $\(\g-1\)^{-1} \g>c>0$ for some constant $c$ depending only on $n$, $k$ and $M_0$;
	hence, we can choose a large positive constant $b$ such that $\frac{n+p}{n-k} -1 +bc \geq 1$.
	By \eqref{P in F<C}, it is easy to see that $\partial_t Q < 0$ at $(z_0,t_0)$ for sufficiently large $\mathcal{F}$. By using the maximum principle and \eqref{C0 in F<C}, we have $Q(z,t) \leq C$ for some uniform positive constant $C$. Then we obtain \eqref{F<C}. This completes the proof of Lemma \ref{lem-F<C}.
\end{proof}

\begin{lem}\label{lem-F>C}
	Let $n \geq 1$, $0 \leq  k \leq n-1$, and $-\infty<p<+\infty$.
	Let $f(z)$ be a smooth positive even function on $\mathbb{S}^n$.
	Suppose that  $M_t = \partial \Omega_t$ is a smooth, origin symmetric uniformly h-convex solution to the flow \eqref{flow-HCMF}. 
	Then there is a  positive constant $C$ depending only on $n$, $k$, $p$, $||f(z)||_{C^2(\mathbb{S}^n)}$ and $M_0$ such that
	\begin{equation}\label{F>c}
	\mathcal{F}(z,t) \geq C^{-1}, \quad \forall (z,t) \in \mathbb{S}^n \times [0,T).
	\end{equation}
\end{lem}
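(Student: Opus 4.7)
The strategy is to mirror the auxiliary-function argument from Lemma \ref{lem-F<C}, but now aimed at an upper bound for a negative logarithmic quantity. I would set
\begin{equation*}
\tilde{Q}(z,t) := -\log \mathcal{F}(z,t) + b\, u(z,t), \qquad b>0,
\end{equation*}
to be chosen small (independent of $p$, $k$, and $t$). By the $C^0$ estimate \eqref{C0 in F<C} the function $u = \log \g$ is uniformly bounded, hence $\tilde{Q}$ differs from $-\log\mathcal{F}$ by a bounded quantity, and a uniform upper bound on $\tilde{Q}$ is equivalent to the desired lower bound $\mathcal{F} \geq C^{-1}$. Note that the origin-symmetry hypothesis enters exactly through the $C^0$--$C^1$ bounds of Lemma \ref{lem-C0-est} and Corollary \ref{cor-C1-est}, together with Lemma \ref{lem-c < Phi <C} which gives the upper bound on $\Phi(t)$ (no pinching is used, so no lower bound on $\Phi$ is assumed).

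At the spatial maximum point $(z_0,t_0)$ of $\tilde{Q}(\cdot,t_0)$ I would use the critical equations $D_i \log\mathcal{F} = b\, D_i u$ and $D_j D_i \log\mathcal{F} \geq b\, D_j D_i u$, together with \eqref{dt-logF} and the scalar evolution $\partial_t u = P - \g^{-1}\mathcal{F}^{-1}$ coming from \eqref{scalr eq u-HCMF}, to compute $\partial_t \tilde{Q}$ and control all of its terms. Among the terms produced at the critical point, three are of order $\mathcal{F}^{-2}$: the "good" contribution $+\frac{\cosh r}{\g\mathcal{F}^2}\sum_i \dot{\mathcal{F}}^{ii}$ (which has the correct sign for $\partial_t \tilde Q < 0$), the quadratic-gradient contribution $+b^2 \dot{\mathcal{F}}^{ij} u_i u_j /\mathcal{F}^2$, and the mixed contribution produced by $-\frac{\langle D\g,D\mathcal{F}\rangle}{\g\mathcal{F}^3}\sum_i\dot{\mathcal{F}}^{ii}$, which after substitution reduces to $+b|D\g|^2\sum_i\dot{\mathcal{F}}^{ii}/(\g^2\mathcal{F}^2)$. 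Using the elementary bound $\dot{\mathcal{F}}^{ij}\g_i\g_j \leq |D\g|^2 \sum_i \dot{\mathcal{F}}^{ii}$ (valid since $\dot{\mathcal{F}}^{ij}$ is positive definite by \eqref{mathcal F-ij >0}) together with the sharp identity $|D\g|^2 = 2\g\cosh r - (\g^2+1) < 2\g\cosh r$ from \eqref{coshr}, the sum of the two bad contributions is at most $(b^2+b)\cdot 2(\cosh r/\g)\cdot \sum_i \dot{\mathcal{F}}^{ii}/\mathcal{F}^2$, which for $b$ sufficiently small is strictly less than half of the good term, leaving a net $\mathcal{F}^{-2}$-contribution $\leq -\epsilon\,\mathcal{F}^{-2}$ for some $\epsilon>0$. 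The remaining terms from \eqref{dt-logF} are all at most of order $\mathcal{F}^{-1}$ or $O(1)$, with bounded coefficients depending only on $n$, $k$, $p$, $\|f\|_{C^2(\mathbb{S}^n)}$, and $M_0$, and in particular the extra $-b\g^{-1}\mathcal{F}^{-1}$ from $\partial_t u$ has the favorable sign. Once $\mathcal{F}(z_0,t_0)$ falls below a uniform threshold, the negative $\mathcal{F}^{-2}$ term dominates and forces $\partial_t \tilde{Q}(z_0,t_0) < 0$; the maximum principle then yields a uniform upper bound on $\tilde{Q}$ and hence the lower bound \eqref{F>c}.

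The main obstacle is the careful sign tracking in \eqref{dt-logF}: the $p$--$k$ dependent terms $-\bigl(\tfrac{n+p}{n-k}-1\bigr)P$ and $\tfrac{n+p}{n-k}\bigl(\tfrac{n+p}{n-k}-1\bigr)\tfrac{P}{\g\mathcal{F}}\mathcal{F}^{ij}\g_i\g_j$ change sign as $p$ crosses $-k$, and the term $\Phi \g^{-\frac{n+p}{n-k}+1}\dot{\mathcal{F}}^{ij}D_jD_ih/\mathcal{F}$ is controlled only through $\|f\|_{C^2(\mathbb{S}^n)}$ and not through any geometric estimate. One has to verify that after choosing $b$ small (depending only on the $C^0$, $C^1$ bounds), all of these terms are dominated either by the $\epsilon\,\mathcal{F}^{-2}$ margin or by the bounded nature of $P$ and $\sum_i\dot{\mathcal{F}}^{ii}$; no lower bound on $\Phi(t)$ or pinching of the shifted principal curvatures is needed, which is precisely why the statement of Lemma \ref{lem-F>C} is weaker than Lemma \ref{lem-F<C} in its hypotheses.
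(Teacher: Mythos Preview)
Your argument has a fatal sign error at the crucial step. With $\tilde Q=-\log\mathcal F+b\,u$, the evolution \eqref{dt-logF} gives in $\partial_t\tilde Q=-\partial_t\log\mathcal F+b\,\partial_tu$ the term
\[
+\frac{\cosh r}{\g\,\mathcal F^{2}}\sum_i\dot{\mathcal F}^{ii},
\]
which is \emph{positive}, not negative. So it does not have ``the correct sign for $\partial_t\tilde Q<0$''; it has the opposite sign. All remaining $\mathcal F^{-2}$ contributions you list (the quadratic gradient term and the mixed term) are of order $O(b)$ or $O(b^2)$, and they are also nonnegative. Hence for small $b$ the net $\mathcal F^{-2}$ contribution to your upper bound for $\partial_t\tilde Q$ is strictly positive, and the maximum principle gives no control on $\tilde Q$ from above. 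The error propagates to your conclusion ``leaving a net $\mathcal F^{-2}$-contribution $\le -\epsilon\,\mathcal F^{-2}$'', which is simply false with these signs.

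The paper repairs this not by taking $b$ small but by reversing the sign of the barrier and shifting it: it works with $Q=\log\mathcal F+\log(\g-1)$ at a spatial \emph{minimum}. The critical equation then reads $D_i\log\mathcal F=-\g_i/(\g-1)$, so the mixed term $-\langle D\g,D\log\mathcal F\rangle\sum_i\dot{\mathcal F}^{ii}/(\g\mathcal F^2)$ becomes $+|D\g|^2\sum_i\dot{\mathcal F}^{ii}/(\g(\g-1)\mathcal F^2)$, and together with the Hessian of $\log(\g-1)$ and the bad $-\cosh r/(\g\mathcal F^2)\sum_i\dot{\mathcal F}^{ii}$ term the three combine \emph{exactly} (using \eqref{coshr}) into
\[
\frac{\cosh r-1}{\g(\g-1)}\cdot\frac{\sum_i\dot{\mathcal F}^{ii}}{\mathcal F^{2}}>0,
\]
which, with $\sum_i\dot{\mathcal F}^{ii}\ge 1$ from \eqref{sum mathcal F-ii}, dominates the $O(\mathcal F^{-1})$ and $O(1)$ terms once $\mathcal F$ is small and forces $\partial_tQ>0$. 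Both the \emph{sign} of the barrier ($+\log(\g-a)$ rather than $-b\log\g$) and the specific shift $a=1$ are essential: with $a=0$ the cancellation fails, and with the opposite sign (your choice) the mixed term reinforces rather than offsets the $\cosh r$ term.
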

\begin{proof}
	Since $M_t = \partial \Omega_t$ is origin symmetric, we have the $C^0$, $C^1$ estimates of $\g(z,t)= e^{u(z,t)}$ in \eqref{C0-est} and \eqref{C1-est}. Moreover, we know from Lemma \ref{lem-c < Phi <C} that the global term $\Phi(t)$ is bounded from above by a uniform positive constant along the flow \eqref{flow-HCMF}. Therefore, there exists a large constant $C$ depending only on $n$, $k$, $p$, $||f||_{C^2 ( \mathbb{S}^n )}$ such that
	\begin{align}
	1 + \frac{1}{C} \leq \g(z,t) \leq C, \label{C0-F>c}\\
	|D \g| \leq C, \label{C1-F>c}\\
	0<\Phi(t) \leq C, \label{Phi-F>c}\\
	P(z,t) \leq C.\label{P-F>c}
	\end{align}
	
	Now we define an auxiliary function 
	\begin{equation}\label{aux Q2}
	Q(z,t) = \log \mathcal{F}(z,t) +\log (\g(z,t)-a),
	\end{equation}
	where $a \in [0,1]$ is a positive constant that will be chosen later in this proof.
	By \eqref{C0-F>c}, in order for \eqref{F>c}, we only need to show $Q(z,t) \geq -C$ for some uniform positive constant $C$.
	For a fixed time $t_0 \in [0,T)$, at the spacial minimum point $z_0$ of $Q(\cdot,t_0)$, we have the critical equations
	\begin{equation}\label{critical eq-F>C}
	D_i \log \mathcal{F} =- D_i \log (\g-a), \quad D_j D_i \log \mathcal{F} \geq -D_jD_i \log(\g-a).
	\end{equation}
	Hence, at $(z_0,t_0)$ we have 
	\begin{align}
	&\frac{\dot{\mathcal{F}}^{ij}}{\mathcal{F}^2} D_j D_i \log \mathcal{F} 
	- \frac{\dot{\mathcal{F}}^{ij}}{\mathcal{F}^2} D_i \log \mathcal{F} D_j \log \mathcal{F} \nonumber\\
	\geq& \frac{\dot{\mathcal{F}}^{ij}}{\mathcal{F}^2}\left(-D_j D_i \log(\g-a) -D_i \log (\g-a) D_j \log(\g-a) \right) 
	=  -\frac{\dot{\mathcal{F}}^{ij}}{\mathcal{F}^2(\g-a)} \g_{ij}  \nonumber\\
	=&-\frac{\dot{\mathcal{F}}^{ij}}{\mathcal{F}^2(\g-a)}\left( \g_{ij} -\frac{1}{2}\frac{|D\g|^2}{\g} \delta_{ij}+\frac{1}{2} \(\g-\frac{1}{\g} \) \delta_{ij} \right)
	-\frac{\sum_{i=1}^n \dot{\mathcal{F}}^{ii}}{\mathcal{F}^2(\g-a)} \left( \frac{1}{2}\frac{|D\g|^2}{\g}- \frac{1}{2} \(\g-\frac{1}{\g} \)\right) \nonumber\\
	=&-\frac{1}{\mathcal{F}(\g-a)} -\frac{\sum_{i=1}^n \dot{\mathcal{F}}^{ii}}{\mathcal{F}^2(\g-a)} \left( \frac{1}{2}\frac{|D\g|^2}{\g}- \frac{1}{2} \(\g-\frac{1}{\g} \)\right).\label{F>C-high-ord-term}
	\end{align}
	Inserting \eqref{F>C-high-ord-term} into \eqref{dt-logF} and using \eqref{dt-log(phi-a)},  at $(z_0,t_0)$ we have
	\begin{align}
	\frac{\partial}{\partial t} Q(z,t)
	=&\partial_t \log \mathcal{F}(z,t) + \partial_t \log(\g(z,t)-a) \nonumber\\
	\geq& -\frac{\sum_{i=1}^n \dot{\mathcal{F}}^{ii}}{\mathcal{F}^2(\g-a)} \left( \frac{1}{2}\frac{|D\g|^2}{\g}- \frac{1}{2} \(\g-\frac{1}{\g} \)\right)
	-\frac{1}{\mathcal{F}(\g-a)}
	+2\frac{\dot{\mathcal{F}}^{ij}}{\mathcal{F}}\Phi D_i h D_j \(\g^{-\frac{n+p}{n-k}+1}\) \nonumber\\
	&+\Phi \g^{-\frac{n+p}{n-k}+1} \frac{\dot{\mathcal{F}}^{ij}}{\mathcal{F}}D_j D_i h
	-\Phi \metric{D \g}{D h}\g^{-\frac{n+p}{n-k}} \frac{ \sum_{i=1}^n\dot{\mathcal{F}}^{ii}}{\mathcal{F}}
	-\frac{\metric{D \g}{D \mathcal{F}}}{\g \mathcal{F}^3} \sum_{i=1}^n \dot{\mathcal{F}}^{ii} \nonumber\\
	&-\frac{\cosh r}{\g \mathcal{F}^2} \sum_{i=1}^n \dot{\mathcal{F}}^{ii}
	- \( \frac{n+p}{n-k}-1 \) P
	+\frac{n+p}{n-k} \( \frac{n+p}{n-k}-1 \) \frac{P}{\g\mathcal{F}}\mathcal{F}^{ij}\g_i \g_j  \nonumber\\
	&+\frac{n+p}{n-k} P\tilde{u}\frac{ \sum_{i=1}^n \dot{\mathcal{F}}^{ii}}{\mathcal{F}}
	+\frac{P}{\g}\frac{ \sum_{i=1}^n\dot{\mathcal{F}}^{ii}}{\mathcal{F}} \nonumber\\
	&+P \frac{\g}{\g-a}- \frac{1}{(\g-a)\mathcal{F}} \label{dt Q-F>C old}
	\end{align}
	Using the estimates \eqref{C0-F>c}--\eqref{P-F>c} in the right-hand side of \eqref{dt Q-F>C old}, at $(z_0,t_0)$ we have
	\begin{align}
		\frac{\partial}{\partial t} Q(z,t)
		\geq&  -\frac{\sum_{i=1}^n\dot{\mathcal{F}}^{ii}}{\mathcal{F}^2(\g-a)} \left( \frac{1}{2}\frac{|D\g|^2}{\g}- \frac{1}{2}\(\g-\frac{1}{\g} \)\right)
		-\frac{\metric{D \g}{D \log \mathcal{F}}}{\g \mathcal{F}^2} \sum_{i=1}^n \dot{\mathcal{F}}^{ii}  \nonumber\\
		&-\frac{\cosh r}{\g \mathcal{F}^2} \sum_{i=1}^n \dot{\mathcal{F}}^{ii}
		-C\frac{\sum_{i=1}^n \dot{\mathcal{F}}^{ii}}{\mathcal{F}}-\frac{C}{\mathcal{F}}- C. \label{dt Q-F>C}
	\end{align}
	Using the critical equation \eqref{critical eq-F>C} and \eqref{F-ij delt-ij geq 1}, we know
	\begin{equation}\label{Dphi,DlogF}
	-\frac{\metric{D \g}{D \log \mathcal{F}}}{\g \mathcal{F}^2} \sum_{i=1}^n \dot{\mathcal{F}}^{ii} = \frac{|D\g|^2}{\g(\g-a)} \frac{\sum_{i=1}^n\dot{\mathcal{F}}^{ii}}{\mathcal{F}^2} \geq 0.
	\end{equation}
	By \eqref{C0-F>c}, we can choose $a=1$.
   Plugging \eqref{Dphi,DlogF} into \eqref{dt Q-F>C}, and using \eqref{coshr}, we have 
   \begin{align}
   	\frac{\partial}{\partial t} Q(z,t) \geq&
   	\frac{\sum_{i=1}^n \dot{\mathcal{F}}^{ii}}{\mathcal{F}^2\g(\g-1)}\(  -\frac{1}{2}|D \g|^2+ \frac{1}{2} \g^2 -\frac{1}{2} + |D \g|^2 - \cosh r \(\g-1\)  \) \nonumber\\
   	&-C\frac{\sum_{i=1}^n \dot{\mathcal{F}}^{ii}}{\mathcal{F}}-\frac{C}{\mathcal{F}}- C \nonumber\\
   	=&\frac{\sum_{i=1}^n \dot{\mathcal{F}}^{ii}}{\mathcal{F}^2\g(\g-1)}\( \cosh r\g-1 - \cosh r \(\g-1 \)\)-C\frac{\sum_{i=1}^n \dot{\mathcal{F}}^{ii}}{\mathcal{F}}-\frac{C}{\mathcal{F}}- C\nonumber\\
   	=&\frac{\sum_{i=1}^n \dot{\mathcal{F}}^{ii}}{\mathcal{F}^2\g(\g-1)}(\cosh r-1) -C\frac{\sum_{i=1}^n \dot{\mathcal{F}}^{ii}}{\mathcal{F}}-\frac{C}{\mathcal{F}}- C \label{dtQ leq ..}
   \end{align}
   at $(z_0,t_0)$.
   Note that \eqref{coshr} and \eqref{C1-F>c} imply
   \begin{equation}\label{coshr-1>c}
   \cosh r-1 \geq  \frac{1}{2}\(\g + \frac{1}{\g}\)-1 >c>0
   \end{equation}
   for some constant $c$ depending only on $n$, $k$ and $M_0$.  Using \eqref{coshr-1>c} in the right-hand side of \eqref{dtQ leq ..}, we arrive at
   \begin{equation}\label{dtQ2 final}
   	\frac{\partial}{\partial t} Q(z,t)\geq c \frac{\sum_{i=1}^n \dot{\mathcal{F}}^{ii}}{\mathcal{F}^2} -C\frac{\sum_{i=1}^n \dot{\mathcal{F}}^{ii}}{\mathcal{F}}-\frac{C}{\mathcal{F}}- C
   \end{equation}
   at $(z_0,t_0)$,
   where  $C>0$ only depends on $n$, $k$, $p$, $||f(z)||_{C^2(\mathbb{S}^n)}$ and $M_0$. 
   Besides, we have $\sum_{i=1}^n\dot{\mathcal{F}}^{ii} \geq 1$ by \eqref{sum mathcal F-ii}.
   Hence, the estimate \eqref{dtQ2 final} implies $\partial_t Q(z,t) \geq 0$ for sufficiently small $\mathcal{F}>0$.
   We therefore prove that $Q(z,t) \geq -C$ for some constant $C$ along the flow \eqref{flow-HCMF} by using the maximum principle and \eqref{C0-F>c}. Consequently, we obtain the desired estimate \eqref{F>c}. This completes the proof of Lemma \ref{lem-F>C}. 
\end{proof}
\subsection{Long time existence and convergence}\label{Subsec-Long time existence and convergence}$ \ $

In this subsection, we give the proofs of Theorem \ref{thm-exist-all p-k=0}--\ref{thm-long time existence}.
\begin{proof}[Proof of Theorem \ref{thm-long time existence}]
	If $A_{ij}[\g]>0$, then \eqref{mathcal F-ij >0} implies
	\begin{equation*}
	\frac{\partial}{\partial \g_{pq}} \(-\mathcal{F}^{-1}(A[\g])\) =\mathcal{F}^{-2} \dot{\mathcal{F}}^{ij}>0 .
	\end{equation*}
	Hence, equation \eqref{scalar eq phi-HCMF} is parabolic at $t=0$ provided that  $A_{ij}[\g(z,0)]>0$ on $\mathbb{S}^n$.
	Consequently, as long as the initial smooth hypersurface $M_0$ is uniformly h-convex, the flow \eqref{flow-HCMF} exits for a short time. 
	Note that $M_t$ is origin symmetric along the flow \eqref{flow-HCMF}, which follows from
	 the evenness of $f(z)$ and equation \eqref{scalar eq phi-HCMF}.
	
		In order to prove the long time existence of the flow \eqref{flow-HCMF}, let us summarize the a priori estimates in Subsection \ref{subsec-a priori est}.
		In the case $n \geq 2$, by the pinching estimates in Proposition \ref{prop-Pinching-est} and the bounds  of $\mathcal{F}$ proved in Lemma \ref{lem-F<C} and Lemma \ref{lem-F>C}, we know that the shifted principal curvatures $\tilde{\kappa}_i =\kappa_i-1$ are bounded between two positive constants along the flow \eqref{flow-HCMF}. Here we remark that in the case $n=1$ and $k=0$, we have $C^{-1} \leq \tilde{\kappa} \leq C$ by Lemma \ref{lem-F<C} and Lemma \ref{lem-F>C} directly.  Combining this fact with the $C^0$ estimates in Lemma \ref{lem-C0-est} and the $C^1$ estimates in Corollary \ref{cor-C1-est}, we have
		\begin{equation*}
			|| \g(z,t) ||_{C_{z,t}^{2,1}(\mathbb{S}^n \times [0, T))} \leq C_{2,1}
		\end{equation*}
		for some constant $C_{2,1}$. Hence equation \eqref{scalar eq phi-HCMF} is uniformly parabolic on $\mathbb{S}^n \times [0, T)$. 
		By \eqref{G-ij,kl <0}, $\mathcal{F}(A[\g]) = p_{n-k}^{\frac{1}{n-k}}(A[\g])$ is concave for $A[\g]>0$,  then
		\begin{equation*}
		\frac{\partial^2}{\partial \g_{pq} \partial \g_{rs}}  \( -\mathcal{F}^{-1}(A[\g]) \) B_{pq}B_{rs}
		=\mathcal{F}^{-2} \ddot{\mathcal{F}}^{pq,rs}B_{pq}B_{rs} - 2\mathcal{F}^{-3} \dot{\mathcal{F}}^{pq}B_{pq} \dot{\mathcal{F}}^{rs}B_{rs}  \leq 0
		\end{equation*} 
		for $B_{pq} \in {\rm Sym}(n)$.
		We conclude that $- \mathcal{F}^{-1}$ is concave with respect to $\g_{pq}$.
		 In Lemma \ref{lem-c < Phi <C}, we proved that $\Phi(t)$ is bounded between two uniform positive constants. 
		 
		 By the above estimates, we can apply the a priori estimates in \cite{TW13} to equation \eqref{scalar eq phi-HCMF} and get the $C^{2+\alpha,1+\frac{\alpha}{2}}$ estimate of $\g(z,t)$, see also \cite{And04, BIS20}. Then by the parabolic Schauder theory, for each nonnegative integers $i$, $j$, there exists a uniform constant $C_{i,j}$ (independent of $T$) such that
		 \begin{equation}\label{higher deriv-phi}
		 || \g(z,t) ||_{C_{z,t}^{i,j}(\mathbb{S}^n \times [0, T))} \leq C_{i,j}.
		 \end{equation}
		 Consequently, we obtain the long time existence of the flow \eqref{flow-HCMF} by a standard continuation argument.
\end{proof}

\begin{proof}[Proofs of Theorem \ref{thm-exist-all p-k=0}--\ref{thm-exist-all p}]
	Given any smooth even and uniformly h-convex initial hypersurface $M_0$, we have $ J_p'(\Omega_t) \leq 0$ along the flow \eqref{flow-HCMF} by the estimate in \eqref{mono quan-ineq}. Besides, the $C^0$ estimate in Lemma \ref{lem-C0-est} yields that $|J_p(\Omega_t)|$ is uniformly bounded from above along the flow \eqref{flow-HCMF}. 
	From the estimates of higher order derivatives \eqref{higher deriv-phi}, we have that the both $|\frac{\partial}{\partial t} \g(z,t)|$ and $|\frac{\partial^2}{\partial t^2}\g(z,t)|$ are bounded from above by some constant $C$ along the flow. Hence  $J_p'(\Omega_t)$ is uniformly continuous and 
	\begin{align*}
	\lim_{t \to +\infty} J_p'(\Omega_t) =0.
	\end{align*}
	By the Arzel\`a-Ascoli theorem and a diagonal argument, there exists a time sequence $\lbrace t_{i} \rbrace \subset (0, +\infty)$ such that $\lbrace M_{t_i}\rbrace$ converge smoothly to a smooth, origin symmetric uniformly h-convex hypersurface $M = \partial \Omega$ as $i \to +\infty$.  Let $u(z) = \log \g(z)$ be the horospherical support function of $\Omega$.
	As 
	\begin{align*}
	\lim_{i \to +\infty} J_p'(\Omega_{t_i})=0 ,
	\end{align*}
	we have 
	\begin{equation*}
	J_p'(\Omega) = 0.
	\end{equation*}
	Then by Lemma \ref{lem-mono-quantitirs}, there exists a positive constant $\gamma$ such that $\g(z)$ satisfies
	\begin{align}\label{eq-limit-hyper}
	\gamma = f^{-1}(z) \g^{-p-n}(z)p_{n-k}(\tilde{\lambda}(x)).
	\end{align}
	Then we complete the proofs of Theorem \ref{thm-exist-all p-k=0} and Theorem \ref{thm-exist-all p} by using \eqref{shifted curvature-support function}. 
\end{proof}

\section{Uniqueness of solutions to Horospherical $p$-Minkowski problem and Horospherical $p$-Christoffel-Minkowski problem with constant function}
\label{sec-uniqueness of horo-p-Chri-Min-constant function}
\subsection{Results in Section \ref{sec-uniqueness of horo-p-Chri-Min-constant function}}
$ \ $

The main result in Section \ref{sec-uniqueness of horo-p-Chri-Min-constant function} is the following Proposition \ref{prop-uniq-sphere-higher dim}. 
\begin{prop}\label{prop-uniq-sphere-higher dim}
	Let $n \geq 1$ and $0 \leq k < l \leq n$ be integers. Assume that  $\chi(s)$ is a smooth positive and monotone non-decreasing function defined on $\mathbb{R}^+$.
	Let $\Omega$  be a smooth uniformly h-convex bounded domain in $\mathbb{H}^{n+1}$. If $n\geq 2$, $k=0$, and $1 \leq l \leq n$, then we assume in addition that $\Omega$ contains the origin in its interior. If $M = \partial \Omega$ satisfies the following equation
	\begin{equation}\label{eq-uniq-sphere}
	\frac{p_l (\tilde{\kappa})}{p_k (\tilde{\kappa})} = \chi(\psi(x)),  
	\end{equation}
	where $ \psi(x) =\cosh r -\tilde{u}$ for $x \in M$,
	then $M$ must be a geodesic sphere. In particular, if the function $\chi$ is strictly increasing, then $M$ is a geodesic sphere centered at the origin.
\end{prop}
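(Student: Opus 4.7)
The plan is to reduce the proposition to the pointwise umbilicity assertion $\tilde\kappa_1(x)=\cdots=\tilde\kappa_n(x)$ for every $x\in M$. Once umbilicity is in hand, a closed smooth uniformly h-convex totally umbilical hypersurface in $\mathbb{H}^{n+1}$ must be a geodesic sphere $B(X_0,r_0)$ by the classification of umbilics. For the sharpened statement under strict monotonicity of $\chi$, umbilicity makes $p_l/p_k=(\coth r_0-1)^{l-k}$ constant on $M$, so $\chi(\psi)$ is constant on $M$; strict monotonicity then forces $\psi=\cosh r-\tilde u=1/\g$ to be constant on $M$, and the explicit formula $\g(z)=e^{r_0}(x_0^{n+1}-\langle x_0,z\rangle)$ from Lemma~\ref{lem-horo supp of geodesic ball} is constant in $z$ only when $x_0=0$, i.e.\ when $X_0$ is the origin.

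For dimension $n\ge 2$ I would combine the shifted Minkowski formula (Lemma~\ref{lem-shifted Minkowski formula}), the Newton--MacLaurin inequality (Lemma~\ref{lem-Newton-MacLaurin ineq}), and the integral identities for closed hypersurfaces in $\mathbb{H}^{n+1}$ announced as Lemma~\ref{lem-integral formulas}. Applying Lemma~\ref{lem-shifted Minkowski formula} at levels $m=k$ and $m=l$ (valid for $l\le n-1$) and substituting the hypothesis $p_l(\tilde\kappa)=\chi(\psi)p_k(\tilde\kappa)$ gives
\begin{equation*}
\int_M \psi\chi(\psi)p_k(\tilde\kappa)\,d\mu=\int_M \tilde u\,p_{l+1}(\tilde\kappa)\,d\mu,\qquad \int_M \psi p_k(\tilde\kappa)\,d\mu=\int_M \tilde u\,p_{k+1}(\tilde\kappa)\,d\mu.
\end{equation*}
The Newton--MacLaurin inequality in the form $p_k p_{l+1}\le p_{k+1}p_l$, equivalently $p_{l+1}\le\chi(\psi)p_{k+1}$, with equality iff all shifted principal curvatures coincide, bounds the right-hand side of the first identity by $\int_M \chi(\psi)\tilde u\,p_{k+1}\,d\mu$. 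I expect that pairing this with the auxiliary integral identities of Lemma~\ref{lem-integral formulas}, together with the positivity of $\tilde u$ on $M$ (which in the $k=0$ case is guaranteed by the origin-in-interior hypothesis via Corollary~\ref{cor-star-shape}), yields an integral equality that is only compatible with the pointwise saturation of the Newton--MacLaurin inequality on all of $M$. The case $l=n$ lies outside the range of Lemma~\ref{lem-shifted Minkowski formula} and should be handled directly through Lemma~\ref{lem-integral formulas}.

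For $n=1$ the only possibilities are $(k,l)=(0,1)$ and the hypothesis reduces to $\tilde\kappa=\chi(\psi)$ on the closed curve $M\subset\mathbb{H}^2$. Here I would combine the one-dimensional shifted Minkowski identity with the Heintze--Karcher type inequality of Proposition~\ref{prop-HK-n=1}, whose equality case characterizes geodesic circles; the monotonicity of $\chi$ then locates the center at the origin by the argument of the first paragraph.

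The main obstacle I foresee is closing the umbilicity loop in dimension $n\ge 2$. The shifted Minkowski formula produces a pair of identities while Newton--MacLaurin produces only a one-sided pointwise inequality between the relevant integrands, and combining these two pieces of information without sign control on $\psi p_k-\tilde u p_{k+1}$ is delicate. The integral identities of Lemma~\ref{lem-integral formulas}, which I expect to be obtained by integrating the contraction of the divergence-free tensor $\dot p_k^{ij}(\tilde\kappa)$ (or $\dot F^{ij}$ with $F=(p_l/p_k)^{1/(l-k)}$) against the Hessian-type formula for $\psi$ from Lemma~\ref{lem-1/phi i, 1/phi ij}, together with a Codazzi-type argument, will be where the argument is actually closed.
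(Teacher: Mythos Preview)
Your endgame (umbilicity $\Rightarrow$ geodesic sphere, then strict monotonicity of $\chi$ forces $\psi$ constant and hence center at the origin) is correct, and you have correctly identified every ingredient the paper uses: Newton--MacLaurin, the shifted Minkowski formula, the integral identities of Lemma~\ref{lem-integral formulas}, the positivity of $\tilde u$ from Corollary~\ref{cor-star-shape}, and Proposition~\ref{prop-HK-n=1} for $n=1$. What is missing is the specific choice of the test function $\tilde f$ in Lemma~\ref{lem-integral formulas}, and without it the argument you sketch (shifted Minkowski at levels $m=k$ and $m=l$, then $p_kp_{l+1}\le p_{k+1}p_l$) does not close: as you note yourself, the weight $\chi(\psi)$ floats between the two identities and you lose sign control.

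The paper does not use the plain shifted Minkowski formula at level $l$ at all. Instead, for $k\ge 1$ it plugs $\tilde f=1/p_k(\tilde\kappa)$ into \eqref{generalize-Mink-formula-2}. This is the key trick: then $\tilde f\,p_k\equiv 1$ so one boundary term vanishes identically, while $\tilde f\,p_l=p_l/p_k=\chi(\psi)$ depends on $x$ only through $\psi$, so $\nabla_j(\tilde f p_l)=\chi'(\psi)\nabla_j\psi=-\chi'(\psi)\tilde h_j{}^s\langle V,\partial_s X\rangle$ by \eqref{1/phi i new}. Paired with $\nabla_i\cosh r=\langle V,\partial_i X\rangle$, the surviving integral becomes $-\tfrac{1}{k}\int_M\chi'(\psi)\dot p_k^{ij}(\tilde\kappa)\tilde h_j{}^s\langle V,\partial_i X\rangle\langle V,\partial_s X\rangle\,d\mu\le 0$, while the left side of \eqref{generalize-Mink-formula-2} is $\int_M\psi\,p_k^{-1}(p_kp_{l-1}-p_{k-1}p_l)\,d\mu\ge 0$ by Newton--MacLaurin at indices $(k-1,k,l-1,l)$ (not $(k,k+1,l,l+1)$). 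Both sides must vanish, forcing umbilicity; this also handles $l=n$ without needing $p_{l+1}$. For $k=0$ the paper instead puts $\tilde f=\chi^{(l-1)/l}$ (or $\tilde f=\chi$ when $l=1$) into \eqref{generalize-Mink-formula-1} and compares with $\int_M(\psi p_{l-1}-\tilde u p_l)\,d\mu=0$ via MacLaurin $p_l^{(l-1)/l}\le p_{l-1}$; here $\tilde u>0$ is used. For $n=1$, Proposition~\ref{prop-HK-n=1} is combined with \eqref{generalize-Mink-formula-1} taking $\tilde f=1/\chi$. In each case the guiding principle is the same: choose $\tilde f$ so that the gradient terms in Lemma~\ref{lem-integral formulas} collapse to $\chi'(\psi)\nabla\psi$ contracted against $\nabla\cosh r$ through a positive-definite tensor.
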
  
As an application of Proposition \ref{prop-uniq-sphere-higher dim}, we consider the case that $n \geq 1$, $p \geq -n$ and $f(z)$ is a positive constant function in equations \eqref{eq-lim-hyp-k=0} and \eqref{eq-limit-hypersurface}. In particular, we completely classify the solutions to these equations that satisfy $\g(z)>1$ and $A_{ij}[\g(z)] >0$. 

\begin{thm}\label{thm-f=c-CM-M-sphere}
	Let $n \geq 1$ and $0 \leq k \leq n-1$ be integers, let $p \geq -n$ be a real number, and let $\gamma$ be a positive constant. For $p>n-2k$, we let 
	\begin{equation*}
	\gamma_0 = \frac{(2k+p-n)^{\frac{2k+p-n}{2}} (n-k)^{n-k}}{(n+p)^\frac{n+p}{2}}.
	\end{equation*}
Consider equation
	\begin{equation}\label{f=c-sphere-CM-M}
	\g(z)^{-p-k} p_{n-k}\(A[\g(z)]\) =\gamma,
	\end{equation}
	where $\g (z)$ is required to satisfy $A_{ij} [\g(z)] >0$ for all $z \in \mathbb{S}^n$. Geometrically, the requirement of $\g(z)$ means that $\log \g(z)$ is the horospherical support function of a smooth uniformly h-convex bounded domain $\Omega$. If $n\geq 2$, $k=0$, and $p>-n$, then we require in addition that $\g(z)>1$ for any $z \in \mathbb{S}^n$.
	\begin{enumerate}
		\item If $p>n-2k$ and $\gamma \in (0, \gamma_0)$,
		then the solutions to equation \eqref{f=c-sphere-CM-M} are $\g_1(z) =c_1$ and $\g_2(z) =c_2$, where $c_1$ and $c_2$ are solutions to equation
		\begin{equation}\label{eq-c gamma}
		c^{-p-k} \( \frac{1}{2} (c-c^{-1}) \)^{n-k} = \gamma.
		\end{equation}
		Geometrically, the domains $\Omega_1$ and $\Omega_2$ are geodesic balls of radius $\log c_1$ and $\log c_2$ centered at the origin, respectively.
		\item If $p>n-2k$ and $\gamma = \gamma_0$,
		then 
		\begin{equation*}
		\g(z) = \( \frac{n+p}{2k+p-n} \)^{\frac{1}{2}} 
		\end{equation*}
		is the unique solution to equation \eqref{f=c-sphere-CM-M}, which means that $\Omega$ is a geodesic ball of radius $\frac{1}{2} \log \frac{n+p}{2k+p-n}$ centered at the origin. 
		\item If $p>n-2k$ and $\gamma > \gamma_0$, then equation \eqref{f=c-sphere-CM-M} admits no solution.
		\item If $p = n-2k$ and $\gamma \in (0, 2^{k-n})$, then there exists a unique solution $\g(z) =c$ to equation \eqref{f=c-sphere-CM-M}, where $c$ solves equation \eqref{eq-c gamma}. Equivalently, $\Omega$ is a geodesic ball of radius $\log c$ centered at the origin.
		\item If $p=n-2k$ and $\gamma \geq 2^{k-n}$, then equation \eqref{f=c-sphere-CM-M} admits no solution.
		\item If $-n<p< n-2k$, then there exists a unique solution $\g(z) =c$ to \eqref{f=c-sphere-CM-M} for each $\gamma \in (0, +\infty)$, where $c$ solves equation \eqref{eq-c gamma}. Equivalently, $\Omega$ is a geodesic ball of radius $\log c$ centered at the origin.
		\item If $p=-n$, then the solutions to \eqref{f=c-sphere-CM-M} are given by
		\begin{equation*}
		\g(z) = \(1+2 \gamma^{\frac{1}{n-k}}\)^{\frac{1}{2}}\( (|x|^2+1)^{\frac{1}{2}} - \metric{x}{z}\),
		\end{equation*}
		where $x \in \mathbb{R}^{n+1}$. Geometrically, $\Omega$ is a geodesic ball of radius $\frac{1}{2}\log \(1+ 2\gamma^{\frac{1}{n-k}}\)$ centered at $(x, (|x|^2+1)) \in \mathbb{H}^{n+1}$.
	\end{enumerate}
\end{thm}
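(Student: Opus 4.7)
The strategy is to recast \eqref{f=c-sphere-CM-M} into the form \eqref{eq-uniq-sphere}, apply the general uniqueness result in Proposition \ref{prop-uniq-sphere-higher dim}, and then reduce the problem to an algebraic study of the resulting equation on the radius. First, by \eqref{shifted curvature-support function}, the eigenvalues of $A[\g]$ are $\g^{-1}\tilde{\lambda}_i$, so $p_{n-k}(A[\g]) = \g^{-(n-k)} p_{n-k}(\tilde{\lambda})$; together with the identity $p_{n-k}(\tilde{\lambda}) = p_k(\tilde{\kappa})/p_n(\tilde{\kappa})$ (cf.\ \eqref{inverse function of pl/pk}) and Lemma \ref{lem-cosh r, tilde u, 1/phi}, which identifies $\g^{-1}$ with $\cosh r -\tilde{u}=\psi(x)$, equation \eqref{f=c-sphere-CM-M} is equivalent to
\begin{equation*}
	\frac{p_n(\tilde{\kappa})}{p_k(\tilde{\kappa})} = \gamma^{-1}\,\psi(x)^{p+n}.
\end{equation*}
This is exactly \eqref{eq-uniq-sphere} with $l=n$ and $\chi(s) = \gamma^{-1} s^{p+n}$. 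For $p>-n$ the function $\chi$ is smooth, positive, and strictly increasing, and the assumption $\g(z)>1$ (when $n\geq 2$ and $k=0$) is precisely the statement that $\Omega$ contains the origin in its interior, which is the hypothesis needed in Proposition \ref{prop-uniq-sphere-higher dim}. Hence the proposition forces $M=\partial\Omega$ to be a geodesic sphere centered at the origin.

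Once $M$ is a centered geodesic sphere of radius $r$, writing $c=e^{r}$ and using $\sinh r = \frac{1}{2}(c-c^{-1})$, the equation reduces to \eqref{eq-c gamma}, i.e.
\begin{equation*}
	F(c) := c^{-p-k}\Bigl(\tfrac{1}{2}(c-c^{-1})\Bigr)^{n-k} = \gamma, \qquad c>1.
\end{equation*}
Cases (1)--(6) then follow from a straightforward calculus analysis of $F$ on $(1,\infty)$. Substituting $t=c^{2}$, one checks that $(\log F)'(t)=-\frac{p+n}{2t}+\frac{n-k}{t-1}$ has a unique zero at $t_{*}=\tfrac{p+n}{2k+p-n}$ provided $p>n-2k$, giving a maximum with value exactly $\gamma_{0}$ (a direct computation from $t_{*}-1=\tfrac{2(n-k)}{2k+p-n}$ yields precisely $\gamma_{0} = (2k+p-n)^{(2k+p-n)/2}(n-k)^{n-k}/(n+p)^{(n+p)/2}$). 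Together with $F(1^{+})=0$ and the asymptotics $F(c)\to 0$, $2^{k-n}$, or $+\infty$ as $c\to\infty$ according to whether $p>n-2k$, $p=n-2k$, or $-n<p<n-2k$, one reads off the number of admissible roots $c$ in each of the cases (1), (2), (3), (4), (5), (6) of the theorem.

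For case (7), where $p=-n$, the function $\chi$ becomes the constant $\gamma^{-1}$, which is still smooth, positive, and monotone non-decreasing, so Proposition \ref{prop-uniq-sphere-higher dim} still applies and forces $M$ to be a geodesic sphere, but now the center need not be the origin. Writing $M = \partial B(X_{0},r_{0})$ with $X_{0}=(x,(|x|^{2}+1)^{1/2})\in\mathbb{H}^{n+1}$, all shifted principal curvatures equal $\coth r_{0}-1$, so the equation $p_{n}(\tilde{\kappa})/p_{k}(\tilde{\kappa})=\gamma^{-1}$ becomes $(\coth r_{0}-1)^{n-k}=\gamma^{-1}$, which solves uniquely as $e^{2r_{0}}=1+2\gamma^{1/(n-k)}$. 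Lemma \ref{lem-horo supp of geodesic ball} then gives the claimed family of horospherical support functions parameterized by $x\in\mathbb{R}^{n+1}$.

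The only real work beyond invoking Proposition \ref{prop-uniq-sphere-higher dim} is the bookkeeping in the calculus step: identifying $t_{*}$ and computing $F(t_{*})=\gamma_{0}$ in closed form, and sorting out which $\gamma$ produce 0, 1, or 2 roots in each regime of $p$. The boundary cases $p=n-2k$ and the sharp threshold $\gamma=\gamma_{0}$ are the most delicate, but are handled by the same one-variable monotonicity analysis; no further geometric input is needed.
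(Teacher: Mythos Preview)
Your proof is correct and follows essentially the same approach as the paper: you rewrite \eqref{f=c-sphere-CM-M} as $p_n(\tilde{\kappa})/p_k(\tilde{\kappa})=\gamma^{-1}\psi^{n+p}$, invoke Proposition \ref{prop-uniq-sphere-higher dim} with $l=n$ (noting, as the paper does, that the extra hypothesis $\g>1$ supplies the origin-interior assumption exactly when it is needed), and then carry out the one-variable analysis of $F(c)$ on $(1,\infty)$. The only cosmetic difference is that the paper differentiates $\zeta(t)=t^{-p-k}\bigl(\tfrac12(t-t^{-1})\bigr)^{n-k}$ directly in $t$, whereas you substitute $t=c^{2}$ and work with $(\log F)'$; both lead to the same critical point $c_*=\bigl(\tfrac{n+p}{2k+p-n}\bigr)^{1/2}$ and the same maximum value $\gamma_0$.
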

By using Theorem \ref{thm-f=c-CM-M-sphere}, we can prove the convergence of the flow \eqref{flow-HCMF} for constant $f(z)$. 
\begin{thm}\label{thm-f=c-convergence}
	Let $n \geq 1$ and $0 \leq k \leq n-1$ be integers, let $p \geq -n$ be a real number,  let $f(z)$ be a positive constant function defined on $\mathbb{S}^n$, and let $M_0 = \partial \Omega_0$ be a smooth, origin symmetric and uniformly h-convex hypersurface in $\mathbb{H}^{n+1}$. Then the flow \eqref{flow-HCMF} converges smoothly to a 
	geodesic sphere $\partial B(r)$ centered at the origin satisfying $\widetilde{W}_k(B(r)) = \widetilde{W}_k(\Omega_0)$.
\end{thm}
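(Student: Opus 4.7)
The strategy is to combine the long time existence of the flow \eqref{flow-HCMF}, the monotonicity of the functional $J_p$ from Lemma \ref{lem-mono-quantitirs}, the uniqueness/classification result Theorem \ref{thm-f=c-CM-M-sphere}, and the conservation of $\widetilde{W}_k$ along the flow.

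First, I would observe that since $f(z)$ is a positive constant, $h(z) = f^{-1/(n-k)}$ is constant, so $Dh \equiv 0$ and Assumption \ref{assump-h} is satisfied trivially in every range of $p \geq -n$. Hence Theorem \ref{thm-long time existence} applies and the flow exists for all $t > 0$. Evenness of $f$ together with the evolution equation \eqref{scalar eq phi-HCMF} shows that $M_t$ remains origin symmetric for all time. The full a priori machinery of Section \ref{sec-horosp-p-Christoffel Minkowski problem} (Lemmas \ref{lem-C0-est}, \ref{lem-Dphi<phi}, \ref{lem-c < Phi <C}, Proposition \ref{prop-Pinching-est}, Lemmas \ref{lem-F<C}, \ref{lem-F>C}) together with parabolic Schauder theory yields uniform $C^{i,j}_{z,t}$ bounds on $\g(z,t)$ independent of $t$, as well as uniform pinching of the shifted principal curvatures.

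Next, by Lemma \ref{lem-mono-quantitirs} we have $\widetilde{W}_k(\Omega_t) \equiv \widetilde{W}_k(\Omega_0)$ and $\frac{d}{dt} J_p(\Omega_t) \leq 0$ along the flow. The $C^0$ estimate gives $|J_p(\Omega_t)| \leq C$, and the uniform higher derivative bounds on $\g(z,t)$ show that $\frac{d}{dt} J_p(\Omega_t)$ is uniformly continuous in $t$, hence $\frac{d}{dt}J_p(\Omega_t) \to 0$ as $t \to \infty$. By Arzel\`a--Ascoli and a diagonal argument, any sequence $t_i \to \infty$ admits a subsequence along which $M_{t_i}$ converges smoothly to a smooth, origin symmetric, uniformly h-convex hypersurface $M_\infty = \partial \Omega_\infty$ with horospherical support function $u_\infty(z)$, and the equality condition in Lemma \ref{lem-mono-quantitirs} forces $f^{-1}\g_\infty^{-p-n}(z)p_{n-k}(\tilde\lambda(x))$ to be a constant. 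Using formulas \eqref{rel-area element} and \eqref{shifted curvature-support function} to rewrite this in terms of $A[\g_\infty]$, we deduce that $\g_\infty(z) = e^{u_\infty(z)}$ solves
\begin{equation*}
\g_\infty(z)^{-p-k} p_{n-k}(A[\g_\infty(z)]) = \gamma
\end{equation*}
for some constant $\gamma > 0$, which is precisely equation \eqref{f=c-sphere-CM-M}.

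Since $\Omega_\infty$ is origin symmetric and non-degenerate, it contains the origin in its interior and $\g_\infty(z) > 1$ on $\mathbb{S}^n$, so the hypotheses of Theorem \ref{thm-f=c-CM-M-sphere} hold. Inspecting cases (1)--(7) of that theorem, in every case with $p > -n$ each admissible solution is a geodesic ball centered at the origin; in the remaining case $p = -n$ the solutions are geodesic balls whose centers are free, but the origin symmetry of $\Omega_\infty$ forces the center to be the origin. Consequently $\Omega_\infty = B(r_\infty)$ for some $r_\infty > 0$, and the conservation law $\widetilde{W}_k(B(r_\infty)) = \widetilde{W}_k(\Omega_0)$ pins down $r_\infty$ uniquely, since $r \mapsto \widetilde{W}_k(B(r))$ is strictly monotone (straightforward from the inductive definition \eqref{induction def of modi-quer-intg} evaluated on geodesic balls). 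Because every subsequential limit coincides with this unique $\partial B(r)$, the full flow $M_t$ must converge smoothly to $\partial B(r)$.

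The main obstacle is the correct identification of the subsequential limit: one has to use that origin symmetry is preserved by the flow and propagates to $M_\infty$ in order to eliminate the ``free center'' degeneracy in case (7) of Theorem \ref{thm-f=c-CM-M-sphere} when $p = -n$, and to ensure that the $C^0$ and h-convexity bounds descend to a smooth, non-degenerate limit so that $\g_\infty > 1$ and $A[\g_\infty] > 0$ (these are required to invoke Theorem \ref{thm-f=c-CM-M-sphere}). The rest of the argument is a standard packaging of the monotonicity and a priori estimates already established earlier in the paper.
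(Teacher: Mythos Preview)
Your proposal is correct and follows essentially the same route as the paper's own proof: long time existence from Theorem \ref{thm-long time existence}, subsequential convergence to a solution of \eqref{f=c-sphere-CM-M} via the monotonicity of $J_p$ and the a priori estimates (as in the proofs of Theorems \ref{thm-exist-all p-k=0}--\ref{thm-exist-all p}), identification of the limit as a geodesic sphere via Theorem \ref{thm-f=c-CM-M-sphere}, and uniqueness of the radius from the conservation of $\widetilde{W}_k$. You actually make explicit a point the paper leaves implicit, namely that in the borderline case $p=-n$ (case (7) of Theorem \ref{thm-f=c-CM-M-sphere}) the center of the limiting ball is a priori free, and it is the preserved origin symmetry of $M_t$ that forces the center to be the origin and hence secures uniqueness of the limit.
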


\begin{rem}
	In the case that $n \geq 2$, $0 \leq k \leq n-1$, $p=-n$, and $f(z)$ is a positive constant, the long time existence and the exponential convergence of the flow \eqref{flow-HCMF} was studied by using Alexandrov reflection argument in \cite[Theorem 1.7]{ACW18}. 
\end{rem}

\subsection{Heintze-Karcher inequality and integral formulas}$ \ $

\begin{prop}\label{prop-HK-n=1}
	Let $\Omega$ be a smooth uniformly h-convex bounded domain in $\mathbb{H}^2$. Then
	\begin{align}\label{Heintze-Karcher-n=1}
	\int_{\partial \Omega} \(\frac{\cosh r- \tilde{u}}{\tilde{\kappa}} - \tilde{u}\) d\mu \geq 0.
	\end{align}
	Equality holds if and only if $\Omega$ is a geodesic ball.
\end{prop}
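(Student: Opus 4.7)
The plan is to use the horospherical Gauss map to reduce the inequality to a Poincar\'e-type estimate on $\mathbb{S}^1$. Since $\partial\Omega$ is uniformly h-convex in $\mathbb{H}^2$, Corollary \ref{cor-Aij>0 Omega=Wulff shape} lets me parameterize $\partial\Omega$ by $\theta\in\mathbb{S}^1$ via $G^{-1}$. Writing $\g(\theta)=e^{u(\theta)}$ for the horospherical support function, Lemma \ref{lem-cosh r, tilde u, 1/phi}, formula \eqref{shifted curvature-support function} and formula \eqref{rel-area element} give
\begin{equation*}
\cosh r-\tilde u=\g^{-1},\qquad \tilde\kappa=\frac{1}{\g\,A[\g]},\qquad d\mu=A[\g]\,d\theta,
\end{equation*}
where $A[\g]=\g''-\tfrac12(\g')^2/\g+\tfrac12(\g-\g^{-1})$ and $\tilde u=\tfrac12(\g')^2/\g+\tfrac12(\g-\g^{-1})$. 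Substituting, the desired inequality \eqref{Heintze-Karcher-n=1} becomes
\begin{equation*}
\int_{\mathbb{S}^1}(A[\g])^{2}\,d\theta\;\ge\;\int_{\mathbb{S}^1}\tilde u\,A[\g]\,d\theta,
\end{equation*}
and since $A[\g]-\tilde u=\g''-(\g')^2/\g$, this is equivalent to
\begin{equation*}
\int_{\mathbb{S}^1}\!\left(\g''-\tfrac{(\g')^2}{\g}\right)A[\g]\,d\theta\;\ge\;0.
\end{equation*}

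Next, I will carry out the integration by parts on the circle. Expanding $A[\g]$ into its three pieces and treating the product term by term, several integrals involving $(\g')^2\,\g''/\g$ and $(\g')^4/\g^2$ can be reduced to each other using $w''(w')^2=\tfrac13\bigl((w')^3\bigr)'$ (with $w=\g$). A direct bookkeeping shows that all the ``mixed'' terms cancel exactly, and the whole integral collapses to
\begin{equation*}
\int_{\mathbb{S}^1}\bigl[(\g'')^{2}-(\g')^{2}\bigr]\,d\theta.
\end{equation*}
This clean cancellation is the main algebraic step, and it is the least obvious part of the argument; I expect checking it carefully (and verifying no boundary terms appear, since everything is periodic) to be where the bulk of the work sits.

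With the reduction in hand, the conclusion follows from Fourier analysis on $\mathbb{S}^1$. Writing $\g(\theta)=\sum_{n\in\mathbb{Z}}a_n e^{in\theta}$, Parseval gives
\begin{equation*}
\int_{\mathbb{S}^1}\bigl[(\g'')^{2}-(\g')^{2}\bigr]\,d\theta
=2\pi\sum_{n\in\mathbb{Z}}n^{2}(n^{2}-1)|a_n|^{2}\;\ge\;0,
\end{equation*}
with equality if and only if $a_n=0$ for all $|n|\ge 2$, i.e.\ $\g(\theta)=a_0+a_1\cos\theta+b_1\sin\theta$. Finally, by Lemma \ref{lem-horo supp of geodesic ball} every such affine-on-$\mathbb{S}^1$ support function is precisely the horospherical support function of a geodesic ball (the coefficients encoding the center and radius via $a_0^2-a_1^2-b_1^2=e^{2r}$). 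Hence equality in \eqref{Heintze-Karcher-n=1} characterizes geodesic balls, completing the proof.
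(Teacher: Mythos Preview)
Your proposal is correct and follows essentially the same route as the paper: reduce via the horospherical Gauss map to $\int_{\mathbb{S}^1}\bigl[(\g'')^{2}-(\g')^{2}\bigr]\,d\theta\ge 0$ and conclude. The paper invokes the Wirtinger inequality where you use Parseval (equivalent), and for the equality case the paper computes $1/\tilde\kappa$ directly from $\g=\alpha\sin t+\beta\cos t+\gamma$ to see it is constant, whereas you appeal to Lemma~\ref{lem-horo supp of geodesic ball}; both arguments are valid and interchangeable.
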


\begin{proof}
	Using \eqref{1/phi, coshr-u}, \eqref{shifted curvature-support function}, \eqref{tilde u}  and  \eqref{rel-area element}, we have
	\begin{align}
	\frac{\cosh r - \tilde{u}}{\tilde{\kappa}} =& \frac{1}{\g \tilde{\kappa}} = A[\g]
	=\g'' - \frac{1}{2} \frac{\(\g' \)^2 }{\g} + \frac{1}{2} \(\g-\frac{1}{\g}\)  ,  \label{HK-formu}\\
	\tilde{u} =& \frac{1}{2} \frac{\(\g'\)^2}{\g} + \frac{1}{2} \( \g- \frac{1}{\g} \),
	\quad 	d\mu = A[\g] d\sigma. \nonumber
	\end{align}
	Then 
	\begin{align}
	\int_{\partial \Omega} \(\frac{\cosh r- \tilde{u}}{\tilde{\kappa}} - \tilde{u}\) d\mu
	=& \int_{\mathbb{S}^1} \( A[\g] - \tilde{u}\) A[\g] d\sigma \nonumber\\
	=& \int_{\mathbb{S}^1} \( \g''-  \frac{\(\g'\)^2}{\g}\)
	\( \g''- \frac{1}{2}\frac{\(\g'\)^2}{\g} +\frac{1}{2} \(\g- \frac{1}{\g}\)  \) d\sigma. \label{HK-sphere}
	\end{align}
	By integration by parts, we have
	\begin{align*}
	\int_{\mathbb{S}^1} \frac{\g'' \(\g' \)^2}{\g} d\sigma
	=&- \int_{\mathbb{S}^1}  \( \frac{\( \g'\)^2 }{\g} \)' \g' d\sigma
	= -\int_{\mathbb{S}^1} \(2\frac{\g'' \g'}{\g}- \frac{ \(\g'\)^3 }{\g^2}  \) \g' d\sigma \nonumber\\
	=& -2 \int_{\mathbb{S}^1} \frac{\g'' \(\g'\)^2 }{\g} d\sigma + \int_{\mathbb{S}^1} \frac{ \(\g'\)^4 }{\g^2} d\sigma,  
	\end{align*}
	which implies
	\begin{align*}
	\int_{\mathbb{S}^1} \(- \frac{3}{2} \frac{\g'' \(\g'\)^2}{\g} + \frac{1}{2} \frac{ \(\g'\)^4 }{\g^2} \) d\sigma = 0. 
	\end{align*}
	Then we have
	\begin{align}
	&\int_{\mathbb{S}^1} \( \g'' - \frac{\(\g' \)^2}{\g} \) \(\g'' - \frac{1}{2} \frac{ \(\g'\)^2 }{\g}  \) d\sigma \nonumber\\
	=& \int_{\mathbb{S}^1} \(  \(\g''\)^2 - \frac{3}{2} \frac{\g''\(\g'\)^2 }{\g} + \frac{1}{2} \frac{\(\g'\)^4 }{\g^2}  \) d\sigma = \int_{\mathbb{S}^1} \( \g'' \)^2 d\sigma. \label{HK-part-1}
	\end{align}
	Using integration by parts, we have
	\begin{align}
	\int_{\mathbb{S}^1} \( \g'' -  \frac{ \(\g'\)^2 }{\g}   \)\( \frac{1}{2} \(\g- \frac{1}{\g}\) \) d\sigma
	=& \int_{\mathbb{S}^1} \( \log \g \)'' \( \frac{1}{2} \(\g^2- 1\) \) d\sigma \nonumber\\
	=& - \int_{\mathbb{S}^1} \( \log \g \)' \g' \g d\sigma
	= - \int_{\mathbb{S}^1} \( \g' \)^2 d\sigma. \label{HK-part-2}
	\end{align}
	Inserting \eqref{HK-part-1} and \eqref{HK-part-2} into \eqref{HK-sphere} and using the Wirtinger inequality, we have
	\begin{align*}
	\int_{\partial \Omega} \(\frac{\cosh r- \tilde{u}}{\tilde{\kappa}} - \tilde{u}\) d\mu
	= \int_{\mathbb{S}^1} (\g'')^2 d\sigma - \int_{\mathbb{S}^1} \( \g' \)^2 d\sigma \geq 0.
	\end{align*}
	Hence we obtain\eqref{Heintze-Karcher-n=1}.
	Besides, we have that equality holds in \eqref{Heintze-Karcher-n=1} only if $\g(z) = \alpha \sin t + \beta \cos t +\gamma$ for some constants $\a$, $\b$ and $\gamma$, where we set $z = (\cos t, \sin t)$ and $t \in [0, 2\pi]$. In this case, we know
	$\(\g'\)^2 + \(\g- \gamma\)^2 = \alpha^2 + \beta^2$ and $\g'' = -\g +\gamma$. Combining these facts with \eqref{HK-formu} yields
	\begin{align*}
	\frac{1}{\tilde{\kappa}} 
	=& \g A[\g]
	=\g'' \g - \frac{1}{2} \(\g'\)^2 + \frac{1}{2} \(\g^2-1\)
	=-\g^2 + \gamma\g - \frac{1}{2} \(\g'\)^2 + \frac{1}{2} \g^2- \frac{1}{2}\\
	=&-\frac{1}{2} \( \(\g'\)^2 + \( \g- \gamma \)^2 - \gamma^2 +1\) 
	=\frac{1}{2} \( \gamma^2 - \alpha^2 -\beta^2-1\).
	\end{align*}
	Hence equality holds in \eqref{Heintze-Karcher-n=1} only if $\Omega$ is a geodesic ball. Conversely, if $\Omega$ is a geodesic ball, then $\tilde{\kappa}$ is constant on $\partial \Omega$, and hence
	\begin{align*}
	\int_{\partial \Omega} \(\frac{\cosh r- \tilde{u}}{\tilde{\kappa}} - \tilde{u}\) d\mu
	= \frac{1}{\tilde{\kappa}} \int_M  \( \( \cosh r - \tilde{u}\) - \tilde{u} \tilde{\kappa}\) d\mu=0,
	\end{align*}
	where we used \eqref{eq-shifted Minkowski formula}. Therefore, equality holds in \eqref{Heintze-Karcher-n=1} if and only if $\Omega$ is a geodesic ball in $\mathbb{H}^2$. We complete the proof of Proposition \ref{prop-HK-n=1}.
\end{proof}

\begin{lem}\label{lem-integral formulas}
	Let $M$ be an $n$-dimensional $(n \geq 1)$ hypersurface in $\mathbb{H}^{n+1}$ and $\tilde{f}$ be a smooth function defined on $M$. For $0 \leq k \leq n-1$, we have
	\begin{equation}\label{generalize-Mink-formula-1}
	\int_{M} \(\( \cosh r-\tilde{u} \)p_k(\tilde{\kappa}) - \tilde{u} p_{k+1}(\tilde{\kappa}) \)\tilde{f} d\mu
	=- \frac{1}{k+1}\int_M \dot{p}_{k+1}^{ij}(\tilde{\kappa}) \nabla_i \cosh r \nabla_j \tilde{f} d\mu.
	\end{equation}
	For $1 \leq k ,\ l \leq n$, we have
	\begin{align}
	&\int_{M} \( \cosh r- \tilde{u}\) \tilde{f} (p_k(\tilde{\kappa}) p_{l-1} (\tilde{\kappa})- p_{k-1}(\tilde{\kappa})p_l(\tilde{\kappa}) )d\mu  \nonumber\\
	=&-\frac{1}{l}\int_M \dot{p}_{l}^{ij}(\tilde\kappa) \nabla_i \cosh r \nabla_j \( \tilde{f} p_k(\tilde{\kappa}) \) d\mu
	+\frac{1}{k}\int_M \dot{p}_k^{ij}(\tilde{\kappa}) \nabla_i \cosh r \nabla_j \( \tilde{f}p_l(\tilde{\kappa}) \) d\mu. \label{generalize-Mink-formula-2}
	\end{align}
\end{lem}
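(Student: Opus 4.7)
The plan is to view both formulas as consequences of the identity
\[
\dot{p}_{m}^{ij}(\tilde{\kappa})\,\nabla_j\nabla_i \cosh r = m\bigl((\cosh r-\tilde{u})p_{m-1}(\tilde{\kappa}) - \tilde{u}\,p_m(\tilde{\kappa})\bigr),
\]
which is exactly \eqref{shifted-Min-fml-diverg-formula} from the proof of Lemma \ref{lem-shifted Minkowski formula}, combined with the fact (recalled just before that lemma) that $\dot{p}_{m}^{ij}(\tilde\kappa)$ is divergence-free because the shifted second fundamental form is a Codazzi tensor. Both \eqref{generalize-Mink-formula-1} and \eqref{generalize-Mink-formula-2} are then obtained by pairing this identity with a suitable test function and integrating by parts on the closed hypersurface $M$.

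For \eqref{generalize-Mink-formula-1}, I would apply the identity with $m=k+1$, multiply both sides by $\tilde{f}/(k+1)$, and integrate over $M$. Since $\nabla_j \dot{p}_{k+1}^{ij}(\tilde\kappa)=0$, integration by parts yields
\[
\int_M \tilde{f}\bigl((\cosh r-\tilde{u})p_k(\tilde\kappa)-\tilde{u}\,p_{k+1}(\tilde\kappa)\bigr)\,d\mu
=\frac{1}{k+1}\int_M \tilde{f}\,\dot{p}_{k+1}^{ij}(\tilde\kappa)\nabla_j\nabla_i\cosh r\,d\mu
= -\frac{1}{k+1}\int_M \dot{p}_{k+1}^{ij}(\tilde\kappa)\nabla_i\cosh r\,\nabla_j\tilde{f}\,d\mu,
\]
which is the claimed formula.

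For \eqref{generalize-Mink-formula-2}, the trick is to take the identity once with $m=l$ and once with $m=k$, multiply them by $\tilde{f}\,p_k(\tilde\kappa)/l$ and $\tilde{f}\,p_l(\tilde\kappa)/k$ respectively, and subtract. The $\tilde{u}\,p_k p_l$ terms cancel exactly, and what remains on the left-hand side after integration is
\[
\int_M \tilde{f}(\cosh r-\tilde{u})\bigl(p_k(\tilde\kappa)p_{l-1}(\tilde\kappa)-p_{k-1}(\tilde\kappa)p_l(\tilde\kappa)\bigr)\,d\mu
=\frac{1}{l}\int_M \tilde{f}\,p_k(\tilde\kappa)\,\dot{p}_l^{ij}(\tilde\kappa)\nabla_j\nabla_i\cosh r\,d\mu
-\frac{1}{k}\int_M \tilde{f}\,p_l(\tilde\kappa)\,\dot{p}_k^{ij}(\tilde\kappa)\nabla_j\nabla_i\cosh r\,d\mu.
\]
Applying integration by parts to each term, again using that $\dot{p}_l^{ij}$ and $\dot{p}_k^{ij}$ are divergence-free, transfers the derivative onto $\tilde{f}\,p_k(\tilde\kappa)$ and $\tilde{f}\,p_l(\tilde\kappa)$, producing exactly the right-hand side of \eqref{generalize-Mink-formula-2}.

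There is no real analytic obstacle here; the only care needed is the algebraic bookkeeping to see the cancellation of the $\tilde{u}\,p_kp_l$ terms, and to keep track of the correct factors of $1/k$ and $1/l$. Since $M$ is closed, no boundary terms appear when integrating by parts, so the argument is purely formal once the divergence-free property of $\dot{p}_m^{ij}(\tilde\kappa)$ is in hand.
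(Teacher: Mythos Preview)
Your proof is correct and follows essentially the same approach as the paper: both use the identity \eqref{shifted-Min-fml-diverg-formula} together with the divergence-free property of $\dot{p}_m^{ij}(\tilde\kappa)$, multiply by the appropriate test functions, and integrate by parts. The paper presents the two pieces for \eqref{generalize-Mink-formula-2} as separate integral identities and then adds them, whereas you describe it as a subtraction before integrating, but this is only a cosmetic difference in sign bookkeeping.
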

\begin{proof}
	In the case $0 \leq k \leq n-1$, by using \eqref{shifted-Min-fml-diverg-formula} and integration by parts, we have
	\begin{align*}
	&\int_{M} \(\( \cosh r-\tilde{u} \)p_k(\tilde{\kappa}) - \tilde{u} p_{k+1}(\tilde{\kappa}) \)\tilde{f} d\mu\\
	=& \frac{1}{k+1} \int_M \(\dot{p}^{ij}_{k+1}(\tilde{\kappa}) \nabla_j  \nabla_i \cosh r\) \tilde{f} d\mu
	=- \frac{1}{k+1} \int_M \dot{p}^{ij}_{k+1}(\tilde{\kappa}) \nabla_i \cosh r \nabla_j \tilde{f}  d\mu,
	\end{align*}
	where we also used the fact that $\dot{p}^{ij}_{k+1} (\tilde{\kappa})$ is divergence-free.
	Hence we obtain the integral formula \eqref{generalize-Mink-formula-1}.
	
	Similarly, for $1 \leq k ,\ l \leq n$, we have
	\begin{align}
	&-\int_{M} \(\( \cosh r-\tilde{u} \)p_{k-1}(\tilde{\kappa}) - \tilde{u} p_{k}(\tilde{\kappa}) \)\tilde{f}p_l (\tilde\kappa) d\mu \nonumber\\
	=& -\frac{1}{k} \int_M \(\dot{p}_k^{ij} (\tilde{\kappa}) \nabla_j \nabla_i \cosh r\)  \tilde{f}p_l (\tilde\kappa)  d\mu
	= \frac{1}{k} \int_M \dot{p}^{ij}_{k}(\tilde{\kappa}) \nabla_i \cosh r \nabla_j \(\tilde{f} p_l(\tilde{\kappa})\)  d\mu, \label{integrb-parts-1}
	\end{align}
	and
	\begin{align}
	&\int_{M} \(\( \cosh r-\tilde{u} \)p_{l-1}(\tilde{\kappa}) - \tilde{u} p_{l}(\tilde{\kappa}) \)\tilde{f}p_k (\tilde\kappa) d\mu  \nonumber\\
	=& \frac{1}{l} \int_M \(\dot{p}^{ij}_{l}(\tilde{\kappa}) \nabla_j \nabla_i \cosh r\)\tilde{f} p_k(\tilde{\kappa}) d\mu
	=- \frac{1}{l} \int_M \dot{p}^{ij}_{l}(\tilde{\kappa}) \nabla_i \cosh r \nabla_j \(\tilde{f} p_k(\tilde{\kappa})\)  d\mu. \label{integrb-parts-2}
	\end{align}
	Then formula \eqref{generalize-Mink-formula-2} follows from the sum of \eqref{integrb-parts-1} and \eqref{integrb-parts-2}. We complete the proof of Lemma \ref{lem-integral formulas}.
\end{proof}

\subsection{Proofs of the results}$ \ $

\begin{proof}[Proof of Proposition \ref{prop-uniq-sphere-higher dim}]
	The following proof is divided into four cases. 
	
	\textbf{Case 1.} $n =1$, $k=0$, and $l=1$. From Proposition \ref{prop-HK-n=1} and \eqref{eq-uniq-sphere}, we have
	\begin{align}\label{n=1,k=0,l=1-ineq 1}
	0 \leq  \int_M \frac{ \(\cosh r- \tilde{u}\)- \tilde{u}\tilde{\kappa} }{\tilde{\kappa}} d\mu
	=\int_M \frac{ \(\cosh r- \tilde{u}\)- \tilde{u}\tilde{\kappa} }{\chi} d\mu,
	\end{align}
	with equality if and only if $M$ is a geodesic circle.
	Taking $\tilde{f} = \frac{1}{\chi}$ in \eqref{generalize-Mink-formula-1} and using \eqref{1/phi i new}, we obtain 
	\begin{align}
	\int_M \frac{ \(\cosh r- \tilde{u}\)- \tilde{u}\tilde{\kappa} }{\chi} d\mu
	=\frac{1}{n} \int_M \frac{\chi'}{\chi^2} g^{ij} \nabla_i \cosh r \nabla_j \( \cosh r- \tilde{u} \) \nonumber\\
	=-\frac{1}{n} \int_M \frac{\chi'}{\chi^2} \tilde{h}^{ij} \metric{V}{\partial_i X} \metric{V}{\partial_j X} \leq 0 .  \label{n=1,k=0,l=1-ineq 2}
	\end{align}
	Combining \eqref{n=1,k=0,l=1-ineq 1} with \eqref{n=1,k=0,l=1-ineq 2}, we have $M$ is a geodesic circle, and $\chi$ is constant on $M$. If $\chi'>0$ on $\mathbb{R}^+$, then  \eqref{1/phi, coshr-u} and \eqref{eq-uniq-sphere} imply that $\cosh r- \tilde{u} = \frac{1}{\g}$ is constant, which means the horospherical support function of $M$ is constant. Hence $M$ is a geodesic circle centered at the origin.
	
	By Corollary \ref{cor-star-shape}, if $\Omega$ contains the origin in its interior, then $M= \partial \Omega$ is star-shaped and hence $\tilde{u}>0$ on $M$. This assumption will be used in the following Case 2 and Case 3.
			
	\textbf{Case 2.} $n \geq 2$, $k=0$, and $l=1$. By \eqref{McLau ineq}, \eqref{eq-shifted Minkowski formula} and the assumption $\tilde{u}>0$, we have
	\begin{equation*}
	\int_M \( (\cosh r- \tilde{u})- \tilde{u}p_1(\tilde{\kappa}) \)p_1(\tilde{\kappa}) d\mu
	\leq\int_M \( (\cosh r- \tilde{u})p_1(\tilde{\kappa}) - \tilde{u}p_2(\tilde{\kappa}) \) d\mu
	=0.
	\end{equation*}
	Equality holds if and only if $M$ is totally umbilical, which means $M$ is a geodesic sphere, and then $\chi$ is constant on $M$ by \eqref{eq-uniq-sphere}.
	Taking $\tilde{f} = p_1(\tilde{\kappa}) = \chi$ in \eqref{generalize-Mink-formula-1} and using \eqref{1/phi i new}, we have
	\begin{align*}
	\int_{M} \( (\cosh r- \tilde{u})- \tilde{u} p_1(\tilde{\kappa}) \)p_1(\tilde{\kappa})d\mu
	=&
	-\frac{1}{n}\int_M g^{ij}\nabla_i \cosh r \nabla_j \chi d\mu\\
	=&
	\frac{1}{n} \int_M \chi' \tilde{h}^{ij} \metric{V}{\partial_i X} \metric{V}{\partial_j X} d\mu \geq 0.
	\end{align*}
	Hence we have that $M$ is a geodesic sphere in Case 2. 
	Particularly, if $\chi' >0$ on $\mathbb{R}^+$, then $\cosh r- \tilde{u} =\frac{1}{\g}$ is constant, and hence $M$ is a geodesic sphere centered at the origin.

	\textbf{Case 3.} $n \geq 2$, $k=0$, and $2 \leq l \leq n$. Using \eqref{McLau ineq}, \eqref{eq-shifted Minkowski formula} and the assumption $\tilde{u}>0$, we have
	\begin{equation}\label{case2-l geq 2-A}
	\int_M \( \(\cosh r-\tilde{u}\)- \tilde{u}p_1(\tilde{\kappa})  \) p_l^{\frac{l-1}{l}}(\tilde{\kappa}) d\mu
	\leq \int_M \(\cosh r-\tilde{u} \)p_{l-1}(\tilde{\kappa}) - \tilde{u} p_l(\tilde{\kappa}) d\mu =0.
	\end{equation}
	Equality holds if and only if  $M$ is a geodesic sphere.
	Taking $\tilde{f} = p_l^{\frac{l-1}{l}}(\tilde{\kappa}) = \chi^{\frac{l-1}{l}}$ in \eqref{generalize-Mink-formula-1} and using Lemma \ref{lem-1/phi i, 1/phi ij}, we have
	\begin{align}
	&-\int_M \( \(\cosh r-\tilde{u}\)- \tilde{u}p_1(\tilde{\kappa})  \) \chi^{\frac{l-1}{l}} d\mu \nonumber\\
	=& \frac{1}{n}\int_{M} g^{ij} \nabla_i \cosh r\nabla_j \chi^{\frac{l-1}{l}} d\mu
	= \frac{l-1}{ln} \int_M \chi^{-\frac{1}{l}} \chi' \nabla_i \cosh r \nabla_j \( \cosh r-\tilde{u} \)d\mu \nonumber\\
	=&-\frac{l-1}{ln} \int_{M} \chi^{-\frac{1}{l}} \chi' \tilde{h}^{ij} \metric{V}{\partial_i X} \metric{V}{\partial_j X} d\mu  \leq 0.\label{case2-1 geq 2-B}
	\end{align}
	Combining \eqref{case2-l geq 2-A} with \eqref{case2-1 geq 2-B}, we have that $M$ is a geodesic sphere. Similar to Case 1, if $\chi'>0$, then $M$ is a geodesic sphere centered at the origin.
	
	\textbf{Case 4.} $n \geq 2$ and $1 \leq k <l \leq n$. Again, we have by use of \eqref{Newton ineq}
	\begin{equation*}
	p_k(\tilde{\kappa})p_{l-1}(\tilde{\kappa})- p_{k-1}(\tilde{\kappa}) p_l (\tilde{\kappa}) \geq 0,
	\end{equation*}
	with equality if and only if $\tilde{\kappa}_1=\tilde{\kappa}_2=\cdots= \tilde{\kappa}_n$. 
	Taking $\tilde{f} = p_k^{-1}(\tilde{\kappa}) = p_l^{-1}(\tilde{\kappa}) \chi$ in \eqref{generalize-Mink-formula-2} and using Lemma \ref{lem-1/phi i, 1/phi ij}, we have
	\begin{align*}
	0 \leq& \int_M \( \cosh r- \tilde{u}\) p_k^{-1}(\tilde{\kappa}) \(  (p_k(\tilde{\kappa}) p_{l-1} (\tilde{\kappa})- p_{k-1}(\tilde{\kappa})p_l(\tilde{\kappa}) ) \) d\mu\\
	=&
	\frac{1}{k} \int_M \dot{p}_{k}^{ij}(\tilde\kappa) \nabla_i \cosh r \nabla_j \( \frac{p_l (\tilde{\kappa})}{p_k (\tilde{\kappa})} \) d\mu
	= \frac{1}{k} \int_M \dot{p}_{k}^{ij}(\tilde\kappa) \nabla_i \cosh r \nabla_j \chi d\mu\\
	=& - \frac{1}{k} \int_M  \chi'\dot{p}_{k}^{ij}(\tilde\kappa) \tilde{h}_j{}^s \metric{V}{\partial_i X} \metric{V}{\partial_s X}d\mu \leq 0.
	\end{align*}
	Thus, $M$ is a geodesic sphere. In the case $\chi'>0$, we have that $M$ is centered at the origin.
	We complete the proof of Proposition \ref{prop-uniq-sphere-higher dim}.
\end{proof}

\begin{proof}[Proof of Theorem \ref{thm-f=c-CM-M-sphere}]
	Assume that $\g(z)$ is a solution to equation \eqref{f=c-sphere-CM-M} that satisfies the requirements in Theorem \ref{thm-f=c-CM-M-sphere}. By Corollary \ref{cor-support-construct-domain}, there exists a smooth uniformly h-convex bounded domain $\Omega$ with horospherical support function $\log \g(z)$. 
	By  \eqref{shifted curvature-support function} and \eqref{1/phi, coshr-u}, we have 
	\begin{equation}\label{pn/pk =const}
	\frac{p_n (\tilde{\kappa})}{p_k (\tilde{\kappa})} = \frac{1}{\gamma} (\cosh r-\tilde{u})^{n+p}
	\end{equation}
	on $\partial \Omega$.
	
	\textbf{Case 1.} $p>-n$.
	Taking $l=n$ and $\chi (s) = \frac{1}{\gamma} s^{n+p}$ in Proposition \ref{prop-uniq-sphere-higher dim}, we have 
	that $\Omega$ must be a geodesic ball centered at the origin. Then we can assume that $\g(z) = c>1$ for all $z \in \mathbb{S}^n$, and hence the radius of $\Omega$ is $\log c$. By the definition of $A[\g]$ in \eqref{def A-phi}, we have that $c$ is the solution to equation 
	\eqref{eq-c gamma}. Hence we only need to count the number of solutions to equation \eqref{eq-c gamma}.
	
	Define a function $\zeta(t)$ on $(1, +\infty)$ by
	\begin{equation*}
	\zeta(t) = t^{-p-k} \( \frac{1}{2} (t-t^{-1}) \)^{n-k}.
	\end{equation*}
	Then
	\begin{align*}
	\zeta'(t) =& -(p+k) t^{-p-k-1} \cdot 2^{k-n}(t- t^{-1})^{n-k} + t^{-p-k} \cdot (n-k) 2^{k-n}(t-t^{-1})^{n-k-1}(1+t^{-2})\\
	=&2^{k-n}t^{-p-k-2}(t-t^{-1}) \( -(2k+p-n) t^2+ n+p\).
	\end{align*}
	
	If $p>n-2k$, then
	 $\zeta(t)$ attains its maximum at $t_0= \(\frac{n+p}{2k+p-n}\)^{\frac{1}{2}}>1$, and $\zeta(t_0) = \gamma_0$.
	Furthermore, it is easy to see that $\zeta (1) =\zeta (+\infty) = 0$ provided that $p>n-2k$. Therefore, when $p>n-2k$, if $\gamma> \zeta(t_0)$, then \eqref{eq-c gamma} has no solution; if $\gamma = \zeta(t_0)$, then $c=t_0$ is the unique solution to \eqref{eq-c gamma}; if $0 < \gamma < \zeta(t_0)$, then \eqref{eq-c gamma} has exactly two different solutions. Thus the Cases (1)--(3) of Theorem \ref{thm-f=c-CM-M-sphere} are proved.
	
	If $p=n-2k$, then $\zeta(t)$ is strictly increasing on $(1,+\infty)$. Furthermore, it is easy to see that $\zeta(1) =0$ and $\zeta(+\infty)  =  2^{k-n}$. Then Case (4) and Case (5) of Theorem \ref{thm-f=c-CM-M-sphere} follows directly from the above argument. 
	 
	 If $-n<p<n-2k$, then $\zeta(t)$ is strictly increasing on $(1,+\infty)$ and $\zeta(+\infty) = +\infty$. Then Case (6) of Theorem \ref{thm-f=c-CM-M-sphere} follows in the same manner as above.
	 
	 \textbf{Case 2.} $p=-n$. By equation \eqref{pn/pk =const} and Proposition \ref{prop-uniq-sphere-higher dim}, we can assume that $\Omega = B(X, r)$, which is the geodesic ball of radius $r$ centered at $X$ in $\mathbb{H}^{n+1}$. It is direct to see that the shifted principal  radii of curvature are all equal to $\sinh r e^r$ on $\partial \Omega$. Hence 
	 \begin{equation*}
	 \sinh r  e^r= \gamma^{\frac{1}{n-k}}.
	 \end{equation*}
	 Consequently, we have $e^{r} = \(1+ 2\gamma^{\frac{1}{n-k}}\)^{\frac{1}{2}}$. This together with Lemma \ref{lem-horo supp of geodesic ball} implies the Case (7) of Theorem \ref{thm-f=c-CM-M-sphere}.

	Then we complete the proof of Theorem \ref{thm-f=c-CM-M-sphere}.
\end{proof}

\begin{proof}[Proof of Theorem \ref{thm-f=c-convergence}]
	With the assumptions of $n$, $k$ and $p$ in Theorem \ref{thm-f=c-convergence}, the long time existence of the flow \eqref{flow-HCMF} was proved in Theorem \ref{thm-long time existence}. The subsequential convergence of the flow \eqref{flow-HCMF} follows from the proofs of Theorem \ref{thm-exist-all p-k=0} and Theorem \ref{thm-exist-all p}.
	By \eqref{eq-limit-hyper}, each limiting  hypersurface $M$ satisfies $\g^{-p-k} p_{n-k}(A[\g]) = \gamma f(z)$, which is equivalent to
	\begin{equation*}
	\frac{p_n (\tilde{\kappa})}{p_k (\tilde{\kappa})} = \frac{1}{\gamma f(z)} \( \cosh r- \tilde{u}\)^{p+n}.
	\end{equation*}
	Since $f(z)$ is constant and $p \geq -n$, we obtain that $M$ must be a geodesic sphere $\partial B(r)$ by Theorem \ref{thm-f=c-CM-M-sphere}. Besides, formula \eqref{mono quan-eq} shows  that $\widetilde{W}_k(B(r)) = \widetilde{W}_k(\Omega_0)$, which implies the uniqueness of $M$. Hence we get the smooth convergence of the flow \eqref{flow-HCMF}.  
\end{proof}

\section{Kazdan-Warner type obstructions for Horospherical Minkowski problem and Horospherical Christoffel-Minkowski problem}\label{sec-Kazdan-Warner}
Letting $p=-n$, equation \eqref{eq-p-CM problem} becomes
\begin{equation}\label{eq-p=-n-CM problem}
\g^{n-k} p_{n-k} \(A[\g]\) =f(z).
\end{equation} 
By \eqref{shifted curvature-support function}, if $A[\g(z)]>0$ on $\mathbb{S}^n$, then equation \eqref{eq-p=-n-CM problem} is equivalent to
\begin{equation}\label{eq-p=-n-CM curvature problem}
p_{n-k} \(\tilde{\lambda} \) = f(z),
\end{equation}
where $\tilde{\lambda}$ are the shifted principal  radii of curvature. 
We call Problem \ref{prob-Horospherical p-Christoffel-Minkowski problem} the \emph{horospherical Christoffel problem} in the case  $p=-n$ and $k=n-1$.
By taking $k=n-1$ in both equation \eqref{eq-p=-n-CM problem} and equation \eqref{eq-p=-n-CM curvature problem}, we have that the equation for the horospherical Christoffel problem is 
\begin{align}
\g p_1(A[\g]) =& f(z),\label{eq-hyperbolic christoffel problem-support}
\end{align}
that is
\begin{align}
\frac{1}{n} \sum_{i=1}^n \frac{1}{\kappa_i-1} =& f(z). \label{eq-hyperbolic christoffel problem-curvature}
\end{align}
Similarly, we call Problem \ref{prob-Horospherical p-Minkowski problem} the \emph{horospherical Minkowski problem} in the case  $p=-n$ and $k=0$.
By taking $k=0$ in both equation \eqref{eq-p=-n-CM problem} and equation \eqref{eq-p=-n-CM curvature problem}, the equation for the horospherical Minkowski problem is 
\begin{equation}
\g^n p_n \(A[\g]\) = f(z), \label{eq-hyperbolic Minkowski problem-support} 
\end{equation}
that is
\begin{equation}
\prod_{i=1}^n \(\kappa_i -1 \) = f^{-1}(z).\label{eq-hyperbolic Minkowski problem-curvature}
\end{equation}
We call Problem \ref{prob-Horospherical p-Christoffel-Minkowski problem} the \emph{horospherical Christoffel-Minkowski problem} in the case $p=-n$ and $1 \leq k \leq n-1$.
Then \eqref{eq-p=-n-CM problem} (or \eqref{eq-p=-n-CM curvature problem}) is the equation for this problem.

In the following theorem we prove a Kazdan-Warner type obstruction for the horospherical Minkowski problem and the horospherical Christoffel-Minkowski problem.
\begin{thm}\label{thm-necess-cond-p=-n}
	Let $n \geq 2$  and $0 \leq k \leq n-1$ be integers. Assume that $\g(z)$ is a smooth uniformly h-convex solution to \eqref{eq-p=-n-CM problem}. Then 
	\begin{equation}\label{eq-necess-cond-p=-n-coordinate}
	\int_{\mathbb{S}^n} \g^{-n} \metric{D f(z)}{D x_i} d \sigma=0, \quad i=0,1,\ldots,n,
	\end{equation}
	where $x_0, x_1, \ldots, x_n$ are the coordinate functions of $\mathbb{S}^n \subset \mathbb{R}^{n+1}$. More generally, $f(z)$ satisfies
	\begin{equation}\label{eq-necess-cond-p=-n-conformal Killing}
	\int_{\mathbb{S}^n} \g^{-n} \overline{V}(f(z)) d \sigma =0
	\end{equation}
	for any conformal vector field $\overline{V}$ on the unit sphere $\mathbb{S}^n$.
\end{thm}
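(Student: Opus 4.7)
The plan is to carry out the two proofs announced in the introduction. For the first approach, I begin with the observation that, writing $u = \log \g$, the matrix $A_{ij}[\g]$ is (up to a scaling by $\g$ and an additive multiple of the metric) the Schouten tensor of the conformally flat metric $\tilde{g} := \g^{-2} g_{\mathbb{S}^n}$ on $\mathbb{S}^n$. A direct conformal-change calculation gives
\[
A_{ij}[\g(z)] \;=\; \g(z)\bigl(\tilde{P}_{ij} - \tfrac{1}{2}\tilde{g}_{ij}\bigr),
\]
where $\tilde{P}$ is the Schouten tensor of $\tilde{g}$; the extra $\tfrac{1}{2}(\g - \g^{-1})\sigma_{ij}$ term in the definition of $A[\g]$ is precisely what is needed for this identification. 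This is the content of Lemma \ref{lem-conf-Schouten} announced for the appendix. Since the eigenvalues of $g_{\mathbb{S}^n}^{-1}A[\g]$ are therefore $\g^{-1}(\lambda_i^{\tilde{P}} - \tfrac{1}{2})$, equation \eqref{eq-p=-n-CM problem} rewrites intrinsically on $\mathbb{S}^n$ as $p_{n-k}(\tilde{P} - \tfrac{1}{2}\tilde{g}) = f(z)$.

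Expanding the left-hand side binomially gives $f = \sum_{m=0}^{n-k} a_{n,k,m}\, p_m(\tilde{P})$ with explicit constants $a_{n,k,m}$ depending only on $n$ and $k$. For each $0 \leq m \leq n$, the Kazdan--Warner identity for the $\sigma_m$-Yamabe curvature on $\mathbb{S}^n$, proved in \cite{Han06,LLL21,Via00}, asserts that
\[
\int_{\mathbb{S}^n} \overline{V}\bigl(p_m(\tilde{P})\bigr)\, dV_{\tilde{g}} \;=\; 0
\]
for every conformal vector field $\overline{V}$ on $\mathbb{S}^n$. Since $dV_{\tilde{g}} = \g^{-n} d\sigma$, taking the linear combination with weights $a_{n,k,m}$ produces \eqref{eq-necess-cond-p=-n-conformal Killing}. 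Specializing to $\overline{V} = Dx_i$, which is a conformal vector field on $\mathbb{S}^n$ because $D_j D_k x_i = -x_i\, \sigma_{jk}$, then gives \eqref{eq-necess-cond-p=-n-coordinate}.

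For the second proof I would argue directly on the hypersurface $M = \partial \Omega$ and bypass the Schouten reinterpretation. Conformal vector fields $\overline{V}$ on $\mathbb{S}^n$ lift to Killing fields $W$ on $\mathbb{H}^{n+1}$ under the identification of $\mathbb{S}^n$ with the ideal boundary, and the horospherical Gauss map $G$ is equivariant with respect to these flows, so that $W(f \circ G) = \overline{V}(f) \circ G$. Using the change-of-variables $p_n(\tilde{\kappa})\, d\mu = \g^{-n}\, d\sigma$ coming from \eqref{rel-area element} and \eqref{shifted curvature-support function}, together with the covariant expressions for $\nabla_i(z,1)$ and $\nabla_j\nabla_i(z,1)$ in Lemma \ref{lem-zi-zij} and the shifted Minkowski formula \eqref{eq-shifted Minkowski formula}, I would produce a tangential vector field $Y$ on $M$ whose divergence with respect to the induced metric equals $p_n(\tilde{\kappa})\, W(f \circ G)$; this is the content of the key identity Lemma \ref{lem-KZ-non-shifted}. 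Integrating over the closed hypersurface $M$ yields \eqref{eq-necess-cond-p=-n-conformal Killing}.

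The principal obstacle in the first approach is verifying the Schouten identification $A[\g] = \g(\tilde{P} - \tfrac{1}{2}\tilde{g})$: one must carefully match the extrinsic pointwise definition of $A_{ij}[\g]$ with the intrinsic conformal transformation law of the Schouten tensor, and check that the $\sinh u$ shift cancels exactly with the $\tfrac{1}{2} g_{\mathbb{S}^n}$ coming from the Schouten tensor of the round sphere. The principal obstacle in the second approach is discovering the correct tangential vector field $Y$; the natural candidates built from the tangential part of $W$ contracted with $\dot p_n^{ij}(\tilde{\kappa})$ have to be combined with curvature-weighted projections of the Minkowski conformal vector $V = \sinh r\, \partial_r$, and arranging the cancellations so that the divergence equals exactly $p_n(\tilde{\kappa})\, W(f \circ G)$ is what Lemma \ref{lem-KZ-non-shifted} encodes.
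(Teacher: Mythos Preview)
Your first approach is essentially the paper's own argument: identify the eigenvalues of $\tilde g^{-1}\widetilde S$ with $\tilde\lambda_i+\tfrac12$ (this is Lemma~\ref{lem-conf-Schouten}), expand $p_{n-k}(\tilde\lambda)$ as a linear combination of $p_m(\tilde g^{-1}\widetilde S)$, and invoke the cited Kazdan--Warner identities. One genuine gap: you treat all $n\ge2$ uniformly, but the Schouten tensor is undefined when $n=2$ (the definition divides by $n-2$). The paper handles $n=2$ separately, using the classical Kazdan--Warner identity for $k=1$ (since $p_1(\tilde\lambda)=\tfrac14\widetilde R-\tfrac12$) and the Li--Lu--Lu result for $k=0$. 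Your binomial expansion and the citation of \cite{LLL21} suggest you are aware of this, but the case split should be explicit.

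Your second approach, however, does not match the paper's direct proof and as described is too vague to be assessed as correct. The paper's Lemma~\ref{lem-KZ-non-shifted} is \emph{not} a divergence identity on $M$ of the form you describe; it is the vector-valued statement $\int_M(\cosh r\,\nu-\tilde u\,X)p_k(\kappa)\,d\mu=0$ in $\mathbb R^{n+1,1}$, proved by pairing with a constant vector $\vec a$ and using either integration by parts with $\dot p_k^{ij}(\kappa)\nabla_j\nabla_i\langle X,\vec a\rangle$ (for $k\ge1$) or a divergence computation in the bulk $\mathbb H^{n+1}$ (for $k=0$). From this, via \eqref{coshr nu-uX}, one obtains $\int_{\mathbb S^n}(\tfrac{D\g}{\g}+z)\g^{-k}p_{n-k}(A[\g])\,d\sigma=0$, and then a short integration by parts on $\int_{\mathbb S^n}\g^{-n}\langle Df,\vec a\rangle\,d\sigma$ yields \eqref{eq-necess-cond-p=-n-coordinate}. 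There is no lifting of conformal fields to Killing fields of $\mathbb H^{n+1}$, no equivariance of $G$, and no tangential vector field $Y$ on $M$ constructed from $W$ and $\dot p_n^{ij}(\tilde\kappa)$. Moreover, the paper's direct method yields only \eqref{eq-necess-cond-p=-n-coordinate}, not the general conformal-field statement \eqref{eq-necess-cond-p=-n-conformal Killing}; your sketch claims the latter but gives no mechanism for it. If you want to pursue a direct geometric proof, the key object to discover is the vector identity in Lemma~\ref{lem-KZ-non-shifted}, and its role is as a replacement for the Minkowski-formula input rather than as a divergence of some $Y$.
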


\begin{rem}$ \ $
	\begin{itemize}
		\item If $f(z)$ is monotone with respect to a coordinate $x_i$, then \eqref{eq-necess-cond-p=-n-coordinate} shows that there is no smooth uniformly h-convex solution to equation \eqref{eq-p=-n-CM problem}.
		\item If $f(z)$ and $\g(z)$ are even functions on $\mathbb{S}^n$, then they satisfy condition \eqref{eq-necess-cond-p=-n-coordinate}. Hence, there is no contradiction between the  existence result in Theorem \ref{thm-exist-all p-k=0} and the necessary condition in Theorem \ref{thm-necess-cond-p=-n}.
	\end{itemize}
\end{rem}

The remainder of Section \ref{sec-Kazdan-Warner} is organized as follows.
\begin{enumerate}
	\item In Subsection \ref{Subsec-p=-n-necess-cond}, we will give the proof of Theorem \ref{thm-necess-cond-p=-n}. Let $\widetilde{g} = \g(z)^{-2}g_0$ be a conformal metric on $\mathbb{S}^n$ and $M$ be a smooth uniformly h-convex hypersurface in $\mathbb{H}^{n+1}$ with horospherical support function $u(z) = \log \g(z)$.
	Motivated by the results in \cite{EGM09}, we show the relationship between the Schouten tensor of metric $\widetilde{g}$  and the shifted principal  radii of curvature on $M$. We remark that another proof of this relationship will be given in Section \ref{sec-appendix}.
	The  $\sigma_k$-Nirenberg problem studies prescribed $k$-th elementary symmetric polynomial of the eigenvalues of the Schouten tensor.
	 Then we prove Theorem \ref{thm-necess-cond-p=-n} by using several Kazdan-Warner type obstructions proved in \cite{Han06, LLL21, Via00} for the $\sigma_k$-Nirenberg problem.
	\item In Subsection \ref{subsec-direct pf of necc cond-p=-n}, we will give a direct proof of \eqref{eq-necess-cond-p=-n-coordinate}. The proof is self-contained and is also valid for the case $n=1$. In Lemma \ref{lem-KZ-non-shifted}, we will prove a sequence of identities for smooth closed hypersurfaces in $\mathbb{H}^{n+1}$. Then we prove \eqref{eq-necess-cond-p=-n-coordinate} by integration by parts.
\end{enumerate}

\subsection{Proof of Theorem \ref{thm-necess-cond-p=-n}}\label{Subsec-p=-n-necess-cond}$ \ $

Given a Riemannian manifold $(M^n, g)$ with $n \geq 3$, the Schouten tensor of metric $g$ is defined by
\begin{equation}\label{def-Schouten tensor}
S_g:= \frac{1}{n-2}\left(  \Ric_g - \frac{R_g}{2(n-1)}g \right),
\end{equation}
where $\Ric_g$ is the Ricci curvature tensor and $R_g$  is the scalar curvature of metric $g$. 

\begin{lem}\label{lem-2ff-M tilde}
	Let $M$ be a smooth closed uniformly h-convex hypersurface in $\mathbb{H}^{n+1}$ $(n \geq 1)$.
	Let $Y(z) =(X-\nu)(z)$, $\widetilde{M} = Y(\mathbb{S}^n)$. Fix a local normal coordinate system around $z \in \mathbb{S}^n$ such that $A_{ij}[\g(z)]$ is diagonal at $z$. Let $\uppercase\expandafter{\romannumeral2}(U,V):= (\widetilde{\nabla}_U V)^\perp$ denote the projection of $\widetilde{\nabla}_U V$ on the normal bundle of $\widetilde{M} \subset \mathbb{R}^{n+1,1}$.
	Then 
	\begin{equation}\label{2ff-M tilde}
	\uppercase\expandafter{\romannumeral2} (\partial_i Y, \partial_j Y) = \frac{1}{\g^2}(-\tilde{\lambda}_i X + (1+ \tilde{\lambda}_i)\nu  ) \delta_{ij}.
	\end{equation}
\end{lem}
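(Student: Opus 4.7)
The plan is to reduce the problem to a direct ambient computation of $\widetilde{\nabla}_{\partial_i Y}\partial_j Y$ in $\mathbb{R}^{n+1,1}$ and to extract its component along the normal bundle of $\widetilde{M}$, which at $Y(z)$ coincides with $\mathrm{span}\{X(z),\nu(z)\}$ because the $\partial_i Y$'s are nondegenerate linear combinations of the $\partial_j X$'s by uniform h-convexity. Concretely, first I would rewrite $Y=X-\nu$ and invoke formula \eqref{di-X-nu} to obtain $\partial_i Y = -\tilde{h}_i{}^j \partial_j X$; at the chosen base point $z$, normal coordinates on $\mathbb{S}^n$ plus diagonality of $A_{ij}[\g]$ force $g_{ij}$, $\tilde{h}_i{}^j$, and $h_i{}^j$ to be simultaneously diagonal (via \eqref{gij-A-phi} and \eqref{shifted curvature-support function}), so $\partial_i Y = -\tilde{\kappa}_i\,\partial_i X$ (no sum).

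Next I would differentiate once more, using that $\widetilde{\nabla}$ is the flat ambient connection:
\begin{equation*}
\widetilde{\nabla}_{\partial_i Y}(\partial_j Y) \;=\; -\bigl(\partial_i \tilde{h}_j{}^k\bigr)\partial_k X \;-\; \tilde{h}_j{}^k\,\widetilde{\nabla}_i(\partial_k X).
\end{equation*}
The first term is tangent to $M$ and hence tangent to $\widetilde{M}$, so it drops out under the normal projection. For the second term I would plug in the Gauss-type identity \eqref{Xij}, namely $\widetilde{\nabla}_i(\partial_k X) = -\tilde{h}_{ik}\nu + g_{ik}(X-\nu) = -h_{ik}\nu + g_{ik}X$, which immediately gives the purely normal contribution
\begin{equation*}
\uppercase\expandafter{\romannumeral2}(\partial_i Y,\partial_j Y) \;=\; \tilde{h}_j{}^k h_{ik}\,\nu \;-\; \tilde{h}_j{}^k g_{ik}\,X
\end{equation*}
(all other terms being tangent to $\widetilde{M}$, noting that the derivative of $\tilde h_j{}^k$ contributes only tangentially at the base point).

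Finally, I would evaluate the two coefficients at the base point using diagonality: $\tilde{h}_j{}^k g_{ik} = \tilde{h}_{ji} = \tilde{\kappa}_i A_{ii}^2\,\delta_{ij}$ and $\tilde{h}_j{}^k h_{ik} = \tilde{\kappa}_i(1+\tilde{\kappa}_i) A_{ii}^2\,\delta_{ij}$. Converting to the shifted radii via $\tilde{\kappa}_i = 1/\tilde{\lambda}_i$ and $A_{ii}[\g] = \tilde{\lambda}_i/\g$ (which follows from \eqref{shifted curvature-support function} at a point where $A_{ij}[\g]$ is diagonal) turns these coefficients into $\tilde{\lambda}_i/\g^2$ and $(1+\tilde{\lambda}_i)/\g^2$, respectively, yielding the claimed formula \eqref{2ff-M tilde}.

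There is really no conceptual obstacle here — the argument is linear algebra plus two identities already established in Section \ref{sec-useful lemmas}. The only place that requires a little care is keeping sign conventions straight (especially the ambient decomposition of $\widetilde{\nabla}_i\partial_k X$ involves both the normal to $\mathbb{H}^{n+1}$ and the normal to $M$) and confirming that the contribution $-(\partial_i \tilde{h}_j{}^k)\partial_k X$ is genuinely tangential to $\widetilde{M}$, which it is because $\{\partial_k X\}$ spans $T_{X(z)}M = T_{Y(z)}\widetilde{M}$ under uniform h-convexity.
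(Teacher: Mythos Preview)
Your proposal is correct and follows essentially the same strategy as the paper: identify the normal bundle of $\widetilde{M}$ with $\mathrm{span}\{X,\nu\}$ and read off the normal projection of the ambient second derivative of $Y$. The packaging differs slightly---the paper computes $\langle\partial_i\partial_j(X-\nu),X\rangle$ and $\langle\partial_i\partial_j(X-\nu),\nu\rangle$ by moving a derivative and invoking the explicit expressions \eqref{di X}, \eqref{di X-nu} in terms of $A_{ij}[\g]$, whereas you route through $\partial_jY=-\tilde{h}_j{}^k\partial_kX$ and the Gauss decomposition of $\partial_i\partial_kX$---but both arrive at $-\frac{1}{\g}A_{ij}[\g]\,X+\bigl(\frac{1}{\g^2}\delta_{ij}+\frac{1}{\g}A_{ij}[\g]\bigr)\nu$ before substituting $A_{ii}=\tilde{\lambda}_i/\g$. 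One small point worth making explicit: identity \eqref{Xij} is derived in the paper in normal coordinates for $M$, where it gives the \emph{full} second derivative; in your coordinates (normal on $\mathbb{S}^n$) the raw $\partial_i\partial_kX$ carries extra $M$-Christoffel terms along $\partial_lX$, but as you note these are tangential to $\widetilde{M}$ and vanish under the projection, so your conclusion stands.
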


\begin{proof}
	Since
	\begin{equation*}
	\metric{X}{X}=-1, \metric{X}{\nu}=0, \metric{\nu}{\nu}=1,
	\end{equation*}
	we have
	\begin{align*}
	\metric{\partial_i Y}{X} =& \metric{\partial_i (X-\nu)}{X}
	=\metric{\nu}{\partial_i X}=0,\\
	\metric{\partial_i Y}{\nu} =& \metric{\partial_i (X-\nu)}{\nu}
	= \metric{\partial_i X}{\nu}=0.
	\end{align*}
	Then we know that vectors $X$ and $\nu$ span the normal bundle of $\widetilde{M} \subset \mathbb{R}^{n+1,1}$.
	From the definition of 	$\uppercase\expandafter{\romannumeral2} (\partial_i Y, \partial_j Y)$ and \eqref{X-nu}, we have
	\begin{align}
	\uppercase\expandafter{\romannumeral2} (\partial_i Y, \partial_j Y)
	=& -\metric{\partial_j \partial_i (X-\nu)}{X} X 
	+\metric{\partial_j \partial_i (X-\nu)}{\nu} \nu \nonumber\\
	=&\metric{\partial_i (X-\nu)}{\partial_j X} X
	-\metric{\partial_i (X-\nu)}{\partial_j \nu} \nu \nonumber\\
	=& \frac{1}{\g}\metric{(e_i,0)}{\partial_j X} X
	+\metric{\partial_i (X-\nu)}{\partial_j (X-\nu) - \partial_j X}\nu  \nonumber\\
	=&-\frac{1}{\g} A_{ij}[\g] X +\(\frac{1}{\g^2} \delta_{ij} +\frac{1}{\g} A_{ij}[\g] \)\nu,\label{2-diY-djY}
	\end{align}
	where we used \eqref{di X} and \eqref{di X-nu} in the last equality. Since $A_{ij} [\g]$ is diagonal at $z$, by using \eqref{shifted curvature-support function} and \eqref{gij-A-phi}, we have
	\begin{equation}\label{Aij-thij}
	A_{ij} [\g]= \frac{1}{\g} \tilde{\lambda}_i \delta_{ij}.
	\end{equation}
	Therefore, we obtain Lemma \ref{lem-2ff-M tilde} by substituting \eqref{Aij-thij} into \eqref{2-diY-djY}. 
\end{proof}

\begin{lem}\label{lem-conf-Schouten}
	The induced Riemannian metric on $\widetilde{M}\subset \mathbb{R}^{n+1,1}$ is $\tilde{g} =\frac{1}{\g^2} g_0$, where $g_0$ is the canonical metric on the unit sphere $\mathbb{S}^n$. Then we have the eigenvalues of $\tilde{g}^{-1} \circ \widetilde{S}$ are $\lbrace \tilde{\lambda}_i + \frac{1}{2}\rbrace$, $i=1,\ldots,n$, where $\widetilde{S}$ is the Schouten tensor of $\tilde{g}$ and $\tilde{\lambda}_i$ are the shifted principal  radii of curvature of $M$ at $X(z)$.
\end{lem}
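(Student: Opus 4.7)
\textbf{Proof plan for Lemma \ref{lem-conf-Schouten}.}
The plan is to argue extrinsically, regarding $\widetilde{M}$ as an $n$-dimensional Riemannian submanifold of codimension two inside the flat Minkowski space $\mathbb{R}^{n+1,1}$, with normal bundle spanned by the timelike vector $X$ and the spacelike vector $\nu$. Since the authors have promised an intrinsic conformal--change proof in the appendix, here I will exploit the explicit second fundamental form already obtained in Lemma \ref{lem-2ff-M tilde} together with the flat--ambient Gauss equation.

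First I would verify the induced metric claim by substituting \eqref{di X-nu} into $\metric{\partial_i Y}{\partial_j Y}$ and using $\metric{(z,1)}{(z,1)}=0$, $\metric{(z,1)}{(e_j,0)}=0$ and $\metric{(e_i,0)}{(e_j,0)}=\delta_{ij}$; the cross terms vanish, leaving $\tilde{g}_{ij}=\g^{-2}\delta_{ij}=\g^{-2}(g_0)_{ij}$. Next I fix normal coordinates on $\mathbb{S}^n$ at $z$ so that $A_{ij}[\g]$ is diagonal; Lemma \ref{lem-2ff-M tilde} then shows that $\uppercase\expandafter{\romannumeral2}(\partial_i Y,\partial_j Y)$ is also diagonal in the indices $i,j$ and points along the normal vector $N_i := -\tilde{\lambda}_i X + (1+\tilde{\lambda}_i)\nu$, scaled by $\g^{-2}$. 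Because $\mathbb{R}^{n+1,1}$ is flat, the Gauss equation collapses to
\begin{equation*}
R^{\widetilde{M}}(\partial_i Y,\partial_j Y,\partial_j Y,\partial_i Y) = \metric{\uppercase\expandafter{\romannumeral2}(\partial_i Y,\partial_i Y)}{\uppercase\expandafter{\romannumeral2}(\partial_j Y,\partial_j Y)},
\end{equation*}
the off--diagonal contribution vanishing. Using $\metric{X}{X}=-1$, $\metric{X}{\nu}=0$ and $\metric{\nu}{\nu}=1$ one computes $\metric{N_i}{N_j}=1+\tilde{\lambda}_i+\tilde{\lambda}_j$, so the unit--normalised sectional curvatures are $K_{ij}=1+\tilde{\lambda}_i+\tilde{\lambda}_j$ for $i\neq j$.

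From here the computation is mechanical. Summing over $j\neq i$ gives $\Ric_{\tilde{g}}(V_i,V_i)=(n-1)+(n-2)\tilde{\lambda}_i+\Sigma$, where $V_i=\g\,\partial_i Y$ is the corresponding unit vector and $\Sigma:=\sum_m\tilde{\lambda}_m$; summing once more produces $R_{\tilde{g}}=n(n-1)+2(n-1)\Sigma$, whence $R_{\tilde{g}}/(2(n-1))=n/2+\Sigma$. Inserting these into the definition \eqref{def-Schouten tensor} the $\Sigma$ terms cancel, leaving
\begin{equation*}
\widetilde{S}(V_i,V_i) = \frac{1}{n-2}\bigl[(n-1)-n/2\bigr] + \tilde{\lambda}_i = \frac{1}{2} + \tilde{\lambda}_i,
\end{equation*}
which is the claimed eigenvalue of $\tilde{g}^{-1}\circ\widetilde{S}$. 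The only delicate point I anticipate is a careful sign check inside the Gauss equation: the inner product $\metric{N_i}{N_j}$ picks up a minus sign from $\metric{X}{X}=-1$, which fortuitously conspires with $+(1+\tilde{\lambda}_i)(1+\tilde{\lambda}_j)$ from the $\nu$--component to cancel the $\tilde{\lambda}_i\tilde{\lambda}_j$ cross term and produce the affine expression $1+\tilde{\lambda}_i+\tilde{\lambda}_j$; absent that cancellation one would not obtain clean eigenvalues for the Schouten tensor, so this is where the Lorentzian signature of the ambient space does genuine work.
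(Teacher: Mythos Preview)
Your proposal is correct and follows essentially the same approach as the paper's main proof: both compute the induced metric from \eqref{di X-nu}, apply the Gauss equation for $\widetilde{M}\subset\mathbb{R}^{n+1,1}$ with the second fundamental form from Lemma \ref{lem-2ff-M tilde}, obtain $\metric{N_i}{N_j}=1+\tilde{\lambda}_i+\tilde{\lambda}_j$ via the Lorentzian cancellation you identified, and then read off Ricci, scalar, and Schouten in turn. The paper presents exactly this extrinsic argument in Section \ref{Subsec-p=-n-necess-cond} and reserves the conformal-change computation for the appendix.
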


\begin{proof}
	By \eqref{di X-nu}, we have $\metric{\partial_i Y}{\partial_j Y} = \frac{1}{\g^2} \delta_{ij}$, which means $\tilde{g}= \frac{1}{\g^2} g_0$ is the induced metric on $\widetilde{M}$.
	Now we set a local orthonormal frame $\lbrace v_1, \ldots, v_n \rbrace$ for $\widetilde{M}$ such that $v_i := \g \partial_i (X-\nu)$, $i=1, \ldots, n$. By the Gauss equation of $\widetilde{M} \subset \mathbb{R}^{n+1,1}$ and \eqref{2ff-M tilde}, we have
	\begin{align*}
	\widetilde{{\rm Riem}} (v_i, v_j, v_k, v_l)
	=& \metric{\uppercase\expandafter{\romannumeral2} (v_i, v_k)}{\uppercase\expandafter{\romannumeral2}(v_j,v_l)}
	-\metric{\uppercase\expandafter{\romannumeral2}(v_i,v_l)}{\uppercase\expandafter{\romannumeral2}(v_j,v_k)}\\
	=& (\delta_{ik} \delta_{jl} - \delta_{il} \delta_{jk})
	\metric{-\tilde{\lambda}_i X +(1+ \tilde{\lambda}_i) \nu}{-\tilde{\lambda}_j X +(1+ \tilde{\lambda}_j) \nu}\\
	=&(\delta_{ik}\delta_{jl}- \delta_{il} \delta_{jk})(1+ \tilde{\lambda}_i + \tilde{\lambda}_j),
	\end{align*}
	where $\widetilde{{\rm Riem}}$ is the Riemannian curvature tensor of $\widetilde{M}$.
	Then the Ricci curvature of metric $\tilde{g}$ is
	\begin{equation}\label{Ric-g-conf}
	\widetilde{\Ric}(v_i, v_k) = \left( (n-1)+(n-2) \tilde{\lambda}_i + \sum_{j=1}^n \tilde{\lambda}_j \right) \delta_{ik},
	\end{equation}
	and the scalar curvature of $\tilde{g}$ is
	\begin{equation}\label{scal-g-conf}
	\widetilde{R} = n(n-1) +2(n-1) \sum_{i=1}^n \tilde{\lambda}_i.
	\end{equation}
	Therefore the matrix $\(\widetilde{S}(v_i, v_j)\)$ is diagonal. Substituting \eqref{Ric-g-conf} and \eqref{scal-g-conf} into the definition of $\widetilde{S}$ in \eqref{def-Schouten tensor}, we know
	\begin{equation*}
	\begin{aligned}
	\widetilde{S}(v_i,v_i) =& \frac{1}{n-2} \left( \widetilde{\Ric}(v_i,v_i) -\frac{\widetilde{R}}{2(n-1)} \right)\\
	=& \frac{1}{n-2} \left( n-1 +(n-2) \tilde{\lambda}_i +\sum_{j=1}^n \tilde{\lambda}_j - \frac{n(n-1) +2(n-1) \sum_{j=1}^n \tilde{\lambda}_j}{2(n-1)}  \right)\\
	=&\tilde{\lambda}_i + \frac{1}{2}.
	\end{aligned}
	\end{equation*}
	This completes the proof of Lemma \ref{lem-conf-Schouten}.
\end{proof}

Using Lemma \ref{lem-conf-Schouten} and \eqref{scal-g-conf}, we can get the following Corollary \ref{cor-rel-shif-prin-radii-Schouton}.
\begin{cor}\label{cor-rel-shif-prin-radii-Schouton}
	When $n \geq 3$, $0 \leq k \leq n-1$, we have
	\begin{align}\label{rel-shif-prin-radii-Schouton-1}
	p_{n-k}(\tilde{\lambda}) = 	p_{n-k}\(\tilde{g}^{-1} \circ \widetilde{S} - \frac{1}{2} I \)
	=
	\sum_{i=0}^{n-k} (-1)^{n-k-i} \frac{C_{n-k}^i}{2^{n-k-i}} p_i \(\tilde{g}^{-1} \circ \widetilde{S} \).
	\end{align}
	When $n=2$, we have
	\begin{align}\label{rel-shif-prin-radii-Schouton-2}
	p_1(\tilde{\lambda})= \frac{1}{2}(\tilde{\lambda}_1 + \tilde{\lambda}_2) = \frac{1}{4} \widetilde{R} - \frac{1}{2}.
	\end{align}
\end{cor}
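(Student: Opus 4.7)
My plan is as follows. The first equality in \eqref{rel-shif-prin-radii-Schouton-1} is immediate from Lemma \ref{lem-conf-Schouten}: since that lemma identifies the eigenvalues of $\tilde{g}^{-1}\circ\widetilde{S}$ as $\{\tilde{\lambda}_i+\tfrac{1}{2}\}_{i=1}^n$, the eigenvalues of $\tilde{g}^{-1}\circ\widetilde{S}-\tfrac{1}{2}I$ are exactly $\{\tilde{\lambda}_i\}$, and $p_{n-k}$, being a symmetric function of the eigenvalues, evaluates equally on both sides. So all the substance lies in the second equality, which is a purely algebraic expansion of $p_{n-k}$ at a shifted argument.

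For the second equality, I would establish the general shifting identity
\begin{equation*}
p_{m}(\mu_1-a,\ldots,\mu_n-a)=\sum_{i=0}^{m}(-a)^{m-i}C_{m}^{i}\,p_i(\mu_1,\ldots,\mu_n),
\end{equation*}
valid for every $0\leq m\leq n$, every $a\in\mathbb{R}$, and every $\mu\in\mathbb{R}^n$. The cleanest derivation is via the generating function
\begin{equation*}
\prod_{j=1}^{n}\bigl(1+t(\mu_j-a)\bigr)=\sum_{m=0}^{n}C_n^{m}\,p_m(\mu-a\mathbf{1})\,t^{m},
\end{equation*}
rewriting the left-hand side as $\prod_{j}\bigl((1-ta)+t\mu_j\bigr)$ and expanding using the standard identity $\sigma_m(\lambda-a\mathbf{1})=\sum_{j=0}^{m}(-a)^{m-j}C_{n-j}^{m-j}\sigma_j(\lambda)$, which in turn reduces to the binomial theorem after an elementary rearrangement (the ratio $C_{n-j}^{m-j}C_n^j/C_n^m$ simplifies to $C_m^j$, converting $\sigma$'s into $p$'s). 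Specializing to $a=\tfrac{1}{2}$, $m=n-k$, and $\mu=$ eigenvalues of $\tilde{g}^{-1}\circ\widetilde{S}$ produces the claimed identity.

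For the $n=2$ case, the Schouten tensor in the form \eqref{def-Schouten tensor} is singular (division by $n-2$), so we bypass it and invoke \eqref{scal-g-conf} directly: $\widetilde{R}=n(n-1)+2(n-1)\sum_i\tilde{\lambda}_i=2+2(\tilde{\lambda}_1+\tilde{\lambda}_2)$, which rearranges to $\tfrac{1}{2}(\tilde{\lambda}_1+\tilde{\lambda}_2)=\tfrac{1}{4}\widetilde{R}-\tfrac{1}{2}$, giving \eqref{rel-shif-prin-radii-Schouton-2}. There is no genuine obstacle here; the corollary is essentially a bookkeeping consequence of Lemma \ref{lem-conf-Schouten} combined with the binomial shift identity for elementary symmetric polynomials, so the proof will be short.
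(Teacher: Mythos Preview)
Your proposal is correct and matches the paper's approach: the paper simply states that the corollary follows from Lemma~\ref{lem-conf-Schouten} and \eqref{scal-g-conf}, and you have supplied exactly the details implied there---the eigenvalue identification from the lemma, the standard binomial shift identity for $p_m$, and the direct use of \eqref{scal-g-conf} when $n=2$ to avoid the undefined Schouten tensor.
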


In \cite{Han06, Via00}, Han, Viaclovsky proved the following Theorem A.
\begin{thmA}[\cite{Han06, Via00}]
	Let $(M^n, g)$ be an $n$-dimensional $(n \geq 3)$ compact Riemannian manifold, $\sigma_k (g^{-1} \circ S_g)$ be the $k$-th $(1 \leq k \leq n)$ elementary symmetric function of the eigenvalues of the Schouten tensor with respect to metric $g$. Let $\overline{V}$ be a conformal Killing vector field on $(M^n, g)$. When $k \geq 3$, also assume that $(M^n,g)$ is locally conformally flat. Then
	\begin{equation*}
	\int_M \metric{\overline{V}}{\nabla \sigma_k (g^{-1} \circ S_g)} d \vol_g =0.
	\end{equation*}
\end{thmA}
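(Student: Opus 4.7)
The approach is Viaclovsky's divergence-tensor argument. Set $A:=g^{-1}\circ S_g$ and define the $k$-th Newton tensor
\[
N_k(A)\;=\;\sum_{i=0}^{k}(-1)^{i}\sigma_{k-i}(A)\,A^{i},
\]
a symmetric $(1,1)$-tensor satisfying $\operatorname{tr}\bigl(N_k(A)\bigr)=(n-k)\sigma_k(A)$. The decisive analytic fact is the divergence identity $\nabla^{i}N_k(A)_{ij}=0$. For $k=1$ this is the twice-contracted second Bianchi identity $\nabla^{i}S_{ij}=\nabla_{j}\sigma_1(A)$; for $k=2$ the residual cross-terms are Cotton contributions $C_{ijl}=\nabla_{l}S_{ij}-\nabla_{j}S_{il}$, antisymmetric in $(j,l)$, which cancel against the symmetric contraction with $A$; for $k\ge 3$ the Cotton contributions no longer cancel pointwise, and the hypothesis of local conformal flatness (which, for $n\ge 4$, forces $C\equiv 0$ via vanishing Weyl and the second Bianchi identity, and is equivalent to $C\equiv 0$ when $n=3$) is used precisely to kill them.

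Granted this divergence identity, the proof is then elementary for $1\le k\le n-1$. Stokes' theorem together with the symmetry of $N_k(A)$ gives
\[
0\;=\;\int_M \nabla^{i}\!\bigl(\overline V^{\,j}N_k(A)_{ij}\bigr)\,d\mathrm{vol}_g
\;=\;\tfrac{1}{2}\int_M\bigl(\nabla^{i}\overline V^{\,j}+\nabla^{j}\overline V^{\,i}\bigr)N_k(A)_{ij}\,d\mathrm{vol}_g.
\]
The conformal Killing equation $\nabla_{i}\overline V_{j}+\nabla_{j}\overline V_{i}=\tfrac{2}{n}\operatorname{div}(\overline V)\,g_{ij}$ transforms this into
\[
\tfrac{n-k}{n}\int_M \operatorname{div}(\overline V)\,\sigma_k(A)\,d\mathrm{vol}_g\;=\;0.
\]
Combining with integration by parts, $\int_M \langle\overline V,\nabla\sigma_k(A)\rangle\,d\mathrm{vol}_g=-\int_M \sigma_k(A)\operatorname{div}(\overline V)\,d\mathrm{vol}_g$, yields the theorem whenever $n-k\neq 0$. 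The borderline case $k=n$ must be treated separately: here $N_n(A)\equiv 0$ by Cayley--Hamilton and the argument above is vacuous, so one instead exploits the diffeomorphism invariance of $\int_M \sigma_n(A)\,d\mathrm{vol}_g$ along the flow of $\overline V$, together with the pointwise formula for the conformal variation of $\sigma_n(A)\,d\mathrm{vol}_g$ under $g\mapsto e^{2tw}g$, to extract the identity from the vanishing first variation.

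The main obstacle is the divergence identity $\nabla^{i}N_k(A)_{ij}=0$ for $k\ge 3$. An inductive proof based on the recursion $N_k(A)=\sigma_k(A)I-A\cdot N_{k-1}(A)$ produces cross-terms that, after repeated use of the second Bianchi identity and commutator manipulations, assemble into contractions of the Cotton tensor against powers of $A$; these contractions vanish exactly under the local conformal flatness assumption. This technical step is the content of the cited works \cite{Via00,Han06}; once it is in hand, the remainder of the proof is routine integration by parts, and no further geometric input is required.
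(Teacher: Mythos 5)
The paper does not prove Theorem A; it imports it from Viaclovsky \cite{Via00} and Han \cite{Han06}, so what you have reconstructed is (a sketch of) Viaclovsky's Newton-tensor argument. Your outline is the right one for $k=1$ (where $\nabla^{i}N_1(A)_{ij}=0$ is the contracted second Bianchi identity) and for every $k$ under local conformal flatness (where the Cotton tensor vanishes). The Stokes/conformal-Killing/trace bookkeeping you give for $1\le k\le n-1$ is correct, and you rightly flag that $k=n$ is degenerate because $N_n(A)\equiv 0$.

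There is, however, a genuine gap in your treatment of $k=2$ without the LCF hypothesis. You assert that the Cotton contributions ``cancel against the symmetric contraction with $A$.'' They do not. A direct computation with $T_2=\sigma_2\,g-\sigma_1 A+A^2$ and the contracted Bianchi identity $\nabla^{i}S_{ij}=\nabla_j\sigma_1$ gives
\[
\nabla^{i}T_{2}{}_{ij}\;=\;A^{ik}\bigl(\nabla_i S_{kj}-\nabla_j S_{ki}\bigr)\;=\;A^{ik}C_{kji},
\]
where $C_{kji}=\nabla_i S_{kj}-\nabla_j S_{ki}$. The only symmetry of $C$ relevant here is antisymmetry in the \emph{last two} indices $(j,i)$; the contraction is over $(i,k)$, where $A$ is symmetric but $C$ has no compensating antisymmetry, so $A^{ik}C_{kji}$ is generically nonzero. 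Hence $T_2$ is \emph{not} divergence-free on a general compact manifold, and your argument, as written, only establishes the theorem for $k=1$ or under LCF. The $k=2$ case in arbitrary dimension is precisely the contribution of Han \cite{Han06} (Viaclovsky's original proposition covered $k=2$ only for $n=3,4$ without LCF); Han's proof is a different integral computation that does not pass through a pointwise identity $\nabla^{i}T_2{}_{ij}=0$. To close the gap you would need either to reproduce Han's direct argument for $\sigma_2$, or to show that the error term $\int_M \overline V^{\,j}A^{ik}C_{kji}\,d\mathrm{vol}_g$ vanishes for conformal Killing $\overline V$ by separate integration by parts -- which is essentially what Han does, and is not a consequence of any pointwise cancellation.
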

\begin{rem}
	When $n \geq 3$ and $k=1$, Theorem A degenerates to the classical Kazdan-Warner identity, i.e.
	\begin{equation*}
	\int_M \metric{\overline{V}}{ \nabla R_g} d \vol_g =0.
	\end{equation*}
\end{rem}

Recently, Li, Lu, and Lu \cite{LLL21} considered the $\sigma_2$-Nirenberg problem and obtained the following Kazdan-Warner type identity on $\mathbb{S}^2$. Here, we state their result by using the notations in this paper.
\begin{thmB}[\cite{LLL21}]
	Let $\overline{V}$ be a conformal Killing vector field on $(\mathbb{S}^2, g_0)$, and let $\tilde{g} = \frac{1}{\g^2} g_0$ be a conformal metric to $g_0$ on $\mathbb{S}^2$, where $\g(z)$ is a smooth, positive function on $\mathbb{S}^2$. Then
	\begin{equation*}
	\int_{\mathbb{S}^2} \g^{-2}\overline{V} \(\sigma_2 \(\g A[\g] + \frac{1}{2}I \) \) d\sigma=0.
	\end{equation*} 
\end{thmB}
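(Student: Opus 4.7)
The plan is to reinterpret the integrand geometrically, then exploit the diffeomorphism invariance of the resulting conformally-natural functional against the one-parameter flow generated by $\overline{V}$. Writing $u = -\log\g$ so that $\tilde g := \g^{-2} g_0 = e^{2u}g_0$, the first step is to verify the identification
\[
\bigl(\tilde g^{-1}\tilde S_{\tilde g}\bigr)^i{}_j = \g\,\g_{ij} + \tfrac{1}{2}\bigl(\g^2 - |D\g|^2\bigr)\delta^i{}_j = \bigl(\g A[\g] + \tfrac{1}{2}I\bigr)^i{}_j,
\]
where the $2$-dimensional Schouten tensor $\tilde S_{\tilde g}$ is defined via the conformal transformation law
$\tilde S = S_{g_0} - \nabla^2 u + du\otimes du - \tfrac{1}{2}|\nabla u|^2 g_0$ with reference $S_{g_0} = \tfrac{1}{2}g_0$. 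This is a direct local calculation (completely analogous to the computation preceding Corollary \ref{cor-rel-shif-prin-radii-Schouton}). Consequently $\sigma_2\bigl(\g A[\g] + \tfrac{1}{2}I\bigr) = \sigma_2\bigl(\tilde g^{-1}\tilde S_{\tilde g}\bigr)$ and $\g^{-2}d\sigma = d\vol_{\tilde g}$, so the identity to be proved becomes $\int_{\mathbb{S}^2}\overline{V}\bigl(\sigma_2(\tilde g^{-1}\tilde S_{\tilde g})\bigr)\,d\vol_{\tilde g}=0$.

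Next I introduce the functional $\mathcal F(\tilde g) := \int_{\mathbb{S}^2}\sigma_2(\tilde g^{-1}\tilde S_{\tilde g})\,d\vol_{\tilde g}$. Because $\overline{V}$ is a conformal Killing field for $g_0$, its flow $\Phi_t$ preserves the conformal class of $g_0$, so $\Phi_t^*\tilde g$ is still of the form $\g_t^{-2}g_0$ and $\tilde S_{\Phi_t^*\tilde g}=\Phi_t^*\tilde S_{\tilde g}$ by naturality under conformal diffeomorphisms; hence $\mathcal F(\Phi_t^*\tilde g)=\mathcal F(\tilde g)$ for every $t$. Differentiating at $t=0$ gives $D\mathcal F(\tilde g)[\mathcal L_{\overline{V}}\tilde g]=0$. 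A short computation shows that conformal Killing for $g_0$ implies conformal Killing for $\tilde g$: $\mathcal L_{\overline{V}}\tilde g = 2\eta\tilde g$ with $\eta = \tfrac{1}{2}\lambda - \g^{-1}\overline{V}(\g)$, where $\lambda$ is the conformal potential of $\overline{V}$ with respect to $g_0$.

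The decisive step is the first variation formula under a conformal deformation $\tilde g_t = e^{2tw}\tilde g$:
\[
\frac{d}{dt}\bigg|_{t=0}\int_{\mathbb{S}^2}\sigma_2(\tilde g_t^{-1}\tilde S_{\tilde g_t})\,d\vol_{\tilde g_t} = (n-2k)\int_{\mathbb{S}^2} w\,\sigma_2(\tilde g^{-1}\tilde S_{\tilde g})\,d\vol_{\tilde g} = -2\int_{\mathbb{S}^2} w\,\sigma_2\,d\vol_{\tilde g}.
\]
To establish it, I combine the infinitesimal transformation identities $\tfrac{d}{dt}\tilde S_{\tilde g_t}|_{t=0} = -\tilde\nabla^2 w$, $\tfrac{d}{dt}\tilde g_t^{-1}|_{t=0}=-2w\tilde g^{-1}$, $\tfrac{d}{dt}d\vol_{\tilde g_t}|_{t=0}=2w\,d\vol_{\tilde g}$, with the Newton tensor identities $T_1^i{}_j\,(\tilde g^{-1}\tilde S)^j{}_i = 2\sigma_2$ and $\tfrac{\partial\sigma_2}{\partial A^i{}_j}=T_1^j{}_i$; the cross term $\int T_1^{ij}\tilde\nabla_i\tilde\nabla_j w\,d\vol_{\tilde g}$ is eliminated by integration by parts together with the divergence-free property $\tilde\nabla_j T_1^{ij}=0$, which itself follows from the Bianchi-type identity $\tilde\nabla_j \tilde S^{ij}=\tilde\nabla^i(\mathrm{tr}_{\tilde g}\tilde S)$ valid on $(\mathbb{S}^2,\tilde g)$. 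Setting $w=\eta$ therefore yields $\int_{\mathbb{S}^2}\eta\,\sigma_2\,d\vol_{\tilde g}=0$, and since $\mathrm{div}_{\tilde g}\overline{V}=n\eta=2\eta$, Cartan's formula (or one integration by parts) converts this into $\int_{\mathbb{S}^2}\overline{V}(\sigma_2)\,d\vol_{\tilde g} = -\int_{\mathbb{S}^2}\sigma_2\,\mathrm{div}_{\tilde g}\overline{V}\,d\vol_{\tilde g}=0$, which is exactly Theorem B after rewriting in the original notation.

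The main obstacle is executing Step~3 consistently in the borderline dimension $n=2$, where the Schouten tensor is not defined by the usual $\tfrac{1}{n-2}$ formula and one must justify both (i) the naturality of $\tilde S_{\tilde g}$ (defined via the conformal transformation law with reference $g_0$) under the conformal diffeomorphisms $\Phi_t$, and (ii) the divergence-free identity $\tilde\nabla_j T_1^{ij}=0$ in this low-dimensional setting; once these are verified the Newton-tensor variational computation proceeds exactly as in Viaclovsky's framework.
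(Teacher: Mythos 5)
Your reduction to the conformal framework is algebraically sound: the identification $\tilde g^{-1}\tilde S_{\tilde g} = \g A[\g] + \frac{1}{2}I$ agrees with the paper's \eqref{S_ij-lambda-1/2} continued formally to $n=2$, the volume identity $\g^{-2}d\sigma = d\vol_{\tilde g}$ is immediate, the conformal factor $\eta$ of $\mathcal L_{\overline V}\tilde g$ is computed correctly, the first-variation bookkeeping (Newton identity $\tr(T_1 A)=2\sigma_2$, prefactor $n-2k=-2$, elimination of $\int T_1^{ij}\tilde\nabla_i\tilde\nabla_j w\,d\vol_{\tilde g}$ by parts) is internally consistent, and the final passage from $\int\eta\,\sigma_2\,d\vol_{\tilde g}=0$ to the theorem via $\divv_{\tilde g}\overline V=2\eta$ and $\int\mathcal L_{\overline V}(\sigma_2\,d\vol_{\tilde g})=0$ is standard. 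The paper does not prove Theorem~B — it cites \cite{LLL21} — so there is no in-paper argument to compare against; note, however, that Subsection~\ref{subsec-direct pf of necc cond-p=-n} supplies an independent, purely hypersurface-theoretic route to the obstruction \eqref{eq-necess-cond-p=-n-coordinate} valid for all $n\geq 1$, based on Lemma~\ref{lem-KZ-non-shifted} and integration by parts, which sidesteps the conformal Schouten machinery entirely.

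The two obstacles you name at the end are not deferred technicalities; they are the substance of the theorem in dimension two. Because $\tilde S_{\tilde g}$ has no intrinsic Riemannian definition when $n=2$ and exists only through the transformation law relative to the reference metric $g_0$, the functional $\mathcal F(\tilde g)=\int\sigma_2(\tilde g^{-1}\tilde S_{\tilde g})\,d\vol_{\tilde g}$ is not manifestly diffeomorphism invariant: your key step $\mathcal F(\Phi_t^*\tilde g)=\mathcal F(\tilde g)$ requires showing that $\Phi_t^*(S_{g_0})$ coincides with the extended Schouten tensor of $\Phi_t^*g_0$, a genuine constraint on the conformal potential $\mu_t$ of a M\"obius flow (it forces the trace-free part of $\nabla^2\mu_t - d\mu_t\otimes d\mu_t$ to vanish), not an automatic consequence of naturality. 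Likewise, $\tilde\nabla_jT_1^{ij}=0$ follows from the contracted second Bianchi identity when $n\geq 3$, but for the extended $\tilde S$ on a surface there is no ambient curvature tensor supplying that identity, and it must be verified by a separate direct computation in the locally conformally flat picture. Without both of these steps filled in, your argument is an accurate roadmap to \cite{LLL21}'s theorem rather than a proof of it; a concrete alternative is to bypass the conformal framework entirely and argue as in Subsection~\ref{subsec-direct pf of necc cond-p=-n}.
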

By using \eqref{shifted curvature-support function}, we remark that Theorem B is equivalent to
\begin{align}\label{LLL-new}
\int_{\mathbb{S}^2} \g^{-2} \overline{V} \(\sigma_2 \(\tilde{\lambda}+\frac{1}{2} \) \) d\sigma=0.
\end{align}

Now we can give the proof of Theorem \ref{thm-necess-cond-p=-n}.
\begin{proof}[Proof of Theorem \ref{thm-necess-cond-p=-n}]
	Assume that $\tilde{g} = \frac{1}{\g^2(z)} g_0$, where $g_0$ is the canonical metric on the unit sphere $\mathbb{S}^n$.
	Then we have $d \vol_{\tilde{g}} = \g^{-n} d\sigma$, where $d\sigma$ is the volume element of $\mathbb{S}^n$.
	Taking $p =-n$ in \eqref{eq-p-CM problem-with-lambda}, we have
	\begin{equation}\label{eq-CM-p=-n}
	f(z) = p_{n-k}(\tilde{\lambda}).
	\end{equation}
	The proof of \eqref{eq-necess-cond-p=-n-conformal Killing} is divided into three cases.
		
	\textbf{Case 1.}  $n \geq 3$ and $0 \leq k\leq n-1$.
	Using \eqref{rel-shif-prin-radii-Schouton-1} in \eqref{eq-CM-p=-n}, we have by Theorem A
	\begin{align*}
	\int_{\mathbb{S}^n} \g^{-n} \overline{V}(f(z)) d\sigma
	=&\int_{\mathbb{S}^n} \g^{-n} \overline{V}(  p_{n-k}(\tilde{\lambda}) ) d\sigma\\
	=&\sum_{i=0}^{n-k} (-1)^{n-k-i} \frac{C_{n-k}^i}{2^{n-k-i}} \int_{\mathbb{S}^n} \g^{-n} \overline{V} \(p_i (\tilde{g}^{-1} \circ\widetilde{S})\) d\sigma =0
	\end{align*}
	for all $0 \leq k \leq n$, where $\overline{V}$ is any conformal Killing vector field on $\mathbb{S}^n$.
	
	\textbf{Case 2.} $n=2$ and $k=1$.
	Using \eqref{rel-shif-prin-radii-Schouton-2} in \eqref{eq-CM-p=-n}, we have by the Kazdan-Warner identity
	\begin{equation*}
	\int_{\mathbb{S}^2} \g^{-2} \overline{V}(f(z)) d\sigma
	= \int_{\mathbb{S}^2} \g^{-2} \overline{V}\( p_1 (\tilde{\lambda}) \) d\sigma
	= \frac{1}{4} \int_{\mathbb{S}^2} \g^{-2} \overline{V}(\widetilde{R}) d\sigma=0.
	\end{equation*}
	
	\textbf{Case 3.} $n=2$ and $k=0$.
	By using \eqref{eq-CM-p=-n}, \eqref{LLL-new} and the Kazdan-Warner identity, we have
	\begin{align*}
	\int_{\mathbb{S}^2} \g^{-2}\overline{V}(f(z)) d\sigma
	=&\int_{\mathbb{S}^2} \g^{-2}\overline{V}\(  p_2 (\tilde{\lambda} ) \)d\sigma\\
	=& \int_{\mathbb{S}^2}\g^{-2}\overline{V}\( p_2 \(\tilde{\lambda} + \frac{1}{2} \) -p_1(\tilde{\lambda}) -\frac{1}{4}\) d\sigma\\
	=&\int_{\mathbb{S}^2} \g^{-2} \(\overline{V}\( p_2 \( \tilde{\lambda} +\frac{1}{2} \)\)-\frac{1}{4} \overline{V}( \widetilde{R})  \)d\sigma=0.
	\end{align*}
	Thus we obtain \eqref{eq-necess-cond-p=-n-conformal Killing}.
	
	In particular, we can choose the conformal Killing vector field $\overline{V}$ in \eqref{eq-necess-cond-p=-n-conformal Killing} as $D x_i$, $ 0 \leq i \leq n$, where $x_0, x_1,\ldots, x_n$ are the coordinate functions of $\mathbb{S}^n \subset \mathbb{R}^{n+1}$. Therefore we obtain \eqref{eq-necess-cond-p=-n-coordinate}. This completes the proof of Theorem \ref{thm-necess-cond-p=-n}.
\end{proof}

\subsection{Direct proof of \eqref{eq-necess-cond-p=-n-coordinate}}\label{subsec-direct pf of necc cond-p=-n}$ \ $

In Theorem \ref{thm-necess-cond-p=-n}, we gave Kazdan-Warner type obstructions to existence of solutions to equation \eqref{eq-p=-n-CM problem}. The proof in Subsection \ref{Subsec-p=-n-necess-cond} is intrinsic, and it is based on several theorems in \cite{Han06, LLL21, Via00}. In this subsection, we will give a geometric proof of the necessary condition \eqref{eq-necess-cond-p=-n-coordinate}. The following Lemma \ref{lem-KZ-non-shifted} is crucial.

\begin{lem}\label{lem-KZ-non-shifted}
	Let $n \geq 1$ and $M=\partial \Omega$ be a smooth closed hypersurface in $\mathbb{H}^{n+1} \subset \mathbb{R}^{n+1,1}$. Then
	\begin{align}\label{KZ-non-shifted}
	\int_M (\cosh r \nu- \tilde{u}X) p_k(\kappa) d\mu =0, \quad \forall \ 0 \leq k \leq n.
	\end{align}
\end{lem}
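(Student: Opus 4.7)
The plan is to show that for every $0\le k\le n$ and every constant vector $a\in\mathbb{R}^{n+1,1}$ the scalar $p_{k}(\kappa)\langle\cosh r\,\nu-\tilde u\,X,\,a\rangle$ is a divergence (of a vector field on $\Omega$ when $k=0$, and of a tangent vector field on $M$ when $k\ge 1$), so that the divergence theorem together with the arbitrariness of $a$ yields the identity as a vector equation in $\mathbb{R}^{n+1,1}$. The argument naturally splits into the base case $k=0$ and an inductive construction for $k\ge 1$ built from the Newton tensor.

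For $k=0$ I would work inside the bounded domain $\Omega\subset\mathbb{H}^{n+1}$. Given $a\in\mathbb{R}^{n+1,1}$, let $a^{T}:=a+\langle a,X\rangle X$ be the projection of $a$ onto $T\mathbb{H}^{n+1}$. A direct computation using $\overline{\nabla}_{Y}a^{T}=\langle a,X\rangle Y$ together with Lemma~\ref{lem-conf-vf-direvative} produces the matching divergences
\[
\divv_{\mathbb{H}^{n+1}}\!\bigl(\cosh r\;a^{T}\bigr)=\langle V,a\rangle+(n+1)\cosh r\,\langle a,X\rangle=\divv_{\mathbb{H}^{n+1}}\!\bigl(\langle a,X\rangle\,V\bigr).
\]
Applying the divergence theorem to both sides, using $\langle X,\nu\rangle=0$ and $\tilde u=\langle V,\nu\rangle$, the bulk terms cancel and one is left with $\int_{M}\cosh r\,\langle\nu,a\rangle\,d\mu=\int_{M}\tilde u\,\langle X,a\rangle\,d\mu$, which is the $k=0$ case.

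For $1\le k\le n$ I would use the Newton tensor $T_{k-1}^{ij}:=\dot{\sigma}_{k}^{ij}(h)$ of the Weingarten map of $M\subset\mathbb{H}^{n+1}$, which is divergence-free on $M$ because $h_{ij}$ is Codazzi in the constant curvature background. The ambient Gauss formula $\widetilde{\nabla}_{i}\partial_{j}X=\Gamma_{ij}^{l}\partial_{l}X+g_{ij}X-h_{ij}\nu$ leads, with $\phi:=\langle X,a\rangle$, to the two parallel Hessian identities
\[
\nabla_{i}\nabla_{j}\phi=g_{ij}\langle X,a\rangle-h_{ij}\langle\nu,a\rangle,\qquad \nabla_{i}\nabla_{j}\cosh r=g_{ij}\cosh r-h_{ij}\tilde u,
\]
the second of which appeared inside the proof of Lemma~\ref{lem-shifted Minkowski formula}. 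I would then introduce the tangent vector field
\[
Y^{i}:=T_{k-1}^{ij}\bigl(\cosh r\,\nabla_{j}\phi-\phi\,\nabla_{j}\cosh r\bigr).
\]
The antisymmetric first-derivative cross terms vanish by symmetry of $T_{k-1}^{ij}$, and the trace identities $T_{k-1}^{ij}g_{ij}=(n-k+1)\sigma_{k-1}$, $T_{k-1}^{ij}h_{ij}=k\sigma_{k}$ together with the two Hessian formulas make the $\sigma_{k-1}$ contributions cancel, leaving
\[
\divv_{M}Y=-k\,\sigma_{k}(\kappa)\,\langle\cosh r\,\nu-\tilde u\,X,\,a\rangle.
\]
Integrating over $M$ and letting $a$ vary yields $\int_{M}p_{k}(\kappa)\bigl(\cosh r\,\nu-\tilde u\,X\bigr)d\mu=0$ for every $1\le k\le n$, which together with the $k=0$ case completes the proof.

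The main thing to watch out for is bookkeeping rather than any substantive analytic obstacle: the sign conventions linking $\widetilde{\nabla}$, $\overline{\nabla}$ and $\nabla$, the normalizations relating $\sigma_{k}$ and $p_{k}$, and the verification that the trace identities for $T_{k-1}^{ij}$ produce exactly the coefficient pattern that makes the $\sigma_{k-1}$ terms cancel, so that only the intended vector $\cosh r\,\nu-\tilde u\,X$ remains on the right-hand side.
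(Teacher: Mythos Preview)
Your proposal is correct and follows essentially the same approach as the paper: both split into a bulk divergence argument for $k=0$ and a Newton-tensor argument for $k\ge 1$ using the divergence-free property of $\dot\sigma_k^{ij}$ together with the twin Hessian identities for $\langle X,a\rangle$ and $\cosh r$. Your presentation is slightly more streamlined---you package the two integrations by parts into a single explicit divergence $\divv_M Y$ with $Y^i=T_{k-1}^{ij}(\cosh r\,\nabla_j\phi-\phi\,\nabla_j\cosh r)$, and for $k=0$ you use two vector fields ($\cosh r\,a^T$ and $\langle a,X\rangle V$) rather than the paper's three---but the substance is identical.
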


\begin{proof}
	Let $\vec{a}$ be any fixed constant vector in $\mathbb{R}^{n+1,1}$. We divide the proof of Lemma \ref{lem-KZ-non-shifted} into two cases.
	
	\textbf{Case 1.} $n \geq 1$ and $1 \leq k \leq n$. By using \eqref{Xij}, we have
	\begin{align}\label{X,a-ij}
	\nabla_j \nabla_i \metric{X}{\vec{a}}
	=
	\nabla_j \metric{\partial_i X}{\vec{a}}
	=
	\metric{-{h}_{ij} \nu + g_{ij} X}{\vec{a}}
	=
	-{h}_{ij} \metric{\nu}{\vec{a}}+ g_{ij} \metric{X}{\vec{a}}.
	\end{align}
	By \eqref{X,a-ij} and  \eqref{pm-ij h-ij}, we have
	\begin{align}
	\int_{M} \cosh r \dot{p}_k^{ij}(\kappa) \nabla_j \nabla_i \metric{X}{\vec{a}} d\mu
	=&
	\int_{M}  \cosh r \dot{p}_k^{ij}(\kappa) \( -{h}_{ij} \metric{\nu}{\vec{a}}+ g_{ij} \metric{X}{\vec{a}} \) d\mu  \nonumber\\
	=&\int_{M} \(-k\cosh r \metric{\nu}{\vec{a}} p_k(\kappa)+  \cosh r \metric{X}{\vec{a}} \dot{p}_k^{ij}(\kappa) g_{ij}\)d\mu.\label{KZ-identity-1 leq k-1}
	\end{align}
	Since $\dot{p}_k^{ij}$ is divergence-free, by using integration by parts, \eqref{di cosh r} and \eqref{conf-vf}, we also have
	\begin{align}
	\int_{M} \cosh r \dot{p}_k^{ij}(\kappa) \nabla_j \nabla_i \metric{X}{\vec{a}} d\mu
	=&\int_{M} \metric{X}{\vec{a}} \dot{p}_k^{ij}(\kappa) \nabla_j \nabla_i \cosh r d\mu  \nonumber\\
	=&\int_{M} \metric{X}{\vec{a}} \dot{p}_k^{ij}(\kappa) \nabla_j \metric{V}{\partial_i X} d\mu  \nonumber\\
	=&  \int_{M}   \metric{X}{\vec{a}}\dot{p}_k^{ij}(\kappa) \(-h_{ij} \tilde{u} + \cosh r g_{ij}  \) d\mu \nonumber\\
	=&	\int_{M} \(-k\tilde{u} \metric{X}{\vec{a}} p_k(\kappa)+  \cosh r \metric{X}{\vec{a}} \dot{p}_k^{ij}(\kappa) g_{ij}\)d\mu.\label{KZ-identity-1 leq k-2}
	\end{align}
	Comparing \eqref{KZ-identity-1 leq k-1} with \eqref{KZ-identity-1 leq k-2}, we have
	\begin{align}\label{KZ-identity-k geq 1}
	\int_M \metric{\cosh r \nu -\tilde{u}X}{\vec{a}}p_k(\kappa) d\mu =0,
	\end{align}
	for all $1 \leq k \leq n$ and $\vec{a} \in \mathbb{R}^{n+1,1}$.
	
	\textbf{Case 2.} $n \geq 1$ and $k=0$. By using \eqref{conf-vf} and \eqref{div-conf-vf}, it is easy to get
	\begin{equation}\label{three-div-formula}
	\begin{aligned}
	\divv_{\mathbb{H}^{n+1}} ( \cosh r \vec{a}) =& \metric{V}{\vec{a}}, \quad
	\divv_{\mathbb{H}^{n+1}}\( \cosh r \metric{X}{\vec{a}} X \) = n \cosh r \metric{X}{\vec{a}},\\
	\divv_{\mathbb{H}^{n+1}} \( \metric{X}{\vec{a}} V\) =& n \cosh r \metric{X}{\vec{a}} + \metric{V}{\vec{a}}.
	\end{aligned}
	\end{equation}
	Since $X$ is the normal vector field of the inclusion $\mathbb{H}^{n+1} \subset \mathbb{R}^{n+1,1}$, we know that the projection of $\vec{a}$ on the tangent bundle $T\mathbb{H}^{n+1}$ is $\vec{a} + \metric{X}{\vec{a}} X$, which implies that $\cosh r \vec{a} +\cosh r \metric{X}{\vec{a}}X$ is a tangential vector field on $\mathbb{H}^{n+1}$.
	Hence, by using $\tilde{u} : = \metric{V}{\nu}$, $\metric{X}{\nu} = 0$, \eqref{three-div-formula} and the divergence theorem, we have
	\begin{align}
	\int_{M} \metric{\cosh r \nu - \tilde{u} X}{\vec{a}} d\mu
	=&
	\int_{M} \metric{\cosh r \vec{a} - \metric{X}{\vec{a}} V}{\nu} d\mu  \nonumber\\
	=&
	\int_{M} \metric{\cosh r \vec{a} + \cosh r \metric{X}{\vec{a}} X - \metric{X}{\vec{a}} V}{\nu} d\mu  \nonumber\\
	=& \int_{\Omega} \divv_{\mathbb{H}^{n+1}} ( \cosh r \vec{a}) + \divv_{\mathbb{H}^{n+1}}\( \cosh r \metric{X}{\vec{a}} X \) - \divv_{\mathbb{H}^{n+1}} \( \metric{X}{\vec{a}} V\) dv \nonumber \\
	=&
	\int_{\Omega}  \( \metric{V}{\vec{a}} + n\cosh r \metric{X}{\vec{a}}- \( n \cosh r \metric{X}{\vec{a}} + \metric{V}{\vec{a}} \)\) dv =0.\label{KZ-identity-k=0}
	\end{align}
	Since the vector $\vec{a}$ is arbitrary, Lemma \ref{lem-KZ-non-shifted} follows from \eqref{KZ-identity-k geq 1} and \eqref{KZ-identity-k=0}.
\end{proof}

\begin{cor}
	Let $M$ be a smooth uniformly h-convex hypersurface in $\mathbb{H}^{n+1}$ $(n \geq 1)$ and $u(z)$ be the horospherical support function of $M$. We denote $\g(z) = e^{u(z)}$. Then 
	\begin{align}\label{KZ-shifted-version}
	\int_M (\cosh r \nu- \tilde{u} X)p_k(\tilde{\kappa}) d\mu = 0, \quad \forall \ 0 \leq k \leq n,
	\end{align}
	and
	\begin{align}\label{KZ-identity-sphere version}
	\int_{\mathbb{S}^n} \( \frac{D \g}{\g}(z) +z \) \g^{-k}(z)p_{n-k}(A[\g(z)])) d\sigma =0, \quad \forall \ 0 \leq k \leq n.
	\end{align}
\end{cor}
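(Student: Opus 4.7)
The plan is to derive both identities as direct consequences of Lemma \ref{lem-KZ-non-shifted}. Identity \eqref{KZ-shifted-version} is essentially the ``shifted" version, which should follow by expanding $p_k(\tilde\kappa)$ as a linear combination of $p_i(\kappa)$ and invoking \eqref{KZ-non-shifted} termwise. Identity \eqref{KZ-identity-sphere version} is the same statement transported to $\mathbb{S}^n$ via the horospherical Gauss map, using the vector rewriting in \eqref{coshr nu-uX} and the area formula \eqref{rel-area element}.

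First I would record the binomial expansion
\begin{equation*}
p_k(\tilde\kappa) \;=\; \sum_{i=0}^{k} (-1)^{k-i}\,\frac{C_{n-i}^{\,k-i}}{C_n^{\,k}}\, p_i(\kappa),
\end{equation*}
which is the normalized version of \eqref{sigma-shifted-no shifted} (already used in Corollary \ref{cor-Steiner-k=0}). Substituting this into the left-hand side of \eqref{KZ-shifted-version} and applying Lemma \ref{lem-KZ-non-shifted} to each integral $\int_M (\cosh r\,\nu - \tilde u X)\, p_i(\kappa)\, d\mu$ (valid for all $0\le i\le n$, and trivially for $i=0$ by the case $k=0$ of that lemma), every term vanishes, proving \eqref{KZ-shifted-version}.

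For \eqref{KZ-identity-sphere version}, I would parameterize $M$ by $\mathbb{S}^n$ through the (diffeomorphic) horospherical Gauss map of the uniformly h-convex $M$, and then push forward \eqref{KZ-shifted-version}. Formula \eqref{coshr nu-uX} gives the pointwise identity
\begin{equation*}
\cosh r\,\nu - \tilde u\,X \;=\; -\Bigl(\tfrac{D\g}{\g}+z,\;0\Bigr)
\end{equation*}
as a map $\mathbb{S}^n\to \mathbb{R}^{n+1,1}$. On the other hand, combining the area formula \eqref{rel-area element} with \eqref{shifted curvature-support function} (which yields that the eigenvalues of $A[\g]$ are $\tilde\lambda_i/\g$, hence $\det A[\g]=\g^{-n}p_n(\tilde\lambda)$) together with the duality $\tilde\kappa_i=1/\tilde\lambda_i$, i.e.\ $p_k(\tilde\kappa)=p_{n-k}(\tilde\lambda)/p_n(\tilde\lambda)$, produces the clean pullback formula
\begin{equation*}
p_k(\tilde\kappa)\,d\mu \;=\; \g^{-k}\,p_{n-k}(A[\g])\,d\sigma.
\end{equation*}
Substituting both identities into \eqref{KZ-shifted-version} and reading off the first $n+1$ components (the last component vanishes pointwise) yields \eqref{KZ-identity-sphere version}.

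Since both statements reduce mechanically to Lemma \ref{lem-KZ-non-shifted} after these rewrites, I do not anticipate any essential obstacle. The only care needed is in the bookkeeping of the $\g$-exponents when converting $p_k(\tilde\kappa)\,d\mu$ to its expression on $\mathbb{S}^n$, and in remembering that \eqref{KZ-shifted-version} requires only smoothness and closedness of $M$ (inherited from Lemma \ref{lem-KZ-non-shifted}), whereas \eqref{KZ-identity-sphere version} uses uniform h-convexity so that $G$ is a diffeomorphism and the formulas of Lemmas \ref{lem-X,nu,di X}--\ref{lem-cosh r, tilde u, 1/phi} apply.
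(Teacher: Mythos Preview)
Your proposal is correct and matches the paper's approach essentially verbatim: the paper also obtains \eqref{KZ-shifted-version} by writing $p_k(\tilde\kappa)$ as a linear combination of the $p_i(\kappa)$ and invoking Lemma~\ref{lem-KZ-non-shifted} termwise, and then derives \eqref{KZ-identity-sphere version} by substituting \eqref{coshr nu-uX} and the pullback identity $p_k(\tilde\kappa)\,d\mu=\g^{-k}p_{n-k}(A[\g])\,d\sigma$ (from \eqref{rel-area element} and \eqref{shifted curvature-support function}) into \eqref{KZ-shifted-version}.
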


\begin{proof}
	By choosing suitable linear combinations, it is easy to see that \eqref{KZ-shifted-version} follows from \eqref{KZ-non-shifted}. Formula \eqref{coshr nu-uX} states 
	\begin{align}\label{cosh r nu- u X}
	\cosh r \nu- \tilde{u} X = \(- \frac{D\g}{\g}-z,0 \).
	\end{align}
	Formulas \eqref{rel-area element} and \eqref{shifted curvature-support function} imply
	\begin{align}\label{pk-dmu-d-sigma}
	p_k(\tilde{\kappa}) d\mu
	=\frac{p_{n-k}}{p_n} (\tilde{\lambda}) \g^{-n} p_n(\tilde{\lambda}) d\sigma
	=\g^{-n}p_{n-k} (\tilde{\lambda}) d\sigma
	= \g^{-k} p_{n-k}(A[\g]) d\sigma.
	\end{align}
	Then  we get \eqref{KZ-identity-sphere version} by substituting \eqref{cosh r nu- u X} and \eqref{pk-dmu-d-sigma} into \eqref{KZ-shifted-version}.
\end{proof}

Now we can give a direct proof of the obstruction \eqref{eq-necess-cond-p=-n-coordinate}.
\begin{proof}[Direct proof of \eqref{eq-necess-cond-p=-n-coordinate}]
	Let $\vec{a}$ be any constant vector in $\mathbb{R}^{n+1}$. We denote its projection on the tangent bundle of $\mathbb{S}^n$ as $\vec{a}^T = \vec{a} - \metric{z}{\vec{a}} z$.  Recall the equation \eqref{eq-p=-n-CM problem},
	\begin{align*}
	\g^{n-k}(z)p_{n-k}(A[\g(z)]) = f(z).
	\end{align*}
	By integration by parts, we have 
	\begin{align}
	\int_{\mathbb{S}^n} \g^{-n} \metric{Df}{\vec{a}} d\sigma
	=& \int_{\mathbb{S}^n} \g^{-n} \metric{D \(\g^{n-k}p_{n-k}(A[\g])\)}{\vec{a}}d\sigma  \nonumber\\
	=&\int_{\mathbb{S}^n} \g^{-n}\metric{D \(\g^{n-k}\)}{\vec{a}}  p_{n-k} (A[\g]) d\sigma
	+\int_{\mathbb{S}^n} \g^{-k}\metric{D p_{n-k}(A[\g])}{\vec{a}} d\sigma   \nonumber\\
	=&\int_{\mathbb{S}^n} \( (n-k)\g^{-k-1} \metric{D\g}{\vec{a}} -  \divv_{\mathbb{S}^n} (\g^{-k} \vec{a}^T)\) p_{n-k}(A[\g])d\sigma.  \label{new pf-p=-n-eq-1}
	\end{align}
	A direct calculation yields 
	\begin{align}
	\divv_{\mathbb{S}^n} (\g^{-k} \vec{a}^T)
	=&  \metric{D\(\g^{-k}\)}{\vec{a}} + \g^{-k}\divv_{\mathbb{S}^n} (\vec{a} - \metric{z}{\vec{a}} z) \nonumber\\
	=& -k\g^{-k-1} \metric{D\g}{\vec{a}} - n\g^{-k}\metric{z}{\vec{a}}.
	\label{div-phi^-k-a}
	\end{align} 
	Inserting \eqref{div-phi^-k-a} into the right-hand side of \eqref{new pf-p=-n-eq-1} and using \eqref{KZ-identity-sphere version}, we have
	\begin{align*}
	\int_{\mathbb{S}^n} \g^{-n} \metric{Df}{\vec{a}} d\sigma
	=n \int_{\mathbb{S}^n} \metric{\frac{D\g}{\g}+z}{\vec{a}}\g^{-k}p_{n-k}(A[\g]) d\sigma=0.
	\end{align*}
	 Since $\vec{a}\in \mathbb{R}^{n+1}$ is arbitrary, we complete the proof of \eqref{eq-necess-cond-p=-n-coordinate}.  
\end{proof}

\begin{rem}
	From the above proof, we know that \eqref{eq-necess-cond-p=-n-coordinate} holds for all $n \geq 1$ in Theorem \ref{thm-necess-cond-p=-n}.
\end{rem}

\section{Horospherical $p$-Brunn-Minkowski inequality and Horospherical $p$-Minkowski inequalities}\label{sec: BM inequalities}

In this section, we propose a conjecture of horospherical $p$-Brunn-Minkowski inequalities in Conjecture \ref{conj-BM} and two conjectures of horospherical $p$-Minkowski inequalities in Conjecture \ref{conj-Min ineq} and Conjecture \ref{conj-Min ineq-weak}. We unify some well-known geometric inequalities from a horospherically convex geometric point of view. Furthermore, we prove some new inequalities which are special cases of these conjectures.

\subsection{An introduction of horospherical $p$-Brunn-Minkowski inequality and horospherical $p$-Minkowski inequalities} \label{subsec-10.1}$ \ $

Let $\omega_n$ denote the volume of the unit n-sphere $\mathbb{S}^n$ and $B(r)$ denote a geodesic ball of radius $r>0$  centered at the origin in $\mathbb{H}^{n+1}$.
At first, we calculate the $k$-th modified quermassintegrals of $B(r)$, i.e. $\widetilde{W}_k(B(r) )$. Since the shifted principal curvatures $\lbrace \tilde{\kappa}_1, \ldots, \tilde{\kappa}_n \rbrace$ on $\partial B(r)$ are all equal to $\frac{e^{-r}}{\sinh r }$ and the area of $\partial B(r)$ is $\omega_n \sinh^{n} r$. By taking $\mathscr{F} \equiv 1$ in Lemma \ref{lem-variation of modified quermass}, we have
\begin{align*}
\frac{d}{dr} \widetilde{W}_k(B(r)) = \omega_n \(\frac{e^{-r}}{\sinh r} \)^{k} \sinh^n r= \omega_n \sinh^{n-k} r e^{-k r}.
\end{align*}
Hence we can define a monotonically increasing function $I_k : [0, +\infty) \to \mathbb{R}$ by
\begin{align}\label{I_k(r)}
I_k(r) : = \widetilde{W}_k\( B(r)\) = \omega_n \int_0^r \sinh^{n-k} t e^{-kt} dt.
\end{align}
Particularly, we have 
\begin{align}\label{I_n(r)}
I_n(r) = \frac{\omega_n}{n}(1- e^{-nr}).
\end{align}
\begin{defn}\label{def-mod k-mean radius}
	The modified $k$-mean radius of a domain $\Omega \subset \mathbb{H}^{n+1}$ is defined by
	\begin{equation}\label{mod k-mean radius}
	r^k_\Omega := I_k^{-1} (\widetilde{W}_k(\Omega)).
	\end{equation}
\end{defn}

Based on the observation of hyperbolic $p$-sum of geodesic balls in Theorem \ref{thm-sum of balls 0.5--2}, we can prove the following geometric inequality. 

\begin{thm}\label{thm-horo-BM balls}
	Let $n \geq 1$ and $0 \leq k \leq n$ be integers, and let $\frac{1}{2} \leq p \leq 2$ be a real number. Assume that $a$ and $b$ are positive numbers such that $a+b \geq 1$.
	Let $K= B(X, r_1)$ and $L= B(Y, r_2)$ be two geodesic balls in $\mathbb{H}^{n+1}$, and let $\Omega = a \cdot K +_p b \cdot L$.  Then
	\begin{equation}\label{horo-BM-ineq-balls}
	\exp \left( p I_k^{-1} ( \widetilde{W}_k (\Omega) ) \right)
	\geq a \exp \left( p I_k^{-1} ( \widetilde{W}_k (K) ) \right)
	+b \exp \left( p I_k^{-1} ( \widetilde{W}_k (L) ) \right), \quad k=0,1, \ldots, n.
	\end{equation}
	Equality holds if and only if $X=Y$, i.e. $K$ and $L$ are centered at the same point.
\end{thm}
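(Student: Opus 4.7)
The plan is to reduce \eqref{horo-BM-ineq-balls} to the inscribed-ball estimate of Theorem \ref{thm-sum of balls 0.5--2} combined with the monotonicity of the modified quermassintegrals under set inclusion, Proposition \ref{prop-domain monotone modified quermass}.

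Set $R:=\tfrac{1}{p}\log(ae^{pr_1}+be^{pr_2})$. First I would invoke Theorem \ref{thm-sum of balls 0.5--2} to produce a geodesic ball $\widetilde B$ of radius $R$ with $\widetilde B\subset\Omega$. Since $\widetilde W_k$ is an isometry invariant of the domain, $\widetilde W_k(\widetilde B)=\widetilde W_k(B(R))=I_k(R)$ by \eqref{I_k(r)}. The inclusion together with Proposition \ref{prop-domain monotone modified quermass} gives $I_k(R)\le\widetilde W_k(\Omega)$, and since $I_k$ is strictly increasing, $R\le I_k^{-1}(\widetilde W_k(\Omega))$. As $p>0$, exponentiating by $p$ preserves the direction of the inequality, producing
\[
ae^{pr_1}+be^{pr_2}=e^{pR}\le \exp\bigl(p\,I_k^{-1}(\widetilde W_k(\Omega))\bigr).
\]
Because $K=B(X,r_1)$ and $L=B(Y,r_2)$ are themselves geodesic balls, one has $r_1=I_k^{-1}(\widetilde W_k(K))$ and $r_2=I_k^{-1}(\widetilde W_k(L))$; substituting these back yields exactly \eqref{horo-BM-ineq-balls}.

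For the equality characterization, if $X=Y$, Theorem \ref{thm-sum of balls 0.5--2} actually asserts that $\Omega$ itself is the geodesic ball of radius $R$ centered at the common point, so every step above is an equality. Conversely, if $X\ne Y$, the same theorem gives the proper inclusion $\widetilde B\subsetneq\Omega$. The hard part will be upgrading the non-strict monotonicity of Proposition \ref{prop-domain monotone modified quermass} to the strict inequality $\widetilde W_k(\widetilde B)<\widetilde W_k(\Omega)$ under proper inclusion of h-convex bounded domains. For $k=0$ this is immediate from $\widetilde W_0=\Vol$. For $1\le k\le n$, my plan is to pick a point $P\in\mathrm{int}(\Omega)\setminus\widetilde B$, use the h-convexity of $\Omega$ (ensuring the h-convex hull of $\widetilde B\cup\{P\}$ remains inside $\Omega$) to construct a smooth uniformly h-convex intermediate domain $\widetilde B\subsetneq\widetilde B'\subset\Omega$, and connect $\widetilde B$ to $\widetilde B'$ via a smooth variation $\partial_tX=\mathscr F\nu$ with outward speed $\mathscr F>0$. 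Lemma \ref{lem-variation of modified quermass} then gives $\tfrac{d}{dt}\widetilde W_k(\Omega_t)=\int p_k(\tilde\kappa)\mathscr F\,d\mu_t>0$ along the variation (since $p_k(\tilde\kappa)>0$ for uniformly h-convex hypersurfaces), producing $\widetilde W_k(\widetilde B)<\widetilde W_k(\widetilde B')\le\widetilde W_k(\Omega)$ and hence strict inequality in \eqref{horo-BM-ineq-balls}.
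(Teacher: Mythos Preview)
Your proof is correct and follows essentially the same route as the paper: invoke Theorem~\ref{thm-sum of balls 0.5--2} to inscribe a ball $\widetilde B$ of radius $R=\tfrac{1}{p}\log(ae^{pr_1}+be^{pr_2})$ in $\Omega$, then apply the monotonicity of $\widetilde W_k$ (Proposition~\ref{prop-domain monotone modified quermass}) and the fact that $I_k^{-1}(\widetilde W_k(B(\cdot,r)))=r$. The paper's proof is terser on the equality case, simply asserting that $\widetilde B\subsetneq\Omega$ forces strict inequality, whereas you correctly flag that Proposition~\ref{prop-domain monotone modified quermass} as stated gives only non-strict monotonicity and sketch a variational argument (via Lemma~\ref{lem-variation of modified quermass} and $p_k(\tilde\kappa)>0$ on uniformly h-convex hypersurfaces) to upgrade it; this is a legitimate refinement of a point the paper leaves implicit.
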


\begin{proof}
	By the definition of the increasing function $I_k$ in \eqref{I_k(r)}, we have 
	\begin{equation}\label{RHS of BM balls}
	\exp \left( p I_k^{-1} ( \widetilde{W}_k (K) ) \right) = e^{p r_1}, \quad
	\exp \left( p I_k^{-1} ( \widetilde{W}_k (L) ) \right) = e^{p r_2}.
	\end{equation}
	Theorem \ref{thm-sum of balls 0.5--2} asserts that we can find a geodesic ball $\widetilde{B}$ of radius $\frac{1}{p} \log \(ae^{pr_1}  + b e^{pr_2} \)$ lying inside $\Omega$, and $\widetilde{B} \subset \neq \Omega$ unless $X=Y$. Combining this with Proposition \ref{prop-domain monotone modified quermass}, we have
	\begin{equation}\label{LHS of BM balls}
	\exp \left( p I_k^{-1} ( \widetilde{W}_k (\Omega) ) \right) \geq 
		\exp \left( p I_k^{-1} ( \widetilde{W}_k (\widetilde{B}) ) \right)
	=ae^{pr_1}  + b e^{pr_2},
	\end{equation}
	with equality if and only if $X=Y$. Then Theorem \ref{thm-horo-BM balls} follows by substituting \eqref{RHS of BM balls} and \eqref{LHS of BM balls} into \eqref{horo-BM-ineq-balls}.
\end{proof}

Question arises whether inequality \eqref{horo-BM-ineq-balls} still holds by replacing $K$ and $L$ by general h-convex domains.
 For this purpose, we propose the following  Conjecture \ref{conj-BM}, which is named as the horospherical $p$-Brunn-Minkowski inequalities for modified quermassintegrals. Recall the definition of hyperbolic dilates in Definition \ref{def-hyperbolic dilates}.
\begin{con}[Horospherical $p$-Brunn-Minkowski inequalities]\label{conj-BM}
	Let $n \geq 1$ and $0 \leq k \leq n$ be integers, and let $\frac{1}{2} \leq p \leq 2$ be a real number. Assume that $a \geq 0$ and $b \geq 0$ are real numbers such that $a+b \geq 1$.
	Let $K$ and $L$ be two smooth uniformly h-convex bounded domains in $\mathbb{H}^{n+1}$, and let $\Omega = a \cdot K +_p b \cdot L$.  Then 
	\begin{equation}\label{horo-BM-ineq}
	\exp \left( p I_k^{-1} ( \widetilde{W}_k (\Omega) ) \right)
	\geq a \exp \left( p I_k^{-1} ( \widetilde{W}_k (K) ) \right)
	+b \exp \left( p I_k^{-1} ( \widetilde{W}_k (L) ) \right), \quad k=0,1, \ldots, n.
	\end{equation}
	\begin{enumerate}
		\item When $0 \leq k \leq n-1$,  equality holds in \eqref{horo-BM-ineq}  if and only if one of the following conditions holds,
		\begin{enumerate}
			\item $a=1$ and $b=0$,
			\item $a=0$ and $b=1$,
			\item $K = L$ and $a+b=1$,
			\item $K$ and $L$ are geodesic balls centered at the same point.
		\end{enumerate}
		\item When $k=n$, equality holds in \eqref{horo-BM-ineq} if and only if one of the following conditions holds,
		\begin{enumerate}
			\item $a \geq 1$ and $b =0$,
			\item $a=0$ and $b \geq 1$,
			\item $K$ and $L$ are hyperbolic dilates.
		\end{enumerate}
	\end{enumerate}
\end{con}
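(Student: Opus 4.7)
The plan is to reduce Conjecture \ref{conj-BM} to its infinitesimal form---the horospherical $p$-Minkowski inequality of Conjecture \ref{conj-Min ineq}---and to attack the Minkowski inequality by a volume-preserving curvature flow. This is modeled on the classical passage between the $L_p$ Minkowski and $L_p$ Brunn-Minkowski inequalities in Euclidean convex geometry.

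For fixed smooth uniformly h-convex $K$ and $L$, set $\Omega_t := K +_p t \cdot L$ so that $u_{\Omega_t} = \frac{1}{p}\log(\g_K^p + t\g_L^p)$. Writing $F(t) := \exp(p\,r^k_{\Omega_t})$ and differentiating via Lemma \ref{lem-formula-Wpk-K-L} together with \eqref{I_k(r)}, one finds
\begin{equation*}
F'(t) = \frac{p\exp(p\,r^k_{\Omega_t})}{\omega_n\sinh^{n-k}(r^k_{\Omega_t})\,e^{-k r^k_{\Omega_t}}}\,\widetilde W_{p,k}(\Omega_t, L).
\end{equation*}
The case $a=1$ of \eqref{horo-BM-ineq} is therefore equivalent to the pointwise lower bound $F'(t)\geq\exp(p\,r^k_L)$, which in turn is precisely the horospherical $p$-Minkowski inequality
\begin{equation*}
\widetilde W_{p,k}(K', L) \geq \tfrac{1}{p}\,\omega_n\sinh^{n-k}(r^k_{K'})\,e^{-k r^k_{K'}}\exp\bigl(p(r^k_L - r^k_{K'})\bigr)
\end{equation*}
applied along $K' = \Omega_t$. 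The general $(a,b)$ case then reduces to the $a=1$ case by absorbing the $a$-factor into $K$ via the hyperbolic $p$-dilation of Definition \ref{def-p dilation}, whose modified quermassintegrals can be read off from the Steiner formula of Theorem \ref{thm-Stein-modi-quemass} specialized to outer parallel sets.

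To establish the Minkowski inequality for a given pair $(K,L)$, I would deform $L$ by a volume-preserving curvature flow designed in the spirit of \eqref{flow-HCMF}: concretely, choose the speed function so that $\widetilde W_k(L_t)$ is preserved (as in Lemma \ref{lem-mono-quantitirs}) while $\widetilde W_{p,k}(K, L_t)$ is monotonically decreasing. Long-time existence and smooth subsequential convergence---available via the arguments supporting Theorem \ref{thm-long time existence}---should produce a limit $L_\infty$ that minimizes $\widetilde W_{p,k}(K,\cdot)$ under the $\widetilde W_k$-constraint. In the origin-symmetric setting Theorem \ref{thm-f=c-CM-M-sphere} identifies $L_\infty$ as a geodesic ball $B(r^k_L)$ centered at the origin, for which the Minkowski inequality reduces to a direct computation matching Theorem \ref{thm-horo-BM balls} via Proposition \ref{prop-domain monotone modified quermass}. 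The equality analysis is extracted from the strict decrease along the flow: equality throughout forces the deformation to be stationary, and Theorem \ref{thm-f=c-CM-M-sphere} then leaves only the cases listed in Conjecture \ref{conj-BM}, augmented for $k=n$ by the horospherical dilates coming from Proposition \ref{prop-geo-hyper-p-dilation}.

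The main obstacle is the absence of a natural symmetry in the pair $(K,L)$. Theorem \ref{thm-long time existence} supplies long-time existence of \eqref{flow-HCMF} only under origin-symmetry, because the pinching estimate of Proposition \ref{prop-Pinching-est} and the $C^0$-bound of Lemma \ref{lem-C0-est} both use reflection through the origin; without symmetry, one loses control of $\cosh r - \tilde u$ in \eqref{scalar eq phi-HCMF}, and the upper bound on $\mathcal F$ in Lemma \ref{lem-F<C} breaks down. Consequently only special cases---when $K$ or $L$ is a geodesic ball, when both are origin symmetric, or when $p$ lies in a restricted range---can be handled unconditionally, and these are what is recorded in Theorem \ref{thm-sum-BM} and its companions. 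A complete proof of Conjecture \ref{conj-BM} appears to require a genuinely new ingredient: either a non-symmetric pinching estimate for \eqref{flow-HCMF}, or a variational or transport-based approach that bypasses curvature flow entirely.
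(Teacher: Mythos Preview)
The statement is a \emph{conjecture}; the paper does not prove it in full, only the special cases collected in Theorem~\ref{thm-sum-BM}. Your final paragraph correctly recognizes this, so the honest question is whether the strategy you outline would recover those special cases and what obstructs the general one.

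Your reduction from \eqref{horo-BM-ineq} to the infinitesimal inequality $F'(t)\geq e^{pr^k_L}$ is correct and is exactly how the paper handles the cases it does prove (see Propositions~\ref{prop-BM-K=L}, \ref{prop-BM-L-ball-p geq 1-k geq 1}, \ref{prop-BM-k=0-p=1-L ball}). Note that the infinitesimal inequality you write is Conjecture~\ref{conj-Min ineq-weak} (type~II), not Conjecture~\ref{conj-Min ineq} (type~I); the paper derives type~I from the variation $\Omega_t=(1-t)\cdot K+_p t\cdot L$ rather than $K+_p t\cdot L$, see Proposition~\ref{prop-rel-BM-Min ineq-strong}.

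The substantive gap is in your flow step. You propose to evolve $L$ by flow~\eqref{flow-HCMF} with weight $f(z)=\g_K^{-p-k}p_{n-k}(A[\g_K])$, so that $\widetilde W_k(L_t)$ is preserved and $J_p(L_t)=\widetilde W_{p,k}(K,L_t)$ decreases. By Lemma~\ref{lem-mono-quantitirs} the limit $L_\infty$ then satisfies $\g_{L_\infty}^{-p-k}p_{n-k}(A[\g_{L_\infty}])=\gamma f(z)$, i.e.\ $dS_{p,k}(L_\infty,\cdot)=\gamma\,dS_{p,k}(K,\cdot)$. Theorem~\ref{thm-f=c-CM-M-sphere} classifies solutions only when $f$ is \emph{constant}; for general $K$ the weight $f$ is not constant, so you have no reason to expect $L_\infty$ to be a geodesic ball. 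What you actually get is a domain whose $k$-th horospherical $p$-surface area measure is proportional to that of $K$, and verifying the Minkowski inequality for such a pair is no easier than the original problem.

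This is why the paper goes the other way: it fixes $L$ to be a geodesic ball and flows $K$, using the locally constrained flow~\eqref{flow-HLW20} of Hu--Li--Wei (Theorems~D and~E) rather than \eqref{flow-HCMF}. Along that flow the left side of the Minkowski inequality decreases (Lemma~\ref{lem-mono-quan-2-HLW20}) and the limit \emph{is} a ball because the flow has no extraneous weight. The key Alexandrov--Fenchel input is Theorem~E (and Theorem~F for $k=0$), not the uniqueness Theorem~\ref{thm-f=c-CM-M-sphere}. Your diagnosis of the remaining obstruction---lack of a priori estimates without origin symmetry---is accurate, but it is secondary to the misidentification of the flow limit.
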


When $k=n$, we will prove Conjecture \ref{conj-BM} by showing that it is equivalent to a Minkowski inequality for functions on the unit sphere $\mathbb{S}^n$.

\begin{lem}\label{lem-Wn-explicit}
	For a smooth uniformly h-convex bounded domain $\Omega$ in $\mathbb{H}^{n+1}$ $(n \geq 1)$, we have
	\begin{align}
	\int_{\partial \Omega} p_n (\tilde{\kappa}) d\mu =& \omega_n - n \widetilde{W}_n(\Omega),\label{Wn-explicit-kappa}\\
	\widetilde{W}_n (\Omega) =& \frac{1}{n} \int_{\mathbb{S}^n} (1 - \g^{-n}_{\Omega}) d\sigma.\label{Wn-explicit-phi}
	\end{align}
	Consequently, it holds that
	\begin{align}\label{phi^-n-Wn}
	\int_{\mathbb{S}^n} \g_{\Omega}^{-n} d\sigma = \omega_n e^{-n r^n_\Omega},
	\end{align} 
	where $r^n_\Omega := I_n^{-1} \( \widetilde{W}_n (\Omega) \)$.
\end{lem}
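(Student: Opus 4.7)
The plan is to derive \eqref{Wn-explicit-kappa} from the Steiner formula for modified quermassintegrals, then obtain \eqref{Wn-explicit-phi} by re-expressing $\int_{\partial\Omega} p_n(\tilde{\kappa})\,d\mu$ via the horospherical Gauss map, and finally deduce \eqref{phi^-n-Wn} from the definition of the modified $n$-mean radius.

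First, I would express $\int_{\partial\Omega} p_n(\tilde{\kappa})\,d\mu$ as an integral on $\mathbb{S}^n$. In a frame diagonalizing $A_{ij}[\g]$, equation \eqref{shifted curvature-support function} gives $A_{ii}[\g] = 1/(\g\tilde{\kappa}_i)$, so $\det A[\g] = \g^{-n}/p_n(\tilde{\kappa})$ (using $p_n(\tilde{\kappa}) = \prod_i \tilde{\kappa}_i$). Combining with \eqref{rel-area element} yields $p_n(\tilde{\kappa})\,d\mu = \g_{\Omega}^{-n}\,d\sigma$, hence
\[
\int_{\partial\Omega} p_n(\tilde{\kappa})\,d\mu = \int_{\mathbb{S}^n} \g_{\Omega}^{-n}\,d\sigma.
\]

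Next, I would apply the Steiner formula Theorem \ref{thm-Stein-modi-quemass} with $k=n$ to the outer parallel set $\Omega_\rho = e^\rho\cdot_1 \Omega$ (which equals $\{X:d_{\mathbb{H}^{n+1}}(X,\Omega)\le\rho\}$ by Proposition \ref{prop-geo-hyper-p-dilation}). Only the $i=n$ term survives, giving
\[
\widetilde{W}_n(\Omega_\rho) - \widetilde{W}_n(\Omega) = \left(\int_{\partial\Omega} p_n(\tilde{\kappa})\,d\mu\right)\int_0^\rho e^{-nt}\,dt = \frac{1-e^{-n\rho}}{n}\int_{\partial\Omega} p_n(\tilde{\kappa})\,d\mu.
\]
Letting $\rho\to+\infty$, the sandwich $B(r_{\min}(\Omega)+\rho)\subset\Omega_\rho\subset B(r_{\max}(\Omega)+\rho)$ together with the monotonicity of $\widetilde{W}_n$ (Proposition \ref{prop-domain monotone modified quermass}) and the explicit formula \eqref{I_n(r)} showing $I_n(r)\to\omega_n/n$ as $r\to+\infty$ forces $\widetilde{W}_n(\Omega_\rho)\to\omega_n/n$. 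This yields \eqref{Wn-explicit-kappa}, and combining with the identity from the previous paragraph immediately gives \eqref{Wn-explicit-phi}.

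For the consequence \eqref{phi^-n-Wn}, the definition $r^n_\Omega = I_n^{-1}(\widetilde{W}_n(\Omega))$ and \eqref{I_n(r)} give $\widetilde{W}_n(\Omega) = \frac{\omega_n}{n}(1-e^{-nr^n_\Omega})$. Substituting this into \eqref{Wn-explicit-phi} produces $\int_{\mathbb{S}^n}\g_{\Omega}^{-n}\,d\sigma = \omega_n e^{-nr^n_\Omega}$. The only mildly delicate point in the whole argument is the passage $\widetilde{W}_n(\Omega_\rho)\to\omega_n/n$; everything else is an algebraic manipulation or a direct substitution, so I do not anticipate any serious obstacle.
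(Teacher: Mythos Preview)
Your proof is correct but follows a different path from the paper. The paper connects $\Omega$ to the origin $N=(0,1)$ via the linear interpolation $\g_t=(1-t)+t\,\g_\Omega$ (which is h-convex by Proposition \ref{prop-p-sum}), integrates the variational identity $\frac{d}{dt}\widetilde{W}_n(\Omega_t)=\frac{d}{dt}\int_{\mathbb{S}^n}(-\tfrac{1}{n}\g_t^{-n})\,d\sigma$ from $t=0$ to $t=1$, and uses the boundary data $\widetilde{W}_n(N)=0$, $\g_N\equiv1$. You instead push $\Omega$ outward to infinity via the dilation $\Omega_\rho=e^\rho\cdot_1\Omega$, invoke the already-proved Steiner formula at $k=n$, and extract the constant from the asymptotic $\widetilde{W}_n(\Omega_\rho)\to\omega_n/n$ via the sandwich argument and Proposition \ref{prop-domain monotone modified quermass}. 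Both routes rest on the same infinitesimal identity $p_n(\tilde{\kappa})\,d\mu=\g^{-n}\,d\sigma$. Your approach recycles existing machinery and is clean, at the cost of a limit step; the paper's approach is entirely finite and slightly more self-contained. One small remark: your sandwich $B(r_{\min}+\rho)\subset\Omega_\rho\subset B(r_{\max}+\rho)$ tacitly assumes $\Omega$ contains the origin, but this is harmless since $\widetilde{W}_n$ is isometry-invariant and you may center the balls at any interior point of $\Omega$.
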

\begin{proof}
	Along a general flow $\partial_t X = \mathscr{F} \nu$, by Lemma \ref{lem-variation of modified quermass} and \eqref{scalr-flow DT's trick}, we have
	\begin{equation*}
	\frac{d}{dt} \widetilde{W}_n(\Omega_t) =  \int_{\Omega_t} p_n (\tilde{\kappa}) \mathscr{F} d \mu_t
	,\quad \frac{\partial}{\partial t} \g (z,t) = \g(z,t) \mathscr{F}.
	\end{equation*}
	Using  \eqref{rel-area element} and \eqref{shifted curvature-support function}, we have
	\begin{align}\label{dt Wn-dt phi^-n}
	\frac{d}{dt} \widetilde{W}_n(\Omega_t) 
	= \int_{\mathbb{S}^n}  p_n (\tilde{\kappa}) \mathscr{F}\g^{-n} p_n (\tilde{\lambda}) d\sigma
	=  \int_{\mathbb{S}^n} \g^{-n} \mathscr{F} d\sigma
	=\frac{d}{dt} \int_{\mathbb{S}^n} \(-\frac{1}{n} \g^{-n}(z,t) \) d\sigma. 
	\end{align}
	Let $N = (0,1)$ be the origin of $\mathbb{H}^{n+1} \subset \mathbb{R}^{n+1,1}$. Then $\widetilde{W}_n(N) =0$, and $\g_N(z) =1$ for all $z \in \mathbb{S}^n$. By Proposition \ref{prop-p-sum}, we can define $\Omega_t$ by $\g_{\Omega_t}(z) = (1-t) + t \g_{\Omega}(z)$ such that $\Omega_0 = N$, $\Omega_1 = \Omega$, and $\partial \Omega_t$ is smooth and uniformly h-convex for $t \in (0,1]$.
	Integrating the both sides of \eqref{dt Wn-dt phi^-n} from $0$ to $1$ then yields 
	\begin{align*}
	\widetilde{W}_n (\Omega) = \frac{1}{n} \int_{\mathbb{S}^n} (1 - \g^{-n}_{\Omega}) d\sigma
	=\frac{1}{n} \omega_n - \frac{1}{n} 	\int_{\partial \Omega} p_n (\tilde{\kappa}) d\mu
	\end{align*}
	for any smooth uniformly h-convex bounded domain $\Omega$. Thus we obtain \eqref{Wn-explicit-kappa} and \eqref{Wn-explicit-phi}. 
	By  \eqref{mod k-mean radius} and \eqref{I_n(r)}, we have
	\begin{align}\label{In-r}
	\widetilde{W}_n(\Omega) = I_n(r^n_{\Omega})= \frac{\omega_n}{n} (1-e^{-nr^n_\Omega}).
	\end{align}
	Comparing \eqref{Wn-explicit-phi} with \eqref{In-r}, we obtain the desired formula \eqref{phi^-n-Wn}.
	We complete the proof of Lemma \ref{lem-Wn-explicit}.
\end{proof}

\begin{lem}\label{lem-Minineq-func}
	Let $n \geq 1$ be an integer and $q \in (-1,0) \cup (0,+\infty)$ be a real number. Assume that $f$ and $g$ are two positive continuous functions on $\mathbb{S}^n$, then  
	\begin{equation}\label{Minineq-func}
	\left( \int_{\mathbb{S}^n} (f+g)^{-q}  d\sigma \right)^{-\frac{1}{q}}
	\geq 
	\(\int_{\mathbb{S}^n} f^{-q} d\sigma \)^{-\frac{1}{q}}
	+\(\int_{\mathbb{S}^n} g^{-q} d\sigma \)^{-\frac{1}{q}}.
	\end{equation}
	Equality holds if and only if $f^{-1} g$ is constant.
\end{lem}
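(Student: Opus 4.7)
The plan is to substitute $p = -q$ and recognize the assertion as the reverse Minkowski inequality on $(\mathbb{S}^n, d\sigma)$, namely
\[
\left(\int_{\mathbb{S}^n} (f+g)^p\, d\sigma\right)^{1/p} \geq \left(\int_{\mathbb{S}^n} f^p\, d\sigma\right)^{1/p} + \left(\int_{\mathbb{S}^n} g^p\, d\sigma\right)^{1/p},
\]
where the assumption $q \in (-1,0) \cup (0,+\infty)$ translates to $p \in (0,1) \cup (-\infty, 0)$, which is precisely the range in which the $L^p$ quasi-norm is super-additive on positive integrands. Since $f$ and $g$ are positive and continuous on the compact sphere, they are bounded between two positive constants and no integrability issue arises.

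The execution would proceed via the reverse Hölder inequality. First I would split
\[
\int_{\mathbb{S}^n} (f+g)^p\, d\sigma = \int_{\mathbb{S}^n} f\, (f+g)^{p-1}\, d\sigma + \int_{\mathbb{S}^n} g\, (f+g)^{p-1}\, d\sigma.
\]
To each term I would apply the reverse Hölder inequality with conjugate exponents $p$ and $p^\ast = p/(p-1)$: for $p \in (0,1)$ one has $p^\ast < 0$, while for $p < 0$ one has $p^\ast \in (0,1)$, so in both regimes the reverse Hölder bound
\[
\int FG\, d\sigma \;\geq\; \Bigl(\int F^{p}\, d\sigma\Bigr)^{1/p} \Bigl(\int G^{p^\ast}\, d\sigma\Bigr)^{1/p^\ast}
\]
is available for positive $F,G$. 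Applied with $F = f$ (and then $F = g$) and $G = (f+g)^{p-1}$, the identity $(p-1)p^\ast = p$ lets me rewrite both right-hand factors in terms of $\int_{\mathbb{S}^n} (f+g)^p\, d\sigma$. Summing the two resulting inequalities and dividing through by the common factor $\bigl(\int_{\mathbb{S}^n}(f+g)^p d\sigma\bigr)^{(p-1)/p}$ collapses the estimate to the desired reverse Minkowski inequality.

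The equality discussion is then automatic: equality in each of the two applications of reverse Hölder forces $f^{p}$ and $g^{p}$ to be proportional to $(f+g)^{p}$ almost everywhere on $\mathbb{S}^n$, from which the constancy of $f^{-1} g$ follows immediately, and conversely one verifies saturation by direct substitution. The main technical point I anticipate is simply a clean verification of the reverse Hölder inequality in the two sub-ranges of $p$; I would record this as a short preliminary statement derived from Jensen's inequality applied to $t \mapsto t^{p}$, which is concave on $(0,\infty)$ for $p \in (0,1)$ and convex for $p < 0$, so that the main computation above can proceed without interruption. Apart from this bookkeeping, no genuine obstacle is expected.
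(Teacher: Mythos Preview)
Your proposal is correct and follows essentially the same route as the paper: after the substitution $p=-q$, both arguments split $\int (f+g)^{p}\,d\sigma=\int f(f+g)^{p-1}\,d\sigma+\int g(f+g)^{p-1}\,d\sigma$ and control each summand via a H\"older-type bound, with the paper phrasing this as forward H\"older on a rearranged product while you invoke the equivalent reverse H\"older inequality directly. The equality discussion is likewise identical.
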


\begin{proof}
	\textbf{Case 1.} $q \in (-1,0)$.
	 By the H\"older inequality, we have
	\begin{align*}
	\int_{\mathbb{S}^n} f^{-q} d\sigma
	\leq&
	\( \int_{\mathbb{S}^n} f(f+g)^{-q-1} d \sigma   \)^{-q}
	\( \int_{\mathbb{S}^n} (f+g)^{-q}  d\sigma   \)^{q+1},
	\end{align*}
	with equality if and only if $f (f+g)^{-1}$ is constant on $\mathbb{S}^n$.  
	Then,
	\begin{align}
	\int_{\mathbb{S}^n} (f+g)^{-q} d\sigma
	=& \int_{\mathbb{S}^n} (f+g) (f+g)^{-q-1} d\sigma \nonumber\\
	=& \int_{\mathbb{S}^n} f(f+g)^{-q-1} d\sigma
	+\int_{\mathbb{S}^n} g(f+g)^{-q-1} d\sigma \nonumber\\
	\geq& \left(\int_{\mathbb{S}^n} (f+g)^{-q} d\sigma \right)^{\frac{q+1}{q}} \left( \(\int_{\mathbb{S}^n} f^{-q} d\sigma \)^{-\frac{1}{q}}
	+\(\int_{\mathbb{S}^n} g^{-q} d\sigma \)^{-\frac{1}{q}}
	\right),\label{func Minkowski ineq -1,0}
	\end{align}
	with equality if and only if both $f (f+g)^{-1}$ and $g (f+g)^{-1}$ are constants, equivalently,  $f^{-1} g$ is constant. Thus \eqref{Minineq-func} is proved for $q \in (-1,0)$.
	
	\textbf{Case 2.} $q \in (0,+\infty)$.
	By the H\"older inequality, we have
	\begin{equation*}
	\int_{\mathbb{S}^n} (f+g)^{-q}  d\sigma
	\leq
	\left(\int_{\mathbb{S}^n} f(f+g)^{-q-1} d \sigma \right)^{\frac{q}{q+1}}
	\(\int_{\mathbb{S}^n} f^{-q} d\sigma \)^{\frac{1}{q+1}},  
	\end{equation*}
	with equality if and only if $f (f+g)^{-1}$ is constant on $\mathbb{S}^n$.
	Hence inequality \eqref{func Minkowski ineq -1,0} is also valid for positive $q$. In this case, equality holds in \eqref{func Minkowski ineq -1,0} if and only if $f^{-1} g$ is constant on $\mathbb{S}^n$.
	This completes the proof of Lemma \ref{lem-Minineq-func}.
\end{proof}

Now we are able to prove the following Theorem \ref{thm-BM-k=n}, which is a special case of Conjecture \ref{conj-BM}.

\begin{thm}\label{thm-BM-k=n}
	 Conjecture \ref{conj-BM} holds in the case $n \geq 1$ and $k=n$.
\end{thm}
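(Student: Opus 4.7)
The plan is to convert the Brunn--Minkowski inequality \eqref{horo-BM-ineq} with $k=n$ into a Minkowski-type inequality for $L^{-q}$-quantities on $\mathbb{S}^n$, using the explicit representation of $\widetilde{W}_n$ established in Lemma \ref{lem-Wn-explicit}, and then invoke Lemma \ref{lem-Minineq-func}.

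First, I would rewrite the modified $n$-mean radius purely as an integral on the sphere. By formula \eqref{phi^-n-Wn},
\begin{equation*}
\exp\bigl(p\,r^n_\Omega\bigr)=\omega_n^{p/n}\left(\int_{\mathbb{S}^n}\varphi_\Omega^{-n}\,d\sigma\right)^{-p/n},
\end{equation*}
and similarly for $K$ and $L$, where $\varphi_\Omega=e^{u_\Omega}$ and by Definition~\ref{def-p sum} the horospherical support functions satisfy $\varphi_\Omega^{p}=a\varphi_K^{p}+b\varphi_L^{p}$.

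Next, assuming $a,b>0$, I would set $F:=a\varphi_K^{p}$, $G:=b\varphi_L^{p}$ (so $\varphi_\Omega^{p}=F+G$) and introduce the dual exponent $q:=n/p$. Because $\tfrac12\leq p\leq 2$ and $n\geq 1$, one has $q\in[n/2,2n]\subset(0,+\infty)$. A direct substitution gives
\begin{equation*}
\int_{\mathbb{S}^n}\varphi_K^{-n}\,d\sigma=a^{q}\int_{\mathbb{S}^n}F^{-q}\,d\sigma,\qquad \int_{\mathbb{S}^n}\varphi_L^{-n}\,d\sigma=b^{q}\int_{\mathbb{S}^n}G^{-q}\,d\sigma,
\end{equation*}
so that the factors $a\cdot a^{-1}$, $b\cdot b^{-1}$ and $\omega_n^{1/q}$ cancel cleanly on both sides, reducing \eqref{horo-BM-ineq} with $k=n$ to
\begin{equation*}
\left(\int_{\mathbb{S}^n}(F+G)^{-q}\,d\sigma\right)^{-1/q}\;\geq\;\left(\int_{\mathbb{S}^n}F^{-q}\,d\sigma\right)^{-1/q}+\left(\int_{\mathbb{S}^n}G^{-q}\,d\sigma\right)^{-1/q}.
\end{equation*}
This is exactly Lemma~\ref{lem-Minineq-func} with the positive continuous data $F,G$ and $q\in(0,+\infty)$, so the inequality follows.

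Finally, I would address the degenerate and equality cases. If $a=0$ (resp.\ $b=0$), then $b\geq1$ (resp.\ $a\geq 1$) and $\Omega$ is the hyperbolic $p$-dilation $b\cdot_{p} L$ (resp.\ $a\cdot_{p} K$), so Definition~\ref{def-p dilation} gives $\varphi_\Omega=b^{1/p}\varphi_L$, hence $\exp(p\,r^n_\Omega)=b\exp(p\,r^n_L)$ and equality holds trivially. For $a,b>0$, the equality clause of Lemma~\ref{lem-Minineq-func} forces $F^{-1}G=(b/a)(\varphi_L/\varphi_K)^{p}$ to be constant on $\mathbb{S}^n$, i.e.\ $\varphi_L/\varphi_K$ is constant, which by Definition~\ref{def-hyperbolic dilates} means that $K$ and $L$ are hyperbolic dilates. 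Together these match precisely the three equality conditions listed in case~(2) of Conjecture~\ref{conj-BM}. There is no genuine analytic obstacle beyond the exponent bookkeeping in the reduction; the substantive inequality has already been isolated in Lemma~\ref{lem-Minineq-func}.
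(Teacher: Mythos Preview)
Your proposal is correct and follows essentially the same approach as the paper: both reduce the $k=n$ case via formula \eqref{phi^-n-Wn} to the functional Minkowski inequality of Lemma~\ref{lem-Minineq-func} with $q=n/p$ and the substitutions $f=a\varphi_K^p$, $g=b\varphi_L^p$, and both handle the degenerate cases $ab=0$ and the equality characterization via Definition~\ref{def-hyperbolic dilates} in the same way.
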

\begin{proof}
	\textbf{Case 1.} $a>0$, $b>0$ and $a+b \geq 1$. By the assumption in Conjecture \ref{conj-BM} and Definition \ref{def-p sum}, we have $\g_\Omega^p(z)= a \g_K^p(z) + b \g_L^p(z)$.
	Taking $q= \frac{n}{p} >0$, $f(z) = a \g_K^p(z)$ and $g(z) = b \g_L^p(z)$ in \eqref{Minineq-func}, we have
	\begin{align}\label{BM-equiv-form}
	\(\int_{\mathbb{S}^n} \g^{-n}_\Omega d \sigma \)^{-\frac{p}{n}}
	\geq 
	a \(\int_{\mathbb{S}^n} \g^{-n}_K d \sigma \)^{-\frac{p}{n}}
	+
	b\(\int_{\mathbb{S}^n} \g^{-n}_L d \sigma \)^{-\frac{p}{n}}.
	\end{align}
	 Equality holds in \eqref{BM-equiv-form} if and only if $\g_K^{-1} \g_L$ is constant on $\mathbb{S}^n$, which means $K$ and $L$ are hyperbolic dilates, see Definition \ref{def-hyperbolic dilates}.
    Using \eqref{phi^-n-Wn}, we have
	\begin{align}\label{W_n-phi}
	\exp \left( p I_n^{-1} ( \widetilde{W}_n (\Omega) ) \right)
	= e^{p r^n_\Omega}
	= \omega_n^{\frac{p}{n}} \(\int_{\mathbb{S}^n} \g^{-n}_\Omega  d\sigma \)^{-\frac{p}{n}}.
	\end{align}
	Substituting \eqref{W_n-phi} into \eqref{BM-equiv-form}, we obtain
	\begin{align}\label{BM-k=n}
	\exp \left( p I_n^{-1} ( \widetilde{W}_n (\Omega) ) \right)
	\geq
	a\exp \left( p I_n^{-1} ( \widetilde{W}_n (K) ) \right)
	+b\exp \left( p I_n^{-1} ( \widetilde{W}_n (L) ) \right),
	\end{align}
	with equality if and only if $K$ and $L$ are hyperbolic dilates.
	
	\textbf{Case 2.} Either $a \geq 1$ and $b=0$, or $a =0$ and $b \geq 1$. It is easy to see that equality holds in \eqref{BM-equiv-form}. Hence, in this case, equality holds in \eqref{BM-k=n} for all smooth uniformly h-convex bounded domains $K$ and $L$.
	
	We complete the proof of Theorem \ref{thm-BM-k=n}.
\end{proof}

\begin{rem}
	If we omit the geometric meaning of $\g_\Omega$, then Lemma \ref{lem-Minineq-func} implies that inequality \eqref{BM-equiv-form} holds for $p \in [ -n,0) \cup (0,+\infty)$.
\end{rem}

Next, we will show that Conjecture \ref{conj-BM} holds in the case $K=L$. The following Theorem C was proved by Andrews et al. \cite[Corollary 1.9]{ACW18}, see also an alternative proof in \cite[Remark 1.7]{HLW20}.
\begin{thmC}[\cite{ACW18}]
	Let $M = \partial \Omega$ be a smooth, closed and uniformly h-convex hypersurface in $\mathbb{H}^{n+1}$ $(n \geq 1)$. Then for any $0 \leq l <k \leq n$, there holds
	\begin{equation*}
	\widetilde{W}_k (\Omega) \geq I_k \circ I_l^{-1} ( \widetilde{W}_l (\Omega)),
	\end{equation*}
	with equality holding if and only if $\Omega$ is a geodesic ball.
\end{thmC}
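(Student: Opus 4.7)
The strategy is to reduce by monotone composition to the consecutive case $k=l+1$, and then attack that case with a quermassintegral-preserving curvature flow. Each $I_j:[0,+\infty)\to\mathbb{R}$ defined in \eqref{I_k(r)} is strictly increasing, so $I_{j+1}\circ I_j^{-1}$ is strictly increasing as well; composing the consecutive inequalities $\widetilde{W}_{j+1}(\Omega)\geq I_{j+1}\circ I_j^{-1}(\widetilde{W}_j(\Omega))$ for $j=l,l+1,\ldots,k-1$ yields the full statement $\widetilde{W}_k(\Omega)\geq I_k\circ I_l^{-1}(\widetilde{W}_l(\Omega))$.

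For the consecutive step, I would run the flow
\begin{equation*}
	\partial_t X = \(c(t)-F(\tilde{\kappa})\)\nu, \qquad F(\tilde{\kappa}) := \frac{p_{l+1}(\tilde{\kappa})}{p_l(\tilde{\kappa})},
\end{equation*}
with the global factor $c(t):=\int_{M_t}p_{l+1}\,d\mu\big/\int_{M_t}p_l\,d\mu$ chosen so that Lemma \ref{lem-variation of modified quermass} forces $\frac{d}{dt}\widetilde{W}_l(\Omega_t)\equiv 0$. A direct calculation, combined with the Cauchy--Schwarz inequality applied to the pair $\sqrt{p_l}$ and $p_{l+1}/\sqrt{p_l}$, then gives
\begin{equation*}
	\frac{d}{dt}\widetilde{W}_{l+1}(\Omega_t)=\frac{1}{\int_{M_t}p_l\,d\mu}\left(\(\int_{M_t}p_{l+1}\,d\mu\)^{2}-\int_{M_t}p_l\,d\mu\cdot\int_{M_t}\frac{p_{l+1}^2}{p_l}\,d\mu\right)\leq 0,
\end{equation*}
with equality iff $F$ is constant on $M_t$, which by the equality case of the Newton--MacLaurin inequality (Lemma \ref{lem-Newton-MacLaurin ineq}) forces $M_t$ to be totally umbilic, hence a geodesic sphere. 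Once long-time existence and subsequential smooth convergence to a geodesic ball $B(r_l)$ with $\widetilde{W}_l(B(r_l))=\widetilde{W}_l(\Omega_0)$ are established, passing to the limit yields $\widetilde{W}_{l+1}(\Omega_0)\geq\widetilde{W}_{l+1}(B(r_l))=I_{l+1}(r_l)=I_{l+1}\circ I_l^{-1}(\widetilde{W}_l(\Omega_0))$.

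The main obstacle is precisely the long-time existence and smooth convergence of this globally constrained flow starting from a uniformly h-convex initial hypersurface. The key technical ingredients I would assemble are: preservation of uniform h-convexity via a tensor maximum principle (Theorem \ref{And-tensor-max-principle}) applied to the shifted Weingarten operator $\tilde{h}_i{}^j$, in the spirit of Proposition \ref{prop-Pinching-est}, exploiting that $F=p_{l+1}/p_l$ is concave and inverse-concave on $\Gamma^+$ (Lemma \ref{lem-mono-increasing concavity}); $C^0$-bounds for the horospherical support function via barrier comparison with horo-balls, using the monotonicity of $\widetilde{W}_l$ under inclusion (Proposition \ref{prop-domain monotone modified quermass}) together with conservation of $\widetilde{W}_l$; upper and lower bounds on the speed $F$ via auxiliary-function arguments in the style of Lemmas \ref{lem-F<C}--\ref{lem-F>C}; and Krylov--Safonov plus Schauder theory for higher regularity, valid because $-F^{-1}$ (viewed as a function of $\g_{pq}$) is concave by Lemma \ref{lem-mono-increasing concavity}.

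The hard part will be preserving h-convexity under this globally constrained speed: the presence of $c(t)$ introduces non-local terms in the evolution of $\tilde{h}_i{}^j$ (via Lemma \ref{lem-evl-thij}), so one must verify that after combining with the Simons-type identity, the zeroth-order reaction terms retain the right sign on the boundary of the h-convex cone. Once the flow converges smoothly to $\partial B(r_l)$, the equality characterization is immediate: equality in the target inequality forces $\frac{d}{dt}\widetilde{W}_{l+1}\equiv 0$ along the flow starting from $\Omega$, which by the Cauchy--Schwarz equality case forces $F$ to be constant on $\partial\Omega$ itself, so $\Omega$ is already a geodesic ball.
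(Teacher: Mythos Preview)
Your overall strategy coincides with that of \cite{ACW18} (which the present paper only cites, not reproves): run a globally constrained curvature flow preserving one modified quermassintegral and monotone in another, then pass to the spherical limit. Two refinements are needed.

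First, the claim that constancy of $F = p_{l+1}(\tilde{\kappa})/p_l(\tilde{\kappa})$ on $M_t$ forces umbilicity ``by the equality case of Newton--MacLaurin'' is not correct as stated: equality in Lemma~\ref{lem-Newton-MacLaurin ineq} characterizes when $p_k p_{l-1} = p_{k-1} p_l$ holds at a single point, not when a curvature quotient is globally constant across the hypersurface. The correct argument is the one you sketch at the very end: equality in the Cauchy--Schwarz step forces $c(t)\equiv F$, so the flow is stationary; combined with convergence to a sphere this yields $\Omega_0 = B(r_l)$.

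Second, Theorem~C does not assume $\Omega$ is origin-symmetric or contains the origin, so the $C^0$ bound on the horospherical support function cannot be read off from Lemma~\ref{lem-C0-est} (whose proof uses symmetry through Lemma~\ref{lem-u-min-r-max}). Conservation of $\widetilde{W}_l$ together with the Borisenko--Miquel estimate controls the inradius and diameter of $\Omega_t$, but not its position relative to the chosen origin, so $u(z,t)$ could still drift. In \cite{ACW18} this is handled by an Alexandrov reflection argument (cf.\ the remark following Theorem~\ref{thm-f=c-convergence}); you would need either that, or a recentering device, to close the $C^0$ estimate.
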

From Theorem C, we can obtain the following Alexandrov-Fenchel type inequalities.
\begin{cor}\label{cor-AF-modif-quermass}
	Let $\Omega$ be a smooth uniformly h-convex bounded domain in $\mathbb{H}^{n+1}$ $(n \geq 1)$. Then 
	\begin{equation}\label{AF-modif-quermass}
	\int_{\partial \Omega} p_k (\tilde{\kappa}) d \mu \geq \omega_n \sinh^{n-k} r^k_\Omega e^{-k r^k_\Omega}, \quad k=0, 1, \ldots, n-1.
	\end{equation}
	Equality holds if and only if $\Omega$ is a geodesic ball.
\end{cor}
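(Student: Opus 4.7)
\medskip

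\noindent\textbf{Proof proposal for Corollary \ref{cor-AF-modif-quermass}.}
The plan is to rewrite the boundary integral $\int_{\partial\Omega} p_k(\tilde\kappa)\,d\mu$ as a linear combination of consecutive modified quermassintegrals, and then invoke Theorem C to compare $\widetilde{W}_{k+1}(\Omega)$ with $\widetilde{W}_k(\Omega)$.

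First, I would use the inductive definition \eqref{induction def of modi-quer-intg} of the modified quermassintegrals. For $1\le k \le n-1$ it gives directly
\begin{equation*}
\int_{\partial\Omega} p_k(\tilde\kappa)\,d\mu \;=\; (n-k)\,\widetilde{W}_{k+1}(\Omega) + (n-2k)\,\widetilde{W}_{k}(\Omega),
\end{equation*}
and the case $k=0$ reduces to $|\partial\Omega|=n\widetilde{W}_1(\Omega)+n\widetilde{W}_0(\Omega)$, which fits the same formula. Specializing this identity to the geodesic ball $B(r)$, whose shifted principal curvatures are all $e^{-r}/\sinh r$ and whose boundary area is $\omega_n \sinh^n r$, yields
\begin{equation*}
(n-k)\,I_{k+1}(r) + (n-2k)\,I_{k}(r) \;=\; \omega_n \sinh^{n-k}r\, e^{-kr}.
\end{equation*}

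Next, I would apply Theorem C with the indices $l=k$ and $k+1$ to the h-convex domain $\Omega$ to obtain
\begin{equation*}
\widetilde{W}_{k+1}(\Omega) \;\ge\; I_{k+1}\!\circ\! I_k^{-1}\bigl(\widetilde{W}_k(\Omega)\bigr) \;=\; I_{k+1}(r^k_{\Omega}),
\end{equation*}
where I use the defining relation $\widetilde{W}_k(\Omega)=I_k(r^k_\Omega)$ from Definition \ref{def-mod k-mean radius}. Plugging this into the identity of the first paragraph gives
\begin{equation*}
\int_{\partial\Omega} p_k(\tilde\kappa)\,d\mu \;\ge\; (n-k)\,I_{k+1}(r^k_\Omega) + (n-2k)\,I_k(r^k_\Omega) \;=\; \omega_n \sinh^{n-k} r^k_\Omega\, e^{-k r^k_\Omega},
\end{equation*}
which is the desired inequality \eqref{AF-modif-quermass}.

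For the equality case, I would trace back through the chain: the only non-trivial inequality used is the application of Theorem C, whose equality case asserts that $\Omega$ must be a geodesic ball. Conversely, if $\Omega=B(r)$ then $r^k_\Omega=r$ and both sides of \eqref{AF-modif-quermass} coincide by the direct computation above. Overall this is a short reduction, so I do not anticipate any serious obstacle; the only place worth double-checking is the $k=0$ endpoint, where the inductive formula \eqref{induction def of modi-quer-intg} is stated separately for $\widetilde{W}_1$ and one must verify by hand that the resulting linear combination matches the pattern $(n-k)\widetilde{W}_{k+1}+(n-2k)\widetilde{W}_k$.
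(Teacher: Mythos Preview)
Your proposal is correct and follows essentially the same route as the paper: both arguments combine Theorem~C (applied to the consecutive indices $k$ and $k+1$) with the inductive relation \eqref{induction def of modi-quer-intg}. The only cosmetic difference is that the paper establishes the identity $(n-k)I_{k+1}(r)+(n-2k)I_k(r)=\omega_n\sinh^{n-k}r\,e^{-kr}$ by an explicit integration by parts starting from the integral definition \eqref{I_k(r)}, whereas you obtain it more directly by specializing the inductive relation to a geodesic ball; your shortcut is a small but legitimate simplification, and the equality analysis via Theorem~C is identical.
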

\begin{proof}
	By Theorem C, we have
	\begin{equation*}
	\widetilde{W}_{k+1} (\Omega) \geq I_{k+1} \circ I_k^{-1} (\widetilde{W}_k (\Omega)) = I_{k+1} (r^k_\Omega).
	\end{equation*}
	Hence, by using \eqref{I_k(r)}, $e^{-t} = \cosh t-\sinh t$ and integration by parts, we have
	\begin{align}
	\widetilde{W}_{k+1} (\Omega)
	\geq&\omega_n \int_0^{r^k_\Omega} \sinh^{n-k-1} t e^{-(k+1) t} dt \nonumber\\
	=& \omega_n \int_0^{r^k_\Omega} \sinh^{n-k-1} t \cosh t e^{-k t} dt
	-\omega_n \int_0^{r^k_\Omega} \sinh^{n-k} t e^{-k t} dt  \nonumber\\
	=& \left.\frac{\omega_n }{n-k} \sinh^{n-k} t e^{-kt}  \right|_{t=0}^{t= r^k_\Omega}
	+\frac{k\omega_n}{n-k} \int_{0}^{r^k_\Omega} \sinh^{n-k} t e^{-kt} dt
	-\widetilde{W}_k(\Omega) \nonumber\\
	=& \frac{\omega_n}{n-k}  \sinh^{n-k} r^k_\Omega e^{-k r^k_\Omega}- \frac{n-2k}{n-k} \widetilde{W}_k (\Omega).\label{AF-inq-shifted-quermass}
	\end{align}
	Equality holds if and only if $\Omega$ is a geodesic ball.
	Then inequality \eqref{AF-modif-quermass} follows directly from \eqref{AF-inq-shifted-quermass} and \eqref{induction def of modi-quer-intg}. We complete the proof of Corollary \ref{cor-AF-modif-quermass}.
\end{proof}

\begin{prop}\label{prop-BM-K=L}
	Conjecture \ref{conj-BM} holds in the case $K=L$. More generally, for any $n \geq 1$, $0 \leq k \leq n-1$, $p > 0$ and $t \geq 1$, we have
	\begin{equation}\label{BM-K=L-original}
	\exp \( p I_k^{-1} ( \widetilde{W}_k (t \cdot \Omega) ) \)
	\geq t \exp \(  p I_k^{-1} ( \widetilde{W}_k(\Omega) ) \)
	\end{equation}
	holds for any smooth uniformly h-convex bounded domain $\Omega$. If $t>1$, then equality holds in \eqref{BM-K=L-original} if and only if $\Omega$ is a geodesic ball.
\end{prop}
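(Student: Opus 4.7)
The plan is to reduce both parts of the statement to a differential inequality along the outer parallel flow and then invoke the Alexandrov--Fenchel inequality of Corollary \ref{cor-AF-modif-quermass}. First I would observe that when $K = L$ the identity $\g_{a\cdot K +_p b\cdot K}^p = (a+b)\g_K^p$ forces $a\cdot K +_p b\cdot K = (a+b)\cdot_p K$ in the sense of Definition \ref{def-p dilation}; since the hypothesis of Conjecture \ref{conj-BM} gives $a+b \geq 1$, this reduces the $K = L$ case of the conjecture to \eqref{BM-K=L-original} with $t = a+b$. Thus it suffices to establish \eqref{BM-K=L-original} for every $t \geq 1$ and $p > 0$.

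Next I would set $\rho := \frac{1}{p}\log t \geq 0$ and invoke Proposition \ref{prop-geo-hyper-p-dilation} to identify $t \cdot_p \Omega$ with the outer parallel set $\Omega_\rho = \{X \in \mathbb{H}^{n+1} : d_{\mathbb{H}^{n+1}}(X, \Omega) \leq \rho\}$, which remains smooth and uniformly h-convex by Proposition \ref{prop-regu-p-dilation}. Setting $r_k(\rho) := I_k^{-1}(\widetilde{W}_k(\Omega_\rho))$, the desired inequality becomes $r_k(\rho) \geq r_k(0) + \rho$, so it will be enough to show $\frac{d r_k}{d\rho} \geq 1$ for every $\rho \geq 0$. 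Viewing $\{\partial\Omega_\rho\}$ as the evolution under the unit-speed outward flow $\partial_\rho X = \nu$, Lemma \ref{lem-variation of modified quermass} yields
\[
\frac{d}{d\rho}\widetilde{W}_k(\Omega_\rho) = \int_{\partial\Omega_\rho} p_k(\tilde\kappa)\, d\mu_\rho.
\]
Combining the chain rule with the identity $I_k'(r) = \omega_n \sinh^{n-k} r\, e^{-kr}$ from \eqref{I_k(r)} and with Corollary \ref{cor-AF-modif-quermass} applied to $\Omega_\rho$ then gives
\[
\frac{d r_k}{d\rho} = \frac{\int_{\partial\Omega_\rho} p_k(\tilde\kappa)\, d\mu_\rho}{\omega_n\,\sinh^{n-k} r_k(\rho)\, e^{-k r_k(\rho)}} \geq 1,
\]
and integration over $[0,\rho]$ closes the inequality.

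For the equality case with $t > 1$, equality in \eqref{BM-K=L-original} forces $\frac{d r_k}{d\rho'} = 1$ for almost every $\rho' \in (0,\rho)$; the equality clause of Corollary \ref{cor-AF-modif-quermass} together with continuity of the family $\{\Omega_{\rho'}\}$ then forces $\Omega_{\rho'}$ to be a geodesic ball for every $\rho' \in [0,\rho]$, and in particular $\Omega = \Omega_0$ is a geodesic ball. Conversely, if $\Omega = B(X_0, r_0)$, isometry invariance of $\widetilde{W}_k$ gives $\widetilde{W}_k(\Omega_\rho) = I_k(r_0+\rho)$, hence $r_k(\rho) = r_0 + \rho$ and equality is attained. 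I do not anticipate a serious obstacle: the substantive content is Corollary \ref{cor-AF-modif-quermass} (equivalently Theorem C), and what remains is essentially a monotonicity argument along the outer parallel flow. The only point that needs care is to verify that $\Omega_\rho$ is smooth and uniformly h-convex at every intermediate $\rho \geq 0$ so that Corollary \ref{cor-AF-modif-quermass} applies; this is however guaranteed by the geometric interpretation of the hyperbolic $p$-dilation established in Section \ref{sec-Steiner formula}.
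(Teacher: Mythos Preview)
Your proposal is correct and follows essentially the same approach as the paper: both reduce to a differential inequality along the dilation/outer-parallel family and invoke Corollary \ref{cor-AF-modif-quermass} as the key input. The only difference is a change of variable: the paper works with the dilation parameter $t$ and shows $t\chi'(t) \geq \chi(t)$ for $\chi(t) = \exp(p\, r^k_t)$, while you substitute $\rho = \frac{1}{p}\log t$ and show $r_k'(\rho) \geq 1$ directly, which makes the integration step slightly cleaner and avoids the paper's bootstrapping of replacing $\Omega$ by $t_0\cdot\Omega$.
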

\begin{proof}
	Let $\Omega_t = t \cdot \Omega $, $r^k_t := I_k^{-1} ( \widetilde{W}_k(\Omega_t) )$ and $\chi(t)  = e^{p r^k_t}$.
	Then the desired inequality \eqref{BM-K=L-original} can be written shortly as 
	\begin{equation}\label{BM-K=L}
	\chi(t) \geq t \chi(1), \quad  \forall \ t \geq 1. 
	\end{equation}
	 Definition \ref{def-p dilation} gives
	 \begin{equation*}
	 u_{\Omega_t}(z) := u(z,t) = u(z,1) + \frac{1}{p} \log t.
	 \end{equation*}
     Hence,  at $t=1$ we have
     \begin{equation*}
     \frac{\partial}{\partial t} u(z,t) =\frac{1}{p}.
     \end{equation*}
	By Lemma \ref{lem-variation of modified quermass}, \eqref{scalr-flow DT's trick} and \eqref{I_k(r)}, at $t=1$ we have
	\begin{align*}
	\frac{d}{dt} \widetilde{W}_k(t \cdot \Omega) = \frac{1}{p} \int_{\partial \Omega} p_k (\tilde{\kappa}) d \mu,
	\quad
	I_k'(r^k_\Omega) = \omega_n \sinh^{n-k} r^k_\Omega e^{-k r^k_\Omega}.
	\end{align*}	
	Hence, by using \eqref{AF-modif-quermass}, we have
	\begin{align}
	\chi'(1) =& p e^{p r^k_\Omega} \frac{d}{dt} r^k_t
	=  p e^{p r^k_\Omega}  \( I'_k(r^k_\Omega )\)^{-1} \frac{d}{dt} \widetilde{W}_k (t \cdot \Omega) \nonumber\\
	=& \(\omega_n \sinh^{n-k} r^k_\Omega e^{-k r^k_\Omega}\)^{-1} e^{pr^k_\Omega} \int_{\partial \Omega} p_k(\tilde{\kappa}) d\mu \nonumber\\
	\geq& e^{p r^k_\Omega} = \chi(1),\label{chi'(1) geq chi(1)}
	\end{align}
	with equality if and only if $\Omega$ is a geodesic ball.
	It is easy to see that \eqref{chi'(1) geq chi(1)} is necessary for \eqref{BM-K=L}; while, we claim that it is also sufficient.	
	
	Instead of $\Omega$, if we set $\overline{\Omega} = {t_0} \cdot \Omega$ be the initial domain for $t_0 \geq 1$, and $\zeta(s) = \chi(t_0 s)$, then inequality \eqref{chi'(1) geq chi(1)} implies $\zeta'(1) \geq \zeta(1)$. Hence 
	\begin{equation*}
	\chi'(t_0) t_0 = \zeta'(1) \geq \zeta(1) = \chi(t_0).
	\end{equation*}
	 Letting $t_0$ runs from $1$ to infinity, we have $\frac{d}{dt} \log \(\frac{\chi(t)}{t} \)\geq 0$ for all $t \geq 1$. Then we get \eqref{BM-K=L}.
	 By the equality case of \eqref{chi'(1) geq chi(1)}, if $t>1$, then equality holds in \eqref{BM-K=L} if and only if $\Omega$ is a geodesic ball. We complete the proof of Proposition \ref{prop-BM-K=L}. 
\end{proof}

By using Theorem \ref{thm-BM-k=n} and Proposition \ref{prop-BM-K=L}, we can simplify Conjecture \ref{conj-BM}.
\begin{prop}\label{prop-reduce conj BM - a+b=1 case}
		Assume that Conjecture \ref{conj-BM} holds in the case that $a \geq 0$ and $b \geq 0$ with $a+b=1$. Then  Conjecture \ref{conj-BM} holds in the case that $a \geq 0$ and $b \geq 0$ with $a+b > 1$.
\end{prop}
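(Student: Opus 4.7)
The plan is to reduce the case $a+b>1$ to the assumed case $a+b=1$ by factoring the total coefficient $s:=a+b$ out as a hyperbolic $p$-dilation and then applying the dilation inequality from Proposition \ref{prop-BM-K=L}. If $b=0$ (so $a\geq 1$, and by symmetry similarly when $a=0$), then $\Omega=a\cdot_p K$ and \eqref{horo-BM-ineq} follows directly from Proposition \ref{prop-BM-K=L} when $0\leq k\leq n-1$, and from the direct computation below when $k=n$. So we may assume $a,b>0$. Set $a'=a/s$ and $b'=b/s$, so $a'+b'=1$, and define $\Omega':=a'\cdot K+_p b'\cdot L$. By Definition \ref{def-p sum},
\begin{equation*}
\g_\Omega^p(z) = a\g_K^p(z) + b\g_L^p(z) = s\bigl(a'\g_K^p(z) + b'\g_L^p(z)\bigr) = s\,\g_{\Omega'}^p(z),
\end{equation*}
that is $u_\Omega = u_{\Omega'} + \tfrac{1}{p}\log s$; in view of Definition \ref{def-p dilation} this gives the key identification $\Omega = s\cdot_p \Omega'$.

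In the range $0\leq k\leq n-1$, Proposition \ref{prop-BM-K=L} applied to $\Omega'$ with parameter $t=s\geq 1$ yields
\begin{equation*}
\exp\bigl(p\,I_k^{-1}(\widetilde W_k(\Omega))\bigr) \;\geq\; s\exp\bigl(p\,I_k^{-1}(\widetilde W_k(\Omega'))\bigr).
\end{equation*}
For $k=n$ this step is in fact an equality: $\g_\Omega = s^{1/p}\g_{\Omega'}$ combined with \eqref{phi^-n-Wn} gives $e^{-n r^n_\Omega} = s^{-n/p}e^{-n r^n_{\Omega'}}$, hence $e^{p r^n_\Omega} = s\, e^{p r^n_{\Omega'}}$. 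In either case, I then invoke the hypothesis that Conjecture \ref{conj-BM} holds in the normalized case $a'+b'=1$ applied to $\Omega'$, multiply through by $s$, and use $sa'=a$ and $sb'=b$ to conclude the desired inequality
\begin{equation*}
\exp\bigl(p\,I_k^{-1}(\widetilde W_k(\Omega))\bigr) \;\geq\; a\exp\bigl(p\,I_k^{-1}(\widetilde W_k(K))\bigr) + b\exp\bigl(p\,I_k^{-1}(\widetilde W_k(L))\bigr).
\end{equation*}

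The equality analysis follows by tracking both steps. For $0\leq k\leq n-1$, since $s>1$, equality in Proposition \ref{prop-BM-K=L} forces $\Omega'$ to be a geodesic ball, and then equality in the assumed $a'+b'=1$ case forces $K$ and $L$ to be concentric geodesic balls (the only applicable sub-case of the equality list, since $s>1$ rules out $a=1,b=0$, $a=0,b=1$, and $K=L$ with $a+b=1$). For $k=n$ the first step is automatically an equality, so equality reduces to the $a'+b'=1$ case and forces $K,L$ to be hyperbolic dilates. Since each step is a direct application of already-established results, there is no substantive obstacle; the only point requiring explicit verification is the identification $a\cdot K+_p b\cdot L = (a+b)\cdot_p\bigl(\tfrac{a}{a+b}\cdot K+_p \tfrac{b}{a+b}\cdot L\bigr)$, which is a one-line computation with horospherical support functions combined with Definition \ref{def-p dilation}.
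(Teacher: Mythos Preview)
Your proof is correct and follows essentially the same strategy as the paper: factor out $s=a+b$ to write $\Omega=s\cdot_p\Omega'$ with $\Omega'=\tfrac{a}{s}\cdot K+_p\tfrac{b}{s}\cdot L$, apply the dilation inequality of Proposition~\ref{prop-BM-K=L}, and then invoke the assumed $a+b=1$ case. The only cosmetic difference is that the paper disposes of the case $k=n$ at the outset by citing Theorem~\ref{thm-BM-k=n}, whereas you verify the dilation step is an equality for $k=n$ directly via \eqref{phi^-n-Wn}.
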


\begin{proof}
	 We have proved in Theorem \ref{thm-BM-k=n} and in Proposition \ref{prop-BM-K=L} that Conjecture \ref{conj-BM} holds when either $n \geq 1$ and $k=n$,  or $ab =0$. 
	Then we can suppose that $n \geq 1$, $0 \leq k \leq n-1$, $a>0$, $b> 0$ and $a+b>1$ without loss of generality.
	For a smooth h-convex bounded domain $\Omega$, we set
	\begin{equation*}
	\chi(\Omega) = \exp \(p I_k^{-1} (\widetilde{W}_k(\Omega)) \),
	\end{equation*}
	where $0 \leq k \leq n-1$ and $\frac{1}{2} \leq p \leq 2$.
	By \eqref{horo-BM-ineq} and the assumption in Proposition \ref{prop-reduce conj BM - a+b=1 case}, we have that inequality
	\begin{equation}\label{BM-a+b=1}
	\chi \((1-t)\cdot K +_p t \cdot L \) \geq (1-t) \chi(K) +t \chi(L) 
	\end{equation}
	holds for any smooth uniformly h-convex bounded domains $K$ and $L$, and $t \in [0,1]$. If $t \in (0,1)$,  then equality holds in \eqref{BM-a+b=1} if and only if either $K=L$, or $K$ and $L$ are geodesic balls centered at the same point.
	 We set
	 \begin{equation*}
	 \overline{\Omega} = \frac{a}{a+b} \cdot K +_p \frac{b}{a+b} \cdot L,
	 \end{equation*}
	 which is well-defined by Theorem \ref{thm-def p sum-well defined}. 
	Using Proposition \ref{prop-BM-K=L} and taking $t = \frac{b}{a+b} \in (0,1)$ in \eqref{BM-a+b=1}, we have
	\begin{equation}\label{chi-aK-bL}
	\chi \(a\cdot K +b \cdot L\)= \chi \((a+b) \cdot \overline{\Omega}\) \geq  (a+b) \chi (\overline{\Omega})
	\geq  a \chi(K) +b \chi (L), 
	\end{equation}
	thus we obtain \eqref{horo-BM-ineq}. Furthermore, if $a+b>1$, then Proposition \ref{prop-BM-K=L} asserts that equality holds only if $\overline{\Omega}$ is a geodesic ball.
	
	 Then equality holds in \eqref{chi-aK-bL} 
	 if and only if  equality holds in \eqref{BM-a+b=1} and $\overline{\Omega}$ is a geodesic ball. Consequently,  equality holds in \eqref{chi-aK-bL} if and only if $K$ and $L$ are geodesic balls centered at the same point.
	We complete the proof of Proposition \ref{prop-reduce conj BM - a+b=1 case}.
\end{proof}

In the classical convex geometry in Euclidean space, we know that the Minkowski inequality can be obtained by calculating the variation of the Brunn-Minkowski inequality. Furthermore, these two inequalities can deduce each other. Similarly, we propose the following conjectures of Minkowski type inequalities in hyperbolic space. Since the statement does not rely on the definition of hyperbolic $p$-sum in Definition \ref{def-p sum}, we can relax the restriction on $p$.
Since Conjecture \ref{conj-BM} has been solved in the case $k=n$ in Theorem \ref{thm-BM-k=n}, we mainly focus on the case $0 \leq k \leq n-1$.

\begin{con}[Horospherical $p$-Minkowski inequalities of type \uppercase\expandafter{\romannumeral1}]\label{conj-Min ineq}
	Let $n \geq 1$ and $0 \leq k \leq n$ be integers, and let $p$ be a real number.
	Suppose that $K$ and $L$ are smooth uniformly h-convex bounded domains in $\mathbb{H}^{n+1}$. If $p>0$, then	
	\begin{align}\label{formula-conj-Min ineq}
	\int_{\mathbb{S}^n} \g_L^p \g_K^{-p-n} p_{n-k}(\tilde{\lambda}_K) d\sigma
	- \int_{\mathbb{S}^n} \g_K^{-n}p_{n-k} (\tilde{\lambda}_K)d\sigma
	\geq
	\omega_n \sinh^{n-k}r^k_K e^{-kr^k_K} \left( e^{p(r^k_L - r^k_K)}-1 \right),
	\end{align} 
	where $r^k_\Omega$ is defined by $r^k_\Omega = I_k^{-1} (\widetilde{W}_k (\Omega))$ for a h-convex bounded domain $\Omega \subset \mathbb{H}^{n+1}$.
	\begin{enumerate}
		\item When $0 \leq k \leq n-1$,  equality holds in \eqref{formula-conj-Min ineq}  if and only if one of the following conditions holds,
			\begin{enumerate}
				\item $K =L$,
				\item $K$ and $L$ are geodesic balls centered at the same point.
			\end{enumerate}
		\item When $k=n$, equality holds in \eqref{formula-conj-Min ineq}  if and only if $K = c\cdot L$ or $L = c\cdot K$ for some constant $c \geq 1$.
	\end{enumerate}
	If $-n \leq p <0$, then
	\begin{align}\label{formula-conj-Min ineq-2}
	\int_{\mathbb{S}^n} \g_L^p \g_K^{-p-n} p_{n-k}(\tilde{\lambda}_K) d\sigma
	- \int_{\mathbb{S}^n} \g_K^{-n}p_{n-k} (\tilde{\lambda}_K)d\sigma
	\leq
	\omega_n \sinh^{n-k}r^k_K e^{-kr^k_K} \left( e^{p(r^k_L - r^k_K)}-1 \right).
	\end{align} 
	\begin{enumerate}
		\item When $0 \leq k \leq n-1$ and $-n<p<0$,  equality holds in \eqref{formula-conj-Min ineq-2} if and only if one of the following conditions holds,
		\begin{enumerate}
			\item $K =L$,
			\item $K$ and $L$ are geodesic balls centered at the same point.
		\end{enumerate}
		\item When  $0 \leq k \leq n-1$ and $p=-n$, equality holds in \eqref{formula-conj-Min ineq-2} if and only if one of the following conditions holds,
				\begin{enumerate}
					\item $K =L$,
					\item $K$ and $L$ are geodesic balls.
				\end{enumerate}
		\item When $k=n$ and $-n<p <0$,  equality holds in \eqref{formula-conj-Min ineq-2} if and only if $K$ and $L$ are hyperbolic dilates.
		\item When $k=n$ and $p=-n$, equality always holds in  \eqref{formula-conj-Min ineq-2}.
	\end{enumerate}
\end{con}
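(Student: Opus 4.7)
The plan is to realise Conjecture \ref{conj-Min ineq} as the first-order infinitesimal form of the horospherical $p$-Brunn--Minkowski inequality (Conjecture \ref{conj-BM}), taken along the interpolation path
$$\Omega_t := (1-t)\cdot K +_p t\cdot L, \qquad t \in [0,1],$$
whose horospherical support function is $u_{\Omega_t}(z) = \tfrac{1}{p}\log\bigl((1-t)\g_K^p(z) + t\g_L^p(z)\bigr)$. Since $\Omega_0 = K$, the BM inequality collapses to equality at $t=0$, so Taylor-expanding at $t=0$ is non-trivial. A direct computation combining Lemma \ref{lem-variation of modified quermass}, the scalar flow identity \eqref{scalr-flow DT's trick}, and the area-element conversion \eqref{rel-area element}--\eqref{shifted curvature-support function} yields
$$\left.\frac{d}{dt}\right|_{t=0}\widetilde{W}_k(\Omega_t) = \frac{1}{p}\left(\int_{\mathbb{S}^n}\g_L^p\,\g_K^{-p-n} p_{n-k}(\tilde{\lambda}_K)\,d\sigma - \int_{\mathbb{S}^n}\g_K^{-n} p_{n-k}(\tilde{\lambda}_K)\,d\sigma\right),$$
and dividing by $I_k'(r^k_K) = \omega_n\sinh^{n-k}r^k_K\,e^{-kr^k_K}$ gives the first-order term of $e^{pr^k_{\Omega_t}}$. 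Matching this with the first-order term of the right-hand side $(1-t)e^{pr^k_K} + t e^{pr^k_L}$ and multiplying through by $p$ produces \eqref{formula-conj-Min ineq} when $p > 0$, while multiplication by the negative factor $p$ flips the inequality and produces \eqref{formula-conj-Min ineq-2} when $-n \le p < 0$. The equality characterisations are intended to be inherited from those of Conjecture \ref{conj-BM} together with the rigidity built into the strictly increasing $I_k$.

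Because Conjecture \ref{conj-BM} is itself open and is moreover restricted to $\tfrac12 \le p \le 2$, the plan is to pursue two complementary routes rather than cite BM unconditionally. For $k = n$ one bypasses BM entirely: the explicit identity \eqref{phi^-n-Wn} rewrites both \eqref{formula-conj-Min ineq} and \eqref{formula-conj-Min ineq-2} as a H\"older inequality for the pair $(f,g) = (\g_K^{-n}, \g_L^{-n})$ on $\mathbb{S}^n$ with exponent parameter $s = p/n$, namely
$$\int_{\mathbb{S}^n} f^{1+s}\,g^{-s}\,d\sigma \ \ge\ \Bigl(\int_{\mathbb{S}^n} f\,d\sigma\Bigr)^{1+s}\Bigl(\int_{\mathbb{S}^n} g\,d\sigma\Bigr)^{-s}\qquad (s > 0),$$
and its Cauchy--Schwarz-type reverse for $s \in (-1,0)$; the equality case of proportionality in these inequalities translates into ``$K$ and $L$ are hyperbolic dilates'' as required in items (2) and (3) of the equality description. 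For $0 \le k \le n-1$ the plan is to design a curvature flow of the type \eqref{flow-HCMF}, with speed tailored so that $\widetilde{W}_k$ is preserved while an $L_p$-type functional built from the left-hand side of \eqref{formula-conj-Min ineq} is monotone. Long-time existence and smooth subsequential convergence, following the template of Section \ref{sec-horosp-p-Christoffel Minkowski problem} and Theorem \ref{thm-f=c-convergence}, would force the limit to be a geodesic ball centred at the origin (for $p > -n$) or an arbitrary geodesic ball (for $p = -n$), on which the inequality becomes an equality computable by direct integration via Corollary \ref{cor-AF-modif-quermass}.

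The main obstacle lies in the pinching estimate of Proposition \ref{prop-Pinching-est} underlying such a flow: it presently forces either origin-symmetry of $M_t$ or the structural Assumption \ref{assump-h} on the prescribed function, neither of which is natural along the $K$-to-$L$ interpolation; one must therefore either enlarge the class of admissible initial data at the cost of weaker estimates, or isolate monotone quantities depending only on $C^1$ data, in the spirit of the curvature-flow inequalities of \cite{ACW18,HL21,HLW20,SX19} used already in Theorem \ref{thm-sum-Min ineq-strong}. A second delicate point is the borderline case $p = -n$, $0 \le k \le n-1$ in item (2b): here the equality set enlarges to include geodesic balls with arbitrary centre, matching exactly the translation-invariance encoded in the Kazdan--Warner identity \eqref{K-W obstruction}, so any flow-based argument must avoid using origin-centred rigidity. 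Extending the full statement beyond $\tfrac12 \le p \le 2$ in the $k \le n-1$ range will likely require establishing a sharp version of Conjecture \ref{conj-BM} itself, perhaps via a Pr\'ekopa--Leindler-type statement adapted to the pointwise hyperbolic $p$-sum of Definition \ref{def-p sum-p>0-new}.
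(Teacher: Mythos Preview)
Your proposal mirrors the paper's own treatment of this conjecture: the variational derivation from Conjecture~\ref{conj-BM} along the path $\Omega_t=(1-t)\cdot K+_p t\cdot L$ is exactly Proposition~\ref{prop-rel-BM-Min ineq-strong}, and your direct H\"older argument for $k=n$ is precisely Theorem~\ref{thm-k=n-strong Min}. The paper does not claim a full proof either; beyond these two ingredients it establishes only the further special cases $K$ a geodesic ball (Theorem~\ref{thm-strong Min ineq-K ball}, via H\"older combined with Theorem~C) and $p=1$, $1\le k\le n-1$, $L$ a ball (Theorem~\ref{thm-p=1-strong Min ineq, L ball}, via a new monotone quantity along the flow~\eqref{flow-HLW20}), all summarised in Theorem~\ref{thm-sum-Min ineq-strong}. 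Your assessment of the obstacles---origin-symmetry and Assumption~\ref{assump-h} in the pinching estimate, and the enlarged equality set at $p=-n$---is accurate and matches what limits the paper's results.
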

By using \eqref{rel-area element}, \eqref{shifted curvature-support function} and \eqref{1/phi, coshr-u},  we note that \eqref{formula-conj-Min ineq} is equivalent to
\begin{align*}
\int_{\partial K} \(\g_L^p (z) (\cosh r -\tilde{u})^{p} -1\)  p_k (\tilde{\kappa}) d\mu
\geq \omega_n \sinh^{n-k}r^k_K e^{-kr^k_K} \left( e^{p(r^k_L - r^k_K)}-1 \right),
\end{align*}
and \eqref{formula-conj-Min ineq-2} is equivalent to
\begin{align*}
\int_{\partial K} \(\g_L^p (z) (\cosh r -\tilde{u})^{p} -1\)  p_k (\tilde{\kappa}) d\mu
\leq \omega_n \sinh^{n-k}r^k_K e^{-kr^k_K} \left( e^{p(r^k_L - r^k_K)}-1 \right).
\end{align*}

\begin{rem}[Relationship between Conjecture \ref{conj-Min ineq} and the flow \eqref{flow-HCMF}]
	We can assume $p>0$ at first.
	If we fix the smooth uniformly h-convex bounded domain $K$ in Conjecture \ref{conj-Min ineq} and set $f(z) =\g_K^{-p-n}(z) p_{n-k}(\tilde{\lambda}_K(G_K^{-1}(z)  ))$, then we have the left-hand side of \eqref{formula-conj-Min ineq} is non-increasing and the right-hand side of \eqref{formula-conj-Min ineq} is invariant along the flow \eqref{flow-HCMF} by \eqref{mono quan-ineq} and \eqref{mono quan-eq}, respectively. For the case $-n \leq p \leq 0$, the argument is roughly the same. Hence, flow \eqref{flow-HCMF} is a possible tool to solve Conjecture \ref{conj-Min ineq}.  
\end{rem}

In the case $\frac{1}{2}\leq p \leq 2$, we will show  Conjecture \ref{conj-Min ineq} can be deduced from Conjecture \ref{conj-BM} in the following way. 
\begin{prop}\label{prop-rel-BM-Min ineq-strong}
	Let $n$, $k$, $p$, $K$ and $L$ satisfy the assumptions in Conjecture \ref{conj-BM}.
	Assume that $\Omega_t:= (1-t) \cdot K +_p t \cdot L$. Then Conjecture \ref{conj-Min ineq} is equivalent to
	\begin{equation*}
	\left. \frac{d}{dt} \right|_{t=0} \exp(p I_k^{-1} (\widetilde{W}_k(\Omega_t))) \geq \exp (p I_k^{-1} (\widetilde{W}_k(L))) - \exp(p I_k^{-1} (\widetilde{W}_k(K))).
	\end{equation*}
	Thus Conjecture \ref{conj-Min ineq} can be obtained from Conjecture \ref{conj-BM} in this case.
\end{prop}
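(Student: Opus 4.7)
The plan is to recognize $\exp\bigl(p I_k^{-1}(\widetilde{W}_k(\cdot))\bigr)$ as a functional whose initial derivative along the deformation $t \mapsto \Omega_t$ equals, up to known positive factors, the left--hand side of the horospherical $p$-Minkowski inequality \eqref{formula-conj-Min ineq}. The equivalence then reduces to unwinding definitions, and the Brunn--Minkowski $\Rightarrow$ Minkowski step is the usual ``chord lies below tangent at the left endpoint'' observation.

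First, I would read off the horospherical support function of $\Omega_t$ from Definition \ref{def-p sum} as
$u_{\Omega_t}(z) = \tfrac{1}{p}\log\bigl((1-t)\varphi_K^p(z)+t\varphi_L^p(z)\bigr),$
and differentiate at $t=0$ to obtain $\partial_t u_{\Omega_t}(z)|_{t=0} = \tfrac{1}{p}\bigl(\varphi_L^p(z)/\varphi_K^p(z)-1\bigr)$. By the DeTurck--type identification \eqref{scalr-flow DT's trick}, this function equals the normal speed $\mathscr{F}$ of $\partial\Omega_t$, so Lemma \ref{lem-variation of modified quermass} gives
$\frac{d}{dt}\widetilde{W}_k(\Omega_t)\bigr|_{t=0} = \tfrac{1}{p}\int_{\partial K}\bigl(\varphi_L^p/\varphi_K^p-1\bigr)p_k(\tilde{\kappa})\,d\mu.$
Transferring this integral to $\mathbb{S}^n$ via the horospherical Gauss map, using \eqref{rel-area element} and \eqref{shifted curvature-support function} exactly as in the proof of Lemma \ref{lem-formula-Wpk-K-L}, converts it into
$p\frac{d}{dt}\widetilde{W}_k(\Omega_t)\bigr|_{t=0} = \int_{\mathbb{S}^n}\varphi_L^p\varphi_K^{-p-n}p_{n-k}(\tilde{\lambda}_K)\,d\sigma - \int_{\mathbb{S}^n}\varphi_K^{-n}p_{n-k}(\tilde{\lambda}_K)\,d\sigma.$
The smoothness and uniform h-convexity of $\Omega_t$ for small $t\geq 0$, needed to justify this one-sided differentiation, are provided by Theorem \ref{thm-def p sum-well defined}.

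Next, I would differentiate $\exp\bigl(pI_k^{-1}(\widetilde{W}_k(\Omega_t))\bigr)$ by the chain rule together with $I_k'(r)=\omega_n\sinh^{n-k}r\,e^{-kr}$ from \eqref{I_k(r)}. At $t=0$ this yields
$\left.\frac{d}{dt}\right|_{t=0}\exp\bigl(pI_k^{-1}(\widetilde{W}_k(\Omega_t))\bigr) = \frac{p\,e^{pr^k_K}}{\omega_n\sinh^{n-k}r^k_K\,e^{-kr^k_K}}\cdot\frac{d}{dt}\widetilde{W}_k(\Omega_t)\Big|_{t=0}.$
Plugging in the identity from the previous paragraph and clearing the positive factor $\omega_n\sinh^{n-k}r^k_K\,e^{-kr^k_K}e^{-pr^k_K}$ shows that the derivative inequality in the proposition is precisely \eqref{formula-conj-Min ineq}, so the ``equivalent to'' assertion is established as pure bookkeeping.

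Finally, to deduce the Minkowski inequality from Conjecture \ref{conj-BM}, I would apply \eqref{horo-BM-ineq} with the parameters $a=1-t$ and $b=t$ (permissible since $a+b=1\geq 1$), obtaining
$\exp(pr^k_{\Omega_t}) \geq (1-t)\exp(pr^k_K) + t\exp(pr^k_L),\quad t\in[0,1].$
Both sides equal $\exp(pr^k_K)$ at $t=0$, so subtracting, dividing by $t>0$ and letting $t\to 0^+$ yields the required derivative inequality, hence, by the equivalence, \eqref{formula-conj-Min ineq}. There is essentially no hard step here: the entire argument is a one-shot variational computation followed by a trivial difference-quotient estimate. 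The only point that deserves mild care, rather than being a genuine obstacle, is ensuring that $\Omega_t$ is a smooth uniformly h-convex domain in a one-sided neighborhood of $t=0$ so that $\widetilde{W}_k(\Omega_t)$ admits a classical right derivative there, which is exactly what Theorem \ref{thm-def p sum-well defined} guarantees.
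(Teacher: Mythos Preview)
Your proposal is correct and follows essentially the same approach as the paper: compute $\partial_t u_{\Omega_t}|_{t=0}$, feed it through Lemma \ref{lem-variation of modified quermass} and the chain rule with $I_k'(r)=\omega_n\sinh^{n-k}r\,e^{-kr}$ to identify the derivative with the left-hand side of \eqref{formula-conj-Min ineq} up to a positive factor, and then obtain the Minkowski inequality from Conjecture \ref{conj-BM} by differentiating the case $a=1-t$, $b=t$ at $t=0$. The paper's proof is organized identically, referring back to the computation in \eqref{chi'(1) geq chi(1)} rather than spelling out the chain rule step, but the content is the same.
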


\begin{proof}
	Denote by $u(z,t)$ the horospherical support function of $\Omega_t$, and $\g(z,t) = e^{u(z,t)}$. Then we have from the definition of $\Omega_t$ that
	\begin{equation*}
	\g^p(z,t) = (1-t) \g^p_K(z) + t \g^p_{L}(z).
	\end{equation*}
	Hence 
	\begin{equation*}
	\left. \frac{\partial}{\partial t} \right|_{t=0} u(z,t) =\frac{1}{p}\frac{\g_L^p(z) - \g_K^p(z)}{\g_K^p (z)}.   
	\end{equation*}
	Applying a similar calculation as in \eqref{chi'(1) geq chi(1)}, we have
	\begin{align*}
	&\left.\frac{d}{dt} \right|_{t=0} \exp\(p I_k^{-1} (\widetilde{W}_k(\Omega_t))\)\\
	=&
	p e^{p r^k_K} \( \omega_n \sinh^{n-k} r^k_K e^{-kr^k_K} \)^{-1} \int_{\partial K} \(  \frac{1}{p}\frac{\g_L^p(z) - \g_K^p(z)}{\g_K^p (z)}\)p_k(\tilde{\kappa}) d\mu\\
	=&\( \omega_n \sinh^{n-k} r^k_K e^{-(k+p)r^k_K} \)^{-1}\( \int_{\partial K} \g_L^p \g_K^{-p} p_k (\tilde{\kappa}) d\mu- \int_{\partial K} p_k(\tilde{\kappa}) d\mu\)\\
	=&\( \omega_n \sinh^{n-k} r^k_K e^{-(k+p)r^k_K} \)^{-1}\( \int_{\mathbb{S}^n} \g_L^p \g_K^{-(n+p)} p_{n-k} (\tilde{\lambda}_K) d\sigma- \int_{\mathbb{S}^n} \g_K^{-n} p_{n-k}(\tilde{\lambda}_K) d\sigma\),
	\end{align*}
	where we used \eqref{rel-area element} and \eqref{shifted curvature-support function} in the last equality. Hence, by use of \eqref{mod k-mean radius}, we can rewrite Conjecture \ref{conj-Min ineq} as
	\begin{equation}\label{BM-a=1-t-b=t-deriv at 0}
	\left. \frac{d}{dt} \right|_{t=0} \exp(p I_k^{-1} (\widetilde{W}_k(\Omega_t))) \geq 
	e^{pr^k_L}- e^{pr^k_K}=\exp (p I_k^{-1} (\widetilde{W}_k(L))) - \exp(p I_k^{-1} (\widetilde{W}_k(K))).
	\end{equation}
	
	 On the other hand, taking $a=1-t$ and $b=t$ in Conjecture \ref{conj-BM} yields
	\begin{equation}\label{BM-a=1-t-b=t}
	\exp(p I_k^{-1} (\widetilde{W}_k(\Omega_t))) \geq 
	(1-t)\exp(p I_k^{-1} (\widetilde{W}_k(K)))+
	t\exp(p I_k^{-1} (\widetilde{W}_k(L))). 
	\end{equation}
	Hence \eqref{BM-a=1-t-b=t-deriv at 0} can be obtained by taking the derivative of \eqref{BM-a=1-t-b=t} with respect to $t$ at $t=0$.
\end{proof}

However, if we set $a=1$, $b=t \geq 0$ in Conjecture \ref{conj-BM} and take the derivative of \eqref{horo-BM-ineq} with respect to $t$ at $t=0$, then we can get the following conjecture by a similar method as in the proof of Proposition \ref{prop-rel-BM-Min ineq-strong}. Here we omit calculations in details.

\begin{con}[Horospherical $p$-Minkowski inequalities of type \uppercase\expandafter{\romannumeral2}]\label{conj-Min ineq-weak}
	Let $n \geq 1$ and $0 \leq k \leq n$ be integers, and let $p$ be a positive number. 
	Suppose that $K$ and $L$ are two smooth uniformly h-convex bounded domains in $\mathbb{H}^{n+1}$. Then 	
	\begin{align}\label{formula-conj-Min ineq-weak}
	\int_{\mathbb{S}^n} \g_L^p \g_K^{-p-n} p_{n-k}(\tilde{\lambda}_K) d\sigma
	\geq
	\omega_n \sinh^{n-k}r^k_K e^{-(k+p)r^k_K} e^{pr^k_L},
	\end{align} 
	where $r^k_\Omega$ is defined by $r^k_\Omega = I_k^{-1} (\widetilde{W}_k (\Omega))$ for a h-convex bounded domain $\Omega \subset \mathbb{H}^{n+1}$. 
	\begin{enumerate}
		\item When $0 \leq k \leq n-1$, equality holds in \eqref{formula-conj-Min ineq-weak} if and only if $K$ and $L$ are geodesic balls centered at the same point.
		\item When $k=n$, equality holds in \eqref{formula-conj-Min ineq-weak} if and only if $K = c \cdot L$ or $L =c \cdot K$ for some constant $c \geq 1$.
	\end{enumerate}
\end{con}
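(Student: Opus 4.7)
The plan is to prove Conjecture \ref{conj-Min ineq-weak} by two complementary strategies depending on $k$: for the top case $k=n$ the inequality reduces to a pure H\"older estimate on $\mathbb{S}^n$ and holds in the full stated range $p>0$, while for $0\le k\le n-1$ it is obtained as the first-order variation of the horospherical $p$-Brunn-Minkowski inequality of Conjecture \ref{conj-BM} along a one-parameter path in hyperbolic $p$-sum (giving $\tfrac12\le p\le 2$), with a curvature-flow reduction intended to cover the remaining range.

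For $k=n$, Lemma \ref{lem-Wn-explicit} shows that \eqref{formula-conj-Min ineq-weak} is equivalent, after cancelling a power of $\omega_n$, to
\begin{equation*}
\int_{\mathbb{S}^n}\g_L^p\g_K^{-p-n}\,d\sigma\;\ge\;\Bigl(\int_{\mathbb{S}^n}\g_K^{-n}\,d\sigma\Bigr)^{\!\tfrac{n+p}{n}}\Bigl(\int_{\mathbb{S}^n}\g_L^{-n}\,d\sigma\Bigr)^{\!-\tfrac{p}{n}}.
\end{equation*}
This follows from H\"older's inequality with conjugate exponents $\tfrac{n+p}{n}$ and $\tfrac{n+p}{p}$ applied to the decomposition $\g_K^{-n}=\g_L^{-np/(n+p)}\cdot(\g_L^p\g_K^{-n-p})^{n/(n+p)}$, and equality forces $\g_L^{-n}$ proportional to $\g_L^p\g_K^{-n-p}$, i.e.\ $\g_K/\g_L$ constant, which characterises hyperbolic dilates as required.

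For $0\le k\le n-1$ in the range $\tfrac12\le p\le 2$, I would specialise \eqref{horo-BM-ineq} to $\Omega_t:=1\cdot K+_p t\cdot L$, subtract the $t=0$ value, and differentiate at $t=0^+$. The scalar evolution $\partial_t u(z,0)=\tfrac{1}{p}\g_L^p\g_K^{-p}$ together with Lemma \ref{lem-variation of modified quermass}, \eqref{rel-area element}, \eqref{shifted curvature-support function} and the chain-rule computation analogous to \eqref{chi'(1) geq chi(1)} yields
\begin{equation*}
\left.\tfrac{d}{dt}\right|_{t=0}\!\exp\!\bigl(pI_k^{-1}(\widetilde{W}_k(\Omega_t))\bigr)=\bigl(\omega_n\sinh^{n-k}r^k_K e^{-(k+p)r^k_K}\bigr)^{-1}\!\int_{\mathbb{S}^n}\!\g_L^p\g_K^{-p-n}p_{n-k}(\tilde\lambda_K)\,d\sigma,
\end{equation*}
while the right-hand side of \eqref{horo-BM-ineq} differentiates to $\exp(pr^k_L)$, and \eqref{formula-conj-Min ineq-weak} follows by transposition. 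For $p>2$, I would instead fix $K$ and run the flow \eqref{flow-HCMF} starting from $L$ with prescribed function $f(z)=\g_K^{-p-n}(z)p_{n-k}(\tilde\lambda_K(G_K^{-1}(z)))$: Lemma \ref{lem-mono-quantitirs} tells us that $J_p(\Omega_t)=\tfrac{1}{p}\int f\g_{\Omega_t}^p\,d\sigma$, which is essentially the left-hand side of \eqref{formula-conj-Min ineq-weak}, is non-increasing while $\widetilde{W}_k(\Omega_t)$ is conserved, so smooth subconvergence to a geodesic ball (via Theorem \ref{thm-f=c-CM-M-sphere}-type rigidity at the limit equation) would reduce the task to checking the inequality when $L$ is this limiting ball, a case verified directly from Corollary \ref{cor-AF-modif-quermass} and the shifted Minkowski formulas \eqref{eq-shifted Minkowski formula}.

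The main obstacle is twofold. First, Conjecture \ref{conj-BM} itself is open for $0\le k\le n-1$, so the differentiation argument is conditional on progress there; and even granted, it only covers $\tfrac12\le p\le 2$, because the definition of hyperbolic $p$-sum in Definition \ref{def-p sum} is restricted to that range. Second, the curvature-flow approach for $p>2$ demands long-time existence and subconvergence of \eqref{flow-HCMF} for arbitrary (not necessarily origin-symmetric) $K$, whereas Theorem \ref{thm-long time existence} and the pinching estimate Proposition \ref{prop-Pinching-est} rely crucially on the evenness of the prescribed $f$ to secure the $C^0$ bound via the reflection argument of Lemma \ref{lem-C0-est}. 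I therefore expect only the partial results summarised in Theorem \ref{thm-sum-Min ineq-weak} to follow unconditionally from this programme, with the general case awaiting a $C^0$ estimate that does not require symmetry.
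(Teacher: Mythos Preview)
Your assessment is accurate and aligns with the paper's own treatment: this statement is a \emph{conjecture}, not a theorem, and the paper proves only the partial cases collected in Theorem \ref{thm-sum-Min ineq-weak}. Your H\"older argument for $k=n$ is exactly the paper's method (Theorem \ref{thm-k=n-strong Min} and Corollary \ref{cor-k=n-conj Min ineq-weak}), and your derivation of the $0\le k\le n-1$ case by differentiating Conjecture \ref{conj-BM} along $\Omega_t=K+_p t\cdot L$ at $t=0$ is precisely how the paper motivates Conjecture \ref{conj-Min ineq-weak} in the paragraph preceding it. You also correctly identify both obstructions the paper faces: Conjecture \ref{conj-BM} remains open for $0\le k\le n-1$, and the flow \eqref{flow-HCMF} requires evenness for the $C^0$ estimate of Lemma \ref{lem-C0-est}.

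One minor correction to your flow idea for $p>2$: even with full convergence of \eqref{flow-HCMF}, the limiting hypersurface satisfies \eqref{eq-limit-hyper} with the \emph{given} $f$, not a constant, so Theorem \ref{thm-f=c-CM-M-sphere} would not apply directly to force the limit to be a geodesic ball; you would instead need to verify the inequality at the flow limit by some other means. The paper does not pursue this route.
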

 By using \eqref{rel-area element}, \eqref{shifted curvature-support function} and \eqref{1/phi, coshr-u}, we note that \eqref{formula-conj-Min ineq-weak} is equivalent to
\begin{equation*}
\int_{\partial K} \g_L^p (z) (\cosh r -\tilde{u})^{p}   p_k (\tilde{\kappa}) d\mu
\geq \omega_n \sinh^{n-k}r^k_K e^{-(k+p)r^k_K}  e^{pr^k_L}.
\end{equation*}
\begin{rem}\label{rem-rel-strong Min-weak Min}
	It is easy to see that \eqref{formula-conj-Min ineq-weak} follows from \eqref{formula-conj-Min ineq} and \eqref{AF-modif-quermass}. Hence Conjecture \ref{conj-Min ineq-weak} is weaker than Conjecture \ref{conj-Min ineq}.
	
	When $L = B(r_L)$, by \eqref{mod k-mean radius}, we have $\g_L^p(z) = e^{p r_L} = e^{p r^k_L}$ for all $z \in \mathbb{S}^n$. Hence, if  $L$ is a geodesic ball centered at the origin, then Conjecture \ref{conj-Min ineq-weak} is equivalent to
	 \begin{align*}
	 \int_{\partial K}  (\cosh r -\tilde{u})^{p}   p_k (\tilde{\kappa}) d\mu
	 \geq \omega_n \sinh^{n-k}r^k_K e^{-(k+p)r^k_K} , 
	 \end{align*}
	 where $n \geq 1$, $0 \leq k \leq n$, $p>0$ and $K$ is a smooth uniformly h-convex bounded domain in $\mathbb{H}^{n+1}$.
\end{rem}

\subsection{Horospherical $p$-Minkowski inequalities}\label{subsec-10.2}$ \ $

In this subsection, we discuss Conjecture \ref{conj-Min ineq} and Conjecture \ref{conj-Min ineq-weak}. For fixed $n$, $k$ and $p$, we first show that these conjectures only depend on the relative position of smooth uniformly h-convex bounded domains $K$ and $L$.
\begin{prop}\label{prop-Conj9.3-Conj9.4-rel position}
	Let $n \geq 1$ and $0 \leq k \leq n$ be integers, and let $p$ be a real number.
	Assume that $K$ and $L$ are smooth uniformly h-convex bounded domains in $\mathbb{H}^{n+1}$.
	Then the integral
	\begin{align*}
	\int_{\mathbb{S}^n} \g_L^p(z) \g_K^{-p-n}(z) p_{n-k}(\tilde{\lambda}_K) d\sigma
	\end{align*}
only depends on the relative position of $K$ and $L$. Namely, we have for any isometry $f$ of $\mathbb{H}^{n+1}$ that
\begin{align*}
\int_{\mathbb{S}^n} \g_{f(L)}^p(z) \g_{f(K)}^{-p-n}(z) p_{n-k}(\tilde{\lambda}_{f(K)}) d\sigma
= \int_{\mathbb{S}^n} \g_L^p(z) \g_K^{-p-n}(z) p_{n-k}(\tilde{\lambda}_K) d\sigma.
\end{align*}
\end{prop}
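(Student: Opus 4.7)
The plan is to rewrite the integral as one against the geometrically natural measure $p_k(\tilde\kappa_K)\,d\mu_K$ on $\partial K$, and then exploit the equivariance of the horospherical Gauss map. First, using the identity $p_{n-k}(\tilde\lambda) = p_k(\tilde\kappa)/p_n(\tilde\kappa)$ (which follows from $C_n^{n-k}=C_n^k$ and the standard relation $\sigma_{n-k}(1/\tilde\kappa) = \sigma_k(\tilde\kappa)/\sigma_n(\tilde\kappa)$) together with \eqref{rel-area element} and \eqref{shifted curvature-support function}, one computes $\det A[\g_K] = \g_K^{-n}/p_n(\tilde\kappa_K)$, so that $d\mu_K = \g_K^{-n}\,p_n(\tilde\lambda_K)\,d\sigma$. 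Pulling back through $G_K$ and cancelling powers of $p_n(\tilde\kappa_K)$ yields
\begin{equation*}
\int_{\mathbb{S}^n}\g_L^p(z)\g_K^{-p-n}(z)\,p_{n-k}(\tilde\lambda_K)\,d\sigma
\;=\;\int_{\partial K}\bigl(\g_L/\g_K\bigr)^{p}\!\bigl(G_K(X)\bigr)\,p_k(\tilde\kappa_K)(X)\,d\mu_K(X).
\end{equation*}
Since $p_k(\tilde\kappa_K)$ and $d\mu_K$ are intrinsic to the hypersurface $\partial K\subset\mathbb{H}^{n+1}$, they are preserved when we replace $K$ by $f(K)$ for an isometry $f$. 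Hence only the factor $(\g_L/\g_K)^p\circ G_K$ needs to be analyzed.

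Next, I will describe the action of an isometry $f$ on the ingredients. Any isometry of $\mathbb{H}^{n+1}$ in the hyperboloid model extends to a Lorentz transformation of $\mathbb{R}^{n+1,1}$ that sends the future light cone to itself. Hence for each $z\in\mathbb{S}^n$ there is a unique $f_\partial(z)\in\mathbb{S}^n$ and a positive scalar $\lambda(z)$ such that $f((z,1))=\lambda(z)(f_\partial(z),1)$; the induced boundary map $f_\partial$ is a conformal transformation of $\mathbb{S}^n$. Applying $f$ to the horospherical identity \eqref{X-nu} $X-\nu=\g_K(z)^{-1}(z,1)$ gives $f(X)-df(\nu)=\g_K(z)^{-1}\lambda(z)(f_\partial(z),1)$, so $G_{f(K)}(f(X))=f_\partial(G_K(X))$ and, by \eqref{horo supp funct-def},
\begin{equation*}
\g_{f(K)}(f_\partial(z))=\lambda(z)^{-1}\,\g_K(z),\qquad \g_{f(L)}(f_\partial(z))=\lambda(z)^{-1}\,\g_L(z).
\end{equation*}
In particular the ratio $\g_L/\g_K$ is invariant: $(\g_{f(L)}/\g_{f(K)})\circ f_\partial=\g_L/\g_K$.

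Combining these observations, I change variables via $X\mapsto f(X)$ in the $\partial(f(K))$-integral:
\begin{equation*}
\int_{\partial f(K)}\bigl(\g_{f(L)}/\g_{f(K)}\bigr)^{p}\!\bigl(G_{f(K)}(Y)\bigr)\,p_k(\tilde\kappa_{f(K)})(Y)\,d\mu_{f(K)}(Y)
\end{equation*}
equals the corresponding integral over $\partial K$, because $G_{f(K)}\circ f=f_\partial\circ G_K$, $(\g_{f(L)}/\g_{f(K)})\circ f_\partial=\g_L/\g_K$, and $f\!\restriction_{\partial K}\!\colon\partial K\to\partial f(K)$ is an isometry preserving $p_k(\tilde\kappa_K)$ and $d\mu_K$. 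Reversing the first step then gives the claimed identity.

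The main obstacle, which is more bookkeeping than substance, is establishing the two transformation laws $G_{f(K)}\circ f=f_\partial\circ G_K$ and $\g_{f(K)}\circ f_\partial=\lambda^{-1}\g_K$ cleanly from the Lorentz action on the light cone, being careful that $f_\partial$ coincides with the usual extension of $f$ to the ideal boundary and that $\lambda$ is the right conformal factor on both sides. Once this is in place, the rest of the argument is a direct change-of-variables computation, and in fact Proposition \ref{prop-sums not rely on origin} already reflects the same principle that the construction ultimately depends only on the relative position of $K$ and $L$ in $\mathbb{H}^{n+1}$.
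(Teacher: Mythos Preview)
Your proposal is correct and follows essentially the same approach as the paper: rewrite the integral over $\mathbb{S}^n$ as $\int_{\partial K}(\g_L/\g_K)^p\,p_k(\tilde\kappa_K)\,d\mu_K$ via \eqref{rel-area element} and \eqref{shifted curvature-support function}, then use that an isometry $f\in O^+(n+1,1)$ acts on the null cone by $f((z,1))=\lambda(z)(f_\partial(z),1)$ to deduce $\g_{f(K)}\circ f_\partial=\lambda^{-1}\g_K$ (and likewise for $L$), so that the ratio $\g_L/\g_K$ is invariant while $p_k(\tilde\kappa)\,d\mu$ is preserved by the isometry. The paper's argument is organized around the image of the supporting horosphere rather than the identity \eqref{X-nu}, but this yields the identical transformation law (with your $\lambda$ equal to the paper's $\chi^{-1}$), and the final change of variables is the same.
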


\begin{proof}
	In this proof, we let $f$ be an isometry of $\mathbb{H}^{n+1}$. It is well-known that $f \in O^+(n+1,1)$, i.e. $f$ is a linear isomorphism of $\mathbb{R}^{n+1,1}$ that preserves the Lorentzian metric and $\{ Y = (y,y_{n+1}) \in \mathbb{R}^{n+1,1}:  y_{n+1}>0 \}$, see e.g. \cite[p. 47]{MB16}.
	For any fixed  $X \in \partial K$, we let $z=G_K(X)$, where $G_K$ is the horospherical Gauss map of $K$. Then we have that the supporting horosphere of $K$ at $X$ is given by 
	\begin{align*}
	H_{z}(u_K(z)) = \{ Y=(y,y_{n+1}) \in \mathbb{H}^{n+1}:  \metric{-Y}{(z,1)}= \g_K(z) \}.
	\end{align*} 
	Since $f$ preserves the Lorentzian metric, condition $\metric{-Y}{(z,1)}= \g_K(z)$ is equivalent to $\metric{-f(Y)}{f\((z,1)\)} = \g_K(z)$. Besides, it is easy to see that $f\( (z,1) \)$ lies in the future light cone of $(0,0) \in \mathbb{R}^{n+1,1}$.
	Then we have
	\begin{align}\label{trans-horo-sphere}
	f\( H_{z}(u_K(z)) \)= \{ Y \in \mathbb{H}^{n+1}:  \metric{-Y}{(\tilde{z},1)}= \chi(z)\g_K(z) \},
	\end{align} 
	where $\chi(z)$ and $\tilde{z}$ were uniquely determined by $\chi(z) f\((z,1)\)= \(\tilde{z}, 1\)$.
	Let $\widetilde{K} = f(K)$ and $\widetilde{X} = f(X)$. We know that $f\( H_{z}(u_K(z)) \)$ is the supporting horosphere of $\widetilde{K}$ at $\widetilde{X}$ and $G_{\widetilde{K}} ( \widetilde{X} ) = \tilde{z}$. By using \eqref{trans-horo-sphere}, we obtain
	$\g_{\widetilde{K}} (\tilde{z} ) = \chi(z)\g_K (z)$.
	Since $\chi(z)$ only depends on the choice of $f$, we have $\g_{\widetilde{L}} (\tilde{z}) = \chi (z) \g_L (z)$ by the same argument. That gives
	\begin{align}\label{phiL-phiK-inv}
  \frac{\g_{\widetilde{L}} (\tilde{z})}{\g_{\widetilde{K}} (\tilde{z} )} =\frac{\g_L (z) }{\g_K (z)}.
  	\end{align}
  	Since $f$ is an isometry of $\mathbb{H}^{n+1}$, we have 
  	\begin{align}\label{curv-mea-inv}
  	p_{k}(\tilde{\kappa}_K ) d\mu_{\partial K} = f^* \(	p_{k}(\tilde{\kappa}_{\widetilde{K}} ) d\mu_{\partial \widetilde{K}} \).
  	\end{align}
  	Combining \eqref{phiL-phiK-inv} with \eqref{curv-mea-inv} yields
	\begin{align}\label{eq-conj 9.3-conj 9.4-rel posi}
	\int_{\partial K} \g_L^p(z)\g_K^{-p}(z)  p_{k}(\tilde{\kappa} ) d\mu_{\partial K}
=\int_{\partial \widetilde{K}} \g_{\widetilde{L}}^p(\tilde{z})\g_{ \widetilde{K} }^{-p}(\tilde{z}) 	p_{k}(\tilde{\kappa}) d\mu_{\partial \widetilde{K}}.
	\end{align}
	Then Proposition \ref{prop-Conj9.3-Conj9.4-rel position} follows by using \eqref{rel-area element} and \eqref{shifted curvature-support function} in \eqref{eq-conj 9.3-conj 9.4-rel posi}.
\end{proof}

\begin{cor}\label{cor-Conj9.3-Conj9.4-rel position}
	Assume that $n$, $k$, $p$ satisfy the assumptions in Conjecture \ref{conj-Min ineq} (or Conjecture \ref{conj-Min ineq-weak}). Then Conjecture \ref{conj-Min ineq} (or Conjecture \ref{conj-Min ineq-weak}) only depends on the relative position of the smooth uniformly h-convex bounded domains $K$ and $L$. Equivalently, the following statements are equivalent,
	\begin{enumerate}
		\item Conjecture \ref{conj-Min ineq} (or Conjecture \ref{conj-Min ineq-weak}) holds for $K$ and $L$,
		\item Conjecture \ref{conj-Min ineq} (or Conjecture \ref{conj-Min ineq-weak}) holds for $f(K)$ and $f(L)$,
	\end{enumerate}
where $f$ is any isometry of $\mathbb{H}^{n+1}$.
\end{cor}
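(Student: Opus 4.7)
The plan is to reduce Corollary \ref{cor-Conj9.3-Conj9.4-rel position} directly to Proposition \ref{prop-Conj9.3-Conj9.4-rel position}, combined with the isometry invariance of the modified quermassintegrals. Concretely, for any isometry $f$ of $\mathbb{H}^{n+1}$, I will check separately that each ingredient appearing in inequalities \eqref{formula-conj-Min ineq}, \eqref{formula-conj-Min ineq-2}, and \eqref{formula-conj-Min ineq-weak} takes the same value when $(K,L)$ is replaced by $(f(K), f(L))$.

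First, for the left-hand side of each inequality in Conjecture \ref{conj-Min ineq} (and Conjecture \ref{conj-Min ineq-weak}), the main term
\[
\int_{\mathbb{S}^n} \g_L^p(z) \g_K^{-p-n}(z) p_{n-k}(\tilde{\lambda}_K) d\sigma
\]
is invariant under the replacement $(K,L) \mapsto (f(K), f(L))$ directly by Proposition \ref{prop-Conj9.3-Conj9.4-rel position}. The second left-hand term $\int_{\mathbb{S}^n} \g_K^{-n} p_{n-k}(\tilde{\lambda}_K) d\sigma$ appearing in \eqref{formula-conj-Min ineq} and \eqref{formula-conj-Min ineq-2} corresponds to the special case $L=K$ and $p=0$ of the same quantity (or equivalently, by \eqref{rel-area element} and \eqref{shifted curvature-support function}, it equals $\int_{\partial K} p_k(\tilde{\kappa}) d\mu$, which is manifestly isometry invariant as an integral of an intrinsic curvature function). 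Either viewpoint shows this term is also invariant under $(K,L) \mapsto (f(K), f(L))$.

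Second, for the right-hand sides, I need to show that the modified $k$-mean radius $r^k_\Omega = I_k^{-1}(\widetilde{W}_k(\Omega))$ is invariant under isometries. Since $I_k$ is a fixed monotone function independent of $\Omega$ (see \eqref{I_k(r)}), it suffices to verify that $\widetilde{W}_k(f(\Omega)) = \widetilde{W}_k(\Omega)$. This follows from the inductive definition \eqref{induction def of modi-quer-intg}: the volume $\Vol(\Omega)$, the surface area $|\partial\Omega|$, and the curvature integrals $\int_{\partial \Omega} p_k(\tilde{\kappa}) d\mu$ are all intrinsic geometric invariants preserved by any isometry of $\mathbb{H}^{n+1}$, so the entire recursion producing $\widetilde{W}_k$ is isometry invariant. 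Consequently $r^k_{f(K)} = r^k_K$ and $r^k_{f(L)} = r^k_L$, which shows the right-hand side
\[
\omega_n \sinh^{n-k} r^k_K \, e^{-k r^k_K} \left( e^{p(r^k_L - r^k_K)} - 1 \right)
\]
(respectively its Conjecture \ref{conj-Min ineq-weak} variant) is unchanged.

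Combining these two paragraphs, every term in \eqref{formula-conj-Min ineq}, \eqref{formula-conj-Min ineq-2}, and \eqref{formula-conj-Min ineq-weak} takes identical values for the pair $(K,L)$ and for the pair $(f(K), f(L))$, so the truth of each inequality for $(K,L)$ is equivalent to its truth for $(f(K), f(L))$. This proves both the equivalence of (1) and (2), and the statement that the conjectures depend only on the relative position of $K$ and $L$. No serious obstacle is anticipated: Proposition \ref{prop-Conj9.3-Conj9.4-rel position} already did the nontrivial analytic work for the $\g_L^p \g_K^{-p-n}$ integrand, and the isometry invariance of $\widetilde{W}_k$ is immediate from its definition. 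The only care needed is to notice that the $L=K$ specialization correctly handles the auxiliary integral in Conjecture \ref{conj-Min ineq}.
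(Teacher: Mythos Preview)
Your proposal is correct and follows essentially the same approach as the paper's proof: invoke Proposition \ref{prop-Conj9.3-Conj9.4-rel position} for the main mixed integral, rewrite the auxiliary term $\int_{\mathbb{S}^n} \g_K^{-n} p_{n-k}(\tilde\lambda_K)\,d\sigma$ as the curvature integral $\int_{\partial K} p_k(\tilde\kappa)\,d\mu$ via \eqref{rel-area element} and \eqref{shifted curvature-support function}, and use isometry invariance of the modified quermassintegrals for the right-hand side. Your additional remark that the auxiliary term is also the $L=K$, $p=0$ specialization of Proposition \ref{prop-Conj9.3-Conj9.4-rel position} is a nice alternative justification.
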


\begin{proof}
	Let $f \in O^+(n+1,1)$ be any isometry of $\mathbb{H}^{n+1}$.
		By using \eqref{rel-area element} and \eqref{shifted curvature-support function}, we have 
		\begin{align*}
		 \int_{\mathbb{S}^n} \g_K^{-n}p_{n-k} (\tilde{\lambda}_K)d\sigma
		 = \int_{\partial K} p_k (\tilde{\kappa}) d\mu_K,
		\end{align*}
		which is preserved along $f$.
		Besides, we know that the modified quermassintegrals of h-convex bounded domains  are preserved along $f$, i.e. $\widetilde{W}_k (K) = \widetilde{W}_k (f(K))$ and $\widetilde{W}_k (L) = \widetilde{W}_k (f(L))$ for all $k=0,1,\ldots,n$. Then Corollary \ref{cor-Conj9.3-Conj9.4-rel position} follows directly from Proposition \ref{prop-Conj9.3-Conj9.4-rel position}.
\end{proof}

\subsubsection{Results about Conjecture \ref{conj-Min ineq}}
We will consider Conjecture \ref{conj-Min ineq} in several cases at first. When $K = L$,  Conjecture \ref{conj-Min ineq} is trivial. 
\begin{enumerate}
	\item When $n \geq 1$ and $k=n$, we will show that Conjecture \ref{conj-Min ineq} is equivalent to the H\"older inequalities in Theorem \ref{thm-k=n-strong Min}. 
	\item When $K$ is a geodesic ball centered at the origin and $L$ is a general smooth uniformly h-convex bounded domain, Conjecture \ref{conj-Min ineq} holds, which will be proved in Theorem \ref{thm-strong Min ineq-K ball}.
	\item  When $L$ is a geodesic ball and $K$ is general, we prove Conjecture \ref{conj-Min ineq} in Theorem \ref{thm-p=1-strong Min ineq, L ball} with assumption $p=1$. 
\end{enumerate}

\begin{thm}\label{thm-k=n-strong Min}
	Conjecture \ref{conj-Min ineq} holds in the case $n \geq 1$ and $k=n$.
\end{thm}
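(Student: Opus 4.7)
The plan is to reduce Conjecture \ref{conj-Min ineq} in the case $k=n$ to a pair of H\"older-type inequalities on $\mathbb{S}^n$. The key observation is that when $k=n$ we have $p_{n-k}(\tilde{\lambda}_K)=p_0(\tilde{\lambda}_K)=1$, so the two integrals in the statement collapse to $\int_{\mathbb{S}^n}\g_L^p\g_K^{-p-n}d\sigma$ and $\int_{\mathbb{S}^n}\g_K^{-n}d\sigma$. Moreover, Lemma \ref{lem-Wn-explicit} (formula \eqref{phi^-n-Wn}) identifies
\[
\int_{\mathbb{S}^n}\g_K^{-n}d\sigma=\omega_n e^{-nr^n_K},\qquad \int_{\mathbb{S}^n}\g_L^{-n}d\sigma=\omega_n e^{-nr^n_L}.
\]
Using these, together with $\sinh^{n-k}r^n_K=1$ when $k=n$, the inequality \eqref{formula-conj-Min ineq} (respectively \eqref{formula-conj-Min ineq-2}) is equivalent to
\begin{equation*}
\int_{\mathbb{S}^n}\g_L^p\g_K^{-p-n}d\sigma \;\gtreqless\; \Bigl(\int_{\mathbb{S}^n}\g_K^{-n}d\sigma\Bigr)^{\frac{n+p}{n}}\Bigl(\int_{\mathbb{S}^n}\g_L^{-n}d\sigma\Bigr)^{-\frac{p}{n}},
\end{equation*}
with $\geq$ for $p>0$ and $\leq$ for $-n\leq p<0$.

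The second step is to observe the pointwise identity, valid when $n+p\neq 0$,
\begin{equation*}
\g_K^{-n}=\bigl(\g_L^p\g_K^{-p-n}\bigr)^{\alpha}\bigl(\g_L^{-n}\bigr)^{\beta},\qquad \alpha=\frac{n}{n+p},\;\;\beta=\frac{p}{n+p},
\end{equation*}
which satisfies $\alpha+\beta=1$. For $p>0$, both $\alpha,\beta\in(0,1)$, so the standard H\"older inequality
\[
\int_{\mathbb{S}^n}F^{\alpha}G^{\beta}d\sigma\leq\Bigl(\int_{\mathbb{S}^n}F\,d\sigma\Bigr)^{\alpha}\Bigl(\int_{\mathbb{S}^n}G\,d\sigma\Bigr)^{\beta}
\]
applied with $F=\g_L^p\g_K^{-p-n}$ and $G=\g_L^{-n}$ yields the desired $\geq$ after rearrangement. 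For $-n<p<0$ we instead have $\alpha>1$ and $\beta<0$; the corresponding reverse H\"older inequality flips the direction and yields exactly the $\leq$ required. The boundary case $p=-n$ is handled separately and trivially: the integrand $\g_L^p\g_K^{-p-n}$ becomes $\g_L^{-n}$, so both sides of \eqref{formula-conj-Min ineq-2} reduce to $\omega_n(e^{-nr^n_L}-e^{-nr^n_K})$ and equality always holds, matching item~(4) of the conjecture.

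Finally, the equality analysis is extracted from the H\"older equality condition. In both H\"older and reverse H\"older, equality holds iff $F$ and $G$ are proportional $\sigma$-a.e., i.e.\ $\g_L^p\g_K^{-p-n}=c\,\g_L^{-n}$ for some constant $c>0$. This is equivalent to $\g_L^{n+p}=c\,\g_K^{n+p}$, and since $n+p>0$ in the non-trivial cases, to $\g_L=c'\g_K$ on $\mathbb{S}^n$ for some $c'>0$, meaning $K$ and $L$ are hyperbolic dilates in the sense of Definition \ref{def-hyperbolic dilates}; by the definition this is precisely $K=c\cdot L$ or $L=c\cdot K$ for some $c\geq 1$. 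This matches the equality cases stated in items (2) and (3) of Conjecture \ref{conj-Min ineq}. No real obstacle is expected here: once the simplification via Lemma \ref{lem-Wn-explicit} is in place, the argument is essentially a two-line H\"older computation, with the only mild subtlety being the careful bookkeeping of the sign of $p$ when choosing between H\"older and its reverse form.
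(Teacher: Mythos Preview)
Your proposal is correct and follows essentially the same route as the paper: reduce via $p_{n-n}=1$ and Lemma~\ref{lem-Wn-explicit}, then apply H\"older on $\mathbb{S}^n$, with the $p=-n$ case an identity. The only cosmetic difference is in the range $-n<p<0$: the paper applies the \emph{standard} H\"older inequality to the decomposition $\g_L^p\g_K^{-n-p}=(\g_L^{-n})^{-p/n}(\g_K^{-n})^{(n+p)/n}$ (both exponents lie in $(0,1)$), whereas you keep the same decomposition as in the $p>0$ case and invoke the reverse H\"older inequality; the two are equivalent rearrangements of the same estimate, and the equality characterizations coincide.
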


\begin{proof}
Let $u_K(z)$ and $u_L(z)$ be the horospherical support functions of $K$ and $L$ respectively, and let $\g_K(z) =e^{u_K(z)}$ and $\g_L(z) = e^{u_L(z)}$. 

\textbf{Case 1.} $p>0$. By the H\"older inequality, we have
\begin{align*}
\int_{\mathbb{S}^n} \g_K^{-n} d\sigma \leq \(\int_{\mathbb{S}^n} \g_L^{-n} d\sigma \)^{\frac{p}{n+p}} \(\int_{\mathbb{S}^n} \g_L^p \g_K^{-n-p} d\sigma \)^{\frac{n}{n+p}},
\end{align*}
with equality if and only if $\g_L^{-1}(z) \g_K(z)$ is constant on $\mathbb{S}^n$.
We rewrite the above inequality as
\begin{align}\label{k=n stron-Min phi p>0}
\int_{\mathbb{S}^n} \g_L^p \g_K^{-n-p} d\sigma 
- \int_{\mathbb{S}^n} \g_K^{-n} d\sigma
\geq \(\int_{\mathbb{S}^n} \g_K^{-n} d\sigma\)\( \(\int_{\mathbb{S}^n} \g_L^{-n} d\sigma \)^{-\frac{p}{n}}
\(\int_{\mathbb{S}^n} \g_K^{-n} d\sigma\)^{\frac{p}{n}}-1 \).
\end{align}
By \eqref{phi^-n-Wn}, we have
\begin{align}\label{k=n quer K,L}
\int_{\mathbb{S}^n} \g_K^{-n} d\sigma  =\omega_n e^{-n r^n_K}, \quad
\int_{\mathbb{S}^n} \g_L^{-n} d\sigma  =\omega_n e^{-n r^n_L}.
\end{align}
Substituting \eqref{k=n quer K,L} into \eqref{k=n stron-Min phi p>0}, we obtain
\begin{align*}
\int_{\mathbb{S}^n} \g_L^p \g_K^{-n-p} d\sigma 
- \int_{\mathbb{S}^n} \g_K^{-n} d\sigma
\geq
\omega_n e^{-n r^n_K} \( e^{p(r^n_L -r^n_K)}-1\),
\end{align*}
thus we obtain \eqref{formula-conj-Min ineq} in the case $n \geq 1$, $k=0$ and $p>0$.
Equality holds if and only if $K$ and $L$ are hyperbolic dilates.

\textbf{Case 2.} $-n<p<0$. By the H\"older inequality, we have
\begin{align*}
\int_{\mathbb{S}^n} \g_L^p \g_K^{-n-p} d\sigma
\leq  \(\int_{\mathbb{S}^n} \g_L^{-n} d\sigma \)^{-\frac{p}{n}}  \( \int_{\mathbb{S}^n} \g_K^{-n} d\sigma \)^{\frac{p}{n} +1},
\end{align*}
with equality if and only if $\g_L^{-1}(z) \g_K(z)$ is constant on $\mathbb{S}^n$.
We rewrite the above inequality as
\begin{align}\label{k=n stron-Min phi p<0}
\int_{\mathbb{S}^n} \g_L^p \g_K^{-n-p} d\sigma 
- \int_{\mathbb{S}^n} \g_K^{-n} d\sigma
\leq \(\int_{\mathbb{S}^n} \g_K^{-n} d\sigma\)\( \(\int_{\mathbb{S}^n} \g_L^{-n} d\sigma \)^{-\frac{p}{n}}
\(\int_{\mathbb{S}^n} \g_K^{-n} d\sigma\)^{\frac{p}{n}}-1 \).
\end{align}
Substituting \eqref{k=n quer K,L} into \eqref{k=n stron-Min phi p<0}, we obtain
\begin{align*}
\int_{\mathbb{S}^n} \g_L^p \g_K^{-n-p} d\sigma 
- \int_{\mathbb{S}^n} \g_K^{-n} d\sigma
\leq
\omega_n e^{-n r^n_K} \( e^{p(r^n_L -r^n_K)}-1\),
\end{align*} 
thus we obtain \eqref{formula-conj-Min ineq-2} in the case $n \geq 1$, $k=n$ and $-n<p<0$.
Equality holds if and only if $K$ and $L$ are hyperbolic dilates.

\textbf{Case 3.} $p=-n$. Using \eqref{k=n quer K,L}, we have
\begin{align*}
\int_{\mathbb{S}^n} \g_L^{-n} d\sigma - \int_{\mathbb{S}^n} \g_K^{-n} d\sigma
= \omega_n e^{-n r^n_K}\( e^{-n \(r^n_L -r^n_K \)} -1\),
\end{align*}
thus we obtain that equality always holds in \eqref{formula-conj-Min ineq-2} when $n \geq 1$, $k=n$ and $p=-n$. 
We complete the proof of Theorem \ref{thm-k=n-strong Min}.
\end{proof}
If $n \geq 1$, $k=n$ and $p>0$, then by use of \eqref{k=n quer K,L},
 it is easy to see that Conjecture \ref{conj-Min ineq-weak} is equivalent to Conjecture \ref{conj-Min ineq} in this case. Then we get the following Corollary \ref{cor-k=n-conj Min ineq-weak}.
 \begin{cor}\label{cor-k=n-conj Min ineq-weak}
 		Conjecture \ref{conj-Min ineq-weak} holds in the case $n \geq 1$ and $k=n$.
 \end{cor}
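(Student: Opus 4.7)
The plan is to deduce the $k=n$ case of Conjecture~\ref{conj-Min ineq-weak} directly from the $k=n$, $p>0$ case of Conjecture~\ref{conj-Min ineq}, which has just been established in Theorem~\ref{thm-k=n-strong Min}. The key observation is that when $k=n$, all the auxiliary factors collapse to constants: we have $p_{n-k}(\tilde{\lambda}_K)=p_0(\tilde{\lambda}_K)=1$ and $\sinh^{n-k}r^n_K = \sinh^0 r^n_K = 1$, so the structural content of the two conjectures coincides in this case.

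First, I would specialize both inequalities to $k=n$ and display them side by side. Conjecture~\ref{conj-Min ineq} at $k=n$ reads
\begin{equation*}
\int_{\mathbb{S}^n} \g_L^p \g_K^{-p-n}\, d\sigma - \int_{\mathbb{S}^n} \g_K^{-n}\, d\sigma \geq \omega_n e^{-nr^n_K}\bigl(e^{p(r^n_L-r^n_K)}-1\bigr),
\end{equation*}
while Conjecture~\ref{conj-Min ineq-weak} at $k=n$ reads
\begin{equation*}
\int_{\mathbb{S}^n} \g_L^p \g_K^{-p-n}\, d\sigma \geq \omega_n e^{-(n+p)r^n_K}e^{pr^n_L}.
\end{equation*}

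Next, I would use formula \eqref{phi^-n-Wn} (equivalently the first identity in \eqref{k=n quer K,L}), which gives $\int_{\mathbb{S}^n} \g_K^{-n}\, d\sigma = \omega_n e^{-nr^n_K}$. Substituting this into the left-hand side of the type~I inequality and simplifying the right-hand side, the type~I inequality at $k=n$ becomes
\begin{equation*}
\int_{\mathbb{S}^n} \g_L^p \g_K^{-p-n}\, d\sigma \geq \omega_n e^{-nr^n_K} \cdot e^{p(r^n_L-r^n_K)} = \omega_n e^{-(n+p)r^n_K} e^{pr^n_L},
\end{equation*}
which is exactly the type~II inequality at $k=n$. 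Hence Theorem~\ref{thm-k=n-strong Min} yields Corollary~\ref{cor-k=n-conj Min ineq-weak}, and moreover the equality characterization transfers verbatim: equality holds if and only if $K$ and $L$ are hyperbolic dilates, matching condition~(2) in Conjecture~\ref{conj-Min ineq-weak}. There is no real obstacle here; the corollary is a direct rewriting, and the only thing to do is to record the substitution cleanly and confirm that the equality cases agree.
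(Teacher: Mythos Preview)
Your proof is correct and is precisely the argument the paper gives: at $k=n$ one uses \eqref{k=n quer K,L} (i.e., $\int_{\mathbb{S}^n}\g_K^{-n}\,d\sigma=\omega_n e^{-nr^n_K}$) to see that the type~I inequality of Theorem~\ref{thm-k=n-strong Min} is literally equivalent to the type~II inequality of Conjecture~\ref{conj-Min ineq-weak}, including the equality case.
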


If we let $K =B(r)$ be a geodesic ball of radius $r>0$ centered at the origin, then it is straightforward to see that
\begin{align*}
 \g_K^{-n}p_{n-k} (\tilde{\lambda}_K)=&
 \sinh^{n-k}r^k_K e^{-kr^k_K},
 \\
  \g_K^{-p-n} p_{n-k}(\tilde{\lambda}_K)
  =&\sinh^{n-k}r^k_K e^{-(k+p)r^{k}_K}.
\end{align*}
Therefore inequalities \eqref{formula-conj-Min ineq} and \eqref{formula-conj-Min ineq-2} in Conjecture \ref{conj-Min ineq}  are equivalent to 
\begin{align*}
\int_{\mathbb{S}^n} \g_L^p d\sigma \geq& \omega_n e^{pr^k_L}, \quad p>0,\\
\int_{\mathbb{S}^n} \g_L^p d\sigma \leq& \omega_n e^{pr^k_L}, \quad  -n \leq p <0
\end{align*} 
respectively, where $n \geq 1$, $ 0 \leq k \leq n$, and $L$ is a smooth uniformly h-convex bounded domain in $\mathbb{H}^{n+1}$. In the next theorem, we will prove Conjecture \ref{conj-Min ineq} in the case that $K =B(r)$.

\begin{thm}\label{thm-strong Min ineq-K ball}
	Conjecture \ref{conj-Min ineq} holds when  $K$ is a geodesic ball centered at the origin. Precisely, if we assume $n \geq 1$, $0 \leq k \leq n$, $p \geq -n$ and $L$ is a smooth uniformly h-convex bounded domain in $\mathbb{H}^{n+1}$, then 
	\begin{equation}\label{Min ineq-K ball}
	\( \frac{\int_{\mathbb{S}^n} \g_L^p d\sigma}{\omega_n} \)^{\frac{1}{p}} \geq e^{r^k_L}.
	\end{equation}
	\begin{enumerate}
		\item When $0 \leq k \leq n-1$ and $p>-n$, equality holds if and only if $L$ is a geodesic ball centered at the origin.
		\item  When $0 \leq k \leq n-1$ and  $p=-n$, equality holds if and only if $L$ is a geodesic ball.
		\item When $k=n$ and $p>-n$, equality holds if and only if $L$ is a geodesic ball centered at the origin.
		\item When $k=n$ and $p=-n$, equality always holds.
	\end{enumerate}
\end{thm}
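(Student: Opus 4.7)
The plan is to establish \eqref{Min ineq-K ball} by a two-step reduction. First, for $p \geq -n$ I will use the monotonicity of $L^p$-means on the probability space $(\mathbb{S}^n, d\sigma/\omega_n)$ to reduce the general $p$ case to the endpoint case $p = -n$. Second, at $p = -n$ I will translate the inequality into a comparison of modified $k$-mean radii and invoke Theorem C (\cite[Corollary 1.9]{ACW18}) stated earlier in the excerpt. This way, none of the heavy machinery developed in Sections \ref{sec-horosp-p-Christoffel Minkowski problem}--\ref{sec-Kazdan-Warner} is needed here; both inputs are already on the table.

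For the execution: since $d\sigma/\omega_n$ is a probability measure on $\mathbb{S}^n$ and $\g_L$ is positive and smooth, the classical H\"older inequality gives, for every $p \geq -n$,
\[
\left(\frac{1}{\omega_n}\int_{\mathbb{S}^n}\g_L^p\,d\sigma\right)^{1/p}
\ \geq\
\left(\frac{1}{\omega_n}\int_{\mathbb{S}^n}\g_L^{-n}\,d\sigma\right)^{-1/n},
\]
with equality (when $p > -n$) if and only if $\g_L$ is constant on $\mathbb{S}^n$. By Lemma \ref{lem-horo supp of geodesic ball} combined with Corollary \ref{cor-support-construct-domain}, constancy of $\g_L$ is equivalent to $L$ being a geodesic ball centered at the origin. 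Next, formula \eqref{phi^-n-Wn} in Lemma \ref{lem-Wn-explicit} gives $\int_{\mathbb{S}^n}\g_L^{-n}\,d\sigma = \omega_n e^{-n r^n_L}$, so the right-hand side above equals $e^{r^n_L}$. Finally, Theorem C yields $\widetilde{W}_n(L) \geq I_n \circ I_k^{-1}(\widetilde{W}_k(L))$, that is $r^n_L \geq r^k_L$ (with equality iff $L$ is a geodesic ball, and trivially if $k=n$). Chaining the two estimates produces
\[
\left(\frac{1}{\omega_n}\int_{\mathbb{S}^n}\g_L^p\,d\sigma\right)^{1/p}\ \geq\ e^{r^n_L}\ \geq\ e^{r^k_L},
\]
which is exactly \eqref{Min ineq-K ball}.

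The analysis of the four equality cases is routine once one notes which step is strict. When $p > -n$ and $k < n$, both steps are non-trivial, so equality forces $L$ to be a geodesic ball centered at the origin; when $p > -n$ and $k = n$, only the $L^p$-monotonicity step is non-trivial, again forcing $L$ to be centered at the origin; when $p = -n$ and $k < n$, only Theorem C is non-trivial, so equality requires only that $L$ be a geodesic ball (not necessarily centered at the origin); when $p = -n$ and $k = n$, both steps are tautologies and equality always holds. Since all the serious geometric content is packaged inside Theorem C, no significant obstacle is expected; the only place demanding mild caution is verifying the direction of H\"older's inequality for negative exponents in the reduction step.
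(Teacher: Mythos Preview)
Your proposal is correct and follows essentially the same approach as the paper's proof: H\"older's inequality (monotonicity of power means) to reduce to $p=-n$, then \eqref{phi^-n-Wn} to identify the right-hand side as $e^{r^n_L}$, then Theorem C to pass from $r^n_L$ to $r^k_L$. The equality analysis also matches the paper's case-by-case discussion.
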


\begin{proof}
    Using the H\"older inequality (see e.g., \cite[p. 146]{GT01}), \eqref{phi^-n-Wn} and Theorem C, we have
	\begin{align}\label{pf-Min-K ball}
	\( \frac{\int_{\mathbb{S}^n} \g_L^p d\sigma}{\omega_n} \)^{\frac{1}{p}}
	\geq  \( \frac{\int_{\mathbb{S}^n} \g_L^{-n} d\sigma}{\omega_n} \)^{-\frac{1}{n}}= e^{ r^n_L} \geq e^{r^k_L}, 
	\end{align} 
	where $p \geq -n$. Thus we obtain \eqref{Min ineq-K ball}. Now we characterize the equality cases of \eqref{Min ineq-K ball}.
	
	If $0 \leq k \leq n$ and $p >-n$, then equality holds in the first inequality of \eqref{pf-Min-K ball}  if and only if  $\g_L(z)$ is constant on $\mathbb{S}^n$, which means that $L$ is a geodesic ball centered at the origin. By Theorem C, equality holds in the second inequality of \eqref{pf-Min-K ball}  when $L$ is a geodesic ball. Hence the Case (1) and the Case (3) in Theorem \ref{thm-strong Min ineq-K ball} are proved.
	
	If $0 \leq k \leq n-1$ and $p=-n$, then Theorem C implies that equality holds in the second inequality of \eqref{pf-Min-K ball}  if and only if $L$ is a geodesic ball. Thus Case (2) is proved. 
	Case (4) follows directly form \eqref{phi^-n-Wn}.
    We complete the proof of Theorem \ref{thm-strong Min ineq-K ball}.
\end{proof}

\begin{rem}
	Taking $p \to 0^+$ in \eqref{Min ineq-K ball}, the corresponding inequality is
	\begin{equation*}
	\int_{\mathbb{S}^n} u_L(z) d\sigma \geq \omega_n r^k_L,
	\end{equation*}
	equality holds if and only if $L$ is a geodesic ball centered at the origin,
	where $u_L(z)$ is the horospherical support function of a smooth uniformly h-convex bounded domain $L$.
\end{rem}

Since equality holds in \eqref{AF-modif-quermass}  when $\Omega$ is a geodesic ball, the following Corollary \ref{cor-weak Min-K ball} is a direct consequence of Theorem \ref{thm-strong Min ineq-K ball}, see Remark \ref{rem-rel-strong Min-weak Min}.
\begin{cor}\label{cor-weak Min-K ball}
		Conjecture \ref{conj-Min ineq-weak} holds when  $K$ is a geodesic ball centered at the origin.
\end{cor}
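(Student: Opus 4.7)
The plan is to reduce Conjecture \ref{conj-Min ineq-weak} to the already-proved Theorem \ref{thm-strong Min ineq-K ball} via the general implication spelled out in Remark \ref{rem-rel-strong Min-weak Min}, namely that \eqref{formula-conj-Min ineq-weak} is a consequence of the chain \eqref{formula-conj-Min ineq} followed by \eqref{AF-modif-quermass}. When $K$ is a geodesic ball centered at the origin, the inequality \eqref{AF-modif-quermass} holds with equality, so no information is lost passing from the strong form to the weak form.

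Concretely, I would first unpack the left-hand side of \eqref{formula-conj-Min ineq-weak} under the assumption $K=B(r^k_K)$. Since $\g_K(z)\equiv e^{r^k_K}$ and each shifted principal radius equals $\sinh r^k_K\, e^{r^k_K}$, we obtain
\begin{equation*}
\g_K^{-p-n}(z)\, p_{n-k}(\tilde\lambda_K) = \sinh^{n-k} r^k_K\, e^{-(k+p)r^k_K},
\end{equation*}
so the target inequality \eqref{formula-conj-Min ineq-weak} collapses to
\begin{equation*}
\int_{\mathbb{S}^n} \g_L^p\, d\sigma \geq \omega_n\, e^{p r^k_L},
\end{equation*}
which is (after taking $p$-th roots, using $p>0$) exactly \eqref{Min ineq-K ball} from Theorem \ref{thm-strong Min ineq-K ball}.

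For the equality discussion, I would invoke the equality cases already established in Theorem \ref{thm-strong Min ineq-K ball}. When $0\leq k\leq n-1$, Theorem \ref{thm-strong Min ineq-K ball} gives equality precisely when $L$ is a geodesic ball centered at the origin (using that $p>0$ forces the H\"older step to collapse, so $\g_L$ is constant, and then the second step via Theorem C is automatically tight); combined with the standing hypothesis that $K$ is also centered at the origin, this matches the equality characterization in Conjecture \ref{conj-Min ineq-weak}. When $k=n$, the same reasoning forces $L$ to be a geodesic ball centered at the origin, and any two concentric geodesic balls are hyperbolic dilates in the sense $K=c\cdot L$ or $L=c\cdot K$ with $c\geq 1$, again matching Conjecture \ref{conj-Min ineq-weak}. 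Since the argument is a direct reduction, there is no substantive obstacle; the only care needed is to verify the equality characterization translates correctly across the two conjectures.
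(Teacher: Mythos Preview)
Your proposal is correct and follows essentially the same approach as the paper: reducing Conjecture \ref{conj-Min ineq-weak} to Theorem \ref{thm-strong Min ineq-K ball} via the implication in Remark \ref{rem-rel-strong Min-weak Min}, noting that \eqref{AF-modif-quermass} holds with equality when $K$ is a geodesic ball. The paper states this in one sentence, while you have helpfully written out the explicit computation and equality discussion.
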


In \cite{HLW20}, Hu et al. introduced the following locally constrained curvature flow,
\begin{equation}\label{flow-HLW20}
\left\{\begin{aligned}
\frac{\partial}{\partial t}X(x,t)=&\left( \(\cosh r-\tilde{u}\) \frac{p_{k-1} (\tilde{\kappa})}{p_k (\tilde{\kappa})} - \tilde{u}\right) \nu , \quad k=1, \ldots,n,\\
X(\cdot,0)=& M_0,
\end{aligned}\right.
\end{equation}
 They proved the long time existence and convergence for flow \eqref{flow-HLW20} in the following Theorem D. Here we only focus on the case that $M_0$ is uniformly h-convex.

\begin{thmD}[\cite{HLW20}]
	Let $n \geq 2$ and $1 \leq k \leq n$, and let $M_0 =\partial \Omega_0$ be a smooth uniformly h-convex hypersurface in $\mathbb{H}^{n+1}$ such that $\Omega_0$ contains the origin in its interior. Then the flow \eqref{flow-HLW20} has a smooth solution for all time $t \in[0, +\infty)$, and $M_t$ is uniformly h-convex for each $t>0$ and converges smoothly and exponentially to a geodesic sphere of radius $r_\infty$ such that $\widetilde{W}_k (B_{r_\infty}) = \widetilde{W}_k(\Omega_0)$.
\end{thmD}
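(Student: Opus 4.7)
The plan is to apply the standard strategy for quermassintegral-preserving curvature flows in space forms: identify two quermassintegrals with opposite behavior along the flow, establish uniform a priori estimates, extract smooth subsequential limits saturating an Alexandrov--Fenchel rigidity, and upgrade to exponential convergence by linearization at the limiting ball. The speed $\mathscr{F}=(\cosh r-\tilde u)\frac{p_{k-1}(\tilde\kappa)}{p_k(\tilde\kappa)}-\tilde u$ is tailored so that $\widetilde{W}_k(\Omega_t)$ is preserved: by Lemma~\ref{lem-variation of modified quermass},
\begin{equation*}
\frac{d}{dt}\widetilde{W}_k(\Omega_t)=\int_{M_t}\bigl[(\cosh r-\tilde u)p_{k-1}(\tilde\kappa)-\tilde u\,p_k(\tilde\kappa)\bigr]d\mu_t=0
\end{equation*}
by the shifted Minkowski formula \eqref{eq-shifted Minkowski formula}. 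When $k\geq 2$, Newton--MacLaurin $p_{k-1}^2\geq p_kp_{k-2}$ from Lemma~\ref{lem-Newton-MacLaurin ineq} combined with the same Minkowski formula yields $\frac{d}{dt}\widetilde{W}_{k-1}(\Omega_t)\geq 0$, with pointwise equality in the integrand precisely at umbilical points; the borderline case $k=1$ is handled analogously, with volume playing the role of the monotone functional and a Heintze--Karcher-type inequality for h-convex hypersurfaces replacing Newton--MacLaurin.

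Next I would establish the a priori estimates. Preservation of $\widetilde{W}_k$, together with its monotonicity under inclusion of h-convex domains (Proposition~\ref{prop-domain monotone modified quermass}), yields two-sided bounds on both the inradius and circumradius of $\Omega_t$ about the origin; in particular the origin stays in the interior, so $M_t$ remains star-shaped. Recasting the flow as a scalar parabolic PDE for the horospherical support function $u(z,t)$ via \eqref{scalr-flow DT's trick} and using the identity $\cosh r-\tilde u=\g^{-1}$, I would derive the gradient bound via the maximum principle applied to $|Du|^2/\g$. Upper and lower bounds on the shifted principal curvatures $\tilde\kappa_i$ come from auxiliary functions of the form $\log(p_{k-1}/p_k)-A\tilde u$ and from a tensor maximum principle argument preserving h-convexity. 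Once $\g(z,t)$ is $C^{2,1}$-bounded with $A_{ij}[\g]$ uniformly positive definite, the inverse-concavity properties of Lemma~\ref{lem-mono-increasing concavity} combined with Krylov--Safonov regularity give uniform $C^{2+\alpha,1+\alpha/2}$ bounds, and Schauder bootstrapping with parabolic continuation yields long-time existence.

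Convergence then follows from the monotonicity of $\widetilde{W}_{k-1}$: it is non-decreasing yet bounded above by Theorem~C applied with the preserved value of $\widetilde{W}_k$, so along a time sequence $\frac{d}{dt}\widetilde{W}_{k-1}\to 0$. Combined with the uniform higher-derivative bounds, a smooth subsequential limit $M_\infty$ exists on which the Newton--MacLaurin defect vanishes pointwise, forcing $M_\infty$ to be totally umbilical and hence a geodesic sphere; preservation of $\widetilde{W}_k$ pins down $r_\infty$ by $\widetilde{W}_k(B(r_\infty))=\widetilde{W}_k(\Omega_0)$, and in particular the subsequential limit is unique, so the full flow converges smoothly. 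For the exponential rate I would linearize at the support function $u\equiv r_\infty$ of $B(r_\infty)$: writing $u=r_\infty+\psi$, the linearized operator on $\psi$ is, up to a positive conformal factor, of the form $\Delta_{\mathbb{S}^n}\psi+c(n,k,r_\infty)\psi$ whose spectrum is strictly negative on the codimension-one subspace preserved by the linearized flow (the remaining kernel direction corresponds to the preserved modified quermassintegral), yielding $\|u(\cdot,t)-r_\infty\|_{C^\ell}\leq Ce^{-\delta t}$ for every $\ell$.

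The main obstacle is preserving uniform h-convexity along the flow while simultaneously obtaining the curvature upper bound. The quotient $p_{k-1}/p_k$ is only inverse-concave, not concave, in the positive cone of the shifted principal curvatures, so the usual concavity-based maximum principle arguments do not apply directly. One must exploit the algebraic cancellation between the nonlocal factor $(\cosh r-\tilde u)p_{k-1}/p_k$ and the quadratic curvature corrections that arise when commuting covariant derivatives in the evolution equation for $\tilde h_{ij}$ analogous to \eqref{evl-thij}; this is precisely where the locally constrained structure of the flow, rather than a purely curvature-driven speed, plays a decisive role.
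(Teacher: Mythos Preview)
Theorem~D is not proved in this paper; it is quoted verbatim from \cite{HLW20} and used as a black box (to derive Theorem~E, Lemma~\ref{lem-mono-quan-2-HLW20}, and Theorem~\ref{thm-p=1-strong Min ineq, L ball}). There is therefore no proof here to compare your proposal against.

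That said, your outline matches the standard architecture for locally constrained flows and is broadly consistent with how \cite{HLW20} proceeds: the identity $\frac{d}{dt}\widetilde W_k=0$ via the shifted Minkowski formula is exactly right, and the monotonicity of a neighboring quermassintegral via Newton--MacLaurin is the correct mechanism for subsequential convergence to a sphere. One point to flag: your linearization sketch is too optimistic. The kernel of the linearized operator at a geodesic sphere typically contains the first spherical harmonics (translations of the center), not just the direction of the preserved quermassintegral; in \cite{HLW20} the exponential rate is obtained only after first showing that the limiting sphere is centered at the origin (which uses the structure of the flow and the assumption that the origin lies in the interior), so that these translation modes are genuinely damped. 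Without that step your spectral argument would stall on an $(n+1)$-dimensional kernel.
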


Using the flow \eqref{flow-HLW20}, Hu et al. \cite{HLW20} gave an alternative proof of Theorem C. Furthermore, they proved the following weighted Alexandrov-Fenchel inequalities. We state their result by use of the modified $k$-mean radius defined in \eqref{mod k-mean radius}.
\begin{thmE}[\cite{HLW20}]
	Let $n \geq 2$ and $1 \leq k \leq n$ be integers. Assume that $\Omega$ is a smooth uniformly h-convex bounded domain in $\mathbb{H}^{n+1}$ which contains the origin in its interior.  Then there holds
	\begin{equation}\label{ineq-HLW20}
	\int_{\partial \Omega} (\cosh r - \tilde{u}) p_k(\tilde{\kappa}) d\mu \geq \omega_n \sinh^{n-k} r^k_\Omega e^{-(k+1) r^k_\Omega}.
	\end{equation}
	Equality holds if and only if $\Omega$ is a geodesic ball centered at the origin.
\end{thmE}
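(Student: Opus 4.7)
The plan is to apply the locally constrained curvature flow \eqref{flow-HLW20} with initial data $M_0 = \partial\Omega$. By Theorem~D the flow has a smooth uniformly h-convex solution $M_t = \partial\Omega_t$ for all $t \ge 0$, converging smoothly and exponentially to a geodesic sphere $\partial B(r_\infty)$ centered at the origin. The strategy is to establish two key facts: (i) the modified quermassintegral $\widetilde{W}_k(\Omega_t)$ is preserved along the flow, which pins down $r_\infty = r^k_\Omega$; and (ii) the weighted integral $Q(t) := \int_{M_t}(\cosh r - \tilde u)\, p_k(\tilde\kappa)\, d\mu_t$ is non-increasing in $t$. Passing to the limit then converts $Q(0) \ge \lim_{t\to\infty}Q(t)$ into the desired inequality \eqref{ineq-HLW20}, after computing the limit value explicitly on the limiting sphere.

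The preservation of $\widetilde{W}_k$ is immediate. Writing the flow as $\partial_t X = \mathscr{F}\nu$ with $\mathscr{F} = (\cosh r - \tilde u)\,p_{k-1}(\tilde\kappa)/p_k(\tilde\kappa) - \tilde u$, Lemma~\ref{lem-variation of modified quermass} gives $\frac{d}{dt}\widetilde{W}_k(\Omega_t) = \int_{M_t}\bigl[(\cosh r - \tilde u)\,p_{k-1}(\tilde\kappa) - \tilde u\, p_k(\tilde\kappa)\bigr]\, d\mu_t$, which vanishes by the shifted Minkowski formula \eqref{eq-shifted Minkowski formula} with $m=k-1$. Hence $r_\infty = r^k_\Omega$. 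On the limiting sphere $\partial B(r^k_\Omega)$ one has $\cosh r - \tilde u = e^{-r^k_\Omega}$, the shifted principal curvatures are $\tilde\kappa_i \equiv e^{-r^k_\Omega}/\sinh r^k_\Omega$, and the area is $\omega_n \sinh^n r^k_\Omega$, which together yield $\lim_{t\to\infty}Q(t) = \omega_n \sinh^{n-k} r^k_\Omega\, e^{-(k+1)r^k_\Omega}$, exactly the right-hand side of \eqref{ineq-HLW20}.

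The main obstacle, and the place where the locally constrained choice of $\mathscr{F}$ pays off, is to verify the monotonicity $Q'(t) \le 0$. To compute $Q'$ I would use the evolution $\partial_t(\cosh r - \tilde u) = -\mathscr{F}(\cosh r - \tilde u) + \langle V, \nabla\mathscr{F}\rangle$, obtained from $\nabla_i \cosh r = \langle V,\partial_i X\rangle$ in Lemma~\ref{lem-conf-vf-direvative} and the conformal Killing identity $\bar\nabla V = \cosh r\, \bar g$, combined with the standard evolution $\partial_t p_k(\tilde\kappa) = \dot p_k^{ij}(\tilde\kappa)\,\partial_t \tilde h_{ij}$ and $\partial_t\, d\mu_t = \mathscr{F} H\, d\mu_t$. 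Integrating by parts and using the fact that $\dot p_k^{ij}(\tilde\kappa)$ is divergence-free (Section~\ref{sec-useful lemmas}), the transport terms involving $\nabla \mathscr{F}$ should recombine with the curvature derivatives produced by $\partial_t \tilde h_{ij}$; the specific form of $\mathscr{F}$ is tuned precisely so that the net integrand reduces to a pointwise quadratic expression in the normalized symmetric functions $p_{k-1}, p_k, p_{k+1}$, weighted by the positive factor $\cosh r - \tilde u$. Applying the Newton-MacLaurin inequality \eqref{Newton ineq} in the form $p_k^2 \ge p_{k-1}p_{k+1}$ then delivers the sign. The hard part will be carrying out the integration by parts cleanly enough to expose this Newton-MacLaurin structure and to identify the equality case (all $\tilde\kappa_i$ equal, i.e. $M_t$ totally umbilical).

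Assembling the pieces, $Q(0) \ge \lim_{t\to\infty}Q(t) = \omega_n \sinh^{n-k} r^k_\Omega\, e^{-(k+1)r^k_\Omega}$, which is \eqref{ineq-HLW20}. For rigidity: if equality holds at $t=0$, then monotonicity forces $Q'(t) \equiv 0$, so the pointwise equality case of the Newton-MacLaurin step forces every $M_t$ to be totally umbilical, hence a geodesic sphere; requiring in addition that such a sphere evolve (in fact, stay fixed) under \eqref{flow-HLW20} forces it to be centered at the origin, since a direct computation shows $\mathscr{F}$ vanishes on $\partial B(r)$ only when $r$ is measured from the origin. Thus $\partial\Omega$ is a geodesic sphere centered at the origin, and the converse is the explicit computation already performed on the limiting sphere.
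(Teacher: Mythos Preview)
Your proposal is correct and follows essentially the same approach that the paper attributes to \cite{HLW20}: monotonicity of $Q(t)=\int_{M_t}(\cosh r-\tilde u)p_k(\tilde\kappa)\,d\mu_t$ along the flow \eqref{flow-HLW20}, preservation of $\widetilde W_k$ via the shifted Minkowski formula, and convergence from Theorem~D. The monotonicity computation you sketch does indeed close up via Newton--MacLaurin and the divergence-free property of $\dot p_k^{ij}(\tilde\kappa)$ (cf.\ the paper's Lemma~\ref{lem-mono-quan-2-HLW20}, which carries out a slightly more general version of exactly this calculation); the equality case there comes out as $\nabla\cosh r\equiv 0$ on $M_t$, directly giving a sphere centered at the origin.
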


Hu et al. \cite{HLW20} found that the left-hand side of \eqref{ineq-HLW20} is monotone non-increasing along the flow \eqref{flow-HLW20}, while the right-hand side of \eqref{ineq-HLW20} is invariant along the flow. Hence they proved Theorem E by applying the convergence result in Theorem D. In the following Lemma \ref{lem-mono-quan-2-HLW20}, we find a new geometric quantity which is also monotone non-increasing along the flow \eqref{flow-HLW20}.
\begin{lem}\label{lem-mono-quan-2-HLW20}
	Let $n \geq 2$ and $1 \leq k \leq n-1$ be integers. Along the flow \eqref{flow-HLW20}, we have
	\begin{equation}\label{mono-quan-2-HLW-1}
	\frac{d}{dt} \int_{\mathbb{S}^n} \(\frac{c_0}{\g(z,t)}-1 \) \g^{-n}(z,t) p_{n-k} (\tilde{\lambda}(G_t^{-1}(z))) d\sigma \leq 0,
	\end{equation}
	for any fixed number $c_0 \geq 1$, which is equivalent to
	\begin{equation}\label{mono-quan-2-HLW-2}
	\frac{d}{dt} \int_{M_t} \({c_0} ( \cosh r -\tilde{u}) -1 \) p_k (\tilde{\kappa}) d\mu_t \leq 0.
	\end{equation}
	Equality holds if and only if $M_t$ is a geodesic sphere centered at the origin.
\end{lem}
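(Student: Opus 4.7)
The plan is as follows. First, the equivalence of the two forms \eqref{mono-quan-2-HLW-1} and \eqref{mono-quan-2-HLW-2} is immediate from identities already collected in the paper: by \eqref{rel-area element} and \eqref{shifted curvature-support function} one has $p_k(\tilde{\kappa})\,d\mu = \gamma^{-n}p_{n-k}(\tilde{\lambda})\,d\sigma$, and by \eqref{1/phi, coshr-u} one has $\cosh r - \tilde{u} = \gamma^{-1}$, so the two integrands agree after pulling back through the horospherical Gauss map. I will therefore work with \eqref{mono-quan-2-HLW-2} and split the functional as
\begin{equation*}
\mathcal{Q}(t) := \int_{M_t}\bigl[c_0(\cosh r - \tilde{u}) - 1\bigr]p_k(\tilde{\kappa})\,d\mu_t = c_0\mathcal{J}_1(t) - \mathcal{J}_2(t),
\end{equation*}
where $\mathcal{J}_1(t) := \int_{M_t}(\cosh r - \tilde{u})p_k(\tilde{\kappa})\,d\mu_t$ and $\mathcal{J}_2(t) := \int_{M_t}p_k(\tilde{\kappa})\,d\mu_t$.

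For $\mathcal{J}_2$, I would use the inductive definition \eqref{induction def of modi-quer-intg} to write $\mathcal{J}_2 = (n-k)\widetilde{W}_{k+1}(\Omega_t) + (n-2k)\widetilde{W}_k(\Omega_t)$. Along the flow \eqref{flow-HLW20}, the quantity $\widetilde{W}_k(\Omega_t)$ is preserved: by Lemma \ref{lem-variation of modified quermass}, $\widetilde{W}_k'(t) = \int_{M_t}p_k(\tilde{\kappa})\mathscr{F}\,d\mu_t$ with $\mathscr{F} = (\cosh r - \tilde{u})p_{k-1}/p_k - \tilde{u}$, and the shifted Minkowski formula \eqref{eq-shifted Minkowski formula} forces this to vanish. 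Applying Lemma \ref{lem-variation of modified quermass} to $\widetilde{W}_{k+1}$ and substituting \eqref{eq-shifted Minkowski formula} again to eliminate the $\tilde{u}p_{k+1}$ term yields
\begin{equation*}
\mathcal{J}_2'(t) = (n-k)\int_{M_t}(\cosh r - \tilde{u})\left[\frac{p_{k-1}p_{k+1}}{p_k} - p_k\right]\!(\tilde{\kappa})\,d\mu_t \leq 0,
\end{equation*}
where the inequality is the Newton--MacLaurin inequality \eqref{Newton ineq}, with equality iff $\tilde{\kappa}_1 = \cdots = \tilde{\kappa}_n$, i.e.\ $M_t$ is totally umbilic and hence a geodesic sphere.

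The derivative $\mathcal{J}_1'(t)$ is exactly the monotone quantity underlying Theorem E from \cite{HLW20}, so the bound $\mathcal{J}_1'(t) \leq 0$ is available; the main task, and the principal obstacle, is to strengthen this into the quantitative estimate $\mathcal{J}_1'(t) \leq \mathcal{J}_2'(t)$. Since both derivatives are non-positive and $c_0 \geq 1$, this strengthening implies $\mathcal{Q}'(t) = c_0\mathcal{J}_1'(t) - \mathcal{J}_2'(t) \leq 0$. I plan to establish it by computing $\mathcal{J}_1'(t)$ directly from the evolution equations $\partial_t X = \mathscr{F}\nu$ and $\partial_t\gamma = \gamma\mathscr{F}$ together with Lemma \ref{lem-conf-vf-direvative} and the evolution \eqref{evl-thij} of the shifted Weingarten tensor, then integrating by parts against the divergence-free tensor $\dot{p}_{k+1}^{ij}(\tilde{\kappa})$ to pass derivatives off of $\mathscr{F}$, and finally applying the same Cauchy--Schwarz step used in the proof of Theorem E in \cite{HLW20}, which I expect to produce exactly the Newton-type residual appearing in $\mathcal{J}_2'(t)$. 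For the equality case, vanishing of $\mathcal{J}_2'$ via the Newton step forces $M_t$ to be a geodesic sphere, while the residual Cauchy--Schwarz equality in the refined estimate for $\mathcal{J}_1'$ forces that sphere to be centered at the origin, matching the claim.
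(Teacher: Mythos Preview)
Your equivalence argument and your computation of $\mathcal{J}_2'$ are correct, and the decomposition $\mathcal{Q}'=(c_0-1)\mathcal{J}_1'+(\mathcal{J}_1'-\mathcal{J}_2')$ is a valid reduction of the general statement to the case $c_0=1$. But this reduction buys you nothing: proving $\mathcal{J}_1'\leq\mathcal{J}_2'$ \emph{is} the lemma with $c_0=1$, and it requires the full computation. Your expectation for how that computation goes is where the proposal drifts. You write that integrating by parts and ``applying the same Cauchy--Schwarz step used in the proof of Theorem E'' will ``produce exactly the Newton-type residual appearing in $\mathcal{J}_2'(t)$''. That is not the mechanism. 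The Newton--MacLaurin inequality and the quadratic-form positivity enter at different places and do different jobs; the Newton residual from $\mathcal{J}_2'$ does not emerge from any integration-by-parts step on $\mathcal{J}_1'$.

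The paper keeps the combination $c_0\mathcal{J}_1'-\mathcal{J}_2'$ together from the start. Substituting $\mathscr{F}=(\cosh r-\tilde u)\,p_{k-1}/p_k-\tilde u$ into the evolution produces a term with $(\cosh r-\tilde u)\,p_{k+1}p_{k-1}/p_k$ multiplied by the pointwise factor $c_0(n-k)\cosh r-(n-k)\geq 0$; Newton--MacLaurin \eqref{Newton ineq} is applied \emph{here}, replacing $p_{k+1}p_{k-1}/p_k$ by $p_k$. After that, the shifted Minkowski formula \eqref{eq-shifted Minkowski formula} kills the ``$-1$'' piece entirely, leaving $c_0$ times an expression which, via the divergence identity \eqref{shifted-Min-fml-diverg-formula}, is rewritten as integrals of $\dot p_{k+1}^{ij}\nabla_j\nabla_i\cosh r$ and $\dot p_k^{ij}\nabla_j\nabla_i\cosh r$ against smooth weights. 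A single integration by parts (using that $\dot p_m^{ij}(\tilde\kappa)$ is divergence-free) together with \eqref{di cosh r}--\eqref{1/phi i new} turns these into the two nonpositive quadratic forms
\[
-\frac{n-k}{k+1}\int_{M_t}\dot p_{k+1}^{ij}(\tilde\kappa)\langle V,\partial_iX\rangle\langle V,\partial_jX\rangle\,d\mu_t
\quad\text{and}\quad
-\frac{k+1}{k}\int_{M_t}\dot p_k^{ij}(\tilde\kappa)\,\tilde h_i{}^l\langle V,\partial_lX\rangle\langle V,\partial_jX\rangle\,d\mu_t.
\]
Equality forces $\nabla\cosh r\equiv 0$, hence a geodesic sphere centered at the origin; the Newton step is not what pins down the center. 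If you rerun this with $c_0=1$ you will indeed obtain $\mathcal{J}_1'\leq\mathcal{J}_2'$, but via the same route, not the one you sketched.
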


\begin{proof}
	The equivalence of \eqref{mono-quan-2-HLW-1} and \eqref{mono-quan-2-HLW-2} follows from  \eqref{rel-area element},  \eqref{shifted curvature-support function} and \eqref{1/phi, coshr-u}.
	The following evolution equation was calculated in \cite[Section 9.1]{HLW20},
	\begin{equation*}
	\frac{d}{dt} \int_{M_t} c_0 (\cosh r - \tilde{u}) p_k (\tilde{\kappa}) d \mu_t
	= c_0 \int_{M_t} \(  (n-k) \cosh r p_{k+1}(\tilde{\kappa}) - (k+1) (\cosh r-\tilde{u}) p_k (\tilde{\kappa}) \) \mathscr{F} d\mu_t,
	\end{equation*}
	where $\mathscr{F}$ represents the speed function of the flow \eqref{flow-HLW20}. Taking the time derivative of \eqref{induction def of modi-quer-intg} and using Lemma \ref{lem-variation of modified quermass}, we have
	\begin{equation*}
	\frac{d}{dt}\int_{{M_t}} p_k (\tilde{\kappa}) d\mu_t
	= (n-2k) \int_{M_t} p_k(\tilde{\kappa}) \mathscr{F} d\mu_t
	+ (n-k) \int_{M_t} p_{k+1} (\tilde{\kappa}) \mathscr{F} d\mu_t.
	\end{equation*}
	Hence, 
	\begin{align}
	&\frac{d}{dt} \int_{M_t} \({c_0} ( \cosh r -\tilde{u}) -1 \) p_k (\tilde{\kappa}) d\mu_t  \nonumber\\
	=& c_0(n-k) \int_{M_t} \cosh r p_{k+1} (\tilde{\kappa}) \mathscr{F} d\mu_t
	- (n-k) \int_{M_t} p_{k+1} (\tilde{\kappa}) \mathscr{F} d\mu_t  \nonumber\\
	& -c_0(k+1) \int_{M_t}  (\cosh r -\tilde{u})p_k (\tilde{\kappa})\mathscr{F} d\mu_t
	-(n -2k) \int_{{M_t}} p_k(\tilde{\kappa}) \mathscr{F} d\mu_t.\label{new mono quan-HLW-1}
	\end{align}
	Since in the flow \eqref{flow-HLW20}, 
	\begin{equation}\label{speed func-HLW20}
	\mathscr{F} = (\cosh r-\tilde{u}) \frac{p_{k-1}  (\tilde{\kappa})}{p_k (\tilde{\kappa})}  - \tilde{u},
	\end{equation}
	we have
	\begin{align}
	&c_0(n-k) \int_{M_t} \cosh r p_{k+1} (\tilde{\kappa}) \mathscr{F} d\mu_t
	- (n-k) \int_{M_t} p_{k+1} (\tilde{\kappa}) \mathscr{F} d\mu_t  \nonumber\\
	=&c_0(n-k) \int_{M_t} \cosh r \( (\cosh r-\tilde{u}) \frac{p_{k+1} (\tilde{\kappa}) p_{k-1} (\tilde{\kappa})}{p_k (\tilde{\kappa})}  -p_{k+1} (\tilde{\kappa}) \tilde{u}\) d\mu_t  \nonumber\\
	&-(n-k) \int_{M_t} \( (\cosh r-\tilde{u})\frac{p_{k+1} (\tilde{\kappa}) p_{k-1} (\tilde{\kappa})}{p_k (\tilde{\kappa})} -  p_{k+1} (\tilde{\kappa}) \tilde{u}\) d\mu_t. \label{before N-M-mono-quan}
	\end{align}
	As $c_0 \geq 1$, we have $c_0(n-k) \cosh r- (n-k) \geq (c_0 -1) (n-k) \geq 0$. Using \eqref{Newton ineq} in the right-hand side of \eqref{before N-M-mono-quan}, we get
	\begin{align}
	&c_0(n-k) \int_{M_t} \cosh r p_{k+1} (\tilde{\kappa}) \mathscr{F} d\mu_t
	- (n-k) \int_{M_t} p_{k+1} (\tilde{\kappa}) \mathscr{F} d\mu_t   \nonumber\\
	\leq& c_0(n-k) \int_{M_t} \cosh r \( (\cosh r -\tilde{u})p_k(\tilde{\kappa}) - p_{k+1} (\tilde{\kappa}) \tilde{u} \) d\mu_t   \nonumber\\
	&-
	(n-k)\int_{M_t} \( (\cosh r -\tilde{u})p_k(\tilde{\kappa}) - p_{k+1} (\tilde{\kappa}) \tilde{u} \) d\mu_t  \nonumber\\
	=&c_0(n-k)\int_{M_t} \cosh r \( (\cosh r -\tilde{u})p_k(\tilde{\kappa}) - p_{k+1} (\tilde{\kappa}) \tilde{u} \) d\mu_t,\label{new mono quan-HLW-1 line}
	\end{align}
	where we used \eqref{eq-shifted Minkowski formula} in the equality.
	Using \eqref{speed func-HLW20} in the last line of \eqref{new mono quan-HLW-1}, we have
	\begin{align}
	&-c_0(k+1) \int_{M_t}  (\cosh r -\tilde{u})p_k (\tilde{\kappa})\mathscr{F} d\mu_t
	-(n -2k) \int_{{M_t}} p_k(\tilde{\kappa}) \mathscr{F} d\mu_t   \nonumber\\
	=& -c_0(k+1) \int_{M_t} (\cosh r-\tilde{u}) \( (\cosh r-\tilde{u})p_{k-1}(\tilde{\kappa} ) - p_k (\tilde{\kappa}) \tilde{u} \) d\mu_t    \nonumber\\
	&-(n-2k) \int_{M_t} \( (\cosh r-\tilde{u})p_{k-1}(\tilde{\kappa})- p_k (\tilde{\kappa})\tilde{u} \)d\mu_t  \nonumber\\
	=&-c_0(k+1) \int_{M_t} (\cosh r-\tilde{u})\( (\cosh r-\tilde{u})p_{k-1}(\tilde{\kappa} ) - p_k (\tilde{\kappa}) \tilde{u} \) d\mu_t,\label{new mono quan-HLW-2 line}
	\end{align}
	where we used \eqref{eq-shifted Minkowski formula} in the last equality.
	Substituting \eqref{new mono quan-HLW-1 line} and \eqref{new mono quan-HLW-2 line} into \eqref{new mono quan-HLW-1} yields
	\begin{align*}
	&\frac{d}{dt} \int_{M_t} \({c_0} ( \cosh r -\tilde{u}) -1 \) p_k (\tilde{\kappa}) d\mu_t\\
	\leq& 
	c_0(n-k)\int_{M_t} \cosh r \( (\cosh r -\tilde{u})p_k(\tilde{\kappa}) - p_{k+1} (\tilde{\kappa}) \tilde{u} \) d\mu_t\\
	&-c_0(k+1) \int_{M_t} (\cosh r-\tilde{u})\( (\cosh r-\tilde{u})p_{k-1}(\tilde{\kappa} ) - p_k (\tilde{\kappa}) \tilde{u} \) d\mu_t\\
	=& c_0 \frac{n-k}{k+1} \int_{M_t} \cosh r \dot{p}_{k+1}^{ij} (\tilde{\kappa}) \nabla_j \nabla_i \cosh r d\mu_t
	-c_0 \frac{k+1}{k} \int_{M_t} (\cosh r-\tilde{u}) \dot{p}_k^{ij}(\tilde{\kappa}) \nabla_j \nabla_i \cosh r d\mu_t,
	\end{align*}
	where we used \eqref{shifted-Min-fml-diverg-formula} in the equality. Since $\dot{p}_{k+1}^{ij} (\tilde{\kappa})$ and $\dot{p}_k^{ij} (\tilde{\kappa})$ are positive definite and divergence-free, integration by parts gives
	\begin{align}
	&\frac{d}{dt} \int_{M_t} \({c_0} ( \cosh r -\tilde{u}) -1 \) p_k (\tilde{\kappa}) d\mu_t \nonumber\\
	\leq&
	-c_0 \frac{n-k}{k+1} \int_{M_t}\dot{p}_{k+1}^{ij}(\tilde{\kappa}) \nabla_i \cosh r \nabla_j \cosh r d\mu_t
	+c_0 \frac{k+1}{k} \int_{M_t} \dot{p}_k^{ij}(\tilde{\kappa}) \nabla_i \cosh r \nabla_j (\cosh r-\tilde{u}) d\mu_t  \nonumber\\
	=&-c_0 \frac{n-k}{k+1} \int_{M_t}  \dot{p}_{k+1}^{ij}(\tilde{\kappa}) \metric{V}{\partial_i X} \metric{V}{\partial_j X} d\mu_t
	-c_0 \frac{k+1}{k} \int_{M_t} \dot{p}_k^{ij}(\tilde{\kappa}) \tilde{h}_{i}{}^l \metric{V}{\partial_l X} \metric{V}{\partial_j X} d\mu_t  \nonumber\\
	\leq& 0, \label{mono-last-2-HLW}
	\end{align}
	where we used \eqref{1/phi i new} in the equality. Then obtain \eqref{mono-quan-2-HLW-2}.
	
    By the uniform h-convexity of $M_t$ and \eqref{di cosh r}, equality holds in the last inequality of \eqref{mono-last-2-HLW} if and only if $\nabla \cosh r =0$, which means that $M_t$ is a geodesic sphere centered at the origin. This completes the proof of Lemma \ref{lem-mono-quan-2-HLW20}.
\end{proof}

Using the new monotone quantity of the flow \eqref{flow-HLW20}  proved in Lemma \ref{lem-mono-quan-2-HLW20}, we can prove that Conjecture \ref{conj-Min ineq} holds in a special case in the following Theorem \ref{thm-p=1-strong Min ineq, L ball}.

\begin{thm}\label{thm-p=1-strong Min ineq, L ball}
	Conjecture \ref{conj-Min ineq} holds when $n \geq 2$, $1 \leq k \leq n-1$, $p=1$, $K$ is a smooth uniformly h-convex bounded domain which contains the origin in its interior, and $L$ is a geodesic ball centered at the origin. Precisely, if $L = B(r_L)$, then
	\begin{align}\label{p=1-strong Min ineq, L ball}
	e^{ r_L}\int_{\mathbb{S}^n}  \g_K^{-1-n} p_{n-k}(\tilde{\lambda}_K) d\sigma
	- \int_{\mathbb{S}^n} \g_K^{-n}p_{n-k} (\tilde{\lambda}_K)d\sigma
	\geq
	\omega_n \sinh^{n-k}r^k_K e^{-kr^k_K} \left( e^{(r_L - r^k_K)}-1 \right)
	\end{align}  
	for any smooth uniformly h-convex bounded domain $K$ which contains the origin in its interior. Particularly, when $L$ degenerates to the origin, we have
	\begin{align}\label{p=1-strong Min ineq, L point}
	\int_{\partial K} (\cosh r-\tilde{u})p_k (\tilde{\kappa}) d\mu
	-\omega_n \sinh^{n-k} r^k_K e^{-(k+1) r^k_K}
	\geq
	\int_{\partial K} p_k(\tilde{\kappa}) d\mu - \omega_n \sinh^{n-k} r^k_K e^{-k r^k_K}
	\geq 0.
	\end{align}
	In the both inequalities, equality holds if and only if $K$ is a geodesic ball centered at the origin.
\end{thm}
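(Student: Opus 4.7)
The plan is to run the locally constrained curvature flow \eqref{flow-HLW20} with initial hypersurface $M_0 = \partial K$ and exploit the monotonicity established in Lemma \ref{lem-mono-quan-2-HLW20} with the specific choice $c_0 = e^{r_L} \geq 1$. First, by the conversion formulas \eqref{rel-area element}, \eqref{shifted curvature-support function}, and \eqref{1/phi, coshr-u}, the left-hand side of \eqref{p=1-strong Min ineq, L ball} can be rewritten as the hypersurface integral
\begin{equation*}
Q(\partial K) := \int_{\partial K} \bigl( e^{r_L}(\cosh r - \tilde u) - 1 \bigr) p_k(\tilde\kappa) \, d\mu,
\end{equation*}
which is precisely the quantity appearing in \eqref{mono-quan-2-HLW-2} for this value of $c_0$.

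Next, by Theorem D, the flow \eqref{flow-HLW20} starting at $\partial K$ exists for all $t \in [0, +\infty)$, remains smooth and uniformly h-convex, and converges smoothly to a geodesic sphere $\partial B(r_\infty)$ centered at the origin with $\widetilde W_k(B(r_\infty)) = \widetilde W_k(K)$; by \eqref{mod k-mean radius} this forces $r_\infty = r^k_K$. Lemma \ref{lem-mono-quan-2-HLW20} then gives $Q(M_t)$ non-increasing in $t$, so
\begin{equation*}
Q(\partial K) \;\geq\; \lim_{t \to \infty} Q(M_t) \;=\; Q\bigl(\partial B(r^k_K)\bigr).
\end{equation*}
A direct computation on $\partial B(r^k_K)$---where $\kappa_i \equiv \coth r^k_K$ gives $\tilde\kappa_i = e^{-r^k_K}/\sinh r^k_K$, $p_k(\tilde\kappa) = (e^{-r^k_K}/\sinh r^k_K)^k$, $\cosh r - \tilde u = e^{-r^k_K}$, and $d\mu = \sinh^n r^k_K \, d\sigma$---yields
\begin{equation*}
Q\bigl(\partial B(r^k_K)\bigr) = \omega_n \sinh^{n-k} r^k_K \, e^{-k r^k_K} \bigl( e^{r_L - r^k_K} - 1 \bigr),
\end{equation*}
which matches the right-hand side of \eqref{p=1-strong Min ineq, L ball}. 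This proves the inequality. For the equality case, if $Q(\partial K) = Q(\partial B(r^k_K))$, then $Q(M_t)$ is constant along the flow; the rigidity clause of Lemma \ref{lem-mono-quan-2-HLW20} forces each $M_t$ to be a geodesic sphere centered at the origin, and in particular $K$ must be such a sphere.

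Finally, \eqref{p=1-strong Min ineq, L point} is obtained by letting $L$ degenerate to the origin, i.e.\ taking $r_L = 0$ and $c_0 = 1$. The first inequality in \eqref{p=1-strong Min ineq, L point} follows from rearranging the specialization of \eqref{p=1-strong Min ineq, L ball}, while the second inequality is exactly the Alexandrov--Fenchel inequality of Corollary \ref{cor-AF-modif-quermass} applied to $K$. The main technical content has already been packaged into Lemma \ref{lem-mono-quan-2-HLW20} and Theorem D, so the remaining work is routine verification; the only subtle point is that the sign condition $c_0 \geq 1$ required by Lemma \ref{lem-mono-quan-2-HLW20} is precisely what restricts this argument to $p = 1$ and $L$ a geodesic ball centered at the origin (with $r_L \geq 0$), explaining the scope of the present theorem.
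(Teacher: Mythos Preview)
Your proposal is correct and follows essentially the same approach as the paper: apply Lemma \ref{lem-mono-quan-2-HLW20} with $c_0 = e^{r_L}$ along the flow \eqref{flow-HLW20} from Theorem D (which preserves $\widetilde W_k$ and converges to $\partial B(r^k_K)$), and deduce \eqref{p=1-strong Min ineq, L point} by setting $r_L = 0$ together with Corollary \ref{cor-AF-modif-quermass}. Your presentation makes the limiting computation on $\partial B(r^k_K)$ more explicit, but the argument is the same.
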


\begin{proof}
	Taking $\Omega_0 = K$ in Theorem D, as the flow \eqref{flow-HLW20} preserves $\widetilde{W}_k (\Omega_t)$, we have $r_\infty = r^k_K$. Taking $c_0 = e^{r_L} \geq 1$ in Lemma \ref{lem-mono-quan-2-HLW20} and using  Theorem D, we obtain
	\begin{align*}
	\int_{\mathbb{S}^n} (e^{r_L} \g_K^{-1}-1) \g_K^{-n} p_{n-k} (\tilde{\lambda}_K) d\sigma
	\geq& \omega_n (e^{r_L} e^{-r_\infty} -1) e^{-n r_\infty} \sinh^{n-k} r_\infty e^{(n-k) r_\infty} \nonumber\\
	=& \omega_n (e^{r_L} e^{-r^k_K} -1) e^{-n r^k_K} \sinh^{n-k} r^k_K e^{(n-k) r^k_K},
	\end{align*}
	and hence obtain \eqref{p=1-strong Min ineq, L ball}. Equality holds in \eqref{p=1-strong Min ineq, L ball} if and only if $K$ is a geodesic ball centered at the origin.
	
	By using  \eqref{rel-area element}, \eqref{shifted curvature-support function} and \eqref{1/phi, coshr-u}, we have that inequality \eqref{p=1-strong Min ineq, L ball} is equivalent to
	\begin{equation*}
	e^{r_L} \int_{\partial K} (\cosh r-\tilde{u})p_k(\tilde{\kappa}) d\mu -
	\int_{\partial K} p_k(\tilde{\kappa}) d\mu
	\geq \omega_n  \sinh^{n-k} r^k_K ( e^{r_L-r^k_K} -1) e^{-k r^k_K}.
	\end{equation*}
	Taking $r_L =0$ in the above inequality and using \eqref{AF-modif-quermass}, we obtain \eqref{p=1-strong Min ineq, L point}. We complete the proof of Theorem \ref{thm-p=1-strong Min ineq, L ball}.
\end{proof}

\begin{rem}
	We notice that Theorem  \ref{thm-p=1-strong Min ineq, L ball} implies Theorem E directly.
\end{rem}

\subsubsection{Results about Conjecture \ref{conj-Min ineq-weak}}
In the last part of this subsection, we will study Conjecture \ref{conj-Min ineq-weak}. Although Conjecture \ref{conj-Min ineq-weak} can be obtained form Conjecture \ref{conj-Min ineq}  by using  \eqref{AF-modif-quermass}, Conjecture \ref{conj-Min ineq} has not been proved in general case. It makes sense for us to discuss Conjecture \ref{conj-Min ineq-weak} here.

Let $K$ contain the origin and $L$ be a geodesic ball centered at the origin in Conjecture \ref{conj-Min ineq-weak}. By Theorem \ref{thm-p=1-strong Min ineq, L ball} or Theorem E, we have Conjecture \ref{conj-Min ineq-weak} holds for $n \geq 2$, $1 \leq k \leq n-1$, $p=1$ in this case. 
Now we prove Conjecture \ref{conj-Min ineq-weak} in the case $n\geq 2$, $1 \leq k \leq n-1$ and $p \geq 1$ by using Theorem \ref{thm-p=1-strong Min ineq, L ball}. 

\begin{prop}\label{prop-cor of strong Min ineq}
	Let $n \geq 2$, $1 \leq k \leq n-1$ be integers, $p \geq 1 $ be a positive number and  $\Omega \subset \mathbb{H}^{n+1}$ be a smooth uniformly h-convex bounded domain which contains the origin in its interior, then
	\begin{equation*}
	\int_{\partial \Omega} (\cosh r-\tilde{u})^p p_k( \tilde{\kappa}) d\mu
	\geq \(\int_{\partial \Omega} p_k (\tilde{\kappa}) d\mu \) e^{- pr^k_\Omega}.
	\end{equation*}
	Equality holds if and only if $\Omega$ is a geodesic ball centered at the origin.
\end{prop}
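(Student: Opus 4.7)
The plan is to reduce the statement for general $p\geq 1$ to the $p=1$ case by a single application of Jensen's inequality with respect to the probability measure $p_k(\tilde\kappa)\,d\mu/\int_{\partial\Omega}p_k(\tilde\kappa)\,d\mu$, and to obtain the $p=1$ case directly from Theorem \ref{thm-p=1-strong Min ineq, L ball} with a well-chosen geodesic ball $L$.

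First I would establish the $p=1$ version, namely
\[
\int_{\partial\Omega}(\cosh r-\tilde u)\,p_k(\tilde\kappa)\,d\mu \;\geq\; e^{-r^k_\Omega}\int_{\partial\Omega}p_k(\tilde\kappa)\,d\mu.
\]
Theorem \ref{thm-p=1-strong Min ineq, L ball} (rewritten via \eqref{rel-area element}, \eqref{shifted curvature-support function}, \eqref{1/phi, coshr-u}) asserts, for any geodesic ball $L=B(r_L)$ centered at the origin and any smooth uniformly h-convex $K$ containing the origin, that
\[
e^{r_L}\!\int_{\partial K}(\cosh r-\tilde u)\,p_k(\tilde\kappa)\,d\mu-\int_{\partial K}p_k(\tilde\kappa)\,d\mu\;\geq\;\omega_n\sinh^{n-k}r^k_K\,e^{-kr^k_K}\bigl(e^{r_L-r^k_K}-1\bigr).
\]
The key observation is that the right-hand side vanishes when $r_L=r^k_K$. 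Setting $K=\Omega$ and choosing $r_L=r^k_\Omega$ therefore produces the desired $p=1$ inequality, with equality (by Theorem \ref{thm-p=1-strong Min ineq, L ball}) precisely when $\Omega$ is a geodesic ball centered at the origin.

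Next I would lift this to $p\geq 1$ using Jensen's inequality. Since $p_k(\tilde\kappa)\geq 0$ on $\partial\Omega$ (uniform h-convexity) and $t\mapsto t^p$ is convex on $(0,\infty)$ for $p\geq 1$, applying Jensen's inequality with the probability measure $d\nu:=p_k(\tilde\kappa)\,d\mu/\int_{\partial\Omega}p_k(\tilde\kappa)\,d\mu$ gives
\[
\frac{\int_{\partial\Omega}(\cosh r-\tilde u)^p p_k(\tilde\kappa)\,d\mu}{\int_{\partial\Omega}p_k(\tilde\kappa)\,d\mu}\;\geq\;\left(\frac{\int_{\partial\Omega}(\cosh r-\tilde u)\,p_k(\tilde\kappa)\,d\mu}{\int_{\partial\Omega}p_k(\tilde\kappa)\,d\mu}\right)^{\!p}\;\geq\;e^{-p\,r^k_\Omega},
\]
where the second inequality is the $p=1$ case just established. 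Multiplying through yields the claimed inequality.

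For the equality case, equality in Jensen's inequality forces $\cosh r-\tilde u$ to be constant on $\partial\Omega$ (on the support of $p_k(\tilde\kappa)\,d\mu$, which is all of $\partial\Omega$), while equality in the $p=1$ step forces $\Omega$ to be a geodesic ball centered at the origin by Theorem \ref{thm-p=1-strong Min ineq, L ball}. Conversely, if $\Omega=B(r_0)$ is centered at the origin then $\cosh r-\tilde u\equiv e^{-r_0}$ on $\partial\Omega$ and $r^k_\Omega=r_0$, so both sides equal $e^{-p r_0}\int_{\partial\Omega}p_k(\tilde\kappa)\,d\mu$. I do not foresee a substantive obstacle: the content is packaged in Theorem \ref{thm-p=1-strong Min ineq, L ball} (whose proof uses the monotonicity along flow \eqref{flow-HLW20} from Lemma \ref{lem-mono-quan-2-HLW20}), and the lift from $p=1$ to $p\geq 1$ is a one-line convexity argument.
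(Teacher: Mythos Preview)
Your proposal is correct and follows essentially the same two-step approach as the paper: establish the $p=1$ case from Theorem~\ref{thm-p=1-strong Min ineq, L ball}, then lift to $p\geq 1$ by H\"older/Jensen. The only difference is a minor shortcut: you choose $r_L=r^k_\Omega$ so the right-hand side of \eqref{p=1-strong Min ineq, L ball} vanishes outright, whereas the paper takes $r_L=0$ (i.e.\ uses \eqref{p=1-strong Min ineq, L point}) and then invokes \eqref{AF-modif-quermass} to finish the $p=1$ step.
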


\begin{proof}
	By Theorem  \ref{thm-p=1-strong Min ineq, L ball}, we have
	\begin{align}
	\int_{\partial \Omega} (\cosh r-\tilde{u})p_k (\tilde{\kappa})
	\geq&  \int_{\partial \Omega} p_k(\tilde{\kappa}) d\mu - \omega_n \sinh^{n-k} r^k_\Omega e^{-k r^k_\Omega} (1- e^{-r^k_K}) \nonumber\\
	\geq& \(\int_{\partial \Omega} p_k (\tilde{\kappa}) d\mu \) e^{- r^k_\Omega}, \label{mid step 1}
	\end{align}
	where the second inequality follows from \eqref{AF-modif-quermass}. Equality holds if and only if $ \Omega$ is a geodesic ball centered at the origin.
	Hence the $p=1$ case of Proposition \ref{prop-cor of strong Min ineq} is proved. From the H\"older inequality and \eqref{mid step 1}, we can easily get 
	\begin{equation*}
	\(  \frac{\int_{\partial \Omega} (\cosh r -\tilde{u})^p p_k(\tilde{\kappa}) d\mu}{ \int_{\partial \Omega} p_k(\tilde{\kappa})  d\mu} \)^{\frac{1}{p}} \geq \frac{\int_{\partial \Omega} (\cosh r -\tilde{u}) p_k(\tilde{\kappa}) d\mu}{ \int_{\partial \Omega} p_k(\tilde{\kappa})  d\mu} \geq e^{-r^k_\Omega}
	\end{equation*}
	for $p \geq 1$. Equality holds if and only if $\Omega$ is a geodesic ball centered at the origin.
	 This completes the proof of Proposition \ref{prop-cor of strong Min ineq}. 
\end{proof}
Using \eqref{AF-modif-quermass} in Proposition \ref{prop-cor of strong Min ineq}, we can get the following Theorem \ref{thm-HLW- p geq 1}.
\begin{thm}\label{thm-HLW- p geq 1}
	Conjecture \ref{conj-Min ineq-weak} holds in the case $n \geq 2$, $1 \leq k \leq n-1$, $p \geq 1$, $K$ is a smooth uniformly h-convex bounded domain which contains the origin in its interior and $L$ is a geodesic ball centered at the origin. Precisely, we have
	\begin{equation}\label{HLW- p geq 1}
	\int_{\partial \Omega} (\cosh r-\tilde{u})^p p_k(\tilde{\kappa}) d\mu
	\geq
	\omega_n \sinh^{n-k} r^k_\Omega e^{-(k+p) r^k_\Omega},
	\end{equation}
	where $\Omega$ is a smooth uniformly h-convex bounded domain which contains the origin in its interior.
	Equality holds in \eqref{HLW- p geq 1} if and only if $\Omega$ is a geodesic ball centered at the origin.
\end{thm}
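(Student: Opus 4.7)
The plan is to obtain Theorem \ref{thm-HLW- p geq 1} as a direct consequence of two inequalities already established earlier in the excerpt, so the argument is essentially a two-step chain rather than a new curvature-flow estimate.

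First, I would invoke Proposition \ref{prop-cor of strong Min ineq}, which under exactly the same hypotheses $n\geq 2$, $1 \leq k \leq n-1$, $p\geq 1$, and $\Omega$ smooth uniformly h-convex with the origin in its interior, yields
\begin{equation*}
\int_{\partial \Omega} (\cosh r-\tilde{u})^p p_k(\tilde{\kappa})\, d\mu \;\geq\; \left(\int_{\partial \Omega} p_k(\tilde{\kappa})\, d\mu\right) e^{-p\, r^k_\Omega},
\end{equation*}
with equality only when $\Omega$ is a geodesic ball centered at the origin. Next, I would apply the Alexandrov--Fenchel type inequality \eqref{AF-modif-quermass} from Corollary \ref{cor-AF-modif-quermass} to the factor $\int_{\partial \Omega} p_k(\tilde{\kappa})\, d\mu$, obtaining
\begin{equation*}
\int_{\partial \Omega} p_k(\tilde{\kappa})\, d\mu \;\geq\; \omega_n \sinh^{n-k} r^k_\Omega \, e^{-k\, r^k_\Omega},
\end{equation*}
with equality if and only if $\Omega$ is a geodesic ball. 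Multiplying the two estimates produces exactly \eqref{HLW- p geq 1}.

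For the equality characterization, I would observe that equality in the combined inequality forces equality in both of the above steps simultaneously. The first forces $\Omega$ to be a geodesic ball centered at the origin, the second forces $\Omega$ to be a geodesic ball; the intersection of these two conditions is precisely ``geodesic ball centered at the origin,'' which conversely makes both inequalities into equalities since $\cosh r - \tilde{u} \equiv e^{-r^k_\Omega}$ and $\tilde{\kappa}_i \equiv e^{-r^k_\Omega}/\sinh r^k_\Omega$ are constant on $\partial \Omega$ in that case.

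There is no real obstacle here: all the substantive analysis is already bundled into Proposition \ref{prop-cor of strong Min ineq} and \eqref{AF-modif-quermass}. The only point worth double-checking is that the hypothesis ``$\Omega$ contains the origin in its interior'' is inherited correctly from Proposition \ref{prop-cor of strong Min ineq}, and that the geometric quantity $r^k_\Omega = I_k^{-1}(\widetilde{W}_k(\Omega))$ appearing in the two cited inequalities is the same in both (it is, by Definition \ref{def-mod k-mean radius}), so the exponents $e^{-p\,r^k_\Omega}$ and $e^{-k\,r^k_\Omega}$ combine cleanly into $e^{-(k+p)\, r^k_\Omega}$.
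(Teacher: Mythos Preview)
Your proposal is correct and matches the paper's approach exactly: the paper simply states that Theorem \ref{thm-HLW- p geq 1} follows by ``using \eqref{AF-modif-quermass} in Proposition \ref{prop-cor of strong Min ineq},'' which is precisely the two-step chain you describe. Your treatment of the equality case is also accurate and slightly more explicit than the paper's.
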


Now we study the case $n \geq 1$, $k=0$ and $p \geq 1$ in Conjecture \ref{conj-Min ineq-weak}. The following Proposition \ref{prop-ineq-weighted vol-cano col} will give a comparison between weighted volume and the canonical volume of bounded domains in $\mathbb{H}^{n+1}$.
\begin{prop}\label{prop-ineq-weighted vol-cano col}
	Let $\Omega$ be a bounded domain in $\mathbb{H}^{n+1}$ $(n \geq 1)$, then we have
	\begin{align}\label{ineq-weighted vol-cano col}
	\Vol_w(\Omega) =\int_{\Omega} \cosh r dv \geq \frac{\omega_n}{n+1} \sinh^{n+1} r^0_\Omega,
	\end{align}
	with equality if and only if $\Omega$ is a geodesic ball centered at the origin.
\end{prop}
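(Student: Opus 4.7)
The plan is to reduce the inequality to a bathtub-principle rearrangement, since the weight $\cosh r$ is strictly increasing in the distance $r$ from the origin. Let $B := B(r^0_\Omega)$, the geodesic ball centered at the origin having the same volume as $\Omega$ (well-defined because $I_0(r) = \omega_n \int_0^r \sinh^n t\,dt$ is a strictly increasing bijection of $[0,\infty)$ onto $[0,\infty)$, and by definition $I_0(r^0_\Omega) = \widetilde{W}_0(\Omega) = \mathrm{Vol}(\Omega)$). It then suffices to prove the symmetrization inequality $\int_\Omega \cosh r\,dv \geq \int_B \cosh r\,dv$ and to evaluate the right-hand side explicitly.

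For the symmetrization, I would decompose
\begin{equation*}
\int_\Omega \cosh r\,dv - \int_B \cosh r\,dv = \int_{\Omega\setminus B} \cosh r\,dv - \int_{B\setminus \Omega} \cosh r\,dv.
\end{equation*}
On $\Omega\setminus B$ every point satisfies $r \geq r^0_\Omega$, hence $\cosh r \geq \cosh r^0_\Omega$; on $B\setminus \Omega$ every point satisfies $r \leq r^0_\Omega$, hence $\cosh r \leq \cosh r^0_\Omega$. Since $\mathrm{Vol}(\Omega)=\mathrm{Vol}(B)$ forces $\mathrm{Vol}(\Omega\setminus B) = \mathrm{Vol}(B\setminus \Omega)$, we obtain
\begin{equation*}
\int_{\Omega\setminus B} \cosh r\,dv - \int_{B\setminus \Omega} \cosh r\,dv \geq \cosh(r^0_\Omega)\bigl[\mathrm{Vol}(\Omega\setminus B) - \mathrm{Vol}(B\setminus \Omega)\bigr] = 0.
\end{equation*}
The evaluation of the right-hand side is an immediate polar-coordinate computation using the warped product structure of $\mathbb{H}^{n+1}$:
\begin{equation*}
\int_B \cosh r\,dv = \omega_n \int_0^{r^0_\Omega} \cosh t\,\sinh^n t\,dt = \frac{\omega_n}{n+1}\sinh^{n+1} r^0_\Omega,
\end{equation*}
which gives the desired lower bound.

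For the equality case, the inequality above is strict unless $\cosh r = \cosh r^0_\Omega$ almost everywhere on $\Omega\setminus B$ and on $B\setminus\Omega$; since $\cosh$ is strictly monotone in $r$, both symmetric differences have measure zero, so $\Omega$ and $B$ agree up to a set of measure zero. Bootstrapping this to a genuine set-theoretic equality (using that $\Omega$ is a domain, hence open) then yields $\Omega = B(r^0_\Omega)$, i.e. $\Omega$ is a geodesic ball centered at the origin. There is no serious obstacle here — the argument is a direct application of the bathtub principle — the only point requiring mild care is the transition from almost-everywhere equality to actual equality in the equality characterization, but this is routine given that the weight $\cosh r$ is a strictly increasing radial function with respect to the origin.
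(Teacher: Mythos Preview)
Your argument is correct and is essentially identical to the paper's own proof: both compare $\Omega$ with the equal-volume ball $B(r^0_\Omega)$, use that $\cosh r \geq \cosh r^0_\Omega$ on $\Omega\setminus B$ and $\cosh r \leq \cosh r^0_\Omega$ on $B\setminus\Omega$ together with $\mathrm{Vol}(\Omega\setminus B)=\mathrm{Vol}(B\setminus\Omega)$, and conclude $\mathrm{Vol}_w(\Omega)\geq \mathrm{Vol}_w(B(r^0_\Omega))$. Your explicit polar-coordinate evaluation of $\mathrm{Vol}_w(B(r^0_\Omega))$ and your more careful treatment of the equality case are minor elaborations the paper leaves implicit.
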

\begin{proof}
	It is easy to see that $\Vol \(\Omega \backslash B (r^0_\Omega) \) = \Vol \(B(r^0_\Omega) \backslash \Omega \)$ by the definition of $r^0_\Omega$ in \eqref{mod k-mean radius}. 
	Noting that $\cosh r \geq \cosh r^0_\Omega$ on  $\Omega \backslash B (r^0_\Omega)$ and $\cosh r \leq  \cosh r^0_\Omega$ in $B(r^0_\Omega)$, we have
	\begin{align*}
	\Vol_w(\Omega) =& \Vol_w \(\Omega \cap  B (r^0_\Omega) \) + \Vol_w \(\Omega \backslash B(r^0_\Omega) \)\\
	\geq& \Vol_w \(\Omega \cap  B (r^0_\Omega) \)+ \cosh r^0_\Omega \Vol\( \Omega \backslash B(r^0_\Omega) \)\\
	=&\Vol_w \( B (r^0_\Omega) \cap \Omega \) + \cosh r^0_\Omega \Vol\( B(r^0_\Omega) \backslash \Omega \)\\
	\geq& \Vol_w \( B (r^0_\Omega) \cap \Omega \) + \Vol_w \( B(r^0_\Omega\backslash \Omega) \)
	= \Vol_w \( B(r^0_\Omega)\).
 	\end{align*}	
	Equality holds if and only if $\Omega$ is a geodesic ball centered at the origin.
\end{proof}
Here we state a weighted isoperimetric inequality proved by Scheuer and Xia \cite{SX19}.
\begin{thmF}[\cite{SX19}]
	Let $\Omega$ be a smooth uniformly h-convex bounded domain in $\mathbb{H}^{n+1}$ $(n \geq 1)$ which contains the origin in its interior. Then
	\begin{equation*}
	\int_{\partial \Omega} \cosh r d\mu
	\geq
	\( \( (n+1)\Vol_w(\Omega) \)^2+
	\omega_n^{ \frac{2}{n+1}} \( (n+1)\Vol_w(\Omega) \)^{\frac{2n}{n+1}}
	\)^{\frac{1}{2}}.
	\end{equation*}
	Equality holds if and only if $\Omega$ is a geodesic ball centered at the origin.
\end{thmF}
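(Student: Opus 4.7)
My strategy is to reduce Theorem F to a sharp weighted isoperimetric inequality by combining a pointwise algebraic identity with Minkowski's integral inequality. The key observation is that the right-hand side of Theorem F splits naturally as $\sqrt{A^2+B^2}$ with $A=(n{+}1)\Vol_w(\Omega)$ and $B=\omega_n^{1/(n+1)}((n{+}1)\Vol_w(\Omega))^{n/(n+1)}$; the quantity $A$ equals $\int_{\partial\Omega}\tilde u\,d\mu$ by \eqref{weighted volume}, and $B$ will appear as the sharp lower bound for $|\partial\Omega|$ supplied by a weighted isoperimetric inequality for h-convex domains.

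First I would derive the pointwise identity
\begin{equation*}
\cosh^2 r - \tilde u^2 \;=\; (\cosh r-\tilde u)(\cosh r+\tilde u) \;=\; \frac{1}{\varphi}\left(\varphi+\frac{|D\varphi|^2}{\varphi}\right) \;=\; 1+\frac{|D\varphi|^2}{\varphi^2} \;\geq\; 1
\end{equation*}
on $\partial\Omega$, using \eqref{1/phi, coshr-u} together with the sum of \eqref{coshr} and \eqref{tilde u}; equality forces $\varphi$ to be locally constant. This gives the pointwise bound $\cosh r\geq\sqrt{1+\tilde u^2}$. Integrating and then applying Minkowski's integral inequality to the vector-valued function $(1,\tilde u)\colon\partial\Omega\to\mathbb{R}^2$ will yield
\begin{equation*}
\int_{\partial\Omega}\cosh r\,d\mu \;\geq\; \int_{\partial\Omega}\sqrt{1+\tilde u^2}\,d\mu \;\geq\; \sqrt{|\partial\Omega|^2 + \bigl((n{+}1)\Vol_w(\Omega)\bigr)^2},
\end{equation*}
with equality in the latter step forcing $\tilde u$ to be a.e.\ constant on $\partial\Omega$.

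The remaining ingredient, and the main obstacle, is the sharp weighted isoperimetric inequality
\begin{equation*}
|\partial\Omega|^{n+1} \;\geq\; \omega_n \bigl((n{+}1)\Vol_w(\Omega)\bigr)^n
\end{equation*}
for smooth uniformly h-convex $\Omega$ containing the origin, with equality iff $\Omega$ is a geodesic ball centered at the origin. I would attempt to establish this via a locally constrained curvature flow modeled on \eqref{flow-HLW20}, namely
\begin{equation*}
\partial_t X \;=\; \left(C(t)\,\frac{\cosh r-\tilde u}{p_1(\tilde\kappa)} \;-\; 1\right)\nu,
\end{equation*}
with $C(t)$ chosen so that $\Vol_w(\Omega_t)$ is preserved; computations in the spirit of Lemma \ref{lem-mono-quan-2-HLW20}, using \eqref{eq-shifted Minkowski formula} and the Newton--MacLaurin inequalities \eqref{Newton ineq}, should show that $|\partial\Omega_t|$ is monotone non-increasing along such a flow. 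Long-time existence, h-convexity preservation, and smooth convergence to a centered geodesic sphere would then follow from pinching and higher-order estimates paralleling Theorem D and Section \ref{sec-horosp-p-Christoffel Minkowski problem}. Combining the resulting weighted isoperimetric bound with the first two steps proves Theorem F, and the three equality analyses jointly force $\Omega$ to be a geodesic ball centered at the origin.
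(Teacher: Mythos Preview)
Your first two steps are correct and elegant: the pointwise identity $\cosh^2 r-\tilde u^2=1+|D\g|^2/\g^2\geq 1$ follows exactly as you say from \eqref{coshr}, \eqref{tilde u}, \eqref{1/phi, coshr-u}, and the vector-valued triangle inequality then gives
\[
\int_{\partial\Omega}\cosh r\,d\mu\;\geq\;\int_{\partial\Omega}\sqrt{1+\tilde u^2}\,d\mu\;\geq\;\sqrt{\,|\partial\Omega|^2+\bigl((n{+}1)\Vol_w(\Omega)\bigr)^2\,}.
\]

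The genuine gap is Step~3. Your proposed weighted isoperimetric inequality $|\partial\Omega|^{n+1}\geq \omega_n\bigl((n{+}1)\Vol_w(\Omega)\bigr)^n$ is \emph{false} for h-convex domains containing the origin. Take any geodesic ball $B(X,r)$ with $X=(x,x_{n+1})$ at hyperbolic distance $s\in(0,r)$ from the origin, so $x_{n+1}=\cosh s>1$ and the ball contains the origin. Then $|\partial B(X,r)|=\omega_n\sinh^n r$, while by Corollary~\ref{cor-weighted volume-ball} one has $(n{+}1)\Vol_w(B(X,r))=\omega_n\,x_{n+1}\sinh^{n+1}r$, hence
\[
\frac{|\partial B(X,r)|^{n+1}}{\omega_n\bigl((n{+}1)\Vol_w(B(X,r))\bigr)^n}
=\frac{\omega_n^{n+1}\sinh^{n(n+1)}r}{\omega_n^{n+1}x_{n+1}^{\,n}\sinh^{n(n+1)}r}
=\frac{1}{x_{n+1}^{\,n}}<1.
\]
So no curvature-flow argument can rescue this inequality; the monotonicity you hope for will fail.

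The paper's route (indicated in the remark following Theorem~\ref{thm-p=1 case in conj last of weighted iso type}, carried out in \cite{LX}) looks superficially parallel but avoids this trap by working with the \emph{Euclidean} projection $\overline{\Omega}=\Pi(\Omega)$ rather than with $\Omega$ itself. Under the vertical projection $\Pi:(x,x_{n+1})\mapsto x$ one has $\Vol_w(\Omega)=\Vol(\overline{\Omega})$ by \eqref{vertical proj-volume element} and $\int_{\partial\Omega}\cosh r\,d\mu=\int_{\partial\overline{\Omega}}\sqrt{1+\overline{u}^2}\,d\overline{\mu}$ by \eqref{vertical proj area element}. Jensen's inequality for the convex function $t\mapsto\sqrt{1+t^2}$ then yields $\int_{\partial\Omega}\cosh r\,d\mu\geq\sqrt{|\partial\overline{\Omega}|^2+((n{+}1)\Vol_w(\Omega))^2}$, and now the \emph{classical} Euclidean isoperimetric inequality $|\partial\overline{\Omega}|^{n+1}\geq\omega_n((n{+}1)\Vol(\overline{\Omega}))^n$ finishes the job. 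The crucial difference is that $|\partial\overline{\Omega}|$, not $|\partial\Omega|$, is being compared to the weighted volume; these two areas agree on centered balls but differ on off-center ones in exactly the way needed.
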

\begin{rem}
	We remark that Theorem F has been recently improved by the authors \cite{LX} that the above weighted isoperimetric inequality actually holds for any domain $\Omega$ with $C^1$ boundary. Besides, Theorem F was recently generalized to a class of weighted Alexandrov-Fenchel inequalities by Hu and the first author \cite{HL21}.
\end{rem}
In the following Theorem \ref{thm-HLW-k=0-p=1}, by using Theorem F and Proposition \ref{prop-ineq-weighted vol-cano col}, we will prove that \eqref{ineq-HLW20} holds in the case $k=0$.
\begin{thm}\label{thm-HLW-k=0-p=1}
	 Conjecture \ref{conj-Min ineq-weak} holds in the case $n \geq 1$, $k=0$, $p=1$, $K$ is a smooth h-convex bounded domain in $\mathbb{H}^{n+1}$ which contains the origin in its interior and $L$ is a geodesic ball centered at the origin.
	 Precisely, we have
	\begin{equation}\label{HLW-k=0-p=1}
	 \int_{\partial \Omega} (\cosh r- \tilde{u}) d\mu \geq \omega_n \sinh^n r^0_\Omega e^{-r^0_\Omega},
	 \end{equation}
	where $\Omega \subset \mathbb{H}^{n+1}$ is a smooth uniformly h-convex bounded domain which contains the origin in its interior. 
	Equality holds in \eqref{HLW-k=0-p=1} if and only if $\Omega$ is a geodesic ball centered at the origin.
\end{thm}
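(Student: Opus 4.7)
My plan is to write the left-hand side as a difference of a boundary integral (to which Theorem F applies) and a volume term (which is controlled by Proposition \ref{prop-ineq-weighted vol-cano col}), and then verify a simple monotonicity so that equality in both ingredients reduces exactly to the desired bound.

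First I would use \eqref{weighted volume}, namely $\int_{\partial\Omega}\tilde u\,d\mu=(n+1)\Vol_w(\Omega)$, to rewrite
\begin{equation*}
\int_{\partial\Omega}(\cosh r-\tilde u)\,d\mu=\int_{\partial\Omega}\cosh r\,d\mu-(n+1)\Vol_w(\Omega).
\end{equation*}
Setting $V:=(n+1)\Vol_w(\Omega)$, Theorem F then gives
\begin{equation*}
\int_{\partial\Omega}(\cosh r-\tilde u)\,d\mu\geq F(V),\qquad F(V):=\sqrt{V^2+\omega_n^{2/(n+1)}V^{2n/(n+1)}}-V,
\end{equation*}
with equality if and only if $\Omega$ is a geodesic ball centered at the origin.

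Next I would check that $F(V)$ is non-decreasing for $V>0$. Writing $g(V)=V^2+\omega_n^{2/(n+1)}V^{2n/(n+1)}$, we have $F'(V)=(g'(V)-2\sqrt{g(V)})/(2\sqrt{g(V)})$, so the monotonicity reduces to $(g'(V))^2\geq 4g(V)$. A direct expansion shows
\begin{equation*}
(g'(V))^2-4g(V)=\tfrac{4(n-1)}{n+1}\omega_n^{2/(n+1)}V^{2n/(n+1)}+\tfrac{4n^2}{(n+1)^2}\omega_n^{4/(n+1)}V^{2(n-1)/(n+1)}\geq 0
\end{equation*}
for $n\geq 1$, which confirms $F'\geq 0$.

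Finally, Proposition \ref{prop-ineq-weighted vol-cano col} gives $V\geq V_0:=\omega_n\sinh^{n+1}r^0_\Omega$, with equality if and only if $\Omega$ is a geodesic ball centered at the origin. Combined with the monotonicity $F(V)\geq F(V_0)$, a direct computation using $\cosh r^0_\Omega-\sinh r^0_\Omega=e^{-r^0_\Omega}$ yields
\begin{equation*}
F(V_0)=\omega_n\sinh^n r^0_\Omega\bigl(\sqrt{1+\sinh^2 r^0_\Omega}-\sinh r^0_\Omega\bigr)=\omega_n\sinh^n r^0_\Omega e^{-r^0_\Omega},
\end{equation*}
which is the desired inequality \eqref{HLW-k=0-p=1}. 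For the equality case, both Theorem F and Proposition \ref{prop-ineq-weighted vol-cano col} force $\Omega$ to be a geodesic ball centered at the origin, and conversely such a ball saturates both inequalities (and trivially \eqref{HLW-k=0-p=1}), giving the full characterization. No step here looks delicate; the only non-routine check is the monotonicity of $F$, but the explicit computation above handles it cleanly.
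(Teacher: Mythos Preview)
Your proposal is correct and follows essentially the same route as the paper's proof: rewrite the left-hand side via \eqref{weighted volume}, apply Theorem~F, use the monotonicity of the resulting expression in $\Vol_w(\Omega)$, and then invoke Proposition~\ref{prop-ineq-weighted vol-cano col}. The only difference is that the paper simply asserts the monotonicity (``It is easy to see that the right-hand side of \eqref{poly-Vw} is monotone increasing''), whereas you supply an explicit verification.
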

\begin{proof}
	By \eqref{weighted volume} and Theorem F, we have
	\begin{align}
	&\int_{\partial \Omega} (\cosh r- \tilde{u}) d\mu
	= \int_{\partial \Omega} \cosh r d\mu - (n+1) \Vol_w(\Omega)  \nonumber\\
	\geq& 
	\( \( (n+1)\Vol_w(\Omega) \)^2+
	\omega_n^{ \frac{2}{n+1}} \( (n+1)\Vol_w(\Omega) \)^{\frac{2n}{n+1}}
	\)^{\frac{1}{2}} - (n+1) \Vol_w(\Omega).\label{poly-Vw}
	\end{align}
	It is easy to see that the right-hand side of \eqref{poly-Vw} is monotone increasing as a single-variable function of  $\Vol_w(\Omega)$. Then by using \eqref{ineq-weighted vol-cano col}, we have
	\begin{align*}
	\int_{\partial \Omega} (\cosh r- \tilde{u}) d\mu
	\geq& \( \(\omega_n \sinh^{n+1} r^0_\Omega \)^2 + \omega_n^{\frac{2}{n+1} } \(\omega_n \sinh^{n+1} r^0_\Omega \)^{\frac{2n}{n+1}}    \)^{\frac{1}{2}} -\omega_n \sinh^{n+1} r^0_\Omega\\
	=& \omega_n \sinh^n r^0_\Omega \( \(\sinh^2 r^0_\Omega+1\)^{\frac{1}{2}} - \sinh r^0_\Omega  \)\\
	=& \omega_n \sinh^n r^0_\Omega e^{-r^0_\Omega}.
	\end{align*}
	Then we obtain \eqref{HLW-k=0-p=1} and the equality holds if and only if $\Omega$ is a geodesic ball centered at the origin. Hence, Theorem \ref{thm-HLW-k=0-p=1} follows from  \eqref{HLW-k=0-p=1} and Remark \ref{rem-rel-strong Min-weak Min}.
\end{proof}

\subsection{Horospherical $p$-Brunn-Minkowski inequalities} $ \ $

In this subsection, we study the horospherical $p$-Brunn-Minkowski inequalities which we proposed in Conjecture \ref{conj-BM}.

	We have shown in Proposition \ref{prop-reduce conj BM - a+b=1 case} that Conjecture \ref{conj-BM} can be obtained from the special case where $a \geq 0$, $b \geq 0$ and $a+b=1$. We proved that Conjecture \ref{conj-BM} holds for $k=n$ in Theorem \ref{thm-BM-k=n}. When $K =L$, we proved Conjecture \ref{conj-BM} in Proposition \ref{prop-BM-K=L}. In the following propositions, we will prove Conjecture \ref{conj-BM} holds for 
	\begin{enumerate}
		\item $n \geq 2$, $1 \leq  k \leq n-1$, $1 \leq p \leq 2$, $a=1$, $b \geq 0$, $K$ is a smooth uniformly h-convex bounded domain in $\mathbb{H}^{n+1}$ which contains the origin in its interior and $L$ is a geodesic ball centered at the origin,
		\item $n \geq 1$, $k =0$, $p=1$, $a=1$, $b \geq 0$, $K$ is a smooth uniformly h-convex bounded domain in $\mathbb{H}^{n+1}$ which contains the origin in its interior and $L$ is a geodesic ball centered at the origin.
	\end{enumerate}

\begin{prop}\label{prop-BM-L-ball-p geq 1-k geq 1}
	Conjecture \ref{conj-BM} holds in the case $n \geq 2$, $1 \leq k \leq n-1$, $1 \leq p \leq 2$, $a=1$, $b =t \geq 0$, $K$ is a smooth uniformly h-convex bounded domain in $\mathbb{H}^{n+1}$ which contains the origin  in its interior and $L$ is a geodesic ball of radius $r_L \geq 0$ centered at the origin. Precisely, we have
	\begin{align}\label{BM-L-ball-p geq 1-k geq 1}
	\exp \( p I_k^{-1} (\widetilde{W}_k(\Omega_t)) \) \geq \exp \( p I_k^{-1} (\widetilde{W}_k(K))\)
	+t \exp \( p I_k^{-1} (\widetilde{W}_k(B(r_L))) \),
	\end{align}
	where $t \geq  0$ and $\Omega_t = K +_p t\cdot B(r_L)$. If $t>0$, then equality holds in \eqref{BM-L-ball-p geq 1-k geq 1} if and only if $K$ is a geodesic ball centered at the origin.
\end{prop}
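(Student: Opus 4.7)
The plan is to reduce the inequality to the weighted Alexandrov--Fenchel inequality of Theorem \ref{thm-HLW- p geq 1}, following the variational pattern already established in Proposition \ref{prop-rel-BM-Min ineq-strong} and Proposition \ref{prop-BM-K=L}. Set $\chi(t) := \exp\!\bigl(p I_k^{-1}(\widetilde{W}_k(\Omega_t))\bigr)$, so the target inequality reads $\chi(t) \geq \chi(0) + t\,e^{pr_L}$. Since $\gamma_{B(r_L)}(z) \equiv e^{r_L}$ is a constant on $\mathbb{S}^n$, Definition \ref{def-p sum} gives $\gamma_{\Omega_t}^p(z) = \gamma_K^p(z) + t\,e^{pr_L}$. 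Two consequences will be used repeatedly: first, the semigroup property $\Omega_{t_0+s} = \Omega_{t_0} +_p s \cdot B(r_L)$ holds for all $t_0, s \geq 0$; second, because $K$ contains the origin we have $\gamma_K \geq 1$, hence $\gamma_{\Omega_t} \geq 1$, so each $\Omega_t$ is a smooth uniformly h-convex bounded domain containing the origin in its interior (applying Theorem \ref{thm-def p sum-well defined}, which also uses the hypothesis $1 \leq p \leq 2$).

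Next I will compute $\chi'(t_0)$ at an arbitrary $t_0 \geq 0$. The semigroup property reduces this to the derivative at $s=0$ along the family $\Omega_{t_0} +_p s\cdot B(r_L)$. By the scalar flow formula \eqref{scalr-flow DT's trick} one has $\partial_s u_{\Omega_{t_0}+_p s \cdot B(r_L)}|_{s=0} = \frac{1}{p}\gamma_{\Omega_{t_0}}^{-p}\,e^{pr_L}$, so Lemma \ref{lem-variation of modified quermass} together with $I_k'(r) = \omega_n \sinh^{n-k} r\,e^{-kr}$ yields
\begin{equation*}
\chi'(t_0) = \frac{e^{p r^k_{\Omega_{t_0}}}\,e^{pr_L}}{\omega_n \sinh^{n-k}r^k_{\Omega_{t_0}}\,e^{-k r^k_{\Omega_{t_0}}}}\int_{\partial \Omega_{t_0}} \gamma_{\Omega_{t_0}}^{-p}(z)\,p_k(\tilde\kappa)\,d\mu.
\end{equation*}
Now I invoke the identity $\gamma_{\Omega_{t_0}}^{-1} = \cosh r - \tilde u$ from \eqref{1/phi, coshr-u} to rewrite the integrand as $(\cosh r - \tilde u)^p p_k(\tilde\kappa)$, and apply Theorem \ref{thm-HLW- p geq 1} to $\Omega_{t_0}$ (the hypotheses $1 \leq p$, $1 \leq k \leq n-1$, and ``contains the origin'' are all met). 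This gives $\chi'(t_0) \geq e^{pr_L}$ for every $t_0 \geq 0$. Integrating this differential inequality from $0$ to $t$ produces \eqref{BM-L-ball-p geq 1-k geq 1}.

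For the equality discussion: if equality holds in \eqref{BM-L-ball-p geq 1-k geq 1} for some $t > 0$, then $\chi'(s) = e^{pr_L}$ for a.e.\ $s \in [0,t]$, and in particular at $s = 0$ the weighted inequality of Theorem \ref{thm-HLW- p geq 1} must saturate on $K$, forcing $K$ to be a geodesic ball centered at the origin. Conversely, if $K = B(r_K)$ is centered at the origin, then $\gamma_K \equiv e^{r_K}$, so $\Omega_t$ is the geodesic ball centered at the origin of radius $\tfrac{1}{p}\log(e^{pr_K} + t\,e^{pr_L})$, and both sides of \eqref{BM-L-ball-p geq 1-k geq 1} equal $e^{pr_K} + t\,e^{pr_L}$.

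The main difficulty is already absorbed into Theorem \ref{thm-HLW- p geq 1}, itself derived via the locally constrained curvature flow \eqref{flow-HLW20} and the monotonicity proved in Lemma \ref{lem-mono-quan-2-HLW20}. The only genuinely new observation needed here is the semigroup identity $\Omega_{t_0+s} = \Omega_{t_0} +_p s \cdot B(r_L)$, which holds precisely because $L$ is a geodesic ball centered at the origin (so $\gamma_L$ is a positive constant) and converts a one-sided differential inequality at $t = 0$ into a pointwise differential inequality on all of $[0,\infty)$.
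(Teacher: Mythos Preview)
Your proof is correct and follows essentially the same approach as the paper: both compute $\chi'(t_0)$ via Lemma \ref{lem-variation of modified quermass}, apply Theorem \ref{thm-HLW- p geq 1} to $\Omega_{t_0}$ to get $\chi'(t_0)\geq e^{pr_L}$, and integrate. Your explicit statement of the semigroup identity $\Omega_{t_0+s}=\Omega_{t_0}+_p s\cdot B(r_L)$ is exactly what the paper invokes implicitly when it says ``Instead of $K$, if we set $\Omega_{t_0}$ as the initial uniformly h-convex bounded domain\ldots''.
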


\begin{proof}
	From Definition \ref{def-p sum}, we have $ \g_{\Omega_t}(z) :=\g(z,t)= \(\g_K^p(z) + t \g_L^p(z) \)^{\frac{1}{p}}$. Hence, at $t=0$ we have 
	\begin{equation*}
	\frac{\partial}{\partial t} u (z,t)= \g_K^{-1} \partial_t \g(z,t) = \frac{1}{p} \(\frac{\g_L(z)}{\g_K(z)} \)^p.  
	\end{equation*}
	Therefore, by using Lemma \ref{lem-variation of modified quermass}, \eqref{1/phi, coshr-u} and $\g_L(z) \equiv e^{r_L}$, we have
	\begin{equation}\label{dt Wk-p geq 1- k geq 1-a=1}
	\left. \frac{d}{dt} \right|_{t=0} \widetilde{W}_k(\Omega_t) = \frac{1}{p}\int_{\partial K} \(\frac{\g_L}{\g_K} \)^p p_k (\tilde{\kappa}) d\mu
	=\frac{e^{p r_L}}{p} \int_{\partial K} (\cosh r-\tilde{u})^p p_k (\tilde{\kappa}) d\mu
	\end{equation}
	For abbreviation, let $r^k_t = I_k^{-1} ((\widetilde{W}_k (\Omega_t))$ and $\chi (t) = e^{p r^k_t}$.
	A similar calculation as in the proof of Proposition \ref{prop-BM-K=L} gives
	\begin{align}
	\chi'(0) =& p \( \omega_n \sinh^{n-k} r^k_K e^{-k r^k_K} \)^{-1} e^{p r^k_K} \left. \frac{d}{dt} \right|_{t=0} \widetilde{W}_k(\Omega_t)   \nonumber\\
	=&e^{p r_L} \( \omega_n \sinh^{n-k} r^k_K e^{-k r^k_K} \)^{-1} e^{p r^k_K} \int_{\partial K} (\cosh r-\tilde{u})^p p_k (\tilde{\kappa}) d\mu
	\geq e^{p r_L},\label{eq-use general HLW}
	\end{align}
	where we used Theorem \ref{thm-HLW- p geq 1} in the last inequality. Equality holds in \eqref{eq-use general HLW} if and only if $K = \Omega_0$ is a geodesic ball centered at the origin.
	Instead of $K$, if we set $\Omega_{t_0}$ as the initial uniformly h-convex bounded domain, then we can get $\chi'(t_0) \geq e^{p r_L}$ by the same calculation as in \eqref{eq-use general HLW}.
	Hence 
	\begin{align}\label{chi(t) geq chi(0)}
	\chi(t) \geq \chi(0) +t e^{p r_L} = \chi(0) + t \exp \(p I_k^{-1}(\widetilde{W}_k (B(r_L) )) \).
	\end{align}
	Thus \eqref{BM-L-ball-p geq 1-k geq 1} follows by the above inequality and the definition of $\chi(t)$ in this proof.
	
	By the equality case of \eqref{eq-use general HLW}, if $t>0$ and equality holds in \eqref{chi(t) geq chi(0)}, then $K$ is a geodesic ball centered at the origin. Conversely, if $K$ is a geodesic ball centered at the origin, then for all $t \geq 0$, it is easy to see that equality holds in \eqref{chi(t) geq chi(0)}. 
	This completes the proof of Proposition \ref{prop-BM-L-ball-p geq 1-k geq 1}.
\end{proof}

Instead of using Theorem \ref{thm-HLW- p geq 1}, if we let $p=1$ and use Theorem \ref{thm-HLW-k=0-p=1} in \eqref{eq-use general HLW}, then we can prove the following Proposition \ref{prop-BM-k=0-p=1-L ball} by a similar argument as in Proposition \ref{prop-BM-L-ball-p geq 1-k geq 1}. Here we omit the proof.
\begin{prop}\label{prop-BM-k=0-p=1-L ball}
	Conjecture \ref{conj-BM} holds in the case $n\geq 1$, $k = 0$, $p=1$, $a=1$, $b =t \geq 0$,  $K$ is a smooth uniformly h-convex bounded domain in $\mathbb{H}^{n+1}$ which contains the origin in its interior and $L$ is a geodesic ball of radius $r_L \geq 0$ centered at the origin, i.e.
	\begin{equation*}
	\exp \(I_0^{-1} (\widetilde{W}_0(\Omega_t)) \) \geq \exp \(  I_0^{-1} (\widetilde{W}_0(K))\)
	+t \exp \(I_0^{-1} (\widetilde{W}_k(B(r_L))) \),
	\end{equation*}
	where $\Omega_t = K +_1 t\cdot B(r_L)$. If $t>0$, then equality holds if and only if $K$ is a geodesic ball centered at the origin.
\end{prop}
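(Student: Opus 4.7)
The plan is to mimic the proof of Proposition \ref{prop-BM-L-ball-p geq 1-k geq 1}, replacing the weighted Alexandrov-Fenchel bound Theorem \ref{thm-HLW- p geq 1} by its $k=0$, $p=1$ counterpart Theorem \ref{thm-HLW-k=0-p=1}. First I would unpack Definition \ref{def-p sum} for $p=1$, $a=1$, $b=t$: the horospherical support function of $\Omega_t$ satisfies $\g_{\Omega_t}(z)=\g_K(z)+t\g_L(z)$, so $\partial_t u(z,t)|_{t=0}=\g_L(z)/\g_K(z)$. Since $L=B(r_L)$ is centered at the origin, $\g_L\equiv e^{r_L}$. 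Combining \eqref{scalr-flow DT's trick}, Lemma \ref{lem-variation of modified quermass} taken at $k=0$, and the identity $\g_K^{-1}=\cosh r-\tilde u$ from \eqref{1/phi, coshr-u}, I would then obtain the initial derivative
\[
\left.\frac{d}{dt}\right|_{t=0}\widetilde W_0(\Omega_t)=\int_{\partial K}\frac{\g_L}{\g_K}\,d\mu=e^{r_L}\int_{\partial K}(\cosh r-\tilde u)\,d\mu.
\]

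Next I would set $\chi(t):=\exp(I_0^{-1}(\widetilde W_0(\Omega_t)))=e^{r^0_t}$. Using $I_0'(r)=\omega_n\sinh^n r$ (read off from \eqref{I_k(r)}) together with the chain rule, this yields
\[
\chi'(0)=\frac{e^{r^0_K+r_L}}{\omega_n\sinh^n r^0_K}\int_{\partial K}(\cosh r-\tilde u)\,d\mu.
\]
The weighted isoperimetric inequality Theorem \ref{thm-HLW-k=0-p=1} supplies $\int_{\partial K}(\cosh r-\tilde u)\,d\mu\geq\omega_n\sinh^n r^0_K\,e^{-r^0_K}$, with equality iff $K$ is a geodesic ball centered at the origin. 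Hence $\chi'(0)\geq e^{r_L}=\exp(I_0^{-1}(\widetilde W_0(B(r_L))))$, with the same rigidity clause.

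Finally, to promote this infinitesimal bound to every time, I would exploit the $1$-sum semigroup identity $\Omega_{t_0}+_1 t\cdot L=\Omega_{t_0+t}$, which is immediate from $\g_K+(t_0+t)\g_L=(\g_K+t_0\g_L)+t\g_L$. Running the same derivative computation with $\Omega_{t_0}$ in place of $K$ then gives $\chi'(t_0)\geq e^{r_L}$ for every $t_0\geq 0$; this is legitimate because Theorem \ref{thm-def p sum-well defined} guarantees $\Omega_{t_0}$ is smooth and uniformly h-convex, while the monotonicity $\g_{\Omega_{t_0}}\geq\g_K$ forces $\Omega_{t_0}\supset K$, so $\Omega_{t_0}$ still contains the origin and Theorem \ref{thm-HLW-k=0-p=1} applies. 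Integrating in $t$ delivers $\chi(t)\geq\chi(0)+te^{r_L}$, which is precisely the claimed horospherical $1$-Brunn-Minkowski inequality. For $t>0$, the rigidity in Theorem \ref{thm-HLW-k=0-p=1} forces each $\Omega_{t_0}$, and in particular $K$, to be a geodesic ball centered at the origin; conversely, in that case $\g_K$ and $\g_L$ are both constant, $\Omega_t$ is itself a geodesic ball centered at the origin, and equality is a direct verification. No substantial obstacle remains since the analytic content was absorbed into Theorem \ref{thm-HLW-k=0-p=1}; the passage from that weighted inequality to its Brunn-Minkowski counterpart is exactly the routine ODE-type argument sketched above.
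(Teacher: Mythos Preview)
Your proposal is correct and takes essentially the same approach as the paper: the paper explicitly states that Proposition \ref{prop-BM-k=0-p=1-L ball} is proved by the argument of Proposition \ref{prop-BM-L-ball-p geq 1-k geq 1} with Theorem \ref{thm-HLW-k=0-p=1} substituted for Theorem \ref{thm-HLW- p geq 1} at the key estimate \eqref{eq-use general HLW}, which is exactly what you have carried out.
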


\subsection{A summary of the results about conjectures}$ \ $

In this subsection, we summarize the results of Conjecture \ref{conj-BM}, Conjecture \ref{conj-Min ineq} and Conjecture \ref{conj-Min ineq-weak},  which have been proved in the previous subsections. According to Proposition \ref{prop-sums not rely on origin} and  Corollary \ref{cor-Conj9.3-Conj9.4-rel position}, the assumptions in these results can be weakened.
\begin{thm}\label{thm-sum-BM}
	Conjecture \ref{conj-BM} holds in one of the following cases,
	\begin{enumerate}
		\item $n \geq 1$ and $k =n$,
		\item $K=L$,
		\item $n \geq 2$, $1 \leq k \leq n-1$, $1 \leq p \leq 2$, $a=1$, $b \geq 0$, $K$ is a smooth uniformly h-convex bounded domain in $\mathbb{H}^{n+1}$ and $L$ is a geodesic ball centered at an interior point of $K$,
		\item $n \geq 1$, $k=0$, $p=1$, $a =1$, $b \geq 0$, $K$ is a smooth uniformly h-convex bounded domain in $\mathbb{H}^{n+1}$  and  $L$ is a geodesic ball centered at an interior point of $K$.
	\end{enumerate}
\end{thm}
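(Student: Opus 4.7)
The plan is to assemble Theorem \ref{thm-sum-BM} from the four case-by-case results already established in this section, together with the invariance property of the hyperbolic $p$-sum recorded in Proposition \ref{prop-sums not rely on origin}. The key observation is that the modified quermassintegrals $\widetilde{W}_k$ are defined intrinsically on h-convex bounded domains, and the hyperbolic $p$-sum $a\cdot K +_p b \cdot L$ depends only on the relative position of $K$ and $L$ in $\mathbb{H}^{n+1}$; in particular, both sides of the conjectural inequality \eqref{horo-BM-ineq} are invariant under the isometry group of $\mathbb{H}^{n+1}$.

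First I would handle Case (1) and Case (2). Case (1), where $k=n$, is exactly the content of Theorem \ref{thm-BM-k=n}, whose proof reduced the inequality to the functional Minkowski inequality of Lemma \ref{lem-Minineq-func} via the identity $\int_{\mathbb{S}^n}\g_\Omega^{-n}d\sigma=\omega_n e^{-n r^n_\Omega}$ from Lemma \ref{lem-Wn-explicit}. Case (2), where $K=L$, follows by applying Proposition \ref{prop-BM-K=L} with $t=a+b\geq 1$: indeed, Definition \ref{def-p sum} gives $a\cdot K +_p b\cdot K = (a+b)\cdot_p K$, so the inequality \eqref{BM-K=L-original} yields the desired estimate directly.

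Next I would address Cases (3) and (4). In Proposition \ref{prop-BM-L-ball-p geq 1-k geq 1} and Proposition \ref{prop-BM-k=0-p=1-L ball} the inequality was established under the additional hypothesis that the geodesic ball $L$ is centered at the origin and that $K$ contains the origin in its interior. To remove this restriction and obtain Theorem \ref{thm-sum-BM} in the stated form, I would apply an isometry $f$ of $\mathbb{H}^{n+1}$ that sends the center of $L$ (assumed to lie in the interior of $K$) to the origin. By Proposition \ref{prop-sums not rely on origin}, $f(a\cdot K +_p b\cdot L) = a\cdot f(K) +_p b\cdot f(L)$, while $\widetilde{W}_k$ is preserved by isometries of $\mathbb{H}^{n+1}$ (both $r^k_\Omega$ and the functional $I_k$ being intrinsic quantities). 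Hence each side of \eqref{horo-BM-ineq} is invariant under the substitution $(K,L)\mapsto (f(K),f(L))$, and the hypothesis ``$L$ is a geodesic ball centered at an interior point of $K$'' reduces to the hypothesis ``$L$ is centered at the origin and $K$ contains the origin in its interior'' treated in Propositions \ref{prop-BM-L-ball-p geq 1-k geq 1} and \ref{prop-BM-k=0-p=1-L ball}.

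I do not expect any genuine obstacle, since the substantive analytic work (the functional inequality in Case (1), the monotonicity along hyperbolic $1$-dilation in Case (2), and the curvature-flow-based Alexandrov--Fenchel type inequalities in Theorems \ref{thm-HLW- p geq 1} and \ref{thm-HLW-k=0-p=1} which drive Cases (3) and (4)) has been carried out earlier; the only point requiring care is to verify rigorously that the isometry-invariance of the hyperbolic $p$-sum recorded in Proposition \ref{prop-sums not rely on origin}, together with the isometry-invariance of $\widetilde{W}_k$, is enough to move the center of the geodesic ball $L$ to the origin without disturbing the validity of the inequality. This amounts to checking that every quantity appearing in \eqref{horo-BM-ineq} is a function of isometry-invariant data of the pair $(K,L)$, which is immediate.
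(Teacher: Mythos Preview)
Your proposal is correct and follows essentially the same approach as the paper: Cases (1) and (2) are cited directly from Theorem \ref{thm-BM-k=n} and Proposition \ref{prop-BM-K=L}, and for Cases (3) and (4) you invoke the isometry-invariance from Proposition \ref{prop-sums not rely on origin} (together with the intrinsic nature of $\widetilde{W}_k$) to reduce to the situation where $K$ contains the origin and $L$ is centered at the origin, then apply Propositions \ref{prop-BM-L-ball-p geq 1-k geq 1} and \ref{prop-BM-k=0-p=1-L ball}. This matches the paper's argument exactly, with slightly more explicit justification of the invariance step.
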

\begin{proof}
	Case $(1)$ was proved in Theorem \ref{thm-BM-k=n}. Case $(2)$ was proved in Proposition \ref{prop-BM-K=L}.
	For fixed $n$, $k$, $p$, $a$ and $b$,
	Proposition \ref{prop-sums not rely on origin} asserts that Conjecture \ref{conj-BM} only depends on the relative position of $K$ and $L$. Hence we can assume that $K$ contains the origin and $L$ is a geodesic ball centered at the origin in Case $(3)$ and Case $(4)$.
	Then Case $(3)$ and Case $(4)$ follow from  Proposition \ref{prop-BM-L-ball-p geq 1-k geq 1} and Proposition \ref{prop-BM-k=0-p=1-L ball} respectively.
\end{proof}
\begin{thm}\label{thm-sum-Min ineq-strong}
	Conjecture \ref{conj-Min ineq} holds in one of the following cases,
	\begin{enumerate} 
		\item $n\geq 1$ and $k=n$,
		\item $K =L$,
			\item $n\geq 2$, $1 \leq k \leq n-1$, $p=1$, $K$ is a smooth uniformly h-convex bounded domain in $\mathbb{H}^{n+1}$  and $L$ is a geodesic ball centered at an interior point of $K$,
		\item $K$ is a geodesic ball and $L$ is a smooth uniformly h-convex bounded domain in $\mathbb{H}^{n+1}$. 
	\end{enumerate}
\end{thm}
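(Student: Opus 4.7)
The plan is to assemble Theorem \ref{thm-sum-Min ineq-strong} from the four special cases already established in Subsection \ref{subsec-10.2}, using Corollary \ref{cor-Conj9.3-Conj9.4-rel position} to reconcile a discrepancy of hypotheses. Note that the individual results in Subsection \ref{subsec-10.2} are stated with additional positional assumptions (e.g.\ the geodesic ball is centered \emph{at the origin}, or $K$ \emph{contains the origin in its interior}), whereas the summary Theorem \ref{thm-sum-Min ineq-strong} asks only for the geodesic ball to be centered at \emph{an interior point} of $K$ (or places no positional restriction at all). The key observation is that, by Corollary \ref{cor-Conj9.3-Conj9.4-rel position}, the validity of Conjecture \ref{conj-Min ineq} for a pair $(K,L)$ is invariant under simultaneously applying any isometry $f \in O^{+}(n+1,1)$ of $\mathbb{H}^{n+1}$ to both domains. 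This is the only non-cosmetic step in the proof.

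First I would dispose of Case (1) by citing Theorem \ref{thm-k=n-strong Min}, which was obtained via the H\"older inequality together with the explicit formula \eqref{phi^-n-Wn} for $\widetilde{W}_n$ in terms of $\g_\Omega^{-n}$; here no positional hypothesis is needed, since both sides of \eqref{formula-conj-Min ineq} (and \eqref{formula-conj-Min ineq-2}) are already manifestly isometry-invariant in this case. Case (2), the equality $K=L$, makes both sides of the inequalities vanish identically, so there is nothing to prove.

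For Case (3), one applies Theorem \ref{thm-p=1-strong Min ineq, L ball}. That theorem is stated for $L$ a geodesic ball centered at the origin and $K$ containing the origin in its interior. Given an arbitrary smooth uniformly h-convex bounded domain $K$ and a geodesic ball $L$ centered at some interior point $X_0 \in K$, one picks an isometry $f \in O^{+}(n+1,1)$ sending $X_0$ to the origin $N=(0,1)$. Then $f(L)$ is a geodesic ball centered at the origin and $f(K)$ is a smooth uniformly h-convex bounded domain containing the origin in its interior, so Theorem \ref{thm-p=1-strong Min ineq, L ball} gives the inequality for $(f(K),f(L))$; by Corollary \ref{cor-Conj9.3-Conj9.4-rel position} it transfers back to $(K,L)$. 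Case (4) is handled in the same spirit: Theorem \ref{thm-strong Min ineq-K ball} establishes the desired inequality whenever $K$ is a geodesic ball \emph{centered at the origin} (for all $p\geq -n$ and all $0\leq k\leq n$), and the same isometry argument applied to an isometry sending the center of $K$ to the origin yields the inequality for a general geodesic ball $K$ paired with any smooth uniformly h-convex bounded domain $L$.

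The only mildly delicate point — and thus the ``main obstacle'' — is verifying that the equality characterizations listed in Conjecture \ref{conj-Min ineq} survive the isometry transport. This is routine but must be stated: the properties ``$K=L$'', ``$K$ and $L$ are geodesic balls centered at the same point'', ``$K$ and $L$ are hyperbolic dilates'', and ``$K=c\cdot L$ or $L=c\cdot K$'' are all invariant under the diagonal action of $O^{+}(n+1,1)$ on pairs of domains, because isometries map geodesic balls to geodesic balls, preserve coincidence of centers, and commute with the hyperbolic $1$-dilation (by Proposition \ref{prop-geo-hyper-p-dilation}, the dilation is characterized purely by the hyperbolic distance function). Thus the equality cases proved in Theorems \ref{thm-k=n-strong Min}, \ref{thm-p=1-strong Min ineq, L ball}, and \ref{thm-strong Min ineq-K ball} for the normalized configurations pass unchanged to the setup of Theorem \ref{thm-sum-Min ineq-strong}, completing the proof.
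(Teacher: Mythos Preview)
Your proposal is correct and follows essentially the same approach as the paper: Case~(1) via Theorem~\ref{thm-k=n-strong Min}, Case~(2) trivially, and Cases~(3)--(4) by invoking Corollary~\ref{cor-Conj9.3-Conj9.4-rel position} to normalize the position before applying Theorems~\ref{thm-p=1-strong Min ineq, L ball} and~\ref{thm-strong Min ineq-K ball}. You spell out the isometry argument for Case~(4) and the transport of equality conditions more explicitly than the paper does, but the underlying logic is identical.
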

\begin{proof}
	Case $(1)$ was proved in Theorem \ref{thm-k=n-strong Min}.
	Case $(2)$ is trivial.
	By Corollary \ref{cor-Conj9.3-Conj9.4-rel position},
	 we can assume that $K$ contains the origin and $L$ is a geodesic ball centered at the origin in Case $(3)$ without loss of generality. 
	Then Case $(3)$ follows from Theorem \ref{thm-p=1-strong Min ineq, L ball}.
	Case $(4)$ was proved in  Theorem \ref{thm-strong Min ineq-K ball}.
\end{proof}

\begin{thm}\label{thm-sum-Min ineq-weak}
	Conjecture \ref{conj-Min ineq-weak} holds in one of the following cases,
	\begin{enumerate}
		\item $n \geq 1$ and $k=n$,
		\item $K =L$, 
		\item $n \geq 1$, $k=0$, $p=1$, $K$ is a smooth uniformly h-convex bounded domain in $\mathbb{H}^{n+1}$ and $L$ is a geodesic ball centered at an interior point of $K$. 
		\item $n \geq 2$, $1 \leq k \leq n-1$, $p \geq 1$, $K$ is a smooth uniformly h-convex bounded domain in $\mathbb{H}^{n+1}$ and $L$ is a geodesic ball centered at an interior point of $K$,
		\item $K$ is a geodesic ball  and $L$ is a smooth uniformly h-convex bounded domain in $\mathbb{H}^{n+1}$.
	\end{enumerate}
\end{thm}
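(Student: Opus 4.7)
The plan is to assemble Theorem \ref{thm-sum-Min ineq-weak} from results already established in the paper, using Corollary \ref{cor-Conj9.3-Conj9.4-rel position} to reduce the general relative-position hypotheses to the normalized set-ups in which those earlier results were proved. Concretely, for cases $(3)$, $(4)$, $(5)$ the hypothesis involves a geodesic ball whose center is either an interior point of $K$ (cases $(3)$ and $(4)$) or arbitrary (case $(5)$), whereas the previously proved inequalities place the relevant ball at the \emph{origin} of the hyperboloid model. Since Corollary \ref{cor-Conj9.3-Conj9.4-rel position} tells us that the inequality of Conjecture \ref{conj-Min ineq-weak} is invariant under the action of the isometry group $O^+(n+1,1)$, I will first fix an isometry $f$ that moves the distinguished point (the center of the geodesic ball in cases $(3)$, $(4)$ and the center of $K$ in case $(5)$) to the origin $N=(0,1)$, and then apply the corresponding previously proven statement to $f(K)$ and $f(L)$.

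First, I will dispose of the easy cases. Case $(2)$, $K=L$, is immediate: both sides of \eqref{formula-conj-Min ineq-weak} coincide with $\int_{\partial K} p_k(\tilde{\kappa})\, d\mu$ by \eqref{rel-area element}, \eqref{shifted curvature-support function}, the definition of $r^k_K$ via \eqref{mod k-mean radius}, and equality with $\omega_n \sinh^{n-k} r^k_K e^{-k r^k_K}$ follows from the sharp form of \eqref{AF-modif-quermass} (with equality forced only by geodesic balls, consistent with the listed equality case). Case $(1)$, $k=n$, is exactly the statement of Corollary \ref{cor-k=n-conj Min ineq-weak}, which was deduced from Theorem \ref{thm-k=n-strong Min} via \eqref{k=n quer K,L}. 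Case $(5)$ is precisely Corollary \ref{cor-weak Min-K ball}: after translating $K$ to be a geodesic ball centered at the origin by an isometry $f$, the inequality reduces to \eqref{Min ineq-K ball}, and the equality characterization transfers back by the invariance under $f$.

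For cases $(3)$ and $(4)$, choose an isometry $f \in O^+(n+1,1)$ that sends the center of the geodesic ball $L$ to the origin $N$; this is possible because the isometry group of $\mathbb{H}^{n+1}$ acts transitively. Then $f(L)$ is a geodesic ball centered at the origin and $f(K)$ is a smooth uniformly h-convex bounded domain that still contains the origin in its interior (since the center of $L$ was assumed to be an interior point of $K$). In case $(3)$, Theorem \ref{thm-HLW-k=0-p=1} applied to $f(K)$ yields \eqref{formula-conj-Min ineq-weak} for the pair $(f(K),f(L))$; in case $(4)$, Theorem \ref{thm-HLW- p geq 1} applied to $f(K)$ does the same. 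Pulling back via $f^{-1}$ and invoking Corollary \ref{cor-Conj9.3-Conj9.4-rel position} establishes the inequality for the original $(K,L)$, with equality precisely when $f(K)$ is a geodesic ball centered at the origin, i.e.\ when $K$ and $L$ are geodesic balls centered at the same point.

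There is no real obstacle here, because the theorem is a consolidation of previously proven statements; the only mildly delicate point is the invariance reduction. To justify it cleanly I will verify once and for all that the two sides of \eqref{formula-conj-Min ineq-weak} are individually isometry-invariant: the left-hand side by Proposition \ref{prop-Conj9.3-Conj9.4-rel position}, and the right-hand side because $r^k_\Omega = I_k^{-1}(\widetilde{W}_k(\Omega))$ depends only on the modified quermassintegral $\widetilde{W}_k(\Omega)$, which is an intrinsic invariant of the bounded domain $\Omega \subset \mathbb{H}^{n+1}$. With this observation, the translation of the center to the origin carries no loss of generality, and Theorem \ref{thm-sum-Min ineq-weak} follows by collecting the five cases.
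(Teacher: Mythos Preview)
Your proposal is correct and follows essentially the same route as the paper: cases $(1)$, $(5)$ are cited from Corollary \ref{cor-k=n-conj Min ineq-weak} and Corollary \ref{cor-weak Min-K ball}; cases $(3)$, $(4)$ are reduced via Corollary \ref{cor-Conj9.3-Conj9.4-rel position} to the origin-normalized situation and then handled by Theorem \ref{thm-HLW-k=0-p=1} and Theorem \ref{thm-HLW- p geq 1} respectively. Your treatment of case $(2)$ is in fact more explicit than the paper's ``trivial'': you correctly observe that when $K=L$ the inequality \eqref{formula-conj-Min ineq-weak} reduces to \eqref{AF-modif-quermass}, which is indeed what is needed.
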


\begin{proof}
	Case $(1)$ was proved in Corollary \ref{cor-k=n-conj Min ineq-weak}.
	Case $(2)$ is trivial.
	By Corollary \ref{cor-Conj9.3-Conj9.4-rel position},  we can assume that $K$ contains the origin and $L$ is centered at the origin in Case $(3)$ and Case $(4)$.
	Then Case $(3)$ and Case $(4)$ follow from Theorem \ref{thm-HLW-k=0-p=1} and Theorem \ref{thm-HLW- p geq 1}  respectively.
	 Case $(5)$ was proved in Corollary \ref{cor-weak Min-K ball}.
\end{proof}

\section{Weighted horospherical $p$-Brunn-Minkowski theory}\label{sec: weighted h-Brunn-Minkowski}
Different from Theorem C, there are some geometric inequalities for hypersurfaces in $\mathbb{H}^{n+1}$ which contain the quantity $\cosh r$, see \cite{HL21, HLW20, SX19, Xia16}. Here $r$ denotes the geodesic distance from points to a fixed origin in $\mathbb{H}^{n+1}$. 
Various assumptions of convexity of hypersurfaces were considered in these papers. Nevertheless, h-convex hypersurfaces satisfy most of them. Then we can ask: \emph{Whether there exists a horospherical Brunn-Minkowski theory with weight $\cosh r$?} 

In  Section \ref{sec: weighted h-Brunn-Minkowski}, we approach this problem with similar works as in the previous sections. We recall that $\Vol_w(\Omega): = \int_{\Omega} \cosh r dv$, where $\Omega$ is a bounded domain in $\mathbb{H}^{n+1}$.
\subsection{Weighted horospherical $p$-Minkowski problem}
$ \ $
The following definitions are similar to Definition \ref{def-p mixed k-th modified qmintegral}-- \ref{def-(p,k)surface area measure}.
\begin{defn}
	Let $n \geq 1$ be an integer and $\frac{1}{2} \leq p \leq 2$ be a real number.
	Assume that $K$ and $L$ are two smooth uniformly h-convex bounded domains in $\mathbb{H}^{n+1}$. 
	The $p$-mixed weighted volume of  $K$, $L$ is defined by
	\begin{align*}
	V_{\omega, p}(K,L) = \lim_{t \to 0^+} \frac{\Vol_w(K +_p t \cdot L) - \Vol_w(K)}{t}.
	\end{align*}
\end{defn}
\begin{defn}
	Let $n \geq 1$ be an integer and $p$ be a real number.
	Assume that $K$ is a smooth uniformly h-convex bounded domain in $\mathbb{H}^{n+1}$.
	The weighted horospherical $p$-surface area measure of  $K$ is defined by
	\begin{align*}
	dS_{\omega,p}(K,z) = \cosh r_K \g_K^{-p} \det (A [\g_K]) d\sigma,
	\end{align*} 
	where
	\begin{align*}
	\cosh r_K = \frac{1}{2} \frac{|D \g_K |^2}{\g_K}(z)+ \frac{1}{2} \(\g_K(z) +\frac{1}{\g_K(z)} \).
	\end{align*} 
\end{defn}
Then, by a weighted version of Lemma \ref{lem-formula-Wpk-K-L}, we see that
\begin{align*}
	V_{\omega, p}(K,L) = \frac{1}{p}\int_{\mathbb{S}^n} \g_L^p  dS_{\omega,p}(K,z).
\end{align*}

Similar to Problem \ref{prob-Horospherical p-Minkowski problem}, we can propose the following prescribed measure problem for h-convex bounded domains.
\begin{prob}[Weighted horospherical $p$-Minkowski problem]\label{prob-weighted-horospherical p-Minkowski problem}
	Let $n \geq 1$ be an integer and $p$ be a real number.
	Given a positive function $f(z)$ defined on $\mathbb{S}^n$, what are  necessary and sufficient conditions for $f$, such that there exists a smooth uniformly h-convex bounded domain $K \subset \mathbb{H}^{n+1}$  satisfying 
	\begin{align*}
	d S_{\omega, p}(K,z) = f(z)d\sigma. 
	\end{align*} 
	That is, finding a smooth positive solution to
	\begin{align*}
	\frac{1}{2}\g^{-p}(z)\(\frac{|D \g|^2}{\g}(z) + \g(z)+\frac{1}{\g(z)} \) p_n (A[\g(z)]) =f(z),
	\end{align*}
	such that $A[\g(z)]>0$ for all $z \in \mathbb{S}^n$.
\end{prob}

\subsection{Weighted horospherical $p$-Brunn-Minkowski inequality and Weighted horospherical $p$-Minkowski inequalities}$ \ $

Let $\Omega$ be a bounded domain in $\mathbb{H}^{n+1}$. Define
\begin{align}\label{S-Omega}
\mathcal{S}(\Omega) = (n+1)^{\frac{1}{n+1}} \omega_n^{  -\frac{1}{n+1}} \Vol_\omega^{\frac{1}{n+1}} (\Omega).
\end{align}
Geometrically, we have $\mathcal{S}(B(r)) = \sinh r$. Similar to Conjecture \ref{conj-BM}, we propose the following Brunn-Minkowski type inequality.

\begin{con}[Weighted horospherical $p$-Brunn-Minkowski inequality] \label{conj-weighted-h-BM}
	Let $n \geq 1$ be an integer and $\frac{1}{2} \leq p \leq 2$ be a real number.
	Assume that $K$ and $L$ are two smooth origin symmetric, uniformly h-convex bounded domains in $\mathbb{H}^{n+1}$. Let $\Omega = a \cdot K +_p b \cdot L$, where  $a \geq 0$, $b \geq 0$ and $a+b \geq 1$. Then 
	\begin{align*}
	\( \mathcal{S}(\Omega) + \sqrt{  \mathcal{S}^2(\Omega)+1 }  \)^p \geq
	a\( \mathcal{S}(K) + \sqrt{  \mathcal{S}^2(K)+1 }  \)^p
	+
	b\( \mathcal{S}(L) + \sqrt{  \mathcal{S}^2(L)+1 }  \)^p.
	\end{align*}
	Equality holds if and only if one of the following conditions holds,
	\begin{enumerate}
		\item $a=1$ and $b=0$,
		\item $a =0$ and $b=1$,
		\item $K =L$ and $a+b =1$, 
		\item $K$ and $L$ are geodesic balls centered at the origin.
	\end{enumerate}
\end{con}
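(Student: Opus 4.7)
The plan is to prove Conjecture \ref{conj-weighted-h-BM} in three stages that parallel the proof of Conjecture \ref{conj-BM}. First, the identity $\sinh r + \cosh r = e^r$ gives $\mathcal{S}(\Omega) + \sqrt{\mathcal{S}(\Omega)^2+1} = e^{r^\omega_\Omega}$, where $r^\omega_\Omega$ is the weighted-volume radius defined by $\Vol_w(B(r^\omega_\Omega)) = \Vol_w(\Omega)$, so the conjecture becomes
\begin{equation*}
e^{p\, r^\omega_\Omega} \geq a\, e^{p\, r^\omega_K} + b\, e^{p\, r^\omega_L}.
\end{equation*}
When $K$ and $L$ are both geodesic balls centered at the origin of radii $r_K$, $r_L$, the horospherical support function $\g_\Omega(z) = (a\, e^{p r_K} + b\, e^{p r_L})^{1/p}$ is constant on $\mathbb{S}^n$, so $\Omega$ is itself an origin-centered ball and equality holds, verifying equality case $(4)$.

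Next, I would reduce the general case $a+b \geq 1$ to the subcase $a+b=1$ exactly as in Proposition \ref{prop-reduce conj BM - a+b=1 case}. The essential input is the \emph{weighted dilation inequality} $e^{p\, r^\omega_{t\cdot_p \Omega}} \geq t\, e^{p\, r^\omega_\Omega}$ for every $t \geq 1$, the weighted analog of Proposition \ref{prop-BM-K=L}. Along the dilation $\g_{t\cdot\Omega} = t^{1/p}\g_\Omega$ one has $\mathscr{F} = 1/(pt)$ and $\frac{d}{dt}\Vol_w(\Omega_t) = \frac{1}{pt}\int_{\partial\Omega_t}\cosh r\,d\mu_t$, so an exact computation analogous to \eqref{chi'(1) geq chi(1)} gives, with $\chi(t) := e^{p\, r^\omega_{t\cdot\Omega}}$,
\begin{equation*}
\chi'(1) = \frac{\chi(1)}{\omega_n\, \sinh^n r^\omega_\Omega \,\cosh r^\omega_\Omega}\int_{\partial\Omega}\cosh r\,d\mu.
\end{equation*}
Theorem F of Scheuer--Xia is sharp on origin-centered balls and can be rewritten exactly as $\int_{\partial\Omega}\cosh r\,d\mu \geq \omega_n \sinh^n r^\omega_\Omega \cosh r^\omega_\Omega$, hence $\chi'(1) \geq \chi(1)$. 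Re-parameterizing the one-parameter dilation group yields $(\chi(t)/t)' \geq 0$ and therefore $\chi(t) \geq t\chi(1)$ for $t \geq 1$. Chaining this with the $a+b=1$ subcase applied to $\bar\Omega := \frac{a}{a+b}\cdot K +_p \frac{b}{a+b}\cdot L$ — noting that $(a+b)\cdot_p \bar\Omega = \Omega$ by direct inspection of the support functions — completes the reduction.

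For the decisive $a+b=1$ subcase I propose to transfer the problem to Euclidean space via the projection $\pi$ of Section \ref{sec-hyperbolic sum and Firey's sum}: origin symmetry of $\Omega \subset \mathbb{H}^{n+1}$ is equivalent to evenness of $\g_\Omega$ and hence to origin symmetry of $\pi(\Omega) \subset \mathbb{R}^{n+1}$, and $\pi$ intertwines the hyperbolic $p$-sum with the Euclidean Firey $L_p$-sum on support functions. The available Euclidean $L_p$ Brunn--Minkowski inequality for origin-symmetric convex bodies (Firey for $p \geq 1$, and partial progress for $0 < p < 1$) provides a candidate transfer mechanism. The analytic bridge is the matrix identity $A[\g_\Omega] = \hat A[\g_\Omega] - \cosh r\cdot I$, together with the integral representations
\begin{equation*}
(n+1)\Vol_w(\Omega) = \int_{\mathbb{S}^n}\bigl(\cosh r - \g_\Omega^{-1}\bigr)\det A[\g_\Omega]\,d\sigma, \qquad (n+1)\hat V(\pi(\Omega)) = \int_{\mathbb{S}^n}\g_\Omega\, \det \hat A[\g_\Omega]\,d\sigma,
\end{equation*}
where $\hat A[\g] = \g_{ij} + \g\,\sigma_{ij}$. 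Expanding $\det(\hat A[\g] - \cosh r\, I)$ as a polynomial in $\cosh r$ whose coefficients are elementary symmetric functions of the Euclidean principal radii of curvature should allow the Firey $L_p$ combination to be handled term by term and the Euclidean $L_p$ Brunn--Minkowski inequality to be transferred to the required weighted hyperbolic inequality for origin-symmetric domains.

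The hard part will be step three. The ratio $\Vol_w(B(r))/\hat V(\pi(B(r))) = \sinh^{n+1}r/e^{(n+1)r}$ is not constant in $r$, so no pointwise identification of $\Vol_w(\Omega)$ with a single Euclidean volume on $\pi(\Omega)$ is available; one must instead exploit the matrix identity above together with Newton--MacLaurin-type estimates (Lemma \ref{lem-Newton-MacLaurin ineq}) to control the mixed-determinant expansion under the Firey $L_p$-combination. A backup strategy is to design a weighted-volume-preserving curvature flow on origin-symmetric hypersurfaces, in the spirit of \eqref{flow-HCMF}, whose monotone functional along the interpolating family $\Omega_s := (1-s)\cdot K +_p s\cdot L$ is the defect $e^{p r^\omega_{\Omega_s}} - (1-s)e^{pr^\omega_K} - s\, e^{pr^\omega_L}$, and to conclude via long-time existence and subsequential convergence. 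In either approach the range $1/2 \leq p < 1$ is delicate, since the underlying Euclidean $L_p$ Brunn--Minkowski inequality for origin-symmetric bodies there is the well-known $L_p$ Brunn--Minkowski conjecture; hence the strategy can be expected to yield the conjecture unconditionally for $p \geq 1$, with additional structural hypotheses (for example unconditional symmetry with respect to coordinate hyperplanes) potentially required to handle the subrange $1/2 \leq p < 1$.
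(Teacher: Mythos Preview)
This statement is a \emph{conjecture} in the paper, not a theorem; the paper does not prove it in general. What the paper does prove is precisely your Stage~2 input: the dilation inequality $e^{p\,r^\omega_{t\cdot_p\Omega}} \geq t\,e^{p\,r^\omega_\Omega}$ for $t\geq 1$ (Proposition~\ref{prop-weighted-scaling}), obtained exactly as you describe from Theorem~F of Scheuer--Xia. Your Stages~1 and~2 are therefore correct and match the paper's partial results; the reduction to $a+b=1$ is the natural analog of Proposition~\ref{prop-reduce conj BM - a+b=1 case}.

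Stage~3 has a genuine structural obstruction, which you partly identify but do not resolve. The paper in fact has \emph{two} projections to Euclidean space: the light-cone projection $\pi$ of Section~\ref{sec-hyperbolic sum and Firey's sum}, which intertwines the hyperbolic $p$-sum with the Firey $L_p$-sum (Theorem~\ref{thm-rel p-sums in Eucl and Hyperbolic}) but sends weighted volume to the functional $\mathcal{V}$ of \eqref{mathcal V-K} rather than to $\Vol_w$; and the vertical projection $\Pi$ of Subsection~\ref{subsec-Proof of conj weighted iso special case}, which \emph{does} satisfy $\Vol_w(\Omega)=\Vol(\Pi(\Omega))$ (equation~\eqref{vertical proj-volume element}) but does \emph{not} intertwine the sums. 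Your proposal tries to force $\pi$ to do both jobs, and the ``mixed-determinant expansion via Newton--MacLaurin'' patch is not a concrete argument: the expansion of $\det(\hat A[\g]-\cosh r\,I)$ involves $\cosh r$, which itself depends nonlinearly on $\g$ and $D\g$ through \eqref{coshr}, so the terms are not Euclidean mixed volumes in any useful sense. The backup flow strategy is likewise only a wish---no candidate flow is exhibited, and the paper's own flow \eqref{flow-HCMF} is adapted to the modified quermassintegrals, not to $\Vol_w$. Finally, for $\tfrac12\leq p<1$ you correctly flag that the Euclidean input is itself conjectural; so even a successful transfer would not yield a proof there. In short, the conjecture remains open, and your Stage~3 does not close it.
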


\begin{rem}\label{rem-origin sym can not remove}
	By calculating the weighted volume of geodesic balls not centered at the origin, we find that
	the origin-symmetric assumption in Conjecture \ref{conj-weighted-h-BM} can not be removed. Furthermore, 
	it can't be replaced by the assumption that $K$ and $L$ both contain the origin in their interiors.
	We will discuss it in Subsection \ref{subsec-origin-sym cant remove}.
\end{rem}

Here we recall Theorem F.
\begin{thmF}[\cite{SX19}]
	Let $\Omega$ be a smooth uniformly h-convex bounded domain in $\mathbb{H}^{n+1}$ $( n\geq 1)$ which contains the origin in its interior. Then
	\begin{align}\label{SX19-weighted-iso}
	\int_{\partial \Omega} \cosh r d\mu
	\geq
	\( \( (n+1)\Vol_w(\Omega) \)^2+
	\omega_n^{ \frac{2}{n+1}} \( (n+1)\Vol_w(\Omega) \)^{\frac{2n}{n+1}}
	\)^{\frac{1}{2}}.
	\end{align}
	Equality holds if and only if $\Omega$ is a geodesic ball centered at the origin.
\end{thmF}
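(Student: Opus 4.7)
The plan is to prove Theorem F by a locally constrained curvature flow that preserves the weighted volume $\Vol_w(\Omega_t)$ while monotonically decreasing the weighted perimeter $\int_{\partial \Omega_t}\cosh r\, d\mu_t$. The motivation for the stated inequality is that for a geodesic ball $B(r)$ centered at the origin one has $\Vol_w(B(r)) = \frac{\omega_n}{n+1}\sinh^{n+1}r$ and $\int_{\partial B(r)}\cosh r\, d\mu = \omega_n \sinh^n r\, \cosh r$, so the identity $\cosh^2 r = \sinh^2 r + 1$ exhibits equality in \eqref{SX19-weighted-iso} on such balls. Thus, if we can deform a general smooth uniformly h-convex $\Omega$ to a centered geodesic ball while preserving $\Vol_w$ and monotonically decreasing $\int_{\partial \Omega}\cosh r\, d\mu$, the inequality and its rigidity statement follow at once.

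First I would introduce a flow of the form $\partial_t X = \bigl(\cosh r - \lambda(t)\,\tilde{u}\bigr)\nu$, with the global factor $\lambda(t)$ chosen so that $\tfrac{d}{dt}\Vol_w(\Omega_t) = 0$; combining Lemma \ref{lem-conf-vf-direvative} with the Minkowski identity \eqref{eq-Minkowski formula} shows that such a $\lambda(t)$ exists and remains bounded between two positive constants on any evolving uniformly h-convex surface. Using \eqref{di cosh r}, \eqref{conf-vf} and integration by parts, I would then compute $\tfrac{d}{dt}\int_{\partial \Omega_t}\cosh r\, d\mu_t$ and show, via the Newton--MacLaurin inequality \eqref{Newton ineq} together with a Cauchy--Schwarz inequality applied to the pair $\{\cosh r,\tilde{u}\}$, that this quantity is nonpositive, with equality precisely when $\partial\Omega_t$ is a geodesic sphere centered at the origin.

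Second, I would establish long-time existence and subsequential convergence. The $C^0$ bounds on the horospherical support function follow from the preservation of $\Vol_w$ together with Lemma \ref{lem-r-min-u-min-r-max-u-max} and Lemma \ref{lem-u-min-r-max}; the $C^1$ bound is Lemma \ref{lem-Dphi<phi} once h-convexity is preserved; and the crucial step is a pinching estimate for the shifted Weingarten tensor, obtained by applying the tensor maximum principle Theorem \ref{And-tensor-max-principle} to $\tilde{h}_{ij} - \varepsilon \tilde{H} g_{ij}$ in the spirit of Proposition \ref{prop-Pinching-est}. A Krylov--Safonov plus Schauder bootstrap then promotes these to uniform $C^{k,\alpha}$ bounds, and Arzel\`a--Ascoli yields a smooth subsequential limit $\Omega_\infty$. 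The limit satisfies $\cosh r = \lambda_\infty \tilde{u}$ on $\partial \Omega_\infty$, which by \eqref{1/phi, coshr-u} has the form $\cosh r - \tilde{u} = (1 - \lambda_\infty^{-1})\cosh r$, an equation of the type considered in Proposition \ref{prop-uniq-sphere-higher dim} (with $k=0$, $l=1$, and a strictly monotone $\chi$); hence $\Omega_\infty$ is a geodesic ball centered at the origin, and the conservation of $\Vol_w$ fixes its radius. Combining the equality case on centered balls with the strict monotonicity along the flow yields both the inequality and the rigidity.

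The main obstacle I anticipate is the pinching estimate, since the proposed speed is neither homogeneous nor concave in the curvatures and the $\cosh r$ weight introduces position-dependent zero-order terms that must be absorbed into the dominant positive contribution $\dot{F}^{ij}g_{ij}(|\tilde{A}|^2 - \varepsilon\tilde{H}^2)$ produced by the maximum principle; choosing $\varepsilon$ depending only on $\Omega_0$ will likely require an analysis analogous to, but more delicate than, the one in Proposition \ref{prop-Pinching-est}. A secondary concern is that the simple form $\partial_t X = (\cosh r - \lambda \tilde{u})\nu$ may fail to be uniformly parabolic at nearly shifted-degenerate points; if so, one would replace the factor $\tilde{u}$ by $\tilde{u}\,H/n$ or by $\tilde{u}\,p_{k+1}(\tilde{\kappa})/p_k(\tilde{\kappa})$ so that the speed becomes a monotone function of $\tilde{h}_{ij}$, and then repeat the argument with only cosmetic changes in the monotonicity computation.
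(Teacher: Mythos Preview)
Theorem~F is cited from \cite{SX19}, but the present paper does sketch its own proof in the remark following Theorem~\ref{thm-p=1 case in conj last of weighted iso type}: one uses the vertical projection $\Pi:\mathbb{H}^{n+1}\to\mathbb{R}^{n+1}$, $\Pi(x,x_{n+1})=x$, under which $\Vol_w(\Omega)=\Vol(\overline{\Omega})$ by \eqref{vertical proj-volume element} and $\int_{\partial\Omega}\cosh r\,d\mu=\int_{\partial\overline{\Omega}}(\overline{u}^2+1)^{1/2}\,d\overline{\mu}$ by \eqref{vertical proj area element}. Applying Jensen's inequality to the convex function $\chi(t)=(t^2+1)^{1/2}$ and then the Euclidean isoperimetric inequality (exactly as in \eqref{Jess-ineq}--\eqref{vertical proj-Eucl ineq}) gives \eqref{SX19-weighted-iso} directly, with equality forcing $\overline{\Omega}$ to be a ball with constant $\overline{u}$, hence $\Omega$ a centered geodesic ball. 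This is entirely elementary, needs no flow, and in fact dispenses with h-convexity (cf.\ the remark citing \cite{LX}).

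Your approach is genuinely different---it is in the spirit of the original \cite{SX19} proof, which does use a locally constrained flow---but as written it has a real gap. The speed $\mathscr{F}=\cosh r-\lambda(t)\tilde{u}$ contains no curvature whatsoever, so the equation $\partial_t X=\mathscr{F}\nu$ is not parabolic at all (not merely ``may fail to be uniformly parabolic''): there is no regularizing mechanism, no Krylov--Safonov theory to invoke, and no reason for h-convexity to be preserved. The fixes you propose (inserting $H/n$ or $p_{k+1}/p_k$) are not cosmetic: they change the stationary equation and the monotonicity computation, and they are essentially what \cite{SX19} actually does (their flow is of inverse-curvature type, e.g.\ $\partial_t X=(\frac{\cosh r}{F}-\tilde{u})\nu$). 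Separately, your identification of the limit is off: the condition $\cosh r=\lambda_\infty\tilde{u}$ is a zero-order relation, not an equation of the form $p_l(\tilde{\kappa})/p_k(\tilde{\kappa})=\chi(\cosh r-\tilde{u})$, so Proposition~\ref{prop-uniq-sphere-higher dim} does not apply as stated. If you want to pursue the flow route, you should start from a genuinely parabolic speed (as in \cite{SX19} or the flow \eqref{flow-HLW20}) and redo the monotonicity and limit analysis for that flow; but the projection argument above is both shorter and stronger.
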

By a similar proof as in Proposition \ref{prop-BM-K=L}, one can prove the following special case of Conjecture \ref{conj-weighted-h-BM}.
\begin{prop}\label{prop-weighted-scaling}
	Conjecture \ref{conj-weighted-h-BM} holds in the case $K =L$ and $K$ is a smooth uniformly h-convex bounded domain which contains the origin in its interior. Precisely, if we assume that $p>0$, $t \geq 1$, and $\Omega$ is a smooth uniformly h-convex bounded domain that contains the origin in its interior, then
	\begin{align}\label{weighted-Minkowski-scaling}
	\( \mathcal{S}(t \cdot \Omega) + \sqrt{  \mathcal{S}^2(t \cdot \Omega)+1 }  \)^p \geq
	t\( \mathcal{S}(\Omega) + \sqrt{  \mathcal{S}^2(\Omega)+1 }  \)^p.
	\end{align}
	If $t>1$, 
     then equality holds in \eqref{weighted-Minkowski-scaling} if and only if  $\Omega$ is a geodesic ball centered at the origin.
\end{prop}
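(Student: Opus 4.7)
\textbf{Proof proposal for Proposition \ref{prop-weighted-scaling}.} The plan is to mimic the one-parameter monotonicity argument used in Proposition \ref{prop-BM-K=L}, but feed it into the weighted isoperimetric inequality of Scheuer--Xia (Theorem F) instead of the modified Alexandrov--Fenchel inequalities of Corollary \ref{cor-AF-modif-quermass}. Set $\Omega_t := t\cdot \Omega$ (the hyperbolic $p$-dilation of Definition \ref{def-p dilation}, so that $u(z,t)=u(z,1)+\tfrac{1}{p}\log t$) and define
\begin{equation*}
\chi(t) := \bigl(\mathcal{S}(\Omega_t) + \sqrt{\mathcal{S}^2(\Omega_t)+1}\bigr)^{p}.
\end{equation*}
Since $t\geq 1$ and $\Omega$ contains the origin in its interior, the same holds for $\Omega_t$, and Proposition \ref{prop-regu-p-dilation} ensures $\Omega_t$ stays smooth and uniformly h-convex. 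The desired inequality reads $\chi(t)\geq t\chi(1)$, which is equivalent to $\tfrac{d}{dt}\log(\chi(t)/t)\geq 0$ on $[1,+\infty)$. As in Proposition \ref{prop-BM-K=L}, by replacing the initial domain $\Omega$ with $t_0\cdot\Omega$ for an arbitrary $t_0\geq 1$ and rescaling the time variable, this reduces to proving the single initial inequality $\chi'(1)\geq \chi(1)$, with equality only in the prescribed case.

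First I will compute $\chi'(1)$. From $\partial_t u(z,t)=\tfrac{1}{p}$ at $t=1$, the relation \eqref{scalr-flow DT's trick} with speed $\mathscr{F}\equiv \tfrac{1}{p}$ and the co-area formula give
\begin{equation*}
\left.\frac{d}{dt}\right|_{t=1}\Vol_w(\Omega_t) = \frac{1}{p}\int_{\partial\Omega}\cosh r\, d\mu.
\end{equation*}
Using the definition \eqref{S-Omega} of $\mathcal{S}$, i.e.\ $(n+1)\Vol_w(\Omega_t)=\omega_n\,\mathcal{S}(\Omega_t)^{n+1}$, and differentiating
$\chi(t)=(s+\sqrt{s^2+1})^{p}$ with $s(t)=\mathcal{S}(\Omega_t)$, a direct calculation yields
\begin{equation*}
\chi'(1) = \chi(1)\cdot \frac{\int_{\partial\Omega}\cosh r\, d\mu}{\omega_n\, \mathcal{S}(\Omega)^{n}\sqrt{\mathcal{S}^2(\Omega)+1}}.
\end{equation*}
Thus $\chi'(1)\geq \chi(1)$ is equivalent to
\begin{equation*}
\int_{\partial\Omega}\cosh r\, d\mu \;\geq\; \omega_n\,\mathcal{S}(\Omega)^{n}\sqrt{\mathcal{S}^2(\Omega)+1}.
\end{equation*}

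The key observation is that this is exactly Theorem F in disguise. Substituting $(n+1)\Vol_w(\Omega)=\omega_n\mathcal{S}^{n+1}(\Omega)$ into the right-hand side of \eqref{SX19-weighted-iso} gives
\begin{equation*}
\sqrt{\bigl(\omega_n\mathcal{S}^{n+1}\bigr)^{2} + \omega_n^{2/(n+1)}\bigl(\omega_n \mathcal{S}^{n+1}\bigr)^{2n/(n+1)}}
=\omega_n\,\mathcal{S}^{n}\sqrt{\mathcal{S}^{2}+1},
\end{equation*}
so Theorem F delivers precisely the inequality we need, with equality if and only if $\Omega$ is a geodesic ball centered at the origin. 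Propagating this equality characterisation through the scaling reduction (applied at every $t_0\in[1,t)$) shows that for $t>1$, equality in \eqref{weighted-Minkowski-scaling} forces $\Omega$ itself to be a geodesic ball centered at the origin, and the converse is an immediate computation on balls.

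The proof is essentially algebraic once the SX19 inequality is invoked, so no serious obstacle arises; the only point requiring care is the bookkeeping that (i) the hyperbolic $p$-dilation preserves smoothness, uniform h-convexity and containment of the origin along $[1,+\infty)$, and (ii) the scaling trick correctly transfers the one-point inequality $\chi'(1)\geq\chi(1)$ to the full monotonicity $\chi(t)/t\geq \chi(1)$ together with its rigidity statement.
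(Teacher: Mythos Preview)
Your proposal is correct and follows essentially the same approach as the paper: both arguments compute the logarithmic derivative of $\mathcal{S}(\Omega_t)+\sqrt{\mathcal{S}^2(\Omega_t)+1}$, rewrite the resulting inequality as $\int_{\partial\Omega_t}\cosh r\,d\mu \geq \omega_n\,\mathcal{S}^n(\Omega_t)\sqrt{\mathcal{S}^2(\Omega_t)+1}$, and recognize this as Theorem~F. The only organizational difference is that the paper applies Theorem~F directly at each $\Omega_t$ and integrates the resulting differential inequality $\frac{d}{dt}\log(\mathcal{S}+\sqrt{\mathcal{S}^2+1})\geq \frac{1}{pt}$, whereas you package this via the scaling trick of Proposition~\ref{prop-BM-K=L}; these are two presentations of the same monotonicity computation.
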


\begin{proof}
Let $t \geq 1$, $\Omega_t : = t \cdot \Omega$, and $u(z,t)$ be the horospherical support function of $\Omega_t$.
Denote $u(z):=u(z,1)$. From Definition \ref{def-p dilation}, we have $\g^p (z, t) =t \g^p(z) $ and $u(z,t) = u(z,1) + \frac{1}{p}\log t$, which implies  $\partial_t u(z,t) = \frac{1}{pt}$. 
By using \eqref{S-Omega}, \eqref{scalr-flow DT's trick} and the co-area formula, we have
\begin{align}
\frac{d}{dt}  \mathcal{S}(\Omega_t) 
=& \frac{d}{dt} \( (n+1)^{\frac{1}{n+1}} \omega_n^{  -\frac{1}{n+1}} \Vol_\omega^{\frac{1}{n+1}} (\Omega_t) \) \nonumber\\
=& (n+1)^{-\frac{n}{n+1}}\omega_n^{  -\frac{1}{n+1}}  \Vol_\omega^{-\frac{n}{n+1}} (\Omega_t)
\int_{\partial \Omega_t} \cosh r \frac{1}{pt} d\mu_t  \nonumber\\
=&\frac{1}{pt}\omega_n^{-1} \mathcal{S}^{-n}(\Omega_t) \int_{\partial \Omega_t} \cosh r  d\mu_t.\label{dt-S-Omega t}
\end{align}	
By \eqref{S-Omega}, Theorem F asserts that
\begin{align}
 \int_{\partial \Omega_t} \cosh r  d\mu_t
 \geq& \( \(\omega_n \mathcal{S}^{n+1} (\Omega_t)  \)^2 +\omega_n^{\frac{2}{n+1}} \( \omega_n \mathcal{S}^{n+1}  (\Omega_t) \)^{\frac{2n}{n+1}} \)^{\frac{1}{2}}  \nonumber\\
=& \(\omega_n^2 \mathcal{S}^{2n+2}  (\Omega_t)+ \omega_n^2 \mathcal{S}^{2n}  (\Omega_t)\)^{\frac{1}{2}} \nonumber \\
=& \omega_n \mathcal{S}^n (\Omega_t) \sqrt{\mathcal{S}^2 (\Omega_t) +1}. \label{weighted Mink-theorem F}
\end{align}
Substituting \eqref{weighted Mink-theorem F} into \eqref{dt-S-Omega t}, we have
\begin{align*}
\frac{d}{dt}  \mathcal{S}(\Omega_t)  \geq \frac{1}{pt}\sqrt{\mathcal{S}^2 (\Omega_t) +1}.
\end{align*}
Hence,
\begin{align}
&\frac{d}{dt}\log \( \mathcal{S}(\Omega_t) + \sqrt{\mathcal{S}^2 (\Omega_t) +1} \) \nonumber\\
=& \( \mathcal{S}(\Omega_t) + \sqrt{\mathcal{S}^2 (\Omega_t) +1} \)^{-1}\(\frac{d}{dt}\mathcal{S}(\Omega_t)
+
\frac{1}{2} \frac{2\mathcal{S}(\Omega_t)}{  \sqrt{\mathcal{S}^2 (\Omega_t) +1} } \frac{d}{dt}\mathcal{S}(\Omega_t)\) \nonumber\\
=&\( \mathcal{S}(\Omega_t) + \sqrt{\mathcal{S}^2 (\Omega_t) +1} \)^{-1} \frac{ \mathcal{S}(\Omega_t) + \sqrt{\mathcal{S}^2 (\Omega_t) +1}}{ \sqrt{\mathcal{S}^2 (\Omega_t) +1} } \frac{d}{dt}\mathcal{S}(\Omega_t)
\geq \frac{1}{pt}.\label{varia-weighted-Mink-Scaling}
\end{align}
Integrating the both sides of \eqref{varia-weighted-Mink-Scaling}, we obtain
\begin{align*}
\log \( \mathcal{S}(\Omega_t) + \sqrt{\mathcal{S}^2 (\Omega_t) +1} \)
\geq \frac{1}{p}\log t+ \log \( \mathcal{S}(\Omega) + \sqrt{\mathcal{S}^2 (\Omega) +1} \),
\end{align*}
thus we obtain \eqref{weighted-Minkowski-scaling}.

 By Theorem F, equality holds in \eqref{weighted Mink-theorem F} if and only if $\Omega_t$ is a geodesic ball centered at the origin, which means that $\g(z,t) = t^{\frac{1}{p}} \g(z)$ is constant on $\mathbb{S}^n$. Hence, equality holds in \eqref{weighted Mink-theorem F}  for some $t \geq 1$ if and only if $\Omega$ is a geodesic ball centered at the origin. Then we have that equality holds in \eqref{weighted-Minkowski-scaling} if and only if either $t=1$, or $\Omega$ is a geodesic ball centered at the origin. This completes the proof of Proposition \ref{prop-weighted-scaling}.
\end{proof}

Letting $a =1-t$ and $b=t$ in Conjecture \ref{conj-weighted-h-BM} and using a similar calculation as in Proposition \ref{prop-rel-BM-Min ineq-strong}, we can propose the following Conjecture \ref{conj-weighted-horo-Mink-ineq}, which is the weighted version of Conjecture \ref{conj-Min ineq}.

\begin{con}[Weighted horospherical $p$-Minkowski inequality of type \uppercase\expandafter{\romannumeral1}] \label{conj-weighted-horo-Mink-ineq}
	Let $K$ and $L$ be two smooth origin-symmetric, uniformly h-convex bounded domains in $\mathbb{H}^{n+1}$ $(n \geq 1)$. If $p>0$, then 
	\begin{equation*}
		\begin{aligned}
		&\int_{\partial K} \g_L^p \cosh r (\cosh r- \tilde{u})^p d\mu
		- \int_{\partial K} \cosh r d\mu\\
		\geq&
		\omega_n \mathcal{S}^n (K) \sqrt{\mathcal{S}^2(K) +1}
		\(  \(  \frac{\mathcal{S}(L) + \sqrt{\mathcal{S}^2(L) +1} }{\mathcal{S}(K) + \sqrt{\mathcal{S}^2(K) +1}}  \)^p-1 \). 
		\end{aligned}
	\end{equation*}
If $p<0$, then
	\begin{equation*}
	\begin{aligned}
	&\int_{\partial K} \g_L^p \cosh r (\cosh r- \tilde{u})^p d\mu
	- \int_{\partial K} \cosh r d\mu\\
	\leq&
	\omega_n \mathcal{S}^n (K) \sqrt{\mathcal{S}^2(K) +1}
	\(  \(  \frac{\mathcal{S}(L) + \sqrt{\mathcal{S}^2(L) +1} }{\mathcal{S}(K) + \sqrt{\mathcal{S}^2(K) +1}}  \)^p-1 \). 
	\end{aligned}
	\end{equation*}
 In the both inequalities, equality holds if and only if either
 \begin{enumerate}
 	\item $K=L$, or
 	\item $K$ and $L$ are geodesic balls centered at the origin.
 \end{enumerate}
\end{con}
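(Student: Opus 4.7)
The plan is to deduce Conjecture~\ref{conj-weighted-horo-Mink-ineq} from Conjecture~\ref{conj-weighted-h-BM} by differentiating at $t=0$ along the hyperbolic $p$-sum $\Omega_t := (1-t)\cdot K +_p t\cdot L$, exactly mirroring the reduction carried out in Proposition~\ref{prop-rel-BM-Min ineq-strong} from Conjecture~\ref{conj-BM} to Conjecture~\ref{conj-Min ineq}. By Definition~\ref{def-p sum} one has $\g_{\Omega_t}^p = (1-t)\g_K^p + t\g_L^p$, hence $\frac{\partial}{\partial t}u(z,0) = \frac{1}{p}(\g_L^p - \g_K^p)\g_K^{-p}$. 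Combining the scalar flow identity \eqref{scalr-flow DT's trick} with the coarea formula for weighted volume and the identification $\g_K^{-1} = \cosh r - \tilde u$ from \eqref{1/phi, coshr-u} yields
\[
\left.\frac{d}{dt}\right|_{t=0}\Vol_w(\Omega_t) = \frac{1}{p}\left(\int_{\partial K} \g_L^p\, \cosh r\, (\cosh r - \tilde u)^p\, d\mu - \int_{\partial K}\cosh r\, d\mu\right).
\]

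The second step is to differentiate the functional $\chi(\Omega) := (\mathcal{S}(\Omega)+\sqrt{\mathcal{S}^2(\Omega)+1})^p$. Since $\mathcal{S}^{n+1} = \frac{n+1}{\omega_n}\Vol_w$, one gets $\frac{d\mathcal{S}}{dt} = (\omega_n\mathcal{S}^n)^{-1}\frac{d\Vol_w}{dt}$, and the identity $\frac{d}{ds}(s+\sqrt{s^2+1}) = (s+\sqrt{s^2+1})/\sqrt{s^2+1}$ leads to
\[
\left.\frac{d\chi}{dt}\right|_{t=0} = \frac{p\,(\mathcal{S}(K)+\sqrt{\mathcal{S}^2(K)+1})^p}{\omega_n\,\mathcal{S}^n(K)\,\sqrt{\mathcal{S}^2(K)+1}}\left.\frac{d\Vol_w}{dt}\right|_{t=0}.
\]
Granting Conjecture~\ref{conj-weighted-h-BM}, the function $F(t) := \chi(\Omega_t) - (1-t)\chi(K) - t\chi(L)$ satisfies $F(0)=0$ and $F(t)\geq 0$ on $[0,1]$, so $F'(0)\geq 0$, i.e.~$\frac{d\chi}{dt}|_{t=0} \geq \chi(L)-\chi(K)$. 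Substituting the two displays above and clearing the positive prefactor produces the $p>0$ half of the conjecture. For the equality analysis one must exclude $F'(0)=0$ while $F(t)>0$ on $(0,1]$; I would handle this by a second-variation computation along $\Omega_t$, using uniform h-convexity of $K$ together with the equality cases already built into Conjecture~\ref{conj-weighted-h-BM}.

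For the $p<0$ case the Brunn-Minkowski input is unavailable, since Conjecture~\ref{conj-weighted-h-BM} is restricted to $\tfrac{1}{2}\leq p\leq 2$, so a different tool is needed. I would design a weighted, $\Vol_w$-preserving curvature flow in the spirit of \eqref{flow-HCMF} whose speed carries a $\cosh r$ factor and is arranged so that the left-hand side of the target inequality is monotone in the direction demanded by the sign of $p$. Long-time existence and subsequential smooth convergence, followed by a classification of equilibria along the lines of Theorem~\ref{thm-f=c-CM-M-sphere} and Proposition~\ref{prop-uniq-sphere-higher dim}---with a weighted Heintze-Karcher inequality generalising Proposition~\ref{prop-HK-n=1} playing the role of the uniqueness input---would force the limit to be an origin-centered geodesic ball, at which the target inequality becomes an equality, and the monotonicity would transfer the inequality back to $K$.

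The decisive obstacle is Conjecture~\ref{conj-weighted-h-BM} itself. The origin-symmetry hypothesis cannot be weakened (Remark~\ref{rem-origin sym can not remove}), and even in Euclidean $L_p$ Brunn-Minkowski theory the analogous range $\tfrac{1}{2}\leq p<1$ is a delicate even-function statement (cf.~\cite{KM20, CHLL20}). A natural route to Conjecture~\ref{conj-weighted-h-BM} is via the projection map $\pi$ of Section~\ref{sec-hyperbolic sum and Firey's sum}: pull back $\cosh r$ to an explicit weight on $\pi(K)\subset\mathbb{R}^{n+1}$, use the compatibility of the hyperbolic $p$-sum with the Minkowski-Firey $L_p$-addition established in Theorem~\ref{thm-rel p-sums in Eucl and Hyperbolic}, and try to reduce to a weighted even $L_p$ Pr\'ekopa-Leindler inequality on Euclidean space. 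The $p<0$ range is expected to be genuinely harder: the Kazdan-Warner-type obstructions at $p=-n$ uncovered in Theorem~\ref{thm-necess-cond-p=-n} already show that the horospherical setting has its own new analytic pitfalls absent from the Euclidean theory.
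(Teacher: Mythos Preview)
The statement in question is a \emph{conjecture} in the paper, not a proved theorem. The paper's only treatment of it is Proposition~\ref{prop-weig Min from weig BM}, which shows that for $\tfrac{1}{2}\le p\le 2$ the conjecture is equivalent to the differentiated form
\[
\left.\frac{d}{dt}\right|_{t=0}\bigl(\mathcal{S}(\Omega_t)+\sqrt{\mathcal{S}^2(\Omega_t)+1}\bigr)^p
\;\ge\;
\bigl(\mathcal{S}(L)+\sqrt{\mathcal{S}^2(L)+1}\bigr)^p
-\bigl(\mathcal{S}(K)+\sqrt{\mathcal{S}^2(K)+1}\bigr)^p,
\]
and hence follows from Conjecture~\ref{conj-weighted-h-BM}. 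Your $p>0$ argument is exactly this reduction: the computation of $\left.\frac{d}{dt}\right|_{t=0}\Vol_w(\Omega_t)$ via the coarea formula and \eqref{1/phi, coshr-u}, and of $\left.\frac{d\chi}{dt}\right|_{t=0}$ via the chain rule, match the paper's Proposition~\ref{prop-weig Min from weig BM} line for line. So on the conditional reduction you and the paper are doing the same thing.

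Two points of divergence are worth noting. First, the paper restricts this derivation to $\tfrac{1}{2}\le p\le 2$ because that is the range in which the hyperbolic $p$-sum $\Omega_t=(1-t)\cdot K+_p t\cdot L$ is known to be well-defined (Theorem~\ref{thm-def p sum-well defined}); your write-up silently applies the same argument to all $p>0$, which is a gap since $\Omega_t$ need not exist outside this range. Second, the paper does not attempt anything for $p<0$ (or for the equality characterization beyond stating it), so your proposed weighted flow approach and second-variation analysis are genuinely new ideas relative to the paper---but they remain speculative programs, not proofs. In particular, the paper offers no analogue of the flow you describe, and your outline would require constructing the flow, proving long-time existence, and establishing a weighted uniqueness result, none of which is done here. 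The honest summary is that neither you nor the paper proves Conjecture~\ref{conj-weighted-horo-Mink-ineq}; both reduce the $\tfrac{1}{2}\le p\le 2$ case to the open Conjecture~\ref{conj-weighted-h-BM}, and everything else remains open.
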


\begin{prop}\label{prop-weig Min from weig BM}
	Assume that $\frac{1}{2} \leq p \leq 2$ and $\Omega_t := (1-t) \cdot K +_p t \cdot L$, then Conjecture \ref{conj-weighted-horo-Mink-ineq} is equivalent to
	\begin{equation}\label{equi-form-weight-Mink}
	\left.	\frac{d}{dt}\right|_{t=0} \(\mathcal{S}(\Omega_t) + \sqrt{\mathcal{S}^2(\Omega_t) +1}   \)^p 
	\geq  \(\mathcal{S}(L) + \sqrt{\mathcal{S}^2(L) +1}  \)^p - \( \mathcal{S}(K) + \sqrt{\mathcal{S}^2(K) +1} \)^p.
	\end{equation}
	Equality holds if and only if either $K=L$, or $K$ and $L$ are geodesic balls centered at the origin.
	Thus Conjecture \ref{conj-weighted-horo-Mink-ineq} can be obtained from Conjecture \ref{conj-weighted-h-BM} in this case.
\end{prop}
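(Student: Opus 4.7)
The plan is to mimic the variational derivation used in Proposition \ref{prop-rel-BM-Min ineq-strong}, now in the weighted setting: first I will compute $\frac{d}{dt}|_{t=0}$ of the left-hand and right-hand sides of \eqref{equi-form-weight-Mink} explicitly, matching them (up to a positive normalization) with the two sides of Conjecture \ref{conj-weighted-horo-Mink-ineq}, and then observe that the inequality \eqref{equi-form-weight-Mink} is exactly what one obtains by differentiating the weighted Brunn–Minkowski inequality of Conjecture \ref{conj-weighted-h-BM} in $t$ at $t=0$, using the choice $a=1-t$, $b=t$.

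First I would set $\g(z,t):=\g_{\Omega_t}(z)$, so that $\g^p(z,t)=(1-t)\g_K^p(z)+t\g_L^p(z)$ by Definition \ref{def-p sum}. Differentiating at $t=0$ gives $\partial_t u(z,0)=\frac{1}{p}\bigl(\g_L^p(z)\g_K^{-p}(z)-1\bigr)$. By the DeTurck-type identity \eqref{scalr-flow DT's trick}, this function is precisely the normal speed $\mathscr{F}$ of the boundary variation at $t=0$. Combining the co-area formula with $\int_{\partial K}\cosh r\,\mathscr{F}\,d\mu=\frac{d}{dt}|_{0}\Vol_w(\Omega_t)$ and the identity \eqref{1/phi, coshr-u} that converts $\g_K^{-1}$ into $\cosh r-\tilde{u}$ on $\partial K$, I obtain
\begin{equation*}
\tfrac{d}{dt}\big|_{t=0}\Vol_w(\Omega_t)
= \tfrac{1}{p}\int_{\partial K}\cosh r\,\bigl(\g_L^p(z)(\cosh r-\tilde{u})^p-1\bigr)\,d\mu.
\end{equation*}
A differentiation of \eqref{S-Omega} gives $\mathcal{S}'=(\omega_n \mathcal{S}^n)^{-1}\Vol_w'$, and a direct chain-rule computation, analogous to \eqref{varia-weighted-Mink-Scaling}, yields
\begin{equation*}
\tfrac{d}{dt}\bigl(\mathcal{S}(\Omega_t)+\sqrt{\mathcal{S}^2(\Omega_t)+1}\bigr)^p
=\frac{p\bigl(\mathcal{S}(\Omega_t)+\sqrt{\mathcal{S}^2(\Omega_t)+1}\bigr)^p}{\sqrt{\mathcal{S}^2(\Omega_t)+1}}\cdot \mathcal{S}'(\Omega_t).
\end{equation*}

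Next I would substitute the expression for $\Vol_w'$ obtained above into this chain-rule identity at $t=0$, so that \eqref{equi-form-weight-Mink} becomes the explicit inequality
\begin{equation*}
\frac{(\mathcal{S}(K)+\sqrt{\mathcal{S}^2(K)+1})^p}{\omega_n\mathcal{S}^n(K)\sqrt{\mathcal{S}^2(K)+1}}\int_{\partial K}\!\cosh r\bigl(\g_L^p(\cosh r-\tilde{u})^p-1\bigr)d\mu \geq F^p(L)-F^p(K),
\end{equation*}
where I abbreviate $F(\Omega)=\mathcal{S}(\Omega)+\sqrt{\mathcal{S}^2(\Omega)+1}$. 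Multiplying through by the positive prefactor and rearranging reproduces exactly the inequality in Conjecture \ref{conj-weighted-horo-Mink-ineq} (with the ratio $F(L)/F(K)$ appearing raised to the $p$-th power). This proves the asserted equivalence, and the same algebra handles the $p<0$ reverse-inequality statement. For the equality characterization, I would note that the equality cases correspond exactly to those for which $\mathscr{F}$ is constant along $\partial K$ and $K=L$ or both are geodesic balls centered at the origin, matching the conjectured equality cases.

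Finally, to deduce Conjecture \ref{conj-weighted-horo-Mink-ineq} from Conjecture \ref{conj-weighted-h-BM}, I would take $a=1-t$ and $b=t$ in the weighted $p$-Brunn–Minkowski inequality, observe that both sides coincide at $t=0$ (they both equal $F^p(K)$), and therefore compare one-sided derivatives at $t=0^+$; this gives precisely \eqref{equi-form-weight-Mink}. The main technical point — and the only place care is needed — is justifying that $\Omega_t$ stays smooth and uniformly h-convex for small $t>0$ so that the variational formulas above are valid; this is guaranteed by Theorem \ref{thm-def p sum-well defined} together with $\frac{1}{2}\leq p\leq 2$, since in this range the $p$-sum of two smooth uniformly h-convex bounded domains is again smooth and uniformly h-convex. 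No additional curvature flow or functional inequality is needed at this step, since the statement is an exact variational identity plus the convexity of the one-parameter family $F^p(\Omega_t)$ along the $p$-sum.
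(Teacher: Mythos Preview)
Your proposal is correct and follows essentially the same route as the paper: compute $\partial_t u(z,0)$ from $\g^p(z,t)=(1-t)\g_K^p+t\g_L^p$, feed it through the co-area formula and the chain rule $\frac{d}{dt}F^p(\Omega_t)=pF^p(\Omega_t)\,\mathcal{S}'/\sqrt{\mathcal{S}^2+1}$ with $\mathcal{S}'=(\omega_n\mathcal{S}^n)^{-1}\Vol_w'$, and then match with the conjecture after using \eqref{1/phi, coshr-u}. Two minor remarks: the equality discussion is simply inherited from the equivalence (your aside about ``$\mathscr{F}$ constant'' is not the right criterion and is not needed), and the final deduction from Conjecture~\ref{conj-weighted-h-BM} uses only that both sides agree at $t=0$, not any ``convexity of $F^p(\Omega_t)$''.
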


\begin{proof}
	At first, we show the equivalence between \eqref{equi-form-weight-Mink} and Conjecture \ref{conj-weighted-horo-Mink-ineq}.
	Let $u(z,t)$ denote the horospherical support function of $\Omega_t$. Then we have $\g^p(z,t) = (1-t)\g_K^p (z) + t \g_L^p(z)$ by the definition of $\Omega_t$. Thus
	\begin{align*}
	\partial_t u(z,t) = \frac{\partial_t \g^p(z,t)}{p \g^p(z,t)}
	= \frac{1}{p} \frac{\g_L^p(z)  -\g_K^p(z)  }{(1-t)\g_K^p (z) + t \g_L^p(z) }.
	\end{align*}
	Using \eqref{scalr-flow DT's trick} and a similar calculation as in \eqref{dt-S-Omega t},  we have
	\begin{align}\label{dt-0-S Omega-t}
	\left. \frac{d}{dt}\right|_{t=0}  \mathcal{S}(\Omega_t)
	= \omega_n^{-1} \mathcal{S}^{-n}(K) \int_{\partial K} \cosh r \(\frac{1}{p} \frac{\g_L^p(z) - \g_K^p(z)}{\g_K^p(z)}  \) d\mu. 
	\end{align}
	A direct calculation gives
	\begin{align}\label{dt-0-log S Omega-t}
		\left. \frac{d}{dt}\right|_{t=0} \log  \(  \mathcal{S}(\Omega_t) + \sqrt{\mathcal{S}^2 (\Omega_t) +1}  \)
		=& \frac{1}{    \sqrt{\mathcal{S}^2(K) +1}} 	\left. \frac{d}{dt}\right|_{t=0} \mathcal{S}(\Omega_t).
	\end{align}
	Inserting \eqref{dt-0-S Omega-t} into \eqref{dt-0-log S Omega-t}, we have
	\begin{align}
		&\left.	\frac{d}{dt}\right|_{t=0} \(\mathcal{S}(\Omega_t) + \sqrt{\mathcal{S}^2(\Omega_t) +1}   \)^p \nonumber\\
	=&p \(\mathcal{S}(K) + \sqrt{\mathcal{S}^2(K) +1}   \)^p	\left.	\frac{d}{dt}\right|_{t=0} \log  \(  \mathcal{S}(\Omega_t) + \sqrt{\mathcal{S}^2 (\Omega_t) +1}  \) \nonumber\\
	=&\(\mathcal{S}(K) + \sqrt{\mathcal{S}^2(K) +1}   \)^p  \frac{  \int_{\partial K} \g_L^p(z) \cosh r \( \cosh r- \tilde{u} \)^p  d\mu -\int_{\partial K} \cosh r d\mu    }{  \omega_n \mathcal{S}^n (K) \sqrt{\mathcal{S}^2(K) +1} },\label{dt-0-S^p Omega t}
	\end{align}
	where we used \eqref{1/phi, coshr-u} in the second equality.
	Substituting \eqref{dt-0-S^p Omega t} into the left-hand side of \eqref{equi-form-weight-Mink}, we have
	\begin{align*}
	&\int_{\partial K} \g_L^p(z) \cosh r \( \cosh r- \tilde{u} \)^p  d\mu -\int_{\partial K} \cosh r d\mu  \\
	\geq& 
	 \omega_n \mathcal{S}^n (K) \sqrt{\mathcal{S}^2(K) +1} \( \(\frac{\mathcal{S}(L) + \sqrt{\mathcal{S}^2(L) +1}}{\mathcal{S}(K) + \sqrt{\mathcal{S}^2(K) +1}}\)^p -1 \),
	\end{align*}
	thus we have that \eqref{equi-form-weight-Mink} is equivalent to Conjecture \ref{conj-weighted-horo-Mink-ineq} in the case $\frac{1}{2} \leq p \leq 2$.
	
	If we admit Conjecture \ref{conj-weighted-h-BM},  then we have 
	\begin{align}\label{weigh conj-a=1-t,b=t}
	\(\mathcal{S}(\Omega_t) + \sqrt{\mathcal{S}^2(\Omega_t) +1}   \)^p
	\geq (1-t)  \( \mathcal{S}(K) + \sqrt{\mathcal{S}^2(K) +1} \)^p+
	t  \(\mathcal{S}(L) + \sqrt{\mathcal{S}^2(L) +1}  \)^p
	\end{align}
	for all $t \in [0,1]$.
	At this time, \eqref{equi-form-weight-Mink} can be achieved by differentiating the both sides of \eqref{weigh conj-a=1-t,b=t} at $t=0$. 
	Thus we have that Conjecture \ref{conj-weighted-horo-Mink-ineq} can be obtained from Conjecture \ref{conj-weighted-h-BM}.
	We complete the proof of Proposition \ref{prop-weig Min from weig BM}. 
\end{proof}

When $p>0$, the following Conjecture \ref{conj-weigh-Min-ineq} can be considered as either a weighted version of Conjecture \ref{conj-Min ineq-weak}, or a corollary of Conjecture \ref{conj-weighted-horo-Mink-ineq} by Theorem F. At the same time, we can also get it by letting $a=1$ and $b=t$ in Conjecture \ref{conj-weighted-h-BM} and doing a similar calculation as in Proposition \ref{prop-rel-BM-Min ineq-strong}.
\begin{con}[Weighted horospherical $p$-Minkowski inequality of type \uppercase\expandafter{\romannumeral2}]\label{conj-weigh-Min-ineq}
	Let $n \geq 1$ be an integer and $p>0$ be a real number.
	Assume that $K$ and $L$ are smooth uniformly h-convex bounded domains in $\mathbb{H}^{n+1}$. Then
	\begin{align*}
	&\int_{\partial K} \g_L^p(z) \cosh r (\cosh r- \tilde{u})^p d\mu\\
	\geq&
		\omega_n \mathcal{S}^n (K)\sqrt{\mathcal{S}^2(K) +1}
	 \(  \frac{\mathcal{S}(L) + \sqrt{\mathcal{S}^2(L) +1} }{\mathcal{S}(K) + \sqrt{\mathcal{S}^2(K) +1}}  \)^p. 
	\end{align*}
	Equality holds if and only if $K$ and $L$ are geodesic balls centered at the origin.
\end{con}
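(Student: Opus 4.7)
My plan is to deduce this weighted horospherical $p$-Minkowski inequality from the weighted horospherical $p$-Brunn-Minkowski inequality in Conjecture \ref{conj-weighted-h-BM} by a first-variation argument, mirroring how Proposition \ref{prop-rel-BM-Min ineq-strong} extracts Conjecture \ref{conj-Min ineq} from Conjecture \ref{conj-BM}. I would set $\Omega_t := K +_p t\cdot L$ for $t\geq 0$ and use Definition \ref{def-p sum} to get $\g^p(z,t) = \g_K^p(z) + t\g_L^p(z)$, so that $\partial_t u(z,0) = \frac{1}{p}\g_L^p\g_K^{-p}$. By \eqref{scalr-flow DT's trick} this is also the normal speed at $t=0$, and so the co-area formula together with the definition \eqref{S-Omega} of $\mathcal{S}$ gives
\[
\left.\frac{d}{dt}\right|_{t=0}\mathcal{S}(\Omega_t) = \frac{1}{p\,\omega_n\mathcal{S}^n(K)}\int_{\partial K}\cosh r \cdot \g_L^p\g_K^{-p}\,d\mu.
\]

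Chain-ruling through $(\mathcal{S}+\sqrt{\mathcal{S}^2+1})^p$ as in \eqref{dt-0-log S Omega-t}--\eqref{dt-0-S^p Omega t}, and substituting $\g_K^{-1}=\cosh r-\tilde u$ from \eqref{1/phi, coshr-u}, produces
\[
\left.\frac{d}{dt}\right|_{t=0}\bigl(\mathcal{S}(\Omega_t)+\sqrt{\mathcal{S}^2(\Omega_t)+1}\bigr)^p = \frac{\bigl(\mathcal{S}(K)+\sqrt{\mathcal{S}^2(K)+1}\bigr)^p}{\omega_n\mathcal{S}^n(K)\sqrt{\mathcal{S}^2(K)+1}}\int_{\partial K}\g_L^p\cosh r(\cosh r-\tilde u)^p\,d\mu.
\]
Applying Conjecture \ref{conj-weighted-h-BM} with $a=1$, $b=t$ gives a one-parameter family of inequalities in which equality holds at $t=0$, so differentiating at $t=0$ and rearranging yields exactly the desired bound; the equality characterization is inherited from the equality case of Conjecture \ref{conj-weighted-h-BM}.

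The main obstacle is that Conjecture \ref{conj-weighted-h-BM} is itself unresolved, and Remark \ref{rem-origin sym can not remove} shows its origin-symmetric hypothesis is essential --- yet Conjecture \ref{conj-weigh-Min-ineq} is stated without any symmetry on $K$ or $L$, so the reduction above is strictly conditional. To attack the unconditional statement I would split it into tractable sub-cases. When $K=L$ the inequality collapses, via \eqref{S-Omega} and \eqref{1/phi, coshr-u}, exactly to the Scheuer--Xia weighted isoperimetric inequality (Theorem F). When $K$ is a geodesic ball centered at the origin one can combine a pointwise H\"older estimate on $\mathbb{S}^n$ for the density $\g_L^p$ with Theorem F applied to $L$, in parallel with Theorem \ref{thm-strong Min ineq-K ball}. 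For general $K$ with $p=1$, I would design a locally constrained curvature flow in the spirit of \eqref{flow-HLW20} whose normal speed is engineered so that $\mathcal{S}(\Omega_t)+\sqrt{\mathcal{S}^2(\Omega_t)+1}$ is preserved while $\int_{\partial\Omega_t}\g_L\cosh r(\cosh r-\tilde u)\,d\mu_t$ is monotone non-increasing, and then prove that the flow converges smoothly to a geodesic ball centered at the origin. Constructing this weighted flow and establishing its long-time existence and convergence --- the weighted analogue of Theorem E via an argument modelled on Lemma \ref{lem-mono-quan-2-HLW20} --- is where the bulk of the analytic difficulty lies.
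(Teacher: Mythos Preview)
This statement is a \emph{conjecture} in the paper, not a theorem with a proof. The paper does not prove it; in the paragraph immediately preceding Conjecture~\ref{conj-weigh-Min-ineq} it simply records two conditional routes to it: (i) as a corollary of Conjecture~\ref{conj-weighted-horo-Mink-ineq} combined with Theorem~F, and (ii) by setting $a=1$, $b=t$ in Conjecture~\ref{conj-weighted-h-BM} and differentiating at $t=0$, ``doing a similar calculation as in Proposition~\ref{prop-rel-BM-Min ineq-strong}.'' Your first-variation argument is precisely route~(ii), and your computation of $\left.\frac{d}{dt}\right|_{t=0}(\mathcal{S}(\Omega_t)+\sqrt{\mathcal{S}^2(\Omega_t)+1})^p$ matches the analogous calculation carried out in Proposition~\ref{prop-weig Min from weig BM} for the type-I conjecture. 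So your conditional reduction agrees with the paper's own heuristic.

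You are also right to flag the hypothesis mismatch: Conjecture~\ref{conj-weighted-h-BM} requires origin symmetry (and Remark~\ref{rem-origin sym can not remove} shows this is essential), while Conjecture~\ref{conj-weigh-Min-ineq} is stated for general $K,L$. The paper leaves this tension unaddressed.

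Your proposed attacks on special cases go beyond what the paper attempts for Conjecture~\ref{conj-weigh-Min-ineq} itself. Your observation that $K=L$ collapses the inequality to Theorem~F is correct. The paper's only unconditional progress in this direction is on the closely related Conjecture~\ref{conj-last of weighted isoperi type} (the case where $L$ degenerates to the origin), which it settles for $p\geq 1$ in Theorem~\ref{thm-p=1 case in conj last of weighted iso type} --- not by a curvature flow, but by the vertical projection $\Pi:\mathbb{H}^{n+1}\to\mathbb{R}^{n+1}$ together with a convexity/Jensen argument on the auxiliary function $\chi(t)=(t^2+1)^{1/2}((t^2+1)^{1/2}-t)^p$. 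Your proposal to design a weighted locally constrained flow for the general $p=1$ case is plausible in spirit but is not pursued in the paper, and would be genuinely new work.
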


In the study of Conjecture \ref{conj-Min ineq-weak}, we proved a special case in Theorem \ref{thm-HLW- p geq 1}. Hence we can propose the following Conjecture \ref{conj-last of weighted isoperi type}, which is the corresponding weighted version of Theorem \ref{thm-HLW- p geq 1}.
\begin{con}\label{conj-last of weighted isoperi type}
	Let $\Omega$ be a smooth uniformly h-convex bounded domain in $\mathbb{H}^{n+1}$. If $0 \leq p<1$, then
	\begin{align}\label{Weighted horospherical Minkowski inequality-L point-p>0}
	\int_{\partial \Omega} \cosh r(\cosh r- \tilde{u})^p d\mu
	\geq \omega_n \mathcal{S}^n (\Omega)
	 \sqrt{\mathcal{S}^2(\Omega) +1}\( \mathcal{S}(\Omega) + \sqrt{\mathcal{S}^2(\Omega) +1}\)^{-p}.
	\end{align}
	Equality holds if and only if $\Omega$ is a geodesic ball centered at the origin.
\end{con}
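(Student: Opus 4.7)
The $p = 0$ case of Conjecture \ref{conj-last of weighted isoperi type} reduces directly to the Scheuer--Xia weighted isoperimetric inequality (Theorem F): unpacking the definition \eqref{S-Omega} of $\mathcal{S}(\Omega)$, one checks that
\begin{equation*}
\omega_n \mathcal{S}^n(\Omega)\sqrt{\mathcal{S}^2(\Omega)+1} \;=\; \sqrt{\bigl((n+1)\Vol_w(\Omega)\bigr)^2 + \omega_n^{2/(n+1)}\bigl((n+1)\Vol_w(\Omega)\bigr)^{2n/(n+1)}},
\end{equation*}
which is exactly the right-hand side in Theorem F.

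For $0 < p < 1$, my plan is to exploit the identity $\cosh r - \tilde{u} = e^{-u}$ (with $u$ the horospherical support function of $\Omega$) to recast the left-hand side as $\int_{\partial\Omega} \cosh r \cdot e^{-pu}\, d\mu$, and apply Jensen's inequality to the convex function $\phi(t) = e^{-pt}$ with respect to the probability measure $\cosh r\, d\mu \big/ \int_{\partial\Omega}\cosh r\, d\mu$ on $\partial\Omega$. This yields
\begin{equation*}
\int_{\partial\Omega}\cosh r(\cosh r-\tilde{u})^p\, d\mu \;\geq\; \Bigl(\int_{\partial\Omega}\cosh r\, d\mu\Bigr)\exp(-p\,\bar{u}_w),
\end{equation*}
where $\bar{u}_w := \int_{\partial\Omega} u\cosh r\, d\mu \big/ \int_{\partial\Omega}\cosh r\, d\mu$ is the weighted mean of $u$. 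Combined with Theorem F in the form $\int\cosh r\, d\mu \geq \omega_n \sinh^n R \cosh R$ (with $R$ defined by $\sinh R = \mathcal{S}(\Omega)$, equivalently $\Vol_w(\Omega) = \Vol_w(B(R))$), the conjecture reduces to the auxiliary inequality
\begin{equation}\label{plan-aux}
\int_{\partial\Omega} u\cosh r\, d\mu \;\leq\; R \int_{\partial\Omega}\cosh r\, d\mu,
\end{equation}
with equality exactly when $\Omega$ is a geodesic ball centered at the origin. Note that the Jensen step has equality precisely when $u$ is constant $\cosh r\, d\mu$-a.e., i.e.\ when $\Omega$ is a centered geodesic ball, which is compatible with the equality characterization of the conjecture.

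The main obstacle is establishing \eqref{plan-aux} in full generality. A direct computation for a non-centered geodesic ball $B(P,\rho)$ with $|P|_{\mathbb{H}^{n+1}} = d > 0$, using the explicit horospherical support function $u(z) = \rho + \log(\cosh d - \sinh d\,\langle e_p,z\rangle)$ from Lemma \ref{lem-horo supp of geodesic ball} together with the $z$-parameterized area element $d\mu = \sinh^n\rho\,(\cosh d - \sinh d\langle e_p,z\rangle)^{-n} d\sigma$, reveals that $d\mu$ concentrates near $z = e_p$, i.e.\ exactly where $u$ attains its minimum $\rho - d$; since $R$ grows only like $d/(n+1) + \log\sinh\rho$ for large $d$, one verifies that \eqref{plan-aux} holds strictly for all non-centered balls. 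For general smooth uniformly h-convex $\Omega$, I would construct a locally constrained curvature flow $\partial_t X = \mathscr{F}\nu$ in the spirit of Scheuer--Xia's flow in \cite{SX19} and the Hu--Li--Wei flow \eqref{flow-HLW20}, with speed $\mathscr{F}$ tuned so that $\tfrac{d}{dt}\Vol_w(\Omega_t) \equiv 0$ (keeping $R$ constant) while the excess $Q(\Omega_t) := \int_{\partial\Omega_t} (u - R)\cosh r\, d\mu_t$ is monotone non-increasing, with equality only on centered spheres. Long-time existence, preservation of uniform h-convexity, and subsequential smooth convergence to $B(R)$ centered at the origin would then give $Q(\Omega) \leq Q(B(R)) = 0$. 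The technical heart will be deriving the correct evolution equation for $Q$ via the shifted Minkowski formula \eqref{eq-shifted Minkowski formula} and the Newton--MacLaurin inequalities \eqref{Newton ineq}, in the manner of Lemma \ref{lem-mono-quan-2-HLW20}.
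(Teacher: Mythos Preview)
The paper does not prove Conjecture~\ref{conj-last of weighted isoperi type} for $0<p<1$; it is explicitly left open. Only the $p\geq 1$ range is handled (Theorem~\ref{thm-p=1 case in conj last of weighted iso type}), and that argument goes through the vertical projection $\Pi:\mathbb{H}^{n+1}\to\mathbb{R}^{n+1}$ of \cite{LX,SX19}: one shows $\int_{\partial\Omega}\cosh r(\cosh r-\tilde u)^p\,d\mu \geq \int_{\partial\overline\Omega}\chi(\overline u)\,d\overline\mu$ with $\chi(t)=(t^2+1)^{1/2}\bigl((t^2+1)^{1/2}-t\bigr)^p$, and then applies Jensen on the Euclidean side using the \emph{convexity} of $\chi$. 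That convexity computation (see \eqref{ineq-p=1 case-last step-in conj last of weighted iso type} and the display preceding it) yields $\chi''/\chi=(t^2+1)^{-2}\bigl(1+p(t^2+1)^{1/2}(p(t^2+1)^{1/2}-t)\bigr)$, which is positive for $p\geq 1$ but \emph{fails} for $0<p<1$ (take $t$ large). So your approach is necessarily different, and the $p=0$ identification with Theorem~F is correct.

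Your Jensen step on the hyperbolic side is valid and reduces the problem to the auxiliary inequality \eqref{plan-aux}, namely $\bar u_w\leq R$. The gap is that \eqref{plan-aux} is itself an open-looking statement of comparable strength to the conjecture, and your proposal does not prove it. The flow paragraph is only a wish list: you have not specified a speed $\mathscr{F}$, you have not computed $\tfrac{d}{dt}Q(\Omega_t)$, and there is no indication that a single $\mathscr{F}$ can simultaneously fix $\Vol_w$ and make $Q$ monotone while preserving uniform h-convexity. Note also that your chaining is lossy: you first replace $\int\cosh r\,d\mu$ by its Theorem~F lower bound and then separately demand $\bar u_w\leq R$, so even if \eqref{plan-aux} turned out to be false in general, the conjecture could still hold. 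Conversely, \eqref{plan-aux} goes in the \emph{opposite} direction to the known comparison $\tfrac{1}{\omega_n}\int_{\mathbb{S}^n}u\,d\sigma\geq r^k_\Omega$ (the $p\to 0$ limit of Theorem~\ref{thm-strong Min ineq-K ball}), so the sign is delicate and the non-centered-ball check, while suggestive, is far from a proof. As written this is a research outline, not a proof.
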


\begin{rem}
	It is natural to propose Conjecture \ref{conj-last of weighted isoperi type} for $p \geq 0$.
	However, if $p \geq 1$, then we will prove \eqref{Weighted horospherical Minkowski inequality-L point-p>0} in the follow-up Subsection \ref{subsec-Proof of conj weighted iso special case}.
\end{rem}

\subsection{Proof of \eqref{Weighted horospherical Minkowski inequality-L point-p>0} in the case $p \geq 1$} \label{subsec-Proof of conj weighted iso special case}$ \ $

The main result in Subsection \ref{subsec-Proof of conj weighted iso special case} is the following Theorem \ref{thm-p=1 case in conj last of weighted iso type}.
\begin{thm}\label{thm-p=1 case in conj last of weighted iso type}
	Let $\Omega$ be a smooth bounded domain in $\mathbb{H}^{n+1}$. If $p \geq 1$, then
	\begin{equation}\label{ineq-p=1 case in conj last of weighted iso type}
	\int_{\partial \Omega} \cosh r(\cosh r- \tilde{u})^p d\mu
	\geq \omega_n \mathcal{S}^n (\Omega)
	\sqrt{\mathcal{S}^2(\Omega) +1}\( \mathcal{S}(\Omega) + \sqrt{\mathcal{S}^2(\Omega) +1}\)^{-p}.
	\end{equation}
	Equality holds if and only if $\Omega$ is a geodesic ball centered at the origin.
\end{thm}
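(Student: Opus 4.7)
My plan is to split the proof into two stages: a reduction of the general $p\geq 1$ case to the key case $p=1$ via Jensen's inequality, and the $p=1$ estimate by Cauchy--Schwarz combined with the Scheuer--Xia weighted isoperimetric inequality (Theorem F, in the $C^1$ form from \cite{LX}). Write $a := \int_{\partial\Omega}\cosh r\,d\mu$ and $b_p := \int_{\partial\Omega}\cosh r\,(\cosh r - \tilde u)^p\,d\mu$. Since $|\tilde u|\leq|V|=\sinh r<\cosh r$, the factor $\cosh r-\tilde u$ is strictly positive on $\partial\Omega$, so all the quantities below are well defined and positive.

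For the reduction, I would apply Jensen's inequality to the convex function $t\mapsto t^p$ on $(0,\infty)$ with respect to the probability measure $d\nu := a^{-1}\cosh r\,d\mu$ on $\partial\Omega$:
\begin{equation*}
\frac{b_p}{a} \;=\; \int_{\partial\Omega}(\cosh r-\tilde u)^p\,d\nu
\;\geq\; \Bigl(\int_{\partial\Omega}(\cosh r-\tilde u)\,d\nu\Bigr)^p
\;=\; \Bigl(\frac{b_1}{a}\Bigr)^p,
\end{equation*}
so that $b_p\geq b_1^p\,a^{1-p}$. Once the sharp bound $b_1\geq a/(\mathcal{S}(\Omega)+\sqrt{\mathcal{S}^2(\Omega)+1})$ is established, Theorem F in the form $a\geq\omega_n\mathcal{S}^n(\Omega)\sqrt{\mathcal{S}^2(\Omega)+1}$ yields
\begin{equation*}
b_p \;\geq\; \frac{a}{\bigl(\mathcal{S}(\Omega)+\sqrt{\mathcal{S}^2(\Omega)+1}\bigr)^p}
\;\geq\; \frac{\omega_n\mathcal{S}^n(\Omega)\sqrt{\mathcal{S}^2(\Omega)+1}}{\bigl(\mathcal{S}(\Omega)+\sqrt{\mathcal{S}^2(\Omega)+1}\bigr)^p},
\end{equation*}
which is the asserted inequality in Theorem~\ref{thm-p=1 case in conj last of weighted iso type}.

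For the $p=1$ bound, I would use the Cauchy--Schwarz inequality
\begin{equation*}
a^2 \;=\; \Bigl(\int_{\partial\Omega}\bigl[\cosh r(\cosh r-\tilde u)\bigr]^{1/2}\bigl[\cosh r/(\cosh r-\tilde u)\bigr]^{1/2}\,d\mu\Bigr)^2
\;\leq\; b_1\cdot J,
\end{equation*}
where $J := \int_{\partial\Omega}\cosh r/(\cosh r-\tilde u)\,d\mu$. This reduces the $p=1$ case to the dual companion estimate
\begin{equation*}
J \;\leq\; a\bigl(\mathcal{S}(\Omega)+\sqrt{\mathcal{S}^2(\Omega)+1}\bigr),
\end{equation*}
which on a geodesic ball of radius $r_0$ centered at the origin collapses to the equality $J = a\,e^{r_0} = a\bigl(\mathcal{S}+\sqrt{\mathcal{S}^2+1}\bigr)$, consistent with the equality case of the theorem.

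The hard part will be proving this dual upper bound for a general smooth bounded $\Omega$. The pointwise bound $\cosh r-\tilde u\geq e^{-r}$ (saturated precisely when $\nu=\partial_r$) only gives $J\leq\int_{\partial\Omega}\cosh r\,e^r\,d\mu$, which is not directly controlled by $a(\mathcal{S}+\sqrt{\mathcal{S}^2+1})$. The natural remedy I would pursue is a curvature-flow/variational argument in the spirit of Scheuer--Xia: run a flow $X_t$ preserving $\Vol_w(\Omega_t)$ (and hence $\mathcal{S}(\Omega_t)$) that converges smoothly to the geodesic ball centered at the origin of the same weighted volume, and show that the functional $J - a\,(\mathcal{S}+\sqrt{\mathcal{S}^2+1})$ is monotone non-increasing along the flow with limiting value zero on the target ball. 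Equivalently, one may attack the constrained extremal problem for $J$ subject to $\Vol_w(\Omega)$ fixed and identify the origin-centered geodesic ball as the unique maximizer from the Euler--Lagrange equation. This flow/variational step is where the main technical work lies; once it is in hand, the Cauchy--Schwarz and Jensen reductions above assemble the full inequality, and tracking the equality cases at each step (equality in Jensen forces $\cosh r-\tilde u$ to be $d\nu$-a.e.\ constant, while equality in Cauchy--Schwarz and the dual bound simultaneously force $\Omega$ to be an origin-centered geodesic ball) gives the equality characterization.
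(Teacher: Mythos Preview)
Your reduction scheme is internally consistent up to the point you flag, but the proposal contains a genuine gap: the ``dual companion estimate'' $J\le a\bigl(\mathcal{S}(\Omega)+\sqrt{\mathcal{S}^2(\Omega)+1}\bigr)$ is precisely the heart of the matter, and you do not prove it. You acknowledge this yourself and only gesture at a volume-preserving flow or a constrained variational problem. That is not a proof---it is a restatement of the difficulty. Worse, the flow machinery you invoke (in the spirit of Scheuer--Xia or the flow \eqref{flow-HLW20}) typically requires at least star-shapedness or h-convexity to obtain long-time existence and convergence, whereas the theorem is stated for \emph{arbitrary} smooth bounded domains. So even if the dual bound is true in that generality (which you have not established), your proposed mechanism to reach it does not match the hypotheses. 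Note also that your route actually needs the \emph{stronger} intermediate inequality $b_1\ge a/(\mathcal{S}+\sqrt{\mathcal{S}^2+1})$, not merely the $p=1$ case of the theorem; the Jensen step $b_p\ge b_1^p a^{1-p}$ cannot be closed from the theorem's own $p=1$ bound together with Theorem~F, because the exponent $1-p\le 0$ makes the two estimates pull in opposite directions.

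The paper's proof follows a completely different and more elementary path that avoids any flow, any dual inequality on $J$, and any convexity hypothesis on $\Omega$. One uses the vertical projection $\Pi:\mathbb{H}^{n+1}\to\mathbb{R}^{n+1}$, $(x,x_{n+1})\mapsto x$, under which $\Vol_w(\Omega)=\Vol(\overline{\Omega})$ and $\cosh r\,d\mu$ pushes forward to $(\overline{u}^2+1)^{1/2}\,d\overline{\mu}$, while $\cosh r-\tilde u = x_{n+1}\bigl(1-\overline{u}/(\overline{u}^2+1)^{1/2}\bigr)$. Since $x_{n+1}\ge(\overline{u}^2+1)^{1/2}$, the left side of \eqref{ineq-p=1 case in conj last of weighted iso type} is bounded below by $\int_{\partial\overline{\Omega}}\chi(\overline{u})\,d\overline{\mu}$ with $\chi(t)=(t^2+1)^{1/2}\bigl((t^2+1)^{1/2}-t\bigr)^p$. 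One then checks by direct computation that for $p\ge 1$ the function $\chi$ is strictly convex and strictly decreasing on $\mathbb{R}$, so Jensen's inequality with respect to the (unweighted) area measure on $\partial\overline{\Omega}$, the identity $\int_{\partial\overline{\Omega}}\overline{u}\,d\overline{\mu}=(n+1)\Vol(\overline{\Omega})$, and the classical Euclidean isoperimetric inequality finish the proof. The equality analysis is immediate from the equality cases of Jensen and of the isoperimetric inequality. This is the idea you are missing: rather than splitting off Cauchy--Schwarz and chasing a dual bound, one packages the entire nonlinearity into a single convex function of the Euclidean support function and reduces everything to the isoperimetric inequality in $\mathbb{R}^{n+1}$.
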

The authors \cite{LX} have gave a new proof of Theorem F and have shown that $\Omega$ need not contain the origin, and the h-convex assumption of $\Omega$ can be removed. That means that the $p=0$ case of Conjecture \ref{conj-last of weighted isoperi type} has been completely solved in \cite{LX}.
 Fortunately, the method used in \cite{LX} can also be used to prove Theorem \ref{thm-p=1 case in conj last of weighted iso type}. 

In the first step, we define the vertical projection map $\Pi$ from $\mathbb{H}^{n+1} \subset \mathbb{R}^{n+1,1}$ to $\mathbb{R}^{n+1} := \mathbb{R}^{n+1} \times \{0 \} \subset \mathbb{R}^{n+1,1}$ by $\Pi(x,x_{n+1}) =x$. Denote by $\rho := |x|$ the radial function in $\mathbb{R}^{n+1}$. Here we remark that Brendle and Wang \cite{BW14} used the map $\Pi$ to prove the Penrose inequality for surface lying in a convex static timelike hypersurface in $\mathbb{R}^{n+1,1}$, see also \cite{MS14}. For a bounded domain $\Omega$ in $\mathbb{H}^{n+1}$, let $\overline{\Omega}$ denote $\Pi(\Omega)$ in this subsection. The support function and the outward unit normal of $\overline{\Omega}$ are denoted by $\overline{u}$ and $\overline{\nu}$, respectively. 

If we write $dv:= \sinh^n r dr d\sigma$ and $d \overline{v}:= \rho^n d\rho d\sigma$ for the volume elements of $\mathbb{H}^{n+1}$ and $\mathbb{R}^{n+1}$ respectively, then 
\begin{equation}\label{vertical proj-volume element}
\pi^* (d \overline{v}) = \pi^* \(\rho^n d\rho d\sigma\) = \sinh^n r \cosh r dr d\sigma = \cosh r dv.
\end{equation}
Therefore, $\Vol_w \(\Omega\)$ is exactly the classical volume of $\overline{\Omega}$ in $\mathbb{R}^{n+1}$. Consequently, the right-hand side of \eqref{ineq-p=1 case in conj last of weighted iso type} is a function of $\Vol \(\overline{\Omega}\)$. In the second step, we will rewrite the left-hand side of \eqref{ineq-p=1 case in conj last of weighted iso type} by using the geometric quantities of $\overline{\Omega}$. The outward unit normal at $X$ of $\partial \Omega$ is given by (see \cite{LX})
\begin{equation}\label{vertical proj-nu}
\nu \(X\) = \frac{\(\overline{\nu} + \overline{u} x, x_{n+1} \overline{u}\)}{ \( \overline{u}^2+1 \)^{\frac{1}{2}}},
\end{equation}
where $\overline{\nu}$ and $\overline{u}$ are evaluated at $x \in \partial \overline{\Omega}$.
For the conformal Killing vector field $V = \sinh r \partial_r$, we know
\begin{equation*}
V =\sinh r \partial_r \( \sinh r \theta, \cosh r \)= \( \cosh r x, \sinh^2 r \)= \(x_{n+1} x, |x|^2\).
\end{equation*}
This together with \eqref{vertical proj-nu} gives
\begin{equation*}
\widetilde{u}\(X\) : = \metric{V}{\nu} = \frac{x_{n+1} \overline{u}}{\( \overline{u}^2+1 \)^{\frac{1}{2}}}.
\end{equation*}
Therefore
\begin{equation}\label{vertical proj cosh r-u}
\(\cosh r-\widetilde{u} \) \(X\)
= x_{n+1} \( 1- \frac{\overline{u}}{\( \overline{u}^2+1 \)^{\frac{1}{2}}}  \).
\end{equation}
Letting $d \overline{\mu}$ denote the area element of $\partial \overline{\Omega}$, we have by \eqref{vertical proj-nu}
\begin{equation}\label{vertical proj area element}
\Pi_* \( \cosh r d\mu \) =\Pi_* \nu \lrcorner d \overline{v} = \metric{\Pi_* \nu}{\overline{\nu}} d\overline{\mu}= \(\overline{u}^2+1 \)^{\frac{1}{2}} d \overline{\mu}.
\end{equation}
Using \eqref{vertical proj cosh r-u} and \eqref{vertical proj area element}, we can write the left-hand side of \eqref{ineq-p=1 case in conj last of weighted iso type} as
\begin{equation*}
\int_{\partial \Omega} \cosh r\(\cosh r - \widetilde{ u}\)^p d\mu
= \int_{\partial \overline{\Omega}} x_{n+1}^p \(1- \frac{\overline{u}}{\( \overline{u}^2+1 \)^{\frac{1}{2}} }\)^p \( \overline{u}^2+1 \)^{\frac{1}{2}} d\overline{\mu}.
\end{equation*}
Since $x_{n+1} = \(|x|^2+1\)^{\frac{1}{2}} \geq \( \overline{u}^2+1 \)^{\frac{1}{2}}$, we have
\begin{equation}\label{LHS of weig iso geq integ of u}
\int_{\partial \Omega} \cosh r\(\cosh r - \widetilde{ u}\)^p d\mu
\geq \int_{\partial \overline{\Omega}} \( \overline{u}^2+1 \)^{\frac{1}{2}} \(\( \overline{u}^2+1 \)^{\frac{1}{2}} - \overline{u}  \)^p d \overline{\mu}, \quad \forall p \geq 0.
\end{equation}
Finally, we are ready to prove Theorem \ref{thm-p=1 case in conj last of weighted iso type}.
\begin{proof} [Proof of Theorem \ref{thm-p=1 case in conj last of weighted iso type}]
	At first, we define a positive function $\chi (t)$ on $\mathbb{R}$ by
	\begin{equation*}
	\chi(t) := \( t^2+1 \)^{\frac{1}{2}} \( \(t^2 +1\)^{\frac{1}{2}} -t \)^p.
	\end{equation*}
	Then \eqref{LHS of weig iso geq integ of u} can be written shortly as
	\begin{equation}\label{short-LHS of weig iso geq integ of u}
	\int_{\partial \Omega} \cosh r\(\cosh r - \widetilde{ u}\)^p d\mu
	\geq  \int_{\partial \overline{\Omega}} \chi (\overline{u}) d\overline{\mu}.
	\end{equation}
	Now we study the properties of $\chi (t)$.
	A direct calculation gives
	\begin{equation}\label{log chi '}
	\( \log \chi (t)\)' 
	= \frac{1}{2} \(\log\(t^2+1\)\)' +p \(\log \( \(t^2+1\)^{\frac{1}{2}}-t \)\)'
	= \frac{t - p \(t^2+1\)^{\frac{1}{2}}}{t^2+1}. 
	\end{equation}
	As we assumed $p \geq 1$, $\chi(t)$ is strictly decreasing on $\mathbb{R}$. For the second derivative of $\chi(t)$, we have by use of \eqref{log chi '}
	\begin{equation*}
	\begin{aligned}
	\frac{\chi''(t)}{\chi(t)} =& \(\log \chi(t)\)''+ \(\log \chi(t)\)'^2\\
	=&\( \frac{t - p \(t^2+1\)^{\frac{1}{2}}}{t^2+1} \)'+ \( \frac{t - p \(t^2+1\)^{\frac{1}{2}}}{t^2+1} \)^2\\
	=&\(t^2+1\)^{-2} \( \(t^2+1 \)\(t- p \(t^2+1\)^{\frac{1}{2}}\)' - 2t \(t- p \(t^2+1\)^{\frac{1}{2}} \)+ \(t- p \(t^2+1\)^{\frac{1}{2}}\)^2  \)\\
	=& \(t^2+1\)^{-2} \( \(t^2+1 \)\(1-pt \(t^2+1\)^{-\frac{1}{2}}\)- 2t^2+t^2+p^2 \(t^2+1\)  \)\\
	=& \(t^2+1\)^{-2} \(1-pt\(t^2+1\)^{\frac{1}{2}}+p^2 \(t^2+1\)  \)\\
	=&  \(t^2+1\)^{-2}\(1+ p\(t^2+1\)^{\frac{1}{2}} \(p \(t^2+1\)^{\frac{1}{2}}-t\)\)>0,
	\end{aligned}
	\end{equation*}
	where we used $p \geq 1$ in the last inequality. Since $\chi''(t)>0$ on $\mathbb{R}$,  Jensen's inequality yields that
	\begin{equation}\label{Jess-ineq}
	\frac{\int_{\partial \overline{\Omega}} \chi \(\overline{u}\)d \overline{\mu}   }{\int_{\partial \overline{\Omega}}d \overline{\mu}}
	\geq 
	\chi \( \frac{\int_{\partial \overline{\Omega}} \overline{u} d\overline{\mu} }{\int_{\partial \overline{\Omega}}d \overline{\mu}  }  \),
	\end{equation}
	with equality if and only if $\overline{u}$ is constant on $\partial \overline{\Omega}$.
	By the divergence theorem and the classical isoperimetric inequality, we have
	\begin{align}
	\int_{\partial \overline{\Omega}} \overline{u} d\overline{\mu} =& (n+1) \Vol \(\overline{\Omega} \)>0, \nonumber\\
		\frac{\int_{\partial \overline{\Omega}} \overline{u} d\overline{\mu} }{\int_{\partial \overline{\Omega}}d \overline{\mu}  }
	=& \frac{(n+1) \Vol \(\overline{\Omega} \)}{ {\rm Area}\( \partial \overline{\Omega} \)  }
	\leq  \(n+1\)^{\frac{1}{n+1}} \omega_n^{-\frac{1}{n+1}}  \Vol\(\overline{\Omega}\)^{\frac{1}{n+1}}. \label{iso-ineq}
	\end{align}
	Equality holds in \eqref{iso-ineq} if and only if $\overline{\Omega}$ is a geodesic ball in $\mathbb{R}^{n+1}$.
	Now we let $\overline{\chi} (t) : = t^{-1} \chi (t)$, which is strictly decreasing on $\mathbb{R}^+$. 
	Using \eqref{Jess-ineq} and \eqref{iso-ineq}, we arrive at
	\begin{equation}\label{vertical proj-Eucl ineq}
	\int_{\partial \overline{\Omega}} \chi \(\overline{u}\)d \overline{\mu}
	\geq
	\int_{\partial \overline{\Omega}} \overline{u} d\overline{\mu} \cdot 
	\overline{\chi} \(  \frac{\int_{\partial \overline{\Omega}} \overline{u} d\overline{\mu} }{\int_{\partial \overline{\Omega}}d \overline{\mu}  } \)
	\geq
	\(n+1\) \Vol\(\overline{\Omega}\) \overline{\chi} \(\(n+1\)^{\frac{1}{n+1}} \Vol\(\overline{\Omega}\)^{\frac{1}{n+1}}\).	
	\end{equation}
	Recall that \eqref{vertical proj-volume element} implies $\Vol \(\overline{\Omega} \) = \Vol_w \(\Omega\)$. Combining \eqref{vertical proj-Eucl ineq} with \eqref{short-LHS of weig iso geq integ of u} and \eqref{S-Omega}, we have
	\begin{align}
	\int_{\partial \Omega} \cosh r\(\cosh r - \widetilde{ u}\)^p d\mu
	\geq& 
	\int_{\partial \overline{\Omega}} \chi (\overline{u}) d\overline{\mu} \nonumber\\
	\geq&
	\(n+1\) \Vol\(\overline{\Omega}\) \overline{\chi} \(\(n+1\)^{\frac{1}{n+1}}\omega_n^{-\frac{1}{n+1}} \Vol\(\overline{\Omega}\)^{\frac{1}{n+1}}\) \nonumber\\
	=&	\(n+1\) \Vol_w\(\Omega\) \overline{\chi} \(\(n+1\)^{\frac{1}{n+1}}\omega_n^{-\frac{1}{n+1}} \Vol_w\(\Omega\)^{\frac{1}{n+1}}\) \nonumber\\
	=& \omega_n \mathcal{S}^{n+1} \(\Omega\) \overline{\chi} \( \mathcal{S} \(\Omega\)\) \nonumber\\
	=& \omega_n \mathcal{S}^n (\Omega)
	\sqrt{\mathcal{S}^2(\Omega) +1}\( \mathcal{S}(\Omega) + \sqrt{\mathcal{S}^2(\Omega) +1}\)^{-p}. \label{ineq-p=1 case-last step-in conj last of weighted iso type}
	\end{align}
	Hence we obtain \eqref{ineq-p=1 case in conj last of weighted iso type}.
	
	By \eqref{Jess-ineq} and \eqref{iso-ineq}, if equality holds in \eqref{ineq-p=1 case-last step-in conj last of weighted iso type}, then $\overline{\Omega}$ is a geodesic ball, and $\overline{u}$ is constant on $\overline{\Omega}$. Consequently, equality holds in \eqref{ineq-p=1 case in conj last of weighted iso type} if and only if $\Omega$ is a geodesic ball centered at the origin. This completes the proof of Theorem \ref{thm-p=1 case in conj last of weighted iso type}.
\end{proof}

\begin{rem}
	Letting $\chi (t): = \(t^2+1\)^{\frac{1}{2}}$, we can check directly that $t^{-1}\chi(t)$ is strictly deceasing on $\mathbb{R}^+$, and $\chi(t)$ is strictly convex on $\mathbb{R}$.
	Therefore the above proof can be used to prove Theorem F without major changes, just as we have done in \cite{LX}.
\end{rem}

\section{Relationship between the hyperbolic $p$-sum in hyperbolic space and the Minkowski-Firey $L_p$-addition in Euclidean space} \label{sec-hyperbolic sum and Firey's sum}
The arguments in Section \ref{sec-hyperbolic sum and Firey's sum}  are motivated by formula \eqref{phi(coshr nu-uX)}. For a convex body $\widehat{K}$ with smooth support function $\g(z)$ in Euclidean space $\mathbb{R}^{n+1}$, the position vectors of $\partial K$ are exactly given by $D\g(z)+ \g(z)z$, $z \in \mathbb{S}^n$, see e.g. \cite[p. 47]{Sch14}.  Here, we identify the $(n+1)$-dimensional Euclidean space $\mathbb{R}^{n+1}$ with the hyperplane $\mathbb{R}^{n+1} \times \{0\}$ in $\mathbb{R}^{n+1,1}$. We will show that formula \eqref{phi(coshr nu-uX)} actually gives a map $\pi$ from h-convex bounded domains in $\mathbb{H}^{n+1}$ to convex bounded domains in $\mathbb{R}^{n+1}$. 
Subsections \ref{subsec-A light cone projection}--\ref{subsec-proof of prop lc-map-supp} are devoted to figure out the detailed properties of the map $\pi$.
Among those properties, Proposition \ref{prop-lc proj map-supp func} plays a key role.
Here we remark that map $\pi$ closely relates to the Penrose inequality in general relativity, see \cite{MS14}.
 In the next step, we study the counterpart of $p$-Brunn-Minkowski inequality, $p$-Minkowski inequality and $p$-isoperimetric inequality in hyperbolic space. These works will be done in Subsections \ref{subsec-weighted p-BM type ineq}--\ref{subse-pf of weighted p-BM type ineq}.

In order to compare the geometry of domains in $\mathbb{H}^{n+1}$ and $\mathbb{R}^{n+1}$, we make some conventions on the notations. Different from the horospherical Gauss map $G$, we use $\widehat{G}_{\widehat{\Omega}}$ to denote the Euclidean Gauss map of a convex body $\widehat{\Omega}$ in $\mathbb{R}^{n+1}$.  We denote the support function of $\widehat{\Omega}$ as $\widehat{u}_{ \widehat{\Omega} } (z)$, i.e.
\begin{equation*}
\widehat{u}_{\widehat{\Omega}} (z) = \max_{x \in \widehat{\Omega}} \metric{x}{z}, \quad z \in \mathbb{S}^n.
\end{equation*}
Now we recall the definition of Minkowski-Firey $L_p$-addition \cite{F62} (Firey's $p$-sum for short)  for convex bodies in $\mathbb{R}^{n+1}$.
 Through Section \ref{sec-hyperbolic sum and Firey's sum}, we  use $+_p^{\mathcal{R}}$ to denote the Firey's $p$-sums for convex bodies in $\mathbb{R}^{n+1}$ and use $+_p^{\mathcal{H}}$ to denote the hyperbolic $p$-sums for h-convex bounded domains in $\mathbb{H}^{n+1}$.
 
 Let $p >0$ be a positive number. Assume that $\widehat{K}$ and $\widehat{L}$ are convex bodies in $\mathbb{R}^{n+1}$ that contain the origin in their interiors. Let $a \geq 0$ and $b \geq 0$ be two real numbers.
Let $\widehat{\Omega} := a \cdot \widehat{K} +^{\mathcal{R}}_p b \cdot \widehat{L}$ denote the $p$-linear combination of $\widehat{K}$ and $\widehat{L}$.
 If $p \geq 1$, then $\widehat{\Omega}$ is defined by
\begin{equation*}
	\widehat{u}_{\widehat{\Omega}}^p (z) = a \widehat{u}_{\widehat{K}}^p (z) + b \widehat{u}_{\widehat{L}}^p(z).
\end{equation*}
If $p \in (0,1)$, then the definition of $\widehat{\Omega}$ was proposed by B\"{o}r\"{o}czky, Lutwak, Yang, and Zhang \cite{BLYZ12}, i.e.
\begin{equation*}
\widehat{\Omega}:=
\bigcap_{z \in \mathbb{S}^n} \left\{ x \in \mathbb{R}^{n+1} \left| \metric{x}{z} \leq  a \widehat{u}_{\widehat{K}}^p (z) + b \widehat{u}_{\widehat{L}}^p (z) \right. \right\}.
\end{equation*}
The above definitions can also be found in \cite[Chapter 9]{Sch14}.
In the case $p \in (0,1)$, we should remark the following fact. If $\widehat{u}(z):= \(a \widehat{u}_{\widehat{K}}^p (z) + b \widehat{u}_{\widehat{L}}^p (z)\)^{\frac{1}{p}}$ satisfies $D^2 \widehat{u}(z) +\widehat{u} (z) I >0$ on $\mathbb{S}^{n}$, then $\widehat{u}_{\widehat{\Omega}}(z) = \widehat{u}(z)$.

For a positive number $p \geq 1$, the \emph{$p$-mixed volume} $\Vol_p \(\widehat{K}, \widehat{L} \)$ of convex bodies $\widehat{K}$ and $\widehat{L}$ is defined by
\begin{equation}  \label{p-mixed volume}
\Vol_p \( \widehat{K}, \widehat{L} \) := \frac{1}{n+1} \int_{\partial \widehat{K}} \widehat{u}_{\widehat{L}}^p (z)\widehat{u}^{1-p}_{\widehat{K}} (z) d \widehat{\mu}, 
\end{equation}
where $z \in \mathbb{S}^n$ denoted the outward unit normal of $\partial \widehat{K} \subset \mathbb{R}^{n+1}$.
To avoid confusion, we may write $d\widehat{\mu}_K$ for $d \widehat{\mu}$ in the following text.

The following $L_p$ Brunn-Minkowski inequality was proved by Firey \cite{F62}, see also an independent proof by Lutwak \cite{Lut93}.
\begin{thmG}[$L_p$ Brunn-Minkowski inequality]
	Let $p>1$ be a real number. Assume that $\widehat{K}$ and $\widehat{L}$ are convex bodies in $\mathbb{R}^{n+1}$ that contain the origin in their interiors.  Then
	\begin{equation}\label{classical p-BM ineq}
	\Vol \( \widehat{K} +_p^{\mathcal{R}} \widehat{L} \)^{\frac{p}{n+1}} \geq \Vol\(\widehat{K} \)^{\frac{p}{n+1}} + \Vol\(\widehat{L} \)^{\frac{p}{n+1}},
	\end{equation}
	with equality if and only if $\widehat{K}$ and $\widehat{L}$ are dilates.
\end{thmG}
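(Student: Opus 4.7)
\medskip

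The plan is to deduce this $L_p$ Brunn--Minkowski inequality (Theorem G) from an $L_p$ Minkowski inequality together with the bilinearity property of the $p$-mixed volume in its second argument. This is the classical Firey--Lutwak route, which I would lay out in three steps.

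First, I would establish an $L_p$ Minkowski inequality: for $p>1$ and convex bodies $\widehat{K},\widehat{L}\subset\mathbb{R}^{n+1}$ containing the origin in their interiors,
\begin{equation*}
\Vol_p\(\widehat{K},\widehat{L}\)\geq \Vol\(\widehat{K}\)^{\frac{n+1-p}{n+1}}\Vol\(\widehat{L}\)^{\frac{p}{n+1}},
\end{equation*}
with equality if and only if $\widehat{K}$ and $\widehat{L}$ are dilates. Starting from the integral representation \eqref{p-mixed volume}, rewrite
\begin{equation*}
\Vol_p\(\widehat{K},\widehat{L}\)=\frac{1}{n+1}\int_{\partial\widehat{K}}\(\frac{\widehat{u}_{\widehat{L}}}{\widehat{u}_{\widehat{K}}}\)^{p}\widehat{u}_{\widehat{K}}\,d\widehat{\mu}_K,
\end{equation*}
and apply Jensen's inequality (with the convex function $t\mapsto t^{p}$ for $p>1$) to the probability measure $(n+1)\Vol(\widehat{K})^{-1}\widehat{u}_{\widehat{K}}\,d\widehat{\mu}_K$ on $\partial\widehat{K}$, combined with the classical Minkowski inequality $\Vol_1\(\widehat{K},\widehat{L}\)\geq \Vol(\widehat{K})^{n/(n+1)}\Vol(\widehat{L})^{1/(n+1)}$ (equality iff $\widehat{K}$ and $\widehat{L}$ are homothetic). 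Tracing the equality conditions and using that both bodies contain the origin in their interiors will force dilation rather than just homothety.

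Second, I would record the linearity of $\Vol_p(\widehat{Q},\cdot)$ with respect to $+_p^{\mathcal R}$: since $\widehat{u}_{a\cdot\widehat{K}+_p^{\mathcal R} b\cdot\widehat{L}}^{\,p}=a\widehat{u}_{\widehat{K}}^{\,p}+b\widehat{u}_{\widehat{L}}^{\,p}$ (using that the Wulff-shape construction is trivial when $D^2\widehat{u}+\widehat{u}\,I\geq 0$, which is inherited from $\widehat{K}$ and $\widehat{L}$), the definition \eqref{p-mixed volume} gives
\begin{equation*}
\Vol_p\(\widehat{Q},\,a\cdot\widehat{K}+_p^{\mathcal R} b\cdot\widehat{L}\)=a\Vol_p\(\widehat{Q},\widehat{K}\)+b\Vol_p\(\widehat{Q},\widehat{L}\).
\end{equation*}
In particular, for any $\widehat{Q}$ with smooth support function, $\Vol_p(\widehat{Q},\widehat{Q})=\Vol(\widehat{Q})$.

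Third, apply the previous two steps with $\widehat{Q}:=\widehat{K}+_p^{\mathcal R}\widehat{L}$:
\begin{equation*}
\Vol\(\widehat{Q}\)=\Vol_p\(\widehat{Q},\widehat{K}\)+\Vol_p\(\widehat{Q},\widehat{L}\)\geq \Vol\(\widehat{Q}\)^{\frac{n+1-p}{n+1}}\left(\Vol\(\widehat{K}\)^{\frac{p}{n+1}}+\Vol\(\widehat{L}\)^{\frac{p}{n+1}}\right),
\end{equation*}
and divide by $\Vol(\widehat{Q})^{(n+1-p)/(n+1)}$ to obtain \eqref{classical p-BM ineq}. For the equality case, equality in both applications of the $L_p$ Minkowski inequality forces $\widehat{Q}$ to be a dilate of both $\widehat{K}$ and of $\widehat{L}$, hence $\widehat{K}$ and $\widehat{L}$ themselves are dilates; the converse is a direct computation from the definition of $+_p^{\mathcal R}$.

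The main obstacle I anticipate is the clean proof of the $L_p$ Minkowski inequality in Step~1 together with its equality case. Jensen gives the $L_p$-inequality from the $L_1$-case immediately, but upgrading the equality characterization from ``homothetic'' (classical Minkowski) to ``dilates'' requires exploiting that both $\widehat{K}$ and $\widehat{L}$ contain the origin in their interiors, so that the homothety center must coincide with the origin; this is the only place where the origin-containing hypothesis is genuinely used.
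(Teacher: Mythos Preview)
The paper does not prove Theorem G at all: it is quoted as a known result of Firey \cite{F62} (with an independent proof by Lutwak \cite{Lut93}) and is used as a black box, together with \eqref{formula-rel Eucl sum and Hyper sum} and \eqref{lc-hyper version of volume}, in the proof of Theorem \ref{thm-weighted p-BM ineq-lc proj}. Similarly, Theorem H (the $L_p$ Minkowski inequality) is only cited from \cite{Lut93}. So there is nothing in the paper to compare your argument against.

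Your three-step derivation is precisely Lutwak's route and is correct. One small comment on your equality discussion: in Step~1 the Jensen inequality already forces $\widehat{u}_{\widehat L}/\widehat{u}_{\widehat K}$ to be constant on $\mathbb{S}^n$, which \emph{is} the dilate condition; you do not need to pass through ``homothetic'' from the classical Minkowski inequality and then upgrade. The origin-containing hypothesis is used only to make the support functions positive (so that $\widehat{u}^p$, the quotient $\widehat{u}_{\widehat L}/\widehat{u}_{\widehat K}$, and $+_p^{\mathcal R}$ are well defined), not in the equality analysis.
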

The following $p$-Minkowski inequality was proved by Lutwak \cite{Lut93}.
\begin{thmH}[$p$-Minkowski inequality]
		Let $p>1$ be a real number. Assume that $\widehat{K}$ and $\widehat{L}$ are convex bodies in $\mathbb{R}^{n+1}$ that contain the origin in their interiors.  Then
		\begin{equation}\label{classcial p-Min ineq}
		\Vol_p \( \widehat{K}, \widehat{L} \)^{n+1}\geq \Vol\( \widehat{K}\)^{n+1-p}\Vol\(\widehat{L}\)^{p},
		\end{equation}
		with equality if and only if $\widehat{K}$ and $\widehat{L}$ are dilates.
\end{thmH}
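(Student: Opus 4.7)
\medskip

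\noindent\textbf{Proof proposal for Theorem H.} The plan is to deduce the $p$-Minkowski inequality from the $p$-Brunn-Minkowski inequality (Theorem G) by a first-variation argument, in the same spirit as the derivation of the classical Minkowski inequality from the classical Brunn-Minkowski inequality. Throughout, I will work with the support functions $\widehat{u}_{\widehat{K}}(z)$ and $\widehat{u}_{\widehat{L}}(z)$ of the two convex bodies, which are strictly positive on $\mathbb{S}^n$ because $\widehat{K}$ and $\widehat{L}$ contain the origin in their interiors.

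\medskip

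\noindent First, for $t\ge 0$, define the family
\[
\widehat{\Omega}_t:=\widehat{K}+_p^{\mathcal{R}}t\cdot\widehat{L},
\]
whose support function is $\widehat{u}_t(z)=\bigl(\widehat{u}_{\widehat{K}}^p(z)+t\,\widehat{u}_{\widehat{L}}^p(z)\bigr)^{1/p}$. Differentiating in $t$ at $t=0^+$ gives
\[
\left.\frac{\partial}{\partial t}\right|_{t=0^+}\widehat{u}_t(z)=\frac{1}{p}\,\widehat{u}_{\widehat{K}}^{1-p}(z)\,\widehat{u}_{\widehat{L}}^p(z).
\]
Next, I would use the Euclidean analogue of \eqref{scalr-flow DT's trick}, namely that along any smooth normal variation of a convex body the rate of change of the support function equals the normal speed $\mathscr{F}$, combined with the standard first-variation formula $\tfrac{d}{dt}\Vol(\widehat{\Omega}_t)=\int_{\partial\widehat{\Omega}_t}\mathscr{F}\,d\widehat{\mu}_t$. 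Using the Euclidean Gauss map to parametrize $\partial\widehat{K}$ by $\mathbb{S}^n$ and recalling the definition \eqref{p-mixed volume} of the $p$-mixed volume, this yields
\[
\left.\frac{d}{dt}\right|_{t=0^+}\Vol(\widehat{\Omega}_t)=\int_{\partial\widehat{K}}\frac{1}{p}\,\widehat{u}_{\widehat{K}}^{1-p}(z)\,\widehat{u}_{\widehat{L}}^p(z)\,d\widehat{\mu}_K=\frac{n+1}{p}\,\Vol_p(\widehat{K},\widehat{L}).
\]

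\medskip

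\noindent Second, I would apply Theorem G (with appropriate rescaling of the body $\widehat{L}$ by $t^{1/p}$) to obtain
\[
\Vol(\widehat{\Omega}_t)^{p/(n+1)}\ge\Vol(\widehat{K})^{p/(n+1)}+t\,\Vol(\widehat{L})^{p/(n+1)}.
\]
Equality holds at $t=0$, so differentiating both sides at $t=0^+$ and inserting the formula for the variation of volume gives
\[
\frac{p}{n+1}\,\Vol(\widehat{K})^{(p-n-1)/(n+1)}\cdot\frac{n+1}{p}\,\Vol_p(\widehat{K},\widehat{L})\ge\Vol(\widehat{L})^{p/(n+1)},
\]
which, after rearranging and raising to the $(n+1)$-st power, is precisely \eqref{classcial p-Min ineq}.

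\medskip

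\noindent The main obstacle I expect is the equality case, since first-variation arguments typically transfer only the easy direction of the equality characterization. To handle this, I would argue as follows: if equality holds in \eqref{classcial p-Min ineq}, then the differentiated inequality above is an equality, which forces
\[
\frac{d}{dt}\!\left.\Bigl(\Vol(\widehat{\Omega}_t)^{p/(n+1)}-\Vol(\widehat{K})^{p/(n+1)}-t\,\Vol(\widehat{L})^{p/(n+1)}\Bigr)\right|_{t=0^+}=0.
\]
Combined with the pointwise inequality from Theorem G valid for all $t\ge 0$ and its concavity/convexity behavior in $t$, one then applies Theorem G to the pair $(\widehat{K},t\cdot\widehat{L})$ for small $t>0$ and reverses the inequality: equality in \eqref{classcial p-Min ineq} propagates to equality in the $p$-Brunn-Minkowski inequality, and the equality clause of Theorem G concludes that $\widehat{K}$ and $\widehat{L}$ are dilates. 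The converse — that dilates give equality — is a direct computation from \eqref{p-mixed volume} using homogeneity of the support function and of volume.
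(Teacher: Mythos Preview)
The paper does not give its own proof of Theorem~H; it is quoted from Lutwak \cite{Lut93} as a known result in Euclidean convex geometry. Your derivation of the inequality \eqref{classcial p-Min ineq} from Theorem~G by first variation is correct and is one of the two standard routes relating the $p$-Minkowski and $p$-Brunn--Minkowski inequalities (Lutwak's original argument in \cite{Lut93} in fact goes the other way, obtaining Theorem~H from the classical Minkowski inequality via H\"older and then deducing Theorem~G from it).

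Your treatment of the equality case is the one place that is genuinely sketchy. The assertion that ``equality in \eqref{classcial p-Min ineq} propagates to equality in the $p$-Brunn--Minkowski inequality'' does not follow from $f'(0)=0$ and $f\ge 0$ alone; you need the concavity you allude to, and it should be stated precisely. The key observation is that
\[
(1-\lambda)\cdot\widehat{\Omega}_{t_1}+_p^{\mathcal{R}}\lambda\cdot\widehat{\Omega}_{t_2}=\widehat{\Omega}_{(1-\lambda)t_1+\lambda t_2},
\]
as one checks directly on support functions, so Theorem~G (in its homogeneous form) implies that $g(t):=\Vol(\widehat{\Omega}_t)^{p/(n+1)}$ is \emph{concave} on $[0,\infty)$. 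Then $f(t):=g(t)-g(0)-t\,\Vol(\widehat{L})^{p/(n+1)}$ is concave, nonnegative, and satisfies $f(0)=0$; if in addition $f'(0^+)=0$ (which is exactly equality in \eqref{classcial p-Min ineq}), concavity forces $f\equiv 0$ on $[0,\infty)$. Hence equality holds in Theorem~G for every $t>0$, and its equality clause yields that $\widehat{K}$ and $t\cdot\widehat{L}$, hence $\widehat{K}$ and $\widehat{L}$, are dilates. With this step made explicit, your argument is complete.
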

In Theorem H, by taking $\widehat{L}$ as the  geodesic balls centered at the origin in $\mathbb{R}^{n+1}$, one will obtain an isoperimetric type inequality for each $p>1$, which is known as the $p$-isoperimetric inequality. 
\begin{thmI}[$p$-isoperimetric inequality]
	Let $p>1$ be a real number. Assume that $\widehat{\Omega}$ is a convex body in $\mathbb{R}^{n+1}$ that contains the origin in its interior. Then
	\begin{equation}\label{p-isoperimetric inequality}
	\int_{\partial \widehat{\Omega}} \widehat{u}_{\widehat{\Omega}}^{1-p}(z) d \widehat{\mu}
	\geq (n+1)^{\frac{n+1-p}{n+1}} \omega_n^{\frac{p}{n+1}} \Vol \( \widehat{\Omega}\)^{\frac{n+1-p}{n+1}},
	\end{equation}
	with equality if and only if $\widehat{\Omega}$ is a geodesic ball centered at the origin.
\end{thmI}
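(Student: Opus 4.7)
The plan is to derive this as a direct specialization of Theorem H (the $p$-Minkowski inequality) by taking $\widehat{L}$ to be a geodesic ball centered at the origin, exactly as the excerpt's remark preceding the statement suggests. The key observation is that when $\widehat{L} = B(r) \subset \mathbb{R}^{n+1}$ is the ball of radius $r$ centered at the origin, its support function is constant, $\widehat{u}_{B(r)}(z) \equiv r$, so the $p$-mixed volume formula \eqref{p-mixed volume} collapses to a simple multiple of the quantity appearing on the left-hand side of the desired inequality.

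Concretely, I would first substitute $\widehat{K} = \widehat{\Omega}$ and $\widehat{L} = B(r)$ into \eqref{p-mixed volume} to obtain
\begin{equation*}
\Vol_p(\widehat{\Omega}, B(r)) \;=\; \frac{r^p}{n+1}\int_{\partial\widehat{\Omega}} \widehat{u}_{\widehat{\Omega}}^{\,1-p}(z)\, d\widehat{\mu}.
\end{equation*}
Combined with $\Vol(B(r)) = \frac{\omega_n}{n+1} r^{n+1}$, Theorem H then reads
\begin{equation*}
\left(\frac{r^p}{n+1}\int_{\partial\widehat{\Omega}} \widehat{u}_{\widehat{\Omega}}^{\,1-p}(z)\, d\widehat{\mu}\right)^{n+1}
\;\geq\; \Vol(\widehat{\Omega})^{\,n+1-p}\left(\frac{\omega_n}{n+1}r^{n+1}\right)^{\!p}.
\end{equation*}
The factor $r^{p(n+1)}$ appears with the same exponent on both sides and cancels, which is exactly why the choice of radius is immaterial. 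After multiplying by $(n+1)^{n+1}$ and extracting the $(n+1)$-th root, the constants reorganize into $(n+1)^{(n+1-p)/(n+1)}\omega_n^{p/(n+1)}$, yielding \eqref{p-isoperimetric inequality}.

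For the equality case, Theorem H characterizes equality by $\widehat{K}$ and $\widehat{L}$ being dilates, and since $\widehat{L}$ is a ball centered at the origin, this forces $\widehat{\Omega}$ itself to be a ball centered at the origin; conversely, a direct check shows equality holds in that case. There is no real obstacle here: the entire argument is a clean specialization plus dimensional bookkeeping, so the only thing that requires care is verifying that all powers of $r$ cancel and that the constants recombine into the stated form $(n+1)^{(n+1-p)/(n+1)}\omega_n^{p/(n+1)}$.
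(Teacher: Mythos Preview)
Your proposal is correct and follows exactly the approach the paper indicates: the paper does not give a separate proof of Theorem I but states just before it that one obtains the inequality by taking $\widehat{L}$ to be a geodesic ball centered at the origin in Theorem H, which is precisely the specialization and constant-tracking you carry out.
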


\begin{rem} $ \ $
	\begin{enumerate}
		\item If $p=1$, then inequalities \eqref{classical p-BM ineq} and \eqref{classcial p-Min ineq} also hold, which are the classical Brunn-Minkowski inequality and the Minkowski inequality. Each of the inequalities holds equality if and only if $\widehat{K}$ and $\widehat{L}$ are homothetic.
		\item In the case $0 \leq p <1$, the studies of $L_p$ Brunn-Minkowski inequality and $p$-Minkowski inequality become more difficult. B\"{o}r\"{o}czky et al. \cite{BLYZ12} considered the $p=0$ case for origin symmetric convex bodies in $\mathbb{R}^2$. For origin symmetric convex bodies in higher dimensional Euclidean space $\mathbb{R}^{n+1}$ ($n\geq 2$), the inequalities were proved for $p$ close to $1^-$, see \cite{CHLL20, KM20}.
	\end{enumerate}
\end{rem}

\subsection{A projection map from $\mathbb{H}^{n+1}$ to $\mathbb{R}^{n+1}$}\label{subsec-A light cone projection}$ \ $

Denote by $\widehat{B}(x, r)$ the geodesic ball of radius $r$ centered at $x$ in $\mathbb{R}^{n+1}$.
\begin{defn}\label{def-light conr proj-pi}
 Define a map $\pi$ by
	\begin{equation*}
	\pi \(\Omega\) :  = \bigcup_{X = (x, x_{n+1}) \in \Omega} \widehat{B}\(-x, x_{n+1} \),
	\end{equation*}
	where $\Omega$ is any set in $\mathbb{H}^{n+1} \subset \mathbb{R}^{n+1,1}$.
\end{defn}
Geometrically, $-\pi (X) =  \widehat{B}\(x, x_{n+1} \)$ is the domain enclosed by the intersection of the past light cone of $X= (x, x_{n+1})$ with $\mathbb{R}^{n+1} \times \{0\}$ in $\mathbb{R}^{n+1,1}$. Here, we provide a concrete example of Definition \ref{def-light conr proj-pi} in the following Figure \ref{Fig-light cone projection}.
\begin{figure}[htbp]
	\centering
	\begin{tikzpicture}[scale=1.5]
	\begin{axis}
	[
	axis lines= middle,
	axis on top,
	hide axis,
	axis equal image
	]
	\addplot3[surf, opacity=0.3, shader=interp,  
	trig format plots = rad,
	domain = 0:2*pi, domain y=0:4]
	({2+y*cos(x)}, {-2+y*sin(x)}, {0});
	\addplot3[surf, opacity=0.6, shader=interp, 
	trig format plots = rad, colormap/viridis,
	domain = 0:2*pi, domain y=0:4]
	({2+y*cos(x)}, {-2+y*sin(x)}, {4-y});
	\addplot3[domain = -2:6, ->]({x},{0},{0});
	\addplot3[domain = 2:-6, ->]({0},{x},{0});
	\addplot3[domain = -1:6, ->]({0},{0},{x});
	\addplot3[surf, domain =-3:4, domain y = -3:4, opacity=1,	colormap/viridis, shader=interp]({x}, {y}, {sqrt(x*x+y*y+1)});
	\addplot3[surf, opacity=1, shader=interp,  colormap/cool,
	trig format plots = rad,
	domain = 0:2*pi, domain y=0:1]
	({1+sqrt(2)*y*cos(x)}, {-1+y*sin(x)}, {sqrt(3+y*y+y*y*cos(x)*cos(x) +2*sqrt(2)*y*cos(x)-2*y*sin(x) )});
	\addplot3[domain = 1.55:6, ->]({0},{0},{x});
	\end{axis}
	\end{tikzpicture}
	\caption{In this figure, the green surface denotes the hyperboloid model of $\mathbb{H}^2$ in $\mathbb{R}^{2,1}$. Let $X= (\frac{\sqrt{2}}{2}, \frac{\sqrt{2}}{2}, \sqrt{2})$ be a point on $\mathbb{H}^2$. The blue domain $\Omega$ denotes the geodesic ball of radius $\frac{3}{2} \log 2$ centered at $X$ on $\mathbb{H}^2$, i.e. $\Omega = B(X, \frac{3}{2} \log r)$.  Then the purple domain lying on $\{ x_2=0 \}$ denotes $-\pi(\Omega)$. Later, Proposition \ref{prop-lc proj maps ball to ball} will imply that $-\pi(\Omega) =\widehat{B}( (2,2), 4) \subset \mathbb{R}^2$.}
	\label{Fig-light cone projection}
\end{figure}
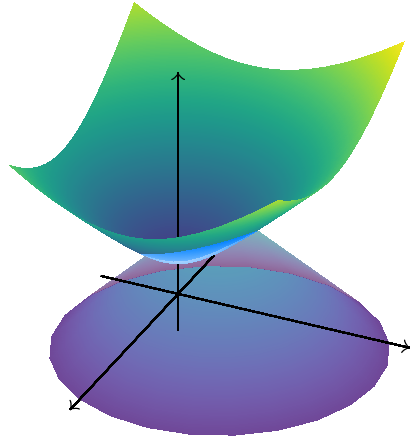

We will show that $\pi$ maps h-convex domains in $\mathbb{H}^{n+1}$ to convex domains in $\mathbb{R}^{n+1}$ in the following proposition. 
\begin{prop}\label{prop-lc proj map-supp func}
	Assume that $\Omega$ is a smooth uniformly h-convex bounded domain in $\mathbb{H}^{n+1}$ with horospherical support function $u_{\Omega}(z)$. Then $\widehat{\Omega} :=\pi \(\Omega\)$ is a smooth uniformly convex bounded domain in $\mathbb{R}^{n+1}$ with support function
	\begin{equation}\label{lc proj map-supp func}
	\widehat{u}_{\widehat{\Omega}} \(z\) = \g_{\Omega}(z) = e^{u_{\Omega}(z)}>0.
	\end{equation}
	Consequently, $\widehat{\Omega}$ contains the origin in its interior.
\end{prop}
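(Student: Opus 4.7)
My plan is to prove the support function identity \eqref{lc proj map-supp func} directly from Definition \ref{def-light conr proj-pi}, and then read off smoothness, uniform convexity, boundedness, and origin-containment of $\widehat{\Omega}$ from the corresponding properties of $\varphi_\Omega$.

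For the upper bound $\widehat{u}_{\widehat{\Omega}}(z) \leq \varphi_\Omega(z)$, I would take any $y \in \widehat{\Omega} = \pi(\Omega)$, pick $X = (x,x_{n+1}) \in \Omega$ with $y \in \widehat{B}(-x,x_{n+1})$, and compute
\begin{equation*}
\langle y, z\rangle = \langle y+x, z\rangle - \langle x, z\rangle \leq |y+x| - \langle x,z\rangle \leq x_{n+1} - \langle x,z\rangle = -\metric{X}{(z,1)} \leq \varphi_\Omega(z),
\end{equation*}
where the last inequality uses \eqref{horo supp funct-def}. For the reverse inequality, I would use Lemma \ref{lem-X,nu,di X} to select the unique $X(z) = (x,x_{n+1}) \in \partial \Omega$ on the supporting horosphere of $\Omega$ at $z$, so $x_{n+1} - \langle x,z\rangle = \varphi_\Omega(z)$, and then check that $y := -x + x_{n+1} z$ satisfies $|y+x| = x_{n+1}$, hence $y \in \widehat{B}(-x,x_{n+1}) \subset \widehat{\Omega}$, with $\langle y,z\rangle = \varphi_\Omega(z)$. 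This gives equality in \eqref{lc proj map-supp func}.

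Granting \eqref{lc proj map-supp func}, the remaining claims follow quickly. The function $\varphi_\Omega = e^{u_\Omega}$ is smooth and strictly positive on $\mathbb{S}^n$ since $u_\Omega$ is. For uniform convexity of $\widehat{\Omega}$, I recall that a positive smooth function $\widehat{u}$ on $\mathbb{S}^n$ is the support function of a smooth uniformly convex body in $\mathbb{R}^{n+1}$ iff $D^2 \widehat{u} + \widehat{u}\, I > 0$. The key identity here is
\begin{equation*}
D^2 \varphi_\Omega + \varphi_\Omega\, I \;=\; A[\varphi_\Omega] + \tfrac{1}{2}\tfrac{|D\varphi_\Omega|^2}{\varphi_\Omega} I + \tfrac{1}{2}\Bigl(\varphi_\Omega + \tfrac{1}{\varphi_\Omega}\Bigr) I,
\end{equation*}
which follows directly from \eqref{def A-phi}. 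Since $\Omega$ is uniformly h-convex, Corollary \ref{cor-support-construct-domain} gives $A[\varphi_\Omega] > 0$, and the remaining two terms on the right are manifestly positive; thus $D^2\varphi_\Omega + \varphi_\Omega I > 0$ strictly on $\mathbb{S}^n$.

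Boundedness of $\widehat{\Omega}$ is immediate from boundedness of $\Omega$: any $X = (x,x_{n+1}) \in \Omega$ satisfies $x_{n+1} \leq R$ for some $R$, so $\widehat{B}(-x,x_{n+1}) \subset \widehat{B}(0,2R)$. Finally, the origin lies in the interior of $\widehat{\Omega}$ because its support function $\widehat{u}_{\widehat{\Omega}} = \varphi_\Omega = e^{u_\Omega}$ is strictly positive everywhere on $\mathbb{S}^n$. I do not foresee a substantive obstacle here: the only place one must be careful is the matching of the Euclidean convexity condition $D^2\widehat{u} + \widehat{u}\,I > 0$ with the horospherical condition $A[\varphi_\Omega] > 0$, which is handled by the explicit identity above.
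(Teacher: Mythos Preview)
Your computation of the support-function identity $\widehat{u}_{\widehat{\Omega}} = \g_\Omega$ is correct and pleasantly direct. The gap is in the sentence ``Granting \eqref{lc proj map-supp func}, the remaining claims follow quickly'': knowing the support function of $\widehat{\Omega}$ only tells you that the \emph{closed convex hull} of $\widehat{\Omega}$ equals the smooth uniformly convex body $\widetilde{\Omega}$ determined by $\g_\Omega$. It does not show that $\widehat{\Omega}$ itself is convex, which is part of the statement. Your lower-bound step exhibits, for each $z$, the single boundary point $y(z) = -x(z) + x_{n+1}(z)z \in \partial\widetilde{\Omega} \cap \widehat{\Omega}$; this covers $\partial\widetilde{\Omega}$ but says nothing about interior points of $\widetilde{\Omega}$. (An annulus and the disk it bounds have the same support function.)

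The paper addresses exactly this. After the easy inclusion $\widehat{\Omega} \subset \widetilde{\Omega}$ (your upper bound), it proves $\widetilde{\Omega} \subset \widehat{\Omega}$ point by point: given $y_0 \in \widetilde{\Omega}$, it fixes an interior point $X_1 \in \Omega$, interpolates $\g_t = t\g_\Omega + (1-t)\g_{X_1}$, finds $t_0 \in (0,1]$ where $\max_{z}(\langle y_0,z\rangle - \g_{t_0}(z)) = 0$, and then uses \eqref{X(z)} on the h-convex domain $\Omega_{t_0} \subset \Omega$ with horospherical support $\log \g_{t_0}$ to produce $X_0 \in \Omega$ with $y_0 \in \widehat{B}(-x^0, x^0_{n+1}) \subset \widehat{\Omega}$. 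Your argument can be completed by adding such a filling step; the other ingredients you wrote (the identity $D^2\g_\Omega + \g_\Omega I = A[\g_\Omega] + \cosh r\, I > 0$, boundedness, and origin-containment) are fine once $\widehat{\Omega} = \widetilde{\Omega}$ is established.
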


Before proving Proposition \ref{prop-lc proj map-supp func}, we state its applications in the following Theorem \ref{thm-rel p-sums in Eucl and Hyperbolic}, Corollary \ref{cor-lc proj map-bdy map} and Proposition \ref{prop-lc proj maps ball to ball}.
\begin{thm}\label{thm-rel p-sums in Eucl and Hyperbolic}
	Let $p$, $a$, $b$, $K$ and $L$ satisfy the same assumptions as in Definition \ref{def-p sum}. Let $\Omega = a \cdot K +_p^{\mathcal{H}} b \cdot  L$. Then
	\begin{equation}\label{formula-rel Eucl sum and Hyper sum} 
	\pi (\Omega) = a \cdot \pi(K) +_p^{\mathcal{R}} b\cdot \pi(L).
	\end{equation} 
\end{thm}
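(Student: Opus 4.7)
The plan is to exploit Proposition \ref{prop-lc proj map-supp func}, which identifies the Euclidean support function of $\pi(\Omega)$ with $\g_\Omega(z)=e^{u_\Omega(z)}$ on $\mathbb{S}^n$. Since a convex body is determined by its support function, it suffices to match the support functions on the two sides of \eqref{formula-rel Eucl sum and Hyper sum}. First I would apply Proposition \ref{prop-lc proj map-supp func} (with its straightforward extension to the degenerate case where $K$ or $L$ is a single point, which follows at once from Definition \ref{def-light conr proj-pi} together with \eqref{X-z,1}) to obtain
\begin{equation*}
\widehat{u}_{\pi(K)}(z)=\g_K(z),\qquad \widehat{u}_{\pi(L)}(z)=\g_L(z),\qquad \widehat{u}_{\pi(\Omega)}(z)=\g_\Omega(z).
\end{equation*}
Definition \ref{def-p sum} of the hyperbolic $p$-sum then gives
\begin{equation*}
\widehat{u}_{\pi(\Omega)}^{\,p}(z)=\g_\Omega^p(z)=a\g_K^p(z)+b\g_L^p(z)=a\,\widehat{u}_{\pi(K)}^{\,p}(z)+b\,\widehat{u}_{\pi(L)}^{\,p}(z).
\end{equation*}

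Next I would split according to the value of $p$. When $1\le p\le 2$, the displayed identity is precisely the defining equation for the support function of the Minkowski--Firey combination $a\cdot\pi(K)+_p^{\mathcal{R}}b\cdot\pi(L)$, so the two convex bodies coincide and \eqref{formula-rel Eucl sum and Hyper sum} follows immediately. When $\tfrac12\le p<1$, the Firey sum is defined via the B\"or\"oczky--Lutwak--Yang--Zhang intersection formula, and the remark recorded just after that definition asserts that the support function of this intersection equals $\widehat{u}:=\bigl(a\,\widehat{u}_{\pi(K)}^{\,p}+b\,\widehat{u}_{\pi(L)}^{\,p}\bigr)^{1/p}$ as soon as $\widehat{u}$ satisfies the strict convexity condition $D^2\widehat{u}+\widehat{u}\,I>0$ on $\mathbb{S}^n$.

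The remaining task is therefore to verify this strict convexity condition for $\widehat{u}=\g_\Omega$. Here Theorem \ref{thm-def p sum-well defined} does the work: outside of the three enumerated exceptional configurations, $\Omega$ is a smooth uniformly h-convex bounded domain and the computation in the proof of Proposition \ref{prop-p-sum} gives $A_{ij}[\g_\Omega]>0$ on $\mathbb{S}^n$. From the definition \eqref{def A-phi} of $A_{ij}[\g]$ one reads off the algebraic identity
\begin{equation*}
D_jD_i\g_\Omega+\g_\Omega\sigma_{ij}=A_{ij}[\g_\Omega]+\tfrac{1}{2}\tfrac{|D\g_\Omega|^2}{\g_\Omega}\sigma_{ij}+\tfrac{1}{2}\bigl(\g_\Omega+\g_\Omega^{-1}\bigr)\sigma_{ij},
\end{equation*}
and since $\g_\Omega>0$ the right-hand side is strictly positive definite, yielding the required $D^2\g_\Omega+\g_\Omega I>0$.

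The main obstacle is the bookkeeping in the range $p\in[\tfrac12,1)$ and the three exceptional degenerate cases of Theorem \ref{thm-def p sum-well defined}. These latter cases are handled separately: when $K$ or $L$ degenerates to a point, or when both do with $p\in\{\tfrac12,2\}$, the domain $\Omega$ itself is a geodesic ball (or a point) by Proposition \ref{prop-Aij >=0} and Corollary \ref{cor-2pts-A>=0}, and its image under $\pi$ is a Euclidean ball whose coincidence with the corresponding Firey sum of balls is verified by a direct calculation using Lemma \ref{lem-horo supp of geodesic ball}. Once these boundary cases are dispatched and the strict convexity of $\widehat{u}=\g_\Omega$ is in hand, the identity \eqref{formula-rel Eucl sum and Hyper sum} follows at once.
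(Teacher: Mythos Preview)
Your overall strategy coincides with the paper's: use Proposition \ref{prop-lc proj map-supp func} to identify $\widehat{u}_{\pi(\Omega)}=\g_\Omega$, and then verify $D^2\g_\Omega+\g_\Omega I>0$ so that (for either range of $p$) the Firey combination has the expected support function. The difference lies in how you establish this convexity condition, and there your handling of the degenerate cases contains an error.

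You claim that in exceptional case (3) of Theorem \ref{thm-def p sum-well defined} (namely $p\in\{\tfrac12,2\}$, $a,b>0$, $a+b=1$, both $K$ and $L$ points) the resulting domain $\Omega$ is a geodesic ball or a point. This is false: Corollary \ref{cor-2pts-A>=0} shows, for instance when $p=2$, that $A_{ij}[\g_\Omega(z)]$ vanishes only at the finitely many critical points of $\g_X/\g_Y$ and is strictly positive elsewhere, so $\Omega$ is a genuine (non-smooth) h-convex domain that is neither a point nor a ball. Your proposed ``direct calculation'' of the Firey sum of balls therefore does not apply.

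The paper avoids any case split by a cleaner observation. In the identity you already wrote down,
\[
D_jD_i\g_\Omega+\g_\Omega\sigma_{ij}=A_{ij}[\g_\Omega]+\tfrac12\tfrac{|D\g_\Omega|^2}{\g_\Omega}\sigma_{ij}+\tfrac12\bigl(\g_\Omega+\g_\Omega^{-1}\bigr)\sigma_{ij},
\]
one needs only the \emph{non-strict} inequality $A_{ij}[\g_\Omega]\ge 0$, which Proposition \ref{prop-Aij >=0} and Theorem \ref{thm-def p sum-well defined} supply in \emph{every} case of Definition \ref{def-p sum}, degenerate or not. Since $\tfrac12(\g_\Omega+\g_\Omega^{-1})\ge 1$, this already forces $D^2\g_\Omega+\g_\Omega I\ge I>0$ uniformly. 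With this single estimate the exceptional cases require no separate treatment, and your $p\ge1$ versus $p<1$ split also becomes unnecessary.
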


\begin{proof}
	Let $\widehat{K} = \pi (K)$, $\widehat{L} = \pi (L)$, $\widehat{\Omega} = \pi(\Omega)$, and $\g_{\Omega}^p(z) = \alpha \g_K^p (z) +\beta \g_L^p (z)$. By \eqref{def A-phi} and Proposition \ref{prop-Aij >=0}, we have
	\begin{equation*}
	 D_j D_i \g_{\Omega}+ \g_{\Omega} \delta_{ij} = A_{ij}[\g_{\Omega} ] + \frac{1}{2} \frac{|D\g_{\Omega}|^2}{\g_{\Omega}}\delta_{ij} + \frac{1}{2} \( \g_{\Omega} + \g_{\Omega}^{-1}\) \delta_{ij}\geq 1>0.
	\end{equation*}
	This together with \eqref{lc proj map-supp func} gives
	\begin{align*}
	&D^2 \widehat{u}_{\widehat{\Omega}} + \widehat{u}_{\widehat{\Omega}} I =D^2 \g_{\Omega} + \g_{\Omega} I  >0,\\
	&\widehat{u}^p_{\widehat{\Omega}} (z) = \g^p_{\Omega}(z)
	= a\g^p_{K}(z) +b \g^p_{L}(z)
	= a \widehat{u}_{\widehat{K}}^p(z) +b \widehat{u}_{\widehat{L}}^p (z).
	\end{align*}
	 Therefore $\widehat{\Omega} = a \cdot \widehat{K} +_p^{\mathcal{R}} b \cdot \widehat{L}$. This completes the proof of Theorem \ref{thm-rel p-sums in Eucl and Hyperbolic}.
\end{proof}

\begin{cor}\label{cor-lc proj map-bdy map}
	Let $\Omega$ be a smooth uniformly h-convex bounded domain in $\mathbb{H}^{n+1}$ and $\widehat{\Omega} = \pi (\Omega)$. Define $\xi : \partial \Omega \to \mathbb{R}^{n+1}$ by 
	\begin{equation*}
	\xi (X) = \frac{\widetilde{u}X - \cosh r \nu}{\cosh r -\widetilde{u}}= -X + x_{n+1} (z,1),
	\end{equation*}
	where $X= (x, x_{n+1}) \in \partial \Omega$ and $z = G_{\Omega} (X)$.
	Then $\xi$ gives a diffeomorphism from $\partial \Omega$ to $\partial \widehat{\Omega}$. Moreover, $G_{\Omega} (X) = \widehat{G}_{\widehat{\Omega}} \( \xi(X) \)$ for all $X \in \partial \Omega$.  
\end{cor}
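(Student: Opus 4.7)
The plan is to reduce the statement to the classical correspondence between a smooth strictly convex body in $\mathbb{R}^{n+1}$ and its support function, via the identification supplied by Proposition \ref{prop-lc proj map-supp func}. The starting point is formula \eqref{phi(coshr nu-uX)}, which gives
\begin{equation*}
\xi(X) = \frac{\widetilde{u}X - \cosh r\,\nu}{\cosh r - \widetilde{u}} = \bigl(D\varphi_\Omega(z) + \varphi_\Omega(z)\,z,\,0\bigr),
\end{equation*}
where $z = G_\Omega(X)$. Thus, under the identification $\mathbb{R}^{n+1} \cong \mathbb{R}^{n+1}\times\{0\} \subset \mathbb{R}^{n+1,1}$, the map $\xi$ factors as $\xi = \Xi \circ G_\Omega$, where $\Xi(z) := D\varphi_\Omega(z) + \varphi_\Omega(z)\,z$. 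I would verify the second equality in the statement, $\xi(X) = -X + x_{n+1}(z,1)$, by a direct computation from \eqref{X-nu} and \eqref{1/phi, coshr-u}: since $X - \nu = \varphi_\Omega^{-1}(z,1)$, we have $\cosh r - \widetilde{u} = \varphi_\Omega^{-1}$, and then $\xi(X) = -X + x_{n+1}(z,1)$ follows by noting that the last coordinate of $X - \nu$ equals $\varphi_\Omega^{-1}$, so the last coordinate of $\xi(X)$ vanishes, confirming $\xi(X) \in \mathbb{R}^{n+1}\times\{0\}$.

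Next, I invoke Proposition \ref{prop-lc proj map-supp func}: $\widehat{\Omega} = \pi(\Omega)$ is a smooth uniformly convex bounded domain in $\mathbb{R}^{n+1}$ whose Euclidean support function satisfies $\widehat{u}_{\widehat{\Omega}}(z) = \varphi_\Omega(z)$. The uniform h-convexity of $\Omega$ gives $A[\varphi_\Omega] > 0$ on $\mathbb{S}^n$, which in turn yields $D^2 \widehat{u}_{\widehat{\Omega}} + \widehat{u}_{\widehat{\Omega}}\,I > 0$, so the Euclidean Gauss map $\widehat{G}_{\widehat{\Omega}} : \partial\widehat{\Omega} \to \mathbb{S}^n$ is a smooth diffeomorphism. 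The classical formula for the inverse Gauss map in terms of the support function (see e.g.\ \cite[Section 1.7]{Sch14}) reads
\begin{equation*}
\widehat{G}_{\widehat{\Omega}}^{-1}(z) = D\widehat{u}_{\widehat{\Omega}}(z) + \widehat{u}_{\widehat{\Omega}}(z)\,z = D\varphi_\Omega(z) + \varphi_\Omega(z)\,z = \Xi(z).
\end{equation*}
Hence $\Xi : \mathbb{S}^n \to \partial\widehat{\Omega}$ is a diffeomorphism, and $\widehat{G}_{\widehat{\Omega}} \circ \Xi = \operatorname{id}_{\mathbb{S}^n}$.

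Finally, combining the two pieces: since $G_\Omega : \partial\Omega \to \mathbb{S}^n$ is a diffeomorphism (by the uniform h-convexity of $\Omega$, as recorded after Lemma \ref{lem-shifted curvature-support function}) and $\Xi : \mathbb{S}^n \to \partial\widehat{\Omega}$ is a diffeomorphism, the composition $\xi = \Xi \circ G_\Omega$ is a diffeomorphism from $\partial\Omega$ onto $\partial\widehat{\Omega}$. Moreover, for every $X \in \partial\Omega$,
\begin{equation*}
\widehat{G}_{\widehat{\Omega}}(\xi(X)) = \widehat{G}_{\widehat{\Omega}}(\Xi(G_\Omega(X))) = G_\Omega(X),
\end{equation*}
which is the claimed identity. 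The main (and only nontrivial) obstacle is justifying the rewriting $\xi(X) = (D\varphi_\Omega + \varphi_\Omega z, 0)$, but this is exactly \eqref{phi(coshr nu-uX)} from Corollary \ref{cor-coshrnu-uX}, so the corollary follows essentially as a bookkeeping consequence of results already established.
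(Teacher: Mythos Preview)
Your proof is correct and follows essentially the same approach as the paper: both arguments combine \eqref{phi(coshr nu-uX)} with Proposition~\ref{prop-lc proj map-supp func} and the classical formula $\widehat{G}_{\widehat{\Omega}}^{-1}(z) = D\widehat{u}_{\widehat{\Omega}}(z) + \widehat{u}_{\widehat{\Omega}}(z)\,z$. Your presentation is somewhat more explicit about the factorization $\xi = \Xi \circ G_\Omega$ and why this yields a diffeomorphism, whereas the paper's proof is terser and leaves that conclusion implicit.
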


\begin{proof}
	For any $z \in \mathbb{S}^n$, it is well-known that $\widehat{G}_{\widehat{\Omega}}^{-1} (z) = \(D \widehat{u}_{\widehat{\Omega}} (z)+ \widehat{u}_{\widehat{\Omega}}(z) z, 0 \)$.
	Using \eqref{lc proj map-supp func} and \eqref{phi(coshr nu-uX)}, we obtain
	\begin{equation*}
	\widehat{G}_{\widehat{\Omega}}^{-1} (z) = \(D \g_{\Omega}(z) + \g_{\Omega}(z) z,0  \)= \xi (X(z)).
	\end{equation*}
	We complete the proof of Corollary \ref{cor-lc proj map-bdy map}.
\end{proof}

\begin{prop}\label{prop-lc proj maps ball to ball}
	Let $X = (x, x_{n+1}) \in \mathbb{H}^{n+1}$ and $\Omega:=B(X, r)$ be a ball of radius $r$ centered at $X$ in  $\mathbb{H}^{n+1}$. 
	Then $\widehat{\Omega} =\pi \( \Omega\)$ is given by $\widehat{B} (-e^rx,  e^r x_{n+1} )$.
\end{prop}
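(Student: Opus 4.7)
My plan is to deduce Proposition \ref{prop-lc proj maps ball to ball} directly from Proposition \ref{prop-lc proj map-supp func} together with Lemma \ref{lem-horo supp of geodesic ball}. A geodesic ball $\Omega = B(X, r)$ is smooth and uniformly h-convex (all of its principal curvatures equal $\coth r > 1$), so the hypotheses of Proposition \ref{prop-lc proj map-supp func} are satisfied. That proposition then ensures $\widehat{\Omega} := \pi(\Omega)$ is a smooth uniformly convex bounded domain in $\mathbb{R}^{n+1}$ whose Euclidean support function is $\widehat{u}_{\widehat{\Omega}}(z) = \g_{\Omega}(z) = e^{u_{\Omega}(z)}$.

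The next step is to identify $\g_{\Omega}$ explicitly. By Lemma \ref{lem-horo supp of geodesic ball}, setting $T = e^r X$ gives
\[
\g_{\Omega}(z) = -\metric{T}{(z, 1)} = e^r x_{n+1} - e^r \metric{x}{z}.
\]
On the other hand, the Euclidean support function of a closed Euclidean ball $\widehat{B}(p, R) \subset \mathbb{R}^{n+1}$ is $\widehat{u}(z) = \metric{p}{z} + R$. Taking $p = -e^r x$ and $R = e^r x_{n+1}$, which is strictly positive since $X \in \mathbb{H}^{n+1}$ forces $x_{n+1} \geq 1$, yields precisely $-e^r \metric{x}{z} + e^r x_{n+1}$, agreeing with $\g_{\Omega}(z)$. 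Since convex bodies in $\mathbb{R}^{n+1}$ are uniquely determined by their support functions, the equality $\widehat{\Omega} = \widehat{B}(-e^r x, e^r x_{n+1})$ follows at once.

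Under this approach there is essentially no obstacle: the proof reduces to a one-line comparison of support functions once Proposition \ref{prop-lc proj map-supp func} is in place. The only conceivable difficulty would arise if one wished to establish Proposition \ref{prop-lc proj maps ball to ball} before Proposition \ref{prop-lc proj map-supp func} --- for instance, as a model case in the proof of the latter. In that situation, the inclusion $\pi(\Omega) \subset \widehat{B}(-e^r x, e^r x_{n+1})$ still comes for free from the set-level identity $\widehat{u}_{\pi(S)}(z) = \sup_{Y \in S}(-\metric{Y}{(z, 1)}) = \g_S(z)$, valid for any $S \subset \mathbb{H}^{n+1}$. The harder, reverse inclusion could be obtained by parametrizing $\partial \Omega$ via the horospherical Gauss map, using the formula $\xi(Y) = (D\g + \g z, 0)$ from \eqref{phi(coshr nu-uX)} to see that the tangent points of the inner balls $\widehat{B}(-y, y_{n+1})$, $Y \in \partial \Omega$, sweep out the entire sphere $\partial \widehat{B}(-e^r x, e^r x_{n+1})$, and then a connectedness argument to see that these internally tangent balls cover the interior of the target.
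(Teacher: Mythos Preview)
Your proof is correct and follows essentially the same approach as the paper: apply Proposition~\ref{prop-lc proj map-supp func} together with Lemma~\ref{lem-horo supp of geodesic ball} to identify the support function of $\widehat{\Omega}$ as $e^r(x_{n+1}-\metric{x}{z})$, then recognize this as the support function of the Euclidean ball $\widehat{B}(-e^r x, e^r x_{n+1})$. Your additional remarks about how one might proceed without Proposition~\ref{prop-lc proj map-supp func} are not needed here, since in the paper that proposition is already available.
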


\begin{proof}
	It is well-known that the classical support function of the geodesic ball $\widehat{B} (y, \rho)$ is given by
	\begin{equation*}
	\widehat{u}_{\widehat{B} (y, \rho)  }(z) = \rho+ \metric{y}{z}, \quad z \in \mathbb{S}^n.
	\end{equation*}
	On the other hand, formulas \eqref{lc proj map-supp func} and \eqref{horo supp of geodesic ball} yield that
	\begin{equation*}
	\widehat{u}_{\widehat{\Omega}} (z) =\g_{\Omega}(z) = e^r\(x_{n+1} - \metric{x}{z}\).
	\end{equation*}
	Then Proposition \ref{prop-lc proj maps ball to ball} follows from the above formulas.
\end{proof}
\subsection{Proof of Proposition \ref{prop-lc proj map-supp func}} \label{subsec-proof of prop lc-map-supp}$ \ $

\begin{lem}\label{lem-lc proj-holds for a point}
	Proposition \ref{prop-lc proj map-supp func} holds when $\Omega$ degenerates to a single point $X= \(x, x_{n+1}\)$ in $\mathbb{H}^{n+1}$.
\end{lem}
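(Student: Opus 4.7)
The plan is to recognize that when $\Omega$ collapses to a single point $X=(x,x_{n+1})\in \mathbb{H}^{n+1}$, the set $\pi(\Omega)$ is not itself a point but rather an honest ball in $\mathbb{R}^{n+1}$, so the statement of Proposition \ref{prop-lc proj map-supp func} (appropriately interpreted) becomes an immediate calculation. Concretely, I would first invoke Definition \ref{def-light conr proj-pi} to read off
\begin{equation*}
\pi(X)=\widehat{B}(-x,x_{n+1}),
\end{equation*}
since the union in the definition reduces to a single term. This is a closed Euclidean ball of radius $x_{n+1}=\sqrt{|x|^{2}+1}>0$ centered at $-x$, so it is automatically smooth, uniformly convex, and, because $|-x|=|x|<\sqrt{|x|^{2}+1}=x_{n+1}$, it contains the origin in its interior.

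Next, I would compute the Euclidean support function of this ball using the standard formula $\widehat{u}_{\widehat{B}(y,\rho)}(z)=\rho+\langle y,z\rangle$ (already recalled in the proof of Proposition \ref{prop-lc proj maps ball to ball}). This gives
\begin{equation*}
\widehat{u}_{\pi(X)}(z)=x_{n+1}-\langle x,z\rangle.
\end{equation*}
On the hyperbolic side, the horospherical support function of the degenerate domain $\{X\}$ was computed during the proof of Lemma \ref{lem-horospp of point}: using \eqref{X-z,1}, we have
\begin{equation*}
\g_{X}(z)=e^{u_{X}(z)}=\langle -X,(z,1)\rangle=x_{n+1}-\langle x,z\rangle.
\end{equation*}
Comparing the two expressions yields $\widehat{u}_{\pi(X)}(z)=\g_{X}(z)$, which is precisely \eqref{lc proj map-supp func} for the degenerate case.

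There is essentially no obstacle here; the only point worth a sentence of comment is conceptual rather than technical, namely that $\pi$ does not preserve dimension of degeneracy (a point in $\mathbb{H}^{n+1}$ is mapped to a full-dimensional ball in $\mathbb{R}^{n+1}$), which is what makes formula \eqref{lc proj map-supp func} continue to be meaningful in the limit. I would close by noting that this is consistent with Proposition \ref{prop-lc proj maps ball to ball} in the limit $r\to 0^{+}$: the ball $\widehat{B}(-e^{r}x,e^{r}x_{n+1})$ obtained there for $B(X,r)$ reduces exactly to $\widehat{B}(-x,x_{n+1})$ when $r=0$, confirming compatibility with the non-degenerate statement.
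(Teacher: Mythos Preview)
Your proof is correct and follows essentially the same approach as the paper: identify $\pi(X)=\widehat{B}(-x,x_{n+1})$ from the definition, compute its Euclidean support function as $x_{n+1}-\langle x,z\rangle$, and match it with $\g_X(z)=-\langle X,(z,1)\rangle$. One minor presentational point: you cite the support-function formula for a ball from the proof of Proposition~\ref{prop-lc proj maps ball to ball}, but that proposition is itself a consequence of Proposition~\ref{prop-lc proj map-supp func} (and hence of the present lemma), so it is cleaner to compute $\widehat{u}_{\widehat{B}(-x,x_{n+1})}(z)=\sup_{y\in\widehat{B}(-x,x_{n+1})}\langle y,z\rangle$ directly, as the paper does, to avoid any appearance of circularity.
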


\begin{proof}
	From Definition \ref{def-light conr proj-pi}, we have $\widehat{B}\(-x, x_{n+1}\) = \pi (X)$. Then we have
	\begin{equation*}
	\widehat{u}_{\widehat{B}\(-x, x_{n+1}\)} \(z\) = \sup_{y \in \widehat{B}\(-x, x_{n+1}\)} \metric{y}{z} = \metric{-x}{z} + x_{n+1}.
	\end{equation*}
	At the same time, we have
	\begin{equation*}
	e^{u_X(z)} = \g_X \(z\) = - \metric{X}{ \(z,1\)} = - \metric{x}{z} +x_{n+1}. 
	\end{equation*}
	The above two formulas imply $\widehat{u}_{\widehat{B}\(-x, x_{n+1}\)} (z) = e^{u_X \(z\)}$. We complete the proof of Lemma \ref{lem-lc proj-holds for a point}.
\end{proof}

\begin{proof}[Proof of Proposition \ref{prop-lc proj map-supp func}]
	In this proof, we will use Lemma \ref{lem-lc proj-holds for a point} frequently without mention.
	By the strict horo-convexity of $\Omega$, we have $A[\g_{\Omega}] >0$. Using \eqref{def A-phi} and \eqref{coshr}, we know
	\begin{equation} \label{D^2 phi+phi I}
	D^2 \g_{\Omega}(z) + \g_{\Omega}(z) I = A [\g_{\Omega}] + \cosh r\(X(z) \) >I, \quad \forall z \in \mathbb{S}^n.
	\end{equation}
	Then we can assume that $\widetilde{\Omega}$ is the convex body in $\mathbb{R}^{n+1}$ with support function $\widehat{u}_{\widetilde{\Omega}}(z) = \g_{\Omega}(z)$.  For any  $y \in \widehat{\Omega}$, there exists $X \in \Omega$ such that $y \in \pi \(X\)$. Consequently, $\metric{y}{z} \leq \widehat{u}_{\pi(X)}(z) = -\metric{x}{z}+x_{n+1}$ for all $z \in \mathbb{S}^n$.
	 Since $\g_X \(z\) \leq \g_\Omega(z)$ for all $z \in \mathbb{S}^n$,
	we have
	\begin{equation*}
	\metric{y}{z} \leq -\metric{x}{z} +x_{n+1} = \g_X \(z\) \leq \g_\Omega\(z\) =e^{u(z)}, \quad \forall z\in \mathbb{S}^n.
	\end{equation*}
	Hence $y \in \widetilde{\Omega}$ and then $\widehat{\Omega} \subset \widetilde{\Omega}$. 
	
	For any $y_0 \in \widetilde{\Omega}$, we claim that there exists $X_0 = \(x^0, x_{n+1}^0\) \in \Omega$ satisfying
	$y_0 \in \pi (X_0) =\widehat{B}\(-x^0, x^0_{n+1}\) \subset \widehat{\Omega}$. This will imply $\widetilde{\Omega} \subset \widehat{\Omega}$ and then $\widetilde{\Omega} = \widehat{\Omega}$. 
	
	 Since $y_0 \in \widetilde{\Omega}$, we have
	\begin{equation}\label{t=0, aux leq 0}
	\metric{y_0}{z} - \g_{\Omega}\(z\)  \leq 0, \quad \forall z \in \mathbb{S}^n. 
	\end{equation}
	Let $X_1 = \(x^1, x^1_{n+1}\)$ be an interior point of $\Omega$. Obviously, $\g_{X_1}(z) < \g_\Omega(z)$. If $y_0 \in \widehat{B}\(-x^1, x^1_{n+1}\) = \pi \(X_1\)$, then we can choose $X_0 := X_1$. Hence we can assume that $y_0$ lies outside $\pi \(X_1\)$ without loss of generality, which implies
	\begin{equation} \label{t=1, aux > 0}
	\max_{z \in \mathbb{S}^n} \( \metric{y_0}{z} - \g_{X_1} \(z\) \) >0.
	\end{equation}
	We define an auxiliary function 
	\begin{equation}\label{phi_t aux-func}
	\g_t(z) :=  t \g_{\Omega}(z) + \(1-t\) \g_{X_1}(z), \quad t \in (0,1].
	\end{equation}
	From \eqref{t=0, aux leq 0} and \eqref{t=1, aux > 0}, there exists $t_0 \in (0,1]$ such that
	\begin{align*}
	\max_{z \in \mathbb{S}^n} \(\metric{y_0}{z} - \g_{t_0} \(z\) \)= 0,
	\end{align*}
	and we assume that the maximum is attained at $z_0 \in \mathbb{S}^n$, which implies $\metric{y_0}{z_0} = \g_{t_0} \(z_0\)$. At $z_0$, we have
	\begin{equation*}
	\metric{y_0}{e} - \metric{D\g_{t_0} \(z_0\)}{e} =0, \quad \forall e \in T_{z_0} \mathbb{S}^n.
	\end{equation*}
	Then we can assume $y_0 - D \g_{t_0} \( z_0\) = \lambda z_0$ for some $\lambda \in \mathbb{R}$. Taking the inner product with $z_0$ on both sides of this formula, we get $\lambda = \metric{y_0}{z_0} = \g_{t_0} \(z_0\)$. Hence
	\begin{equation*}
	y_0 = D \g_{t_0} \(z_0\) + \g_{t_0}(z_0) z_0.
	\end{equation*}
	By Definition \ref{def-p sum} and \eqref{phi_t aux-func}, we know that $\log \g_{t_0}(z)$ is the horospherical support function of a uniformly h-convex bounded domain $\Omega_{t_0}$, see also \cite[Proposition 5.1]{ACW18}. Since $\g_{t_0}(z) \leq  \g_{\Omega}(z)$, we know $\Omega_{t_0} \subset \Omega$.
	By using \eqref{X(z)},  we can find a point $X_0 := X(z_0) \in \partial \Omega_{t_0} \subset \Omega$. By \eqref{X-coshr (z,1)}, we obtain
	\begin{equation*}
	X_0 - x_{n+1}^0 (z_0,1)
	= -\( D \g_{t_0}(z_0) + \g_{t_0}(z_0) z_0,0 \)=-y_0.
	\end{equation*}
	It is easy to see that $\(X_0 - x_{n+1}^0 \(z_0,1\)\) \in \widehat{B}(x_0, x_{n+1}^0)$, which implies 
	\begin{equation*}
	y_0 \in \widehat{B} \(-x_0, x^0_{n+1}\)= \pi (X_0) \subset \widehat{\Omega}.
	\end{equation*}
	Then $\widetilde{\Omega} \subset \widehat{\Omega}$ and hence $\widetilde{\Omega} = \widehat{\Omega}$.
	This shows that $\widehat{\Omega} =\pi (\Omega)$ is a convex body with support function $\g_{\Omega}(z)$ in $\mathbb{R}^{n+1}$. As $\g_{\Omega}(z)= e^{u_{\Omega} (z)}>0 $, we know that $\pi (\Omega)$ contains the origin in its interior.
	We complete the proof of Proposition \ref{prop-lc proj map-supp func}.
\end{proof}

\subsection{Weighted $p$-Brunn-Minkowski type inequality  and weighted $p$-Minkowski type inequality for h-convex domains}
\label{subsec-weighted p-BM type ineq}$ \ $

The $p$-Brunn-Minkowski inequality (Theorem G) and $p$-Minkowski inequality (Theorem H) play important roles in the convex geometric analysis.
Proposition \ref{prop-lc proj map-supp func} and Theorem \ref{thm-rel p-sums in Eucl and Hyperbolic} suggest us to interpret them  to  geometric inequalities for h-convex domains in $\mathbb{H}^{n+1}$ by use of map $\pi$. The results in this subsection will be proved in Subsection \ref{subse-pf of weighted p-BM type ineq}.
 
To state our results, we should define two geometric functionals.
\begin{defn}
	Let $p$ be a real number.
	Let $K$ and $L$ be two smooth uniformly h-convex bounded domain in $\mathbb{H}^{n+1}$. We define
	\begin{align}
	\mathcal{V} (K) :=& \frac{1}{n+1} \int_{\partial K} \frac{p_n \( \cosh r \kappa - \widetilde{u} \)}{\(\cosh r - \widetilde{u}\)^{n+1}} d\mu, \label{mathcal V-K}\\
	\mathcal{V}_p \( K, L \):=& \frac{1}{n+1}\int_{\partial K} \g_L^p (z) \frac{p_n \( \cosh r \kappa - \widetilde{u} \)}{\(\cosh r - \widetilde{u}\)^{n+1-p}} d\mu. \label{mathcal V-K,L}
	\end{align}
\end{defn}

\begin{prop}\label{prop-mathcal V monotone}
	The geometric functional $\mathcal{V}$ is monotone increasing with respect to inclusions for h-convex domains.
\end{prop}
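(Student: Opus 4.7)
The plan is to identify $\mathcal{V}(\Omega)$ with the Euclidean volume $\Vol(\pi(\Omega))$ and then appeal to the obvious monotonicity of the projection $\pi$ under inclusion. The statement of Proposition \ref{prop-mathcal V monotone} will then follow from the monotonicity of Euclidean volume under inclusion of convex bodies. The main obstacle is merely a bookkeeping computation relating the hyperbolic curvature data to the Euclidean Hessian of the support function; once carried out, the rest is essentially immediate.

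First I would establish the key identity
\begin{equation*}
\mathcal{V}(\Omega) = \Vol(\pi(\Omega))
\end{equation*}
for any smooth uniformly h-convex bounded domain $\Omega \subset \mathbb{H}^{n+1}$. By Proposition \ref{prop-lc proj map-supp func}, $\widehat{\Omega} := \pi(\Omega)$ is a smooth uniformly convex body in $\mathbb{R}^{n+1}$ containing the origin with Euclidean support function $\widehat{u}_{\widehat{\Omega}}(z) = \g_\Omega(z)$. The classical formula from Euclidean convex geometry gives
\begin{equation*}
\Vol(\widehat{\Omega}) = \frac{1}{n+1}\int_{\mathbb{S}^n} \widehat{u}_{\widehat{\Omega}}(z)\,\det\bigl(D^2 \widehat{u}_{\widehat{\Omega}} + \widehat{u}_{\widehat{\Omega}} I\bigr)\,d\sigma.
\end{equation*}
Combining the identity $D^2 \g + \g I = A[\g] + \cosh r\, I$ (which follows from \eqref{def A-phi} and \eqref{coshr}) with the diagonalized form of $A[\g]$ coming from \eqref{shifted curvature-support function}, namely that $A[\g]$ has eigenvalues $\g^{-1}\tilde{\lambda}_i$, a direct diagonal computation yields
\begin{equation*}
\det(D^2 \g + \g I) = \prod_{i=1}^n \bigl(\g^{-1}\tilde{\lambda}_i + \cosh r\bigr) = \prod_{i=1}^n \frac{\cosh r\,\kappa_i - \tilde{u}}{\kappa_i - 1} = \frac{p_n(\cosh r\,\kappa - \tilde{u})}{p_n(\tilde{\kappa})},
\end{equation*}
where I used $\g^{-1} = \cosh r - \tilde{u}$ from \eqref{1/phi, coshr-u}.

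Next I would pull back the $\mathbb{S}^n$-integral to $\partial \Omega$ via the horospherical Gauss map. From \eqref{rel-area element} together with \eqref{shifted curvature-support function} one has $d\sigma = \g^n p_n(\tilde{\kappa})\,d\mu$, so substituting and using $\g^{n+1} = (\cosh r - \tilde{u})^{-(n+1)}$ gives
\begin{equation*}
\Vol(\pi(\Omega)) = \frac{1}{n+1}\int_{\partial \Omega} \g^{n+1}\,p_n(\cosh r\,\kappa - \tilde{u})\,d\mu = \frac{1}{n+1}\int_{\partial \Omega} \frac{p_n(\cosh r\,\kappa - \tilde{u})}{(\cosh r - \tilde{u})^{n+1}}\,d\mu = \mathcal{V}(\Omega),
\end{equation*}
as desired.

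Finally I would observe that $\pi$ preserves inclusions: this is transparent from Definition \ref{def-light conr proj-pi}, since if $K \subset L$ then the union $\bigcup_{X \in K}\widehat{B}(-x, x_{n+1})$ defining $\pi(K)$ is over a subset of the indices used for $\pi(L)$, so $\pi(K) \subset \pi(L)$. Consequently, if $K$ and $L$ are smooth uniformly h-convex bounded domains with $K \subset L$, then
\begin{equation*}
\mathcal{V}(K) = \Vol(\pi(K)) \leq \Vol(\pi(L)) = \mathcal{V}(L).
\end{equation*}
The extension of monotonicity from the smooth uniformly h-convex case to general h-convex bounded domains can then be handled by an approximation argument analogous to that in Proposition \ref{prop-Aij >=0}, smoothing both $K$ and $L$ from within while preserving h-convexity and inclusion, and then passing to the limit in the volume identity.
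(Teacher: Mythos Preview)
Your proof is correct and follows essentially the same route as the paper: both establish the identity $\mathcal{V}(\Omega)=\Vol(\pi(\Omega))$ (the paper packages this as Lemma~\ref{lem-hyperbolic version of p-mixed volume}) and then use monotonicity of $\pi$ under inclusion together with monotonicity of Euclidean volume. The only cosmetic difference is that the paper verifies $\pi(K)\subset\pi(L)$ via the support-function comparison $\g_K\le \g_L$ and Proposition~\ref{prop-lc proj map-supp func}, whereas you read it off directly from the union-of-balls Definition~\ref{def-light conr proj-pi}; both are equally valid.
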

Theorem G can be used to get a weighted horospherical $p$-Brunn-Minkowski type inequality for h-convex domains in $\mathbb{H}^{n+1}$. 
\begin{thm}\label{thm-weighted p-BM ineq-lc proj}
	Let $1 \leq p \leq 2$, $a \geq 0$, $b \geq 0$  and $a+b \geq 1$. Assume that $K$ and $L$ are two smooth uniformly h-convex bounded domains in $\mathbb{H}^{n+1}$.
	Let $\Omega = a \cdot K +_p^{\mathcal{H}} b \cdot L$. Then
	\begin{equation}\label{a cls of weighted BM ineq}
	\mathcal{V}^{\frac{p}{n+1}} \( \Omega\) 
	\geq a^\frac{1}{p} \mathcal{V}^{\frac{p}{n+1}} \( K\) + b^\frac{1}{p} \mathcal{V}^{\frac{p}{n+1}} \( L\).
	\end{equation}
	When $p=1$, equality holds in \eqref{a cls of weighted BM ineq} if $K$ and $L$ are hyperbolic dilates.
	When $1<p \leq 2$ and $ab \neq 0$, equality holds in \eqref{a cls of weighted BM ineq} if and only if $K$ and $L$ are hyperbolic dilates. 
\end{thm}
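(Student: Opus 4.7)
The plan is to reduce the claim to the classical $L_p$ Brunn--Minkowski inequality in $\mathbb{R}^{n+1}$ (Theorem G) via the light-cone projection $\pi$ introduced in Definition \ref{def-light conr proj-pi} and studied in Proposition \ref{prop-lc proj map-supp func}. The key identity I want to establish first is
\begin{equation*}
    \mathcal{V}(\Omega) = \Vol(\pi(\Omega)),
\end{equation*}
for any smooth uniformly h-convex bounded domain $\Omega \subset \mathbb{H}^{n+1}$. To prove this, I would start from the standard Euclidean volume formula $\Vol(\widehat{\Omega}) = \frac{1}{n+1}\int_{\mathbb{S}^n} \widehat{u}(z)\det(D^2\widehat{u}+\widehat{u} I)\,d\sigma$, substitute $\widehat{u}_{\pi(\Omega)} = \g_\Omega$ from Proposition \ref{prop-lc proj map-supp func}, and then use the definition \eqref{def A-phi} of $A[\g]$ together with \eqref{coshr} to write $D^2\g+\g I = A[\g]+\cosh r\, I$. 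By \eqref{shifted curvature-support function} the eigenvalues of $A[\g]$ are $(\g\tilde{\kappa}_i)^{-1}$, so the eigenvalues of $D^2\g+\g I$ are $(\cosh r\,\kappa_i - \tilde u)/\tilde\kappa_i$; combining this with the area formula \eqref{rel-area element} and the substitution $\g=(\cosh r-\tilde u)^{-1}$ from \eqref{1/phi, coshr-u} to convert back to an integral over $\partial\Omega$ yields exactly $\mathcal{V}(\Omega)$ as defined in \eqref{mathcal V-K}.

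With this identification, the Brunn--Minkowski inequality in hyperbolic space becomes an inequality in Euclidean space. By Theorem \ref{thm-rel p-sums in Eucl and Hyperbolic},
\begin{equation*}
    \pi(\Omega) = \pi(a\cdot K +_p^{\mathcal{H}} b\cdot L) = a\cdot \pi(K) +_p^{\mathcal{R}} b\cdot\pi(L),
\end{equation*}
so $\pi(\Omega)$, $\pi(K)$, $\pi(L)$ are convex bodies in $\mathbb{R}^{n+1}$ containing the origin in their interiors, and the scalars $a,b$ on both sides of the identity refer to the Firey scaling used in Definition \ref{def-p sum}. I would then apply Theorem G to $\pi(K)$ and $\pi(L)$ (which is valid for all $p\geq 1$, since the case $p=1$ is the classical Brunn--Minkowski inequality and does not require the origin condition; for $1<p\leq 2$ it is Firey--Lutwak) and rewrite the resulting inequality using the identity $\mathcal{V}=\Vol\circ\pi$. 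This is the entire mechanism that produces \eqref{a cls of weighted BM ineq}.

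The main obstacle will be in the equality statement rather than the inequality itself, since the computation above is essentially bookkeeping. The classical $L_p$ Brunn--Minkowski equality case says that $a\cdot\pi(K)+_p^{\mathcal{R}}b\cdot\pi(L)$ achieves equality in Theorem G iff $\pi(K)$ and $\pi(L)$ are Euclidean dilates (for $1<p\leq 2$ and $ab\neq 0$); for $p=1$ they need only be homothetic. To convert back, I would use Corollary \ref{cor-lc proj map-bdy map} to see that $\pi$ preserves Gauss-map data in the precise sense $\widehat{u}_{\pi(\Omega)} = \g_\Omega$, so $\pi(K)$ and $\pi(L)$ being dilates (i.e., $\widehat{u}_{\pi(L)} = c\,\widehat{u}_{\pi(K)}$ for some $c\geq 0$) is equivalent to $\g_L = c\,\g_K$, which by Definition \ref{def-hyperbolic dilates} is exactly the condition that $K$ and $L$ be hyperbolic dilates. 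For the $p=1$ case, $\pi(K)$ and $\pi(L)$ being homothetic means $\widehat{u}_{\pi(L)} = c\,\widehat{u}_{\pi(K)} + \langle v,z\rangle$ for some $v\in\mathbb{R}^{n+1}$; I would need to check that such a translation term forces $K$ and $L$ to be hyperbolic dilates under the $p=1$ assumption, which is where the subtlety lies and where I expect the argument to be most delicate.
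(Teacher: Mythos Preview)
Your approach is essentially identical to the paper's: the paper proves $\mathcal{V}(K)=\Vol(\pi(K))$ in Lemma \ref{lem-hyperbolic version of p-mixed volume} (via exactly the computation you outline), then combines Theorem \ref{thm-rel p-sums in Eucl and Hyperbolic} with Theorem G and Lemma \ref{lem-Eucl dilates equiv Hyper parallel} (which is your ``$\g_L=c\,\g_K$'' argument). One clarification: your concern about the $p=1$ equality case is unnecessary, since the theorem only asserts the \emph{if} direction there; you merely need that hyperbolic dilates map to Euclidean dilates (hence homothets), which is immediate, and no analysis of the translation term is required.
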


Theorem H can be used to get a weighted horospherical $p$-Minkowski type inequality for h-convex domains in $\mathbb{H}^{n+1}$.
\begin{thm}\label{thm-weighted p-Min ineq-lc proj}
	Let $p \geq 1$. Assume that $K$ and $L$ are two smooth uniformly h-convex bounded domains in $\mathbb{H}^{n+1}$.
	Then
	\begin{equation} \label{a cls of weighted Min ineq}
	\mathcal{V}_p \( K, L\) \geq \mathcal{V}^\frac{n+1-p}{n+1} \( K \)\mathcal{V}^\frac{p}{n+1} \( L \).
	\end{equation}
	When $p=1$, equality holds in \eqref{a cls of weighted Min ineq}  if $K$ and $L$ are hyperbolic dilates.
	When $p>1$, equality holds in \eqref{a cls of weighted Min ineq} if and only if $K$ and $L$ are hyperbolic dilates.
\end{thm}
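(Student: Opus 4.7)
The plan is to reduce Theorem \ref{thm-weighted p-Min ineq-lc proj} to the classical $L_p$-Minkowski inequality (Theorem H) via the projection map $\pi$ studied in Subsections \ref{subsec-A light cone projection}--\ref{subsec-proof of prop lc-map-supp}. The key identity will be
\begin{equation*}
\mathcal{V}(K) = \Vol(\pi(K)), \qquad \mathcal{V}_p(K,L) = \Vol_p(\pi(K),\pi(L)),
\end{equation*}
after which the desired inequality is immediate from Theorem H applied to the convex bodies $\pi(K), \pi(L) \subset \mathbb{R}^{n+1}$.

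First I would establish the relationship between the Euclidean principal radii of curvature $\widehat\lambda$ of $\pi(K)$ and the geometric data on $\partial K$. Since Proposition \ref{prop-lc proj map-supp func} gives $\widehat u_{\pi(K)}(z) = \g_K(z)$, the Hessian-type identity
\begin{equation*}
D^2 \g_K + \g_K I = A[\g_K] + \cosh r\, I,
\end{equation*}
which follows from \eqref{def A-phi} and \eqref{coshr}, together with the diagonal form $A_{ij}[\g_K] = \g_K^{-1}\tilde\lambda_i \delta_{ij}$ in adapted coordinates (see \eqref{Aij-thij}) and $\g_K^{-1} = \cosh r - \widetilde u$ from \eqref{1/phi, coshr-u}, yields the pointwise formula $\widehat\lambda_i = (\cosh r\,\kappa_i - \widetilde u)\tilde\lambda_i$, where $\tilde\lambda_i = 1/(\kappa_i-1)$.

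Next I would convert the defining integrals \eqref{mathcal V-K} and \eqref{mathcal V-K,L} into the standard support-function expressions for $\Vol$ and $\Vol_p$. Using the change of variables along the horospherical Gauss map and $d\mu = \g_K^{-n}\prod_i\tilde\lambda_i\, d\sigma = (\cosh r - \widetilde u)^n\prod_i\tilde\lambda_i\, d\sigma$ (see \eqref{rel-area element}, \eqref{shifted curvature-support function}), one computes
\begin{equation*}
\Vol(\pi(K)) = \tfrac{1}{n+1}\!\int_{\mathbb S^n}\!\g_K\prod_i\widehat\lambda_i\, d\sigma = \tfrac{1}{n+1}\!\int_{\partial K}\!\tfrac{p_n(\cosh r\,\kappa - \widetilde u)}{(\cosh r -\widetilde u)^{n+1}}\,d\mu = \mathcal{V}(K),
\end{equation*}
and analogously, substituting $\widehat u_{\pi(L)} = \g_L$ and $\widehat u_{\pi(K)}^{1-p}$ together with $d\widehat\mu_{\pi(K)} = \prod_i\widehat\lambda_i\,d\sigma$ into the Euclidean $p$-mixed volume \eqref{p-mixed volume} gives $\Vol_p(\pi(K),\pi(L)) = \mathcal{V}_p(K,L)$.

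Finally, applying Theorem H to the smooth convex bodies $\pi(K)$ and $\pi(L)$ (which contain the origin in their interiors by Proposition \ref{prop-lc proj map-supp func}) yields $\mathcal{V}_p(K,L)^{n+1} \geq \mathcal{V}(K)^{n+1-p}\mathcal{V}(L)^p$. For the equality characterization, when $p>1$ Theorem H forces $\pi(K)$ and $\pi(L)$ to be dilates, i.e.\ $\g_K = c\,\g_L$ for a constant $c>0$; equivalently, $u_K - u_L$ is constant on $\mathbb S^n$, which by Definition \ref{def-hyperbolic dilates} means $K$ and $L$ are hyperbolic dilates, and conversely. The $p=1$ case gives only the sufficient direction because the classical Minkowski inequality has equality for homothets, a strictly larger class in $\mathbb{R}^{n+1}$ than dilates. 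I do not expect a major obstacle: the only technical step is the pointwise relation between $\widehat\lambda$ and $(\kappa,\cosh r,\widetilde u)$, and this is a direct algebraic manipulation of the identities already recorded in Sections \ref{sec-h domain}--\ref{sec-useful lemmas}.
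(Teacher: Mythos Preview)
Your proposal is correct and follows essentially the same route as the paper: the paper packages the identities $\mathcal{V}(K)=\Vol(\pi(K))$ and $\mathcal{V}_p(K,L)=\Vol_p(\pi(K),\pi(L))$ into Lemma~\ref{lem-hyperbolic version of p-mixed volume} (proved via the same computation you outline, culminating in \eqref{Det of D^2 phi+ phi I}), then applies Theorem~H and invokes Lemma~\ref{lem-Eucl dilates equiv Hyper parallel} for the equality case. Your handling of the $p=1$ equality direction (only sufficiency, since homothets in $\mathbb{R}^{n+1}$ are strictly more general than dilates) is also exactly what the paper records.
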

The following Theorem \ref{thm-weighted p-iso ineq-lc proj} is the counterpart of Theorem I in $\mathbb{H}^{n+1}$.
\begin{thm}\label{thm-weighted p-iso ineq-lc proj}
		Let $p \geq 1$. Assume that $\Omega$ is a smooth uniformly h-convex bounded domain in $\mathbb{H}^{n+1}$. Then
		\begin{equation}\label{weighted p-iso type ineq from lc proj}
		\int_{\partial \Omega} \frac{p_n \( \cosh r \kappa- \widetilde{u} \)}{ \( \cosh r -\widetilde{u}\)^{n+1-p} } d\mu
		\geq \omega_n^{\frac{p}{n+1}} \( \int_{\partial \Omega} \frac{p_n \( \cosh r \kappa- \widetilde{u} \)}{ \( \cosh r -\widetilde{u}\)^{n+1} }   d\mu   \)^{\frac{n+1-p}{n+1}}.
 		\end{equation}
		When $p>1$, equality holds in \eqref{weighted p-iso type ineq from lc proj} if and only if $\Omega$ is a geodesic ball centered at the origin. When $p=1$, equality holds in \eqref{weighted p-iso type ineq from lc proj} if and only if $\Omega$ is a geodesic ball.
\end{thm}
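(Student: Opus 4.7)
The plan is to deduce Theorem~\ref{thm-weighted p-iso ineq-lc proj} from the classical $p$-isoperimetric inequality (Theorem~I) in $\mathbb{R}^{n+1}$ by pushing $\Omega$ through the light-cone projection map $\pi$ of Definition~\ref{def-light conr proj-pi}. By Proposition~\ref{prop-lc proj map-supp func}, $\widehat{\Omega}:=\pi(\Omega)$ is a smooth uniformly convex bounded domain in $\mathbb{R}^{n+1}$ containing the origin in its interior, with support function $\widehat{u}_{\widehat{\Omega}}(z)=\g_{\Omega}(z)=(\cosh r-\widetilde{u})^{-1}$, where the last identity comes from \eqref{1/phi, coshr-u}.

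The central computation is to match the geometric objects on $\partial\widehat{\Omega}$ with those on $\partial\Omega$ via the diffeomorphism $\xi\colon\partial\Omega\to\partial\widehat{\Omega}$ in Corollary~\ref{cor-lc proj map-bdy map}. Parameterizing both boundaries by the sphere through $G_{\Omega}$ and $\widehat{G}_{\widehat{\Omega}}$, the Euclidean area element is $d\widehat{\mu}=\det\widehat{A}\,d\sigma$ with $\widehat{A}_{ij}=D_jD_i\widehat{u}_{\widehat{\Omega}}+\widehat{u}_{\widehat{\Omega}}\sigma_{ij}$. Using the identity $D_jD_i\g_{\Omega}+\g_{\Omega}\sigma_{ij}=A_{ij}[\g_{\Omega}]+\cosh r\,\sigma_{ij}$ implicit in \eqref{D^2 phi+phi I} together with the diagonalization $A_{ii}[\g]=\g^{-1}\tilde{\lambda}_i$ from Lemma~\ref{lem-shifted curvature-support function}, I expect at a diagonalizing frame
\begin{equation*}
\widehat{A}_{ii}=\g_{\Omega}^{-1}\tilde{\lambda}_i+\cosh r=\frac{\cosh r\,\kappa_i-\widetilde{u}}{\kappa_i-1}.
\end{equation*}
Comparing with $d\mu=\det A[\g_{\Omega}]\,d\sigma=\g_{\Omega}^{-n}/p_n(\tilde{\kappa})\,d\sigma$ from \eqref{rel-area element} gives the key Jacobian relation
\begin{equation*}
d\widehat{\mu}=\frac{p_n(\cosh r\,\kappa-\widetilde{u})}{(\cosh r-\widetilde{u})^n}\,d\mu.
\end{equation*}

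With these in hand, the translation is purely algebraic: the Euclidean volume satisfies $\Vol(\widehat{\Omega})=\frac{1}{n+1}\int_{\partial\widehat{\Omega}}\widehat{u}_{\widehat{\Omega}}\,d\widehat{\mu}=\mathcal{V}(\Omega)$, and
\begin{equation*}
\int_{\partial\widehat{\Omega}}\widehat{u}_{\widehat{\Omega}}^{1-p}\,d\widehat{\mu}=\int_{\partial\Omega}\frac{p_n(\cosh r\,\kappa-\widetilde{u})}{(\cosh r-\widetilde{u})^{n+1-p}}\,d\mu.
\end{equation*}
Plugging these into Theorem~I for $\widehat{\Omega}$ yields \eqref{weighted p-iso type ineq from lc proj} with the correct constant $\omega_n^{p/(n+1)}$, the factors of $(n+1)$ cancelling as expected. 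Proposition~\ref{prop-mathcal V monotone} (monotonicity of $\mathcal{V}$) is not needed for the inequality itself, only its definition.

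For the equality discussion, Theorem~I is sharp precisely when $\widehat{\Omega}$ is a Euclidean ball, centered at the origin when $p>1$ and possibly translated when $p=1$. If $\widehat{\Omega}$ is a ball centered at the origin, then $\widehat{u}_{\widehat{\Omega}}$, hence $\g_{\Omega}$, is constant on $\mathbb{S}^n$, and by Lemma~\ref{lem-horo supp of geodesic ball} (or directly by \eqref{X(z)}) $\Omega$ is a geodesic ball centered at the origin. If instead $p=1$ and $\widehat{u}_{\widehat{\Omega}}(z)=c+\langle y,z\rangle$, then Lemma~\ref{lem-horo supp of geodesic ball} identifies $\g_{\Omega}$ with the horospherical support function of a geodesic ball in $\mathbb{H}^{n+1}$ (with center determined by $y$), completing the characterization. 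The main obstacle I anticipate is verifying the Jacobian formula for $d\widehat{\mu}/d\mu$ rigorously — it requires carefully tracking the pull-back under the composition $\widehat{G}_{\widehat{\Omega}}^{-1}\circ G_{\Omega}$ and using that both boundaries are graphs over $\mathbb{S}^n$ via their respective support-function parameterizations.
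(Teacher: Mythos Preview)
Your proposal is correct and follows essentially the same route as the paper: push $\Omega$ to $\widehat{\Omega}=\pi(\Omega)$ via Proposition~\ref{prop-lc proj map-supp func}, translate the two boundary integrals using the Jacobian identity $d\widehat{\mu}=\g_{\Omega}^n\,p_n(\cosh r\,\kappa-\widetilde{u})\,d\mu$ (this is exactly the content of Lemma~\ref{lem-hyperbolic version of p-mixed volume} and its Corollary, equations \eqref{lc-hyper version of volume} and \eqref{hyper version of p-surface area}), apply Theorem~I, and read off the equality cases through Proposition~\ref{prop-lc proj maps ball to ball}. The Jacobian computation you sketched is precisely the paper's \eqref{Det of D^2 phi+ phi I} combined with \eqref{area ele Hyper Gauss map}, so the ``main obstacle'' you anticipate is already resolved there.
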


\subsection{Proof of results in Subsection \ref{subsec-weighted p-BM type ineq}} \label{subse-pf of weighted p-BM type ineq}$ \ $

\begin{lem}\label{lem-hyperbolic version of p-mixed volume}
	Let $p$ be a real number.
	Assume that $K$ and $L$ are smooth uniformly h-convex bounded domains in $\mathbb{H}^{n+1}$. Let $\widehat{K} = \pi (K)$ and $\widehat{L} = \pi (L)$. Then
	\begin{align}
	\mathcal{V} (K) =& \Vol \( \widehat{K} \), \label{lc-hyper version of volume} \\
	\quad \mathcal{V}_p (K,L) =& \Vol_p \( \widehat{K}, \widehat{L} \). \label{lc-hyper version of p-mixed volume}
	\end{align}
\end{lem}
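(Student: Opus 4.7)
The plan is to prove both identities by transferring the computation on $\partial \widehat{K}$ to the sphere $\mathbb{S}^n$ via the Euclidean Gauss map, then transferring further to $\partial K$ via the horospherical Gauss map, and matching integrands using the horospherical support function identity $\widehat{u}_{\widehat{K}}(z) = \g_K(z)$ from Proposition \ref{prop-lc proj map-supp func}.

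The first step is to recall the classical formula for the volume and the $p$-mixed volume of a smooth convex body containing the origin in its interior. For such a body $\widehat{K}$ in $\mathbb{R}^{n+1}$, the boundary is parameterized via the inverse Gauss map by $\widehat{X}(z) = D\widehat{u}_{\widehat{K}}(z) + \widehat{u}_{\widehat{K}}(z)z$, the area element pulls back to $d\widehat\mu = \det_\sigma(D^2 \widehat u_{\widehat K} + \widehat u_{\widehat K} I)\,d\sigma$, and integration by parts gives
\begin{equation*}
\Vol(\widehat{K}) = \frac{1}{n+1} \int_{\mathbb{S}^n} \widehat{u}_{\widehat{K}}(z)\,\det_\sigma\!\bigl(D^2 \widehat{u}_{\widehat{K}}+ \widehat{u}_{\widehat{K}} I\bigr)\, d\sigma,
\end{equation*}
while \eqref{p-mixed volume} rewrites as $\Vol_p(\widehat K,\widehat L)= \frac{1}{n+1}\int_{\mathbb S^n} \widehat u_{\widehat L}^{\,p}\,\widehat u_{\widehat K}^{\,1-p}\det_\sigma(D^2\widehat u_{\widehat K}+\widehat u_{\widehat K}I)\,d\sigma$. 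Substituting $\widehat u_{\widehat K}=\g_K$ and $\widehat u_{\widehat L}=\g_L$ from Proposition \ref{prop-lc proj map-supp func} reduces everything to proving an identity between the determinant $\det_\sigma(D^2\g_K+\g_K I)$ and the hyperbolic quantities appearing in $\mathcal V$ and $\mathcal V_p$.

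The key computation is the identity
\begin{equation*}
\det\nolimits_\sigma\bigl(D^2 \g_K + \g_K I\bigr) \;=\; \frac{p_n(\cosh r\,\kappa - \widetilde{u})}{p_n(\widetilde{\kappa})}
\end{equation*}
at the point $X(z)\in \partial K$. To obtain it, combine \eqref{D^2 phi+phi I}, i.e.\ $D^2 \g_K+\g_K I = A[\g_K]+\cosh r\,I$, with \eqref{shifted curvature-support function}, which says that in a frame diagonalizing $\mathcal W$ at $X(z)$ the matrix $\sigma^{-1}A[\g_K]$ has eigenvalues $\widetilde{\lambda}_i/\g_K = 1/[\g_K(\kappa_i-1)]$. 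Hence the eigenvalues of $\sigma^{-1}(D^2\g_K+\g_K I)$ are $\frac{1}{\g_K(\kappa_i-1)}+\cosh r$, and using $\g_K^{-1}=\cosh r-\widetilde u$ from \eqref{1/phi, coshr-u} one factors each eigenvalue as $\frac{\cosh r\,\kappa_i-\widetilde{u}}{\kappa_i-1}$, yielding the claimed determinant identity.

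Finally, convert $d\sigma$ back to $d\mu$ on $\partial K$ via \eqref{rel-area element} and \eqref{shifted curvature-support function}, which together give $d\sigma = \g_K^{n}\,p_n(\widetilde\kappa)\,d\mu$. Plugging this conversion and the determinant identity into the two Euclidean integrals, and using $\g_K = (\cosh r-\widetilde u)^{-1}$ once more, the factor $p_n(\widetilde\kappa)$ cancels and one obtains exactly $\mathcal V(K)$ and $\mathcal V_p(K,L)$ as defined in \eqref{mathcal V-K}, \eqref{mathcal V-K,L}. The only mildly delicate step is the eigenvalue computation leading to the determinant identity, but it is purely algebraic once \eqref{D^2 phi+phi I} and \eqref{shifted curvature-support function} are in hand, so there is no serious obstacle; the proof is essentially a chain of substitutions.
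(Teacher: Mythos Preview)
Your proposal is correct and follows essentially the same approach as the paper: both transfer the Euclidean integrals to $\mathbb{S}^n$ via the Gauss map, substitute $\widehat u_{\widehat K}=\g_K$ from Proposition~\ref{prop-lc proj map-supp func}, compute the eigenvalues of $D^2\g_K+\g_K I = A[\g_K]+\cosh r\,I$ as $(\cosh r\,\kappa_i-\widetilde u)/(\kappa_i-1)$ using \eqref{D^2 phi+phi I}, \eqref{shifted curvature-support function}, and \eqref{1/phi, coshr-u}, and then pull back to $\partial K$ via \eqref{rel-area element}. The only cosmetic difference is that the paper writes the determinant as $p_n(\tilde\lambda_K)\,p_n(\cosh r\,\kappa_K-\widetilde u)$ while you write it as $p_n(\cosh r\,\kappa-\widetilde u)/p_n(\tilde\kappa)$, which are trivially equivalent.
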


\begin{proof}
	The following formula is well-known in convex geometry (see e.g., \cite[p. 121]{Sch14}),
	\begin{equation}\label{area ele Eucl Gauss map}
	d\widehat{\mu}_{\widehat{K}}\( \widehat{G}^{-1}_{\widehat{K}} (z)\) = p_n \(D^2 \widehat{u}_{\widehat{K}}(z) + \widehat{u}_{\widehat{K}}(z)\) d\sigma(z).
	\end{equation}
	Recall the definition of $p$-mixed volume in \eqref{p-mixed volume}.
	Formula \eqref{area ele Eucl Gauss map} together with \eqref{lc proj map-supp func} gives
	\begin{align}
	\Vol_p \( \widehat{K}, \widehat{L} \)
	:=& \frac{1}{n+1} \int_{\partial \widehat{K}} \widehat{u}_{\widehat{L}}^p(z)  \widehat{u}_{\widehat{K}}^{1-p}(z) d\widehat{\mu}_K \nonumber\\
	=&\frac{1}{n+1} \int_{\mathbb{S}^n} \widehat{u}^p_{\widehat{L}} \widehat{u}_{\widehat{K}}^{1-p} p_n \( D^2 \widehat{u}_{\widehat{K}} + \widehat{u}_{\widehat{K}} I\) d\sigma \nonumber\\
	=& \frac{1}{n+1} \int_{\mathbb{S}^n} \g_L^p \g_K^{1-p} p_n\(D^2 \g_K+\g_K I\) d\sigma  \label{p-mixed volume on sphere}
	\end{align}
	Using \eqref{D^2 phi+phi I}, \eqref{shifted curvature-support function} and \eqref{1/phi, coshr-u}, we have
	\begin{align}
	p_n \(D^2\g_K(z) +\g_K(z) I\) =& p_n \(\cosh r I+ A[\g_K]  \) \nonumber\\
	=& \prod_{i=1}^n \( \cosh r+ \( \cosh r- \widetilde{u} \) \( \tilde{\lambda}_K \)_i \) \nonumber\\
	=& p_n \( \tilde{\lambda}_K \) \prod_{i=1}^n \( \cosh r \(\tilde{\kappa}_K\)_i + \cosh r- \widetilde{u }\) \nonumber\\
	=& p_n \(\tilde{\lambda}_K \) p_n \( \cosh r \kappa_K - \widetilde{u} \), \label{Det of D^2 phi+ phi I}
	\end{align}
	where the right-hand side of \eqref{Det of D^2 phi+ phi I} was evaluated at $G^{-1}_K \(z\)$.
	From \eqref{rel-area element} and \eqref{shifted curvature-support function}, we obtain
	\begin{equation}\label{area ele Hyper Gauss map}
	\(G^{-1}_K\)^* \(d\sigma(z) \) = \det \(A[\g_K]\)^{-1} d\mu_K = \g_K^n p_n \( \tilde{\kappa}_K \) d\mu_K\( G^{-1}_K (z) \).
	\end{equation}
	Inserting \eqref{Det of D^2 phi+ phi I} and \eqref{area ele Hyper Gauss map} into \eqref{p-mixed volume on sphere}, and using \eqref{mathcal V-K,L}, we get
	\begin{align*}
	\Vol_p \( \widehat{K}, \widehat{L} \)
	=& \frac{1}{n+1}\int_{\partial K} \g_L^p (z) \g_K^{1-p}(z)  p_n \( \tilde{\lambda}_K\) p_n \( \cosh r\kappa_K -\widetilde{u} \) \g_K^n p_n \( \tilde{\kappa}_K\) d\mu_K\\
	=&\frac{1}{n+1}\int_{\partial K} \g_L^p (z) \frac{ p_n \( \lambda'\kappa_K -\widetilde{u} \)}{\( \cosh r-\widetilde{u}\)^{n+1-p}} d\mu_K\\
	=& \mathcal{V}_p (K,L). 
	\end{align*}
	Hence we obtain \eqref{lc-hyper version of p-mixed volume}.
	Formula \eqref{lc-hyper version of volume} follows by taking $L=K$ in \eqref{lc-hyper version of p-mixed volume}.
	We complete the proof of Lemma \ref{lem-hyperbolic version of p-mixed volume}.
\end{proof}

\begin{cor}
	Let $p$ be a real number. Assume that $K$ is a smooth uniformly h-convex bounded domain in $\mathbb{H}^{n+1}$ and $\widehat{K} = \pi (K)$. Then
	\begin{equation}\label{hyper version of p-surface area}
	\int_{\partial \widehat{K}} \widehat{u}_{\widehat{K}}^{1-p}(z) d\widehat{\mu}
	= \int_{\partial K} \frac{p_n \( \cosh r \kappa -\widetilde{u} \)}{ \(\cosh r- \widetilde{u}\)^{n+1-p}  } d\mu.
	\end{equation}
\end{cor}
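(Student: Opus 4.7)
The corollary is essentially the specialization of Lemma \ref{lem-hyperbolic version of p-mixed volume} in which the second domain is taken to be the origin. The plan is as follows. Let $N=(0,1)\in\mathbb{H}^{n+1}$ be the origin, regarded as a degenerate h-convex bounded domain. By Lemma \ref{lem-horospp of point} (or by direct computation from \eqref{X-z,1}), the horospherical support function of $\{N\}$ satisfies $\g_{\{N\}}(z)=-\metric{N}{(z,1)}=1$ for all $z\in\mathbb{S}^n$. On the Euclidean side, Proposition \ref{prop-lc proj map-supp func} (extended via Lemma \ref{lem-lc proj-holds for a point}) gives $\pi(\{N\})=\widehat{B}(0,1)$, the unit ball in $\mathbb{R}^{n+1}$, whose Euclidean support function is $\widehat{u}_{\pi(\{N\})}(z)\equiv 1$.

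With this choice, specialize identity \eqref{lc-hyper version of p-mixed volume} to $L=\{N\}$ and $\widehat{L}=\pi(\{N\})$:
\begin{equation*}
\mathcal{V}_{p}\bigl(K,\{N\}\bigr)=\Vol_{p}\bigl(\widehat{K},\widehat{B}(0,1)\bigr).
\end{equation*}
Substituting $\g_{\{N\}}\equiv 1$ into \eqref{mathcal V-K,L} yields
\begin{equation*}
\mathcal{V}_{p}\bigl(K,\{N\}\bigr)=\frac{1}{n+1}\int_{\partial K}\frac{p_{n}(\cosh r\,\kappa-\widetilde{u})}{(\cosh r-\widetilde{u})^{n+1-p}}\,d\mu,
\end{equation*}
while substituting $\widehat{u}_{\widehat{B}(0,1)}\equiv 1$ into \eqref{p-mixed volume} gives
\begin{equation*}
\Vol_{p}\bigl(\widehat{K},\widehat{B}(0,1)\bigr)=\frac{1}{n+1}\int_{\partial\widehat{K}}\widehat{u}_{\widehat{K}}^{\,1-p}(z)\,d\widehat{\mu}.
\end{equation*}
Equating the two expressions and cancelling the factor $\tfrac{1}{n+1}$ produces exactly \eqref{hyper version of p-surface area}.

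The only subtlety is that Lemma \ref{lem-hyperbolic version of p-mixed volume} is stated for a \emph{smooth uniformly h-convex bounded} second domain $L$, whereas here $L=\{N\}$ is degenerate. However, inspecting the proof of that lemma shows that $L$ enters only through its support function $\g_L(z)$ appearing as a smooth multiplicative factor in the integrands; the change-of-variables identities \eqref{area ele Eucl Gauss map}, \eqref{Det of D^2 phi+ phi I}, \eqref{area ele Hyper Gauss map} rely solely on the smooth uniform h-convexity of $K$. Since $\g_{\{N\}}\equiv 1$ is smooth on $\mathbb{S}^n$ and matches $\widehat{u}_{\pi(\{N\})}\equiv 1$ under Proposition \ref{prop-lc proj map-supp func}, the derivation carries through verbatim. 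Thus no genuine obstacle arises; the only point requiring a line of justification is this mild extension of the hypothesis on $L$ in Lemma \ref{lem-hyperbolic version of p-mixed volume}, which can be handled either by the observation above or, if one prefers, by approximating $N$ by a family of shrinking smooth geodesic balls centered at $N$ (for which $\g_L\to 1$ smoothly on $\mathbb{S}^n$) and passing to the limit in both sides.
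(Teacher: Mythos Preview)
Your proof is correct and follows essentially the same idea as the paper: specialize Lemma \ref{lem-hyperbolic version of p-mixed volume} to a domain $L$ with constant $\g_L$, so that the common factor $\g_L^p=\widehat{u}_{\widehat{L}}^p$ cancels from both sides. The only difference is that the paper takes $L$ to be a geodesic ball of positive radius centered at the origin (so $\g_L\equiv C_L\geq 1$), which is genuinely smooth and uniformly h-convex; this sidesteps entirely the degeneracy issue you raise when choosing $L=\{N\}$. Your handling of that issue is fine, but the paper's choice is marginally cleaner since it stays strictly within the stated hypotheses of the lemma without any additional justification.
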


\begin{proof}
	Suppose that $L \subset \mathbb{H}^{n+1}$ is a geodesic ball centered at the origin, which means  $\g_L (z) \equiv C_L$ for some constant $C_L \geq 1$. Proposition \ref{prop-lc proj map-supp func} gives $\widehat{u}_{\widehat{L}}(z) \equiv C_L$.
	Recall the definitions of $\Vol_p(\widehat{K}, \widehat{L})$ and $\mathcal{V}(K, L)$ in \eqref{p-mixed volume} and \eqref{mathcal V-K,L}, respectively. 
	Then \eqref{hyper version of p-surface area} follows directly from \eqref{lc-hyper version of p-mixed volume}.
\end{proof}

\begin{lem}\label{lem-Eucl dilates equiv Hyper parallel}
	Assume that $K$ and $L$ are smooth uniformly h-convex bounded domains in $\mathbb{H}^{n+1}$. Let $\widehat{K} = \pi (K)$ and $\widehat{L} = \pi (L)$. The following statements are equivalent,
	\begin{enumerate}
		\item $K$ and $L$ are hyperbolic dilates,
		\item $\widehat{K}$ and $\widehat{L}$ are dilates.
	\end{enumerate}
\end{lem}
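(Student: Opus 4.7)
The proof plan is essentially to unwind the two definitions and apply Proposition \ref{prop-lc proj map-supp func} to bridge them. The key observation is that hyperbolic dilation with parameter $p=1$ translates, via the map $\pi$, into ordinary Euclidean dilation, because at the level of support functions both operations amount to multiplying by a positive constant.

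First, I would record the translation dictionary: by Proposition \ref{prop-lc proj map-supp func},
\begin{equation*}
\widehat{u}_{\widehat{K}}(z) = \g_K(z), \qquad \widehat{u}_{\widehat{L}}(z) = \g_L(z) \qquad \text{for all } z \in \mathbb{S}^n.
\end{equation*}
Next, from Definition \ref{def-p dilation} taken with $p=1$, for any constant $c \geq 1$ the hyperbolic $1$-dilate satisfies $\g_{c \cdot_1 L}(z) = c\,\g_L(z)$, while the Euclidean dilate satisfies $\widehat{u}_{c\widehat{L}}(z) = c\,\widehat{u}_{\widehat{L}}(z)$.

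For the implication $(1) \Rightarrow (2)$, I would assume without loss of generality that $K = c \cdot_1 L$ for some $c \geq 1$; the dictionary above then immediately yields $\widehat{u}_{\widehat{K}} = c\,\widehat{u}_{\widehat{L}}$, and since a convex body is determined by its support function, $\widehat{K} = c\widehat{L}$, so $\widehat{K}$ and $\widehat{L}$ are Euclidean dilates. For the converse $(2) \Rightarrow (1)$, assume $\widehat{K} = c\widehat{L}$ for some $c > 0$; then $\g_K = c\,\g_L$ on $\mathbb{S}^n$. If $c \geq 1$ this reads $u_K = u_L + \log c$ with $\log c \geq 0$, hence $K = c \cdot_1 L$ by Definition \ref{def-p dilation}; if $c < 1$, the same argument applied with the roles of $K$ and $L$ swapped gives $L = c^{-1} \cdot_1 K$. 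In either case the pair is hyperbolic dilates per Definition \ref{def-hyperbolic dilates}.

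Since both directions are one-line applications of the support-function identity from Proposition \ref{prop-lc proj map-supp func} and the explicit form of the hyperbolic $1$-dilation, there is no substantive obstacle here; the only thing to be mildly careful about is the sign convention $c \geq 1$ built into Definition \ref{def-hyperbolic dilates}, which is handled by the symmetric swap of $K$ and $L$ when $c<1$.
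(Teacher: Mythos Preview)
Your proof is correct and follows essentially the same approach as the paper, which simply cites Proposition \ref{prop-lc proj map-supp func} and Definition \ref{def-hyperbolic dilates}; you have merely unpacked the one-line argument into its constituent steps. The paper also records right after Definition \ref{def-hyperbolic dilates} that $K$ and $L$ are hyperbolic dilates if and only if $\g_K^{-1}\g_L$ is constant, which is exactly the bridge you use.
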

\begin{proof}
	Lemma \ref{lem-Eucl dilates equiv Hyper parallel} follows directly from Proposition \ref{prop-lc proj map-supp func} and Definition \ref{def-hyperbolic dilates}.
\end{proof}

\begin{proof}[Proof of Proposition \ref{prop-mathcal V monotone}]
	Let $\Omega_1$ and $\Omega_2$ be two h-convex domains in $\mathbb{H}^{n+1}$ with $\Omega_1 \subset \Omega_2$. Since $\g_{\Omega_1} \leq  \g_{\Omega_2}$, \eqref{lc proj map-supp func} implies that $\widehat{\Omega}_1 \subset \widehat{\Omega}_2$. Therefore, Proposition \ref{prop-mathcal V monotone} follows from \eqref{lc-hyper version of volume}. 
\end{proof}

\begin{proof}[Proof of Theorems \ref{thm-weighted p-BM ineq-lc proj} -- \ref{thm-weighted p-iso ineq-lc proj} ]
	Combining Theorem G with \eqref{formula-rel Eucl sum and Hyper sum} and \eqref{lc-hyper version of volume}, we obtain \eqref{a cls of weighted BM ineq}. By Lemma \ref{lem-Eucl dilates equiv Hyper parallel}, equality holds in \eqref{a cls of weighted BM ineq} if $K$ and $L$ are hyperbolic dilates. Besides, if $1<p \leq 2$, then equality holds in \eqref{a cls of weighted BM ineq} if and only if $K$ and $L$ are hyperbolic dilates. Then we complete the proof of Theorem \ref{thm-weighted p-BM ineq-lc proj}.
	
	Theorem \ref{thm-weighted p-Min ineq-lc proj} follows from Theorem H, \eqref{lc-hyper version of volume}, \eqref{lc-hyper version of p-mixed volume} and a similar argument as above.

	Inequality \eqref{weighted p-iso type ineq from lc proj} follows from \eqref{p-isoperimetric inequality}, \eqref{hyper version of p-surface area} and \eqref{lc-hyper version of volume}. When $p>1$, by Theorem G and Proposition \ref{prop-lc proj maps ball to ball}, we know that  the equality holds in \eqref{weighted p-iso type ineq from lc proj} if and only if $\Omega$ is a geodesic ball centered at the origin. When $p=1$, by the classical isoperimetric inequality and Proposition \ref{prop-lc proj maps ball to ball},  we have that the equality holds in \eqref{weighted p-iso type ineq from lc proj} if and only if $\Omega$ is a geodesic ball. We complete the proof of Theorem \ref{thm-weighted p-iso ineq-lc proj}.
\end{proof}

In the last part of Section \ref{sec-hyperbolic sum and Firey's sum}, we would like to make the following comments on map $\pi$. 

\begin{enumerate}
	\item The Firey's $p$-sum ($p>1$) for convex bodies in Euclidean space depends on the choice of the origin of $\mathbb{R}^{n+1}$.
	Now we choose a point to be the origin of $\mathbb{H}^{n+1}$, i.e. $N=(0,1)\in \mathbb{R}^{n+1,1}$ in the hyperboloid model. Then map $\pi$ gives a perfect relationship between the hyperbolic $p$-sum and the Firey's $p$-sum, which was proved in Theorem \ref{thm-rel p-sums in Eucl and Hyperbolic}.  
	 We have shown in Proposition \ref{prop-geo-hyper-p-dilation} and Proposition \ref{prop-sums not rely on origin} that the both hyperbolic $p$-sum and hyperbolic $p$-dilation can be constructed intrinsically in hyperbolic space $\mathbb{H}^{n+1}$, which is a different phenomenon comparing with the Firey's $p$-sum.
	 
	 \item 	The approach in Section \ref{sec-hyperbolic sum and Firey's sum} relies on the h-convexity of domains in $\mathbb{H}^{n+1}$. Specifically, we used the horospherical support function to investigate the properties of the map $\pi$. Recall that we defined hyperbolic $p$-sums for sets in Definition \ref{def-p sum-p>0-new}, and the definition of $\pi$ does not require the convexity of domains.
	 For nonconvex compact sets in Euclidean space, the $p$-Brunn-Minkowski inequality $(p > 1)$ was proved in \cite{LYZ12}.
	 Thus, it is natural to study whether the results in Section \ref{sec-hyperbolic sum and Firey's sum} still hold in a more general setting.
	 For example, we can ask: \emph{Can one remove the h-convex assumption of $K$ and $L$ in Theorem \ref{thm-weighted p-BM ineq-lc proj}}?
	
	\item According to Gardner \cite{Gar02}, if inequalities are silver currency in mathematics, those that come along with precise equality conditions are gold. 
	For many important geometric inequalities of multiple domains in $\mathbb{R}^{n+1}$, the equality holds if and only if the domains are dilates. Similarly, the inequalities of a single domain in $\mathbb{R}^{n+1}$ holds equality always require that the domain is a  geodesic ball.
	So, another strong motivations that suggest us to move geometric inequalities for domains in $\mathbb{R}^{n+1}$ to those in $\mathbb{H}^{n+1}$ are Proposition \ref{prop-lc proj maps ball to ball} and Lemma \ref{lem-Eucl dilates equiv Hyper parallel}.  
	In conclusion, our approach in Section \ref{sec-hyperbolic sum and Firey's sum} can be used to find many geometric inequalities for h-convex domains in $\mathbb{H}^{n+1}$ by using inequalities for domains in Euclidean space. Furthermore, the approach can give new characterizations of geodesic balls in $\mathbb{H}^{n+1}$. Finally, we will give a concrete example in Theorem \ref{thm-hyoer version of BCD17}.
	
	\item For inequality \eqref{weighted p-iso type ineq from lc proj}, the h-convex assumption of $\Omega$ in Theorem \ref{thm-weighted p-iso ineq-lc proj} is not natural. 
	A domain $\Omega$ in $\mathbb{H}^{n+1}$ is called static-convex if the principal curvatures are greater than or equal to $\frac{\widetilde{u}}{\cosh r}$ on $\partial \Omega$. The static-convex assumption of domains appeared in many important geometric inequalities, see e.g. \cites{BW14, HL21, Xia16}.
	Therefore, we conjecture that Theorem \ref{weighted p-iso type ineq from lc proj} holds for all static-convex domains in $\mathbb{H}^{n+1}$.

	\item In this paper, we focus on geometric inequalities of  domains in $\mathbb{H}^{n+1}$. Let $\Omega$ be a domain in $\mathbb{H}^{n+1}$ and $M=\partial \Omega$ be its boundary.  It is easy to see that $\nu +X$ and $\nu-X$ are two light-like, outward pointing normal of $M \subset \mathbb{R}^{n+1,1}$, where $\nu+X$ is future directed and $\nu-X$ is past directed. Besides, $\nu-X = -\g^{-1}(z) \(z,1\)$. Then we know that $-\xi\(M\)$ is the intersection of the null hypersurface generated by $M$ and hyperplane $\mathbb{R}^{n+1} \times \{0\}$, see Corollary  \ref{cor-lc proj map-bdy map}. If we no longer require that the spacelike submanifold $M^n$ lies on $\mathbb{H}^{n+1}$, then the above structure was well studied in general relativity, see e.g.  \cites{MS12, MS14}. Hence, it is natural to ask: \emph{How to propose the results in Section \ref{sec-hyperbolic sum and Firey's sum} for general spacelike submanifold $M^n$ in $\mathbb{R}^{n+1,1}$}?
\end{enumerate}

Now we give another direct application of the map $\pi$. In \cite{BCD17}, Brendle, Choi, and Daskalopoulos classified the self-similar solutions of the $\alpha$-Gauss curvature flow for all $\alpha \geq \frac{1}{n+2}$, $n \geq 2$. 
For convenience, we do not consider the affine invariant case $\alpha= \frac{1}{n+2}$ in the following statement of their result.
\begin{thmJ}[\cite{BCD17}]
	Let $\alpha \in (\frac{1}{n+2}, +\infty)$ and $n \geq 2$. Assume that $\widehat{\Omega} \subset \mathbb{R}^{n+1}$ is a uniformly convex bounded domain containing the origin in its interior, and satisfies
	\begin{equation*}
	p_n(D^2 \widehat{u}_{\widehat{\Omega}}(z) + \widehat{u}_{\widehat{\Omega}}(z) I)^{-\alpha} = \widehat{u}_{\widehat{\Omega}}(z), \quad \forall z \in \mathbb{S}^n.
	\end{equation*}
	Then $\widehat{\Omega}$ is a geodesic ball centered at the origin.
\end{thmJ}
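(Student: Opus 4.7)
The plan is to translate the equation into a fully nonlinear elliptic PDE on $\mathbb{S}^n$ and apply a maximum principle argument to a carefully chosen tensor, following the strategy of Brendle--Choi--Daskalopoulos. Writing $K = p_n(D^2\hat{u}_{\widehat\Omega}+\hat{u}_{\widehat\Omega}I)^{-1}$ for the Gauss curvature via the inverse Gauss map parametrization, the equation $p_n(D^2\hat{u}_{\widehat\Omega}+\hat{u}_{\widehat\Omega}I)^{-\alpha}=\hat{u}_{\widehat\Omega}$ becomes $K^\alpha=\hat{u}_{\widehat\Omega}$, which is the self-similar soliton equation for the $\alpha$-Gauss curvature flow. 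The goal is then to show that any smooth, uniformly convex solution $\hat{u}_{\widehat\Omega}$ of this equation on $\mathbb{S}^n$ must be constant.

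The key step is to introduce an auxiliary symmetric $(0,2)$-tensor $Z_{ij}$ on $\mathbb{S}^n$, built from $D^2\log\hat{u}_{\widehat\Omega}$ and $\log\hat{u}_{\widehat\Omega}$, whose vanishing encodes that $\widehat\Omega$ is a geodesic ball centered at the origin. By differentiating the equation twice and using the concavity properties of $\log p_n$ together with the divergence-free property of the cofactor of $D^2\hat{u}_{\widehat\Omega}+\hat{u}_{\widehat\Omega}I$, one derives an elliptic differential inequality of the schematic form
\begin{equation*}
b^{kl}\nabla_k\nabla_l Z_{ij}+\text{(lower order)}\geq c\,|Z|^2\,g_{ij}
\end{equation*}
with $b^{kl}>0$ and $c>0$, where the positivity of $c$ exploits $\alpha>\tfrac{1}{n+2}$ in a crucial way (the exponent $\tfrac{1}{n+2}$ is precisely the affine-invariant threshold at which the quadratic gradient terms can be absorbed with the correct sign). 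Applying the strong maximum principle, one concludes $Z_{ij}\equiv 0$.

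From $Z_{ij}\equiv 0$, an ODE analysis along integral curves of $\nabla\log\hat{u}_{\widehat\Omega}$, or alternatively an integral identity combined with the strict convexity of the principal radii, forces $\hat{u}_{\widehat\Omega}$ to be constant. Since the equation then reduces to $(\hat{u}_{\widehat\Omega})^{\alpha n}=\hat{u}_{\widehat\Omega}$, there is a unique positive constant solution, which corresponds to a geodesic ball centered at the origin whose radius is determined by $\alpha$.

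The main obstacle is constructing the correct tensor $Z_{ij}$ so that the quadratic gradient terms appearing in its evolution/elliptic identity combine with a favorable sign. This is delicate because one must simultaneously handle the nonlinearity of $p_n$, the sub-critical exponent $\alpha$, and the absence of translation invariance (the origin plays a distinguished role). The borderline case $\alpha=\tfrac{1}{n+2}$ is genuinely different — there the solutions form a larger family (ellipsoids), so the strict inequality $\alpha>\tfrac{1}{n+2}$ must be used explicitly when closing the maximum principle argument.
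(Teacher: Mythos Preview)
The paper does not prove Theorem~J; it is quoted verbatim from \cite{BCD17} and used as a black box to derive the subsequent Theorem~\ref{thm-hyoer version of BCD17} via the map $\pi$. There is therefore no ``paper's own proof'' to compare your proposal against.

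Your sketch is in the general spirit of the Brendle--Choi--Daskalopoulos argument (a tensor maximum principle that exploits the strict inequality $\alpha>\tfrac{1}{n+2}$), but as written it is too schematic to stand as a proof: you never specify the tensor $Z_{ij}$, and the asserted elliptic inequality with a favorable sign is stated rather than derived. The actual argument in \cite{BCD17} is more delicate --- it proceeds by applying the strong maximum principle to a carefully constructed quantity built from the principal curvatures (essentially the trace-free part of the inverse second fundamental form, after a suitable normalization), and the closing step uses a pointwise algebraic inequality whose strictness genuinely fails at $\alpha=\tfrac{1}{n+2}$. If your goal were to reprove Theorem~J you would need to supply those details; for the purposes of the present paper, however, the result is simply cited and no proof is expected.
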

\begin{thm}\label{thm-hyoer version of BCD17}
	Let $\alpha \in (\frac{1}{n+2}, +\infty)$ and $n \geq 2$.
	Let $\Omega$ be a smooth uniformly h-convex bounded domain in $\mathbb{H}^{n+1}$. Assume that $\partial \Omega$ satisfies
	\begin{equation}\label{hyper version of BCD17}
	p_n \(\cosh r + \frac{\cosh r- \widetilde{u}}{\tilde{\kappa}} \)^\alpha = \gamma\(\cosh r- \widetilde{u}\)
	\end{equation}
	for some positive constant $\gamma$.
	Then $\Omega$ is a geodesic ball centered at the origin.
\end{thm}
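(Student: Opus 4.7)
\textbf{Proof plan for Theorem \ref{thm-hyoer version of BCD17}.} The strategy is to transplant the equation to the Euclidean setting via the map $\pi$ of Definition \ref{def-light conr proj-pi}, reduce it to the self-similar equation treated by Theorem J, and then translate the rigidity conclusion back to $\mathbb{H}^{n+1}$ using Proposition \ref{prop-lc proj maps ball to ball}.

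First, set $\widehat{\Omega} := \pi(\Omega)$. By Proposition \ref{prop-lc proj map-supp func}, $\widehat{\Omega}$ is a smooth uniformly convex bounded domain in $\mathbb{R}^{n+1}$ containing the origin in its interior, with support function $\widehat{u}_{\widehat{\Omega}}(z) = \g_{\Omega}(z)$. I would next rewrite the left-hand side of \eqref{hyper version of BCD17} using the computation in the proof of Lemma \ref{lem-hyperbolic version of p-mixed volume}. Because $\tilde\lambda_i\kappa_i = 1+\tilde\lambda_i$, the identity
\[
p_n\!\left(\cosh r + \tfrac{\cosh r-\widetilde u}{\tilde\kappa}\right) \;=\; p_n\!\left(\cosh r\, I + A[\g_{\Omega}]\right) \;=\; p_n\!\left(D^2 \g_{\Omega} + \g_{\Omega} I\right) \;=\; p_n\!\left(D^2 \widehat{u}_{\widehat{\Omega}} + \widehat{u}_{\widehat{\Omega}} I\right)
\]
holds on $\mathbb{S}^n$. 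Combining this with $\cosh r - \widetilde{u} = 1/\g_{\Omega} = 1/\widehat{u}_{\widehat{\Omega}}$ from \eqref{1/phi, coshr-u}, equation \eqref{hyper version of BCD17} becomes
\[
p_n\!\left(D^2 \widehat{u}_{\widehat{\Omega}} + \widehat{u}_{\widehat{\Omega}} I\right)^{-\alpha} \;=\; \frac{1}{\gamma}\, \widehat{u}_{\widehat{\Omega}}(z), \qquad z\in \mathbb{S}^n.
\]

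Next I would normalize away the constant $1/\gamma$ by a homothety. For $c>0$, put $\widehat{\Omega}' := c\widehat{\Omega}$, so $\widehat{u}_{\widehat{\Omega}'} = c\widehat{u}_{\widehat{\Omega}}$ and $p_n(D^2 \widehat{u}_{\widehat{\Omega}'}+\widehat{u}_{\widehat{\Omega}'}I)^{-\alpha} = c^{-n\alpha} p_n(D^2\widehat{u}_{\widehat{\Omega}}+\widehat{u}_{\widehat{\Omega}} I)^{-\alpha}$. Choosing $c = \gamma^{-1/(n\alpha+1)}$, which is well-defined since $\alpha > 1/(n+2)$ forces $n\alpha+1>0$, yields $p_n(D^2 \widehat{u}_{\widehat{\Omega}'}+\widehat{u}_{\widehat{\Omega}'} I)^{-\alpha} = \widehat{u}_{\widehat{\Omega}'}$. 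Theorem J then applies to $\widehat{\Omega}'$, and we conclude that $\widehat{\Omega}'$, hence $\widehat{\Omega}$, is a geodesic ball centered at the origin of $\mathbb{R}^{n+1}$.

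Finally, I would translate back. If $\widehat{\Omega}=\widehat{B}(0,\rho)$ then $\widehat{u}_{\widehat{\Omega}}\equiv \rho$, so $\g_{\Omega}\equiv\rho$ on $\mathbb{S}^n$. By Lemma \ref{lem-horo supp of geodesic ball} (applied with $X=N=(0,1)$ and $r=\log\rho$), the only smooth uniformly h-convex domain with constant horospherical support function $\rho$ is the geodesic ball $B(N,\log\rho)$ centered at the origin of $\mathbb{H}^{n+1}$; uniqueness follows from Corollary \ref{cor-support-construct-domain}. Thus $\Omega = B(N,\log\rho)$, proving the theorem. No step is a genuine obstacle here—the substantive input is Theorem J, and the key technical bridge is the identification $p_n(\cosh r+(\cosh r-\widetilde u)/\tilde\kappa)=p_n(D^2\widehat{u}_{\widehat\Omega}+\widehat{u}_{\widehat\Omega}I)$, which has already been established inside the proof of Lemma \ref{lem-hyperbolic version of p-mixed volume}.
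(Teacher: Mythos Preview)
Your proposal is correct and follows essentially the same route as the paper: both translate \eqref{hyper version of BCD17} via the map $\pi$ and the identity \eqref{Det of D^2 phi+ phi I} into $p_n(D^2\widehat{u}_{\widehat\Omega}+\widehat{u}_{\widehat\Omega}I)^{-\alpha}=\gamma^{-1}\widehat{u}_{\widehat\Omega}$, rescale by $\gamma^{-1/(1+n\alpha)}$, invoke Theorem~J, and conclude. The only cosmetic difference is in the last step: the paper reads off $\Omega=B(N,\log\rho)$ directly from Proposition~\ref{prop-lc proj maps ball to ball}, whereas you argue via constancy of $\g_\Omega$ and Corollary~\ref{cor-support-construct-domain}---both are equally valid.
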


\begin{proof}
	Using \eqref{Det of D^2 phi+ phi I}, \eqref{1/phi, coshr-u} and Proposition \ref{prop-lc proj map-supp func}, we have that equation \eqref{hyper version of BCD17} is equivalent to
	\begin{equation*}
	p_n \( D^2 \widehat{u}_{\pi \(\Omega\)}(z) + \widehat{u}_{\pi (\Omega)}(z) I\)^{\alpha} = \gamma \widehat{u}^{-1}_{\pi (\Omega)}(z), \quad 
	\forall z \in \mathbb{S}^n.
	\end{equation*}
	Letting $\widehat{\Omega} = \gamma^{-\frac{1}{1+ \alpha n}} \pi (\Omega)$ in Theorem J, we have that $\widehat{\Omega}$ is a geodesic ball centered at the origin. 
	By Proposition \ref{prop-lc proj maps ball to ball}, we know that $\Omega$ is a geodesic ball centered at the origin. Therefore, we complete the proof of Theorem \ref{thm-hyoer version of BCD17}.
\end{proof}
The study of curvature equations in space forms is a longstanding problem in geometric analysis,  see e.g. \cites{GLM18, GLW22, GRW15}. Combining results in that topic with the map $\pi$ in this section can give other characterizations of geodesic balls in $\mathbb{H}^{n+1}$.

\section{Appendix} \label{sec-appendix}
\subsection{Intrinsic proof of Lemma \ref{lem-conf-Schouten}}$ \ $

The following Lemma \ref{lem-Ric-R-conf-trans} is well known in conformal geometry (see e.g., \cite[Equation (2.1)]{GL11}).
\begin{lem}\label{lem-Ric-R-conf-trans}
	For any two conformal metrics $g$ and $\tilde{g}$, which satisfy $\tilde{g} = e^{2\lambda} g$ for some smooth function $\lambda$, we have
		\begin{align}
		\widetilde{\Ric}_{ij} =&\Ric_{ij}-(n-2)\(\nabla_j \nabla_i \lambda -\nabla_i \lambda \nabla_j \lambda\) - \Delta \lambda g_{ij}
		-(n-2) |\nabla \lambda|^2 g_{ij},\label{tilde Ric}\\
		e^{2\lambda} \widetilde{R}
		=&R-2(n-1)\Delta \lambda -(n-1)(n-2)|\nabla \lambda|^2, \nonumber
	\end{align}
	where $\widetilde{\Ric}_{ij}$ and $\widetilde{R}$ are the Ricci curvature tensor and scalar curvature with respect to metric $\tilde{g}$. 
\end{lem}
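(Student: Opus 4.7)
The plan is to prove this classical conformal transformation formula by a direct coordinate computation, proceeding in three stages.

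First I would derive the transformation law for the Christoffel symbols. Starting from the local coordinate expression $\Gamma^k_{ij} = \tfrac{1}{2} g^{kl}(\partial_i g_{jl} + \partial_j g_{il} - \partial_l g_{ij})$ and substituting $\tilde{g}_{ij} = e^{2\lambda} g_{ij}$, $\tilde{g}^{kl} = e^{-2\lambda} g^{kl}$, a routine cancellation yields
\[
\tilde{\Gamma}^k_{ij} = \Gamma^k_{ij} + \delta^k_i \nabla_j \lambda + \delta^k_j \nabla_i \lambda - g_{ij} \nabla^k \lambda,
\]
where the covariant derivatives on the right are taken with respect to $g$ and $\nabla^k\lambda = g^{kl}\nabla_l\lambda$. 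This is the only input needed about the Levi-Civita connection of $\tilde{g}$.

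Second I would substitute this into the coordinate formula $R^l{}_{ijk} = \partial_i \Gamma^l_{jk} - \partial_j \Gamma^l_{ik} + \Gamma^l_{im}\Gamma^m_{jk} - \Gamma^l_{jm}\Gamma^m_{ik}$. After expansion and collection, the difference $\widetilde{R}^l{}_{ijk} - R^l{}_{ijk}$ reorganizes into terms linear in the Hessian $\nabla_j\nabla_i\lambda$ and quadratic in $d\lambda$. Contracting $l = k$ then produces the Ricci formula \eqref{tilde Ric}, and a further trace with $\tilde{g}^{ij} = e^{-2\lambda} g^{ij}$ collects the $\Delta\lambda$ and $|\nabla\lambda|^2$ contributions and gives the scalar curvature identity.

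The main obstacle is purely the bookkeeping in the second step: one has to track several cross-cancellations between the $\partial\tilde{\Gamma}$ and $\tilde{\Gamma}\tilde{\Gamma}$ contributions, and the specific coefficients of $\nabla_i\lambda\,\nabla_j\lambda$ and $\Delta\lambda\, g_{ij}$ only emerge after organizing those cancellations carefully. A conceptually cleaner alternative is to linearize Ricci along the one-parameter family $g_t = e^{2t\lambda} g$ and integrate, but this requires a separate derivation of the variation of Ricci under $\delta g = h$, so it is not obviously shorter. Since the statement appears here only as an auxiliary tool for the intrinsic proof of Lemma \ref{lem-conf-Schouten}, I would carry out the direct approach above and verify the final coefficients by specializing to $n=2$, where the scalar formula reduces to the classical Gauss curvature relation $\widetilde{K} = e^{-2\lambda}(K - \Delta\lambda)$ as a sanity check.
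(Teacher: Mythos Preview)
Your approach is correct and is the standard direct derivation of these classical formulas. However, the paper does not actually prove this lemma at all: it is stated as ``well known in conformal geometry'' with a citation to \cite[Equation (2.1)]{GL11}, and is used purely as a black box in the subsequent intrinsic proof of Lemma \ref{lem-conf-Schouten}. So there is no paper-proof to compare against; you have supplied a full argument where the authors simply quote the result. Your Christoffel symbol transformation is right, the outlined contraction is the usual route, and your $n=2$ sanity check is a good idea.
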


Now we give an intrinsic proof of Lemma \ref{lem-conf-Schouten}.
\begin{proof}[Proof of Lemma \ref{lem-conf-Schouten}]
	In this proof, we use normal coordinates around $z_0 \in (\mathbb{S}^n, g_0)$  such that the Ricci tensor is diagonal at $z_0$.
	Taking $\lambda=- \log \g$ in \eqref{tilde Ric}, at $z_0$ we have $\tilde{g}^{ij} = \g^2 \delta_{ij}$ and 
	\begin{align}
	\widetilde{\Ric}_{ij}
	=& (n-1) \delta_{ij} 
	-(n-2) \left( - (\log \g)_{ij} - \frac{\g_i \g_j}{\g^2}  \right)
	+ \Delta \log \g \delta_{ij} - (n-2) \frac{|D \g|^2}{\g^2}\delta_{ij}  \nonumber\\
	=&(n-1) \delta_{ij} +(n-2) \frac{\g_{ij}}{\g}
	+ \(\frac{\Delta \g}{\g}- \frac{|D \g|^2}{\g^2} \) \delta_{ij}
	-(n-2) \frac{|D \g|^2}{\g^2} \delta_{ij}.\label{phi-2-Rij}
	\end{align}
	Hence
	\begin{align}\label{phi-2-R}
	\widetilde{R} := 
	\tilde{g}^{ij} 	\widetilde{\Ric}_{ij}
	=
	n(n-1)\g^2 +2(n-1) \g \Delta \g -n(n-1) |D \g|^2.
	\end{align}
	Using \eqref{phi-2-Rij}, \eqref{phi-2-R} and \eqref{shifted curvature-support function}, at $z_0$ we have
	\begin{align}
	\tilde{g}^{ik}\widetilde{S}_{kj} =& \frac{1}{n-2}\(\g^2 \widetilde{\Ric}_{ij} - \frac{\widetilde{R}}{2(n-1)} \delta_{ij}\)
	=\g \(\g_{ij} - \frac{1}{2}\frac{|D \g|^2}{\g} \delta_{ij} + \frac{1}{2} \g \delta_{ij} \)    \nonumber\\
	=& \g A_{ij}[\g] +\frac{1}{2}\delta_{ij} = \(\tilde{\lambda}_i +\frac{1}{2}\) \delta_{ij},\label{S_ij-lambda-1/2}
	\end{align}
	where $\tilde{\lambda}_1, \ldots, \tilde{\lambda}_n$ are shifted principal  radii of curvature of $M$. This completes the proof of Lemma \ref{lem-conf-Schouten}.
\end{proof}
\subsection{Theorem \ref{thm-weighted Steiner formula} without uniformly h-convex condition}$ \ $

The original proof of Theorem \ref{thm-weighted Steiner formula} used horospherical support function, which depends on the h-convexity of $\Omega$.  In the following Proposition \ref{prop-new-pf-weighted Steiner formula}, we only assume that $\Omega \subset \mathbb{H}^{n+1}$ is a smooth convex bounded domain as we mentioned in Remark \ref{rem-WSF-convex}.

\begin{prop}\label{prop-new-pf-weighted Steiner formula}
	Let $n \geq 1$ be an integer and $\Omega$ be a smooth convex bounded domain in $\mathbb{H}^{n+1}$, then
	\begin{align*}
	\Vol_w(\Omega_\rho) - \Vol_w(\Omega)
	=& \sum_{k=0}^{n} \left( \int_{\partial \Omega} \cosh r \sigma_k (\tilde{\kappa}) d \mu \right) \int_0^\rho  e^{(n-k+1)t} \sinh^{k}t dt\\
	&-\sum_{k=0}^{n} \left( \int_{\partial \Omega} (\cosh r - \tilde{u}) \sigma_k (\tilde{\kappa}) d \mu \right) \int_0^\rho  e^{(n-k)t} \sinh^{k+1}t dt.
	\end{align*}
\end{prop}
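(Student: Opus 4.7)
The plan is to bypass the horospherical support function entirely and argue by the normal exponential map, which only requires ordinary geodesic convexity of $\Omega$. Define $F\colon \partial\Omega\times[0,\rho]\to\mathbb{H}^{n+1}$ by $F(y,t):=\exp_y(t\nu(y))$, where $\nu$ is the outward unit normal of $\partial\Omega$. For a smooth convex bounded domain in $\mathbb{H}^{n+1}$, all principal curvatures satisfy $\kappa_i\geq 0$, the Jacobian of $F$ (computed below) is strictly positive for all $t\geq 0$, and the nearest-point projection onto the closed convex set $\overline{\Omega}$ in the Cartan--Hadamard manifold $\mathbb{H}^{n+1}$ is single-valued, so $F$ is a diffeomorphism onto $\overline{\Omega_\rho}\setminus\mathrm{int}(\Omega)$ and the volume factors as $dv=d\mu_t\,dt$, where $d\mu_t$ is the area element of $\partial\Omega_t := F(\partial\Omega,t)$.

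The first step is to compute $d\mu_t$ via Jacobi fields along the normal geodesics. In constant curvature $-1$, a normal Jacobi field satisfies $J''=J$. For $J_i(t):=\partial_i F(y,t)$ the initial data are $J_i(0)=\partial_i y$ and $J'_i(0)=\overline{\nabla}_{\partial_i}\nu=h_i{}^j\partial_j y$, which yields
\begin{equation*}
	J_i(t)=\bigl(\cosh t\,\delta_i{}^j+\sinh t\,h_i{}^j\bigr)E_j(t),
\end{equation*}
where $E_j(t)$ is the parallel translate of $\partial_j y$ along the geodesic $t\mapsto F(y,t)$. Hence
\begin{equation*}
	d\mu_t=\det\bigl(\cosh t\,I+\sinh t\,h\bigr)d\mu=\prod_{i=1}^n\bigl(\cosh t+\sinh t\,\kappa_i\bigr)d\mu.
\end{equation*}
The crucial algebraic identity $\cosh t+\sinh t\,\kappa_i=e^t+\sinh t\,\tilde\kappa_i$ (where $\tilde\kappa_i=\kappa_i-1$) then gives the expansion
\begin{equation*}
	d\mu_t=\sum_{k=0}^n \sigma_k(\tilde\kappa)\,e^{(n-k)t}\sinh^k t\,d\mu.
\end{equation*}

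The second step evaluates $\cosh r$ along the normal geodesic $\gamma(t):=F(y,t)$. Setting $f(t):=\cosh r(\gamma(t))$ and using \eqref{conf-vf}, one computes $f'(t)=\metric{V}{\gamma'(t)}$ and $f''(t)=\metric{\overline{\nabla}_{\gamma'}V}{\gamma'}=\cosh r(\gamma(t))=f(t)$, with $f(0)=\cosh r(y)$ and $f'(0)=\metric{V(y)}{\nu(y)}=\tilde u(y)$. Therefore
\begin{equation*}
	f(t)=\cosh t\,\cosh r(y)+\sinh t\,\tilde u(y)=e^t\cosh r(y)-\sinh t\bigl(\cosh r(y)-\tilde u(y)\bigr).
\end{equation*}

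Finally, one assembles
\begin{equation*}
	\Vol_w(\Omega_\rho)-\Vol_w(\Omega)=\int_0^\rho\!\int_{\partial\Omega}f(t)\,d\mu_t\,dt
\end{equation*}
by multiplying the two expansions above and integrating term by term; the $e^t\cosh r$ piece produces the first sum, and the $-\sinh t(\cosh r-\tilde u)$ piece produces the second sum, matching \eqref{weighted Steiner formula}. I expect the main obstacle to be the rigorous justification that $F$ is a global diffeomorphism onto $\overline{\Omega_\rho}\setminus\mathrm{int}(\Omega)$ under the mere convexity assumption: one must verify that no focal points arise (which follows from $\kappa_i\geq 0$ together with $J''=J$, since the eigenvalues $\cosh t+\sinh t\,\kappa_i$ are strictly positive for $t\geq 0$) and that distinct outward normal geodesics do not meet (which follows from the uniqueness of the nearest-point projection onto a closed geodesically convex set in $\mathbb{H}^{n+1}$). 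Once this is in place, the computation itself is routine.
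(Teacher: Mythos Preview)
Your proposal is correct and follows essentially the same approach as the paper: parametrize $\overline{\Omega_\rho}\setminus\mathrm{int}(\Omega)$ by the outward normal exponential map, compute $d\mu_t$ and $\cosh r_t$ along the normal flow, then multiply the two expansions and integrate in $t$. The only difference is cosmetic---the paper works extrinsically in the hyperboloid model, writing $X_t=\cosh t\,X+\sinh t\,\nu$ and reading off $\cosh r_t=-\metric{X_t}{(0,1)}$ and $\partial_i X_t=(\cosh t+\sinh t\,\kappa_i)\partial_i X$ directly from Minkowski inner products, whereas you derive the same formulas intrinsically via Jacobi fields and the ODE $f''=f$ for $f(t)=\cosh r(\gamma(t))$.
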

 
\begin{proof}
	For any position vector $X \in \partial \Omega$, we know $\cosh r = - \metric{X}{ (0,1)} = x_{n+1}$ by \eqref{polar coord X}. Recall that $\overline{\nabla}$ and $\widetilde{\nabla}$ denote the connections on $\mathbb{H}^{n+1}$ and $\mathbb{R}^{n+1,1}$ respectively.
	As $V := \sinh r \partial_r = \overline{\nabla} \cosh r$, we have that $V$ is the projection of $(0,-1) =\widetilde{\nabla} x_{n+1} $ on $T_X \mathbb{H}^{n+1}$. Hence, $\tilde{u}: = \metric{V}{\nu} = -\metric{(0,1)}{\nu}$ at $X \in \partial \Omega$.
	Denote by $X_t$ the position vector on $\partial \Omega_t$. Then (see e.g., \cite[Remark 14]{EGM09})
	\begin{align*}
	X_t = \cosh t X + \sinh t \nu.
	\end{align*}	
	Let $r_t$ be the geodesic distance form $X_t$ to $N =(0,1) \in \mathbb{R}^{n+1,1}$, i.e. $r_t = d_{\mathbb{H}^{n+1}} (X_t, N)$. Then \eqref{geo-dis on hyperbolic space} implies
	\begin{align}
	\cosh r_t =& -\metric{X_t}{(0,1)}
	= -\cosh t \metric{X}{(0,1)} - \sinh t \metric{\nu}{(0,1)} \nonumber\\
	=&\cosh t \cosh r + \sinh t \tilde{u}
	=\cosh r e^t - \(\cosh r- \tilde{u}\)\sinh t.  \label{coshr-t}
	\end{align}
	Now we use normal coordinates for $\partial \Omega$ around some point $p \in \partial \Omega$ such that $g_{ij} =\delta_{ij}$ and $\(h_i{}^j\) = {\rm diag}\(\kappa_1, \ldots, \kappa_n\)$ at $p$. Then
	\begin{align*}
	\partial_i X_t = \cosh t \partial_i X + \sinh t \partial_i \nu
	=(\cosh t + \sinh t \kappa_i)\partial_i X
	=\(e^t + \sinh t \tilde{\kappa}_i \) \partial_i X.
	\end{align*}
	It follows that 
	\begin{align}\label{dmu-t}
	d \mu_t = \sqrt{\det \( \metric{\partial_i X_t}{ \partial_j X_t} \) } d \mu 
	= \prod \limits_{i=1}^n (e^t + \sinh t \tilde{\kappa}_i) d\mu
	= \sum_{k=0}^n e^{(n-k)t} \sinh^k t \s_k(\tilde{\kappa}) d\mu.
	\end{align}
	Using \eqref{coshr-t} and \eqref{dmu-t}, we have
	\begin{align}
	A_w(\Omega_t)
	:=& \int_{\partial \Omega_t} \cosh r_t d\mu_t \nonumber\\ 
	=&\int_{\partial \Omega}
	 \(\cosh r e^t - \(\cosh r- \tilde{u}\) \sinh t \)  
	\(\sum_{k=0}^n e^{(n-k)t} \sinh^k t \s_k(\tilde{\kappa}) \)d\mu  \nonumber\\
	=& 
	\sum_{k=0}^n \left( \int_{\partial \Omega} \cosh r \s_k(\tilde{\kappa}) d\mu  \right)
	e^{(n-k+1)t} \sinh^k t  \nonumber\\
	&-
	\sum_{k=0}^n \left( \int_{\partial \Omega} \( \cosh r -\tilde{u}\) \s_k(\tilde{\kappa}) d\mu  \right)
	e^{(n-k) t} \sinh^{k+1} t. \label{A_w(Omega t) - kappa}
	\end{align}
	 Proposition \ref{prop-new-pf-weighted Steiner formula} follows by integrating the both sides of \eqref{A_w(Omega t) - kappa} from $0$ to $\rho$.
\end{proof}

\subsection{Origin symmetric assumption in Conjecture \ref{conj-weighted-h-BM}} \label{subsec-origin-sym cant remove}$ \ $

\begin{lem}\label{lem-weighted volume of sym}
	Let $Y \in \mathbb{H}^{n+1}$ and $K$ be a symmetric domain with respect to $X$ in $\mathbb{H}^{n+1}$. 
	Then the weighted volume of $K$ with respect to $Y= (y, y_{n+1})$ is given by
	\begin{equation}\label{weighted volume of sym}
	\int_{K} \cosh d_{\mathbb{H}^{n+1}}(P, Y) dv = \cosh \( d_{\mathbb{H}^{n+1}}(X,Y) \) \int_K \cosh  d_{\mathbb{H}^{n+1}}(P, X) dv,
	\end{equation}
	where $P \in K$.
\end{lem}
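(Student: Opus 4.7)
The plan is to reduce the statement to a linear identity on the vector-valued integral $\int_K P\,dv(P)\in\mathbb{R}^{n+1,1}$, and then exploit the geodesic reflection through $X$ as an isometry preserving $K$.

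First I would rewrite the integrand using Lemma \ref{lem-geo-dis on hyperbolic space}, which gives $\cosh d_{\mathbb{H}^{n+1}}(P,Y)=-\metric{P}{Y}$ for all $P,Y\in\mathbb{H}^{n+1}\subset\mathbb{R}^{n+1,1}$. Since the Lorentzian inner product is bilinear and $Y$ is fixed, this turns the claimed identity into
\[
\metric{\int_K P\,dv(P)}{Y} \,=\, \cosh d_{\mathbb{H}^{n+1}}(X,Y)\cdot\metric{\int_K P\,dv(P)}{X}/(-\metric{X}{X})^{-1}\cdot(\text{signs})
\]
or more cleanly: it suffices to prove the vector identity
\[
\int_K P\,dv(P) \;=\; \Big(\int_K \cosh d_{\mathbb{H}^{n+1}}(P,X)\,dv(P)\Big)\, X.
\]
Once this is established, taking the Lorentzian inner product of both sides with $-Y$ and using $\metric{X}{Y}=-\cosh d_{\mathbb{H}^{n+1}}(X,Y)$ immediately yields \eqref{weighted volume of sym}.

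Next I would compute the geodesic reflection $\sigma_X\colon\mathbb{H}^{n+1}\to\mathbb{H}^{n+1}$ about $X$ in the hyperboloid model. Using the parametrization \eqref{formula of geodesic segment}, any $P\in\mathbb{H}^{n+1}$ can be written as $P=\cosh t\,X+\sinh t\, v$ for some unit $v\in T_X\mathbb{H}^{n+1}$ and $t=d_{\mathbb{H}^{n+1}}(P,X)$, so that $\sigma_X(P)=\cosh t\,X-\sinh t\,v$. Adding the two expressions gives the key pointwise identity
\[
P+\sigma_X(P) \;=\; 2\cosh t\,X \;=\; -2\metric{P}{X}\,X \;=\; 2\cosh d_{\mathbb{H}^{n+1}}(P,X)\,X,
\]
where the last step uses Lemma \ref{lem-geo-dis on hyperbolic space} again.

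Finally, since $K$ is symmetric with respect to $X$, the isometry $\sigma_X$ maps $K$ onto itself and preserves the volume form $dv$; hence the change of variables $P\mapsto\sigma_X(P)$ gives $\int_K P\,dv(P)=\int_K \sigma_X(P)\,dv(P)$. Averaging these two expressions and using the displayed identity above yields the desired vector identity, and the lemma follows. No serious obstacle arises: the argument is essentially the observation that, in the hyperboloid model, the Lorentzian center of mass of a set symmetric about $X$ is a positive multiple of $X$, with the multiple being exactly the weighted volume on the right-hand side of \eqref{weighted volume of sym}.
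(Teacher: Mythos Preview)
Your argument is correct and is essentially the same as the paper's, just carried out coordinate-free. The paper normalizes $X=(0,1)$ so that the geodesic reflection $\sigma_X$ becomes the map $(p,p_{n+1})\mapsto(-p,p_{n+1})$; the symmetry of $K$ then kills $\int_K p\,dv$ directly, leaving $\int_K P\,dv=(\int_K p_{n+1}\,dv)\,X$, which is exactly your vector identity specialized to those coordinates. Your version has the minor advantage of avoiding the implicit appeal to transitivity of the isometry group, at the cost of writing out $\sigma_X$ explicitly; either way the content is the one-line observation that the Lorentzian center of mass of a centrally symmetric set lies along $X$. (The garbled display with ``(\text{signs})'' before your ``more cleanly'' should be deleted in a final write-up, but the clean statement that follows it is exactly what is needed.)
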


\begin{proof}
	Without loss of generality, we assume that  $X= (0,1)$ is the north pole of the hyperboloid model. Then
	\begin{align}\label{weig vol ball-1}
	\int_K \cosh d_{\mathbb{H}^{n+1}}(P, Y) dv
	=& -\int_K \metric{P}{Y}dv   \nonumber\\
	=&\int_K p_{n+1}y_{n+1}dv - \int_K \metric{p}{y}dv \nonumber\\
	=& y_{n+1} \int_K p_{n+1} dv,
	\end{align}
	where $P =(p, p_{n+1}) \in K$, and the last equality followed from the symmetry of $K$.
	 Then \eqref{weighted volume of sym} follows by substituting $y_{n+1} = \cosh d_{\mathbb{H}^{n+1}} (X,Y)$ and $p_{n+1} = \cosh d_{\mathbb{H}^{n+1}} (X,P)$ into \eqref{weig vol ball-1}. This completes the proof of Lemma \ref{lem-weighted volume of sym}.
\end{proof}

Taking $K=B(X,r)$ and $Y =(0,1)$ in \eqref{weighted volume of sym}, we get the weighted volume of geodesic balls not centered at the origin.
\begin{cor} \label{cor-weighted volume-ball}
	Let  $B(X, r)$ be a geodesic ball of radius $r$ centered at $X=(x,x_{n+1})$ in $\mathbb{H}^{n+1}$. Then
	\begin{equation*}
	\mathcal{S}(B(X,r))= x_{n+1}^{\frac{1}{n+1}} \sinh r.
	\end{equation*}
\end{cor}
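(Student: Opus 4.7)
The plan is to apply Lemma \ref{lem-weighted volume of sym} with $K=B(X,r)$ and $Y=(0,1)$, then compute the resulting ball-centered integral via the warped product structure of $\mathbb{H}^{n+1}$ around $X$. Since $B(X,r)$ is symmetric about its own center $X$, the lemma applies and yields
\begin{equation*}
\Vol_w(B(X,r)) = \int_{B(X,r)} \cosh d_{\mathbb{H}^{n+1}}(P,(0,1))\, dv = \cosh d_{\mathbb{H}^{n+1}}(X,(0,1)) \int_{B(X,r)} \cosh d_{\mathbb{H}^{n+1}}(P,X)\, dv.
\end{equation*}
Here $\cosh d_{\mathbb{H}^{n+1}}(X,(0,1)) = -\metric{X}{(0,1)} = x_{n+1}$ by \eqref{geo-dis on hyperbolic space}.

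Next I would evaluate the remaining integral by choosing $X$ as the base point for geodesic polar coordinates on $\mathbb{H}^{n+1}$, so that the ball $B(X,r)$ becomes the polar region $\{(s,\theta):  0 \leq s \leq r,\ \theta \in \mathbb{S}^n\}$ with volume element $\sinh^n s\, ds\, d\sigma$. Then
\begin{equation*}
\int_{B(X,r)} \cosh d_{\mathbb{H}^{n+1}}(P,X)\, dv = \omega_n \int_0^r \cosh s\, \sinh^n s\, ds = \frac{\omega_n}{n+1} \sinh^{n+1} r.
\end{equation*}
Combining the two displays gives $\Vol_w(B(X,r)) = \frac{\omega_n}{n+1} x_{n+1} \sinh^{n+1} r$.

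Finally, substituting into the definition \eqref{S-Omega} of $\mathcal{S}$ produces
\begin{equation*}
\mathcal{S}(B(X,r)) = (n+1)^{\frac{1}{n+1}} \omega_n^{-\frac{1}{n+1}} \left( \frac{\omega_n}{n+1} x_{n+1} \sinh^{n+1} r \right)^{\frac{1}{n+1}} = x_{n+1}^{\frac{1}{n+1}} \sinh r,
\end{equation*}
which is the claimed identity. There is no real obstacle here: every step is either a direct invocation of Lemma \ref{lem-weighted volume of sym} or an elementary computation, so the only thing to watch is bookkeeping of the normalizing constants in the definition of $\mathcal{S}$.
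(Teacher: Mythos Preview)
Your proof is correct and follows exactly the approach the paper indicates: apply Lemma \ref{lem-weighted volume of sym} with $K=B(X,r)$ and $Y=(0,1)$, then compute the remaining ball-centered integral and substitute into \eqref{S-Omega}. The paper leaves those computations implicit, so you have simply filled in the details it omits.
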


Assume that $X = (x,x_{n+1}) \in \mathbb{H}^{n+1}$. Let $r_1= \log \frac{11}{10}$, $r_{2} = \log  \frac{4}{3}$ and $r_{3} = \log \frac{73}{60}$. Let $K = B(X, r_1)$, $L = B(X, r_2)$ and $\Omega= B(X, r_3)$ be geodesic balls centered at $X$. Then $\Omega = \frac{1}{2} \cdot K +_1 \frac{1}{2} \cdot L$. In addition, if $x_{n+1} = 100^{n+1}$, then Corollary \ref{cor-weighted volume-ball} implies 
\begin{align*}
	&\( \mathcal{S}(\Omega) + \sqrt{  \mathcal{S}^2(\Omega)+1 }  \) -
	\frac{1}{2}\( \mathcal{S}(K) + \sqrt{  \mathcal{S}^2(K)+1 }  \)
	-
	\frac{1}{2}\( \mathcal{S}(L) + \sqrt{  \mathcal{S}^2(L)+1 }  \)\\
	=&0.7533\ldots >0.
\end{align*}
If $x_{n+1} = 2^{n+1}$, then Corollary \ref{cor-weighted volume-ball} implies
\begin{align*}
	&\( \mathcal{S}(\Omega) + \sqrt{  \mathcal{S}^2(\Omega)+1 }  \) -
	\frac{1}{2}\( \mathcal{S}(K) + \sqrt{  \mathcal{S}^2(K)+1 }  \)
	-
	\frac{1}{2}\( \mathcal{S}(L) + \sqrt{  \mathcal{S}^2(L)+1 }  \)\\
	=&-0.0516\ldots<0.
\end{align*}
Consequently, the origin symmetric assumption in Conjecture \ref{conj-weighted-h-BM} can not be removed.

\subsection{Another proof of Theorem \ref{thm-sum of balls 0.5--2} for $1 \leq p \leq 2$} $ \ $

In the following Lemma \ref{lem-Concavity of N}, we prove that the Minkowski norm $N$ is concave with respect to future time-like vectors.
\begin{lem}\label{lem-Concavity of N}
	Let $X$ and $Y$ be future time-like vectors in $\mathbb{R}^{n+1,1}$ and $t \in(0,1)$ be a real number. Then
	\begin{equation}\label{concavity of N}
	N\( \(1-t\) X+t Y \) \geq  (1-t) N(X) + t N(Y),
	\end{equation}
	with equality if and only if $X = kY$ for some constant $k>0$. Thus, $N(X)$ is a concave function defined on the future time-like vectors.
\end{lem}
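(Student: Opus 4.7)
The plan is to reduce the concavity of $N$ to the reverse Cauchy--Schwarz inequality for future-directed time-like vectors in Minkowski space, which asserts that for any two such vectors $X, Y$ one has $-\langle X, Y\rangle \geq N(X) N(Y)$, with equality if and only if $X$ and $Y$ are positively proportional. This inequality is classical and follows, for example, from writing $Y = \alpha X + W$ where $W$ is in the orthogonal complement of $X$ (which is space-like since $X$ is time-like) and comparing $-\langle X, Y \rangle = \alpha N(X)^2$ with $N(X) N(Y) = N(X) \sqrt{\alpha^2 N(X)^2 - \langle W, W\rangle}$.

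Granting this, I would first verify that $(1-t) X + t Y$ is itself future-directed time-like, so that $N((1-t)X + tY)$ is well defined. The last coordinate is $(1-t) x_{n+1} + t y_{n+1} > 0$ since both $x_{n+1}, y_{n+1} > 0$, and expanding
\[
 \langle (1-t)X + tY,\, (1-t)X + tY\rangle = -(1-t)^2 N(X)^2 - t^2 N(Y)^2 + 2t(1-t)\langle X, Y\rangle
\]
together with the reverse Cauchy--Schwarz bound $\langle X, Y\rangle \leq -N(X) N(Y) < 0$ shows that this quantity is at most $-((1-t) N(X) + t N(Y))^2 < 0$.

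The same computation is the whole point: squaring both sides of the desired inequality \eqref{concavity of N}, it suffices to prove
\[
 -(1-t)^2 \langle X, X\rangle - t^2 \langle Y, Y\rangle - 2 t(1-t) \langle X, Y\rangle \;\geq\; \bigl( (1-t) N(X) + t N(Y) \bigr)^2,
\]
which, after using $\langle X, X\rangle = -N(X)^2$ and $\langle Y, Y\rangle = -N(Y)^2$, reduces exactly to $-\langle X, Y\rangle \geq N(X) N(Y)$. Taking square roots (both sides are nonnegative) yields \eqref{concavity of N}.

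For the equality case, equality in \eqref{concavity of N} forces equality in the reverse Cauchy--Schwarz inequality, hence $X$ and $Y$ are positively proportional, i.e.\ $X = k Y$ for some $k > 0$; conversely this proportionality clearly gives equality. I do not anticipate a real obstacle here: the only delicate point is having the sharp reverse Cauchy--Schwarz with its equality characterization at hand, and the rest is a one-line algebraic manipulation. Once \eqref{concavity of N} is established for a single $t$, the standard midpoint argument (or simply the above proof applied directly with arbitrary convex weights) gives concavity of $N$ on the full cone of future-directed time-like vectors.
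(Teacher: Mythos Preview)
Your proof is correct. Both you and the paper ultimately reduce the inequality to $-\langle X,Y\rangle \geq N(X)N(Y)$ for future time-like vectors, but the packaging differs. You invoke the reverse Cauchy--Schwarz inequality directly as a known fact (with a short sketch via orthogonal decomposition), then expand $N((1-t)X+tY)^2$ and apply it in one line; this is coordinate-free and conceptually clean. The paper instead works in coordinates $X=(x,x_{n+1})$, $Y=(y,y_{n+1})$: it first applies the Euclidean triangle inequality $|(1-t)x+ty|\leq(1-t)|x|+t|y|$, and then proves $x_{n+1}y_{n+1}-|x||y|\geq N(X)N(Y)$ by an AM--GM type argument. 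These two steps together amount exactly to proving reverse Cauchy--Schwarz by hand. Your route is shorter and highlights the underlying Lorentzian geometry; the paper's route is more self-contained and makes the equality analysis explicit in coordinates (equality in the triangle inequality forces $x,y$ parallel, equality in the second step forces $x_{n+1}|y|=y_{n+1}|x|$). Either way the equality characterization comes out the same.
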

\begin{proof}
	The proof is straightforward. Assume that $X= (x, x_{n+1})$ and $Y=(y,y_{n+1})$ with $x_{n+1} >|x| \geq 0$ and $y_{n+1}>|y| \geq 0$. A direct calculation shows
	\begin{align}
	N( (1-t)X+ tY) =& \(  \( (1-t)x_{n+1}+ t y_{n+1} \)^2-  | (1-t)x+ty|^2    \)^{\frac{1}{2}} \nonumber\\
	\geq& \(  \( (1-t)x_{n+1}+ t y_{n+1} \)^2-  \( (1-t)|x|+t|y|\)^2    \)^{\frac{1}{2}}\nonumber\\
	=& \((1-t)^2\(x_{n+1}^2-|x|^2\)+t^2\(y_{n+1}^2-|y|^2\)+2t\(1-t\)\(x_{n+1}y_{n+1}-|x||y|\)  \)^{\frac{1}{2}} \nonumber\\
	=&\( (1-t)^2N(X)^2+t^2N(Y)^2 +2t (1-t ) \(x_{n+1}y_{n+1}-|x||y|\)\)^{\frac{1}{2}}, \label{N-1}
	\end{align}
	with equality if and only if either $x=ky$ or $y=kx$ for some constant $k\geq0$.
	On the other hand, the AM-GM inequality yields 
	\begin{align}
	x_{n+1}y_{n+1}-|x||y|
	=& \( x_{n+1}^2y_{n+1}^2+|x|^2|y|^2-2 x_{n+1}y_{n+1}|x||y| \)^{\frac{1}{2}} \nonumber\\
	\geq& \(x_{n+1}^2 y_{n+1}^2+|x|^2|y|^2-x_{n+1}^2 |y|^2-y_{n+1}^2|x|^2   \)^{\frac{1}{2}} \nonumber\\
	=&\(x_{n+1}^2-|x|^2\)^{\frac{1}{2}}\(y_{n+1}^2-|y|^2\)^{\frac{1}{2}} \nonumber\\
	=&N(X)N(Y),\label{N-2}
	\end{align}
	with equality if and only if $x_{n+1}|y| = y_{n+1}|x|$. Then \eqref{concavity of N} follows by substituting \eqref{N-2} into \eqref{N-1}. For future time-like vectors $X$ and $Y$, if equality hold in the both \eqref{N-1} and \eqref{N-2}, then $X=kY$ for some constant $k>0$.
	This completes the proof of Lemma \ref{lem-Concavity of N}.  
\end{proof}

By using the above Lemma \ref{lem-Concavity of N},  we can give another proof of Theorem \ref{thm-sum of balls 0.5--2} for $1 \leq p \leq 2$.
\begin{proof}[Another proof of Theorem \ref{thm-sum of balls 0.5--2} for $1 \leq p \leq 2$]
	We first consider the case $p=1$.
	The concavity of the Minkowski norm $N$ in \eqref{concavity of N} implies that
	\begin{equation*}
	N( a e^{r_1} X + b e^{r_2} Y) \geq a N(e^{r_1} X) + b N( e^{r_2} Y)= ae^{r_1} +b e^{r_2}, 
	\end{equation*} 
	with equality if and only if $X=Y$. This together with Corollary \ref{cor-1 sum of balls} gives that $\Omega$ is a geodesic ball of radius greater than $\frac{1}{p}\log \(a e^{p r_1+ b e^{p r_2}}\)$ whenever $X \neq Y$. If $X=Y$, then the radius of $\Omega$ is $\frac{1}{p}\log \(a e^{p r_1+ b e^{p r_2}}\)$. Thus we have that Theorem \ref{thm-sum of balls 0.5--2} holds for $p=1$.
	
	Now we consider the case $1< p \leq 2$.
	Let $q$ satisfy $\frac{1}{p} + \frac{1}{q} =1$.
	Taking $A = a^{\frac{1}{p}} \g_K(z)$, $B = b^{\frac{1}{p}} \g_L(z)$ and 
	\begin{equation*}
	t = \frac{be^{pr_2}}{ae^{pr_1 } +b e^{pr_2}}
	\end{equation*}
	in inequality \eqref{jesen-ineq-p>1}, we have
	\begin{align}
	\g_{\Omega}(z)=& \(a \g_K(z)^p +b \g_L(z)^p\)^{\frac{1}{p}} \nonumber\\
	\geq& \(\frac{a e^{pr_1}}{a e^{pr_1}+b e^{pr_2}} \)^{\frac{1}{q}} a^{\frac{1}{p}} \g_K(z)+
	\(\frac{b e^{pr_2}}{a e^{pr_1+b e^{pr_2}}}  \)^{\frac{1}{q}}b^{\frac{1}{p}} \g_L(z) \nonumber\\
	=&- \metric{  \(\frac{a e^{pr_1}}{a e^{pr_1}+b e^{pr_2}} \)^{\frac{1}{q}}a^{\frac{1}{p}} e^{r_1} X+ 
		\(\frac{b e^{pr_2}}{a e^{pr_1+b e^{pr_2}}}  \)^{\frac{1}{q}} b^{\frac{1}{p}}e^{r_2} Y    }{(z,1)  } \nonumber\\
	=&- \metric{T}{(z,1)}, \label{big ball in sum}
	\end{align}
	where the first equality used the assumption $1<p \leq 2$ and Theorem \ref{thm-def p sum-well defined}, and 
	\begin{equation*}
	T:=  \(\frac{a e^{pr_1}}{a e^{pr_1}+b e^{pr_2}} \)^{\frac{1}{q}} a^{\frac{1}{p}}e^{r_1} X+ 
	\(\frac{b e^{pr_2}}{a e^{pr_1+b e^{pr_2}}}  \)^{\frac{1}{q}} b^{\frac{1}{p}}e^{r_2} Y.
	\end{equation*}
	By the convexity of the domain enclosed by the future light cone of $(0,0)$, it is obvious that $T$ is a future time-like vector.
	By the concavity of the Minkowski norm $N$, we have
	\begin{align*}
	N(T) \geq& \(\frac{a e^{pr_1}}{a e^{pr_1}+b e^{pr_2}} \)^{\frac{1}{q}}a^{\frac{1}{p}} e^{r_1}+\(\frac{b e^{pr_2}}{a e^{pr_1+b e^{pr_2}}}  \)^{\frac{1}{q}}b^{\frac{1}{p}} e^{r_2}\\
	=&    \(a e^{pr_1}+b e^{pr_2}\)^{\frac{1}{p}}\geq  (a+b)^{\frac{1}{p}} \geq 1,
	\end{align*}
	and equality holds in the first inequality if and only if $X=Y$. Let $\widetilde{B} = B(\frac{T}{N(T)}, \log N(T))$.
	Then the above inequality implies that the radius of $\widetilde{B}$ is greater than $\frac{1}{p}\(a e^{pr_1}+b e^{pr_2}\)$ whenever $X \neq Y$. On the other hand, Lemma \ref{lem-horo supp of geodesic ball} implies $\g_{\widetilde{B}} = - \metric{T}{(z,1)}$, and hence \eqref{big ball in sum} deduces $\widetilde{B} \subset \Omega$.
	Consequently, if $p>1$ and $X \neq Y$, then there is a geodesic ball $\widetilde{B}$ of radius greater than $\frac{1}{p}\(a e^{pr_1}+b e^{pr_2}\)$ lying inside $\Omega$. If $1<p \leq 2$ and $X=Y$, then the identity $\widetilde{B} = \Omega$ follows by the same argument as in the previous proof of Theorem \ref{thm-sum of balls 0.5--2}.
	
	Then we complete the proof of Theorem \ref{thm-sum of balls 0.5--2} for $1 \leq  p \leq 2$.
\end{proof}

\begin{bibdiv}
\begin{biblist}
\bibliographystyle{amsplain}

\bib{Alek37}{article}{
	author={Aleksandrov, A. D.},
	title={On the theory of mixed volumes. I. Extension of certain concepts in the theory of convex bodies},
	language={Russian; German summary},
	journal={Mat. Sbornik N.S.},
	volume={2},
	date={1937},
	pages={947--972},
}

\bib{Alek38}{article}{
	author={Aleksandrov, A. D.},
	title={ On the theory of mixed volumes of convex bodies. III. Extension of two theorems of Minkowski on convex polyhedra to arbitrary convex bodies},
	language={Russian},
	note={ English translation in Aleksandrov, A. D., Selected Works, Part 1, Chapter 4, pp. 61–97, Gordon and Breach, Amsterdam, 1996.}
	journal={ Mat. Sb.},
	volume={3},
	date={1938},
	pages={27--46},
}

\bib{And94}{article}{
	author={Andrews, Ben},
	title={Contraction of convex hypersurfaces in Riemannian spaces},
	journal={J. Differential Geom.},
	volume={39},
	date={1994},
	number={2},
	pages={407--431},
}

\bib{And04}{article}{
	title={Fully nonlinear parabolic equations in two space variables},
	author={Andrews, Ben},
	year={2004},
	eprint={arXiv: math/0402235},
	archivePrefix={arXiv},
}

\bib{And07}{article}{
	author={Andrews, Ben},
	title={Pinching estimates and motion of hypersurfaces by curvature
		functions},
	journal={J. Reine Angew. Math.},
	volume={608},
	date={2007},
	pages={17--33},
}

\bib{ACW18}{article}{
	author={Andrews, Ben},
	author={Chen, Xuzhong},
	author={Wei, Yong},
	title={Volume preserving flow and Alexandrov-Fenchel type inequalities in
		hyperbolic space},
	journal={J. Eur. Math. Soc. (JEMS)},
	volume={23},
	date={2021},
	number={7},
	pages={2467--2509},
}

\bib{AHL20}{article}{
	author={Andrews, Ben},
	author={Hu, Yingxiang},
	author={Li, Haizhong},
	title={Harmonic mean curvature flow and geometric inequalities},
	journal={Adv. Math.},
	volume={375},
	date={2020},
	pages={107393, 28},
}

\bib{AMZ13}{article}{
	author={Andrews, Ben},
	author={McCoy, James},
	author={Zheng, Yu},
	title={Contracting convex hypersurfaces by curvature},
	journal={Calc. Var. Partial Differential Equations},
	volume={47},
	date={2013},
	number={3-4},
	pages={611--665},
}

\bib{AK22}{article}{
	title={Horocyclic Brunn-Minkowski inequality},
	author={Assouline, Rotem},
	author={Klartag, Bo'az},
	year={2022},
	eprint={arXiv: 2208.09826},
	archivePrefix={arXiv},
	primaryClass={math.MG}
}

\bib{BM99}{article}{
	author={Borisenko, Alexandr A.},
	author={Miquel, Vicente},
	title={Total curvatures of convex hypersurfaces in hyperbolic space},
	journal={Illinois J. Math.},
	volume={43},
	date={1999},
	number={1},
	pages={61--78},
}

\bib{Bor22}{article}{
	title={The Logarithmic Minkowski conjecture and the $L_p$-Minkowski Problem},
	author={B\"{o}r\"{o}czky, K\'{a}roly J.},
year={2022},
eprint={ arXiv:2210.00194v2},
archivePrefix={arXiv},
primaryClass={math.AP}
}

\bib{BLYZ12}{article}{
	author={B\"{o}r\"{o}czky, K\'{a}roly J.},
	author={Lutwak, Erwin},
	author={Yang, Deane},
	author={Zhang, Gaoyong},
	title={The log-Brunn-Minkowski inequality},
	journal={Adv. Math.},
	volume={231},
	date={2012},
	number={3-4},
	pages={1974--1997},
}

\bib{BHZ16}{article}{
	author={B\"{o}r\"{o}czky, K\'{a}roly J.},
	author={Heged\H{u}s, P\'{a}l},
	author={Zhu, Guangxian},
	title={On the discrete logarithmic Minkowski problem},
	journal={Int. Math. Res. Not. IMRN},
	date={2016},
	number={6},
	pages={1807--1838},
}

\bib{BLYZ13}{article}{
	author={B\"{o}r\"{o}czky, K\'{a}roly J.},
	author={Lutwak, Erwin},
	author={Yang, Deane},
	author={Zhang, Gaoyong},
	title={The logarithmic Minkowski problem},
	journal={J. Amer. Math. Soc.},
	volume={26},
	date={2013},
	number={3},
	pages={831--852},
}

\bib{BCD17}{article}{
	author={Brendle, Simon},
	author={Choi, Kyeongsu},
	author={Daskalopoulos, Panagiota},
	title={Asymptotic behavior of flows by powers of the Gaussian curvature},
	journal={Acta Math.},
	volume={219},
	date={2017},
	number={1},
	pages={1--16},
}

\bib{BHW16}{article}{
	author={Brendle, Simon},
	author={Hung, Pei-Ken},
	author={Wang, Mu-Tao},
	title={A Minkowski inequality for hypersurfaces in the anti--de
		Sitter--Schwarzschild manifold},
	journal={Comm. Pure Appl. Math.},
	volume={69},
	date={2016},
	number={1},
	pages={124--144},
}

\bib{BW14}{article}{
	author={Brendle, Simon},
	author={Wang, Mu-Tao},
	title={A Gibbons-Penrose inequality for surfaces in Schwarzschild
		spacetime},
	journal={Comm. Math. Phys.},
	volume={330},
	date={2014},
	number={1},
	pages={33--43},
	issn={0010-3616},
}

\bib{Bru1887}{article}{
	author={Brunn, Hermann},
	title={ \"{U}ber Ovale und Eifl\"{a}chen},
	journal={Dissertation, M\"{u}nchen},
	date={1887},
}

\bib{BIS20}{article}{
	title={Christoffel-Minkowski flows},
	author={Bryan, Paul},
	author={Ivaki, Mohammad N.},
	author={Scheuer, Julian},
	journal={to appear in Trans. Amer. Math. Soc.},
	eprint={arXiv: 2005.14680},
	archivePrefix={arXiv},
	primaryClass={math.DG}
}

\bib{Bry87}{article}{
	author={Bryant, Robert L.},
	title={Surfaces of mean curvature one in hyperbolic space},
	language={English, with French summary},
	note={Th\'{e}orie des vari\'{e}t\'{e}s minimales et applications (Palaiseau,
		1983--1984)},
	journal={Ast\'{e}risque},
	number={154-155},
	date={1987},
	pages={12, 321--347, 353 (1988)},
}

\bib{CHLL20}{article}{
	author={Chen, Shibing},
	author={Huang, Yong},
	author={Li, Qi-Rui},
	author={Liu, Jiakun},
	title={The $L_ p$-Brunn-Minkowski inequality for $p<1$},
	journal={Adv. Math.},
	volume={368},
	date={2020},
	pages={107166, 21},
}

\bib{CY76}{article}{
	author={Cheng, Shiu Yuen},
	author={Yau, Shing Tung},
	title={On the regularity of the solution of the $n$-dimensional Minkowski
		problem},
	journal={Comm. Pure Appl. Math.},
	volume={29},
	date={1976},
	number={5},
	pages={495--516},
}

\bib{CW06}{article}{
	author={Chou, Kai-Seng},
	author={Wang, Xu-Jia},
	title={The $L_p$-Minkowski problem and the Minkowski problem in
		centroaffine geometry},
	journal={Adv. Math.},
	volume={205},
	date={2006},
	number={1},
	pages={33--83},
}

\bib{dW70}{article}{
	author={do Carmo, M. P.},
	author={Warner, F. W.},
	title={Rigidity and convexity of hypersurfaces in spheres},
	journal={J. Differential Geometry},
	volume={4},
	date={1970},
	pages={133--144},
}

\bib{Eps86}{article}{
	author={Epstein, Charles L.},
	title={The hyperbolic Gauss map and quasiconformal reflections},
	journal={J. Reine Angew. Math.},
	volume={372},
	date={1986},
	pages={96--135},
	issn={0075-4102},
}

\bib{EGM09}{article}{
	author={Espinar, Jos\'{e} M.},
	author={G\'{a}lvez, Jos\'{e} A.},
	author={Mira, Pablo},
	title={Hypersurfaces in $\Bbb H^{n+1}$ and conformally invariant
		equations: the generalized Christoffel and Nirenberg problems},
	journal={J. Eur. Math. Soc. (JEMS)},
	volume={11},
	date={2009},
	number={4},
	pages={903--939},
}

\bib{Fen29}{article}{
	author={Fenchel, Werner},
	title={\"{U}ber Kr\"{u}mmung und Windung geschlossener Raumkurven},
	language={German},
	journal={Math. Ann.},
	volume={101},
	date={1929},
	number={1},
	pages={238--252},
}

\bib{FJ38}{article}{
	author={Fenchel, W.},
	author={Jessen, B.},
	title={Mengenfunktionen und konvexe K\"{o}rper},
	journal={Danske Vid. Selskab. Mat.-fys. Medd.},
	volume={16},
	date={1938},
	pages={1--31},
}

\bib{Fill13}{article}{
	author={Fillastre, Fran\c{c}ois},
	title={Fuchsian convex bodies: basics of Brunn-Minkowski theory},
	journal={Geom. Funct. Anal.},
	volume={23},
	date={2013},
	number={1},
	pages={295--333},
}

\bib{F62}{article}{
	author={Firey, Wm. J.},
	title={$p$-means of convex bodies},
	journal={Math. Scand.},
	volume={10},
	date={1962},
	pages={17--24},
}

\bib{GST13}{article}{
	author={Gallego, E.},
	author={Solanes, G.},
	author={Teufel, E.},
	title={Linear combinations of hypersurfaces in hyperbolic space},
	journal={Monatsh. Math.},
	volume={169},
	date={2013},
	number={3-4},
	pages={329--354},
}

\bib{GLM18}{article}{
	author={Gao, Shanze},
	author={Li, Haizhong},
	author={Ma, Hui},
	title={Uniqueness of closed self-similar solutions to
		$\sigma_k^{\alpha}$-curvature flow},
	journal={NoDEA Nonlinear Differential Equations Appl.},
	volume={25},
	date={2018},
	number={5},
	pages={Paper No. 45, 26},
}

\bib{GLW22}{article}{
	author={Gao, Shanze},
	author={Li, Haizhong},
	author={Wang, Xianfeng},
	title={Self-similar solutions to fully nonlinear curvature flows by high
		powers of curvature},
	journal={J. Reine Angew. Math.},
	volume={783},
	date={2022},
	pages={135--157},
	issn={0075-4102},
}

\bib{Gar02}{article}{
	author={Gardner, R. J.},
	title={The Brunn-Minkowski inequality},
	journal={Bull. Amer. Math. Soc. (N.S.)},
	volume={39},
	date={2002},
	number={3},
	pages={355--405},
}

\bib{Gar06}{book}{
	author={Gardner, R. J.},
	title={Geometric tomography},
	series={Encyclopedia of Mathematics and its Applications},
	volume={58},
	edition={2},
	publisher={Cambridge University Press, New York},
	date={2006},
	pages={xxii+492},
}

\bib{GWW14}{article}{
	author={Ge, Yuxin},
	author={Wang, Guofang},
	author={Wu, Jie},
	title={Hyperbolic Alexandrov-Fenchel quermassintegral inequalities II},
	journal={J. Differential Geom.},
	volume={98},
	date={2014},
	number={2},
	pages={237--260},
}

\bib{Ger06}{book}{
	author={Gerhardt, Claus},
	title={Curvature problems},
	series={Series in Geometry and Topology},
	volume={39},
	publisher={International Press, Somerville, MA},
	date={2006},
	pages={x+323},
}

\bib{GT01}{book}{
	author={Gilbarg, David},
	author={Trudinger, Neil S.},
	title={Elliptic partial differential equations of second order},
	series={Classics in Mathematics},
	note={Reprint of the 1998 edition},
	publisher={Springer-Verlag, Berlin},
	date={2001},
	pages={xiv+517},
}

\bib{GL15}{article}{
	author={Guan, Pengfei},
	author={Li, Junfang},
	title={A mean curvature type flow in space forms},
	journal={Int. Math. Res. Not. IMRN},
	date={2015},
	number={13},
	pages={4716--4740},
}

\bib{GM03}{article}{
	author={Guan, Pengfei},
	author={Ma, Xi-Nan},
	title={The Christoffel-Minkowski problem. I. Convexity of solutions of a
		Hessian equation},
	journal={Invent. Math.},
	volume={151},
	date={2003},
	number={3},
	pages={553--577},
}

\bib{GRW15}{article}{
	author={Guan, Pengfei},
	author={Ren, Changyu},
	author={Wang, Zhizhang},
	title={Global $C^2$-estimates for convex solutions of curvature
		equations},
	journal={Comm. Pure Appl. Math.},
	volume={68},
	date={2015},
	number={8},
	pages={1287--1325},
	issn={0010-3640},
}

\bib{GX18}{article}{
	author={Guan, Pengfei},
	author={Xia, Chao},
	title={$L^p$ Christoffel-Minkowski problem: the case $1<p<k+1$},
	journal={Calc. Var. Partial Differential Equations},
	volume={57},
	date={2018},
	number={2},
	pages={Paper No. 69, 23},
}

\bib{GL11}{article}{
	author={Guo, Bin},
	author={Li, Haizhong},
	title={The second variational formula for the functional $\int
		v^{(6)}(g)dV_g$},
	journal={Proc. Amer. Math. Soc.},
	volume={139},
	date={2011},
	number={8},
	pages={2911--2925},
}

\bib{Han06}{article}{
	author={Han, Zheng-Chao},
	title={A Kazdan-Warner type identity for the $\sigma_k$ curvature},
	journal={C. R. Math. Acad. Sci. Paris},
	volume={342},
	date={2006},
	number={7},
	pages={475--478},
}

\bib{HL19}{article}{
	author={Hu, Yingxiang},
	author={Li, Haizhong},
	title={Geometric inequalities for hypersurfaces with nonnegative
		sectional curvature in hyperbolic space},
	journal={Calc. Var. Partial Differential Equations},
	volume={58},
	date={2019},
	number={2},
	pages={Paper No. 55, 20},
}

\bib{HL21}{article}{
	author={Hu, Yingxiang},
	author={Li, Haizhong},
	title={Geometric inequalities for static convex domains in hyperbolic space},
	journal={Trans. Amer. Math. Soc.},
	volume={375},
	date={2022},
	number={8},
	pages={5587--5615},
}

\bib{HLW20}{article}{
	author={Hu, Yingxiang},
	author={Li, Haizhong},
	author={Wei, Yong},
	title={Locally constrained curvature flows and geometric inequalities in
		hyperbolic space},
	journal={Math. Ann.},
	volume={382},
	date={2022},
	number={3-4},
	pages={1425--1474},
}

\bib{HLYZ16}{article}{
	author={Huang, Yong},
	author={Lutwak, Erwin},
	author={Yang, Deane},
	author={Zhang, Gaoyong},
	title={Geometric measures in the dual Brunn-Minkowski theory and their
		associated Minkowski problems},
	journal={Acta Math.},
	volume={216},
	date={2016},
	number={2},
	pages={325--388},
}

\bib{HLYZ05}{article}{
	author={Hug, Daniel},
	author={Lutwak, Erwin},
	author={Yang, Deane},
	author={Zhang, Gaoyong},
	title={On the $L_p$ Minkowski problem for polytopes},
	journal={Discrete Comput. Geom.},
	volume={33},
	date={2005},
	number={4},
	pages={699--715},
}

\bib{KM20}{article}{
	author={Kolesnikov, Alexander},
	author={Milman, Emanuel},
	title={Local $L^p$-Brunn-Minkowski inequalities for $p<1$},
	journal={Mem. Amer. Math. Soc.},
	volume={277},
	date={2022},
	number={1360},
	pages={v+78},
}

\bib{Lew38}{article}{
	author={Lewy, Hans},
	title={On differential geometry in the large. I. Minkowski's problem},
	journal={Trans. Amer. Math. Soc.},
	volume={43},
	date={1938},
	number={2},
	pages={258--270},
}

\bib{LWX}{article}{
	title={The discrete horospherical $p$-Minkowski problem in hyperbolic space, preprint}, 
	author={Li, Haizhong},
	author={Wan, Yao},
	author={Xu, Botong},
	year={2022},
}

\bib{LWX14}{article}{
	author={Li, Haizhong},
	author={Wei, Yong},
	author={Xiong, Changwei},
	title={A geometric inequality on hypersurface in hyperbolic space},
	journal={Adv. Math.},
	volume={253},
	date={2014},
	pages={152--162},
}

\bib{LX}{article}{
	author={Li, Haizhong},
	author={Xu, Botong},
	title={A class of weighted isoperimetric inequalities in hyperbolic space},
	journal={to appear in Proc. Amer. Math. Soc.}
	doi={10.1090/proc/16219}
	year={2022},
	eprint={ arXiv:2210.12632},
}

\bib{LSW20}{article}{
	author={Li, Qi-Rui},
	author={Sheng, Weimin},
	author={Wang, Xu-Jia},
	title={Flow by Gauss curvature to the Aleksandrov and dual Minkowski
		problems},
	journal={J. Eur. Math. Soc. (JEMS)},
	volume={22},
	date={2020},
	number={3},
	pages={893--923},
}

\bib{LLL21}{article}{
	author={Li, YanYan},
	author={Lu, Han},
	author={Lu, Siyuan},
	title={On the $\sigma_2$-Nirenberg problem on $\Bbb S^2$},
	journal={J. Funct. Anal.},
	volume={283},
	date={2022},
	number={10},
	pages={Paper No. 109606},
}

\bib{Lut93}{article}{
	author={Lutwak, Erwin},
	title={The Brunn-Minkowski-Firey theory. I. Mixed volumes and the
		Minkowski problem},
	journal={J. Differential Geom.},
	volume={38},
	date={1993},
	number={1},
	pages={131--150},
}

\bib{Lut96}{article}{
	author={Lutwak, Erwin},
	title={The Brunn-Minkowski-Firey theory. II. Affine and geominimal
		surface areas},
	journal={Adv. Math.},
	volume={118},
	date={1996},
	number={2},
	pages={244--294},
}

\bib{LYZ12}{article}{
	author={Lutwak, Erwin},
	author={Yang, Deane},
	author={Zhang, Gaoyong},
	title={The Brunn-Minkowski-Firey inequality for nonconvex sets},
	journal={Adv. in Appl. Math.},
	volume={48},
	date={2012},
	number={2},
	pages={407--413},
}

\bib{LYZ18}{article}{
	author={Lutwak, Erwin},
	author={Yang, Deane},
	author={Zhang, Gaoyong},
	title={$L_p$ dual curvature measures},
	journal={Adv. Math.},
	volume={329},
	date={2018},
	pages={85--132},
}

\bib{MS12}{article}{
	author={Mars, Marc},
	author={Soria, Alberto},
	title={On the Penrose inequality for dust null shells in the Minkowski
		spacetime of arbitrary dimension},
	journal={Classical Quantum Gravity},
	volume={29},
	date={2012},
	number={13},
	pages={135005, 25},
	issn={0264-9381},
}

\bib{MS14}{article}{
	author={Mars, Marc},
	author={Soria, Alberto},
	title={Geometry of normal graphs in Euclidean space and applications to
		the Penrose inequality in Minkowski},
	journal={Ann. Henri Poincar\'{e}},
	volume={15},
	date={2014},
	number={10},
	pages={1903--1918},
}

\bib{MB16}{article}{
	title={An Introduction to Geometric Topology}, 
	author={Martelli, Bruno},
	year={2016},
	eprint={arXiv: 1610.02592},
	archivePrefix={arXiv},
	primaryClass={math.GT}
}

\bib{Min1897}{article}{
	title={Allgemeine Lehrs{\"a}tze {\"u}ber die konvexen Polyeder}, 
	author={Minkowski, Hermann},
	journal={Nachr. Ges. Wiss. G{\"o}ttingen},
	date={1897},
	pages={198--219},
}

\bib{Min1903}{article}{
	author={Minkowski, Hermann},
	title={Volumen und Oberfl\"{a}che},
	journal={Math. Ann.},
	volume={57},
	date={1903},
	number={4},
	pages={447--495},
}

\bib{Nat15}{article}{
	author={Nat\'{a}rio, Jos\'{e}},
	title={A Minkowski-type inequality for convex surfaces in the hyperbolic
		3-space},
	journal={Differential Geom. Appl.},
	volume={41},
	date={2015},
	pages={102--109},
}

\bib{Nir53}{article}{
	author={Nirenberg, Louis},
	title={The Weyl and Minkowski problems in differential geometry in the
		large},
	journal={Comm. Pure Appl. Math.},
	volume={6},
	date={1953},
	pages={337--394},
}

\bib{Oss78}{article}{
	author={Osserman, Robert},
	title={The isoperimetric inequality},
	journal={Bull. Amer. Math. Soc.},
	volume={84},
	date={1978},
	number={6},
	pages={1182--1238},
}

\bib{Pog78}{book}{
	author={Pogorelov, A.V.},
	title={The Minkowski multidimensional problem},
	series={Scripta Series in Mathematics},
	note={Translated from the Russian by Vladimir Oliker;
		Introduction by Louis Nirenberg},
	publisher={V. H. Winston \& Sons, Washington, D.C.; Halsted Press [John
		Wiley \& Sons], New York-Toronto-London},
	date={1978},
	pages={106},
}

\bib{Rei73}{article}{
	author={Reilly, Robert C.},
	title={On the Hessian of a function and the curvatures of its graph},
	journal={Michigan Math. J.},
	volume={20},
	date={1973},
	pages={373--383},
}

\bib{San04}{book}{
	author={Santal\'{o}, Luis A.},
	title={Integral geometry and geometric probability},
	series={Cambridge Mathematical Library},
	edition={2},
	note={With a foreword by Mark Kac},
	publisher={Cambridge University Press, Cambridge},
	date={2004},
	pages={xx+404},
}

\bib{SX19}{article}{
	author={Scheuer, Julian},
	author={Xia, Chao},
	title={Locally constrained inverse curvature flows},
	journal={Trans. Amer. Math. Soc.},
	volume={372},
	date={2019},
	number={10},
	pages={6771--6803},
}

\bib{Sch14}{book}{
	author={Schneider, Rolf},
	title={Convex bodies: the Brunn-Minkowski theory},
	series={Encyclopedia of Mathematics and its Applications},
	volume={151},
	edition={Second expanded edition},
	publisher={Cambridge University Press, Cambridge},
	date={2014},
	pages={xxii+736},
}

\bib{TW13}{article}{
	author={Tian, Guji},
	author={Wang, Xu-Jia},
	title={A priori estimates for fully nonlinear parabolic equations},
	journal={Int. Math. Res. Not. IMRN},
	date={2013},
	number={17},
	pages={3857--3877},
}

\bib{Via00}{article}{
	author={Viaclovsky, Jeff A.},
	title={Conformal geometry, contact geometry, and the calculus of
		variations},
	journal={Duke Math. J.},
	volume={101},
	date={2000},
	number={2},
	pages={283--316},
}

\bib{WX14}{article}{
	author={Wang, Guofang},
	author={Xia, Chao},
	title={Isoperimetric type problems and Alexandrov-Fenchel type
		inequalities in the hyperbolic space},
	journal={Adv. Math.},
	volume={259},
	date={2014},
	pages={532--556},
}

\bib{WX15}{article}{
	author={Wei, Yong},
	author={Xiong, Changwei},
	title={Inequalities of Alexandrov-Fenchel type for convex hypersurfaces
		in hyperbolic space and in the sphere},
	journal={Pacific J. Math.},
	volume={277},
	date={2015},
	number={1},
	pages={219--239},
}

\bib{Xia16}{article}{
	author={Xia, Chao},
	title={A Minkowski type inequality in space forms},
	journal={Calc. Var. Partial Differential Equations},
	volume={55},
	date={2016},
	number={4},
	pages={Art. 96, 8},
}

\bib{Zhu15}{article}{
	author={Zhu, Guangxian},
	title={The $L_p$ Minkowski problem for polytopes for $0<p<1$},
	journal={J. Funct. Anal.},
	volume={269},
	date={2015},
	number={4},
	pages={1070--1094},
}

\bib{Zhu17}{article}{
	author={Zhu, Guangxian},
	title={The $L_p$ Minkowski problem for polytopes for $p<0$},
	journal={Indiana Univ. Math. J.},
	volume={66},
	date={2017},
	number={4},
	pages={1333--1350},
}

\end{biblist}
\end{bibdiv}
\end{document}